\newif{\ifarxiv}
\newif{\ifta}
\newcommand\eqdef{\mathrel{\buildrel \text{def}\over=}}
\newcommand\pow{\mathbb{P}}
\newcommand\Pfin{\pow_{\mathrm{fin}}}
\newcommand{\rat}{\mathbb{Q}}
\newcommand{\real}{\mathbb{R}}
\newcommand{\creal}{\overline{\mathbb{R}}_+}
\newcommand{\Rp}{\mathbb{R}_+}
\newcommand\diff{\smallsetminus}
\DeclareMathOperator{\upc}{\uparrow\!}
\DeclareMathOperator{\dc}{\downarrow\!}
\newcommand\nat{\mathbb{N}}
\newcommand\limp{\mathrel{\Rightarrow}}
\newcommand\Lform{\mathcal L}
\newcommand\uuarrow{\rlap{$\uparrow$}\raise.5ex\hbox{$\uparrow$}}%%
\newcommand\ddarrow{\rlap{$\downarrow$}\raise.5ex\hbox{$\downarrow$}}%%
\newcommand\identity[1]{\mathrm{id}_{#1}}
\newcommand\dotleq{\mathrel{\dot\leq}}
\newcommand\one{{\mathbf 1}}
\newcommand\Smyth{{\mathcal Q}}
\newcommand\cvx{{\mathrm{cvx}}}
\newcommand\Smythc{\Smyth^\cvx}
\newcommand\SV{\Smyth_\Vt}
\newcommand\SVc{\SV^\cvx}
\newcommand\Vt{\mathsf{V}}
\newcommand\PV\Plotkinn %{\mathcal P_{\mathcal V}}
\newcommand\Val{{\mathbf V}}
\newcommand\Sober{{\mathcal S}}
\newcommand\Open{{\mathcal O}}
\newcommand\pw{\mathrm{\lowercase{p}}}
\newcommand\fin{\mathrm{\lowercase{f}}}
\newcommand\wS{\mathit{\lowercase{w}S}}
\newcommand\Demon{{\mathtt{D}}}
\newcommand\Nature{{\mathtt{P}}}
\newcommand\DN{{\Demon\Nature}}
\newcommand\Topcat{\mathbf{Top}}
\newcommand\Setcat{\mathbf{Set}}
\newcommand{\interior}[1]{\text{int} ({#1})} %{{\buildrel \circ \over {#1}}}
\newcommand\dG{{\text{\textsf{d}}}}
\newcommand\patch{{\text{\textsf{patch}}}}
\newcommand\Min{\mathop{\mathrm{Min}}}
\newcommand\Latt{\mathcal{L}}
\newcommand\cb[1]{\mathbf{#1}} % or \pmb
\newcommand\conv{\mathop{\mathrm{conv}}}
\newcommand\clconv{\mathop{\overline{\conv}}}
\newcommand\Alg{\mathfrak{A}}
\newcommand\B{\mathfrak{B}}
\newcommand\C{\mathfrak{C}}
\newcommand\Cb{\mathfrak{D}}
\newcommand\Iu{\mathbb I}
\newcommand\quot[2]{{#1}/{#2}}
\newcommand\etaS{\eta^{\Sober}}
\newcommand\conify{\mathrm{cone}}
\newcommand\etac{\eta^{\conify}}
\newcommand\leqc{\leq^{\conify}}
\newcommand\cext{\leftslice} %{{\underline{\mathrm{c}}}}
\newcommand\dbl{\mathrm{dbl}}
\newcommand\tscope{\mathrm{tscope}}
\newcommand\etaca{\eta^{\tscope, \alpha}}
\newcommand\leqca{\leq_{\alpha}}
\newcommand\etaps{\eta^{\astar*}}
\newcommand\etass{\eta^{**}}
\newcommand\scope[1]{((#1))}
\newcommand\rast{\circledast} %{\divideontimes} %{\diamond} % {\ast}
\newcommand\astar{\star}
\newcommand\Cone{\cb{Cone}}
\newcommand\BA{\cb{BA}}
\newcommand\sTBA{\cb{sTBA}}
\newcommand\TBA{\cb{TBA}}
\newtheorem{theorem}{Theorem}[section]
\newtheorem{proposition}[theorem]{Proposition}
\newtheorem{deflem}[theorem]{Def.\ and Lemma}
\newtheorem{defprop}[theorem]{Def.\ and Proposition}
\newtheorem{corollary}[theorem]{Corollary}
\newtheorem{lemma}[theorem]{Lemma}
\newproof{proof}{Proof}
\newcommand\qed{\hfill$\Box$}
\newtheorem{problem}[theorem]{Problem}
\newtheorem{fact}[theorem]{Fact}
\newtheorem{definition}[theorem]{Definition}
\newtheorem{example}[theorem]{Example}
\newtheorem{remark}[theorem]{Remark}
\newcommand\ForAuthors[1]%          %  temporary remark for the
\journal{Topology and its Applications}
\newcommand\abs{%
  Barycentric algebras are an abstraction of the notion of convex
  sets, defined by a set of equations.  We study semitopological and
  topological barycentric algebras, in the spirit of a previous study
  by Klaus Keimel on semitopological and topological cones (2008),
  which are special cases of semitopological and topological
  barycentric algebras.  For example, the space of all continuous
  valuations (a very close cousin of measures) over a topological
  space is a topological cone, while probability valuations form a
  topological barycentric algebra, and subprobability valuations form
  a pointed topological barycentric algebra.  Among other results, we
  show the existence of free semitopological cones over
  semitopological barycentric algebras and over pointed
  semitopological algebras, we investigate which semitopological
  barycentric algebras embed into semitopological cones and which
  pointed semitopological barycentric algebras embed strictly into
  semitopological cones.  We study notions of local convexity, which
  split into weak local convexity, local convexity, local affineness
  and local linearity.  We show that the weakly locally convex
  topological barycentric algebras are exactly the affine retracts of
  locally affine topological barycentric algebras.  On locally convex
  barycentric algebras, we show sandwich theorems, extending theorems
  by Roth and Keimel on cones.  A running theme of this paper is the
  notion of barycenters, which we progressively generalize until we
  reach a general notion of barycenters of continuous (resp.,
  subprobability, probability) valuations, inspired by a definition of
  Choquet.  We conclude with a general barycenter existence theorem,
  whose proof relies on the study of the Smyth poweralgebra, namely
  the topological barycentric algebra of all non-empty convex compact
  saturated subsets of a topological barycentric algebra.  }
\begin{document}
\begin{frontmatter}

%% Title, authors and addresses

%% use the tnoteref command within \title for footnotes;
%% use the tnotetext command for the associated footnote;
%% use the fnref command within \author or \address for footnotes;
%% use the fntext command for the associated footnote;
%% use the corref command within \author for corresponding author footnotes;
%% use the cortext command for the associated footnote;
%% use the ead command for the email address,
%% and the form \ead[url] for the home page:
%%
%% \title{Title\tnoteref{label1}}
%% \tnotetext[label1]{}
%% \author{Name\corref{cor1}\fnref{label2}}
%% \ead{email address}
%% \ead[url]{home page}
%% \fntext[label2]{}
%% \cortext[cor1]{}
%% \address{Address\fnref{label3}}
%% \fntext[label3]{}

\title{Semitopological Barycentric Algebras}

%% use optional labels to link authors explicitly to addresses:
%% \author[label1,label2]{<author name>}
%% \address[label1]{<address>}
%% \address[label2]{<address>}

\author{Jean Goubault-Larrecq\orcidlink{0000-0001-5879-3304}}

\address{Universit\'e Paris-Saclay, CNRS, ENS Paris-Saclay,
  Laboratoire M\'ethodes
  Formelles, 91190, Gif-sur-Yvette, France.\\
  \texttt{jgl@lml.cnrs.fr}
  % , Phone: +33-1 47 40 22 60, Fax: +33 1 47 40 24 64
}

\begin{abstract}
  \abs
  %% Text of abstract
\end{abstract}

\begin{keyword}
  % projective limit \sep stably compact space \sep strongly sober space
  % \sep coherent space \sep weakly Hausdorff space
%% keywords here, in the form: keyword \sep keyword
  % Quasi-metric \sep formal balls \sep Lipschitz continuous maps \sep
  % consonance \sep determined by (topology)
%% MSC codes here, in the form: \MSC code \sep code
%% or \MSC[2008] code \sep code (2000 is the default)
  \MSC[2020] 52A01 \sep 46A55 \sep 52A07 \sep 54E99
  % 52A01   	Axiomatic and generalized convexity
  % 46A55   	Convex sets in topological linear spaces; Choquet
  % theory
  % 52A07   	Convex sets in topological vector spaces (aspects of convex geometry) 
  %54E99 General topology - spaces with richer
                            % structures- None of the above, but in
                            % this section
  % 54H30   	Applications of general topology to computer science (e.g., digital topology, image processing)
  
                            % 54Dxx	Fairly general properties of topological spaces
                            % 54D30   	Compactness
                            % 54D45   	Local compactness, $\sigma$-compactness
                            % 54D70   	Base properties of topological
                            % spaces
                            % 54D99   	None of the above, but in this
                            % section
                            % 54F17   	Inverse limits of set-valued functions
\end{keyword}

\end{frontmatter}
\else
\title{Semitopological Barycentric Algebras}
\author{Jean Goubault-Larrecq\\
  Universit\'e Paris-Saclay, CNRS, ENS Paris-Saclay,\\
  Laboratoire M\'ethodes
  Formelles, 91190, Gif-sur-Yvette, France.\\
  \texttt{jgl@lml.cnrs.fr}}
\date{\relax}

\begin{document}
\maketitle

\begin{abstract}
  \abs
  
  MSC classification [2020]: 52A01, 46A55, 52A07, 54E99
\end{abstract}
\fi

%%
%% Start line numbering here if you want
%%
% \linenumbers

\ifta
\noindent
\begin{minipage}{0.25\linewidth}
  \includegraphics[scale=0.2]{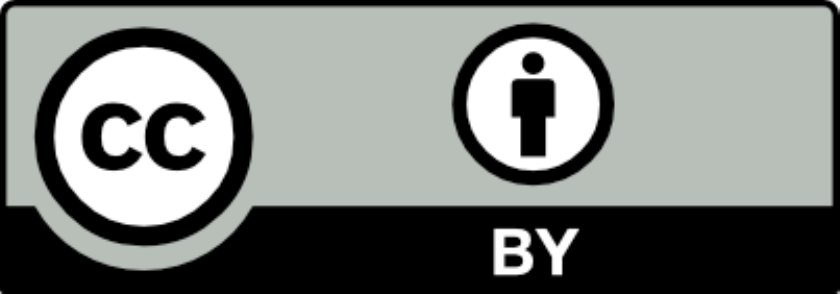}
\end{minipage}
\begin{minipage}{0.74\linewidth}
  \scriptsize
  For the purpose of Open Access, a CC-BY public copyright licence has
  been applied by the authors to the present document and will be
  applied to all subsequent versions up to the Author Accepted
  Manuscript arising from this submission.
\end{minipage}
\fi

%% main text
\tableofcontents

\section{Introduction}
\label{sec:intro}

Functional analysis is mostly done on topological vector spaces.
Klaus Keimel set out the basis for functional analysis in a $T_0$
setting in \cite{Keimel:topcones2}, by studying topological and
semitopological \emph{cones}.  In a nutshell, a cone is a structure
that obeys all the axioms of a real vector space, except without
opposites or multiplication by negative scalars.  Many standard
theorems still hold in this setting, and the space of all (possibly
unbounded) measures on a space $X$, is a prime example of a cone that
is not embeddable in a vector space.

The space of probability measures on $X$ is not a cone, because you
cannot multiply a probability measure by any scalar except $1$ in
order to obtain a probability measure, but it still has some intrinsic
structure.  We may say that it is a convex subset of a cone, but there
is a more intrinsic view to it: it is a \emph{barycentric algebra}
with a compatible topology.  Barycentric algebras are sets with binary
operations $+_a$ with $a \in [0, 1]$ that model taking a convex
combination of two elements with weight $a$ and $1-a$ respectively,
and obey specific axioms.  Barycentric algebras were studied by
Marshall H. Stone \cite{Stone:bary} and Hellmuth Kneser
\cite{Kneser:convex} independently, then by Walter D. Neumann
\cite{neumann:bary} and Tadeusz {\'S}wirszcz
\cite{Swirszcz:bary,Swirszcz:monadic,Swirszcz:monadic:convex}.  Joe
Flood \cite{Flood:semiconvex}, Lev Anatol'evich Skornyakov
\cite{Skornyakov:convexor}, Viktor Viktorovich Ignatov
\cite{Ignatov:convexor:qvar,Ignatov:convexor:struct}, Anna
B. Romanowska and Jonathan D. H. Smith studied them in more depth
\cite{RS:bary:alg}, as well as other authors, which we will not cite
(see \cite[Remarks~2.9]{KP:mixed} for a somewhat more complete
bibliography).  Barycentric algebras are more than convex subsets of
real vector spaces, as sup-semilattices are barycentric algebras, too.
In general, every barycentric algebra decomposes uniquely into a
semi-structural band of open affine sets
\cite[Teorema~2]{Ignatov:convexor:struct}, and open affine sets are
convex subsets of real vector spaces up to isomorphism; every
barycentric algebra also embeds in what is known as a P\l onka sum of
convex subsets of real vector spaces, indexed by a sup-semilattice
\cite{RS:bary:alg}; none of this will play any r\^ole in this paper.

Apart from a (crucial) notion of algebraically open sets, no topology
is involved in the papers mentioned above.  Klaus Keimel and Gordon
D. Plotkin studied ordered barycentric algebras, and then barycentric
algebras with a partial ordering that is directed-complete and makes
the algebraic operations (Scott-)continuous \cite{KP:mixed}.  We
develop this further, and, imitating \cite{Keimel:topcones2} somewhat,
produce a theory of semitopological and topological barycentric
algebras.

\paragraph{Outline.}  After setting out a few preliminary notions and
results in Section~\ref{sec:preliminaries}, we embark on a study of
barycentric algebras in Section~\ref{sec:baryc-algebr-preord}.  We
recall what they are, then proceed to preordered, ordered, then
semitopological and topological barycentric algebras.  % We feel that
% this is needed.  For example, we build the free semitopological cone
% over a semitopological barycentric algebra $\B$ in much the same way
% as we build the free preordered cone over a preordered barycentric
% algebra, and it is easier to understand the latter first; we then show
% the topology on the free semitopological cone can be characterized in
% two different ways, one involving a notion of semi-concave maps.
% We show that $\B$ embeds in a semitopological cone through an affine
% topological embedding if and only if it embeds in its free
% semitopological cone, and we characterize the embeddable
% semitopological barycentric algebras.  We also show that the free
% semitopological cone over the topological barycentric algebra
% $\Val_1 X$ of probability valuations on a space $X$ is $\Val_b X$, the
% space of all bounded continuous valuations on $X$.
We turn to pointed barycentric algebras in
Section~\ref{sec:point-baryc-algebr}: those are simply barycentric
algebras with a distinguished element $\bot$, which one may see as a
zero.  In such pointed barycentric algebras, one can multiply by
scalars in $[0, 1]$, as in the space $\Val_{\leq 1} X$ of
subprobability valuations on a space $X$, where $\bot$ is the zero
valuation.
% We build the free semitopological cone over a pointed
% semitopological barycentric algebra through a construction that we
% call telescopes.  A pointed semitopological algebra embeds
% \emph{strictly} in a semitopological cone if and only if it embeds
% through an affine topological embedding that maps $\bot$ to the zero
% of the cone.  We show that every pointed semitopological algebra
% embeds in a semitopological cone, although not necessarily strictly,
% and we characterize the strictly embeddable pointed semitopological
% algebras.  We also show that $\Val_{\leq 1} X$ is the free pointed
% semitopological algebra over $\Val_1 X$.

These sections contain the main novelty of this paper: free
semitopological cone constructions, equivalent notions of
embeddability (resp.\ strict embeddability) of (resp.\ pointed)
semitopological barycentric algebras in semitopological cones, showing
that the space $\Val_b X$ of bounded continuous valuations over a
space $X$ is the free semitopological cone over the pointed
semitopological barycentric algebra $\Val_{\leq 1} X$ and over the
semitopological barycentric algebras $\Val_1 X$ of probability
valuations on $X$, for example.

The rest of the paper adapts results on semitopological cones, mostly
due to Keimel, to the setting of semitopological barycentric algebras
and pointed semitopological barycentric algebras
\cite{Keimel:topcones2}.  Section~\ref{sec:convexity} is about
convexity, convex hulls, closed convex hulls, saturated convex hulls.
We study notions of local convexity in
Section~\ref{sec:local-convexity}.
% This splits into
% several definitions: a stronger notion of locally affine
% semitopological barycentric algebras, which naturally leads to the
% study of the topological cone $\B^\astar$ dual to any semitopological
% barycentric algebra $\B$ and to the notion of linearly separated
% semitopological barycentric algebra; and a weaker notion of weakly
% locally convex semitopological barycentric algebra.
% Locally affine
% semitopological barycentric algebras are the natural setting in which
% one can explain the Schr\"oder-Simpson theorems
% \cite{SchSimp:obs,Simpson:2009}.  By contrast, and extending work by
% Heckmann \cite{heckmann96}, we show that the weakly locally convex
% (pointed) topological barycentric algebras $\B$ are exactly those with
% a continuous barycenter map $\beta$ from the space of simple
% normalized (resp.\ subnormalized) valuations on $\B$ to $\B$; this
% turns that space of valuations into the free weakly locally convex
% (pointed) topological barycentric algebra over $\B$, and shows that
% every weakly locally convex (pointed) topological barycentric algebra
% is a retract of a locally affine (pointed) topological barycentric
% algebra.  We also deal with the important case of sober weakly locally
% convex (pointed) topological barycentric algebras.
In Section~\ref{sec:cons-baryc-algebr}, we turn to consistent and
strongly consistent semitopological barycentric algebras, generalizing
a notion of cones where addition is almost open due to Keimel.
% \cite{Keimel:topcones2}.
% While closures of convex sets are always
% convex, in consistent semitopological barycentric algebras, interiors
% of convex saturated subsets will be convex.  We will see that every
% consistent weakly locally convex semitopological barycentric algebra
% is in fact locally convex.  The main point about consistent
% semitopological barycentric algebras $\B$, though, is that lower
% semicontinuous envelopes of concave maps from $\B$ to $\creal$ will be
% concave.  We will also adapt Plotkin's cone-theoretic version of the
% Banach-Alaoglu theorem \cite{Plotkin:alaoglu} to consistent
% semitopological barycentric algebras whose underlying topological
% space is a c-space, and we will show that all familiar spaces of
% continuous valuations are (strongly) consistent.
While the theory of locally convex vector spaces relies heavily on the
Hahn-Banach theorem, the theory of semitopological cones relies a lot
on a sandwich theorem due to Keimel \cite{Keimel:topcones2}, and
derived from a deep result due to Roth \cite{Roth:sandwich}.  We will
establish a similar sandwich theorem on semitopological barycentric
algebras, by reduction to the case of cones.
% , by reduction to Keimel's theorem
% thanks to specific properties of free semitopological cones.  As a
% first consequence, we will show that, in a locally convex
% semitopological barycentric algebra, every convex compact saturated
% subset is the intersection of the open half-spaces that contain it.
This will lead us to the study of convex compact saturated subsets,
and then to locally convex-compact semitopological
barycentric algebras, adapting the eponymous cone-theoretic notion due
to Keimel.
% \cite{Keimel:topcones2}.
We call \emph{convenient} those semitopological barycentric algebras
that are linearly separated, locally compact, sober and topological,
and
% and we show that they are all locally convex, and that convex compact
% saturated subsets have bases of convex compact saturated
% neighborhoods, and also of convex open neighborhoods.  This leads us
% to
we will then investigate the barycentric algebra-theoretic equivalent of
Keimel's Smyth powercone over a topological cone,
% \cite{Keimel:topcones2},
namely its space of non-empty convex compact saturated subsets.  All
this will be needed in the final Section~\ref{sec:barycenters-part-6},
where we will study a general notion of barycenter of continuous
(resp.\ subprobability, probability) valuations on semitopological
cones (resp.\ pointed semitopological algebras, resp.\ semitopological
algebras), in the style of Choquet \cite{Choquet:analysis:2}.
% Finally, we study a general notion of barycenter of continuous (resp.\
% subprobability, probability) valuations on semitopological cones
% (resp.\ pointed semitopological algebras, resp.\ semitopological
% algebras) in Section~\ref{sec:barycenters-part-6}, mimicking a notion
% invented by Choquet \cite{Choquet:analysis:2}.  We show that
% barycenters of probability valuations supported on closed, or compact
% saturated, convex sets lie in the convex set, under various
% assumptions, and we
We will conclude with a theorem stating, inter alia, that every
probability valuation on convenient barycentric algebra has a unique
barycenter, and that the barycenter map is continuous, generalizing a
similar, recent theorem on cones \cite{GLJ:bary}.

\section{Preliminaries}
\label{sec:preliminaries}

Cones will play a prominent role in this paper.  By cone, we do not
mean cones embedded in real vector spaces, rather the following more
abstract version, used in \cite{Keimel:topcones2,KP:mixed}, but
already studied by Benno Fuchssteiner and Wolfgang Lusky
\cite{FL:cones}: a \emph{cone} is a commutative monoid $(\C, +, 0)$
(or just $\C$ for short) with an action $a, x \mapsto a \cdot x$ of
the semi-ring $\Rp$ on $\C$, that is:
\begin{equation}
  \label{eq:cone}
  \ifta
  \begin{array}{ccc}
    0 \cdot x = 0 & (ab) \cdot x = a \cdot (b \cdot x) & 1 \cdot x=x\\
    a \cdot 0=0 & a \cdot (x+y) = a \cdot x+a \cdot y & (a+b) \cdot x = a \cdot x+b \cdot x
  \end{array}
  \else
  \begin{array}{ccc}
    0 \cdot x = 0 & (ab) \cdot x = a \cdot (b \cdot x) \\
    1 \cdot x=x & a \cdot 0=0 \\
    a \cdot (x+y) = a \cdot x+a \cdot y & (a+b) \cdot x = a \cdot x+b \cdot x
  \end{array}
  \fi
\end{equation}
for all $a, b \in \Rp$ and $x, y \in \C$.  We will often write $ax$ for $a \cdot x$.

An \emph{preordered cone} (resp., \emph{ordered cone}) is a cone $\C$
together with a preordering (resp., an ordering) $\leq$, such that
addition and scalar multiplication are monotonic in each argument.  A
\emph{semipreordered cone} (resp., \emph{semiordered cone}) is a cone
with a preordering (resp., an ordering) such that addition is
monotonic, and for every $a \in \Rp$, $a \cdot \_$ is monotonic.  In a
preordered cone $\C$, $0$ is a least element, since for every
$x \in \C$, $0 = 0 \cdot x \leq 1 \cdot x = x$; not so in a
semipreordered cone.  We will not consider semipreordered and
semiordered cones, except for references to the literature.

\begin{remark}
  \label{rem:semiordered}
  With these definitions, we stray from the literature: e.g., Keimel
  \cite{Keimel:topcones2}, Keimel and Plotkin \cite{KP:mixed}, and
  several others, call \emph{ordered} cone what we call semiordered
  cone.  Our ordered cones are their \emph{pointed} ordered cones,
  namely the ordered cones (in their sense) such that $0$ is a least
  element; this is equivalent to requiring that $\_ \cdot x$ is
  monotonic for every $x \in \C$ \cite[Lemma~3.5,
  Definition~3.6]{Keimel:topcones2}.  All our ordered, preordered or
  semitopological cones will be pointed, and therefore we prefer to
  follow \cite{tix01} here, who assumes monotonicity both in $a$ and
  in $x$.
\end{remark}

A function $f \colon \C \to \Cb$ between cones is \emph{positively
  homogeneous} if and only if $f (a \cdot x) = a \cdot f (x)$ for all
$a \in \Rp$ and $x \in \C$.  It is \emph{additive} if and only if
$f (x+y) = f (x) + f (y)$ for all $x, y \in C$, and \emph{linear} if
it is positively homogeneous and additive.  This applies notably when
$\Cb$ is the cone $\creal \eqdef \Rp \cup \{\infty\}$.  When $\Cb$ is
a preordered cone (as $\creal$ is, for example), $f$ is
\emph{superadditive} (resp.\ \emph{subadditive}) if and only if
$f (x + y) \geq f (x) + f (y)$ (resp.\ $f (x+y) \leq f (x) + f (y)$)
for all $x, y \in C$, and is \emph{superlinear} (resp.\
\emph{sublinear}) if and only if it is superadditive (resp.\
subadditive) and positively homogeneous.

Cones and linear maps form a category, which we write as $\Cone$.

Let us say \emph{poset} for partially ordered set.  We can equip every
poset $(P, \leq)$ (or just $P$ for short) with various topologies.  We
will be particularly interested in the \emph{Scott topology}, whose
open subsets are the subsets $U$ that are upwards-closed (for every
$x \in U$, for every $y \in P$ such that $x \leq y$, $y$ is in $U$)
and such that that every directed family ${(x_i)}_{i \in I}$ whose
supremum exists in $P$ and is in $U$, some $x_i$ is already in $U$.
When this is needed, we write $P_\sigma$ for $P$ with its Scott
topology.  We will dispense with the $\sigma$ subscript when $P=\Rp$:
we will always equip $\Rp$ with its Scott topology, and its open
subsets are $]a, \infty[$ where $a \in \Rp$, $\Rp$ and the empty set.
We will do the same with $\Rp \diff \{0\}$ and with
$\creal \eqdef \Rp \cup \{\infty\}$, where $\infty$ is a fresh element
larger than every element of $\Rp$.  The Scott topology is
particularly important on \emph{dcpos} (short for ``directed-complete
partial order''), which are posets in which every directed family has
a supremum; $\creal$ is a dcpo, $\Rp$ is not.  Given two posets $P$
and $Q$, the continuous maps $f \colon P_\sigma \to Q_\sigma$ are
exactly the \emph{Scott-continuous} maps, namely the functions
$f \colon P \to Q$ that are monotonic and preserve directed suprema.
We refer to \cite{JGL-topology}, to \cite{GHKLMS:contlatt} and to
\cite{AJ:domains} for notions of domain theory and non-Hausdorff
topology.

We write $\interior A$ for the interior of $A$ and $cl (A)$ for the
closure of $A$.  Compact sets are defined without any requirement of
separation, namely they are those sets such that every open cover has
a finite subcover.  Every finite set $A$ is compact, as well as $\upc
A$ with $A$ finite.

Every topological space $X$ comes with a specialization preordering
$\leq$, defined by $x \leq y$ if and only if every open neighborhood
of $x$ contains $y$.  It is a partial ordering if and only if $X$ is
$T_0$.  The specialization preordering of a poset $P$ is its original
ordering.  A subset of a space $X$ is \emph{saturated} if and
only if it is equal to the intersection of its open neighborhoods,
equivalently if it is upwards-closed in the specialization
preordering.  The \emph{saturation} of $A$ in $X$ is the intersection
of the open neighborhoods of $A$, and is equal to
$\upc A \eqdef \{y \in X \mid \exists x \in A, x \leq y\}$.

A \emph{c-space} is a topological space $X$ such that every point has
a basis of neighborhoods of the form $\upc y$, where
$\upc y \eqdef \{z \in X \mid y \leq z\}$; in other words, for every
$x \in X$, for every open neighborhood $U$ of $x$, there is a point
$y \in U$ such that $x \in \interior {\upc y}$ \cite{Erne:ABC}.  For
example, $\Rp$ is a c-space in its Scott topology.

In general, all continuous posets from domain theory are c-spaces in
their Scott topology.  The definition goes as follows.  In every poset
$P$, the \emph{way-below} relation $\ll$ is defined by $x \ll y$ if
and only if every directed subset $D$ of $P$ whose supremum exists and
is larger than or equal to $y$ contains an element larger than or
equal to $x$.  Let $\ddarrow x \eqdef \{y \in P \mid y \ll x \}$.  $P$
is \emph{continuous} if and only if $\ddarrow x$ is directed for every
$x \in P$, and $x$ is its supremum.  If so, the Scott topology has a
base of open subsets of the form
$\uuarrow x \eqdef \{y \in P \mid x \ll y\}$, $x \in P$; such sets are
equal to the interior $\interior {\upc x}$ in the Scott topology.  For
example, in $\Rp$ and in $\creal$, $s \ll t$ if and only if $s=0$ or
$s < t$.

We will use various notions of embeddings.  An \emph{order embedding}
of a poset $P$ in a poset $Q$ is a function $f \colon P \to Q$ that
preserves and reflects orders, namely $f (x) \leq f (y)$ if and only
if $x \leq y$.  Every order embedding is injective.  We decide to call
\emph{preorder embedding} any order-preserving and reflecting map
between preordered sets, despite the fact that such a map may fail to
be injective.

A \emph{topological embedding} $f \colon X \to Y$ between topological
spaces is a homeomorphism of $X$ onto the image of $f$, with the
subspace topology induced by the inclusion in $Y$.  Equivalently, it
is an injective continuous map that is \emph{full}, in the sense that
for every open subset $U$ of $X$, there is an open subset $V$ of $Y$
such that $U = f^{-1} (V)$.  The term ``full'' originates from
\cite{GLHL:inf:words}.  A full continuous map is injective, hence a
topological embedding, if $X$ is $T_0$.

A \emph{semitopological cone} is a cone with a topology that makes $+$
and $\cdot$ separately continuous---the topology on $\Rp$ being the
Scott topology \cite[Definition~4.2]{Keimel:topcones2}.  It is
\emph{topological} if $+$ and $\cdot$ are jointly continuous
\cite[Definition~4.2]{Keimel:topcones2}.  It turns out that if $\cdot$
is separately continuous, then it is jointly continuous
\cite[Corollary~6.9(a)]{Keimel:topcones2}, because of the following,
which we will call Ershov's observation
\cite[Proposition~2]{Ershov:aspace:hull}: given a c-space $X$, and two
topological spaces $Y$ and $Z$, and function
$f \colon X \times Y \to Z$ is jointly continuous if and only if it is
separately continuous.  (In all fairness, a more general statement had
been obtained by Jimmie D. Lawson \cite[Theorem~2]{Lawson:pointwise},
extending and fixing a previous result due to Banaschewski
\cite[Proposition~6]{Banaschewski:ext}, which says that the class of
spaces $X$ such that the latter property holds is the class of locally
finitary compact spaces---which include the c-spaces.  A space is
\emph{locally finitary compact} if and only if every point has a base
of neighborhoods of the form $\upc E \eqdef \bigcup_{y \in E} \upc y$
with $E$ finite.)  It also follows from Ershov's observation that
every \emph{c-cone}, namely every semitopological cone whose
underlying topological space is a c-space, is topological.

Given a semitopological cone $\C$, every positively homogeneous lower
semicontinuous function $f \colon \C \to \creal$ gives rise to a
proper open subset $f^{-1} (]1, \infty])$ of $\C$.  Conversely, every
proper open subset $U$ of $\C$ gives rise to a positively homogeneous
lower semicontinuous function $M^U$, called its \emph{upper Minkowski
  functional}:
$M^U (x) \eqdef \sup \{r \in \Rp \diff \{0\} \mid (1/r) \cdot x \in
U\}$ \cite[Section~7]{Keimel:topcones2}.  The two constructions are
inverse of each other, and are monotonic: if $U \subseteq V$ then
$M^U \leq M^V$ (pointwise), and if $f \leq g$ then
$f^{-1} (]1, \infty]) \subseteq g^{-1} (]1, \infty])$.  This bijective
correspondence specializes to one between superlinear, resp.\
sublinear, resp.\ linear lower semicontinuous functions and proper
convex open sets, resp.\ proper concave open sets, resp.\ proper open
half-spaces; a set is \emph{convex} if and only if it is closed under
all operations $+_a$, $a \in [0, 1]$, \emph{concave} if and only if
its complement is convex, and a \emph{half-space} if and only if it is
both convex and concave.

Here are a few examples of semitopological cones.
\begin{example}
  \label{exa:cone:R}
  $\creal$, with its usual notions of addition, multiplication, and
  the Scott topology of the usual ordering $\leq$, is a c-cone, hence
  a topological cone.
\end{example}

\begin{example}[Example~3.5 in \cite{GLJ:Valg}]
  \label{exa:dcone:LX}
  For every space $X$, let $\Lform X$ be the set of all lower
  semicontinuous maps from $X$ to $\creal$, namely the set of all
  continuous maps from $X$ to the space $\creal$ with its Scott
  topology.  We equip it with pointwise addition, pointwise
  multiplication by a scalar, and this yields a semitopological cone
  when given the Scott topology of the pointwise ordering.  When $X$
  is core-compact, namely when the lattice $\Open X$ of open subsets
  of $X$ is a continuous dcpo with respect to inclusion, $\Lform X$ is
  a continuous dcpo, hence a c-cone, hence a topological cone.

  There are other topologies that make $\Lform X$ a semitopological
  cone, for example the topology of pointwise convergence (since then
  $\Lform X$ appears as a subpace of the product ${\creal}^X$), or the
  weak$^*$upper topology (see \cite[Example~5.3]{Keimel:topcones2});
  the latter is always topological.
\end{example}

\begin{example}[Section~6.1 in \cite{heckmann96}]
  \label{exa:semilatt:cone}
  (See also Example~4.5 in \cite{Keimel:topcones2} and Example~3.6 in
  \cite{GLJ:Valg}.)  Every sup-semilattice $L$ that is pointed in the
  sense that it has a least element $0$ is canonically an ordered cone
  with supremum $\vee$ as addition and scalar multiplication defined
  by $a \cdot x \eqdef x$ if $a \neq 0$, $a \cdot x \eqdef 0$
  otherwise.  With the Scott topology, it is a semitopological cone,
  and a topological cone if $L$ is a continuous poset
  \cite{heckmann96}.
\end{example}

The next example relies on a notion that is close to measures, and
which is in a sense our central object of study: \emph{continuous
  valuations}
\cite{saheb-djahromi:meas,Lawson:valuation,jones89,Jones:proba}.  Let
$\Open X$ denote the lattice of open subsets of a space $X$.  This is
in particular a dcpo; so is $\creal$.  A \emph{continuous valuation}
on a space $X$ is a map $\nu \colon \Open X \to \creal$ that is
\emph{strict} ($\nu
(\emptyset)=0$), % \emph{monotonic} ($U \limp V$ implies
% $\nu (U) \leq \nu (V)$) and 
\emph{modular} (for all $U, V \in \Open X$,
$\nu (U) + \nu (V) = \nu (U \cup V) + \nu (U \cap V)$) and
Scott-continuous.  Among all continuous valuations, we find the
\emph{simple valuations} $\sum_{i=1}^n a_i \delta_{x_i}$, where
$n \in \nat$, each $a_i$ is in $\Rp$ and each $x_i$ is a point of $X$;
the \emph{Dirac valuation} $\delta_x$ maps every $U \in \Open X$ to
$1$ if $x \in U$, to $0$ otherwise.

We say that $\nu$ is a \emph{probability} valuation
if and only if $\nu (X)=1$, and a \emph{subprobability} valuation if
and only if $\nu (X) \leq 1$.

We will need to touch upon Reinhold Heckmann's notion of
\emph{point-continuous} valuation \cite{heckmann96} from time to time.
Let $\Open_\pw X$ denote the set of open subsets of $X$ with the
\emph{pointwise topology}, whose subbasic open sets are
$[x \in] \eqdef \{U \in \Open X \mid x \in U\}$.  A point-continuous
valuation is a strict, modular, continuous map from $\Open_\pw X$ to
$\creal$ ($\creal$ has its Scott topology, as usual).  Since every
open subset of $\Open_\pw X$ is Scott-open, every point-continuous
valuation is a continuous valuation.  Every simple valuation if a
point-continuous.  There are point-continuous valuations that are not
simple, e.g. $\infty.\delta_x$ or
$\sum_{n \in \nat} \frac 1 {2^n} \delta_n$ on $\nat$ (with its
discrete topology).  Point-continuous valuations and continuous
valuations coincide on continuous dcpos
\cite[Theorem~6.9]{heckmann96}, but not on general spaces, not even on
dcpos \cite{GLJ:minval}.

Continuous valuations are an alternative to measures that have become
popular in domain theory \cite{jones89,Jones:proba}.  There are
several results that link continuous valuations to measures, starting
with \cite{saheb-djahromi:meas} and \cite{Lawson:valuation}.  To say
it briefly, if slightly inaccurately, continuous valuations and
measures are the same thing, on reasonable enough spaces, and a pretty
complete paper on this topic is \cite{KL:measureext}.  More precisely,
there is a two-way correspondence between continuous valuations and
measures.  In one direction, every measure on the Borel
$\sigma$-algebra of $X$ induces a continuous valuation on $X$ by
restriction to the open sets, if $X$ is hereditarily Lindel\"of
(namely, if every directed family of open sets contains a cofinal
monotone sequence).  This is an easy observation, and one half of
Adamski's theorem \cite[Theorem~3.1]{Adamski:measures}, which states
that a space is hereditary Lindel\"of if and only if every measure on
its Borel $\sigma$-algebra restricts to a continuous valuation on its
open sets.  In the other direction, every continuous valuation on a
space $X$ extends to a measure on the Borel sets provided that $X$ is
an LCS-complete space \cite[Theorem~1]{dBGLJL:LCS}.  An
\emph{LCS-complete} space is a space homeomorphic to a $G_\delta$
subset of a locally compact sober space.  Every continuous dcpo in its
Scott topology, being locally compact and sober, is LCS-complete.
Every Polish space, and more generally every quasi-Polish space, is
LCS-complete; see \cite{dBGLJL:LCS}.

We write $\Val X$ for the space of continuous valuations on a space
$X$, with the \emph{weak topology}, whose subbasic open sets are
defined by $[U > r] \eqdef \{\nu \in \Val X \mid \nu (U) > r\}$, where
$U \in \Open X$ and $r \in \Rp \diff \{0\}$.  It will sometimes be practical
to realize that the sets $[r \ll U] \eqdef \{\nu \in \Val X \mid r \ll
\nu (U)\}$ with $U \in \Open X$ and $r \in \Rp$ form a subbase of the
topology, too---simply because $[r \ll U]$ is equal to $[U > r]$ when
$r \neq 0$ and to the whole of $\Val X$ if $r=0$.

There is a notion of integral $\int_{x \in X} h (x) \,d\nu$, or
$\int h \,d\nu$ for short, where $h \in \Lform X$ and
$\nu \in \Val X$, which was originally defined in a similar manner to
the usual Lebesgue integral by Jones \cite{jones89,Jones:proba}; see
\cite[Section~2.2]{GLJ:Valg} for a quick survey.  A practical
definition is by using a Choquet formula
$\int h \,d\nu \eqdef \int_0^\infty \nu (h^{-1} (]t, \infty]))\,dt$,
where the integral on the right is a Riemann integral.  The integral
$\int h \,d\nu$ is linear and Scott-continuous in both $h$ and $\nu$.
Additionally, the weak topology also has a subbase of open sets
$[h > r] \eqdef \{\nu \in \Val X \mid \int h \,d\nu > r\}$, where
$h \in \Lform X$ and $r \in \Rp$ (or $r \in \Rp \diff \{0\}$, since
$[h > 0] = \bigcup_{r > 0} [h > r]$).

$\Val_1 X$ is its subspace of probability valuations and
$\Val_{\leq 1} X$ is its subspace of subprobability valuations.  We
will also write $\Val_b X$ for the subspace of \emph{bounded}
continuous valuations, namely those $\nu \in \Val X$ such that
$\nu (X) < \infty$.  We will write $\Val_\fin X$ for the subspace of
\emph{simple} valuations, $\Val_\pw X$ for the subspace of
\emph{point-continuous} valuations, $\Val_{\leq 1, \fin} X$ for simple
subprobability valuations, $\Val_{1, \fin} X$ for simple probability
valuations, $\Val_{\leq 1, \pw}$ for point-continuous subprobability
valuations, $\Val_{1, \pw}$ for point-continuous probability
valuations.  The specialization ordering in each case is given by the
\emph{stochastic ordering}: $\mu \leq \nu$ if and only if
$\mu (U) \leq \nu (U)$ for every $U \in \Open X$.

\begin{example}
  \label{exa:VX:top:cone}
  With the obvious addition and scalar multiplication, $\Val X$ is a
  $T_0$ topological cone for every space $X$
  \cite[Proposition~3.8]{GLJ:Valg}.  With the stochastic ordering, it
  is an ordered cone.
\end{example}

Let us mention some results from \cite{AMJK:scs:prob}, which should
help understand how the topology of $\Val X$, or rather of
$\Val_{\leq 1} X$ and of $\Val_1 X$ relates to the vague topology on
the corresponding spaces of measures.  When $X$ is a compact pospace
(a compact space with an ordering which is compatible with the
topology, in the sense that its graph is closed in $X \times X$), the
space $\mathfrak M_{\leq 1} (X)$ of all subprobability measures on
$X$, with the vague topology and the stochastic ordering (defined
here, as before, by $\mu \leq \mu'$ if and only if
$\mu (U) \leq \mu' (U)$ for every $U \in \Open X$) is also a compact
pospace \cite[Theorem~31]{AMJK:scs:prob}.  There is a well-known
one-to-one correspondence between compact pospaces and certain spaces
called the \emph{stably compact} spaces: given a compact pospace $X$,
the space $X^\uparrow$ obtained as $X$ with the topology obtained by
just keeping the open upwards-closed subsets as open sets is stably
compact, and one can recover the original topology as the \emph{patch
  topology} (the coarsest topology containing the open sets and the
compact saturated sets), and the original ordering as the
specialization ordering of $X$.  Then
$\mathfrak M_{\leq 1} (X)^\uparrow$ is isomorphic, as a topological
cone, with $\Val_{\leq 1} (X^\uparrow)$, and therefore
$\mathfrak M_{\leq 1} (X)$ is isomorphic, as a compact pospace, to the
patch space of $\Val_{\leq 1} (X^\uparrow)$
\cite[Theorem~36]{AMJK:scs:prob}.  A similar result holds for spaces
of probability measures or valuations.

% In general, we will
% write $\Val_\bullet X$, where $\bullet$ is nothing, ``$\leq 1$'', or
% ``$1$''.

\section{Barycentric algebras}
\label{sec:baryc-algebr-preord}

\subsection{Plain barycentric algebras}
\label{sec:plain-baryc-algebr}

A \emph{barycentric algebra} is a set $\B$ with a family of binary
operations $+_a$, for every $a \in [0, 1]$, satisfying:
\begin{align*}
  x +_1 y & = x \\
  x +_a x & = x \\
  x +_a y & = y +_{1-a} x \\
  (x +_a y) +_b c & = x +_{ab} (y +_{\frac {(1-a)b} {1-ab}} z)
          & \text{if $a < 1$ and $b < 1$},
\end{align*}
for all $x, y, z \in \B$ and $a, b \in [0, 1]$.  An \emph{affine map}
$f \colon \B \to \Alg$ between barycentric algebras is a function such
that $f (x +_a y) = f (x) +_a f (y)$ for all $x, y \in \B$ and
$a \in [0, 1]$.  Those form a category, which we write as $\BA$.

Every cone is a barycentric algebra provided that we define $x +_a y$
as $a \cdot x + (1-a) \cdot y$, and every linear map between cones is
then affine.  Every convex subset of a cone is a barycentric algebra,
with the induced operations $+_a$.  There are stranger examples of
barycentric algebras.  For example, every sup-semilattice $X$ is a
barycentric algebra, with $x +_a y \eqdef x \vee y$ if
$a \in {]0, 1[}$ (and $x +_1 y \eqdef x$, $x +_0 y \eqdef y$): see
\cite[Example~1.3]{Flood:semiconvex}; Flood considers an
inf-semilattice instead, and while this is immaterial for now, the
preordered and semitopological variants will require that the
semilattice operation $\vee$ be a supremum, not an infimum.

There is a free cone on every barycentric algebra $\B$ \cite[Standard
construction~2.4]{KP:mixed}, which Keimel and Plotkin write as
$C_{\B}$, but which we prefer to write as $\conify (\B)$.  This is
obtained as the collection of pairs $(r, x)$ where
$r \in \Rp \diff \{0\}$ and $x \in \B$, plus a distinguished element
$0$.  It is a cone with the following operations $+$ and $\cdot$:
\begin{align*}
  a \cdot (r, x) & \eqdef (ar, x)
  & (r, x) + (s, y) & \eqdef (r+s, x +_{\frac r {r+s}} y) \\
  a \cdot 0 & \eqdef 0
  & (r, x) + 0 & \eqdef (r, x) \\
  0 \cdot (r, x) & \eqdef 0
  & 0 + (s, y) & \eqdef (s, y) \\
  0 \cdot 0 & \eqdef 0
  & 0 + 0 & \eqdef 0
\end{align*}
where $a \in \Rp \diff \{0\}$.  There is a function
$\etac_{\B} \colon \B \to \conify (\B)$ defined by
$\etac_{\B} (x) \eqdef (1, x)$ for every $x \in \B$, and $\etac_{\B}$
is injective and affine.  For every function $f \colon \B \to \C$ to a
cone $\C$, there is a unique positively homogeneous map
$f^\cext \colon \conify (\B) \to \C$ such that
$f^\cext \circ \etac_{\B} = f$: just define $f^\cext (0) \eqdef 0$ and
$f^\cext (r, x) \eqdef r f (x)$ for all
$(r, x) \in \conify (\B) \diff \{0\}$.  (This function is written as
$\overline f$ in \cite{KP:mixed}.)  Additionally, if $f$ is affine
then $f^\cext$ is linear, and this shows that $\conify (\B)$ is the
free cone over the barycentric algebra $\B$.  This is due to Flood
\cite[Section~1.4]{Flood:semiconvex}.  Since $\etac_{\B}$ is injective
and affine, we can think of $\B$ as being affinely embedded in the
cone $\conify (\B)$, and this can be used to simplify computations a
lot; in fact, we will almost never use the defining axioms of the
operations $+_a$ in a barycentric algebra.

For example, this trick can be used to prove the following, originally
due to Stone \cite[Definition~1, Lemma~2, Lemma~3]{Stone:bary}, see
also Skornyakov \cite[Section~1]{Skornyakov:stoch:alg}.  We let
$\Delta_n \eqdef \{(\alpha_1, \cdots, \alpha_n) \in \Rp^n \mid
\sum_{i=1}^n \alpha_i=1\}$ be the standard simplex, and we abbreviate
$(\alpha_1, \cdots, \alpha_n)$ as $\vec\alpha$.
\begin{defprop}[Barycenters, part 1]
  \label{prop:bary:1}
  Let $\B$ be a barycentric algebra.  For every $n \geq 1$, for all
  points $x_1$, \ldots, $x_n$ of $\B$ and for every
  $(a_1, \cdots, a_n) \in \Delta_n$, there is a unique point of $\B$,
  which we write as $\sum_{i=1}^n a_i \cdot x_i$, whose image under
  $\etac_{\B}$ is equal to $\sum_{i=1}^n a_i \cdot \etac_{\B} (x_i)$.
  This is the \emph{barycenter} of the points $x_i$ with weights
  $a_i$.
  \begin{enumerate}
  \item It is equal to $x_n$ if $a_n=1$ and to
    $(\sum_{i=1}^{n-1} \frac {a_i} {1-a_n} \cdot x_i) +_{1-a_n} x_n$
    otherwise,
  \item it is invariant under permutations, namely for every bijection
    $\sigma$ of $\{1, \cdots, n\}$ onto itself, $\sum_{i=1}^n a_i
    \cdot x_i = \sum_{j=1}^n a_{\sigma (i)} \cdot x_{\sigma (i)}$,
  \item if $a_n=0$, then
    $\sum_{i=1}^n a_i \cdot x_i = \sum_{i=1}^{n-1} a_i \cdot x_i$,
  \item and $\sum_{i=1}^n a_i \cdot x_i$ is mapped by every affine
    function $f$ from $\B$ to a barycentric algebra $\Alg$ to
    $\sum_{i=1}^n a_i \cdot f (x_i)$.
  \end{enumerate}
\end{defprop}
\begin{proof}
  Uniqueness comes from the injectivity of $\etac_{\B}$.  For
  existence, we give an explicit description of this point, by
  induction on $n \geq 1$.  If $n=1$, then $a_1=1$, and we define
  $\sum_{i=1}^n a_i \cdot x_i$ as $x_1$.  In general, if $a_n=1$, then
  we define $\sum_{i=1}^n a_i \cdot x_i$ as $x_n$.  Otherwise, and if
  $n \geq 2$, then $1-a_n$ is non-zero, and we define
  $\sum_{i=1}^n a_i \cdot x_i$ as
  $(\sum_{i=1}^{n-1} \frac {a_i} {1-a_n} \cdot x_i) +_{1-a_n} x_n$.
  This fits our needs, since $\etac_{\B}$ is affine.  This also shows
  item~1.  Item~2 is because $\sum_{i=1}^n a_i \cdot x_i$ and
  $\sum_{j=1}^n a_{\sigma (i)} \cdot x_{\sigma (i)}$ are both mapped
  by $\etac_{\B}$ to
  $\sum_{i=1}^n a_i \cdot \etac_{\B} (x_i) = \sum_{j=1}^n a_{\sigma
    (i)} \cdot \etac_{\B} (x_{\sigma (i)})$, and since $\etac_{\B}$ is
  injective.

  3. If $a_n=0$, then $\etac_{\B} (\sum_{i=1}^n a_i \cdot x_i) =
  \sum_{i=1}^n a_i \cdot \etac_{\B} (x_i) = \sum_{i=1}^{n-1} a_i \cdot
  \etac_{\B} (x_i) = \etac_{\B} (\sum_{i=1}^{n-1} a_i \cdot x_i)$, and
  we conclude since $\etac_{\B}$ is injective.
  
  4.  Given any affine function $f$ from $\B$ to a barycentric algebra
  $\Alg$, we verify that
  $f (\sum_{i=1}^n a_i \cdot x_i) = \sum_{i=1}^n a_i \cdot f (x_i)$ by
  induction on $n \geq 1$.  If $n=1$ or if more generally $a_n=1$,
  then both sides are equal to $f (x_n)$.  Otherwise, the left-hand
  side is equal to
  $f (\sum_{i=1}^{n-1} \frac {a_i} {1-a_n} \cdot x_i) +_{1-a_n} x_n)$,
  hence to
  $f (\sum_{i=1}^{n-1} \frac {a_i} {1-a_n} \cdot x_i) +_{1-a_n} f
  (x_n)$ since $f$ is affine.  This is equal to
  $(\sum_{i=1}^{n-1} \frac {a_i} {1-a_n} \cdot f (x_i)) +_{1-a_n} f
  (x_n)$ by induction hypothesis, hence to
  $ \sum_{i=1}^n a_i \cdot f (x_i)$.  \qed
\end{proof}

The same embedding trick is also helpful in proving the following
\emph{entropic law}:
\begin{align}
  \label{eq:entropic}
  (x +_a y) +_b (z +_a t) & = (x +_b t) +_a (y +_b t)
\end{align}
for all $x, y, z, t \in \B$ and $a, b \in [0, 1]$ \cite[Section~1.5,
identity~(7)]{Flood:semiconvex}; see also \cite[paragraph after
Lemma~2.3]{KP:mixed}.

\subsection{Preordered and ordered barycentric algebras}
\label{sec:preord-order-baryc}

Before we define the semitopological and topological variants of
barycentric algebras, we need to talk about the preordered and ordered
variants.  A \emph{preordered barycentric algebra} is a barycentric
algebra $\B$ with a preordering $\leq$ such that $+_a$ is monotonic
for every $a \in [0, 1]$.  An \emph{ordered barycentric algebra} is
one where $\leq$ is antisymmetric \cite[Definition 2.6]{KP:mixed}.

Every (pre)ordered cone is a (pre)ordered barycentric algebra.  Every
sup-semilattice is an ordered barycentric algebra, with
$x +_a y \eqdef x \vee y$ if $a \in {]0, 1[}$.

For every barycentric algebra $\B$ and every preordered barycentric
algebra $\Alg$, we say that a function $f \colon \B \to \Alg$ is
\emph{concave} (resp.\ \emph{convex}) if and only if
$f (x +_a y) \geq f (x) +_a f (y)$ (resp.\ $\leq$) for all
$x, y \in \B$ and $a \in [0, 1]$.  Hence an affine map is one that is
both concave and convex.

\begin{lemma}
  \label{lemma:f*:conv:conc}
  For every % map % removed jgl 17 dec 2025
  barycentric algebra $\B$ and every preordered cone
  $\C$, for every function $f \colon \B \to \C$, $f^\cext$ is
  positively homogeneous; if $f$ is concave then $f^\cext$ is
  superlinear, if $f$ is convex then $f^\cext$ is sublinear, and if
  $f$ is affine then $f^\cext$ is linear.
\end{lemma}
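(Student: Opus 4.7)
The plan is to unpack the definition of $f^\cext$ directly, since $\conify(\B)$ has a very concrete description as $(\Rp \diff \{0\}) \times \B$ plus a distinguished $0$, together with the explicit addition and scalar multiplication given above.

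First I would verify positive homogeneity. For $(r,x) \in \conify(\B) \diff \{0\}$ and $a \in \Rp \diff \{0\}$, $a \cdot (r,x) = (ar, x)$, so $f^\cext(a \cdot (r,x)) = (ar) f(x) = a (r f(x)) = a f^\cext(r,x)$. The remaining cases ($a=0$, or the argument equal to $0$) are immediate from $f^\cext (0) = 0$ and the cone axioms in $\C$. This part is routine.

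The interesting part is the super-/sub-additivity claims. I would first dispose of the cases where one of the two summands is $0$: if say $(r,x) = 0$, then $(r,x) + (s,y) = (s,y)$ and $f^\cext(0) + f^\cext(s,y) = 0 + s f(y) = s f(y)$, so we get equality for free. The main case is therefore $r, s > 0$. Here $(r,x) + (s,y) = \bigl(r+s,\; x +_{r/(r+s)} y\bigr)$, so, writing $a \eqdef r/(r+s)$,
\begin{align*}
f^\cext((r,x) + (s,y)) &= (r+s) \cdot f(x +_a y).
\end{align*}
Assuming $f$ is concave, this is $\geq (r+s) \cdot \bigl(f(x) +_a f(y)\bigr) = (r+s) \cdot \bigl(a \cdot f(x) + (1-a) \cdot f(y)\bigr)$, where I use monotonicity of $(r+s) \cdot \_$ in $\C$ (valid because $\C$ is a preordered cone). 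Expanding with distributivity in $\C$ and the definition of $a$, this simplifies to $r \cdot f(x) + s \cdot f(y) = f^\cext(r,x) + f^\cext(s,y)$, as desired. The convex case is identical with $\leq$ in place of $\geq$, and the affine case combines both to give equality, hence linearity.

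The main potential obstacle is making sure that monotonicity of scalar multiplication (not just of addition) in $\C$ is available when propagating the concavity/convexity inequality through the multiplication by $r+s$; but this is precisely what being a preordered cone (in the sense of this paper) supplies, so no genuine difficulty arises. Everything else is bookkeeping around the edge cases $r=0$, $s=0$, and $a=0$.
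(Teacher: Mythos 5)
Your proof is correct and follows essentially the same route as the paper's: reduce to the case of two nonzero summands, use the explicit formula $(r,x)+(s,y)=(r+s,\,x+_{r/(r+s)}y)$, apply concavity/convexity of $f$, and push through the scalar $r+s$ using monotonicity of scalar multiplication in $\C$. The only cosmetic difference is that the paper simply cites positive homogeneity (and linearity when $f$ is affine) as already established in the free-cone discussion rather than re-verifying it.
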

\begin{proof}
  We have already mentioned that $f^\cext$ is positively homogeneous,
  and that it is linear if $f$ is affine.  If $f$ is concave, then we
  check that $f^\cext$ is superadditive, hence superlinear.  We have
  $f^\cext (u+0) = f^\cext (u) = f^\cext (u) + f^\cext (0)$ for every
  $u \in \conify (\B)$, and it remains to verify that
  $f^\cext ((r, x) + (s, y)) \geq f^\cext (r, x) + f^\cext (s, y)$.
  Indeed,
  $f^\cext ((r, x) + (s, y)) = f^\cext (r+s, x +_{\frac r {r+s}} y) =
  (r+s) f (x +_{\frac r {r+s}} y) \geq (r+s) (\frac r {r+s} f (x) +
  \frac s {r+s} f (y)) = r f (x) + s f (y) = f^\cext (r, x) + f^\cext
  (s, y)$.  Similarly, if $f$ is convex, then $f^\cext$ is subadditive
  hence sublinear.  \qed
\end{proof}

Keimel and Plotkin build the free semiordered cone (the free ordered
cone, in the usual terminology, see Remark~\ref{rem:semiordered}) on
an ordered barycentric algebra $\B$ \cite[Standard
construction~2.7]{KP:mixed}, and this is simply $\conify (\B)$, with
ordering defined by $0 \leq 0$ and $(r, x) \leq (s, y)$ if and only if
$r=s$ and $x \leq y$ in $\B$.  We are more interested in the free
ordered, in general the free preordered, cone.

To this end, we need a preordering $\leq$ on $\conify (\B)$ that
contains the Keimel-Plotkin (pre)ordering, and that satisfies
$u \leq u+v$ for all $u, v \in \conify (\B)$.  Indeed, in a preordered
cone, $0 \leq v$, whence $u \leq u + v$.  Temporarily calling $\leq_1$
the Keimel-Plotkin preordering and $\leq_2$ the smallest preordering
such that $u \leq_2 u+v$ for all $u, v \in \conify (\B)$, we will
build the reflexive transitive closure of $\leq_1 \cup \leq_2$.  This
simplifies to $\leq_2 ; \leq_1$, where $u \leq_2 ; \leq_1 v$ if and
only if $u \leq_2 w \leq_1 v$ for some $w \in \conify (\B)$, and we
arrive at the following definition.
\begin{definition}[$\leqc$]
  \label{defn:bary:alg:conify:ord}
  For every preordered barycentric algebra $\B$, the relation $\leqc$
  on $\conify (\B)$ is defined by letting $u \leqc v$ if and only if:
  \begin{itemize}
  \item either $u=v=0$,
  \item or $v$ can be written as $(s, y)$ and there are
    $u' \in \conify (\B)$ and $y_1 \in \B$ such that $u+u' = (s,y_1)$
    and $y_1 \leq y$.
  \end{itemize}
\end{definition}

\begin{proposition}
  \label{prop:bary:alg:conify:ord}
  For every preordered barycentric algebra $\B$:
  \begin{enumerate}
  \item the relation $\leqc$ is a preordering, and turns
    $\conify (\B)$ into a preordered cone;
  \item $\etac_{\B}$ is an injective affine preorder embedding;
  \item for every monotonic map $f \colon \B \to \C$ to a preordered
    cone $\C$, there is a unique monotonic positively homogeneous map
    $f^\cext \colon \conify (\B) \to \C$ such that
    $f^\cext \circ \etac_{\B} = f$; if $f$ is affine (resp.\ concave,
    convex) then $f^\cext$ is linear (resp.\ superlinear, sublinear).
  \item In particular, $\conify (\B)$ with the preordering $\leqc$ is
    the free preordered cone over the preordered barycentric algebra
    $\B$.
  \end{enumerate}
\end{proposition}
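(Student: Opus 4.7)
The plan is to address the four claims in order, with the technical weight falling on Item~1 (verifying $\leqc$ is a cone-compatible preorder) and on the monotonicity clause of Item~3.

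For Item~1, reflexivity is immediate by taking $u' = 0$ and $y_1 = x$ when $u = (r, x)$. Transitivity goes by a case split on whether the middle and top elements are zero; in the nontrivial case, with $u \leqc v = (s, y)$ witnessed by $u + u' = (s, y_1)$ and $y_1 \leq y$, and $v \leqc w = (t, z)$ witnessed by $v + v' = (t, z_1)$ and $z_1 \leq z$, I take the combined witness $u'' = u' + v'$; computing $u + u'' = (s, y_1) + v'$ and using monotonicity of $+_a$ in $\B$ produces the required $z_2 \leq z$ with $u + u'' = (t, z_2)$. Compatibility of $\leqc$ with $+$ and $\cdot$ in each argument is then routine: the witnessing datum $u'$ for $u \leqc u''$ transports by addition or scaling of a fixed element, and the residual barycentric inequality again follows from monotonicity of $+_a$ in $\B$ (and, for scalar monotonicity in the scalar argument, from the axiom $x +_a x = x$).

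For Item~2, affinity of $\etac_\B$ reduces to the same unfolding of $a \cdot (1, x) + (1-a) \cdot (1, y)$ used to introduce $\conify(\B)$; preorder-reflecting follows because matching magnitudes in $(1, x) + u' = (1, y_1)$ forces $u' = 0$, whence $y_1 = x$ and the condition reduces to $x \leq y$. For Item~3, uniqueness of $f^\cext$ is forced by positive homogeneity: $f^\cext(r, x) = r \cdot f^\cext(\etac_\B(x)) = r f(x)$. For existence I set $f^\cext(0) = 0$ and $f^\cext(r, x) = r f(x)$, with linearity, superlinearity, or sublinearity of $f^\cext$ following from Lemma~\ref{lemma:f*:conv:conc} when $f$ is affine, concave, or convex. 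For monotonicity, given $u \leqc v = (s, y)$ witnessed by $u + u' = (s, y_1)$ with $y_1 \leq y$, I chain
\[
  f^\cext(u) \leq f^\cext(u) + f^\cext(u') \leq f^\cext(u + u') = s\,f(y_1) \leq s\,f(y) = f^\cext(v),
\]
where the first inequality uses that $0$ is least in the preordered cone $\C$, the second uses the superadditivity supplied by Lemma~\ref{lemma:f*:conv:conc}, and the third combines monotonicity of $f$ with monotonicity of scalar multiplication in $\C$. Item~4 is then immediate: specializing Item~3 to monotonic affine $f$ yields a unique monotonic linear extension, which together with Item~2 is the universal property of the free preordered cone.

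The main obstacle is the monotonicity clause in Item~3, because $\leqc$ simultaneously encodes the Keimel--Plotkin ordering and the cone inequality $u \leq u + v$; respecting the latter forces a superadditive bound $f^\cext(u) \leq f^\cext(u + u')$ that monotonicity of $f$ on $\B$ alone does not supply. The bridge is exactly the superadditivity provided by Lemma~\ref{lemma:f*:conv:conc}, combined with $0$ being the least element of $\C$, and this is the step that has to be set up carefully (and to which the hypothesis on $f$ must be tuned).
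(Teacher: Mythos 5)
Your argument mirrors the paper's proof almost step for step: the same combined witness $u' + v'$ for transitivity in Item~1, the same level-matching argument for reflection of the preorder in Item~2, the same forced formula $f^\cext(r,x) = r\,f(x)$, and essentially the same chain of inequalities for monotonicity of $f^\cext$ in Item~3.

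The one place where you depart from the paper's text is exactly where you raise a flag, and the flag is correct. The step $f^\cext(u) + f^\cext(u') \leq f^\cext(u+u')$ needs superadditivity, which Lemma~\ref{lemma:f*:conv:conc} delivers only when $f$ is concave; the paper's own proof instead says ``$f^\cext$ is linear,'' which presupposes $f$ affine --- either way, strictly more than the stated hypothesis ``$f$ monotonic.'' Indeed, the assertion as written is false for general monotonic $f$: let $\B \eqdef [0,1]$ with $x +_a y \eqdef ax + (1-a)y$ and the usual order, $\C \eqdef \Rp$, and $f(x) \eqdef x^2$. Since $(1,1) + (1,0) = (2, \tfrac12)$, one has $(1,1) \leqc (2, \tfrac12)$, yet $f^\cext(1,1) = 1 > \tfrac12 = f^\cext(2, \tfrac12)$; and because positive homogeneity together with $f^\cext \circ \etac_\B = f$ forces the formula, there is no monotonic positively homogeneous extension of $f$ at all. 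Unwinding $f^\cext(u) \leq f^\cext(u+u')$ on representatives shows the exact extra condition needed on $f$ is $a\,f(x) \leq f(x +_a y)$ for all $x, y, a$, i.e., semi-concavity in the sense of Definition~\ref{defn:qaff} (implied by concavity, by positive homogeneity, and by affineness). Item~4 only uses the affine case, so the freeness conclusion stands; but your instinct that the hypothesis on $f$ in Item~3 must be tuned --- to at least monotonic and semi-concave --- is the right one, and it points at a gap that the paper's own proof silently glosses over.
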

\begin{proof}
  1.  The relation $\leqc$ just defined on $\conify (\B)$ is reflexive.
  We also observe that $0 \leqc v$ for every $v \in \conify (\B)$: when
  $v$ is of the form $(s, y)$, this follows by taking
  $u' \eqdef (s, y)$ and $y_1 \eqdef y$.

  Let us show that $\leqc$ is transitive.  We assume three points
  $u, v, w \in \conify (\B)$ such that $u \leqc v \leqc w$.  If $u=v$
  or $v=w$, then $u \leqc w$ is trivial, so we assume $u \neq v$ and
  $v \neq w$.  If $u=0$, then $u \leqc w$, since $0$ is below every
  point, as we have seen.  If $v=0$, then $u \leqc v$ is only possible
  if $u=0$, whence $u \leqc w$.  Otherwise, none of $u, v, w$ is equal
  to $0$; hence we write $u$ as $(r, x)$, $v$ as $(s, y)$, and $w$ as
  $(t, z)$.  By Definition~\ref{defn:bary:alg:conify:ord}, there are
  points $u', v' \in \conify (\B)$ and $y_1, z_1 \in \B$ such that
  $u+u' = (s, y_1)$, $y_1 \leq y$, $v+v' = (t,z_1)$, and $z_1 \leq z$.
  Since $u \neq v$, $u' \neq 0$, so we can write $u'$ as $(r',x')$;
  similarly we can write $v'$ as $(s', y')$.  Then
  $u+(u'+v') = (u+u') + v'$ (since $\conify (\B)$ is a cone)
  $= (s, y_1) + (s', y') = (s+s', y_1 +_{\frac s {s+s'}} y')$.  Now
  $v+v' = (s, y) + (s', y') = (s+s', y +_{\frac s {s+s'}} y')$ is
  equal to $(t, z_1)$, so $t=s+s'$ and
  $y +_{\frac s {s+s'}} y' = z_1$.  Since $t=s+s'$,
  $u+(u'+v') = (t, y_1 +_{\frac s {s+s'}} y')$.  Then
  $y_1 +_{\frac s {s+s'}} y' \leq y +_{\frac s {s+s'}} y'$ (since
  $y_1 \leq y$ and $\B$ is preordered) $= z_1 \leq z$.  This shows
  that $u \leqc (t, z) = w$.

  We verify that the cone operations are monotonic on $\conify (\B)$.
  We start with $\_ \cdot w$, where $w \in \conify (\B)$.  If $w=0$,
  this is the constant $0$ map, which is monotonic, so we assume that
  $w$ is of the form $(t, z)$.  For all $a \leq b$ in $\Rp$, if $a=0$
  then $a \cdot w = 0$, which is less than or equal to every element
  of $\conify (\B)$, hence to $b \cdot w$.  If $a=b$, the inequality
  $a \cdot w \leqc b \cdot w$ is clear, too.  Otherwise, $0 < a < b$.
  Let $u' \eqdef ((b-a)t, z)$.  Then
  $(a \cdot w) + u' = (at, z) + ((b-a)t, z) = (bt, z +_{a/b} z) = (bt,
  z) = b \cdot w$, so $a \cdot w \leqc b \cdot w$.

  We proceed with $a \cdot \_$, where $a \in \Rp$.  Let $u \leq v$ in
  $\conify (\B)$.  We need to show that $a \cdot u \leq a \cdot v$.
  This is clear if $u=v=0$ or if $a=0$.  Hence we assume that $a > 0$,
  and that $u+u' = (s, y_1)$ and $v=(s, y)$ for some
  $u' \in \conify (\B)$ and $y_1 \leq y$ in $\B$.  Then
  $(a \cdot u) + (a \cdot u') = a \cdot (u+u') = (as, y_1)$,
  $y_1 \leq y$, and $a \cdot v = (as, y)$, so
  $a \cdot u \leq a \cdot v$.

  Finally, we show that $\_ + w$ is monotonic for every
  $w \in \conify (\B)$.  If $w=0$, this is the identity map, for which
  the claim is clear, so we assume that $w$ is of the form $(t, z)$.
  Let $u \leqc v$ in $\conify (\B)$.  If $u=v=0$, then
  $u+w = w = v +w$, so $u + w \leqc v + w$.  Otherwise,
  $u+u' = (s, y_1)$ for some $u' \in \conify (\B)$ and some
  $y_1 \leq y$ such that $v = (s, y)$.  Then
  $(u+w) + u' = (u+u') + w = (s, y_1) + (t, z) = (s+t, y_1 +_{s/(s+t)}
  z)$ and $v+w = (s, y) + (t, z) = (s+t, y+_{s/(s+t)} z)$.  Since
  $+_{s/(s+t)}$ is monotonic and $y_1 \leq y$, we have
  $y_1 +_{s/(s+t)} z \leq y +_{s/(s+t)} z$, so $u+w \leqc v+w$.
  
  2.  We have seen that the map $\etac_{\B}$ is affine and injective.
  For all $x \leq y$ in $\B$, we have
  $\etac_{\B} (x) \leqc \etac_{\B} (y)$, namely $(1, x) \leqc (1, y)$:
  just take $u' \eqdef 0$, $y_1 \eqdef x$ and $s \eqdef 1$ in
  Definition~\ref{defn:bary:alg:conify:ord}.  Conversely, if
  $\etac_{\B} (x) \leqc \etac_{\B} (y)$, then there are
  $u' \in \conify (\B)$ and $y_1 \in \B$ such that
  $(1, x) + u' = (1, y_1)$ and $y_1 \leq y$.  The point $u'$ cannot be
  of the form $(r', x')$, since then $(1,x)+(r',x')$ would be equal to
  $(1+r', x +_{\frac 1 {1+r'}} x')$, whose first component is not
  equal to $1$.  Hence $u'=0$; then $y_1 = x$, and therefore
  $x \leq y$, since $y_1 \leq y$.

  3.  Let $f$ be any monotonic map from $\B$ to $\C$.  If $f^\cext$
  exists, because $\conify (\B)$ is the free cone on $\B$.  In order
  to show that $f^\cext$ exists, we build it as before, letting
  $f^\cext (0) \eqdef 0$ and $f^\cext (a, x) \eqdef a f (x)$; then
  $f^\cext$ is positively homogeneous and
  $f^\cext \circ \etac_{\B} = f$.  We claim that $f^\cext$ is
  monotonic.  Let $u \leqc v$ in $\conify (\B)$.  If $u=v=0$, then
  clearly $f^\cext (u) \leq f^\cext (v)$.  Otherwise, there are
  $u' \in \conify (\B)$ and $y_1 \in \B$ such that $u+u' = (s, y_1)$,
  $v = (s, y)$, and $y_1 \leq y$.  Then
  $f^\cext (u+u') = f^\cext (u) + f^\cext (u')$ since $f^\cext$ is
  linear.  Since $\C$ is preordered (in our sense, see
  Remark~\ref{rem:semiordered}), $0 \leq f^\cext (u')$, so
  $f^\cext (u) \leq f^\cext (u+u')$.  Also,
  $f^\cext (u+u') = s f (y_1) \leq s f (y)$ (since $f$ is monotonic)
  $= f^\cext (v)$.  Hence $f^\cext (u) \leq f^\cext (v)$.

  The fact that $f^\cext$ is superlinear (resp.\ sublinear, affine) If
  $f$ is concave (resp.\ convex, affine), is by
  Lemma~\ref{lemma:f*:conv:conc}.
  
  4.  Obvious consequence of item~3.  \qed
\end{proof}
Since $\etac_{\B}$ is an injective affine preorder embedding, it
follows that, up to affine bijective preorder embeddings, the
preordered barycentric algebras are exactly the convex subsets of
preordered cones.

\begin{remark}
  \label{rem:bary:alg:ord}
  Given a preordered barycentric algebra $\B$, we have $u \leqc u+v$
  for all $u, v \in \conify (\B)$, just as in every preordered cone.
  In particular, for all $r, s \in \Rp \diff \{0\}$ and for every
  $x \in \B$, $(r, x) \leqc (r+s, x)$.  Indeed,
  $(r, x) + (s, x) = (r+s, x +_{\frac r {r+s}} x) = (r+s, x)$.
\end{remark}

We also notice the following.
\begin{lemma}
  \label{lemma:bary:alg:ord:ord}
  Given an ordered (not just preordered) barycentric algebra $\B$, the
  preordering $\leqc$ on $\conify (\B)$ is antisymmetric, hence a
  partial ordering.
\end{lemma}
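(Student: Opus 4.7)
The plan is to unpack Definition~\ref{defn:bary:alg:conify:ord} carefully and show that if $u \leqc v$ and $v \leqc u$, then $u = v$. I would dispatch the cases involving $0$ first, then reduce the general situation to antisymmetry of $\leq$ on $\B$.

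I would start by handling the $0$-cases. If $u = 0$, then $v \leqc u = 0$ is incompatible with $v$ being of the form $(s, y)$ (which is the only other clause available in the definition), so $v = 0 = u$. The case $v = 0$ is symmetric. So I may assume neither is zero, and write $u = (r, x)$ and $v = (s, y)$ with $r, s \in \Rp \diff \{0\}$ and $x, y \in \B$.

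From $u \leqc v$, I obtain $u' \in \conify(\B)$ and $y_1 \in \B$ with $u + u' = (s, y_1)$ and $y_1 \leq y$. Comparing first components in $\conify(\B)$, writing $u'$ as either $0$ or $(r', x')$, this forces $r + r' = s$ with $r' \geq 0$, so $r \leq s$. The symmetric argument from $v \leqc u$ gives $s \leq r$, hence $r = s$, and consequently $r' = 0$ and the analogous $s' = 0$, i.e.\ $u' = v' = 0$. Substituting back, $u + 0 = (s, y_1)$ forces $y_1 = x$, so $x \leq y$; symmetrically, $y \leq x$.

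The conclusion is then immediate: since $\B$ is ordered (the preordering $\leq$ on $\B$ is antisymmetric), $x = y$, whence $u = (r, x) = (s, y) = v$. There is essentially no obstacle here; the only mild subtlety is making sure the $0$-cases of Definition~\ref{defn:bary:alg:conify:ord} are handled before writing $u$ and $v$ as pairs, since the second clause of that definition presupposes the right-hand side is nonzero.
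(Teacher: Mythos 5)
Your proof is correct and follows essentially the same route as the paper's: dispatch the zero cases via Definition~\ref{defn:bary:alg:conify:ord}, then compare first components to force $r=s$ and $u'=v'=0$, which reduces the claim to antisymmetry of $\leq$ on $\B$. The only (cosmetic) difference is that you spell out the zero case a little more explicitly than the paper does.
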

\begin{proof}
    Let us assume that $u \leqc v \leqc u$ in $\conify (\B)$.  Apart from
  the trivial case where $u=v=0$, $u \leqc v$ states that there are
  $u' \in \conify (\B)$ and $y_1 \in \B$ such that $u+u' = (s,y_1)$,
  $v = (s, y)$, and $y_1 \leq y$.  Similarly, $v \leqc u$ states that
  there are $v' \in \conify (\B)$ and $x_1 \in \B$ such that
  $v+v' = (r, x_1)$, $u = (r, x)$ and $x_1 \leq x$.

  Since $u+u' = (s, y_1)$ and $u = (r, x)$, we have $r \leq s$: either
  $u'=0$ and this is clear, or $u'$ is of the form $(r', x')$, and
  then $u+u' = (r+r', x +_{\frac r {r+r'}} x')$, with $r < r+r'$.
  Similarly, since $v+v' = (r, x_1)$ and $v = (s, y)$, we have
  $s \leq r$.  Therefore $r=s$.

  Then, $u+u' = (s, y_1)$ and $u = (r, x)$ with $r=s$ imply that
  $u'=0$ (otherwise $s = r+r' > r$, where $u' = (r', x')$).
  Similarly, $v+v' = (r, x_1)$ and $v = (s, y)$ with $s=r$ imply that
  $v'=0$.

  Therefore $u = (s, y_1) = (r, y_1)$, so $y_1=x$; since $y_1 \leq y$,
  it follows that $x \leq y$.  Similarly, $v = (r, x_1) = (s, x_1)$,
  so $x_1= y$; since $x_1 \leq x$, $y \leq x$.  Since $\B$ is ordered,
  it follows that $x=y$.  \qed
\end{proof}

\subsection{Semitopological and topological barycentric algebras}
\label{sec:semit-topol-baryc}

\begin{definition}
  \label{defn:bary:alg:top}
  A barycentric algebra $\B$ with a topology is:
  \begin{itemize}
  \item \emph{semitopological} if and only if the function
    $(x, a, y) \mapsto x +_a y$ is separately continuous from
    $\B \times [0, 1] \times \B$ to $\B$,
  \item \emph{quasitopological} if and only if the map
    $(x, a) \mapsto x +_a y$ is (jointly) continuous from
    $\B \times [0, 1]$, for every fixed $y \in \B$,
  \item \emph{topological} if and only if the function
    $(x, a, y) \mapsto x +_a y$ is jointly continuous from
    $\B \times [0, 1] \times \B$ to $\B$,
  \end{itemize}
  where in each case $[0, 1]$ is given its usual, metric topology.
\end{definition}
There are corresponding categories $\sTBA$ of semitopological
barycentric algebras and $\TBA$ of topological barycentric algebras,
with affine continuous maps as morphisms.  Quasitopological
barycentric algebras will almost play no role in the sequel, except in
Lemma~\ref{lemma:bary:in:cone} below.

For barycentric algebras, the implications topological $\limp$
quasitopological $\limp$ semitopological hold.
\begin{lemma}
  \label{lemma:bary:in:cone}
  Given any convex subset $A$ of a cone $\C$, $A$ is a barycentric
  algebra with the induced operation
  $x +_a y \eqdef a \cdot x + (1-a) \cdot y$.

  If $\C$ is a preordered cone, then $A$ is a preordered barycentric
  algebra.

  If $\C$ is a semitopological cone, then $A$ is a quasitopological,
  hence a semitopological barycentric algebra in the subspace
  topology.

  If $\C$ is a topological cone, then $A$ is a topological barycentric
  algebra.
\end{lemma}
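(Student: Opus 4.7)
The plan is to verify each of the four assertions in turn by unpacking $x +_a y$ as $a \cdot x + (1-a) \cdot y$ and invoking the relevant structure on $\C$.

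For the first assertion, I check the four defining axioms of a barycentric algebra by direct computation in the cone. The identity $x +_1 y = x$ follows from $1 \cdot x = x$ and $0 \cdot y = 0$; the identity $x +_a x = x$ from distributivity, since $a \cdot x + (1-a) \cdot x = (a + (1-a)) \cdot x = x$; the commutativity $x +_a y = y +_{1-a} x$ from commutativity of $+$ in $\C$; and the skew-associativity law by expanding both $(x +_a y) +_b z$ and $x +_{ab} (y +_{(1-a)b/(1-ab)} z)$, simplifying the scalar coefficients on each term $x$, $y$, $z$ to $ab$, $b(1-a)$, and $1-b$ respectively. Convexity of $A$ ensures $x +_a y \in A$ whenever $x, y \in A$ and $a \in [0, 1]$.

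For the preordered case, in a preordered cone both $+$ and each $a \cdot \_$ are monotonic, so $x \mapsto a \cdot x + (1-a) \cdot y$ is monotonic at fixed $a$ and $y$, and similarly in $y$; hence each $+_a$ is monotonic in both arguments. For the topological case, joint continuity of both $+$ and $\cdot$ on $\C$ lets me factor $(x, a, y) \mapsto a \cdot x + (1-a) \cdot y$ as the composition $(x, a, y) \mapsto (a \cdot x, (1-a) \cdot y)$ followed by $+$, both jointly continuous, so $+_a$ is jointly continuous on $\C \times [0,1] \times \C$; restricting to $A$ yields a topological barycentric algebra.

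The semitopological-to-quasitopological step is the main obstacle. The goal is joint continuity of $(x, a) \mapsto a \cdot x + (1-a) \cdot y$ in $(x, a)$ for each fixed $y$, given only separate continuity of $+$ on $\C$. The starting observation is that scalar multiplication is already jointly continuous on $\Rp \times \C$ in any semitopological cone, by Ershov's observation (since $\Rp$ is a c-space), and this extends to $[0, 1] \times \C$ with $[0, 1]$ in its metric topology, because that topology is finer than the Scott topology inherited from $\Rp$. Thus $(x, a) \mapsto a \cdot x$ is jointly continuous from $\C \times [0,1]$ to $\C$, and $a \mapsto (1-a) \cdot y$ is continuous. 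The hard step is then combining these two legs through $+$, which is only separately continuous; the argument should exploit that the second leg $(1-a) \cdot y$ varies only along the one-dimensional subcone $\{b \cdot y : b \in \Rp\}$, so that a careful case analysis on $a_0 \in \{0, 1\}$ versus $a_0 \in (0, 1)$, together with the rewriting $a \cdot x + (1-a) \cdot y = a \cdot (x + \tfrac{1-a}{a} \cdot y)$ when $a > 0$ (and its symmetric form for $a < 1$), reduces joint continuity at $(x_0, a_0)$ to separate continuity of $+$ at a single fixed point, which is available.
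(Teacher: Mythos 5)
Parts 1, 2, and 4 of your proposal are fine and match the paper: the axioms follow by direct expansion, monotonicity is clear, and in the topological case joint continuity of $+$ and $\cdot$ combines cleanly. The only real content in this lemma is the semitopological-to-quasitopological step, and that is where your argument has a gap.

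You correctly note that $\cdot$ is jointly continuous on $\Rp \times \C$ by Ershov's observation, and that the metric topology on $[0,1]$ is fine enough to make $a \mapsto a$ and $a \mapsto 1-a$ both continuous into $\Rp_\sigma$. The problem is the last sentence: ``\emph{a careful case analysis \ldots together with the rewriting $a\cdot x + (1-a)\cdot y = a\cdot (x + \tfrac{1-a}{a}\cdot y)$ when $a>0$ \ldots reduces joint continuity at $(x_0,a_0)$ to separate continuity of $+$ at a single fixed point.}'' That reduction does not work. After the rewriting you still need joint continuity of the map $(x, b) \mapsto x + b \cdot y$ on $\C \times \Rp$ (with $b = \tfrac{1-a}{a}$), and this is not a consequence of separate continuity of $+$ at a single point: separate continuity only gives you that $x_0 + b\cdot y$ is continuous in $b$ and $x + b_0\cdot y$ is continuous in $x$, and passing from those to joint continuity is exactly the content of Ershov's observation for $\Rp \times \C \to \C$. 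Your sketch replaces an invocation of Ershov with the phrase ``which is available,'' but what is available is only separate continuity, not joint. The rewriting also degenerates as $a_0 \to 0$, so the two charts ($a>0$ and $a<1$) would have to be handled carefully even if the reduction were sound.

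The paper's proof sidesteps the problem by introducing a third variable: fix $y$ and set $f(x,a,b) \eqdef a\cdot x + b\cdot y$ on $\C \times \Rp \times \Rp$. This $f$ is separately continuous, and since $\Rp$ is a c-space, two applications of Ershov's observation (one absorbing the $a$ factor into $a\cdot x$, one absorbing the $b$ factor into $u + b\cdot y$) yield joint continuity of $f$; precomposing with the continuous map $g(a) = (a, 1-a)$ from $[0,1]$ to $\Rp_\sigma \times \Rp_\sigma$ then gives the result. The essential point is that Ershov's observation must be used once more than you acknowledge, namely to make $(u, b) \mapsto u + b\cdot y$ jointly continuous; your proposal does not do this and so does not go through as written.
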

\begin{proof}
  The first part is clear, as the second part on the preordered case.
  If $\C$ is semitopological, then let us fix $y \in A$, and let us
  define $f \colon \C \times \Rp \times \Rp \to \C$ by
  $f (x, a, b) \eqdef a \cdot x + b \cdot y$.  The function $f$ is
  separately continuous, hence jointly continuous by two applications
  of Ershov's observation.  Let $g \colon [0, 1] \to \Rp \times \Rp$
  be defined by $g (a) \eqdef (a, 1-a)$.  This is a continuous map,
  since $g^{-1} ({]s, \infty[} \times {]t, \infty[})$ is equal to
  $]s, 1-t[$ if $s < 1-t \leq 1$, to $]s, 1]$ if $s < 1 < 1-t$,
  % jgl 16 dec 2025: was
  % $]s, 1-t[$ if $s < 1-t < 1$, to $]s, 1]$ if $s < 1 \leq 1-t$,
  and is empty otherwise.  Letting $i$ be the inclusion of $A$ into
  $\C$, it follows that
  $f \circ (i \times g) \colon A \times [0, 1] \to \C$, which maps
  $(x, a)$ to $x +_a y$, is (jointly) continuous.

  If $\C$ is topological, then the function
  $f' \colon \C \times \Rp \times \Rp \times \C \to \C$ defined by
  $f' (x, a, b, y) \eqdef a \cdot x + b \cdot y$ is (jointly)
  continuous.  We see $f'$ as a function from
  $\C \times (\Rp \times \Rp) \times \C$ to $\C$.  Then
  $f' \times (i \times g \times i) \colon A \times [0, 1] \times A \to
  \C$ is (jointly) continuous, and maps $(x, a, y)$ to $x +_a y$.  \qed
\end{proof}

Since every cone is a barycentric algebra, there is a risk of conflict
between the uses of ``semitopological'' and ``topological'' in each
case.  This risk is in fact nonexistent because of the following.
\begin{lemma}
  \label{lemma:bary:in:cone:iff}
  Let $\C$ be a cone with a topology that makes scalar multiplication
  separately continuous.  Seen as a barycentric algebra, $\C$ is
  semitopological (resp.\ topological) if and only if $\C$ is a
  semitopological (resp.\ topological) cone.
\end{lemma}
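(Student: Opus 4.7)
The strategy rests on two elementary identities that translate between the cone operations and the barycentric operations, valid in any cone by the defining formula $x +_a y = a \cdot x + (1-a) \cdot y$ together with (\ref{eq:cone}):
\[
  x + y = 2 \cdot (x +_{1/2} y), \qquad a \cdot x = x +_a 0.
\]
The first lets us recover $+$ from $+_{1/2}$ and scalar multiplication; the second lets us recover $\cdot$ from $+_a$ by plugging in $y=0$.

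For the \emph{only if} direction, I would simply invoke Lemma~\ref{lemma:bary:in:cone} applied with $A \eqdef \C$: $\C$ is trivially a convex subset of itself, so if $\C$ is a semitopological (resp.\ topological) cone then it is a quasitopological---hence semitopological---(resp.\ topological) barycentric algebra in the subspace, i.e.\ its own, topology, with $+_a$ as defined above.

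For the \emph{if} direction in the semitopological case, suppose $(x, a, y) \mapsto x +_a y$ is separately continuous. Fixing $a = 1/2$ gives separate continuity of $(x, y) \mapsto x +_{1/2} y$. By the standing assumption, the map $z \mapsto 2 \cdot z$ is continuous, so composing with it shows $(x, y) \mapsto 2 \cdot (x +_{1/2} y) = x+y$ is separately continuous. Combined with the given separate continuity of $\cdot$, this makes $\C$ a semitopological cone. For the \emph{if} direction in the topological case, suppose $(x, a, y) \mapsto x +_a y$ is jointly continuous. Fixing $y = 0$ yields joint continuity of $(x, a) \mapsto x +_a 0 = a \cdot x$, so $\cdot$ is jointly continuous. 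Fixing $a = 1/2$ yields joint continuity of $(x, y) \mapsto x +_{1/2} y$, and composing with the continuous map $z \mapsto 2 \cdot z$ gives joint continuity of $(x,y) \mapsto x+y$.

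There is essentially no obstacle here; the only subtle point worth flagging in the write-up is that the hypothesis ``scalar multiplication is separately continuous'' is used precisely to guarantee continuity of the unary map $z \mapsto 2\cdot z$ in the semitopological backward direction---without such a hypothesis one could not even state the lemma meaningfully, since recovering $+$ from $+_{1/2}$ inherently requires multiplying by the scalar $2$.
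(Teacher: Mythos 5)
Your ``only if'' argument and the semitopological ``if'' argument match the paper's exactly, and your treatment of $+$ in the topological ``if'' direction is also the paper's. But the extra step where you deduce joint continuity of $\cdot$ from $a \cdot x = x +_a 0$ has a gap and is, in fact, unnecessary.

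The gap: fixing $y = 0$ gives joint continuity of $(x, a) \mapsto a \cdot x$ from $\C \times [0, 1]$, where $[0, 1]$ carries the usual metric topology (that is the topology in Definition~\ref{defn:bary:alg:top}). What is required for a topological cone is joint continuity of $\cdot \colon \Rp \times \C \to \C$ with the Scott topology on $\Rp$. Two things go wrong: the identity $x +_a 0$ only covers $a \in [0, 1]$; and the metric topology on $[0, 1]$ is strictly finer than the Scott topology (for instance $[0, 1/2[$ is metric-open but not Scott-open), so joint continuity with respect to the metric topology on the scalar factor does not transfer to the coarser Scott topology. Patching this would require establishing monotonicity of $\cdot$ in the scalar argument and then extending beyond $1$ via $a \cdot x = 2^k \cdot ((a/2^k) \cdot x)$, none of which is in your write-up.

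The step is also unnecessary. The hypothesis that $\cdot$ is separately continuous already yields joint continuity of $\cdot$ for free, by Ershov's observation recalled in Section~\ref{sec:preliminaries}: $\Rp$ with its Scott topology is a c-space, so any function separately continuous from $\Rp \times \C$ is automatically jointly continuous. This is exactly what the paper's proof relies on; in the topological case it only checks joint continuity of $+$ and leaves $\cdot$ to that earlier remark. Your closing comment identifies the use of separate continuity of $\cdot$ only as giving continuity of $z \mapsto 2 \cdot z$; the more important payoff of that hypothesis is that it already disposes of joint continuity of $\cdot$, so the cleanest repair is simply to delete your sentence about $\cdot$ in the topological case.
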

\begin{proof}
  If $\C$ is a semitopological (resp.\ topological) cone, then by
  taking $A \eqdef \C$ in Lemma~\ref{lemma:bary:in:cone}, we obtain
  that $\C$ is a semitopological (resp.\ topological) barycentric
  algebra.  Conversely, let us assume that $\C$ is semitopological
  (resp.\ topological) as a barycentric algebra.  The function $+$ is
  the composition of $(x, y) \mapsto x +_{1/2} y$ with
  $z \mapsto 2 \cdot z$.  The latter is continuous by assumption, and
  the former is separately (resp.\ jointly) continuous.  Hence $\C$ is
  a semitopological (resp.\ topological) cone.  \qed
\end{proof}

\begin{example}
  \label{exa:V1X:top:bary:alg}
  By Lemma~\ref{lemma:bary:in:cone} and Example~\ref{exa:VX:top:cone},
  $\Val_{\leq 1} X$, $\Val_1 X$ and $\Val_b X$ are topological
  barycentric algebras, being convex subspaces of the topological cone
  $\Val X$.  The same holds of $\Val_\fin X$, $\Val_{\leq 1, \fin} X$,
  $\Val_{1, \fin} X$, $\Val_\pw X$, $\Val_{\leq 1, \pw} X$, $\Val_{1,
    \pw} X$.  For the latter three, we need to observe that
  point-continuous valuations are closed under finite linear
  combinations \cite[Section~3.2]{heckmann96}.
\end{example}

\begin{example}
  \label{exa:lattice:top:bary:alg}
  Every sup-semilattice $L$ is a quasitopological barycentric algebra,
  with its Scott topology, and where $x +_a y$ is defined as the
  supremum $x \vee y$ for all $a \in {]0, 1[}$ (as $x$ when $a=1$, as
  $y$ when $a=0$).  It is topological if $L$ is a continuous poset.
  Indeed, the lift $L_\bot$, obtained by adding a fresh element $\bot$
  below every element of $L$, is a pointed sup-semilattice, hence a
  semitopological cone by Example~\ref{exa:semilatt:cone}, which is
  even topological if $L$ is a continuous poset.  Additionally, $L$ is
  convex in $L_\bot$, so we conclude by
  Lemma~\ref{lemma:bary:in:cone}.
\end{example}

\begin{example}
  \label{exa:cont:s-baryalg}
  Let us call \emph{s-barycentric algebra} any ordered barycentric
  algebra $\B$ such that $+_a$ is Scott-continuous for every
  $a \in [0, 1]$ and such that for all $x, y \in \B$, the map
  $a \mapsto x +_a y$ is continuous from $[0, 1]$, with its usual
  metric topology, to $\B_\sigma$.  (An \emph{s-cone} is an ordered
  cone that is an s-barycentric algebra \cite[Definition
  6.1]{Keimel:topcones2}.)  By definition, every s-barycentric algebra
  is a semitopological barycentric algebra in its Scott topology.
    
  Every continuous s-barycentric algebra is a topological barycentric
  algebra in its Scott topology, by Ershov's observation.
\end{example}
\begin{proposition}
  \label{prop:c:bary:alg}
  Every semitopological barycentric algebra $\B$ whose underlying
  topological space is locally finitary compact (e.g., a c-space, a
  continuous dcpo) is a topological barycentric algebra.
\end{proposition}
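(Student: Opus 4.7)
The plan is to reduce joint continuity of $(x,a,y) \mapsto x +_a y$ on $\B \times [0,1] \times \B$ to separate continuity by two applications of Lawson's strengthening of Ershov's observation mentioned in the preliminaries, namely: if $X$ is locally finitary compact, then for any topological spaces $Y$ and $Z$, every separately continuous map $X \times Y \to Z$ is jointly continuous.

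First, I fix $x \in \B$ and consider the map $h_x \colon \B \times [0,1] \to \B$ defined by $h_x(y,a) \eqdef x +_a y$. By the semitopological assumption on $\B$, $h_x$ is separately continuous in $y$ (for $a$ fixed) and in $a$ (for $y$ fixed). Since $\B$ is locally finitary compact by hypothesis, Lawson's theorem applies with $X \eqdef \B$, $Y \eqdef [0,1]$, $Z \eqdef \B$, giving joint continuity of $h_x$. Re-swapping the arguments, the map $(a,y) \mapsto x +_a y$ is jointly continuous from $[0,1] \times \B$ to $\B$.

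Now I consider the full operation $f \colon \B \times ([0,1] \times \B) \to \B$ given by $f(x,(a,y)) \eqdef x +_a y$. For each fixed $(a,y)$, continuity in $x$ is part of the semitopological assumption; for each fixed $x$, continuity in $(a,y)$ is exactly what was established in the previous paragraph. Hence $f$ is separately continuous. Applying Lawson's theorem once more, with $X \eqdef \B$ (locally finitary compact), $Y \eqdef [0,1] \times \B$, $Z \eqdef \B$, we obtain that $f$ is jointly continuous. This is precisely the condition that $\B$ be a topological barycentric algebra.

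There is no real obstacle here: the only subtle point is recognizing that Lawson's theorem must be applied twice, once to merge the $a$- and $y$-arguments into a single jointly continuous dependence and once to merge this with the $x$-argument; both applications are legitimate because the locally finitary compact space $\B$ sits on the correct side of the product each time. The examples of c-spaces and continuous dcpos listed in the statement are subsumed, since every c-space is locally finitary compact (indeed, every $\upc y$ is of the shape $\upc E$ with $E \eqdef \{y\}$ finite), and every continuous dcpo in its Scott topology is a c-space, hence locally finitary compact. \qed
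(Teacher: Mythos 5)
Your proof is correct and follows essentially the same route as the paper's: two applications of the Banaschewski--Lawson theorem, using the locally finitary compactness of $\B$ each time. The only cosmetic difference is that the paper fixes $y$ first (arriving at the intermediate ``quasitopological'' notion it defines) while you fix $x$ first; via the axiom $x +_a y = y +_{1-a} x$ these are the same argument.
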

\begin{proof}
  We use the Banaschewski-Lawson theorem
  \cite[Theorem~2]{Lawson:pointwise}; Ershov's observation is enough
  if $\B$ is a c-space.  For each $y \in \B$, the map
  $(x, a) \mapsto x +_a y$ is separately continuous, hence jointly
  continuous since $\B$ is locally finitary compact, from
  $\B \times [0, 1]$ to $\B$.  Hence $\B$ is quasitopological.  The
  map $(x, a, y) \mapsto x +_a y$ is then separately continuous in
  $(x, a)$ and in $y$, hence jointly continuous since $\B$ is locally
  finitary compact.  \qed
\end{proof}

We have seen that every (preordered) barycentric algebra $\B$ embeds
in a (preordered) cone, namely in its free (preordered) cone
$\conify (\B)$.  The situation is different with semitopological
barycentric algebras.
\begin{definition}[Embeddable]
  \label{defn:bary:alg:embed}
  An \emph{embedding} of a semitopological barycentric algebra $\B$ in
  a semitopological cone $\C$ is an affine topological embedding of
  $\B$ in $\C$.  $\B$ is \emph{embeddable} if and only if there is an
  embedding of $\B$ in some semitopological cone $\C$.
\end{definition}
Not all semitopological barycentric algebras are embeddable: we will
give a counterexample in Example~\ref{exa:bary:alg:notembed}.  We will
see that $\B$ is embeddable if and only if $\etac_{\B}$ is an
embedding in $\conify (\B)$, which will be the free semitopological
cone over $\B$.  We need to give $\conify (\B)$ a particular topology.
\begin{definition}[Cone topology]
  \label{defn:cone:top}
  For every semitopological barycentric algebra $\B$, the \emph{cone
    topology} on $\conify (\B)$ has a subbase of open subsets
  consisting of those open subsets of
  $(\Rp \diff \{0\})_\sigma \times \B$ (with the product topology)
  that are upwards-closed with respect to the preordering $\leqc$ of
  Definition~\ref{defn:bary:alg:conify:ord}.
\end{definition}

\begin{remark}
  \label{rem:cone:top:spec}
  The specialization preordering of $\conify (\B)$, with the cone
  topology of Definition~\ref{defn:cone:top}, is \emph{not} $\leqc$ in
  general: we will give a counterexample in
  Example~\ref{exa:bary:alg:notembed}.  This may seem paradoxical,
  since we have defined the topology on $\conify (\B)$ as certain open
  sets that are upwards-closed with respect to $\leqc$, and this can
  lead to subtle errors.  The best we can say is that for all
  $u, v \in \conify (\B)$, if $u \leqc v$ then $u$ is less than or
  equal to $v$ in the specialization ordering of $\conify (\B)$.
\end{remark}

Studying the cone topology will be easier with the following notion.
\begin{definition}[Semi-concave]
  \label{defn:qaff}
  A \emph{semi-concave map} from a semitopological barycentric algebra
  $\B$ to a preordered cone $\C$ is a function $h \colon \B \to \C$
  such that for all $x, y \in \B$ and $a \in [0, 1]$,
  $h (x +_a y) \geq a \, h (x)$.
\end{definition}
We will almost exclusively use this notion when $\C=\creal$.

The following is immediate.
\begin{fact}
  \label{fact:qaff}
  Let $\B$ be a semitopological barycentric algebra.  Every concave map
  from $\B$ to $\creal$, and in particular every affine map from $\B$ to
  $\creal$ is semi-concave.  The collection of semi-concave lower
  semicontinuous maps on $\B$ is closed under scalar multiplication by
  non-negative numbers, addition, pointwise binary infima and
  pointwise arbitrary suprema.
\end{fact}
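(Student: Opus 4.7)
The plan is to unpack the definitions and check each assertion by a short computation inside $\creal$, using the fact that $\creal$ is a preordered cone with $0$ as least element.

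First I would show that every concave $h \colon \B \to \creal$ is semi-concave: by concavity, $h(x +_a y) \geq h(x) +_a h(y) = a\, h(x) + (1-a)\, h(y)$, and since $h(y) \in \creal$ and $(1-a) \in \Rp$, we have $(1-a)\, h(y) \geq 0$, so $h(x +_a y) \geq a\, h(x)$. Every affine map is in particular concave, so this handles the first sentence.

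For the closure properties, the key observation is that the inequality defining semi-concavity, $h(x +_a y) \geq a\, h(x)$, involves only $h$ evaluated at two points and a scalar multiplication, and is preserved by all the operations claimed. Explicitly: if $c \in \Rp$ and $h$ is semi-concave, then $(c\,h)(x +_a y) = c\,h(x +_a y) \geq c\,(a\, h(x)) = a\,(c\,h)(x)$; if $h_1$, $h_2$ are semi-concave, then $(h_1 + h_2)(x +_a y) \geq a\,h_1(x) + a\,h_2(x) = a\,(h_1+h_2)(x)$; for a binary infimum, $(h_1 \wedge h_2)(x +_a y) \geq \min(a\,h_1(x), a\,h_2(x)) = a\,(h_1 \wedge h_2)(x)$, using that $a \cdot {-}$ is monotone on $\creal$; and for an arbitrary supremum $h = \sup_{i \in I} h_i$, $h(x +_a y) = \sup_i h_i(x +_a y) \geq \sup_i a\,h_i(x) = a\,h(x)$, using Scott-continuity of $a \cdot {-} \colon \creal \to \creal$.

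For the lower semicontinuity part, one simply invokes the standard facts that lower semicontinuous maps into $\creal$ are closed under multiplication by non-negative scalars, finite sums, finite infima, and arbitrary suprema; these hold for any topological space, in particular for $\B$. There is no real obstacle here: the entire statement is a routine verification, and the only point that requires a hint of care is that one should observe, for the binary infimum, that $a \cdot {-}$ preserves binary infima on $\creal$ (which it does since it is monotone and $\creal$ is totally ordered), and, for arbitrary suprema, that $a \cdot {-}$ is Scott-continuous.
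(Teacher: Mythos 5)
Your proof is correct, and it matches the paper's stance on this statement: the paper offers no proof at all, prefacing the Fact only with ``The following is immediate,'' and your computation is exactly the routine unwinding of the definitional inequality $h(x +_a y) \geq a\,h(x)$ that justifies that remark. The one minor imprecision is in the arbitrary-supremum step, where you cite ``Scott-continuity of $a \cdot {-}$'': since the index set $I$ is arbitrary, what you actually need is that $a \cdot {-}$ preserves \emph{all} suprema in $\creal$, not just directed ones; this does hold (it preserves finite suprema because it is monotone on a chain, and directed suprema by Scott-continuity, hence all suprema), but the reason you give is slightly off. Everything else is accurate.
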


\begin{fact}
  \label{fact:qaff:homog}
  Every positively homogeneous lower semicontinuous map $f$ from a
  semitopological cone $\C$ to $\creal$ is semi-concave.
\end{fact}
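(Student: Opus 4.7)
The plan is to unpack what semi-concavity demands in a cone, then reduce it to monotonicity plus positive homogeneity. In the cone $\C$, viewed as a barycentric algebra via $x +_a y = a \cdot x + (1-a) \cdot y$, the required inequality
\[
f(x +_a y) = f(a \cdot x + (1-a) \cdot y) \geq a \cdot f(x)
\]
will follow if I can show that $a \cdot x$ lies below $x +_a y$ in the specialization preordering of $\C$ (since $f$, being lower semicontinuous to $\creal$ with its Scott topology, is monotonic for that preordering), and then use $f(a \cdot x) = a \cdot f(x)$ from positive homogeneity.

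First, I would verify that the specialization preordering $\leq$ on $\C$ turns it into a preordered cone in the paper's sense. Separate continuity of $+$ implies that $x \mapsto x+z$ and $z \mapsto x+z$ are each monotonic for $\leq$. Separate continuity of $\cdot$ similarly gives monotonicity of $x \mapsto a \cdot x$ for each $a$, and of $a \mapsto a \cdot x$ for each $x$ (as a map from $\Rp$ with the Scott topology). The latter yields $0 = 0 \cdot x \leq 1 \cdot x = x$ for every $x$, so $0$ is a least element.

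Next, since $0 \leq (1-a) \cdot y$ and addition is monotonic, I obtain
\[
a \cdot x \;=\; a \cdot x + 0 \;\leq\; a \cdot x + (1-a) \cdot y \;=\; x +_a y.
\]
Because $f$ is lower semicontinuous into $\creal$, it is monotonic with respect to the specialization preordering, so $f(a \cdot x) \leq f(x +_a y)$. Combining with positive homogeneity gives $a \cdot f(x) = f(a \cdot x) \leq f(x +_a y)$, which is exactly semi-concavity.

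There is no real obstacle here; the only subtlety worth stating carefully is the passage from separate continuity of $\cdot$ to the fact that $0$ is the least element of the specialization preordering, which is what distinguishes a preordered cone from the weaker semipreordered notion mentioned in Remark~\ref{rem:semiordered}. Once that is secured, the argument is a two-line monotonicity plus homogeneity computation.
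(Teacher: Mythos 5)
Your argument is correct and matches the paper's one-line proof: use $0 \leq (1-a)\cdot y$ and monotonicity of $+$ to get $a\cdot x \leq x+_a y$, then monotonicity of $f$ and positive homogeneity. The extra care you take in justifying that $0$ is least (hence that $\C$ is a preordered, not merely semipreordered, cone) is a valid and worthwhile clarification, but the route is the same.
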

Indeed,
$f (x +_a y) = f (a \cdot x + (1-a) \cdot y) \geq f (a \cdot x) = a \,
f (x)$, since $(1-a) \cdot y \geq 0$, and $+$ and $f$ are monotonic.

For every function $h$ from a barycentric algebra $\B$ to $\creal$,
there is a unique positively homogeneous map
$h^\cext \colon \conify (\B) \to \creal$ defined by
$h^\cext (0) \eqdef 0$, $h^\cext (r, x) \eqdef r h (x)$.  We use this
in the following.
\begin{proposition}
  \label{prop:bary:alg:preorder:product}
  For every semitopological barycentric algebra $\B$, the open subsets
  of $\conify (\B)$ in its cone topology are exactly the subsets of the
  form $(h^\cext)^{-1} (]1, \infty])$, where $h$ ranges over the
  semi-concave lower semicontinuous maps from $\B$ to $\creal$, plus
  $\conify (\B)$ itself.
\end{proposition}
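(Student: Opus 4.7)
The plan is to prove both inclusions separately, after reducing to the subbasic level. The class of sets of the form $(h^\cext)^{-1}(]1, \infty])$ with $h$ semi-concave and lower semicontinuous contains $\emptyset$ (take $h \equiv 0$), and by Fact~\ref{fact:qaff} it is closed under finite intersection (corresponding to pointwise binary infima of the $h$'s) and arbitrary union (corresponding to pointwise suprema). So the task reduces to showing (A) each such set is open in the cone topology, and conversely (B) every $\leqc$-upwards-closed open subset of $(\Rp \diff \{0\})_\sigma \times \B$, namely every subbasic open, is of this form.

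For (A), the set $(h^\cext)^{-1}(]1, \infty]) = \{(r, x) \mid r h(x) > 1\}$ sits inside $(\Rp \diff \{0\}) \times \B$ and is open in the product topology, being the preimage of the Scott-open set $]1, \infty]$ under the jointly continuous map $(r, x) \mapsto r h(x)$ (Scott-continuity of multiplication on $\creal$ combined with lower semicontinuity of $h$). For $\leqc$-upwards-closure, I would unfold Definition~\ref{defn:bary:alg:conify:ord}: given $(r, x) \leqc (s, y)$ with witness $(r, x) + u' = (s, y_1)$ and $y_1 \leq y$, the case $u' = 0$ forces $(r, x) = (s, y_1)$ and reduces to monotonicity of $h$ (inherited from lower semicontinuity via the specialization preordering); otherwise, writing $u' = (r', x')$ gives $s = r + r'$ and $y_1 = x +_{r/s} x'$, so that $h(y) \geq h(y_1) \geq (r/s) h(x)$ by monotonicity and semi-concavity, whence $s h(y) \geq r h(x) > 1$.

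For (B), given a subbasic open $W$, I define a variant of the upper Minkowski functional by $h(x) \eqdef \sup\{s \in \Rp \diff \{0\} \mid (1/s, x) \in W\}$, with $\sup \emptyset = 0$. The key structural observation is that for each fixed $x$, the slice $V_x \eqdef \{r > 0 \mid (r, x) \in W\}$ is a Scott-open subset of $\Rp \diff \{0\}$ that is upwards-closed in the usual order on $\real$ (using Remark~\ref{rem:bary:alg:ord}, which gives $(r, x) \leqc (r', x)$ whenever $0 < r < r'$), hence $V_x = (a_x, \infty) \cap (\Rp \diff \{0\})$ for some $a_x \in [0, \infty]$, and $h(x) = 1/a_x$ under the natural conventions; the identity $W = \{(r, x) \mid r h(x) > 1\}$ follows. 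Lower semicontinuity of $h$ is immediate from $h^{-1}(]t, \infty]) = \bigcup_{0 < r < 1/t} \{x \in \B \mid (r, x) \in W\}$, each slice being open in $\B$ by continuity of $x \mapsto (r, x)$. The main step is semi-concavity: I would verify the cone-arithmetic identity $(r, x) + (r(1-a)/a, y) = (r/a, x +_a y)$ in $\conify(\B)$, which yields $(r, x) \leqc (r/a, x +_a y)$, so $(r, x) \in W$ forces $(r/a, x +_a y) \in W$ by $\leqc$-upwards-closure; taking suprema then yields $h(x +_a y) \geq a h(x)$. The main obstacle is precisely in finding this decomposition to propagate membership in $W$ along the $+_a$-direction; everything else reduces to unfolding the relevant definitions.
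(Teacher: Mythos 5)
Your proof is correct and follows essentially the same route as the paper's: showing that each $(h^\cext)^{-1}(]1,\infty])$ is product-open and $\leqc$-upwards-closed, then for the converse building a Minkowski-style functional $h(x) = \sup\{s > 0 \mid (1/s,x) \in W\}$ and verifying lower semicontinuity and semi-concavity via the cone-arithmetic identity and Remark~\ref{rem:bary:alg:ord}. The only notable difference is cosmetic: you appeal to joint continuity of $(r,x)\mapsto rh(x)$ via Ershov's observation where the paper does the corresponding way-below manipulation by hand, and you note the slice structure of $W$ explicitly; both are sound.
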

\begin{proof}
  Let us call the \emph{pointed product topology} the topology on
  $\conify (\B)$ consisting of the open subsets of
  $(\Rp \diff \{0\})_\sigma \times \B$ with the product topology, plus
  the whole of $\conify (\B)$.  The cone topology consists of the open
  subsets in the pointed product topology that are upwards-closed.  We
  call the \emph{semi-concave topology} the topology consisting of the
  sets $(h^\cext)^{-1} (]1, \infty])$, where $h$ ranges over the
  semi-concave lower semicontinuous maps from $\B$ to $\creal$, plus
  the whole set $\conify (\B)$.

  For every open subset $V$ of $\conify (\B)$ in the semi-concave
  topology, we claim that $V$ is open in the cone topology.  If
  $V = \conify (\B)$, this is clear, so we assume that $V$ is proper.
  We write $V$ as $(h^\cext)^{-1} (]1, \infty[)$ for some semi-concave
  lower semicontinuous map $h$ from $\B$ to $\creal$.  For every
  $u \in V$, necessarily of the form $(r, x)$ with $r > 0$,
  $h^\cext (u) = r h (x) > 1$.  Multiplication is Scott-continuous on
  $\creal$, and $(r, h (x))$ is the supremum of some directed family
  of points $(a, b) \ll (r, h (x))$, equivalently $a \ll r$ and
  $b \ll h (x)$.  Hence there are two numbers $a, b \in \Rp$ such that
  $a \ll r$, $b \ll h (x)$ and $ab > 1$; in particular, both $a$ and
  $b$ are non-zero, so $r > a$ and $h (x) > b$.  The open rectangle
  ${]a, \infty[} \times h^{-1} (]b, \infty])$ then contains $u$, and
  for every point $(s, y)$ in that open rectangle,
  $h^\cext (s, y) = s h (y) > ab > 1$, so the open rectangle is included
  in $(h^\cext)^{-1} (]1, \infty]) = V$.  This shows that $V$ is open in
  the pointed product topology.  Additionally, for every
  $u \eqdef (r, x)$ in $V$, for every $v \in \conify (\B)$ such that
  $u \leqc v$, by definition (see
  Definition~\ref{defn:bary:alg:conify:ord}) we can write $v$ as
  $(s, y)$ so that for some $u' \in \conify (\B)$ and some $y_1 \in \B$,
  $u+u' = (s, y_1)$ and $y_1 \leq y$.  Since
  $(r, x) \in V = (h^\cext)^{-1} (]1, \infty])$, $h^\cext (u) = r h (x) > 1$.
  If $u'=0$, then $h^\cext (u+u') = h^\cext (u) > 1$.  Otherwise, we write
  $u'$ as $(r', x')$, and then
  $h^\cext (u+u') = h^\cext (r+r', x +_{\frac r {r+r'}} x') = (r+r') h (x
  +_{\frac r {r+r'}} x') \geq r h (x)$ (because $h$ is semi-concave)
  $= h^\cext (u) > 1$.  In any case, $h^\cext (u+u') > 1$.  Since
  $h^\cext (u+u') = h^\cext (s, y_1) = s h (y_1)$, since $y_1 \leq y$ and $h$,
  being lower semicontinuous, is monotonic, we also have
  $s h (y) > 1$, namely $h^\cext (v) > 1$.  In other words,
  $v \in (h^\cext)^{-1} (]1, \infty]) = V$, for every $v$ such that
  $u \leqc v$.  Hence $V$ is upwards-closed.  It follows that $V$ is
  open in the cone topology.
  
  Conversely, let $V$ be an open subset of $\conify (\B)$ in the cone
  topology.  If $0 \in V$, then $V = \conify (\B)$, which is open in
  $\conify (\B)$.  Henceforth, we will assume that $0 \not\in V$,
  namely that $V$ is proper.  Let
  $h (x) \eqdef \sup \{r \in \Rp \diff \{0\} \mid (1/r, x) \in V\}$,
  where the the supremum is taken in $\creal$, and the supremum of the
  empty set if $0$.

  We claim that the function $h$ is lower semicontinuous.  Let
  $x \in \B$ and let us assume that $h (x) > t$, where $t \in \Rp$.
  There is an $r \in \Rp \diff \{0\}$ such that $r > t$ and
  $(1/r, x) \in V$.  By definition of the cone topology, there is an
  open rectangle ${]a, \infty[} \times U$, with $a > 0$ and $U$ open
  in $\B$, that contains $(1/r, x)$ and is included in $V$.  Then
  $1/r > a$, $x \in U$, and for every $x' \in U$, $(1/r, x')$ is in
  ${]a, \infty[} \times U \subseteq V$, so $h (x') \geq r > t$.  Hence
  $U$ is an open neighborhood of $x$ included in
  $h^{-1} (]t, \infty])$.  Since $x$ is arbitrary in
  $h^{-1} (]t, \infty])$, $h^{-1} (]t, \infty])$ is open in $\B$.

  Ve verify that $h$ is semi-concave.  For all $x, y \in \B$ and
  $a \in [0, 1]$, we show that $h (x +_a y) \geq a h (x)$.  This is
  clear if $a=0$ or if $a=1$, so we assume that $0 < a < 1$.  Then, it
  suffices to show that for every $t < a h (x)$, $t < h (x +_a y)$.
  From $t < a h (x)$, we obtain that there is an
  $r \in \Rp \diff \{0\}$ such that $ar > t$ and $(1/r, x) \in V$.
  Then
  $(1/r, x) + ((1-a)/(ar), y) = (1/(ar), x +_{\frac {1/r} {1/(ar)}} y) =
  (1/(ar), x +_a y)$ is larger than or equal to $(1/r, x)$ by
  Remark~\ref{rem:bary:alg:ord}.  By definition of the cone topology,
  $V$ is upwards-closed; since $(1/r, x) \in V$, $(1/(ar), x +_a y)$
  is also in $V$.  By definition of $h$, $h (x +_a y) \geq ar > t$.

  For every point $u \in \conify (\B)$, $h^\cext (u) > 1$ if and only if
  $u$ is of the form $(s, x)$ with $s > 0$ and $s h (x) > 1$.  Then,
  $s h (x) > 1$ if and only if there is an $r \in \Rp \diff \{0\}$
  such that $rs > 1$ and $(1/r, x) \in V$, if and only if
  $(1/r, x) \in V$ for some $r \in \Rp \diff \{0\}$ such that
  $1/r < s$.  If so, then $(1/r, x) \leqc (s, x)$ by
  Remark~\ref{rem:bary:alg:ord}, since $s > 1/r$, and since $V$ is
  upwards-closed, it follows that $u = (s, x)$ is in $V$.  Conversely,
  if $u \in V$, we claim that $h^\cext (u) > 1$.  Since $0 \not\in V$, we
  may write $u$ as $(s, x)$ with $s > 0$.  By definition of the cone
  topology, there is an open rectangle ${]a, \infty[} \times U$ that
  contains $(s, x)$ and is included in $V$.  In particular, $s > a$
  and $x \in U$.  Let $r \in \Rp \diff \{0\}$ be such that
  $s > 1/r > a$, for example $\frac 2 {s+a}$.  Then $h (x) \geq r$, so
  $h^\cext (u) = s h (x) \geq rs > 1$.  It follows that
  $V = {(h^\cext)}^{-1} (]1, \infty])$.  Hence $V$ is open in the cone
  topology on $\conify (\B)$.  \qed
\end{proof}

\begin{lemma}
  \label{lemma:semi-concave:lsc}
  For every semi-concave lower semicontinuous map $h$ on a
  semitopological barycentric algebra $\B$, for all $x, y \in \B$, the
  map $a \mapsto \frac 1 {1-a} h (x +_a y)$ is Scott-continuous from
  $[0, 1[$ to $\creal$.
\end{lemma}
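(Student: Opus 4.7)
The plan is to split the result into two independent claims: (i) the map $f(a) \eqdef \frac{1}{1-a} h(x +_a y)$ is monotonic on $[0, 1[$, and (ii) $f$ is lower semicontinuous from $[0, 1[$ (with its usual metric topology) to $\creal$. Once both are established, Scott-continuity follows by a standard argument: upward-closedness of $f^{-1}(]c, \infty])$ is immediate from (i); for inaccessibility, given a directed $D \subseteq [0, 1[$ with $\sup D = a_\ast \in [0, 1[$ and $c < f(a_\ast)$, lower semicontinuity provides an open neighborhood $N$ of $a_\ast$ in the usual topology on which $f > c$, and since $[0, 1[$ is totally ordered, some element of $D$ lies in $N$.

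For (i), the key identity is
\[
  (x +_a y) +_{(1-b)/(1-a)} x = x +_b y \qquad (0 \leq a \leq b < 1),
\]
which I would verify by applying the injective affine map $\etac_{\B}$ and matching the coefficients on $\etac_{\B}(x)$ and $\etac_{\B}(y)$ in $\conify(\B)$: the left-hand side yields weight $\frac{a(1-b)}{1-a} + 1 - \frac{1-b}{1-a} = b$ on $\etac_{\B}(x)$, matching the right-hand side. Letting $r \eqdef \frac{b-a}{(1-a)(1-b)}$, a direct calculation in $\conify(\B)$ then gives
\[
  \bigl(\tfrac{1}{1-a}, x +_a y\bigr) + (r, x) = \bigl(\tfrac{1}{1-b}, x +_b y\bigr),
\]
so by Definition~\ref{defn:bary:alg:conify:ord}, $(\tfrac{1}{1-a}, x +_a y) \leqc (\tfrac{1}{1-b}, x +_b y)$. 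I would then observe that $h^\cext$ is monotonic with respect to $\leqc$: if $u+u' = (s, y_1)$ and $v = (s, y)$ with $y_1 \leq y$ in $\B$, semi-concavity of $h$ gives $h^\cext(u) \leq h^\cext(u+u')$ (exactly as in the proof of Proposition~\ref{prop:bary:alg:preorder:product}, using $h^\cext(r+r', x +_{r/(r+r')} x') \geq r\, h(x) = h^\cext(r, x)$), while lower semicontinuity of $h$ (hence monotonicity of $h$ with respect to the specialization preorder) yields $h^\cext(u+u') = s\, h(y_1) \leq s\, h(y) = h^\cext(v)$. Evaluating at the two points above, $f(a) = h^\cext(\tfrac{1}{1-a}, x +_a y) \leq h^\cext(\tfrac{1}{1-b}, x +_b y) = f(b)$.

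For (ii), the separate continuity clause of Definition~\ref{defn:bary:alg:top} ensures that $a \mapsto x +_a y$ is continuous from $[0, 1]$ with its usual topology to $\B$, so $a \mapsto h(x +_a y)$ is lsc into $\creal$; the map $a \mapsto \frac{1}{1-a}$ is continuous from $[0, 1[$ to $\Rp$, and multiplication on $\creal$ is jointly Scott-continuous, so $f$ is lsc as a product of two non-negative lsc functions.

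The main obstacle is clearly (i). One cannot hope to argue monotonicity inside $\B$ directly, because no preorder is available beyond the specialization preorder, and $a \mapsto x +_a y$ need not be monotonic in $a$ with respect to it. The step up to $\conify(\B)$, whose preordering $\leqc$ is enriched by the rule $u \leqc u+u'$, is what makes the computation possible, and the identity relating $x +_a y$ and $x +_b y$ inside $\B$ is essentially forced by the requirement that the two points project to comparable elements at different scalar levels in $\conify(\B)$.
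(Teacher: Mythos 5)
Your proposal is correct and follows essentially the same approach as the paper: the same decomposition into (i) monotonicity of $f$ and (ii) continuity of $f$ with respect to the standard topology on $[0,1[$, followed by the observation that a monotonic, continuous map from $[0,1[$ to $\creal$ is Scott-continuous, and the same key identity $(x +_a y) +_{(1-b)/(1-a)} x = x +_b y$ driving the monotonicity step. The only presentational difference is that you route the monotonicity argument through $\conify(\B)$, the preordering $\leqc$, and monotonicity of $h^\cext$, whereas the paper applies semi-concavity of $h$ directly in $\B$ — the underlying inequality is the same, just wrapped in more scaffolding in your version.
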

\begin{proof}
  Let $f$ be this map.  We first show that $f$ is monotonic.  Let
  $0 \leq a < b < 1$.  If $a=0$, then
  $\frac 1 {1-a} h (x +_a y) \leq \frac 1 {1-b} h (x +_b y)$ reduces
  to $h (y) \leq \frac 1 {1-b} h (x +_b y)$, namely to
  $(1-b) h (y) \leq h (x +_b y)$, or equivalently to
  $(1-b) h (y) \leq h (y +_{1-b} x)$, which follows from the fact that
  $h$ is semi-concave.  Hence let us assume $0 < a < b < 1$.  Let
  $z \eqdef y +_{1-a} x$.  We have
  $z +_{(1-b)/(1-a)} x = (y +_{1-a} x) +_{(1-b)/(1-a)} x = y +_{(1-a)
    \cdot (1-b)/(1-a)} (x +_{\frac {a (1-b)/(1-a)} {1- (1-a)\cdot
      (1-b)/(1-a)}} x) = y +_{1-b} x$, and since $h$ is semi-concave,
  $h (z +_{(1-b)/(1-a)} x) \geq \frac {1-b} {1-a} h (z)$.  In other
  words, $h (y +_{(1-b} x) \geq \frac {1-b} {1-a} h (y +_{1-a} x)$.
  Therefore
  $\frac 1 {1-b} h (y +_{1-b} x) \geq \frac 1 {1-a} h (y +_{1-a} x)$,
  or equivalently
  $\frac 1 {1-b} h (x +_b y) \geq \frac 1 {1-a} h (x +_a y)$.

  The function $f$ is continuous from $[0, 1[$ with its standard
  topology to $\creal$.  This is because $a \mapsto x +_a y$ is
  continuous from $[0, 1]$ to $\B$, $h$ is continuous from $\B$ to
  $\creal$, and multiplication by $\frac 1 {1-a}$ is
  Scott-continuous.  But the continuous monotonic maps $f$ from
  $[0, 1[$ to $\creal$ are exactly the Scott-continuous maps: if $f$
  is continuous and monotonic, for every $t \in \Rp$,
  $f^{-1} (]t, \infty])$ is upwards-closed, hence of the form $[r, 1[$
  or $]r, 1[$ with $0 \leq r \leq 1$, and the first form is open in
  $[0, 1[$ only if $r=0$.  \qed
\end{proof}

\begin{theorem}
  \label{thm:conify:semitop}
  For every semitopological barycentric algebra $\B$,
  \begin{enumerate}
  \item $\conify (\B)$ is a semitopological cone in its cone topology.
  \item The map $\etac_{\B}$ is injective, affine %, full
    and continuous.
  \item For every semi-concave (resp., concave, affine) continuous map
    $f \colon \B \to \C$ to a semitopological cone $\C$, there is a
    unique positively homogeneous (resp., superlinear, linear)
    continuous map $f^\cext \colon \conify (\B) \to \C$ such that
    $f^\cext \circ \etac_{\B} = f$.

    In particular, $\conify (\B)$ with the cone topology is the free
    semitopological cone over $\B$.
  \item The following are equivalent:
    \begin{enumerate}[label=(\alph*)]
    \item $\B$ is embeddable, that is, there is an affine topological
      embedding of $\B$ in a semitopological cone;
    \item $\etac_{\B} \colon \B \to \conify (\B)$ is a topological
      embedding;
    \item the open subsets of $\B$ are exactly the sets
      $h^{-1} (]1, \infty])$ where $h$ ranges over the semi-concave
      lower semicontinuous maps from $\B$ to $\creal$;
    \item the sets $h^{-1} (]1, \infty])$ where $h$ ranges over the
      semi-concave lower semicontinuous maps from $\B$ to $\creal$ form
      a subbase of its topology.
    \end{enumerate}
  \end{enumerate}
\end{theorem}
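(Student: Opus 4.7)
The plan is to leverage Proposition~\ref{prop:bary:alg:preorder:product}, which identifies the proper open subsets of $\conify(\B)$ in the cone topology as the sets $(h^\cext)^{-1}(]1, \infty])$ where $h \colon \B \to \creal$ ranges over semi-concave lower semicontinuous maps. Every continuity argument below reduces to checking that preimages of such subbasic opens are again open in the pointed product topology and upwards-closed in $\leqc$.

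For item~1, separate continuity of scalar multiplication is direct: for fixed $u = (r_0, x_0)$, the preimage of $(h^\cext)^{-1}(]1, \infty])$ under $a \mapsto a \cdot u$ is $\{a \in \Rp \mid a r_0 h(x_0) > 1\}$, visibly Scott-open; for fixed $a_0 > 0$, the preimage under $u \mapsto a_0 \cdot u$ is $\{(r, x) \mid a_0 r h(x) > 1\}$, open in the product topology by lsc of $h$ and upwards-closed in $\leqc$ by monotonicity of $a_0 \cdot \_$ from Proposition~\ref{prop:bary:alg:conify:ord}. For addition, commutativity reduces us to $u \mapsto u+v$ with $v = (t, z)$ fixed: if $v \in V = (h^\cext)^{-1}(]1, \infty])$, the preimage is all of $\conify(\B)$ since $v \leqc u+v$ (Remark~\ref{rem:bary:alg:ord}) and $V$ is upwards-closed; otherwise, the preimage omits $0$ and one must show openness of $\{(r,x) \mid (r+t)\, h(x +_{r/(r+t)} z) > 1\}$ in $(\Rp \diff \{0\})_\sigma \times \B$, upwards-closedness in $\leqc$ following from monotonicity of $+$. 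Item~2 then uses injectivity and affineness of $\etac_{\B}$ (already established for the free cone), with continuity following from $\etac_{\B}^{-1}((h^\cext)^{-1}(]1, \infty])) = h^{-1}(]1, \infty])$, which is open by lsc of $h$.

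For item~3, uniqueness of $f^\cext$ is forced by positive homogeneity, and existence uses the standard formula $f^\cext(r, x) \eqdef r f(x)$, $f^\cext(0) \eqdef 0$; positive homogeneity (always), superlinearity (concave case), and linearity (affine case) come from Lemma~\ref{lemma:f*:conv:conc}, noting that concavity implies semi-concavity since $(1-a) f(y) \geq 0$ in the preordered cone $\C$. For continuity of $f^\cext$, I invoke the upper Minkowski functional correspondence: every proper open of $\C$ has the form $g^{-1}(]1, \infty])$ for a positively homogeneous lsc $g \colon \C \to \creal$, and the composite $h \eqdef g \circ f$ is lsc and semi-concave (combining monotonicity and positive homogeneity of $g$ with semi-concavity of $f$), so that $(f^\cext)^{-1}(g^{-1}(]1, \infty])) = (h^\cext)^{-1}(]1, \infty])$ is open in the cone topology by Proposition~\ref{prop:bary:alg:preorder:product}. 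For item~4: (b)$\Rightarrow$(a) is trivial; (a)$\Rightarrow$(b) factors any affine topological embedding $i \colon \B \to \C$ through item~3 as $i = i^\cext \circ \etac_{\B}$, transferring fullness of $i$ to $\etac_{\B}$; (b)$\Leftrightarrow$(c) computes the subspace-topology preimages as in item~2; and (c)$\Leftrightarrow$(d) uses Fact~\ref{fact:qaff} to note that the family $\{h^{-1}(]1, \infty]) \mid h \text{ semi-concave lsc}\}$ is closed under finite intersections (binary infima) and arbitrary unions (suprema), so being a subbase coincides with being the entire topology.

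The main obstacle is the openness verification in item~1 when $v \notin V$: since $\B$ is only semitopological, joint continuity of $(x, a) \mapsto x +_a z$ is unavailable, and lsc of $(r, x) \mapsto (r+t)\, h(x +_{r/(r+t)} z)$ in both arguments at once is not automatic. The remedy is Lemma~\ref{lemma:semi-concave:lsc}: after substituting $a = r/(r+t)$, the function is monotonic and Scott-continuous in $r$ for each fixed $x$, while for each fixed $r$ it is lsc in $x$ by separate continuity of $+_a$ composed with lsc of $h$. Given $(r_0, x_0)$ in the preimage, Scott-continuity in $r$ supplies some $r_* < r_0$ at which the strict inequality still holds at $x_0$; lsc in $x$ at $r_*$ yields an open $U \ni x_0$ on which it holds for every $x$; and monotonicity in $r$ propagates the inequality to the full open product neighborhood $(r_*, \infty) \times U$, as required.
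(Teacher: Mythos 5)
Your proposal is correct and follows the same overall structure as the paper's proof, but in a few places you take noticeably different (and arguably cleaner) routes. For item~1, the paper appeals to Ershov's observation to get joint lower semicontinuity of $(r, x) \mapsto (r+t)\,h(x +_{r/(r+t)} z)$ from separate continuity plus the fact that $\Rp \diff \{0\}$ is a continuous poset; you instead unroll that observation by hand, explicitly using Scott-continuity in $r$ to step back to an $r_* < r_0$, lower semicontinuity in $x$ at $r_*$ to get an open $U$, and monotonicity in $r$ to fill in the full open rectangle $]r_*, \infty[ \times U$. The two arguments are equivalent; yours is self-contained while the paper's is shorter. (You should add a word handling the trivial case $v = 0$, where $\_ + v$ is the identity, since ``$v = (t,z)$'' silently assumes $v \neq 0$.) For continuity of $f^\cext$ in item~3, you take a genuinely different and tidier path: rather than re-running the product-topology plus upward-closedness verification (as the paper does, again via Ershov's observation), you reduce to Proposition~\ref{prop:bary:alg:preorder:product} by writing each proper open $V$ of $\C$ as $g^{-1}(]1,\infty])$ with $g = M^V$ positively homogeneous and lsc, observing $g \circ f^\cext = (g \circ f)^\cext$ and that $g \circ f$ is semi-concave lsc; this outsources all the hard work to the already-established description of the cone topology and is, I think, the more transparent argument. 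Finally, for item~4 you replace the paper's implication cycle $(c) \Rightarrow (d) \Rightarrow (b) \Rightarrow (a)$ with a direct proof of $(d) \Rightarrow (c)$ via Fact~\ref{fact:qaff}: the family $\{h^{-1}(]1,\infty])\}$ is closed under binary infima and arbitrary suprema (hence finite intersections and arbitrary unions of the corresponding opens), so if it is a subbase it is already the whole topology; this is a nice self-contained lattice argument, avoiding a detour through $\conify(\B)$.
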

\begin{proof}
  1.  We know that $\conify (\B)$ is a cone.  For every
  $u \in \conify (\B)$, we claim that $\_ \cdot u$ is continuous from
  $\Rp$ to $\conify (\B)$.  Using
  Proposition~\ref{prop:bary:alg:preorder:product}, it suffices to
  show that
  $(\_ \cdot u)^{-1} ((h^\cext)^{-1} (]1, \infty]))$---alternatively,
  $\{a \in \Rp \mid h^\cext (a \cdot u) > 1\}$---is Scott-open in
  $\Rp$ for every semi-concave lower semicontinuous map
  $h \colon \B \to \creal$.  If $u=0$, then
  $h^\cext (a \cdot u) = h^\cext (0) = 0$, so
  $\{a \in \Rp \mid h^\cext (a \cdot u) > 1\}$ is empty; if $u$ is of
  the form $(r, x)$ with $r > 0$, then
  $h^\cext (a \cdot u) = ar \, h (x)$, so
  $\{a \in \Rp \mid h^\cext (a \cdot u) > 1\}$ is empty if $h (x)=0$,
  the whole of $]0, \infty[$ if $h (x) = \infty$, and
  $]1/(r h (x)), \infty[$ otherwise.

  Let $a \in \Rp$.  Similarly, in order to see that $a \cdot \_$ is
  continuous, it suffices to show that
  $(a \cdot \_)^{-1} ((h^\cext)^{-1} (]1, \infty]))$ is open for every
  semi-concave lower semicontinuous map $h \colon \B \to \creal$.  But
  $(a \cdot \_)^{-1} ((h^\cext)^{-1} (]1, \infty])) = \{u \in \conify (\B)
  \mid h^\cext (a \cdot u) > 1\} = \{(r, x) \in \conify (\B) \diff \{0\}
  \mid ar \, h (x) > 1\} = ((ah)^\cext)^{-1} (]1, \infty])$.  Since $ah$
  is semi-concave and lower semicontinuous by Fact~\ref{fact:qaff},
  the latter set is open by
  Proposition~\ref{prop:bary:alg:preorder:product}.

  Let $v \in \conify (\B)$.  In order to show that $\_ + v$ is
  continuous, we once again rely on
  Proposition~\ref{prop:bary:alg:preorder:product}, and we show that
  $V \eqdef (\_ + v)^{-1} ((h^\cext)^{-1} (]1, \infty]))$ is open in
  $\conify (\B)$, for every semi-concave lower semicontinuous map
  $h \colon \B \to \creal$.  This is clear if $v=0$, since in that case
  $V = (h^\cext)^{-1} (]1, \infty])$ is open in the cone topology, by
  Proposition~\ref{prop:bary:alg:preorder:product}.  Hence we assume
  that $v\neq 0$, and we write $v$ as $(s, y)$, with $s > 0$.

  If $0 \in V$, then $h^\cext (v) > 1$.  For every $u \in \conify (\B)$,
  $v \leqc u+v$ (see Remark~\ref{rem:bary:alg:ord}), and since
  $(h^\cext)^{-1} (]1, \infty])$ is open in the cone topology, as we have
  just recalled, it is upwards-closed, and therefore $h^\cext (u+v) > 1$.
  This shows that every $u \in \conify (\B)$ is in
  $V = (\_ + v)^{-1} ((h^\cext)^{-1} (]1, \infty]))$, so
  $V = \conify (\B)$, which is open.

  We therefore assume that $0 \not\in V$.  For every point
  $(r, x) \in \conify (\B) \diff \{0\}$, $(r, x) \in V$ if and only if
  $h^\cext ((r, x) + (s, y)) > 1$, equivalently
  $(r+s) h (x +_{\frac r {r+s}} y) > 1$.  Let
  $f \colon (\Rp \diff \{0\})_\sigma \times \B \to \creal$ map
  $(r, x)$ to $(r + s) h (x +_{\frac {r} {r+s}} y)$: we have just
  shown that $V = f^{-1} (]1, \infty])$.  We claim that $V$ is open in
  the product topology on $(\Rp \diff \{0\})_\sigma \times \B$, and
  upwards-closed in $\conify (\B)$.

  By Proposition~\ref{prop:bary:alg:preorder:product},
  $(h^\cext)^{-1} (]1, \infty])$ is open in the cone topology, hence
  upwards-closed.  The function $\_ + v$ is monotonic by
  Proposition~\ref{prop:bary:alg:conify:ord}, item~1, so
  $V = (\_ + v)^{-1} ((h^\cext)^{-1} (]1, \infty]))$ is upwards-closed.

  With $x$ fixed, $f (\_, x)$ is the composition of
  $a \mapsto \frac 1 {1-a} h (x +_a y)$ (which is Scott-continuous by
  Lemma~\ref{lemma:semi-concave:lsc}) with
  $r \mapsto \frac {r} {r+s}$, which is also Scott-continuous from
  $\Rp \diff \{0\}$ to $]0, 1[$ (indeed, it is continuous with respect
  to the standard topology on the reals, and monotonic since we can
  write $\frac {r} {r+s}$ as $1 - \frac s {r+s}$); hence $f (\_, x)$
  is Scott-continuous.  With $r$ fixed, $f (r, \_)$ is continuous from
  $\B$ to $\creal$, too, since $h$ is, scalar multiplication by $r+s$
  is, and $x \mapsto x +_{\frac {r} {r+s}} y$ is continuous, owing to
  the fact that $\B$ is semitopological.  Therefore $f$ is separately
  continuous from $(\Rp \diff \{0\}) \times \B$ to $\creal$.  By
  Ershov's observation, and since $\Rp \diff \{0\}$ is a continuous
  poset (with $<$ as way-below relation), $f$ is jointly continuous.
  Hence $V = f^{-1} (]1, \infty])$ is open in the product topology on
  $(\Rp \diff \{0\})_\sigma \times \B$.

  2.  The map $\etac_{\B}$ is affine and injective.  In order to show
  that it is continuous, it suffices to show that
  $(\etac_{\B})^{-1} ((h^\cext)^{-1} (]1, \infty])$ is open in $\B$
  for every semi-concave lower semicontinuous map
  $h \colon \B \to \creal$, by
  Proposition~\ref{prop:bary:alg:preorder:product}.  Now
  $(\etac_{\B})^{-1} ((h^\cext)^{-1} (]1, \infty]) = (h^\cext \circ
  \etac_{\B})^{-1} (]1, \infty]) = h^{-1} (]1, \infty])$ (by
  Proposition~\ref{prop:bary:alg:conify:ord}, item~3), which is open
  since $h$ is lower semicontinuous.

  3.  Let $f \colon \B \to \C$ be semi-concave and continuous.  Since
  every semitopological barycentric algebra (resp., cone) is a
  preordered barycentric algebra (resp., cone) in its specialization
  preordering, and since continuous maps are monotonic with respect to
  the specialization preorderings, the uniqueness of $f^\cext$ follows
  from Proposition~\ref{prop:bary:alg:conify:ord}, item~3.  Similarly,
  if $f^\cext$ exists, then if is positively homogeneous, and if $f$ is
  concave (resp.\ affine), then it is superlinear (resp.\ linear).

  We recall that $f^\cext$ is defined by $f^\cext (0) \eqdef 0$ and
  $f^\cext (r, x) \eqdef r f (x)$.  It remains to show that $f^\cext$
  is continuous.  Let $V$ be an open subset of $\C$.  If $0 \in V$,
  then $V = \C$ since $0$ is least in $\C$, and then
  $(f^\cext)^{-1} (V) = \conify (\B)$ is open.  Hence we assume that
  $0 \not\in V$.  Then $(f^\cext)^{-1} (V)$ does not contain $0$, and
  we show that $(f^\cext)^{-1} (V)$ is open in
  $(\Rp \diff \{0\})_\sigma \times \B$ and upwards-closed in
  $\conify (\B)$.

  For every $u \in (f^\cext)^{-1} (V)$, for every $v \in \conify (\B)$
  such that $u \leqc v$, we verify that $v \in (f^\cext)^{-1} (V)$.
  Since $u \neq 0$, $v$ is also different from $0$, and we write $v$
  as $(s, y)$.  There is a point $u' \in \conify (\B)$ and an element
  $y_1 \in \B$ such that $u+u' = (s, y_1)$ and $y_1 \leq y$.  Then
  $f^\cext (v) = s f (y) \geq s f (y_1)$ (since $f$, being continuous,
  is monotonic) $= f^\cext (u+u') \geq f^\cext (u)$ (since $f$ is
  monotonic and $u+u' \geq u$ by Remark~\ref{rem:bary:alg:ord}).
  Since $V$ is upwards-closed, $f^\cext (v) \in V$.  Therefore
  $v \in (f^\cext)^{-1} (V)$.

  Next, we check that $(f^\cext)^{-1} (V)$ is open in
  $(\Rp \diff \{0\})_\sigma \times \B$.  The restriction of $f^\cext$
  to $(\Rp \diff \{0\})_\sigma \times \B$ is the function
  $(r, x) \mapsto r f (x)$.  This is Scott-continuous in $r$ and
  continuous in $x$, since $f$ is continuous.  By Ershov's
  observation, this restriction of $f^\cext$ is jointly continuous
  from $(\Rp \diff \{0\}) \times \B$ to $\creal$, so
  $(f^\cext)^{-1} (V)$ is open in $(\Rp \diff \{0\})_\sigma \times \B$
  with the product topology.

  4.  $(a) \limp (b)$: Let $i \colon \B \to \C$ be an affine
  topological embedding of $\B$ in a semitopological cone $\C$.  For
  every open subset $U$ of $\B$, there is an open subset $V$ of $\C$
  such that $U = i^{-1} (V)$.  By item~3, there is a linear continuous
  map $i^\cext \colon \conify (\B) \to \C$ such that
  $i^\cext \circ \etac_{\B} = i$.  Hence
  $U = (i^\cext \circ \etac_{\B})^{-1} (V) = (\etac_{\B})^{-1}
  ((i^\cext)^{-1} (V))$, showing that $U = (\etac_{\B})^{-1} (V')$
  where $V'$ is the open set $(i^\cext)^{-1} (V)$.  It follows that
  $\etac_{\B}$ is full, and therefore a topological embedding, using
  item~2.

  $(b) \limp (c)$.  By item $(b)$, the open subsets of $\B$ are exactly
  the sets of the form $(\etac_{\B})^{-1} (V)$ where $V$ is open in
  $\conify (\B)$.  If $V$ is the whole of $\conify (\B)$, then
  $(\etac_{\B})^{-1} (V) = \B$, which we can write as
  $h^{-1} (]1, \infty])$, where $h$ is the constant function $2$, say;
  clearly, this function is semi-concave and lower semicontinuous.  If
  $V$ is proper, then $V = (h^\cext)^{-1} (]1, \infty])$ for some
  semi-concave lower semicontinuous map $h \colon \B \to \creal$ by
  Proposition~\ref{prop:bary:alg:preorder:product}; then
  $(\etac_{\B})^{-1} (V) = (h^\cext \circ \etac_{\B})^{-1} (]1, \infty]) =
  h^{-1} (]1, \infty])$.

  $(c) \limp (d)$: this is clear.

  $(d) \limp (b) \limp (a)$.  We assume that the sets
  $h^{-1} (]1, \infty])$ where $h$ ranges over the semi-concave lower
  semicontinuous maps from $\B$ to $\creal$ form a subbase of the
  topology on $\B$.  Since $h^\cext \circ \etac_{\B} = h$, every
  subbasic open set $h^{-1} (]1, \infty])$ is equal to
  ${(\etac_{\B})}^{-1} ((h^\cext)^{-1} (]1, \infty]))$.  Hence
  $\etac_{\B}$ is full, and using item~2, $\etac_{\B}$ is a
  topological embedding.  This shows $(b)$, hence $(a)$, thanks to
  item~1.  \qed
\end{proof}

\begin{remark}
  \label{rem:bary:alg:f*}
  Let $\C$ be a semitopological cone, for example $\creal$.  Using
  Theorem~\ref{thm:conify:semitop}, item~3, there is a bijection
  $f \mapsto f^\cext$, with inverse $M \mapsto M \circ \etac_{\B}$,
  between semi-concave, resp.\ concave, resp.\ affine lower
  semicontinuous maps $f$ from $\B$ to $\C$ and positively
  homogeneous, resp.\ superlinear, resp.\ sublinear lower
  semicontinuous maps $M$ from $\conify (\B)$ to $\C$.  The only
  missing argument is that $M \circ \etac_{\B}$ is semi-concave,
  resp.\ concave, resp.\ affine if $M$ is positively homogeneous,
  resp.\ superlinear, resp.\ sublinear, and this is easily verified,
  using Fact~\ref{fact:qaff:homog} notably.  Additionally, by
  definition of $f^\cext$, $f \mapsto f^\cext$ is monotonic, preserves
  pointwise suprema, binary pointwise infima, sums and scalar products
  of semi-concave lower semicontinuous maps, and so does its inverse
  $M \mapsto M \circ \etac_{\B}$.
\end{remark}

We now have enough to exhibit our prime example of a non-embeddable
semitopological barycentric algebra.
\begin{example}
  \label{exa:bary:alg:notembed}
  Let $\B^- \eqdef {]-\infty, 0]}$, with the Scott topology of its
  usual ordering.  We define $x +_a y$ for all $x, y \in \B^-$ and
  $a \in [0, 1]$ as $a \cdot x + (1-a) \cdot y$.
  \begin{enumerate}
  \item With these operations, $\B^-$ is a topological cone.
  \item The only semi-concave lower semicontinuous maps on $\B^-$ are
    the constant maps, and therefore $\B^-$ is not embeddable.
  \item The injective affine continuous map $\etac_{\B}$ is not a
    topological embedding.
  \item The open subsets in the cone topology on $\conify (\B^-)$ are
    $\conify (\B^-)$, the empty set, and the sets
    $\Open_t \eqdef \{(s, x) \in \Rp \times \B^- \mid s > t\}$,
    $t \in \Rp$.
  \item The specialization preordering of $\conify (\B^-)$ is given by:
    $u$ is less than or equal to $v$ if and only if
    $\ell (u) \leq \ell (v)$, where the level $\ell (u)$ of an element
    $u$ is defined by $\ell (0) \eqdef 0$, $\ell (r, x) \eqdef r$.
  \item $\conify (\B)$ is not $T_0$, although $\B^-$ is.
  \end{enumerate}
\end{example}
\begin{proof}
  1.  The barycentric algebra equations are obvious.  Let
  $(x_0, a_0, y_0) \in \B^- \times [0, 1] \times \B^-$ and let us
  consider a basic open neighborhood ${]b, 0]}$ of $x_0 +_{a_0} y_0$
  in $\B^-$.  We have $a_0x_0+(1-a_0)y_0 > b$.  Let $\epsilon > 0$ be
  such that: $(a)$
  $a_0 x_0 + (1-a_0) y_0 + \epsilon (x_0 + y_0 -1) > b$.

  For every $a \in [0, 1]$ such that $|a-a_0|< \epsilon$, for every
  $x > x_0-\epsilon$, for every $y > y_0 - \epsilon$, we have
  $ax+(1-a)y > a (x_0-\epsilon) + (1-a) (y_0-\epsilon)$ (the large
  inequality $\geq$ is clear, and the inequality is strict because at
  least one of $a$, $1-a$ is non-zero)
  $= a x_0 + (1-a) y_0 - \epsilon \geq (a_0+\epsilon) x_0 +
  (1-a_0+\epsilon) y_0 - \epsilon$ (taking into account that
  $x_0, y_0 \leq 0$)
  $= a_0 x_0 + (1-a_0) y_0 + \epsilon (x_0 + y_0 -1) > b$ (by
  condition $(a)$).

  Hence the triples $(x, a, y)$ in $\B^- \times [0, 1] \times \B^-$
  such that $a \in {]a_0 - \epsilon, a_0 + \epsilon[}$,
  $x > x_0 - \epsilon$ and $y > y_0 - \epsilon$, which form an open
  set, are all mapped to ${]b, 0]}$ by the $\_ +_\_ \_$ operation: the
  $\_ +_\_ \_$ operation is jointly continuous.

  2.  Let $C$ be a closed subset of $\B^-$ of the form
  $h^{-1} ([0, 1])$ with $h$ semi-concave and lower semicontinuous.
  We claim that $C$ is trivial, namely empty or equal to the whole of
  $\B^-$.  To this end, we assume that $C$ is non-empty.  By
  definition of the Scott topology, it must be of the form
  $]-\infty, a_0]$ for some $a_0 \in {]-\infty, 0]}$.  Since
  $a_0 \in C$, $h (a_0) \leq 1$.  For every $n \in \nat$,
  $a_0 = 0 +_{1-1/2^n} (a_0 2^n)$, so $h (a_0) \geq (1-1/2^n) h (0)$,
  since $h$ is semi-concave.  Taking suprema over $n \in \nat$, $h
  (a_0) \geq h (0)$, so $h (0) \leq 1$.  It follows that $0 \in C$,
  hence $a_0 = 0$ and $C = \B^-$.

  Let $h \colon \B^- \to \creal$ be any semi-concave lower
  semicontinuous map.  If $h$ is not constant, we can find two points
  $x$ and $y$ such that $h (x) < h (y)$.  Let $a \in \Rp$ be such that
  $h (x) < a < h (y)$.  Then $(1/a)h$ is semi-concave and lower
  semicontinuous by Remark~\ref{fact:qaff}.  It follows that
  $((1/a)h)^{-1} ([0, 1])$ is trivial.  But $((1/a)h)^{-1} ([0, 1])$
  contains $x$ and not $y$, and this is impossible.  Therefore $h$ is
  constant.  Conversely, every constant map is semi-concave and lower
  semicontinuous by Remark~\ref{fact:qaff}.

  Any non-trivial closed subset, such as $]-\infty, a]$ with $a < 0$,
  shows that item~4$(d)$ of Theorem~\ref{thm:conify:semitop} fails.
  Hence item~4$(a)$ fails, too: $\B^-$ is not embeddable.

  3.  Because item~4$(b)$ of Theorem~\ref{thm:conify:semitop} must
  also fail.

  4.  The only semi-concave lower semicontinuous maps from $\B^-$ to
  $\creal$ are the constant maps.  For a constant map $h$ with value
  $a$, $h^\cext$ maps $0$ to $0$ and $(r, x)$ to $ra$.  Hence
  ${(h^\cext)}^{-1} (]1, \infty])$ is
  $\{(r, x) \in \conify (\B) \diff \{0\} \mid r > 1/a\} = \Open_{1/a}$
  if $a \neq 0, \infty$, the empty set if $a=0$, and $\Open_0$ if
  $a=\infty$.

  5.  By item~4, $u$ is less than or equal to $v$ in the
  specialization preordering of $\conify (\B^-)$ if and only if for
  every $a \in \creal$, $u \in \Open_a$ implies $v \in \Open_a$.  If
  $u=0$, this is always true.  Let therefore $u$ be of the form
  $(r, x)$, with $r \in \Rp \diff \{0\}$.  Then $u$ is less than or
  equal to $v$ if and only if for every $a \in \creal$ such that
  $a < r$, $v$ is of the form $(s, y)$ with $a < s$; that is
  equivalent to $r \leq s$.

  6.  By item~5, any two elements with the same level are less than or
  equal to each other.  \qed
\end{proof}

\begin{theorem}
  \label{thm:bary:alg:topo}
  For every topological barycentric algebra $\B$, $\conify (\B)$ is a
  topological cone.
\end{theorem}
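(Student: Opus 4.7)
The plan is to upgrade the separate continuity of the cone operations on $\conify(\B)$—established in Theorem~\ref{thm:conify:semitop}(1)—to joint continuity, invoking the hypothesis that $(x,a,y)\mapsto x+_a y$ is jointly continuous on $\B\times[0,1]\times\B$. For scalar multiplication $\cdot\colon\Rp\times\conify(\B)\to\conify(\B)$, separate continuity immediately promotes to joint continuity by Ershov's observation, since $\Rp$ in its Scott topology is a c-space.

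For addition, Proposition~\ref{prop:bary:alg:preorder:product} reduces the task to showing that $W_h:=\{(u,v)\in\conify(\B)^2:h^\cext(u+v)>1\}$ is open for every semi-concave lower semicontinuous $h\colon\B\to\creal$. I split the argument on whether $u_0,v_0$ are zero. When $u_0=0$, Remark~\ref{rem:bary:alg:ord} gives $v\leqc u+v$, and since $(h^\cext)^{-1}(]1,\infty])$ is $\leqc$-upward-closed, the open rectangle $\conify(\B)\times(h^\cext)^{-1}(]1,\infty])$ is contained in $W_h$; the case $v_0=0$ is symmetric.

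The main case is $u_0=(r_0,x_0)$ and $v_0=(s_0,y_0)$ both non-zero, where $h^\cext(u_0+v_0)=(r_0+s_0)\,h(x_0+_{a_0}y_0)>1$ with $a_0:=r_0/(r_0+s_0)$. My plan is to construct explicit semi-concave lower semicontinuous maps $h_1,h_2\colon\B\to\creal$—along the lines of $h_1(x):=\lambda\,h(x+_{a_0}y_0)$ and $h_2(y):=\mu\,h(x_0+_{a_0}y)$ for suitable positive $\lambda,\mu$—whose semi-concavity follows from the identity $(x_1+_a x_2)+_{a_0}y_0=(x_1+_{a_0}y_0)+_a(x_2+_{a_0}y_0)$ combined with the semi-concavity of $h$, so that $u_0\in V_1:=(h_1^\cext)^{-1}(]1,\infty])$, $v_0\in V_2:=(h_2^\cext)^{-1}(]1,\infty])$, and $V_1\times V_2\subseteq W_h$.

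The last inclusion is the main obstacle: the map $(r,s)\mapsto r/(r+s)$ fails to be Scott-continuous to $[0,1]$ with its metric topology, so the joint continuity of $+_a$ on $\B$ does not transfer through a naive neighbourhood argument. I plan to close the gap by exploiting the semi-concavity bounds $h(x+_a y)\geq a\,h(x)$ and $h(x+_a y)\geq(1-a)\,h(y)$ together with the monotonicity of $(r,s)\mapsto(r+s)\,h(x+_{r/(r+s)}y)$ in each of $r$ and $s$ supplied by Lemma~\ref{lemma:semi-concave:lsc}, so that growth in $r+s$ compensates for whatever value the ratio $r/(r+s)$ takes; this allows $V_1,V_2$ to be widened into genuine open neighbourhoods whose product still lies inside $W_h$.
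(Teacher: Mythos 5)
Your skeleton matches the paper's: treat the cases with $u_0=0$ or $v_0=0$ via $\leqc$-upward-closedness, promote scalar multiplication via Ershov, and isolate addition at non-zero points as the hard step. But the remedy you sketch for the main case does not close the gap you correctly identify. The semi-concavity bounds relate $F(r,s,x,y):=(r+s)\,h(x+_{r/(r+s)}y)$ to $h(x)$ and $h(y)$---namely $F \geq r\,h(x)$ and $F\geq s\,h(y)$---but what you control is $r\,h_1(x)=r\lambda\,h(x+_{a_0}y_0)$ and $s\,h_2(y)$, and semi-concavity relates $h(x+_{a_0}y_0)$ to $h(x)$ in the wrong direction: it yields $h(x+_{a_0}y_0)\geq a_0 h(x)$, hence an \emph{upper} bound on $h(x)$, not a lower one. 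So $r\,h_1(x)>1$ is compatible with $r\,h(x)$ being arbitrarily small, and the inclusion $V_1\times V_2\subseteq W_h$ fails in general. The monotonicity supplied by Lemma~\ref{lemma:semi-concave:lsc} lets you extend a neighbourhood upward in $r$ and $s$ once you already have one; it cannot manufacture the initial neighbourhood, because $h(x+_{r/(r+s)}y)$ does not factor as $x$ and $y$ vary.

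The missing ingredient is the one you announce but never actually deploy: joint continuity of $(x,a,y)\mapsto x+_a y$ on $\B$. The paper bypasses semi-concave maps entirely in the main case and works directly with the pointed product topology on $\conify(\B)$: from an open rectangle ${]a,\infty[}\times A\subseteq W$ around $u_0+v_0=(r_0+s_0,\,x_0+_{a_0}y_0)$, joint continuity of $+_a$ on $\B$ gives a metric-topology box $U\times{]a_0-\epsilon,a_0+\epsilon[}\times V$ around $(x_0,a_0,y_0)$ landing in $A$; the metric continuity of $(r,s)\mapsto r/(r+s)$ then shrinks this to a small metric box around $(r_0,s_0)$; and only then does one extend to the Scott-open half-lines ${]r_0-\eta,\infty[}$, ${]s_0-\eta,\infty[}$ by writing $(r',x')=(r,x')+(r'-r,x')$ (Remark~\ref{rem:bary:alg:ord}) and using that $W$ is $\leqc$-upward-closed. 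That final extension step is the correct role for the monotonicity you invoke; everything preceding it is what your proposal is missing.
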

\begin{proof}
  The cone $\conify (\B)$ is semitopological by
  Theorem~\ref{thm:conify:semitop}, item~1.  It remains to show that
  $+$ is jointly continuous on $\conify (\B)$.
  % We use the alternative
  % description of the cone topology given in
  % Proposition~\ref{prop:bary:alg:preorder:product}.

  Let $W$ be an open subset of $\conify (\B)$.  We claim that
  $+^{-1} (W)$ is open in $\conify (\B) \times \conify (\B)$.  This is
  clear if $W = \conify (\B)$, since then
  $+^{-1} (W) = \conify (\B) \times \conify (\B)$.  Hence we will assume
  that $W$ is proper.

  Let $(u, v) \in +^{-1} (W)$.  If $u=0$, then $v \in W$, and for
  every $u' \in \conify' (\B)$ and for every $v' \in W$,
  $u'+v' \geq 0+v'$ (since $0$ is least and $+$ is monotonic in a
  preordered cone) $= v' \in W$, so $\conify (\B) \times W$ is an open
  neighborhood of $(u, v)$ that is included in $+^{-1} (W)$.  The
  argument is similar if $v=0$.

  Let therefore $u$ be written as $(r, x)$ and $v$ as $(s, y)$, where
  $r, s > 0$.  Since $u+v \in W$ and $W$ is proper, there is an open
  rectangle ${]a, \infty[} \times A$ containing $u+v$ and included in
  $W$, where $a > 0$ and $A$ is open in $\B$.  Since
  $u+v = (r+s, x +_{\frac r {r+s}} y)$, we have $r+s > a$ and
  $x +_{\frac r {r+s}} y \in A$.  $\B$ is topological, so there are an
  $\epsilon > 0$, an open neighborhood $U$ of $x$ and an open
  neighborhood $V$ of $y$ such that for every
  $b \in {]\frac r {r+s} - \epsilon, \frac r {r+s} + \epsilon[}$, for
  every $x' \in U$ and for every $y' \in V$, $x' +_b y \in A$.  We
  take $\epsilon$ so small that $0 < \frac r {r+s} - \epsilon$ and
  $\frac r {r+s} + \epsilon < 1$.  The function
  $(r', s') \mapsto \frac {r'} {r'+s'}$ is (jointly) continuous from
  ${]0, \infty[} \times {]0, \infty[}$ to $]0, \infty[$, where
  $]0, \infty[$ is given its usual metric topology, and it takes the
  value $\frac r {r+s}$ at $(r', s') \eqdef (r, s)$.  Hence there is
  an $\eta > 0$ such that for every $r' \in {]r-\eta, r+\eta[}$ and
  every $s' \in {]s-\eta, s+\eta[}$, $\frac {r'} {r'+s'}$ is in
  ${]\frac r {r+s} - \epsilon, \frac r {r+s} + \epsilon[}$.  We can
  take $\eta$ so small that $r - \eta > 0$, $s - \eta > 0$,
  $r + \eta < 1$, $s + \eta < 1$, and $2\eta < r+s-a$, too.  We
  consider the open rectangle
  $({]r-\eta, \infty[} \times U) \times ({]r-\eta, \infty[} \times
  V)$.  By construction, this open rectangle contains
  $((r, x), (s, y))$.

  For every pair of points $(r', x')$ and $(s', y')$ in the open
  rectangle
  $({]r-\eta, \infty[} \times U) \times ({]r-\eta, \infty[} \times
  V)$, we claim that $(r', x') + (s', y') \in +^{-1} (W)$.

  \emph{Case~1.}  We verify that $(r', x') + (s', y') \in +^{-1} (W)$
  when $(r', x') \in {]r-\eta, r+\eta[} \times U$ and
  $(s', y') \in {]r-\eta, r+\eta[} \times V$.  In that case,
  $(r', x') + (s', y') = (r'+s', x' +_{\frac {r'} {r'+s}} y)$ is such
  that $r'+s' > r+s-2\eta > a$, since $2\eta < r+s-a$; since
  $r' \in {]r-\eta, r+\eta[}$ and $s' \in {]s-\eta, s+\eta[}$,
  $\frac {r'} {r'+s'}$ is in
  ${]\frac r {r+s} - \epsilon, \frac r {r+s} + \epsilon[}$, and using
  the fact that $x' \in U$ and $y' \in V$,
  $x' +_{\frac {r'} {r'+s'}} y$ is in $A$.  Therefore
  $(r', x') + (s', y') \in {]a, \infty[} \times A \subseteq W$.

  \emph{Case~2.}  We verify that $(r', x') + (s', y') \in +^{-1} (W)$
  when $(r', x') \in {[r, \infty[} \times U$ and
  $(s', y') \in {]s-\eta, s+\eta[} \times V$.  Since $r' \geq r$, we
  can write $(r', x')$ as $(r, x') + (r'-r, x')$ (see Remark~\ref{rem:bary:alg:ord}).
  Then $(r', x') + (s', y') = ((r, x') + (s', y')) + (r'-r, x')$ is
  larger than or equal to $(r, x') + (s', y')$, and the latter is in
  $W$ by Case~1.  Since $W$ is upwards-closed,
  $(r', x') + (s', y') \in W$.

  Together with Case~1, we have shown that $(r', x') + (s', y') \in +^{-1} (W)$
  for all $(r', x') \in {]r-\eta, \infty[} \times U$ and
  $(s', y') \in {]s-\eta, s+\eta[} \times V$.

  \emph{Case~3.}  For every $(r', x') \in {]r-\eta, \infty[} \times U$
  and for every $(s', y') \in {[s, \infty[} \times V$, we write
  $(s', y')$ as $(s, y') + (s'-s, y)$, using the fact that $s' \geq s$
  (see Remark~\ref{rem:bary:alg:ord}).  Then
  $(r', x') + (s', y') = ((r', x') + (s, y')) + (s'-s, y')$ is larger
  than or equal to $(r', x') + (s, y')$, which is in $W$ by Cases~1
  and~2.  Since $W$ is upwards-closed, $(r', x') + (s', y')$ is in
  $W$.  \qed
\end{proof}

\subsection{The free semitopological cone over $\Val_1 X$}
\label{sec:free-semit-cone}

Since one of our primary examples of barycentric algebras is
$\Val_1 X$, it is interesting to note that
$\Val_b X \cong \conify (\Val_1 X)$.  This is less trivial than it may
seem, and in order to prove it, we will need some preparation, and
notably a decomposition lemma due to Matthias Schr\"oder and Alex
Simpson.  This lemma is implicit in \cite[paragraph after
Theorem~4.2]{SchSimp:obs}, and is stated in
\cite[Slide~38]{Simpson:2009}.  Since no proof seems to exist in
print, we include one here, as Proposition~\ref{prop:schsimp:decomp}
below; but the result and the proof must be credited to Matthias
Schr\"oder and Alex Simpson.

In order to deal with it in full generality, we extend the notion of
valuation slightly.  A \emph{lattice of subsets} $\Latt$ of $X$ is a
collection of subsets of $X$ that is closed under finite intersection
and finite union.  Extending our previous notion, a \emph{valuation}
on $(X, \Latt)$ is a strict, modular and monotonic (Scott-continuity
is not required) map from $\Latt$ to $\creal$; it is \emph{bounded} if
$\nu (X) < \infty$.
% {prop:restr}, extension.tex
An \emph{algebra of subsets} if a lattice of subsets that is closed
under complements.  Let $\mathcal A (\Latt)$ be the smallest algebra
of subsets of $X$ containing $\Latt$; its elements are the finite
disjoint unions of \emph{crescents}, where a crescent on $\Latt$ is a
difference $U \diff V$ of two elements $U, V \in \Latt$.  The
\emph{Smiley-Horn-Tarski theorem} states that every bounded valuation
$\nu$ on $(X, \Latt)$ extends to a unique valuation on
$(X, \mathcal A (\Latt))$---still written $\nu$---, which is bounded,
and satisfies
$\nu (U \diff V) = \nu (U) - \nu (U \cap V) = \nu (U \cup V) - \nu
(V)$ for all $U, V \in \Latt$.  This is sometimes attributed to Billy
J. Pettis \cite{Pettis:ext}, but similar results had been proved by
Malcolm F. \cite{smiley44} and Alfred Horn and Alfred Tarski
\cite{HornTarski48:ext}.

Given any bounded valuation $\nu$ on $(X, \Latt)$ and crescent $C$ on
$\Latt$, one can then define a new bounded valuation $\nu_{|C}$ on $(X,
\Latt)$ by $\nu_{|C} (U) \eqdef \nu (U \cap C)$ for every $U \in
\Latt$.  This is sometimes called the restriction of $\nu$ to $C$, but
we prefer to use the term \emph{constriction}: we wish to avoid
confusing this with, say, the restriction of $\nu$ to a smaller
lattice of subsets.

When $X$ is a topological space, $\Latt = \Open X$ and $\nu$ is a
bounded \emph{continuous} valuation, then $\nu_{|C}$ is a bounded
continuous valuation.  Indeed, let us write $C$ as $U \diff V$, where
$U, V \in \Open X$.  For every directed family ${(U_i)}_{i \in I}$ of
open subsets of $X$,
$\nu_{|C} (\bigcup_{i \in I} U_i) = \nu (\bigcup_{i \in I} U_i \cap U
\diff V) = \nu ((\bigcup_{i \in I} U_i \cap U) \cup V) - \nu (V) =
\sup_{i \in I} \nu ((U_i \cap U) \cup V) - \nu (V) = \sup_{i \in I}
\nu_{|C} (U_i)$.

Here is the announced Schr\"oder-Simpson decomposition lemma.
\begin{proposition}[Schr\"oder-Simpson]
  \label{prop:schsimp:decomp}
  Let $X$ be a set with a lattice of subsets $\Latt$, and $\mu$ and
  $\nu$ be two bounded valuations on $(X, \Latt)$.  Let
  $L \eqdef \{U_1, \cdots, U_n\}$ be a finite lattice of subsets of
  $X$ included in $\Latt$, and let us assume that
  $\mu (U_i) \leq \nu (U_i)$ for every $i$, $1\leq i\leq n$.  Then
  there are two bounded valuations $\nu_1$ and $\nu_2$ on $(X, \Latt)$
  such that:
  \begin{enumerate}
  \item $\nu = \nu_1+\nu_2$;
  \item $\mu (U_i) \leq \nu_1 (U_i)$ for every $i$, $1\leq i\leq n$;
  \item and $\mu (X) = \nu_1 (X)$.
  \end{enumerate}
  Additionally, $\nu_1$ and $\nu_2$ are finite linear combinations of
  constrictions $\nu_{|C}$ of $\nu$ to pairwise disjoint crescents $C$.
\end{proposition}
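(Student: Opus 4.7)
The plan is to build $\nu_1$ and $\nu_2$ as linear combinations of constrictions of $\nu$ to the atoms of the finite Boolean algebra $\mathcal{A}(L)$ generated by $L$. Since $L$ is a finite lattice of subsets closed under $\cap$ and $\cup$, $\mathcal{A}(L)$ is a finite Boolean algebra whose atoms $A_1, \ldots, A_k$ partition $X$. Each atom has the form $\bigcap_{i \in I} U_i \setminus \bigcup_{i \notin I} U_i$ for some $I \subseteq \{1, \ldots, n\}$, which is a crescent on $L$, hence on $\Latt$. By Smiley--Horn--Tarski, each $U_i$ is a disjoint union $\bigsqcup_{j \in S_i} A_j$ for a unique $S_i \subseteq \{1, \ldots, k\}$, and $\nu = \sum_{j=1}^k \nu_{|A_j}$ as a bounded valuation on $(X, \Latt)$.

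I would then seek coefficients $\lambda_j \in [0, 1]$ and set
\[
  \nu_1 \eqdef \sum_{j=1}^k \lambda_j\, \nu_{|A_j}, \qquad
  \nu_2 \eqdef \sum_{j=1}^k (1-\lambda_j)\, \nu_{|A_j}.
\]
Automatically $\nu_1 + \nu_2 = \nu$, and both are finite linear combinations of constrictions of $\nu$ to the pairwise disjoint crescents $A_j$, as required. The remaining conditions $\mu(U_i) \le \nu_1(U_i)$ for $1 \le i \le n$ and $\nu_1(X) = \mu(X)$ reduce to the finite linear system
\[
  \sum_{j \in S_i} \lambda_j\, \nu(A_j) \ge \mu(U_i) \quad (1 \le i \le n), \qquad
  \sum_{j=1}^k \lambda_j\, \nu(A_j) = \mu(X),
\]
in the unknowns $\lambda_1, \ldots, \lambda_k \in [0, 1]$.

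The main obstacle is proving that this linear system is feasible. My approach is to recast it as a transportation / max-flow question on the bipartite network with atoms $A_j$ as supplies of capacity $\nu(A_j)$, each $U_i \in L$ imposing a minimal intake $\mu(U_i)$, infinite-capacity arcs from $A_j$ to those $U_i$ with $A_j \subseteq U_i$, and a universal sink collecting exactly $\mu(X)$ units. Feasibility reduces, via max-flow / min-cut, to a family of Hall-type inequalities $\mu(\bigcup T) \le \nu(\bigcup T)$ for subfamilies $T \subseteq L$; these follow from the pointwise hypothesis $\mu(U_i) \le \nu(U_i)$ together with the modularity of $\mu$ and $\nu$, by an inclusion--exclusion computation over intersections of elements of $T$, which stay in $L$ because $L$ is closed under $\cap$ and $\cup$. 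Once some $\lambda \in [0, 1]^k$ satisfying the $n$ lower-bound inequalities is obtained, the exact equality $\sum_j \lambda_j\, \nu(A_j) = \mu(X)$ is recovered by decreasing coordinates along directions that preserve those inequalities, using that the feasible region is a non-empty convex polytope in $[0, 1]^k$ and that $\mu(X)$ lies in the range of the continuous mass map $\lambda \mapsto \sum_j \lambda_j \nu(A_j)$ on this polytope (between its min, bounded above by $\mu(X)$ via the same Hall argument, and its max $\nu(X) \ge \mu(X)$).
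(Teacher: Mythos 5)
Your ansatz---build $\nu_1$ and $\nu_2$ as convex combinations $\sum_j \lambda_j \nu_{|A_j}$ and $\sum_j (1-\lambda_j)\nu_{|A_j}$ over the atoms of the Boolean algebra generated by $L$---is exactly the paper's structure: those atoms are precisely the crescents $C_I = \bigcap_{i \in I} U_i \setminus \bigcup_{i \notin I} U_i$, and the paper also defines $\nu_1 = \sum_J c_J \nu_{|C_J}$, $\nu_2 = \sum_J (1-c_J)\nu_{|C_J}$ for certain $c_J \in [0,1]$. So the target solution shape is right; the question is how to produce the coefficients.

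Here the proposal has a genuine gap. The constraints $\sum_{j : A_j \subseteq U_i} \lambda_j \nu(A_j) \geq \mu(U_i)$ are not demands in a transportation problem: each atom $A_j$ contributes the \emph{same} quantity $\lambda_j \nu(A_j)$ to \emph{every} $U_i$ that contains it, rather than splitting its supply among destinations. Your network (``infinite-capacity arcs from $A_j$ to $U_i$, each $U_i$ imposing a minimal intake, a universal sink'') models a split-flow problem, which is not the LP you wrote down. Correspondingly, the Hall-type inequalities $\mu(\bigcup T) \leq \nu(\bigcup T)$ are indeed true (trivially, since $L$ is union-closed so $\bigcup T \in L$), but they are not the min-cut/feasibility certificate for your LP. And the closing step---``the exact equality $\sum_j \lambda_j \nu(A_j) = \mu(X)$ is recovered by decreasing coordinates, with the minimum of the mass over the polytope bounded above by $\mu(X)$ via the same Hall argument''---is not substantiated: if $X \in L$ (as the rest of the proof tacitly assumes), then one of your constraints already forces $\sum_j \lambda_j \nu(A_j) \geq \mu(X)$, so what you actually need is that the minimum over the polytope \emph{equals} $\mu(X)$, and the duality computation needed for that does not reduce to the Hall inequalities you cite.

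What the paper does instead is to change the combinatorial model. It maps $X$ to the finite poset $M = \mathbb{P}(\{1,\dots,n\})$ by $f(x) = \{i \mid x \in U_i\}$, so that $C_I = f^{-1}(\{I\})$ and $f^{-1}(\upc I) = \bigcap_{i \in I} U_i \in L$. The pushforward valuations $f[\mu]$ and $f[\nu]$ are then simple valuations on $M$, and the hypothesis $\mu(U_i) \leq \nu(U_i)$ translates exactly into $f[\mu] \leq f[\nu]$ in the stochastic order on $M$. Jones' splitting lemma---Strassen's theorem for finite posets, a stochastic-domination result, not a bipartite marriage theorem---yields a transport matrix ${(t_{IJ})}$ supported on $I \subseteq J$ with $\sum_J t_{IJ} = \mu(C_I)$ and $\sum_I t_{IJ} \leq \nu(C_J)$, and the paper then simply takes $\lambda_J = c_J \eqdef (\sum_I t_{IJ})/\nu(C_J)$. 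With this choice, $\nu_1(X) = \sum_J c_J \nu(C_J) = \sum_{I,J} t_{IJ} = \sum_I \mu(C_I) = \mu(X)$ is automatic, and $\nu_1(U_i) \geq \mu(U_i)$ follows by moving the constraint $i \in J$ to $i \in I$ using $I \subseteq J$. So the mass equality, which is the sticking point in your sketch, comes for free from the structure of the transport matrix. To repair your proof you would need to replace the bipartite flow model by a supply-demand problem on the poset $M$ (which is precisely what the splitting lemma packages), rather than on the bipartite graph between atoms and sets.
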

\begin{proof}
  We extend $\mu$ and $\nu$ uniquely to bounded valuations on
  $\mathcal A (\Latt)$, using the Smiley-Horn-Tarski theorem.

  Let $M$ be $\pow (\{1, \cdots, n\})$, ordered by inclusion.  We will
  silently see $M$ as a topological space, with the Alexandroff
  topology of inclusion, that is, its open sets are the upwards-closed
  subsets of $M$.  For each $I \in M$, let $C_I$ be the crescent
  $\bigcap_{i \in I} U_i \diff \bigcup_{i \not\in I} U_i$, and let us
  note that the sets $C_I$ are pairwise disjoint.

  We define a map $f \colon X \to M$ by letting $f (x)$ be the set of
  indices $i$, $1\leq i\leq n$, such that $x \in U_i$, and we note
  that $C_I = f^{-1} (\{I\})$ for every $I \in M$.  It follows that
  for every $A \subseteq M$, $f^{-1} (A)$ is the disjoint union
  $\bigcup_{I \in A} C_I$.

  Given any fixed element $I$ of $M$, let us show that
  $f^{-1} (\upc I) = \bigcap_{i \in I} U_i$.  For every
  $x \in f^{-1} (\upc I)$, $f (x)$ contains $I$, so for every
  $i \in I$, $i \in f (x)$, meaning that $x$ is in $U_i$: in other
  words, $x \in \bigcap_{i \in I} U_i$.  Conversely, for every
  $x \in \bigcap_{i \in I} U_i$, $f (x)$ must contain at least all the
  indices in $I$, so $I \subseteq f (x)$, meaning that
  $x \in f^{-1} (\upc I)$.

  Since $L$ is closed under finite intersections, $f^{-1} (\upc I)$ is
  in $L$.

  For every upwards-closed subset $A$ of $M$, $A$ is equal to the
  union of all sets $\upc I$, $I \in A$, so
  $f^{-1} (A) = \bigcup_{I \in A} f^{-1} (\upc I)$.  Since $L$ is
  closed under finite unions, $f^{-1} (A) \in L$ for every
  upwards-closed subset $A$ of $M$.

  There is an image valuation $f [\mu]$, defined by
  $f [\mu] (A) \eqdef \mu (f^{-1} (A))$ for every open
  (upwards-closed) subset $A$ of $M$, and it is bounded; similarly
  with $f [\nu]$.  We observe that $f [\mu] \leq f [\nu]$.  Indeed,
  for every upwards-closed subset $A$ of $M$,
  $f [\mu] (A) = \mu (f^{-1} (A))$ and
  $f [\nu] (A) = \nu (f^{-1} (A))$.  Since $f^{-1} (A)$ is in $L$, it
  is equal to $U_i$ for some $i$, and the claim follows from our
  assumption that $\mu (U_i) \leq \nu (U_i)$.

  Since $M$ is finite, $f [\mu]$ and $f [\nu]$ are simple valuations.
  Explicitly, $f [\mu] = \sum_{I \in M} a_I \delta_I$ where
  $a_I \eqdef \mu (C_I)$.  Indeed, for every (upwards-closed) subset
  $A$ of $M$, we write $f^{-1} (A)$ as the disjoint union of crescents
  $C_I$, $I \in A$, so $\mu (f^{-1} (A)) = \sum_{I \in A} \mu (C_I)$.
  Similarly, let us write $f [\nu]$ as $\sum_{J \in M} b_J \delta_J$.
  
  We can now use Claire Jones' splitting lemma.  This was first stated
  in \cite[Theorem 4.10]{Jones:proba} (for subprobability valuations,
  although that is not immediately apparent), and we will use the
  slightly more general version of \cite[Proposition
  IV-9.18]{GHKLMS:contlatt}.  Two simple valuations, such as $f [\mu]$
  and $f [\nu]$ above, as such that the first one is less than or
  equal to the second one in the stochastic ordering if and only if
  there is a \emph{transport matrix} ${(t_{IJ})}_{I, J \in M}$ of
  non-negative real numbers such that: $t_{IJ}\neq 0$ implies
  $I \subseteq J$, $\sum_{J \in M} t_{IJ} = a_I$ for every $I \in M$,
  and $\sum_{I \in M} t_{IJ} \leq b_J$ for every $J \in M$.  We define
  $\nu_1$ as $\sum_{J \in M} c_J \nu_{|C_J}$ and $\nu_2$ as
  $\sum_{J \in M} (1-c_J) \nu_{|C_J}$, where
  $c_J \eqdef \frac {\sum_{I \in M} t_{IJ}} {b_J}$ for every
  $J \in M$, provided that $b_J \neq 0$, and $c_J = 0$ otherwise.
  Those are bounded valuations, and they are continuous if $\Latt$ is
  a topology and $\mu$ and $\nu$ are continuous.

  1. By construction, $\nu_1 + \nu_2 = \sum_{J \in M} \nu_{|C_J}$, and
  the latter equals $\nu$: for every $U \in \Latt$,
  $\sum_{J \in M} \nu_{|C_J} (U) = \sum_{J \in M} \nu (U \cap C_J) =
  \nu (\bigcup_{J \in M} U \cap C_J) = \nu (U \cap \bigcup_{J \in M}
  C_J) = \nu (U)$, because $X$ is the disjoint union of all $C_J$s
  ($\bigcup_{J \in M} C_J = \bigcup_{J \in M} f^{-1} (\{J\}) = f^{-1}
  (M) = X$).

  2. For every $i$, $1\leq i\leq n$, we check that
  $\mu (U_i) \leq \nu_1 (U_i)$.  Let
  $A \eqdef \{I \in M \mid i \in I\} = \upc \{i\}$.  This is
  upwards-closed in $M$, and $U_i$ is the disjoint union
  $\bigcup_{I \in A} C_I$, because the latter is equal to
  $\bigcup_{I \in A} f^{-1} (\{I\}) = f^{-1} (\upc \{i\}) = U_i$.
  Also, $\mu (C_I) = \mu (f^{-1} (\{I\})) = a_I$.  Then
  $\mu (U_i) = \sum_{I \in A} \mu (C_I) = \sum_{I \in A} a_I = \sum_{I
    \in A, J \in M} t_{IJ}$.  The terms $t_{IJ}$ with $I \in A$ and
  $J \not\in A$ are zero, and can be removed from the sum; indeed, if
  $t_{IJ}$ were non-zero, then $I \subseteq J$, and since $I \in A$
  and $A$ is upwards-closed, $J$ would also be in $A$.  Therefore
  $\mu (U_i) = \sum_{I, J \in A} t_{IJ}$.  We compare this to
  $\nu_1 (U_i)$, which is equal to
  $\sum_{J \in A} c_J \nu (U_i \cap C_J)$.  For every $J \in A$, since
  $i \in J$ we have $C_J \subseteq U_i$, so
  $\nu_1 (U_i) = \sum_{J \in A} c_J \nu (C_J) = \sum_{J \in A} c_J b_J
  \geq \sum_{I \in M, J \in A} t_{IJ}$, and that is larger than or
  equal to $\sum_{I, J \in A} t_{IJ} = \mu (U_i)$.

  3. We use a similar argument with $A \eqdef M$:
  $\mu (X) = \sum_{I \in M} \mu (C_I) = \sum_{I \in M} a_I = \sum_{I,
    J \in M} t_{IJ} = \sum_{J \in M} c_J b_J$ by definition ot $c_J$.
  We have $b_J = \nu (C_J)$, so
  $\mu (X) = \sum_{J \in M} c_J \nu (C_J) = \nu_1 (X)$.  \qed
\end{proof}

\begin{theorem}
  \label{thm:V1->Vb}
  For every topological space $X$, $\Val_b X$ is the free cone over
  the barycentric algebra $\Val_1 X$ and $\Val_b X$ is the free
  semitopological cone over the semitopological barycentric algebra
  $\Val_1 X$.

  As a consequence, $\Val_b X$ and $\conify (\Val_1 X)$ are naturally
  isomorphic semitopological cones.

  Similarly, $\Val_{b, \pw} X$ is the free semitopological cone over
  the semitopological barycentric algebra $\Val_{1, \pw} X$.
\end{theorem}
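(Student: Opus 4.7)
The plan is to realize $\Val_b X$ directly as the free (semitopological) cone over $\Val_1 X$, after which the identification with $\conify(\Val_1 X)$ follows formally from the universal property in Theorem~\ref{thm:conify:semitop}. The candidate extension of an affine $f\colon\Val_1 X\to\C$ is the obvious one: set $f^\cext(0)\eqdef 0$ and $f^\cext(\mu)\eqdef\mu(X)\,f(\mu/\mu(X))$ for $\mu\neq 0$. Uniqueness of a positively homogeneous extension is forced; additivity, the only non-trivial check, follows by writing
\[
\frac{\mu+\nu}{(\mu+\nu)(X)} \;=\; \frac{\mu(X)}{(\mu+\nu)(X)}\cdot\frac{\mu}{\mu(X)} \;+\; \frac{\nu(X)}{(\mu+\nu)(X)}\cdot\frac{\nu}{\nu(X)},
\]
which exhibits the left-hand side as a convex combination in $\Val_1 X$; applying $f$ and multiplying through by $(\mu+\nu)(X)$ gives $f^\cext(\mu+\nu)=f^\cext(\mu)+f^\cext(\nu)$. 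This settles the algebraic free-cone claim and yields the bijection $\conify(\Val_1 X)\to\Val_b X$ sending $(r,\nu)$ to $r\nu$.

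The real content is the continuity of $f^\cext$ when $f$ is continuous and $\C$ is semitopological. Recalling that the open sets of $\C$ are generated by the sets $M^{-1}(]1,\infty])$ for lower semicontinuous positively homogeneous $M\colon\C\to\creal$, and noting that for such $M$ the composite $h\eqdef M\circ f$ is lower semicontinuous and semi-concave on $\Val_1 X$ (semi-concavity follows from the affinity of $f$ together with Fact~\ref{fact:qaff:homog}), continuity of $f^\cext$ reduces to the following claim: for every semi-concave lower semicontinuous $h\colon\Val_1 X\to\creal$, the function $H\colon\Val_b X\to\creal$ defined by $H(0)\eqdef 0$ and $H(\mu)\eqdef\mu(X)\,h(\mu/\mu(X))$ for $\mu\neq 0$ is lower semicontinuous; indeed $M\circ f^\cext=H$ by positive homogeneity of $M$.

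This lower semicontinuity of $H$ is the main obstacle and is where Proposition~\ref{prop:schsimp:decomp} enters. The fundamental difficulty is that the weak topology on $\Val_b X$ supplies only lower bounds on $\mu(U)$, and hence no way to directly force $\mu/\mu(X)$ to lie near $\nu_0\eqdef\mu_0/\mu_0(X)$ once $\mu(X)$ is allowed to exceed $\mu_0(X)$. To prove $H$ lower semicontinuous at $\mu_0$ with $H(\mu_0)>1$, set $r_0\eqdef\mu_0(X)$, pick $s_1<h(\nu_0)$ with $r_0 s_1>1$, use lower semicontinuity of $h$ to find a basic open $W=\Val_1 X\cap\bigcap_{i=1}^n[U_i>t_i]$ around $\nu_0$ on which $h>s_1$, let $L$ be the finite sublattice of $\Open X$ generated by $U_1,\ldots,U_n,X$, choose $\alpha\in(1/(r_0 s_1),1)$, and take
\[
V \;\eqdef\; \bigcap_{U\in L,\ \mu_0(U)>0}[U>\alpha\mu_0(U)],
\]
a basic open neighborhood of $\mu_0$. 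For $\mu\in V$, Proposition~\ref{prop:schsimp:decomp} applied with $\alpha\mu_0$ and $\mu$ in the roles of ``$\mu$'' and ``$\nu$'' and with finite lattice $L$ produces a decomposition $\mu=\mu_1+\mu_2$ with $\mu_1\geq\alpha\mu_0$ on $L$ and $\mu_1(X)=\alpha r_0$. Then $\mu_1/\alpha r_0\in W$, so $h(\mu_1/\alpha r_0)>s_1$; writing $\mu/\mu(X)=(\mu_1/\alpha r_0)+_{\alpha r_0/\mu(X)}(\mu_2/\mu_2(X))$ in $\Val_1 X$ (or $\mu/\mu(X)=\mu_1/\alpha r_0$ when $\mu_2=0$) and invoking semi-concavity of $h$ yields $h(\mu/\mu(X))\geq(\alpha r_0/\mu(X))s_1$, hence $H(\mu)\geq\alpha r_0 s_1>1$.

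The universal property of $\Val_b X$ and the natural isomorphism with $\conify(\Val_1 X)$ then follow, with naturality in $X$ automatic from freeness. For the point-continuous variant, the same argument applies verbatim once one observes that the pieces $\mu_1,\mu_2$ delivered by Proposition~\ref{prop:schsimp:decomp} are finite linear combinations of constrictions $\mu_{|C}$ of $\mu$, and constrictions preserve point-continuity; hence the decomposition stays inside $\Val_{b,\pw} X$ and $\mu_1/\alpha r_0\in\Val_{1,\pw} X$, so evaluating $h$ on it is legitimate.
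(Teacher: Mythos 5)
Your proof is correct and follows essentially the same route as the paper. The core of both arguments is identical: fix a point (your $\mu_0$, the paper's $\mu$), introduce a slack factor $\alpha<1$, shrink to a basic weak-open neighborhood of the normalized valuation, apply the Schr\"oder--Simpson decomposition lemma (Proposition~\ref{prop:schsimp:decomp}) to split each nearby $\mu$ into a normalizable piece $\mu_1$ with $\mu_1(X)=\alpha\,\mu_0(X)$ plus a remainder, and then use the positivity/monotonicity of the remainder to conclude. Your only packaging difference is that you first pass to the upper Minkowski functional $M$ of the target open set, so that continuity of $f^\cext$ becomes lower semicontinuity of the scalar map $H$, and the final inequality $H(\mu)\geq H(\mu_1)$ is obtained from semi-concavity of $h=M\circ f$ rather than from additivity of $\hat f$ plus upward-closedness of $V$; after applying $M$, these are the same inequality. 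The treatment of uniqueness, additivity of the extension, the derivation of $\conify(\Val_1 X)\cong\Val_b X$ from freeness, and the point-continuous variant via constrictions all match the paper.
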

The first part of the proposition states, equivalently, that for every
cone $\C$, every affine map $f \colon \Val_1 X \to \C$ extends to a
unique linear map $\hat f$ from $\Val_b X$ to $\C$, which is
continuous if $f$ is (assuming that $\C$ is a semitopological cone,
and that $\Val_1 X$ and $\Val_b X$ are given the weak topology).

\begin{proof}
  The second part of the proposition follows from the first part and
  the fact that free constructions are always unique up to natural
  isomorphisms.  We focus on the first part.
  
  If $\hat f$ exists, then for every $\nu \in \Val_b X$, either $\nu$
  is the zero valuation and $\hat f (\nu)$ must be equal to $0$, or
  $a \eqdef \nu (X)$ is non-zero, and in $\Rp$ since $\nu$ is bounded.
  In that case, $\frac 1 a \cdot \nu$ is in $\Val_1 X$, and since
  $\hat f$ is positively homogeneous we must have
  $\hat f (\nu) = a \cdot \hat f \Big(\frac 1 a \cdot \nu\Big) = a
  \cdot f \Big(\frac 1 a \cdot \nu\Big)$.  This shows that $\hat f$ is
  unique if it exists.

  Let us define $\hat f$ as mapping the zero valuation to $0$ and
  every non-zero element $\nu \in \Val_b X$ to
  $\nu (X) \cdot f \Big(\frac 1 {\nu (X)} \cdot \nu\Big)$.  The
  function $\hat f$ is strict by definition.  For every
  $a \in \Rp \diff \{0\}$, for every $\nu \in \Val_b X$, it is easy to
  see that $\hat f (a \cdot \nu) = a \cdot \hat f (\nu)$.  Let us show
  that $\hat f$ is additive.  For all $\mu, \nu \in \Val_b X$, if
  $\mu$ is the zero valuation then
  $\hat f (\mu + \nu) = \hat f (\nu) = 0 + \hat f (\nu) = \hat f (\mu)
  + \hat f (\nu)$, and similarly if $\nu$ is the zero valuation.
  Otherwise, let $a \eqdef \mu (X)$, $b \eqdef \nu (X)$, then:
  \begin{align*}
    \hat f (\mu + \nu)
    & = (a+b) \cdot f \left(\frac 1 {a+b} \cdot (\mu + \nu)\right) \\
    & = (a+b) \cdot f \left(\frac a {a+b} \cdot \frac 1 a \cdot \mu +
      \frac b {a+b} \cdot \frac 1 b \cdot \nu\right) \\
    \ifta
    & = (a+b) \cdot \left(\frac a {a+b} \cdot f \left(\frac 1 a \cdot
      \mu\right)
      + \frac b {a+b} \cdot f \left(\frac 1 b \cdot \nu\right)\right)
    & \text{since $f$ is affine} \\
    \else
    & = (a+b) \cdot \biggl(\frac a {a+b} \cdot f \left(\frac 1 a \cdot
      \mu\right) \\
    & \qquad
      + \frac b {a+b} \cdot f \left(\frac 1 b \cdot \nu\right)\biggr)
    \quad \text{since $f$ is affine} \\
    \fi
    & = a \cdot f \left(\frac 1 a \cdot \mu\right) + b \cdot f \left(\frac 1 b
      \cdot \nu\right)
    \\ &
         = \hat f (\mu) + \hat f (\nu).
  \end{align*}

  \emph{The semitopological case.}  If $\C$ is a semitopological cone
  and $f$ is continuous from $\Val_1 X$ to $\C$, we claim that
  $\hat f$ is continuous from $\Val_b X$ to $\C$.  This is the hard
  part.  Let $V$ be an open subset of $\C$.  We aim to show that
  ${\hat f}^{-1} (V)$ is open.  This is obvious if $V=\C$, so let us
  assume that $V$ is proper.  Equivalently, $0$ is not in $V$.  We fix
  $\mu$ in ${\hat f}^{-1} (V)$, and we will find a basic open set
  $\mathcal U$ containing $\mu$ and included in ${\hat f}^{-1} (V)$.

  The set $\{\alpha \in \Rp \mid \alpha \cdot \hat f (\mu) \in V\}$ is
  open in $\Rp$, since scalar multiplication is continuous in its
  first argument, and it contains $1$.  Therefore we can find a value
  $\alpha \in {]0, 1[}$ such that $\alpha \cdot \hat f (\mu)$ is in in
  $V$.
    
  Let $a \eqdef \mu (X)$.  If $a$ were equal to $0$, then $\mu$ would
  be the zero valuation, so $\hat f (\mu)$ would be equal to $0$, but
  $\hat f (\mu) \in V$ and $0 \not\in V$.  Hence $a \neq 0$, and
  $\hat f (\mu) = a \cdot f \Big(\frac 1 a \cdot \mu\Big)$.  Then
  $\alpha a \cdot f \Big(\frac 1 a \cdot \mu\Big)$ is in $V$, so
  $f \Big(\frac 1 a \cdot \mu\Big)$ is in
  $\frac 1 {\alpha a} \cdot V \eqdef \Big\{\frac 1 {\alpha a} \cdot y
  \mid y \in V\Big\}$.  The latter is open, as the inverse image of
  $V$ under the map $y \mapsto \alpha a \cdot y$.  It follows that
  $\frac 1 a \cdot \mu$ is in the open set
  $f^{-1} \Big(\frac 1 {\alpha a} \cdot V\Big)$.

  By definition of the weak topology, $\frac 1 a \cdot \mu$ is in some
  finite intersection $\bigcap_{i=1}^n [r_i \ll U_i]_1$ of subbasic
  open sets of $\Val_1 X$, itself included in
  $f^{-1} \Big(\frac 1 {\alpha a} \cdot V\Big)$ (we write
  $[r \ll U]_1$ for $[r \ll U] \cap \Val_1 X$, in order to avoid
  confusion with $[r \ll U]$, which we keep as a notation for an open
  subset of $\Val_b X$).

  Let $L$ be the smallest lattice of sets that contains every $U_i$.
  $L$ consists of all unions of intersections of sets among $U_1$,
  \ldots, $U_n$.  It is easy to see that $L$ is finite (and contains
  at most $2^{2^n}$ elements).  By taking the intersection of
  $\bigcap_{i=1}^n [r_i \ll U_i]_1$ with the extra sets $[0 \ll U]_1$,
  where $U \in L \diff \{U_1, \cdots, U_n\}$, and replacing
  $\{U_1, \cdots, U_n\}$ by $L$, we may assume that
  $\{U_1, \cdots, U_n\}$ is itself a lattice of subsets of $X$.
  
  Let $\mathcal U$ be the basic open set
  $\bigcap_{i=1}^n [\alpha \mu (U_i) \ll U_i]$ in $\Val_b X$.  Since
  $\alpha < 1$, $\mu$ is in $\mathcal U$.
  
  We check that $\mathcal U$ is included in ${\hat f}^{-1} (V)$.  For
  every $\nu \in \mathcal U$, $\alpha\mu (U_i) \leq \nu (U_i)$ for
  every $i$, $1\leq i\leq n$.  By the Schr\"oder and Simpson
  decomposition lemma (Proposition~\ref{prop:schsimp:decomp}), there
  are two bounded valuations $\nu_1$ and $\nu_2$ such that
  $\nu = \nu_1+\nu_2$, $\alpha\mu (U_i) \leq \nu_1 (U_i)$ for every
  $i$, and $\alpha\mu (X) = \nu_1 (X)$---namely,
  $\nu_1 (X) = \alpha a$.  Then
  $\hat f (\nu) = \hat f (\nu_1) + \hat f (\nu_2)$ (since $\hat f$ is
  additive)
  $\geq \hat f (\nu_1) = (\alpha a) \cdot f \Big(\frac 1 {\alpha a}
  \cdot \nu_1\Big)$.

  For every $i$, $\alpha\mu (U_i) \leq \nu_1 (U_i)$ and $r_i \ll \frac
  1 a \cdot \mu (U_i)$ (since $\frac 1 a \cdot \mu$ is in $[r_i \ll
  U_i]_1$), so $r_i \ll \frac 1 {\alpha a} \cdot \nu_1
  (U_i)$.  Hence $\frac 1 {\alpha a} \cdot
  \nu_1$ is in $\bigcap_{i=1}^n [r_i \ll
  U_i]_1$, and the latter is included in $f^{-1} \Big(\frac 1 {\alpha
    a} \cdot V\Big)$.  In other words, $f \Big(\frac 1 {\alpha a}
  \cdot \nu_1\Big)$ is in $\frac 1 {\alpha a} \cdot
  V$, and therefore the larger value $\hat f (\nu)$ is in
  $V$.  Since $\nu$ is arbitrary in $\mathcal U$, $\mathcal U
  \subseteq {\hat f}^{-1} (V)$.

  In the case of $\Val_{b, \pw} X$ and $\Val_{1, \pw} X$, we reason in
  exactly the same way.  We only have to pay attention that the two
  valuations $\nu_1$ and $\nu_2$ obtained by using Schr\"oder and
  Simpson's decomposition lemma
  (Proposition~\ref{prop:schsimp:decomp}) are point-continuous.  By
  Proposition~\ref{prop:schsimp:decomp}, $\nu_1$ and $\nu_2$ are
  obtained as finite linear combinations of constrictions $\nu_{|C}$
  of $\nu$ to pairwise disjoint crescents $C$.  Such constrictions are
  point-continuous: this is an immediate consequence of the two
  bulleted items of \cite[Section~3.3]{heckmann96}.  Additionally,
  finite linear combinations of point-continuous valuations are
  point-continuous by \cite[Section~3.2]{heckmann96}.
\end{proof}

\subsection{Barycenters, part 2}
\label{sec:barycenters-part-2}

Let us return to the barycenter construction
$\sum_{i=1}^n a_i \cdot x_i$ of Definition and
Proposition~\ref{prop:bary:1}.  We topologize the standard simplex
$\Delta_n$ with the subspace topology induced by its inclusion in
$\real^n$, where $\real$ has its usual metric topology.
\begin{proposition}
  \label{prop:bary:1:cont}
  For every topological barycentric algebra $\B$, the map
  $(a_1, \cdots, \allowbreak a_n, x_1, \cdots, x_n) \mapsto
  \sum_{i=1}^n a_i \cdot x_i$ is (jointly) continuous from
  $\Delta_n \times \B \times \cdots \times \B$ to $\B$.
\end{proposition}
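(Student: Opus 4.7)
The plan is to proceed by induction on $n$, reducing the $n$-ary barycenter to an $(n{-}1)$-ary barycenter combined with a single binary operation $+_a$, which is jointly continuous by the hypothesis that $\B$ is topological.

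For $n=1$, the map is the projection $(1, x_1) \mapsto x_1$, which is trivially continuous. For $n \geq 2$, fix $(\vec b, \vec y) \in \Delta_n \times \B^n$ and aim to show joint continuity at this point. Since $\sum_i b_i = 1$ and each $b_i \in [0,1]$, at least one coordinate $b_k$ is strictly less than $1$ (otherwise all $b_i = 1$, contradicting $n \geq 2$). Using the permutation invariance from Definition and Proposition~\ref{prop:bary:1}(2), together with the obvious self-homeomorphism of $\Delta_n \times \B^n$ that permutes indices, I can reduce to the case $b_n < 1$.

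On the open neighborhood $N \eqdef \{\vec a \in \Delta_n \mid a_n < 1\}$ of $\vec b$, Definition and Proposition~\ref{prop:bary:1}(1) gives the explicit formula
\[
  \sum_{i=1}^n a_i \cdot x_i = \Big(\sum_{i=1}^{n-1} \tfrac{a_i}{1-a_n} \cdot x_i\Big) +_{1-a_n} x_n.
\]
The normalization map $\vec a \mapsto (\tfrac{a_1}{1-a_n}, \ldots, \tfrac{a_{n-1}}{1-a_n})$ is continuous from $N$ into $\Delta_{n-1}$ (the image lies in $\Delta_{n-1}$ because $\sum_{i<n} a_i = 1-a_n$). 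Composing with the $(n{-}1)$-ary barycenter map, which is jointly continuous by the induction hypothesis, produces a jointly continuous map $N \times \B^{n-1} \to \B$ sending $(\vec a, x_1, \ldots, x_{n-1})$ to $\sum_{i<n} \tfrac{a_i}{1-a_n} \cdot x_i$. Pairing this with the continuous maps $\vec a \mapsto 1-a_n$ and $(\vec a, \vec x) \mapsto x_n$ and postcomposing with the jointly continuous operation $(u, t, v) \mapsto u +_t v$ from $\B \times [0,1] \times \B$ to $\B$ (which is available since $\B$ is topological) yields joint continuity of the full barycenter map on $N \times \B^n$, and in particular at $(\vec b, \vec y)$.

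There is no real obstacle: the only delicate point is avoiding the singularity at $a_n = 1$ in the recursive formula, and this is handled by the permutation argument that pushes the coordinate $\geq 1$ (if any) away from position $n$. The induction combines cleanly because the recursive formula expresses an $n$-ary barycenter as one $+_a$ composition on top of an $(n{-}1)$-ary barycenter taken over a continuously-varying weight vector.
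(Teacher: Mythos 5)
Your proof is correct and takes essentially the same route as the paper's: induction on $n$, permuting indices so that the last weight is strictly less than $1$, then using the recursion $\sum_{i=1}^n a_i \cdot x_i = \bigl(\sum_{i=1}^{n-1} \frac{a_i}{1-a_n} \cdot x_i\bigr) +_{1-a_n} x_n$ together with the joint continuity of $+_{-}$ on a topological barycentric algebra. The only stylistic difference is that you phrase the argument as a composition of continuous maps on the open locus $\{a_n < 1\}$, whereas the paper performs the same computation pointwise by producing explicit open rectangles; the content is identical.
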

\begin{proof}
  Let $\beta_n$ be this map.  We prove the claim by induction on
  $n \geq 1$.  When $n=1$, this is clear.  Otherwise, let us fix
  $(a_1, \cdots, a_n, x_1, \cdots, x_n) \in \Delta_n \times \B \times
  \cdots \times \B$ and let us assume that
  $\sum_{i=1}^n a_i \cdot x_i$ is in some open subset $W$ of $\B$.  We
  will find an open neighborhood of
  $(a_1, \cdots, a_n, x_1, \cdots, x_n)$ whose image under $\beta_n$
  is included in $W$.  By invariance under permutations (Definition
  and Proposition~\ref{prop:bary:1}, item~2), we may assume that
  $a_n \neq 1$: since $n \geq 2$ and $\sum_{i=1}^n a_i=1$, some $a_i$
  is different from $1$, and we may permute indices $i$ and $n$ if
  necessary.  Then
  $\beta_n (a_1, \cdots, a_n, x_1, \cdots, x_n) = \beta_{n-1} (\frac
  {a_1} {1-a_n}, \cdots, \frac {a_{n-1}} {1-a_n}, x_1, \cdots,
  x_{n-1}) +_{1-a_n} x_n$ by Definition and
  Proposition~\ref{prop:bary:1}, item~1.  Since $\B$ is topological,
  there is an open neighborhood $V$ of
  $\beta_{n-1} (\frac {a_1} {1-a_n}, \cdots, \frac {a_{n-1}} {1-a_n},
  \allowbreak x_1, \cdots, x_{n-1})$, an open neighborhood $U_n$ of
  $x_n$, and an open neighborhood $I$ of $a_n$ such that for all
  $y \in V$, $x'_n \in U_n$ and $a'_n \in I_n$,
  $y +_{1-a'_n} x'_n \in W$.  By taking a smaller neighborhood if
  necessary, and since $a_n < 1$, we may assume that $I$ is included
  in $[0, 1[$.  By induction hypothesis, and by definition of the
  topology on $\Delta_{n-1} \times \B^{n-1}$, there is an open
  rectangle $I'_1 \times \cdots \times I'_{n-1}$ containing
  $(\frac {a_1} {1-a_n}, \cdots, \frac {a_{n-1}} {1-a_n}, x_1, \cdots,
  x_{n-1})$, and there are open neighborhoods $U_1$ of $x_1$, \ldots,
  $U_{n-1}$ of $x_{n-1}$ such that the image of
  $((I'_1 \times \cdots \times I'_{n-1}) \cap \Delta_{n-1}) \times
  (U_1 \times \cdots \times U_{n-1})$ under $\beta_{n-1}$ is included
  in $V$.  Let $f \colon (s, t) \mapsto \frac s {1-t}$: since $f$ is
  continuous from $\real \times {[0, 1[}$ to $\real$, there are open
  rectangles $I_1 \times J_1$ containing $(a_1, a_n)$, \ldots,
  $I_{n-1} \times J_{n-1}$ containing $(a_{n-1}, a_n)$ such that $f$
  maps every pair in $I_1 \times J_1$ to $I'_1$, \ldots, every pair in
  $I_{n-1} \times J_{n-1}$ to $I'_{n-1}$.  Letting
  $I_n \eqdef J_1 \cap \cdots \cap J_{n-1} \cap I$,
  $((I_1 \times \cdots \times I_n) \cap \Delta_n) \times (U_1 \times
  \cdots \times U_n)$ is an open neighborhood of
  $(a_1, \cdots, a_n, x_1, \cdots, x_n)$ that is mapped by $\beta_n$
  to $W$.  \qed
\end{proof}

Before we close this section, we mention the following, out of
curiosity; this will be useful later.  The topology of $\Delta_n$ is
defined as the subspace topology from $\real^n$, where $\real$ has its
usual metric topology.  The topology of $\real^n$ is very different
from the Scott topology of the pointwise ordering.  Despite this, the
following holds.
\begin{lemma}
  \label{lemma:Delta:scott}
  For every $n \geq 1$, the (Scott) topology on $(\creal)^n$ and the
  product topology on $\real^n$ (where $\real$ has its metric
  topology) induce the same topology on the subspace $\Delta_n$.  With
  this topology, $\Delta_n$ is compact Hausdorff.
\end{lemma}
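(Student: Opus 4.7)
The plan is to compare the two topologies on $\Delta_n$ subbase against subbase, and then deduce compactness and Hausdorffness from familiar properties of $\real^n$.

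Since $\creal$ is a continuous dcpo, the Scott topology on the product $(\creal)^n$ agrees with the product of Scott topologies on each factor. A subbase is thus given by the sets $\pi_i^{-1} (]r, \infty])$ with $r \in \Rp$ and $1 \leq i \leq n$. The usual metric topology on $\real^n$ has as subbase the sets $\pi_i^{-1} (]r, s[)$ with $r < s$ real. I would show that each subbasic open of one topology, intersected with $\Delta_n$, is open in the other.

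The Scott-to-metric direction is immediate: each set $\{\alpha \in \Delta_n \mid \alpha_i > r\}$ is metric-open even in the ambient $\real^n$. The more interesting direction reduces to showing that every set of the form $\{\alpha \in \Delta_n \mid \alpha_i < s\}$ is Scott-open in $\Delta_n$. The key observation is that on $\Delta_n$, the condition $\alpha_i < s$ is equivalent to $\sum_{j \neq i} \alpha_j > 1-s$, since the coordinates sum to $1$. The map $\alpha \mapsto \sum_{j \neq i} \alpha_j$ is Scott-continuous from $(\creal)^n$ to $\creal$, as a finite sum of Scott-continuous projections, so the set $\{\alpha \in (\creal)^n \mid \sum_{j \neq i} \alpha_j > 1-s\}$ is Scott-open in $(\creal)^n$; its trace on $\Delta_n$ is precisely $\{\alpha \in \Delta_n \mid \alpha_i < s\}$.

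For the final assertion, I would invoke Heine--Borel: $\Delta_n$ is bounded (contained in $[0,1]^n$) and closed in $\real^n$, as the intersection of the closed half-spaces $\{\alpha_i \geq 0\}$ with the closed hyperplane $\{\sum_{i=1}^n \alpha_i = 1\}$, hence compact; and it is Hausdorff as a subspace of a Hausdorff space. I do not anticipate any real obstacle here; the only slightly clever step is the rewrite that converts the upper bound $\alpha_i < s$ into a Scott-open lower bound on $\sum_{j \neq i} \alpha_j$, using the defining equation of $\Delta_n$.
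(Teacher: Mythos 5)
Your proof is correct and follows essentially the same path as the paper's: the central step in both is the rewrite of the upper constraint $\alpha_i < s$ as the lower constraint $\sum_{j \neq i} \alpha_j > 1-s$ using $\sum_j \alpha_j = 1$, and the compactness/Hausdorffness claim is handled in both by observing $\Delta_n$ is closed and bounded in $\real^n$. The only surface difference is that you compare subbases while the paper works with basic open boxes $\prod_i ]a_i, b_i[$; this does not change the substance.
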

\begin{proof}
  Let $\tau_1$ be the Scott topology on $\creal^n$.  Since $\creal$ is
  a continuous dcpo, $\tau_1$ is also the product of the Scott
  topologies on each copy of $\creal$.  The Scott topology on $\creal$
  is coarser than the standard topology, so $\tau_1$ is coarser than
  the product topology on $\real^n$; we will call $\tau_2$ the latter.
  In order to show the first part of the lemma, it is then enough to
  show that that for every basic open subset
  $U \eqdef \prod_{i=1}^n {]a_i, b_i[}$ of $\tau_2$, $U \cap \Delta_n$
  is open in the subspace topology induced by $\tau_1$.  The elements
  $\vec\alpha$ of $U \cap \Delta_n$ are exactly those such that
  $\alpha_i > a_i$ and
  $\sum_{1 \leq j\leq n, j \neq i} \alpha_j > 1-b_i$ (because
  $\sum_{1\leq j \leq n, j \neq i} \alpha_j = 1-\alpha_i$).  Hence
  $U \cap \Delta_n = V \cap \Delta_n$, where
  $V \eqdef \{\vec\alpha \in \creal^n \mid \forall i \in \{1, \cdots,
  n\}, \alpha_i > a_i \text{ and }\sum_{1 \leq j\leq n, j \neq i}
  \alpha_j > 1-b_i\}$, which is easily seen to be Scott-open.

  Since $\real^n$ is Hausdorff, its subspace $\Delta_n$ is Hausdorff.
  It is closed in $\real^n$, because it is equal to the inverse image
  of the closed set $\{1\}$ under the continuous map
  $\vec \alpha \mapsto \sum_{i=1}^n \alpha_i$.  It is included in
  $[0, 1]^n$, which is compact by Tychonoff's theorem, hence
  $\Delta_n$ is a compact subset of $\real^n$, and therefore also a
  compact subspace of $\real^n$.  \qed
\end{proof}

\section{Pointed barycentric algebras}
\label{sec:point-baryc-algebr}

\subsection{Pointed barycentric algebras and preordered pointed
  barycentric algebras}
\label{sec:point-baryc-algebr-1}

Among the preordered barycentric algebras, some have a least element
$\bot$.  This is the case of $\Val_{\leq 1} X$ and of $\Val_b X$,
where $\bot$ is the zero valuation, but not of $\Val_1 X$, for
example.

\begin{definition}
  \label{defn:bary:alg:pointed}
  A \emph{pointed barycentric algebra} is a barycentric algebra $\B$
  with a distinguished element.

  A \emph{preordered pointed barycentric algebra} is a pointed
  barycentric algebra and a preordered barycentric algebra in which
  the distinguished element is least.  An \emph{ordered pointed
    barycentric algebra} is a preordered pointed barycentric algebra
  whose preordering is antisymmetric.

  A \emph{pointed semitopological barycentric algebra} is a
  semitopological barycentric algebra and a semitopological
  barycentric algebra such that the distinguished element is least in
  the specialization preordering.

  In all cases, we will write $\bot$ for the distinguished element,
  and $a \cdot x$ for $x +_a \bot$, for all $a \in [0, 1]$ and
  $x \in \B$.
\end{definition}

\begin{remark}
  \label{rem:bary:alg:pointed:bot:uniq}
  Beware that there may be more than one least element in a preordered
  barycentric algebra.  In Definition~\ref{defn:bary:alg:pointed}, we
  only require the distinguished element $\bot$ to \emph{one} of the
  least elements.  There is no such ambiguity if $\B$ is ordered, or
  if $\B$ is semitopological and $T_0$.
\end{remark}

The operation $a \cdot x$ obeys almost the same properties as in
cones, except that $a$ has to be smaller than or equal to $1$.
\begin{lemma}
  \label{lemma:bary:alg:pointed}
  In a pointed barycentric algebra $\B$ with distinguished element
  $\bot$, the following hold:
  \[
    \ifta
    \begin{array}{ccc}
      0 \cdot x = \bot & (ab) \cdot x = a \cdot (b \cdot x)
      & 1 \cdot x = x \\
      a \cdot \bot = \bot & a \cdot (x +_b y) = a \cdot x +_b a \cdot y
                            & a \cdot x +_r b \cdot x = (ra+(1-r)b) \cdot x
    \end{array}
    \else
    \begin{array}{ccc}
      0 \cdot x = \bot & (ab) \cdot x = a \cdot (b \cdot x) \\
      1 \cdot x = x & a \cdot \bot = \bot \\
      \multicolumn{3}{c}{a \cdot (x +_b y) = a \cdot x +_b a \cdot y} \\
      \multicolumn{3}{c}{a \cdot x +_r b \cdot x = (ra+(1-r)b) \cdot x}
    \end{array}
    \fi
  \]
  for all $a, b, r \in [0, 1]$ and $x, y \in \B$.

  If $\B$ is preordered or semitopological, additionally,
  $(a, x) \mapsto a \cdot x$ is monotonic in both $a$ and $x$.  If
  $\B$ is a pointed semitopological barycentric algebra, it is
  Scott-continuous in $a$ and continuous in $x$, hence jointly
  continuous from $[0, 1]_\sigma \times \B$ to $\B$.
\end{lemma}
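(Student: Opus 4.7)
The plan is to handle the six algebraic identities uniformly by applying the injective affine map $\etac_{\B} \colon \B \to \conify (\B)$ of Proposition~\ref{prop:bary:alg:conify:ord} to both sides and computing in the cone $\conify (\B)$, where $y +_c z$ equals the cone combination $c \cdot y + (1-c) \cdot z$. Writing $u \eqdef \etac_{\B} (x)$, $v \eqdef \etac_{\B} (y)$, and $w \eqdef \etac_{\B} (\bot)$, we get $\etac_{\B} (a \cdot x) = a \cdot u + (1-a) \cdot w$. Each identity then becomes an elementary linear combination: for instance, $\etac_{\B} (a \cdot (b \cdot x)) = a(bu + (1-b) w) + (1-a) w = ab\,u + (1 - ab)\, w = \etac_{\B} ((ab) \cdot x)$, and $\etac_{\B} (a \cdot x +_r b \cdot x) = (ra + (1-r)b)\,u + (1 - (ra + (1-r)b))\,w = \etac_{\B} ((ra + (1-r)b) \cdot x)$; the other four identities are verified in the same way. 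Injectivity of $\etac_{\B}$ then transfers each identity back to $\B$.

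For the monotonicity claims, the $x$-variable at fixed $a$ is immediate from the monotonicity of $+_a$ (in the semitopological case, passing through the specialization preordering, which turns $\B$ into a preordered barycentric algebra). For monotonicity in $a$ at fixed $x$, take $0 \leq a \leq a' \leq 1$. If $a' = 0$ then both sides equal $\bot$ by identity~(1); if $a' = 1$, then $a \cdot x = x +_a \bot \leq x +_a x = x = a' \cdot x$ using $\bot \leq x$ and the idempotency axiom. For $0 < a' < 1$, set $b \eqdef a/a' < 1$; the fourth barycentric axiom applied to $(x +_{a'} \bot) +_b \bot$ yields $x +_{a'b}(\bot +_{\frac{(1-a')b}{1-a'b}} \bot) = x +_a \bot$, i.e., $a \cdot x = (a' \cdot x) +_b \bot$. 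Since $\bot \leq a' \cdot x$ and $+_b$ is monotonic, this is $\leq (a' \cdot x) +_b (a' \cdot x) = a' \cdot x$.

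In the semitopological case, continuity in $x$ at fixed $a$ is immediate from separate continuity of $+_{\,\cdot\,}$. For Scott-continuity in $a$ at fixed $x$, the map $a \mapsto x +_a \bot$ is continuous from $[0, 1]$ with its usual metric topology to $\B$ by separate continuity, and monotonic by the previous paragraph; hence the preimage of any open $U \subseteq \B$ is a metric-open upwards-closed subset of $[0, 1]$, and such a set must be one of $\emptyset$, $[0, 1]$, or $(r, 1]$ with $r \in [0, 1)$---each of which is Scott-open. Joint continuity from $[0, 1]_\sigma \times \B$ to $\B$ then follows from Ershov's observation, since $[0, 1]$ is a continuous lattice and therefore $[0, 1]_\sigma$ is a c-space. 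The main subtlety is the promotion from metric continuity to Scott continuity on the $a$-variable, for which monotonicity is the essential ingredient before Ershov's observation can be applied.
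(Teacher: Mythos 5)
Your proof is correct and takes essentially the same approach as the paper; the only difference is that the paper establishes the first four identities by direct computation from the barycentric-algebra axioms and derives the fifth from the entropic law, whereas you apply the affine embedding $\etac_{\B}$ uniformly to all six, which is a harmless and arguably cleaner streamlining (it is also what the paper does for the sixth identity). The monotonicity argument (your parametrization $b \eqdef a/a'$ versus the paper's $c \eqdef \frac{1-b}{1-a}$) and the promotion from metric continuity to Scott continuity via upwards-closedness, followed by Ershov's observation, match the paper's reasoning in substance.
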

\begin{proof}
  First, $0 \cdot x = x +_0 \bot = \bot$,
  $1 \cdot x = x +_1 \bot = x$, and
  $a \cdot \bot = \bot +_a \bot = \bot$.  Second,
  $(ab) \cdot x = x +_{ab} \bot$, and we compute
  $a \cdot (b \cdot x)$: if $a=1$, this is equal to
  $b \cdot x = x +_b \bot = x +_{ab} \bot$; if $b=1$, this is equal to
  $a \cdot x = x +_a \bot = x +_{ab} \bot$; otherwise, it is equal to
  $(b \cdot x) +_a \bot = (x +_b \bot) +_a \bot = x +_{ab} (\bot
  +_{\frac {(1-b)a} {1-ab}} \bot) = x +_{ab} \bot$.  Third, using the
  entropic law (\ref{eq:entropic}),
  $a \cdot x +_b a \cdot y = (x +_a \bot) +_b (y +_a \bot) = (x +_b y)
  +_a (\bot +_b \bot) = (x +_b y) +_a \bot = a \cdot (x +_b y)$.  In
  order to show the final law
  $a \cdot x +_r b \cdot x = (ra+(1-r)b) \cdot x$, we use the affine
  embedding $\etac_{\B}$ of $\B$ in the free cone $\conify (\B)$.
  Beware that $\etac_{\B} (a \cdot x)$ is not equal to
  $a \cdot \etac_{\B} (x)$, rather to
  $\etac_{\B} (x +_a \bot) = (1, x +_a \bot) = (a, x) + (1-a, \bot)$.
  Then, using this and the fact that $\etac_{\B}$ is affine,
  $\etac_{\B} (a \cdot x +_r b \cdot x) = r \cdot ((a, x) + (1-a,
  \bot)) + (1-r) \cdot ((b, x) + (1-b, \bot)) = (ra, x) + (r(1-a),
  \bot) + ((1-r) b, x) + ((1-r)(1-b), \bot) = (ra + (1-r)b, x) +
  (r(1-a)+(1-r)(1-b), \bot)$, while
  $\etac_{\B} ((ra+(1-r)b) \cdot x) = (ra+(1-r)b, x) + (1-ra-(1-r)b,
  \bot)$, which is the same value.  We conclude since $\etac_{\B}$ is
  injective.

  Let now $\B$ be preordered or semitopological.
  The map $(a, x) \mapsto a \cdot x = x +_a \bot$ is monotonic in $x$
  since $+_a$ is monotonic.  In order to show that it is monotonic in
  $a$, let $a, b \in [0, 1]$ with $a < b$.
  % If $b=1$, then $x +_a \bot \leq x +_a x$ (since $+_a$ is monotonic
  % and $\bot$ is least) $= x = x +_b \bot$.
  % If $b < 1$, then 
  Let $c \eqdef \frac {1-b} {1-a}$, so that $1-c(1-a)=b$; also,
  $c < 1$ since $a < b$.  Then
  $a \cdot x = x +_a \bot = \bot +_{1-a} x \leq (\bot +_c x) +_{1-a}
  x$ (since $+_{1-a}$ is monotonic and $\bot$ is least)
  $= \bot +_{c(1-a)} (x +_{\frac {(1-c)(1-a)} {1-c(1-a)}} x) = \bot
  +_{c (1-a)} x = x +_{1-c(1-a)} \bot = x +_b \bot = b \cdot x$.

  Let now $\B$ be a pointed semitopological barycentric algebra.  Then
  $a \cdot x = x +_a \bot$ is continuous in $x$ since $+_a$ is
  separately continuous.  For every $x \in \B$, the inverse image
  $(\_ \cdot x)^{-1} (U)$ of an open subset $U$ of $\B$ is open in
  $[0, 1]$ with its usual metric topology because
  $a \mapsto x +_a \bot$ is continuous, and is upwards-closed: if
  $a \in (\_ \cdot x)^{-1} (U)$ and $a \leq b$ in $[0, 1]$, then
  $a \cdot x \in U$, $a \cdot x \leq b \cdot x$ as we have proved
  above, and since $U$ is upwards-closed, $b \cdot x$ is also in $U$.
  The upwards-closed open subsets of $[0, 1]$ are the Scott-open
  subsets of $[0, 1]$, so $(\_ \cdot x)^{-1} (U)$ is open in
  $[0, 1]_\sigma$, hence $\_ \cdot x$ is continuous, for every
  $x \in \B$.  This shows that $\cdot$ is separately continuous from
  $[0, 1]_\sigma \times \B \to \B$, hence jointly continuous by
  Ershov's observation, since $[0, 1]$ is a continuous poset in its
  usual ordering.  \qed
\end{proof}
Preordered pointed barycentric algebras form a category $\BA_\bot$,
whose morphisms are the \emph{strict} affine monotonic maps, namely
the affine monotonic functions mapping $\bot$ to $\bot$.  We will not
be so much interested in pointed barycentric algebras that are not at
least preordered.  Similarly, we write $\sTBA_\bot$ for the category
of pointed semitopological barycentric algebras and strict affine
continuous maps, and $\TBA_\bot$ for its full subcategory of pointed
topological barycentric algebras.

We will use the following additional classes of maps.  We reuse the
names of the corresponding classes of maps on cones.
\begin{definition}
  \label{defn:bary:alg:pointed:linear}
  A function $f \colon \B \to \Alg$ between preordered pointed
  barycentric algebras is:
  \begin{itemize}
  \item \emph{$\Iu$-homogeneous} if and only if
    $f (a \cdot x) = a \cdot f (x)$ for all $a \in [0, 1]$ and
    $x \in \B$ ($\Iu$ is a common name for $[0, 1]$);
  \item \emph{superlinear} if and only if it is $\Iu$-homogeneous and concave;
  \item \emph{sublinear} if and only if it is $\Iu$-homogeneous and convex;
  \item \emph{linear} if and only if it is $\Iu$-homogeneous and
    affine.
  \end{itemize}
\end{definition}
There is no risk of confusion with the similar notions on cones: on
cones, the similarly named notions mean the same thing, as the
following remark states.
\begin{remark}
  \label{rem:bary:alg:pointed:linear}
  Let $f \colon \C \to \Cb$ be a function between preordered cones.
  Then:
  \begin{itemize}
  \item $f$ is $\Iu$-homogeneous if and only if it is positively
    homogeneous.  Indeed, if $f (a \cdot x) = a \cdot f (x)$ for all
    $a \in [0, 1]$ and $x \in \C$, then
    $f (a \cdot x) = a \cdot f (x)$ also for all $a \in \Rp$: when
    $a > 1$, this is due to the fact that $1/a < 1$, so
    $f ((1/a) \cdot (a \cdot x))$, which is equal to $f (x)$, is also
    equal to $(1/a) \cdot f (a \cdot x)$ by
    Definition~\ref{defn:bary:alg:pointed:linear}, whence
    $f (a \cdot x) = a \cdot f (x)$ by multiplying both sides by $a$.
  \item Hence $f \colon \C \to \Cb$ is superlinear (resp.\ sublinear,
    linear) as a function between preordered cones if and only if it
    is superlinear (resp.\ sublinear, linear) as a function between
    preordered pointed barycentric algebras
    (Definition~\ref{defn:bary:alg:pointed:linear}).  Indeed, the only
    if direction is easy, while in the if direction, we assume $f$
    $\Iu$-homogenous (hence positively homogeneous, by the previous
    point) and concave (resp.\ convex, affine).  For all
    $x, y \in \C$,
    $f (x + y) = f (2 \cdot (x +_{1/2} y)) = 2 \cdot f (x +_{1/2} y)
    \geq 2 \cdot (f (x) +_{1/2} f (y))$ (resp. $\leq$, $=$)
    $= f (x) + f (y)$.
  \end{itemize}
\end{remark}

The morphisms of the category of pointed barycentric algebras are the
strict affine maps or equivalent the linear maps, thanks to the
following proposition.
\begin{proposition}
  \label{prop:strict:aff}
  A function $f \colon \B \to \Alg$ between pointed barycentric
  algebras is linear if and only if it is strict and affine.
\end{proposition}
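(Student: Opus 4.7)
The plan is to unpack the definitions on both sides and observe that the equivalence reduces to two one-line computations using the identity $a \cdot x = x +_a \bot$ from Definition~\ref{defn:bary:alg:pointed} and the basic laws from Lemma~\ref{lemma:bary:alg:pointed}.

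For the forward direction, assume $f$ is linear, meaning $\Iu$-homogeneous and affine (Definition~\ref{defn:bary:alg:pointed:linear}). Affineness is immediate. For strictness, I would apply $\Iu$-homogeneity with $a = 0$ at any point $x \in \B$ (e.g.\ $x = \bot$) to get $f(\bot) = f(0 \cdot x) = 0 \cdot f(x) = \bot$, using that $0 \cdot y = \bot$ for every $y$ in a pointed barycentric algebra by Lemma~\ref{lemma:bary:alg:pointed}.

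For the converse, assume $f$ is strict and affine. To establish $\Iu$-homogeneity, I would fix $a \in [0,1]$ and $x \in \B$ and compute
\[
f(a \cdot x) \;=\; f(x +_a \bot) \;=\; f(x) +_a f(\bot) \;=\; f(x) +_a \bot \;=\; a \cdot f(x),
\]
where the second equality uses affineness of $f$, the third uses strictness, and the outer two use the definition $a \cdot x \eqdef x +_a \bot$ from Definition~\ref{defn:bary:alg:pointed}.

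There is essentially no obstacle here: the statement is a definitional unfolding, and the only subtlety is making sure to invoke Lemma~\ref{lemma:bary:alg:pointed} for the identity $0 \cdot x = \bot$ when deducing strictness from $\Iu$-homogeneity.
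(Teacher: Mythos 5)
Your proof is correct and follows essentially the same route as the paper: deducing strictness from $\Iu$-homogeneity via $0 \cdot x = \bot$, and $\Iu$-homogeneity from strictness and affineness via the chain $f(a \cdot x) = f(x +_a \bot) = f(x) +_a f(\bot) = f(x) +_a \bot = a \cdot f(x)$. The only cosmetic difference is that you cite Lemma~\ref{lemma:bary:alg:pointed} for $0 \cdot y = \bot$ while the paper unfolds it inline.
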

\begin{proof}
  In one direction, every linear map is affine, and being
  $\Iu$-homogeneous, is strict: we have
  $f (0 \cdot x) = 0 \cdot f (x)$ for every point $x$, in particular
  for $x \eqdef \bot$; then $0 \cdot \bot = \bot +_0 \bot = \bot$, and
  $0 \cdot f (x) = f (x) +_0 \bot = \bot$.

  In the other direction, it suffices to show that every strict affine
  map $f$ is $\Iu$-homogeneous.  Indeed,
  $f (a \cdot x) = f (x +_a \bot) = f (x) +_a f (\bot)$ (since $f$ is
  affine) $= f (x) +_a \bot$ (since $f$ is strict) $= a \cdot f (x)$.  \qed
\end{proof}

The following shows how close cones and pointed barycentric algebras
are: the only thing missing in the latter to become cones is a
doubling map.  (Multiplication by any number strictly larger than $1$
would work, too.)
\begin{proposition}
  \label{prop:ba->cone}
  Let $\B$ be a pointed barycentric algebra with a linear map
  $\dbl \colon \B \to \B$ (the \emph{doubling map}) such that
  $\dbl (\frac 1 2 \cdot x) = x$ for every $x \in \B$.  Then $\B$ is a
  cone with the following operations:
  \begin{itemize}
  \item addition $x+y$ is defined as $\dbl (x +_{1/2} y)$ for all $x,
    y \in \B$;
  \item zero is defined as the distinguished element $\bot$ of $\B$;
  \item scalar multiplication is defined by $a \cdot x \eqdef \dbl^k
    (\frac a {2^k} \cdot x)$ for all $a \in \Rp$ and $x \in \B$, where
    $k$ is any natural number large enough that $\frac a {2^k} \leq
    1$, and $\dbl^k$ is $\dbl$ composed with itself $k$ times.
  \end{itemize}
  If $\B$ is a preordered barycentric algebra and $\dbl$ is monotonic,
  this construction turns $\B$ into a preordered cone.

  If $\B$ is a semitopological, resp.\ topological barycentric algebra
  and $\dbl$ is continuous, this construction turns $\B$ into a
  semitopological, resp.\ topological cone.
\end{proposition}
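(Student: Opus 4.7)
The plan is to verify the cone axioms~(\ref{eq:cone}) together with the commutative monoid laws for the new $+$ directly from the barycentric algebra identities of Lemma~\ref{lemma:bary:alg:pointed} and the hypothesis $\dbl \circ (\tfrac{1}{2} \cdot \_) = \mathrm{id}$. Recall that by Proposition~\ref{prop:strict:aff}, linearity of $\dbl$ amounts to being strict and affine, and in particular $\dbl$ is $\Iu$-homogeneous. The first thing to check is that $a \cdot x$ does not depend on the choice of $k$: if $k$ and $k+1$ both satisfy $\tfrac{a}{2^k} \leq 1$, then $\dbl^{k+1}(\tfrac{a}{2^{k+1}} \cdot x) = \dbl^k\bigl(\dbl(\tfrac{1}{2} \cdot (\tfrac{a}{2^k} \cdot x))\bigr) = \dbl^k(\tfrac{a}{2^k} \cdot x)$, using the identity $(ab) \cdot x = a \cdot (b \cdot x)$ from Lemma~\ref{lemma:bary:alg:pointed} and the defining property of $\dbl$.

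The scalar multiplication axioms then follow one by one. The identities $0 \cdot x = \bot$, $1 \cdot x = x$ and $a \cdot \bot = \bot$ are immediate, using strictness of $\dbl$ for the last one. For $(ab) \cdot x = a \cdot (b \cdot x)$ I pick $k, \ell$ with $\tfrac{a}{2^k}, \tfrac{b}{2^\ell} \leq 1$; both sides are then equal to $\dbl^{k+\ell}\bigl(\tfrac{a}{2^k} \cdot (\tfrac{b}{2^\ell} \cdot x)\bigr)$, the key step being to commute $\tfrac{a}{2^k} \cdot$ past $\dbl^\ell$ by $\Iu$-homogeneity. The identity $a \cdot (x+y) = a \cdot x + a \cdot y$ is proved in the same spirit, combining $\Iu$-homogeneity of $\dbl$ with $c \cdot (u +_b v) = c \cdot u +_b c \cdot v$ from Lemma~\ref{lemma:bary:alg:pointed}. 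Finally, $(a+b) \cdot x = a \cdot x + b \cdot x$ is obtained by choosing $k$ large enough that $a, b \leq 2^{k-1}$ and then applying the identity $c \cdot x +_{1/2} d \cdot x = \tfrac{c+d}{2} \cdot x$, a special case of the last law in Lemma~\ref{lemma:bary:alg:pointed}.

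The commutative monoid structure of $+$ is also straightforward apart from associativity: $x + \bot = \dbl(x +_{1/2} \bot) = \dbl(\tfrac{1}{2} \cdot x) = x$, and $x + y = y + x$ because $x +_{1/2} y = y +_{1/2} x$. Associativity is the main obstacle. The trick I plan to use is to rewrite $z$ as $\dbl(z +_{1/2} \bot)$ and $x$ as $\dbl(x +_{1/2} \bot)$, then pull the outer $\dbl$ out using affineness, arriving at the symmetric-looking expressions
\begin{align*}
  (x+y)+z &= \dbl^2\bigl((x +_{1/2} y) +_{1/2} (z +_{1/2} \bot)\bigr), \\
  x+(y+z) &= \dbl^2\bigl((x +_{1/2} \bot) +_{1/2} (y +_{1/2} z)\bigr).
\end{align*}
The two inner expressions coincide by one application of the entropic law~(\ref{eq:entropic}) with $a = b = 1/2$, combined with the identity $\bot +_{1/2} z = z +_{1/2} \bot$; this is the one point where a genuine barycentric-algebra computation is needed.

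For the remaining structural statements: if $\B$ is preordered and $\dbl$ is monotonic, then $+$ is monotonic because $+_{1/2}$ is, and each $a \cdot \_$ is monotonic because $\dbl^k$ and scalar multiplication by $\tfrac{a}{2^k} \in [0,1]$ are (Lemma~\ref{lemma:bary:alg:pointed}); monotonicity and Scott-continuity of $\_ \cdot x$ in $a$ also follow from Lemma~\ref{lemma:bary:alg:pointed} after observing that every directed family in $\Rp$ possessing a supremum is bounded and eventually lies in $[0, 2^k]$ for a suitable $k$. If $\B$ is semitopological (resp.\ topological) and $\dbl$ is continuous, then the required separate (resp.\ joint) continuity of $+$ and of $\cdot$ follows by composing continuous maps, using the joint continuity of $[0,1]_\sigma \times \B \to \B$ granted by Lemma~\ref{lemma:bary:alg:pointed} and the continuity of $\dbl^k$.
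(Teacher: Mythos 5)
Your proposal follows essentially the same path as the paper's proof: well-definedness of $a\cdot x$ by commuting $\dbl$ past $\tfrac12\cdot\_$, verification of the six cone equations via $\Iu$-homogeneity and the laws of Lemma~\ref{lemma:bary:alg:pointed}, and the $\dbl^2$-rewriting trick for associativity. That part is sound.

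There is, however, a genuine gap in the last paragraph, in the semitopological case. You assert that continuity of $\cdot$ ``follows by composing continuous maps, using the joint continuity of $[0,1]_\sigma \times \B \to \B$ and the continuity of $\dbl^k$.'' The difficulty is that the composition you have in mind only yields continuity of $(a,x) \mapsto a\cdot x$ on each slab $[0,2^k]_\sigma \times \B$ (where a \emph{fixed} $k$ works), and there is no way to glue these into a continuity statement on $\Rp \times \B$: the sets $[0,2^k]$ are not Scott-open in $\Rp$, so this is not an open cover and the pasting lemma does not apply. Concretely, the problematic point is continuity of $\_\cdot x \colon \Rp \to \B$ at a fixed $a_0$: you must produce a Scott-open $]r,\infty[ \ni a_0$ mapped into a given proper open $V$, but the formula $\dbl^k(\tfrac{a}{2^k}\cdot x)$ only covers $a \in [0,2^k]$. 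What is needed — and what the paper supplies — is a patching step: fix $k$ with $a_0 \le 2^k$, use continuity of $a \mapsto \dbl^k(\tfrac{a}{2^k}\cdot x)$ on $[0,2^k]_\sigma$ to find $]r,2^k] \subseteq (\_\cdot x)^{-1}(V)$ with $r < a_0$, and then argue that for $a > 2^k$ one has $a\cdot x \ge 2^k\cdot x \in V$, which stays in $V$ because open subsets of $\B$ are upwards-closed. This relies on the monotonicity of $\_\cdot x$ in the specialization preorder, which is where your remark about directed families in $\Rp$ being eventually bounded belongs (though as stated it invokes Scott-continuity and directed sups, a notion that does not directly give topological continuity into an arbitrary semitopological $\B$). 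Once you insert this upwards-closedness argument, the proof is complete; joint continuity of $\cdot$ in the topological case then follows by the same reasoning applied on $[0,2^k]_\sigma \times \B$ plus the same extension step.
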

\begin{proof}
  We first verify that scalar multiplication is well-defined.  In
  order to clarify, the term $\frac a {2^k} \cdot x$ is equal to
  $x +_{\frac a {2^k}} \bot$, which makes sense in any pointed
  barycentric algebra with distinguished element $\bot$.  For all
  $k, k' \in \nat$ such that $\frac a {2^k}, \frac a {2^{k'}} \leq 1$,
  we observe that
  $\dbl^k (\frac a {2^k} \cdot x) = \dbl^{k'} (\frac a {2^{k'}} \cdot
  x)$, so the definition of $a \cdot x$ is indeed independent of the
  chosen $k$.  Indeed, and assuming $k \leq k'$ without loss of
  generality,
  $\dbl^k (\frac a {2^k} \cdot x) = \dbl^k (\dbl (\frac 1 2 \cdot
  \frac a {2^k} \cdot x))$ (since $\dbl (\frac 1 2 \cdot \_)$ is the
  identity map) $= \dbl^{k+1} (\frac a {2^{k+1}} \cdot x)$ (by the
  second equation of Lemma~\ref{lemma:bary:alg:pointed}); iterating
  this shows the claim.

  We have one notation $a \cdot x$ for potentially two different
  things when $a \in [0, 1]$: either the scalar multiplication that
  already exists in $\B$, namely $x +_a \bot$, or the new scalar
  multiplication defined in Proposition~\ref{prop:ba->cone}.  They
  turn out to be the same: since $a \leq 1$, our newly-defined notion
  $a \cdot x$ is equal to $\dbl^0 (\frac a {2^0} \cdot x)$ by picking
  $k \eqdef 0$, which is the old notion of $a \cdot x$.

  Let us check the cone equations.  We start with the properties of
  $+$.  Addition is commutative, since $x +_{1/2} y = y +_{1/2} x$ for
  all $x, y \in \B$.  For every $x \in \B$,
  $x + 0 = \dbl (x +_{1/2} \bot)$ (since $0$ is defined as $\bot$)
  $= \dbl (\frac 1 2 \cdot x) = x$.  In order to show associativity,
  let $x, y, z, \in \B$.  Then
  $(x+y)+z = \dbl (\dbl (x +_{1/2} y) +_{1/2} z) = \dbl (\dbl (x
  +_{1/2} y) +_{1/2} \dbl (z +_{1/2} \bot))$ (since
  $z = \dbl (\frac 1 2 \cdot z)$ and
  $\frac 1 2 \cdot z = z +_{1/2} \bot$)
  $= \dbl^2 ((x +_{1/2} y) +_{1/2} (z +_{1/2} \bot))$, since $\dbl$ is
  linear hence affine.  Similarly,
  $x+(y+z) = \dbl^2 ((x +_{1/2} \bot) +_{1/2} (y +_{1/2} z))$.  It
  then suffices to show that
  $(x +_{1/2} y) +_{1/2} (z +_{1/2} \bot) = (x +_{1/2} \bot) +_{1/2}
  (y +_{1/2} z)$, and this follows from the fact that they are both
  mapped to
  $\frac 1 4 \etac_{\B} (x) + \frac 1 4 \etac_{\B} (y) + \frac 1 4
  \etac_{\B} (z) + \frac 1 4 \etac_{\B} (\bot)$ by the injective
  affine map $\etac_{\B}$, or by the entropic law (\ref{eq:entropic}).

  We now check the six explicitly listed equations (\ref{eq:cone}),
  see Section~\ref{sec:preliminaries}.  First, for every $x \in \B$,
  $0 \cdot x = \bot = 0$ by the first equation of
  Lemma~\ref{lemma:bary:alg:pointed}.  Second, for all $a, b \in \Rp$,
  let $k$ be so large that $\frac a {2^k}, \frac b {2^k} \leq 1$,
  hence also $\frac {ab} {2^{2k}} \leq 1$.  Then
  $a \cdot (b \cdot x) = \dbl^k (\frac a {2^k} \cdot \dbl^k (\frac b
  {2^k} \cdot x)) = \dbl^{2k} (\frac a {2^k} \cdot \frac b {2^k} \cdot
  x)$ (since $\dbl$ is linear hence $\Iu$-homogeneous)
  $= \dbl^{2k} (\frac {ab} {2^{2k}} \cdot x)$ (by the second equation
  of Lemma~\ref{lemma:bary:alg:pointed}) $= (ab) \cdot x$.  Third, we
  have $1 \cdot x = x$ by the third equation of
  Lemma~\ref{lemma:bary:alg:pointed}.  Fourth,
  $a \cdot 0 = \dbl^k (\frac a {2^k} \cdot \bot)$ for some $k$ large
  enough, and that is equal to $\dbl^k (\bot)$ by the
  \ifta
  first equation
  of the second row of equations of
  \else
  fourth equation of
  \fi
  Lemma~\ref{lemma:bary:alg:pointed}, which is equal to $\bot = 0$,
  since $\dbl$ is linear hence strict.  Fifth, for all $x, y \in \B$,
  $a \cdot (x+y) = \dbl^k (\frac a {2^k} \cdot (\dbl (x +_{1/2} y)))$
  where $k$ is large enough that $\frac a {2^k} \leq 1$.  Since $\dbl$
  is linear hence $\Iu$-homogeneous, that is equal to
  $\dbl^{k+1} (\frac a {2^k} \cdot (x +_{1/2} y))$, which in turn is
  equal to
  $\dbl^{k+1} ((\frac a {2^k} \cdot x) +_{1/2} (\frac a {2^k} \cdot
  y))$ (by the \ifta
  second equation of the second row of equations of
  \else
  fifth equation of
  \fi
  Lemma~\ref{lemma:bary:alg:pointed}), which is equal to
  $\dbl ((\dbl^k (\frac a {2^k} \cdot x) +_{1/2} (\dbl^k (\frac a
  {2^k} \cdot y)))$ since $\dbl$, hence also $\dbl^k$, is affine; and
  what we wrote is just $(a \cdot x) + (a \cdot y)$, so
  $a \cdot (x+y) = (a \cdot x) + (a \cdot y)$.  Sixth, for all
  $a, b \in \Rp$, let $k \in \nat$ be such that
  $\frac a {2^k}, \frac b {2^k} \leq 1$; in particular,
  $\frac {a+b} {2^{k+1}} \leq 1$.  Then
  $(a+b) \cdot x = \dbl^k (\frac {a+b} {2^k} \cdot x)$
  $a \cdot x + b \cdot x = \dbl (\dbl^k (\frac a {2^k} \cdot x)
  +_{1/2} \dbl^k (\frac b {2^k} \cdot x)) = \dbl^{k+1} (\frac a {2^k}
  \cdot x +_{1/2} \frac b {2^k} \cdot x)$ (since $\dbl$, hence
  $\dbl^k$, is affine) $= \dbl^{k+1} (\frac {a+b} {2^k} \cdot x)$ (by
  the last equation of Lemma~\ref{lemma:bary:alg:pointed})
  $= (a+b) \cdot x$.

  If $\B$ is a preordered barycentric algebra and $\dbl$ is monotonic,
  then addition is monotonic as a composition of monotonic maps, zero
  is least by definition, and $a \cdot \_$ is monotonic for every
  $a \in \Rp$, as the composition of the monotonic maps $\dbl^k$ and
  $\frac a {2^k} \cdot \_$ (the latter being monotonic by
  Lemma~\ref{lemma:bary:alg:pointed}).  This makes $\B$ a pointed
  semipreordered cone, hence a preordered cone.  Explicitly,
  $\_ \cdot x$ is also monotonic: if $a \leq b$, then
  $b \cdot x = a \cdot x + (b-a) \cdot x \geq a \cdot x + 0 = a \cdot
  x$, where the middle inequality is because $0$ is least.

  We now assume that $\B$ is a semitopological (resp.\ topological)
  barycentric algebra and that $\dbl$ is continuous.  It is clear that
  addition is separately (resp.\ jointly) continuous, and that
  $a \cdot \_$ is continuous for every $a \in \Rp$.  It remains to
  show that $a \mapsto a \cdot x$ is continuous from $\Rp$ to $\B$
  for every $x \in \B$.  Let $a_0 \in \Rp$, and let $V$ be an open
  neighborhood of $a_0 \cdot x$.  If $V$ is the whole of $\B$, then
  its inverse image under $\_ \cdot x$ is the whole of $\Rp$, which is
  Scott-open, so we will assume that $V$ is proper, or equivalently
  that $\bot = 0$ is not in $V$.  We fix $k \in \nat$ so that
  $\frac {a_0} {2^k} \leq 1$.  The map
  $a \mapsto \dbl^k \circ (\frac a {2^k} \cdot x)$ is continuous from
  $[0, 2^k]_\sigma$ to $\B$, in particular because
  $b \mapsto b \cdot x$ is continuous by
  Lemma~\ref{lemma:bary:alg:pointed}.  Hence there is a Scott-open
  subset $U$ of $[0, 2^k]$ that contains $a_0$ and whose image under
  $a \mapsto \dbl^k \circ (\frac a {2^k} \cdot x)$ is included in $V$.
  Since $\bot \not\in V$, $0$ is not in $U$.  Therefore $U$ is an
  interval of the form $]r, 2^k]$, with $0 \leq r < a_0$.  It remains
  to see that $]r, \infty[$ is a Scott-open neighborhood of $a_0$ such
  that $a \cdot x \in V$ for every $a \in {]r, \infty[}$.  This is
  true for $a \in {]r, 2^k]}$ by construction.  Seeing $\B$ as a
  preordered barycentric algebra, with its specialization preordering,
  we have see that $\_ \cdot x$ is monotonic, so this is also true for
  larger values of $a$, since $V$ is upwards-closed.  \qed
\end{proof}

\subsection{Telescopes}
\label{sec:telescopes}

There are several ways of building the free preordered cone over a
preordered pointed barycentric algebra $\B$.  The cone $\conify (\B)$
itself cannot be the free preordered cone over $\B$ as a
\emph{pointed} barycentric algebra, because $\etac_{\B}$ is not a
morphism in the category of pointed barycentric algebras: it is not
strict, since it maps the the distinguished element $\bot$ of $\B$ to
$(1, \bot)$, which is not the distinguished element $0$ of
$\conify (\B)$.  One way of building the desired free preordered cone
is by taking a quotient of $\conify (\B)$, under the smallest
equivalence relation $\equiv_\bot$ such that $(r, \bot) \equiv_\bot 0$
and $(r, a \cdot x) \equiv_\bot (ra, x)$ for all
$r \in \Rp \diff \{0\}$, $a \in {]0, 1[}$ and $x \in \B$.  We will
leave this as an exercise to the reader.  A more interesting
construction, although it appears to depend on a spurious parameter
$\alpha \in {]0, 1[}$, is as follows.  We build the colimit, taken in
the category of sets, or of preordered sets, or of topological spaces,
of the diagram:
\begin{align*}
  \xymatrix{\B \ar[r]^{\alpha \cdot \_}
  & \B \ar[r]^{\alpha \cdot \_}
  & \cdots \ar[r]^{\alpha \cdot \_}
  & \B \ar[r]^{\alpha \cdot \_}
  & \cdots}
\end{align*}
where $\alpha$ is a fixed real number such that $0 < \alpha < 1$.
We first build the coproduct $\coprod_{n \in \nat} \B$, whose elements
are pairs $(n, x)$ with $x \in \B$, and then we take a quotient by an
appropriate equivalence relation.  Here is an explicit definition.

\begin{definition}[Telescope]
  \label{defn:tscope}
  For every pointed barycentric algebra $\B$ and every given number
  $\alpha \in {]0, 1[}$, let $\equiv_\alpha$ be the smallest equivalence
  relation on $\coprod_{n \in \nat} \B$ such that $(n, x)
  \equiv_\alpha (n+1, \alpha \cdot x)$ for all $n \in \nat$ and $x \in \B$.

  The \emph{$\alpha$-telescope} on $\B$ is the quotient
  $\tscope_\alpha (\B) \eqdef \quot {(\coprod_{n \in \nat} \B)}
  {\equiv_\alpha}$.  We write $[(n, x)]_\alpha$ for the equivalence
  class of $(n, x)$. % It is preordered by
  % $[(n, x)]_\alpha \leq [(n, y)]_\alpha$ if and only if $x \leq y$ in
  % $\B$.
\end{definition}

Every element of $\tscope_\alpha (\B)$ can be written
$[(n, x)]_\alpha$, for each sufficiently large natural number $n$.
Explicitly, one can write it as $[(n_0, x_0)]_\alpha$ for some
$n_0 \in \nat$ and $x_0 \in \B$, and therefore also as
$[(n, \alpha^{n-n_0} \cdot x_0)]_\alpha$ for every $n \geq n_0$.  It
follows that, given two points of $\tscope_\alpha (\B)$, we can write
them as $[(n, x)]_\alpha$ and $[(n, x)]_\alpha$ with the same value of
$n \in \nat$.  We will use these facts often.

\begin{lemma}
  \label{lemma:equiv:alpha}
  Let $\B$ be a pointed barycentric algebra.  For every
  $\alpha \in {]0, 1[}$, for all $m, n \in \nat$ and $x, y \in \B$,
  $(m, x) \equiv_\alpha (n, y)$ if and only if
  $\alpha^{k-m} \cdot x = \alpha^{k-n} \cdot y$ for some
  $k \geq m, n$.
\end{lemma}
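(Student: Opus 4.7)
The plan is to prove both directions by introducing the auxiliary relation $\sim$ on $\coprod_{n \in \nat} \B$ defined by $(m, x) \sim (n, y)$ if and only if there exists $k \geq m, n$ with $\alpha^{k-m} \cdot x = \alpha^{k-n} \cdot y$, and then showing that $\equiv_\alpha$ and $\sim$ coincide.

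For the ``if'' direction ($\sim \subseteq \equiv_\alpha$), I would first prove by induction on $j \geq 0$ that $(m, x) \equiv_\alpha (m+j, \alpha^j \cdot x)$ for every $m \in \nat$ and $x \in \B$. The base case $j = 0$ uses $1 \cdot x = x$ from Lemma~\ref{lemma:bary:alg:pointed}. For the inductive step, one applies the generator $(m+j, \alpha^j \cdot x) \equiv_\alpha (m+j+1, \alpha \cdot (\alpha^j \cdot x))$ and rewrites the right-hand side via $(ab) \cdot x = a \cdot (b \cdot x)$, again from Lemma~\ref{lemma:bary:alg:pointed}, to get $\alpha^{j+1} \cdot x$. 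Applying this with $j = k-m$ and $j = k-n$ respectively yields $(m, x) \equiv_\alpha (k, \alpha^{k-m} \cdot x)$ and $(n, y) \equiv_\alpha (k, \alpha^{k-n} \cdot y)$; the hypothesis identifies these two endpoints, so transitivity gives $(m, x) \equiv_\alpha (n, y)$.

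For the ``only if'' direction ($\equiv_\alpha \subseteq \sim$), the strategy is to show $\sim$ is itself an equivalence relation containing all generating pairs $(n, x), (n+1, \alpha \cdot x)$; since $\equiv_\alpha$ is the smallest such relation, inclusion follows. Reflexivity uses $k = m$ and $1 \cdot x = x$; symmetry is trivial. For transitivity, given witnesses $k_1$ for $(m, x) \sim (n, y)$ and $k_2$ for $(n, y) \sim (p, z)$, I take $k \eqdef \max(k_1, k_2)$ and apply $\alpha^{k-k_1} \cdot \_$ (resp. $\alpha^{k-k_2} \cdot \_$) to both sides of the witnessing equalities, using $(ab) \cdot x = a \cdot (b \cdot x)$ to collapse the iterated scalar actions and chain the two equalities through the common term $\alpha^{k-n} \cdot y$. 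For the generating pairs, take $k = n+1$ and observe that $\alpha^{k-n} \cdot x = \alpha \cdot x = 1 \cdot (\alpha \cdot x) = \alpha^{k-(n+1)} \cdot (\alpha \cdot x)$.

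The argument is essentially bookkeeping; the only non-trivial ingredients are the two identities $1 \cdot x = x$ and $(ab) \cdot x = a \cdot (b \cdot x)$ from Lemma~\ref{lemma:bary:alg:pointed}, together with the standard trick of taking a common upper bound $k$ in $\nat$ when merging two witnesses. There is no real obstacle, but one must be careful that the definition $\alpha^j \cdot x$ is legitimate in a pointed barycentric algebra (since $\alpha^j \in {]0, 1]} \subseteq [0, 1]$, it is, being an iterate of the primitive operation $a \mapsto a \cdot x$), and that all scalar-multiplication manipulations stay within $[0, 1]$ — which they do, because only products of powers of $\alpha < 1$ appear.
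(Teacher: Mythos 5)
Your proof is correct and follows essentially the same route as the paper: introduce the auxiliary relation (there called $\cong_\alpha$), show it is contained in $\equiv_\alpha$ by chaining generator steps, and conversely show it is an equivalence relation containing the generators, so that minimality of $\equiv_\alpha$ yields the reverse inclusion. The paper's argument uses the same ingredients (the identities $1 \cdot x = x$ and $(ab) \cdot x = a \cdot (b \cdot x)$ from Lemma~\ref{lemma:bary:alg:pointed}, and a common upper bound $k$ for transitivity), merely presenting the ``if'' direction as a chain rather than an explicit induction.
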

\begin{proof}
  Let us define the relation $\cong_\alpha$ on
  $\coprod_{n \in \nat} \B$ by by $(m, x) \cong_\alpha (n, y)$ if and
  only if $\alpha^{k-m} \cdot x = \alpha^{k-n} \cdot y$ for some
  $k \geq m, n$.

  If $(m, x) \cong_\alpha (n, y)$, then, taking $k$ as above,
  $(m, x) \equiv_\alpha (m+1, \alpha \cdot x) \equiv_\alpha \cdots
  \equiv_\alpha (k, \alpha^{k-m} \cdot x) = (k, \alpha^{k-n} \cdot y)
  \equiv_\alpha (k-1, \alpha^{k-n-1} \cdot y) \equiv_\alpha \cdots
  \equiv_\alpha (n, y)$.  Hence $\cong_\alpha$ is smaller than or
  equal to (included in) $\equiv_\alpha$.

  It is clear that $\cong_\alpha$ is reflexive and symmetric.  Let now
  $(m, x) \cong_\alpha (n, y)$ and $(n, y) \cong_\alpha (p, z)$.  Then
  $\alpha^{k-m} \cdot x = \alpha^{k-n} \cdot y$ for some
  $k \geq m, n$, and $\alpha^{k'-n} \cdot y = \alpha^{k'-p} \cdot z$
  for some $k' \geq n, p$.  Let us pick $k'' \geq k, k'$.  Then
  $k'' \geq m, p$ and
  $\alpha^{k''-m} \cdot x = \alpha^{k''-k} \cdot \alpha^{k-m} \cdot y
  = \alpha^{k''-k} \cdot \alpha^{k-n} \cdot y = \alpha^{k''-n} \cdot y
  = \alpha^{k''-k'} \cdot \alpha^{k'-n} \cdot y = \alpha^{k''-k'}
  \cdot \alpha^{k'-p} \cdot z = \alpha^{k''-p} \cdot z$, so
  $(m, x) \cong_\alpha (p, z)$.  Therefore $\cong_\alpha$ is
  transitive, and hence, an equivalence relation.

  For all $n \in \nat$ and $x \in \B$,
  $(n, x) \cong_\alpha (n+1, \alpha \cdot x)$.  Indeed, let
  $k \eqdef n+1$.  Then
  $\alpha^{k-n} \cdot x = \alpha^{k-(n+1)} \cdot \alpha \cdot x$,
  since both sides are equal to $\alpha \cdot x$, using the second
  equation of Lemma~\ref{lemma:bary:alg:pointed} to simplify the
  right-hand side.  Therefore $\cong_\alpha$ is an equivalence
  relation such that $(n, x) \cong_\alpha (n+1, \alpha \cdot n)$ for
  all $n \in \nat$ and $x \in \B$.  Since $\equiv_\alpha$ is the
  smallest such relation, it is smaller than or equal to
  $\cong_\alpha$.  Therefore $\equiv_\alpha$ and $\cong_\alpha$
  coincide.  \qed
\end{proof}

\begin{remark}
  \label{rem:equiv:alpha:classes}
  It is not in general the case that $(n, x) \equiv_\alpha (n, y)$
  (with the same $n$, chosen in advance) if and only if $x=y$.  A
  counterexample will be given in
  Example~\ref{exa:bary:alg:pointed:noembed:1} below.  The case of
  pointed $T_0$ semitopological barycentric algebras is nicer, as we
  will see in Remark~\ref{rem:equiv:alpha:semitop}.
\end{remark}

\begin{example}
  \label{exa:bary:alg:pointed:noembed:1}
  Keimel and Plotkin give an example of an ordered pointed barycentric
  algebra with two distinct points $x$ and $y$ such that
  $r \cdot x = r \cdot y$ for every $r \in {]0, 1[}$
  \cite[Example~2.19]{KP:mixed}.  We reformulate it as follows.  Let
  $\B^{KP}$ be the subset of $(\real \cup \{-\infty\}) \times \Rp$
  consisting of the points $(-\infty, s)$ with $s \in [0, 1]$, and the
  point $(0, 1)$.  We equip $\B^{KP}$ with the pointwise ordering and
  the pointwise operations $+_a$, so that
  $(-\infty, s) +_a (-\infty, t) = (-\infty, as+(1-a) t)$,
  $(-\infty, s) +_a (0, 1) = (-\infty, as + 1-a)$ if $a > 0$, $(0, 1)$
  if $a=0$, in particular.  This is a preordered pointed barycentric
  algebra, with distinguished (least) element
  $\bot \eqdef (-\infty, 0)$.  Letting $x \eqdef (0, 1)$ and
  $y \eqdef (-\infty, 1)$, we have $r \cdot x = r \cdot y$ for every
  $r \in {]0, 1[}$, but $x \not\leq y$: we recall that
  $r \cdot x = x +_r \bot$, and similarly for $r \cdot y$, from which
  we obtain that both are equal to $(-\infty, r)$.  In particular,
  $[(0, x)]_\alpha = [(0, y)]_\alpha$, but $x \neq y$.
\end{example}

\begin{proposition}
  \label{prop:tscope:bary:alg}
  For every pointed barycentric algebra $\B$, for every
  $\alpha \in {]0, 1[}$,
  \begin{enumerate}
  \item $\tscope_\alpha (\B)$ has the structure of a pointed
    barycentric algebra, with distinguished element
    $[(0, \bot)]_\alpha$, and with $+_a$ defined by:
    \ifta
    \begin{align*}
      [(m, x)]_\alpha +_a [(n, y)]_\alpha
      & \eqdef [(\max (m, n),
        (\alpha^{\max (m,n)-m} \cdot x) +_a (\alpha^{\max (m,n)-n} \cdot
        y))]_\alpha
    \end{align*}
    \else
    \begin{align*}
      & [(m, x)]_\alpha +_a [(n, y)]_\alpha \\
      & \eqdef [(\max (m, n), \\
      & \qquad
        (\alpha^{\max (m,n)-m} \cdot x) +_a (\alpha^{\max (m,n)-n} \cdot
        y))]_\alpha
    \end{align*}
    \fi
    for every $a \in [0, 1]$ and for all elements $[(m, x)]_\alpha$
    and $[(n, y)]_\alpha$ of $\tscope_\alpha (\B)$---in particular,
    $[(n, x)]_\alpha +_a [(n, y)]_\alpha \eqdef [(n, x +_a
    y)]_\alpha$.
  \item $a \cdot [(n, x)]_\alpha \eqdef [(n, a \cdot x)]_\alpha$ for
    all $a \in [0, 1]$, $n \in \nat$ and $x \in \B$.
  \item The latter formula extends to all $a \in \Rp$ by letting
    $a \cdot ([(n, x)]_\alpha) \eqdef [(n+k, (\alpha^k a) \cdot
    x)]_\alpha$ for all $a \in \Rp$, $n \in \nat$ and $x \in \B$,
    where $k$ is so large that $\alpha^k a \leq 1$.  The map
    $a \cdot \_$ is linear.
  \item In particular, $2 \cdot \_$ is a doubling map, and therefore
    $\tscope_\alpha (\B)$ has a cone structure, given by the
    construction of Proposition~\ref{prop:ba->cone}.
  \end{enumerate}

  If $\B$ is a preordered pointed barycentric algebra, then:
  %where:%  for all
  % $u, v \in \tscope_\alpha (\B)$,
  % writing $u$ as $[(n, x)]_\alpha$ and
  % $v$ as $[(n, y)]_\alpha$ with the same $n \in \nat$:
  \begin{enumerate}[resume]
  \item $\tscope_\alpha (\B)$ is itself a preordered pointed
    barycentric algebra, with the preordering $\leqca$ defined as
    follows: for all $u, v \in \tscope_\alpha (\B)$, $u \leqca v$ if
    and only if one can write $u$ as $[(n, x)]_\alpha$, $v$ as
    $[(n, y)]_\alpha$ with the same $n \in \nat$ and with $x \leq y$
    in $\B$; in particular, the distinguished element $[(0,
    \bot)]_\alpha$ is least;
  \item for every $a \in \Rp$, the map
    $a \cdot \_ \colon \tscope_\alpha (\B) \to \tscope_\alpha (\B)$ of
    item~3 is monotonic;
  \item in particular, $2 \cdot \_$ is a monotonic doubling map, and
    therefore $\tscope_\alpha (\B)$ has a preordered cone structure,
    given by the construction of Proposition~\ref{prop:ba->cone}.
  \end{enumerate}
\end{proposition}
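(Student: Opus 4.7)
The strategy is to exploit Lemma~\ref{lemma:equiv:alpha}, which states that $(m,x) \equiv_\alpha (n,y)$ if and only if $\alpha^{k-m} \cdot x = \alpha^{k-n} \cdot y$ for some $k \geq m,n$. This implies that any finite collection of elements of $\tscope_\alpha(\B)$ can be represented at a common level $n$, by replacing each $[(n_i,x_i)]_\alpha$ with $[(n, \alpha^{n-n_i} \cdot x_i)]_\alpha$ for $n \geq \max_i n_i$. This reduces essentially all verifications to computations on representatives in $\B$, controlled by the identities of Lemma~\ref{lemma:bary:alg:pointed}.

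For item~1, well-definedness of $+_a$ on equivalence classes is checked by bringing two representatives $(m,x), (m',x')$ of one class and $(n,y), (n',y')$ of another to a common level $M$ and using $a \cdot (u +_b v) = a \cdot u +_b a \cdot v$ from Lemma~\ref{lemma:bary:alg:pointed} to see that the two candidate values of the sum coincide. The barycentric-algebra axioms follow immediately, because the formula $[(n,x)]_\alpha +_a [(n,y)]_\alpha = [(n, x +_a y)]_\alpha$ transports every equation from $\B$ to $\tscope_\alpha(\B)$. Item~2 reduces to $a \cdot [(n,x)]_\alpha = [(n,x)]_\alpha +_a [(0,\bot)]_\alpha = [(n,x)]_\alpha +_a [(n,\bot)]_\alpha = [(n, x +_a \bot)]_\alpha = [(n, a \cdot x)]_\alpha$, using $\alpha \cdot \bot = \bot$. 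Item~3 requires two checks: independence of $k$ (from $(n+k, z) \equiv_\alpha (n+k+1, \alpha \cdot z)$ together with $\alpha \cdot (\alpha^k a) \cdot x = (\alpha^{k+1} a) \cdot x$) and independence of representative (a direct application of Lemma~\ref{lemma:equiv:alpha}); linearity of $a \cdot \_$ then follows from another round of Lemma~\ref{lemma:bary:alg:pointed}.

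For item~4, $2 \cdot \_$ is a doubling map by a direct computation: if $u = [(n,x)]_\alpha$, pick $k$ with $\alpha^k \leq 1/2$; then $2 \cdot \left(\frac 1 2 \cdot u\right) = [(n+k, (2\alpha^k) \cdot \left(\frac 1 2 \cdot x\right))]_\alpha = [(n+k, \alpha^k \cdot x)]_\alpha = u$. Combined with the linearity obtained in item~3, Proposition~\ref{prop:ba->cone} then endows $\tscope_\alpha(\B)$ with a cone structure. For items~5--7, the relation $\leqca$ is reflexive trivially, and transitivity is obtained by lifting two given comparisons to a common level (Lemma~\ref{lemma:equiv:alpha}) and invoking monotonicity of $\alpha^k \cdot \_$ on $\B$ (Lemma~\ref{lemma:bary:alg:pointed}). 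The distinguished element is least because $[(0,\bot)]_\alpha = [(n,\bot)]_\alpha$ for every $n$ and $\bot$ is least in $\B$. Monotonicity of $+_a$ and of $a \cdot \_$ are established the same way, by lifting to a common level and invoking monotonicity in $\B$; item~7 then follows by applying Proposition~\ref{prop:ba->cone} to the now-monotonic doubling map.

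The principal obstacle is bookkeeping: one must repeatedly manage the intertwined use of the pointed-algebra scalar $a \cdot \_$ (which is defined only for $a \in [0,1]$) and the level-shift identity $(n,z) \equiv_\alpha (n+1, \alpha \cdot z)$, and check that each well-definedness verification commutes properly with these. No individual step is conceptually hard given Lemma~\ref{lemma:equiv:alpha} and Lemma~\ref{lemma:bary:alg:pointed}, but the verifications are numerous and require care.
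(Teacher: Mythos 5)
Your proposal is correct and follows the same overall architecture as the paper's proof: everything is reduced to manipulations of representatives in $\B$ controlled by the identities of Lemma~\ref{lemma:bary:alg:pointed}, with Lemma~\ref{lemma:equiv:alpha} used to pass between representatives at different levels. The one point of organizational difference is minor: for well-definedness of $+_a$ in item~1, the paper reduces to the single generating relation $(m,x)\equiv_\alpha(m+1,\alpha\cdot x)$ and then does a small case analysis on $m\geq n$ versus $m<n$, whereas you invoke Lemma~\ref{lemma:equiv:alpha} directly to lift both representatives to a common level; both come to the same computation via $a\cdot(u+_b v)=a\cdot u+_b a\cdot v$. Your sketch of transitivity for $\leqca$ is terse but sound — the key subtlety (that the two representatives of the middle element $v$ need not agree at the common level, and require a further lift by a power of $\alpha$ before they coincide) is exactly what Lemma~\ref{lemma:equiv:alpha} supplies, and the paper spells out that extra lift explicitly.
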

\begin{proof}
  1.  We verify that the operation $+_a$ on $\tscope_\alpha (\B)$ is
  well-defined.  In order to do so, letting
  $f ((m, x), (n, y)) \eqdef (\max (m, n), (\alpha^{\max (m,n)-m}
  \cdot x) +_a (\alpha^{\max (m,n)-n} \cdot y))$, or equivalently
  $f ((m, x), (n, y)) \eqdef (\max (m, n), (\alpha^{\max (n-m,0)}
  \cdot x) +_a (\alpha^{\max (m-n,0)} \cdot y))$, we show that if
  $(m, x) \equiv_\alpha (m', x')$ then
  $f ((m, x), (n, y)) \equiv_\alpha f ((m', x'), (n, y))$.  It is
  enough to prove this when $m' \eqdef m+1$ and
  $x' \eqdef \alpha \cdot x$.  Then:
  \ifta
  \begin{align*}
    & f ((m+1, \alpha \cdot x), (n, y)) \\
    & = (\max (m+1, n),
      (\alpha^{\max (n-m-1, 0)} \cdot \alpha \cdot x)
      +_a (\alpha^{\max (m+1-n,0)} \cdot y)) \\
    & = (\max (m+1, n),
      (\alpha^{\max (n-m, 1)} \cdot x)
      +_a (\alpha^{\max (m+1-n,0)} \cdot y)),
  \end{align*}
  \else
  \begin{align*}
    & f ((m+1, \alpha \cdot x), (n, y)) \\
    & = (\max (m+1, n),
    \\ & \qquad
      (\alpha^{\max (n-m-1, 0)} \cdot \alpha \cdot x)
      +_a (\alpha^{\max (m+1-n,0)} \cdot y)) \\
    & = (\max (m+1, n),
    \\ & \qquad
      (\alpha^{\max (n-m, 1)} \cdot x)
      +_a (\alpha^{\max (m+1-n,0)} \cdot y)),
  \end{align*}
  \fi
  
  using the second equation of Lemma~\ref{lemma:bary:alg:pointed}.

  If $m \geq n$, then $\max (m+1, n) = m+1$, $\max (n-m, 1) = 1$ and
  $\max (m+1-n, 0) = m+1-n$, so
  $f ((m+1, \alpha \cdot x), (n, y)) = (m+1, (\alpha \cdot x) +_a
  (\alpha^{m+1-n} \cdot y)) = (m+1, (\alpha \cdot x) +_a (\alpha \cdot
  \alpha^{m-n} \cdot y))$ by the second equation of
  Lemma~\ref{lemma:bary:alg:pointed}.  By the
  \ifta
  second equation in the
  second row of equations of
  \else
  fifth equation of
  \fi
  Lemma~\ref{lemma:bary:alg:pointed}, that
  is equal to $(m+1, \alpha \cdot (x +_a (\alpha^{m-n} \cdot y))$,
  which is $\equiv_\alpha$-equivalent to
  $(m, x +_a (\alpha^{m-n} \cdot y)$; and
  $f ((m, x), (n, y)) = (m, x +_a \alpha^{m-n} \cdot y)$, since
  $m \geq n$.

  If $m < n$, then $\max (m+1, n) = n$, $\max (n-m, 1) = n-m$ and
  $\max (m+1-n, 0) = 0$, so
  $f ((m+1, \alpha \cdot x), (n, y)) = (n, (\alpha^{n-m} \cdot x) +_a
  y)$, while $\max (m, n) = n$, $\max (n-m, 0) = n-m$ and
  $\max (m-n, 0) = 0$, so
  $f ((m, x), (n, y)) = (n, (\alpha^{n-m} \cdot x) +_a y)$, which is
  the exact same value.

  Hence $+_a$ is well-defined on equivalence classes.  Since we can
  write any element of $\tscope_\alpha (\B)$ as $[(n, x)]_\alpha$ with
  $n$ large enough, in any expression involving finitely many points
  of $\tscope_\alpha (\B)$, we may assume that all these points are
  written as $[(n, x)]_\alpha$ with the same $n$ (and possibly
  different $x$).  With the formula
  $[(n, x)]_\alpha +_a [(n, y)]_\alpha \eqdef [(n, x +_a y)]$, it is
  then clear that $\tscope_\alpha (\B)$ is a barycentric algebra.

  2.  For every $a \in [0, 1]$, for every
  $[(n, x)]_\alpha \in \tscope_\alpha (\B)$, then,
  $a \cdot [(n, x)]_\alpha = [(n, x)]_\alpha +_a [(n, \bot)]_\alpha =
  [(n, x +_a \bot)]_\alpha = [(n, a \cdot x)]_\alpha$.

  3.  We extend the formula of item~2 and we define
  $a \cdot [(n, x)]_\alpha \eqdef [(n+k, (\alpha^k a) \cdot
  x)]_\alpha$ for every $a \in \Rp$, not just in $[0, 1]$, where $k$
  is so large that $\alpha^k a \leq 1$.  (If $a \in [0, 1]$, we can
  take $k \eqdef 0$, which gives back the previous definition.)  Such
  a $k$ always exists because $\alpha \in {]0, 1[}$.  It does not
  matter which $k$ we choose, since for all $k, k' \in \nat$ such that
  $k \leq k'$ and $\alpha^k a \leq 1$,
  $(n+k, (\alpha^k a) \cdot x) \equiv_\alpha (n+k', \alpha^{k'-k}
  \cdot (\alpha^k a) \cdot x) = (n+k', (\alpha^{k'} a) \cdot x)$,
  using the second equation of Lemma~\ref{lemma:bary:alg:pointed}.
  The definition of scalar multiplication is also independent of the
  representative $(n, x)$ in the equivalence class $[(n, x)]_\alpha$.
  Indeed, letting $f (n, x) \eqdef (n+k, (\alpha^k a) \cdot x)$, it
  suffices to show that
  $f (n+1, \alpha \cdot x) \equiv_\alpha f (n, x)$: indeed,
  $f (n+1, \alpha \cdot x) = (n+k+1, (\alpha^k a) \cdot \alpha \cdot
  x) = (n+k+1, (\alpha^{k+1} a) \cdot x) = (n+k+1, \alpha \cdot
  (\alpha^k a) \cdot x)$ (by the second equation of
  Lemma~\ref{lemma:bary:alg:pointed})
  $\equiv_\alpha (n+k, (\alpha^k a) \cdot x) = f (n, x)$.  Hence
  $a \cdot \_$ is well-defined on $\tscope_\alpha (\B)$ for every
  $a \in \Rp$.

  The map $a \cdot \_$ is strict:
  $a \cdot [(0, \bot)]_\alpha = [(k, (\alpha^k a) \cdot \bot)]_\alpha$
  (with $k$ so large that $\alpha^k a \leq 1$) $= [(k, \bot)]_\alpha$
  (by
  \ifta
  the first equation on the second row of equations of
  \else
  the fourth equation of
  \fi
  Lemma~\ref{lemma:bary:alg:pointed}) $= [(0, \bot)]_\alpha$.

  We claim that the map $a \cdot \_$ is affine.  We have
  $a \cdot ([(n, x)]_\alpha +_a [(n, y)]_\alpha) = (a \cdot [(n,
  x)]_\alpha) +_a (a \cdot [(n, y)]_\alpha)$.  Picking $k$ such that
  $\alpha^k a \leq 1$ as above, the left-hand side is equal to
  $[(n+k, (\alpha^k a) \cdot (x +_a y))]_\alpha$, while the right-hand
  side is equal to
  $[(n+k, ((\alpha^k a) \cdot x) +_a ((\alpha^k a) \cdot y))]$.  Those
  are equal by \ifta
  the middle equation in the second row of equations of
  \else
  the fifth equation of
  \fi
  Lemma~\ref{lemma:bary:alg:pointed}.

  Being strict and affine, $a \cdot \_$ is linear, thanks to
  Proposition~\ref{prop:strict:aff}.

  4.  In order to show that $2 \cdot \_$ is a doubling map, it remains
  to verify that $2 \cdot \frac 1 2 \cdot u = u$ for every
  $u \in \tscope_\alpha (\B)$.  We write $u$ as $[(n, x)]_\alpha$, and
  we pick $k$ so that $2\alpha^k \leq 1$.  Then
  $2 \cdot \frac 1 2 \cdot u = 2 \cdot [(n, \frac 1 2 \cdot x)]_\alpha
  = [(n+k, (2 \alpha^k) \cdot \frac 1 2 \cdot x)]_\alpha = [(n+k,
  \alpha^k \cdot x)]_\alpha$ (by the second equation of
  Lemma~\ref{lemma:bary:alg:pointed}) $= u$, since
  $(n+k, \alpha^k \cdot x) \equiv_\alpha (n, x)$.  Therefore
  $\tscope_\alpha (\B)$ is a cone by Proposition~\ref{prop:ba->cone}.
  
  5.  We now assume that $\B$ is a preordered pointed barycentric
  algebra.  The definition of the preordering $\leqca$ on
  $\tscope_\alpha (\B)$ is a bit subtle.  If $x \leq y$, then
  $[(n, x)]_\alpha \leq [(n, y)]_\alpha$ for some $n$, but not
  necessarily for the least $n$ such that we can write both sides of
  the inequality with the same $n$.

  At any rate, this defines a preordering.  Reflexivity is obvious,
  while transitivity is proved as follows.  If $u \leqca v$ and
  $v \leqca w$, we can write $u$ as $[(n, x)]_\alpha$, $v$ as
  $[(n, y)]_\alpha$ with $x \leq y$, and we can also write $v$ as
  $[(n', y')]_\alpha$ and $w$ as $[(n', z)]_\alpha$ with $y' \leq z$.
  Let $n'' \eqdef \max (n, n')$.  Now
  $(n, x) \equiv_\alpha (n+1, \alpha \cdot x) \equiv_\alpha \cdots
  \equiv_\alpha (n'', \alpha^{n''-n} \cdot x)$, similarly
  $(n, y) \equiv_\alpha (n'', \alpha^{n''-n} \cdot y)$,
  $(n', y') \equiv_\alpha (n'', \alpha^{n''-n'} \cdot y')$, and
  $(n', z) \equiv_\alpha (n'', \alpha^{n''-n'} \cdot z)$.  We recall
  that $v = [(n, y)]_\alpha = [(n', y')]_\alpha,$ so
  $(n, y) \equiv_\alpha (n', y')$, and therefore
  $(n'', \alpha^{n''-n} \cdot y) \equiv_\alpha (n'', \alpha^{n''-n'}
  \cdot y')$.  By Lemma~\ref{lemma:equiv:alpha},
  $\alpha^{k-n''} \cdot \alpha^{n''-n} \cdot y = \alpha^{k-n''} \cdot
  \alpha^{n''-n'} \cdot y'$ for some $k \geq n''$, therefore
  $\alpha^{k-n} \cdot y = \alpha^{k-n'} \cdot y'$.  Then,
  $(n, x) \equiv_\alpha (n'', \alpha^{n''-n} \cdot x) \equiv_\alpha
  (n''+1, \alpha^{n''+1-n} \cdot x) \equiv_\alpha \cdots \equiv_\alpha
  (k, \alpha^{k-n} \cdot x)$, similarly
  $(n', z) \equiv_\alpha (k, \alpha {k-n'} \cdot z)$, so we can write
  $u$ as $[(k, \alpha^{k-n} \cdot x)]_\alpha$, $w$ as
  $[(k, \alpha^{k-n'} \cdot z)]_\alpha$, and
  $\alpha^{k-n} \cdot x \leq \alpha^{k-n} \cdot y = \alpha^{k-n'}
  \cdot y' \leq \alpha^{k-n'} \cdot z$, showing that $u \leqca w$.

  We check that $\_ +_a w$ is monotonic from $\tscope_\alpha (\B)$ to
  itself, for every $a \in [0, 1]$ and for every
  $w \in \tscope_\alpha (\B)$.  Let $u \leqca v$, with
  $u \eqdef [(n, x)]_\alpha$, $v \eqdef [(n, y)]_\alpha$ and
  $x \leq y$.  We may write $w$ as $[(n, z)]_\alpha$, with the same
  $n$, possibly increasing the value of $n$ for $u$, $v$, and $w$.
  Then $x +_a y \leq z$, so $u + v = [(n, x +_a y)]_\alpha \leqca w$.
  Hence $\tscope_\alpha (\B)$ is a preordered barycentric algebra.

  The equivalence class of $(0, \bot)$ contains all the pairs
  $(n, \alpha^n \cdot \bot)$, namely all the pairs $(n, \bot)$,
  $n \in \nat$, because $\alpha^n \cdot \bot = \bot$, by
  \ifta
  the first equation of the second row of equations in
  \else
  the fourth equation of
  \fi
  Lemma~\ref{lemma:bary:alg:pointed}.  For every $x \in \B$,
  $\bot \leq x$, so
  $[(0, \bot)]_\alpha = [(n, \bot)]_\alpha \leqca [(n, x)]_\alpha$.
  Hence $[(0, \bot)]_\alpha$ is least in $\tscope_\alpha (\B)$, and
  therefore $\tscope_\alpha (\B)$ is a preordered pointed barycentric
  algebra.

  6.  We verify that $a \cdot \_$ is monotonic.  If $x \leq y$, so
  that $[(n, x)]_\alpha \leqca [(n, y)]_\alpha$, then
  $a \cdot [(n, x)]_\alpha \leqca a \cdot [(n, y)]_\alpha$ since
  $(\alpha^k a) \cdot x \leq (\alpha^k a) \cdot x$ (we recall that
  scalar multiplication on $\B$ is monotonic, by
  Lemma~\ref{lemma:bary:alg:pointed}), where $k$ is sufficiently
  large, so that $\alpha^k a \leq 1$.

  7.  By items~4 and~6, $2 \cdot \_$ is a monotonic doubling map, so
  $\tscope_\alpha (\B)$ is a preordered cone by
  Proposition~\ref{prop:ba->cone}.  \qed
\end{proof}

\begin{theorem}
  \label{thm:bary:alg:conify:ord:pointed}
  For every preordered pointed barycentric algebra $\B$ and every
  $\alpha \in {]0, 1[}$, $\tscope_\alpha (\B)$ is a preordered cone,
  and:
  \begin{enumerate}
  \item the function $\etaca_{\B} \colon x \mapsto [(0, x)]_\alpha$ is
    a monotonic linear map from $\B$ to $\tscope_\alpha (\B)$;
  \item for every map $f \colon \B \to \C$ to a cone $\C$ that
    commutes with $\alpha \cdot \_$ (namely such that
    $f (\alpha \cdot x) = \alpha \cdot f (x)$ for every $x \in \B$),
    there is a unique map
    $f^{\scope\alpha} \colon \tscope_\alpha (\B) \to \C$ that
    commutes with $\alpha \cdot \_$ and such that
    $f^{\scope\alpha} \circ \etaca_{\B} = f$, and it is given by the
    formula
    $f^{\scope\alpha} ([(n, x)]_\alpha) \eqdef (1/\alpha)^n \cdot f
    (x)$; if $f$ is affine, resp.\ strict, then so is
    $f^{\scope\alpha}$, in particular $f^{\scope\alpha}$ is linear
    if $f$ is; if $f$ is $\Iu$-homogeneous, then $f^{\scope\alpha}$
    is positively homogeneous;
  \item if $\C$ is a preordered cone, and if $f$ is monotonic (resp.\
    concave, convex, superlinear, sublinear in the sense of
    Definition~\ref{defn:bary:alg:pointed:linear}), then
    $f^{\scope\alpha}$ is monotonic (resp.\ concave, convex,
    superlinear, sublinear).
  \item In particular, $\tscope_\alpha (\B)$ is the free cone, and
    also the free preordered cone, over the preordered pointed
    barycentric algebra $\B$, and this is independent of $\alpha$ (up
    to natural isomorphism of preordered cones).
  \end{enumerate}
\end{theorem}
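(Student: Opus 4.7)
The plan is to verify items~1--4 in order. Item~1 is immediate:
monotonicity of $\etaca_{\B}$ follows from
Proposition~\ref{prop:tscope:bary:alg}.5 with common index $n=0$;
strictness follows because $[(0,\bot)]_\alpha$ is the distinguished
element of $\tscope_\alpha(\B)$ by
Proposition~\ref{prop:tscope:bary:alg}.1; affineness is the formula
$[(0,x)]_\alpha +_a [(0,y)]_\alpha = [(0, x +_a y)]_\alpha$; and
linearity then follows from Proposition~\ref{prop:strict:aff}.

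For item~2, I first observe by induction on $j$ that if $f$ commutes
with $\alpha \cdot \_$ then $f$ commutes with $\alpha^j \cdot \_$ for
every $j \in \nat$, using the equation
$(ab) \cdot x = a \cdot (b \cdot x)$ of
Lemma~\ref{lemma:bary:alg:pointed}. The formula
$f^{\scope\alpha}([(n,x)]_\alpha) \eqdef (1/\alpha)^n \cdot f(x)$ is
then well-defined: if $[(n,x)]_\alpha = [(n',x')]_\alpha$, then by
Lemma~\ref{lemma:equiv:alpha} there is $k \geq n, n'$ with
$\alpha^{k-n} \cdot x = \alpha^{k-n'} \cdot x'$ in $\B$; applying $f$
and the iterated commutation gives
$\alpha^{k-n} \cdot f(x) = \alpha^{k-n'} \cdot f(x')$ in $\C$, and
multiplying by $(1/\alpha)^k$ yields
$(1/\alpha)^n \cdot f(x) = (1/\alpha)^{n'} \cdot f(x')$. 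The identities
$f^{\scope\alpha} \circ \etaca_{\B} = f$ and commutation of
$f^{\scope\alpha}$ with $\alpha \cdot \_$ are direct computations. For
uniqueness, if $g$ commutes with $\alpha \cdot \_$ and
$g \circ \etaca_{\B} = f$, then
Proposition~\ref{prop:tscope:bary:alg}.2 identifies
$\alpha^n \cdot [(n,x)]_\alpha$ with $[(0,x)]_\alpha = \etaca_{\B}(x)$,
so $\alpha^n \cdot g([(n,x)]_\alpha) = g(\etaca_{\B}(x)) = f(x)$, and
cancelling $\alpha^n$ in $\C$ by multiplying with $(1/\alpha)^n$ forces
$g = f^{\scope\alpha}$. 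Preservation of affineness, strictness,
$\Iu$-homogeneity (yielding positive homogeneity), and linearity all
reduce to the cone identity
$r \cdot (u +_a v) = r \cdot u +_a r \cdot v$ together with the
corresponding equation on $f$.

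Item~3 follows by the same pattern, valid in the preordered case:
write two elements of $\tscope_\alpha(\B)$ with a common index $n$,
apply the corresponding property of $f$ in $\B$, and use monotonicity
of $(1/\alpha)^n \cdot \_$ in $\C$ together with the cone identity
above. Item~4 is then a consequence: any linear extension
$\hat f \colon \tscope_\alpha(\B) \to \C$ is positively homogeneous
and hence commutes with $\alpha \cdot \_$, so by item~2 it must
coincide with $f^{\scope\alpha}$, which is linear when $f$ is;
combining with item~3 yields the free preordered cone property.
Independence of $\alpha$ up to natural isomorphism is the standard
uniqueness of free objects.

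The main obstacle is the well-definedness step in item~2, which
requires passing from the equation
$\alpha^{k-n} \cdot x = \alpha^{k-n'} \cdot x'$ in the pointed
barycentric algebra $\B$ (where only scalars in $[0,1]$ act) to the
equation
$(1/\alpha)^n \cdot f(x) = (1/\alpha)^{n'} \cdot f(x')$ in the cone
$\C$ (where all scalars in $\Rp$ act). This works only because $f$
commutes with every $\alpha^j \cdot \_$ and the cancellation
$(1/\alpha)^k \cdot \alpha^k = 1$ is available in $\C$.
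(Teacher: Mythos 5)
Your proposal is correct and follows essentially the same route as the paper's proof. The one small divergence is in the well-definedness step of item~2: you unfold $\equiv_\alpha$ via Lemma~\ref{lemma:equiv:alpha} and apply iterated commutation, whereas the paper only checks the single generating relation $(n,x)\equiv_\alpha(n+1,\alpha\cdot x)$ (which suffices since a map constant on a generating relation factors through the generated equivalence relation); both are fine. One caution: your remark that the preservation claims ``reduce to the cone identity $r\cdot(u+_a v)=r\cdot u+_a r\cdot v$'' covers the affine/concave/convex cases but not the $\Iu$-homogeneity step, where one must instead invoke the formula $a\cdot[(n,x)]_\alpha=[(n+k,(\alpha^k a)\cdot x)]_\alpha$ from Proposition~\ref{prop:tscope:bary:alg}.3 and use $\Iu$-homogeneity of $f$ on the scalar $\alpha^k a\leq 1$; that is how the paper derives positive homogeneity, and it would be worth spelling out.
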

\begin{proof}
  We know that $\tscope_\alpha (\B)$ is a preordered cone from
  Proposition~\ref{prop:tscope:bary:alg}.

  1.  For all $x, y \in \B$ such that $x \leq y$, we have
  $[(0, x)]_\alpha \leqca [(0, y)]_\alpha$, so $\etaca_{\B}$ is
  monotonic.  It is strict by definition of the zero element of
  $\tscope_\alpha (\B)$.  In order to see that it is linear, it
  remains to show that it is affine, by Proposition~\ref{prop:strict:aff},
  and this follows from the equality
  $[(n, x)]_\alpha +_a [(n, y)]_\alpha = [(n, x +_a y)]_\alpha$ (see
  Proposition~\ref{prop:tscope:bary:alg}, item~1) with $n \eqdef 0$.

  2.  We first characterize the map $\alpha \cdot \_$ on
  $\tscope_\alpha (\B)$.  By Proposition~\ref{prop:tscope:bary:alg},
  item~4, for all $n \in \nat$ and $x \in \B$,
  $\alpha \cdot [(n, x)]_\alpha = [(n, \alpha \cdot x)]_\alpha$, so
  $\alpha \cdot [(n, x)]_\alpha = [(n-1, x)]_\alpha$ if $n \geq 1$.
  By induction on $n$, we obtain that
  $\alpha^n \cdot ([(n, x)]_\alpha) = [(0, x)]_\alpha$.  If
  $f^{\scope\alpha}$ exists and commutes with $\alpha \cdot$, then
  for every point $[(n, x)]_\alpha$ of $\tscope_\alpha (\B)$,
  $f^{\scope\alpha} (\alpha^n \cdot [(n, x)]_\alpha)$ is equal both
  to $\alpha^n \cdot (f^{\scope\alpha} ([(n, x)]_\alpha))$ and to
  $f^{\scope\alpha} ([(0, x)]_\alpha) = (f^{\scope\alpha} \circ
  \etaca_{\B}) (x) = f (x)$.  Therefore
  $f^{\scope\alpha} ([(n, x)]_\alpha)$ must be equal to
  $(1/\alpha)^n \cdot f (x)$, a notation that makes sense since scalar
  multiplication is taken in the cone $\C$.  It follows that
  $f^{\scope\alpha}$ is unique if it exists.

  As for existence, we define $f^{\scope\alpha}$ as mapping every
  $[(n, x)]_\alpha$ to $(1/\alpha)^n \cdot f (x)$.  This is
  independent of the chosen representative in the equivalence class of
  $(n, x)$.  Indeed, it suffices to show that
  $(1/\alpha)^{n+1} \cdot f (\alpha \cdot x) = (1/\alpha)^n \cdot f
  (x)$, which follows from the fact that $f$ commutes with
  $\alpha \cdot \_$.

  If $f$ is strict, then $f^{\scope\alpha} ([(0, \bot)]_\alpha) =
  (1/\alpha)^0 \cdot f (\bot) = f (\bot) = 0$, so $f^{\scope\alpha}$
  is strict.
  
  If $f$ is affine, then for all points $[(n, x)]_\alpha$ and
  $[(n, y)]_\alpha$ of $\tscope_\alpha (\B)$ (with the same $n$), for
  every $a \in [0, 1]$,
  $f^{\scope\alpha} ([(n, x)]_\alpha +_a [(n, y)]_\alpha) = f^{\scope,
    \alpha} ([(n, x +_a y)]_\alpha) = (1/\alpha)^n \cdot f (x +_a y) =
  (1/\alpha)^n \cdot (a \cdot f (x) + (1-a) \cdot f (y)) = a \cdot
  (1/\alpha)^n \cdot f (x) + (1-a) \cdot (1/\alpha)^n \cdot f (y) = a
  \cdot f^{\scope\alpha} ([(n, x)]_\alpha) + (1-a) \cdot f^{\scope,
    \alpha} ([(n, y)]_\alpha)$, so $f^{\scope\alpha}$ is affine.

  By Proposition~\ref{prop:strict:aff}, it follows that $f^{\scope\alpha}$
  is linear if $f$ is.

  If $f$ is $\Iu$-homogeneous, then we check that $f^{\scope\alpha}$
  is positively homogeneous.  Let $a \in \Rp$ and $[(n, x)]_\alpha$ be
  a point of $\tscope_\alpha (\B)$.  We pick $k$ so that
  $\alpha^k a \leq 1$.  Then
  $f^{\scope\alpha} (a \cdot [(n, x)]_\alpha) = f^{\scope\alpha}
  ([(n+k, (\alpha^k a) \cdot x)]_\alpha) = (1/\alpha)^{n+k} \cdot f
  ((\alpha^k a) \cdot x) = (1/\alpha)^{n+k} \cdot (\alpha^k a) \cdot f
  (x)$ (since $f$ is $\Iu$-homogeneous and $\alpha^k a \leq 1$)
  $= a \cdot (1/\alpha)^n \cdot f (x) = a \cdot f^{\scope\alpha}
  ([(n, x)]_\alpha)$.

  3.  Let us assume that $\C$ is preordered.  If $f$ is monotonic,
  then we check that for every $n \in \nat$, for all $x, y \in \B$
  such that $x \leq y$,
  $f^{\scope\alpha} ([(n, x)]_\alpha) \leq f^{\scope\alpha} ([(n,
  y)]_\alpha)$; and indeed,
  $f^{\scope\alpha} ([(n, x)]_\alpha) = (1/\alpha)^n \cdot f (x) \leq
  (1/\alpha)^n \cdot f (y) = f^{\scope\alpha} ([(n, y)]_\alpha)$,
  since $f$ is monotonic.

  If $f$ is concave, we reason as in the case where $f$ is affine (see
  item~2):
  $f^{\scope\alpha} ([(n, x)]_\alpha +_a [(n, y)]_\alpha) = f^{\scope,
    \alpha} ([(n, x +_a y)]_\alpha) = (1/\alpha)^n \cdot f (x +_a y)
  \geq (1/\alpha)^n \cdot (a \cdot f (x) + (1-a) \cdot f (y)) = a
  \cdot (1/\alpha)^n \cdot f (x) + (1-a) \cdot (1/\alpha)^n \cdot f
  (y) = a \cdot f^{\scope\alpha} ([(n, x)]_\alpha) + (1-a) \cdot
  f^{\scope\alpha} ([(n, y)]_\alpha)$, so $f^{\scope\alpha}$ is
  concave.  Similarly, if $f$ is convex then so is
  $f^{\scope\alpha}$.  Since superlinearity (resp.\ sublinearity) is
  equivalent to $\Iu$-homogeneity plus concavity (resp.\ convexity) by
  Remark~\ref{rem:bary:alg:pointed:linear}, $f^{\scope\alpha}$ is
  superlinear, resp.\ sublinear if $f$ is.

  4.  The fact that $\tscope_\alpha (\B)$ is the free preordered cone
  over $\B$ is an immediate consequence of items~2 and~3.  Hence
  $\tscope_\alpha$ define a functor that is left adjoint to the
  forgetful functor from preordered cones to preordered pointed
  barycentric algebras.  As such, it is determined in a unique way up
  to natural isomorphism in the category of preordered cones, hence is
  independent of $\alpha$, up to natural isomorphism.  \qed
\end{proof}

The map $\etaca_{\B}$ is almost a preorder embedding, as the following
Lemma~\ref{lemma:bary:alg:pointed:embed:almost} shows.  This rests on
the following clever observation; the first part is due to Neumann
\cite[Lemma~2]{neumann:bary}, the second part is due to Keimel and
Plotkin \cite[Lemma~2.8]{KP:mixed}, while the remaining is a trivial
consequence.
\begin{fact} %[Neumann; Keimel and Plotkin]
  \label{fact:neumann}
  In a barycentric algebra $\B$, if $x +_a z = y +_a z$ for some
  $a \in {]0, 1[}$, then $x +_a z = y +_a z$ for every
  $a \in {]0, 1[}$.  In a preordered barycentric algebra $\B$, if
  $x +_a z \leq y +_a z$ for some $a \in {]0, 1[}$, then
  $x +_a z \leq y +_a z$ for every $a \in {]0, 1[}$.

  It follows that in a pointed barycentric algebra, if
  $r \cdot x = r \cdot y$ for some $r \in {]0, 1[}$, then
  $r \cdot x = r \cdot y$ for every $r \in {]0, 1[}$, and if
  $r \cdot x \leq r \cdot y$ for some $r \in {]0, 1[}$, then
  $r \cdot x \leq r \cdot y$ for every $r \in {]0, 1[}$.
\end{fact}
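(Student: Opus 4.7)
My plan is to show that the set $S \eqdef \{b \in {]0,1[} \mid x +_b z = y +_b z\}$ coincides with all of ${]0,1[}$, by combining a straightforward ``downward'' closure with a more subtle ``upward'' step.

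For the downward step, the associativity axiom together with idempotency $z +_d z = z$ yields $(u +_a z) +_c z = u +_{ac} z$ for $c \in [0, 1]$, so applying $+_c z$ to both sides of the hypothesis gives $S \supseteq {]0, a]}$.

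The main obstacle is the upward step. My plan is to choose $b \eqdef 2a/(1+a) \in {]a, 1[}$, which is dictated by the identities $b/2 = a/(1+a)$ and $b/(2-b) = a$. With this value, applying the associativity axiom expands $(x +_{1/2} y) +_b z$ to $x +_{b/2} (y +_{b/(2-b)} z) = x +_{a/(1+a)} (y +_a z)$; substituting the hypothesis $y +_a z = x +_a z$ and then running the axiom backwards (using $x +_{1/2} x = x$) collapses this back to $x +_b z$. By the commutativity axiom $x +_{1/2} y = y +_{1/2} x$, the same argument with $x$ and $y$ swapped yields $y +_b z$ as well, so $x +_b z = (x +_{1/2} y) +_b z = y +_b z$. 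Iterating, the map $c \mapsto 2c/(1+c)$ has only $0$ and $1$ as fixed points in $[0, 1]$, so the sequence $a_0 \eqdef a$, $a_{n+1} \eqdef 2 a_n/(1+a_n)$ is increasing in $]0, 1[$ and converges to $1$; combined with the downward step, this exhausts ${]0, 1[}$.

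The preordered case follows the same outline, replacing the substitution step by a monotonicity step: from $x +_a z \leq y +_a z$ and monotonicity of $+_{a/(1+a)}$ in its second argument, I obtain the chain $x +_b z \leq (x +_{1/2} y) +_b z \leq y +_b z$, and iterate as before. The pointed consequence is then immediate by taking $z \eqdef \bot$, since $r \cdot x = x +_r \bot$.  \qed
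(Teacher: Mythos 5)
Your proof is correct. Note that the paper does not actually prove this Fact: it attributes the first statement to Neumann and the second to Keimel and Plotkin, and invokes them as known results, so there is no proof in the paper to compare against. Your argument is self-contained and valid. The downward step $(u +_a z) +_c z = u +_{ac} z$ (from associativity and $z +_d z = z$) correctly gives $(0,a] \subseteq S$, and in the preordered case it goes through by monotonicity of $\_ +_c z$. The upward step is also correct: with $b = 2a/(1+a)$ one indeed has $b/2 = a/(1+a)$, $2 - b = 2/(1+a)$, and $b/(2-b) = a$, so the associativity axiom (with $1/2 < 1$ and $b < 1$) unfolds $(x +_{1/2} y) +_b z$ to $x +_{a/(1+a)}(y +_a z)$; the substitution and the re-folding via $x +_{1/2} x = x$ give $x +_b z$, and the commutativity axiom gives $y +_b z$ symmetrically, so $b \in S$. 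In the preordered case the two applications of monotonicity of $\_+_{a/(1+a)}\_$ in its second argument give the needed chain $x +_b z \leq (x +_{1/2} y) +_b z \leq y +_b z$. The iteration $a_{n+1} = 2a_n/(1+a_n)$ is strictly increasing on $(0,1)$, bounded by $1$, with fixed points only at $0$ and $1$, so $a_n \to 1$; combined with the downward step, $S = (0,1)$. The pointed consequence by setting $z := \bot$ is immediate. This is a clean, elementary proof of a result the paper leaves to citation; it relies only on the four axioms of barycentric algebras (plus monotonicity in the preordered case), and in particular does not need the embedding into $\conify(\B)$ that the paper uses for most of its other computations.
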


\begin{lemma}
  \label{lemma:bary:alg:pointed:embed:almost}
  Let $\alpha \in {]0, 1[}$.  For any two points $x$, $y$ in a
  preordered pointed barycentric algebra $\B$, if
  $\etaca_{\B} (x) \leqca \etaca_{\B} (y)$ in $\tscope_\alpha (\B)$,
  then $r \cdot x \leq r \cdot y$ for some $r \in {]0, 1[}$, and then
  for every $r \in {]0, 1[}$.
\end{lemma}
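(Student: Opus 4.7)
The plan is to unpack the definition of $\leqca$ and use the explicit characterization of $\equiv_\alpha$ given in Lemma~\ref{lemma:equiv:alpha} to transport the inequality back to $\B$, and then to appeal to Fact~\ref{fact:neumann} to upgrade ``for some $r$'' to ``for every $r$''.

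First I would unfold the hypothesis $[(0,x)]_\alpha \leqca [(0,y)]_\alpha$ according to the definition of $\leqca$ in Proposition~\ref{prop:tscope:bary:alg}, item~5: there exist $n \in \nat$ and $x', y' \in \B$ with $x' \leq y'$ such that $[(0,x)]_\alpha = [(n,x')]_\alpha$ and $[(0,y)]_\alpha = [(n,y')]_\alpha$. Next I would apply Lemma~\ref{lemma:equiv:alpha} to each of these equalities: $(0,x) \equiv_\alpha (n,x')$ yields some $k \geq n$ with $\alpha^k \cdot x = \alpha^{k-n} \cdot x'$, and similarly $(0,y) \equiv_\alpha (n,y')$ yields some $k' \geq n$ with $\alpha^{k'} \cdot y = \alpha^{k'-n} \cdot y'$.

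Setting $K \eqdef \max(k, k', 1)$ and using $\alpha^{K-k} \cdot \_$ on both sides of the first equation (together with the associativity of scalar multiplication from the second equation of Lemma~\ref{lemma:bary:alg:pointed}), I would obtain $\alpha^K \cdot x = \alpha^{K-n} \cdot x'$, and likewise $\alpha^K \cdot y = \alpha^{K-n} \cdot y'$. Since $x' \leq y'$ in $\B$ and scalar multiplication $\alpha^{K-n} \cdot \_$ is monotonic on $\B$ by Lemma~\ref{lemma:bary:alg:pointed}, this gives $\alpha^K \cdot x \leq \alpha^K \cdot y$. Setting $r \eqdef \alpha^K$, we have $r \in {]0,1[}$ (this is why I took $K \geq 1$) and $r \cdot x \leq r \cdot y$.

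Finally, Fact~\ref{fact:neumann} (second part) immediately promotes ``for some $r \in {]0,1[}$'' to ``for every $r \in {]0,1[}$'', completing the proof. There is no real obstacle here: the argument is purely a bookkeeping unfolding of definitions, the main subtlety being that one must pick $K$ large enough (in particular $K \geq 1$) to land strictly inside $]0,1[$ so as to trigger Fact~\ref{fact:neumann}.
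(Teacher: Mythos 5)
Your proof is correct and follows essentially the same route as the paper's: unfold $\leqca$ to get $[(0,x)]_\alpha = [(n,x')]_\alpha$ and $[(0,y)]_\alpha = [(n,y')]_\alpha$ with $x' \leq y'$, apply Lemma~\ref{lemma:equiv:alpha} to both equivalences, harmonize the exponents by multiplying by further powers of $\alpha$, use monotonicity of scalar multiplication, and finish with Fact~\ref{fact:neumann}. Your explicit choice $K \eqdef \max(k,k',1)$ is exactly the paper's remark about replacing $\alpha^k$ by $\alpha^{k+1}$ to guarantee $r < 1$.
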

\begin{proof}
  If $\etaca_{\B} (x) \leqca \etaca_{\B} (y)$, then we can write
  $\etaca_{\B} (x)$ as $[(n, x')]_\alpha$, $\etaca_{\B} (y)$ as
  $[(n, y')]_\alpha$, where $x' \leq y'$.  Since
  $\etaca_{\B} (x) = [(0, x)]_\alpha = [(n, x')]_\alpha$, we have
  $(0, x) \equiv_\alpha (n, x')$, so
  $\alpha^k \cdot x = \alpha^{k-n} \cdot x'$ for some $k \geq n$, by
  Lemma~\ref{lemma:equiv:alpha}.  Similarly,
  $\alpha^{k'} \cdot y = \alpha^{k'-n} \cdot y'$ for some $k' \geq n$.
  Multiplying by powers of $\alpha$ if necessary (and using the second
  equation of Lemma~\ref{lemma:bary:alg:pointed}), we may assume that
  $k=k'$.  Since $x' \leq y'$, we obtain that
  $\alpha^{k-n} \cdot x' \leq \alpha^{k-n} \cdot x'$, hence that
  $\alpha^k \cdot x \leq \alpha^k \cdot y$.  We can therefore take
  $r \eqdef \alpha^k$---or $\alpha^{k+1}$ in order to ensure that
  $r < 1$.  Then $r \cdot x \leq r \cdot y$ for every
  $r \in {]0, 1[}$, by Fact~\ref{fact:neumann}.  \qed
\end{proof}
Frustratingly, but crucially, $r$ cannot be forced to be equal to $1$
in Lemma~\ref{lemma:bary:alg:pointed:embed:almost}.  We will see why
in Example~\ref{exa:bary:alg:pointed:noembed}.  In the meantime, let
us observe that the situation is nicer with pointed semitopological
barycentric algebras.
\begin{lemma}
  \label{lemma:bary:alg:pointed:embed:topo}
  Let $\B$ be a pointed semitopological barycentric algebra,
  considered with its specialization preordering $\leq$.  Then:
  \begin{enumerate}
  \item for all $x, y \in \B$, if $r \cdot x \leq r \cdot y$ for every
    $r \in {[0, 1[}$, then $x \leq y$;
  \item $\etaca_{\B}$ is a preorder embedding, where
    $\tscope_\alpha (\B)$ is given the preordering $\leqca$ of
    Proposition~\ref{prop:tscope:bary:alg}.
  \end{enumerate}
\end{lemma}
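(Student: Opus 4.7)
The plan is as follows. For item~1, assume $r \cdot x \leq r \cdot y$ for every $r \in [0,1[$, and let $U$ be an arbitrary open neighborhood of $x$ in $\B$; to establish $x \leq y$ in the specialization preordering we must show $y \in U$. By Lemma~\ref{lemma:bary:alg:pointed}, the map $r \mapsto r \cdot x$ is continuous from $[0,1]_\sigma$ to $\B$. Since $1 \cdot x = x \in U$, the preimage of $U$ is a Scott-open subset of $[0,1]$ containing $1$, hence contains an interval $]s, 1]$ with $s < 1$. Pick any $r \in {]s, 1[}$, so $r \cdot x \in U$ and $r \in [0, 1[$.

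The hypothesis gives $r \cdot x \leq r \cdot y$, and since $U$ is open (hence upwards-closed in the specialization preordering), $r \cdot y \in U$. Now Lemma~\ref{lemma:bary:alg:pointed} also tells us that $a \mapsto a \cdot y$ is monotonic in $a$, so $r \cdot y \leq 1 \cdot y = y$; using once more that $U$ is upwards-closed, $y \in U$. Since $U$ was arbitrary, $x \leq y$.

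For item~2, we already know from Theorem~\ref{thm:bary:alg:conify:ord:pointed}(1) that $\etaca_{\B}$ is monotonic, so it preserves the preordering. For reflection, assume $\etaca_{\B}(x) \leqca \etaca_{\B}(y)$ in $\tscope_\alpha(\B)$. By Lemma~\ref{lemma:bary:alg:pointed:embed:almost}, $r \cdot x \leq r \cdot y$ for every $r \in {]0,1[}$. The case $r=0$ is free, since $0 \cdot x = \bot = 0 \cdot y$ by the first equation of Lemma~\ref{lemma:bary:alg:pointed}, so in fact $r \cdot x \leq r \cdot y$ holds for every $r \in [0,1[$. Item~1 then yields $x \leq y$, finishing the proof.

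The only subtle point—and what makes the topological setting strictly stronger than the purely algebraic one flagged after Lemma~\ref{lemma:bary:alg:pointed:embed:almost}—is the first paragraph: we need the continuity of scalar multiplication in the scalar to push the obstruction ``$r \cdot x \in U$ but perhaps $x \notin U$'' aside, by approaching $r = 1$ from below inside the Scott topology of $[0,1]$. No other step is delicate.
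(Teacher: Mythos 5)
Your proof is correct and follows essentially the same route as the paper's. The one cosmetic divergence in item 1 is how you deduce $y \in U$ once $r \cdot y \in U$: you invoke the monotonicity of $a \mapsto a \cdot y$ together with the fact that open sets are upwards-closed, whereas the paper observes that $(\_ \cdot y)^{-1}(U)$ is Scott-open, hence upwards-closed in $[0,1]$, so it contains $1$; these are interchangeable. In item 2 you are actually a touch more careful than the paper: you explicitly supply the $r=0$ case (since Lemma~\ref{lemma:bary:alg:pointed:embed:almost} only gives $r \in {]0,1[}$ and item 1 requires $r \in [0,1[$) and you explicitly cite monotonicity of $\etaca_{\B}$ from Theorem~\ref{thm:bary:alg:conify:ord:pointed}, both of which the paper leaves tacit.
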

\begin{proof}
  1.  In order to show that $x \leq y$, we consider any open
  neighborhood $U$ of $x$, and we show that $y \in U$.  The map
  $\_ \cdot x$ is continuous from $\Rp$ to $\B$ by assumption, so
  $(\_ \cdot x)^{-1} (U)$ is a Scott-open subset of $\Rp$.  By
  assumption, it contains $1$, so it is of the form $]t, \infty[$ with
  $0 < t < 1$ or equal to the whole of $\Rp$.  In any case, it
  contains some $r \in {]0, 1[}$.  Then $r \cdot x \in U$, and since
  $U$ is upwards-closed and $r \cdot x \leq r \cdot y$, $r \cdot y$ is
  also in $U$.  In other words, $(\_ \cdot y)^{-1} (U)$ is a
  Scott-open subset of $\Rp$ that contains $r$. Since $r < 1$ and
  Scott-open sets are upwards-closed, it also contains $1$, so
  $y \in U$.    Since $U$ is arbitrary, it follows that $x \leq y$.

  2.  By Lemma~\ref{lemma:bary:alg:pointed:embed:almost}, for any two
  points $x, y \in \B$ such that
  $\etaca_{\B} (x) \leqca \etaca_{\B} (y)$, then the assumption of
  item~1 is satisfied; so $x \leq y$.  In other words, $\etaca_{\B}$
  is a preorder embedding.  \qed
\end{proof}

\begin{remark}
  \label{rem:equiv:alpha:semitop}
  Let $\alpha \in {]0, 1[}$, and $\B$ be a pointed $T_0$
  semitopological barycentric algebra.  Then the strange situations
  depicted in Remark~\ref{rem:equiv:alpha:classes} do not happen,
  namely, for every $n \in \nat$ and for all $x, y \in \B$,
  $(n, x) \equiv_\alpha (n, y)$ if and only if $x=y$.  Hence the
  equivalence classes with respect to $\alpha$ are all of the form
  $\{(n_0, x_0), (n_0+1, \alpha \cdot x_0), \cdots, (n_0+k, \alpha^k
  \cdot x_0), \cdots\}$ for a unique $n_0 \in \nat$ and a unique
  $x_0 \in \B$.  It also follows that $\etaca_{\B}$ is injective.
\end{remark}
\begin{proof}
  If $(n, x) \equiv_\alpha (n, y)$, then by
  Lemma~\ref{lemma:equiv:alpha},
  $\alpha^{k-n} \cdot x = \alpha^{k-n} \cdot y$ for some $k \geq n$.
  Then $r \cdot x = r \cdot y$, where $r \eqdef \alpha^{k+1-n} < 1$,
  so $r \cdot x = r \cdot y$ for every $r \in {]0, 1[}$, by
  Fact~\ref{fact:neumann}.  By
  Lemma~\ref{lemma:bary:alg:pointed:embed:topo}, $x \leq y$ and
  $y \leq x$, so $x=y$ since $\B$ is $T_0$.

  Given an element $u$ of $\tscope_\alpha (\B)$---an equivalence
  class---let $n_0$ be the least $n \in \nat$ such that some pair
  $(n, x)$ is in $u$.  We have just seen that there is a unique point
  $x_0 \in \B$ such that $(n_0, x_0)$ is in $u$.  Then all the
  $\equiv_\alpha$-equivalent points $(n_0+k, \alpha^k \cdot x_0)$ are
  in the equivalence class.  By the first part of the remark, any pair
  of the form $(n_0+k, y)$ must satisfy $y = \alpha^k \cdot x_0$, and
  there is no pair of the form $(n, x)$ with $n < n_0$ by minimality
  of $n_0$.

  In particular, for all points $x, y \in \B$, $\etaca_{\B} (x)$ is
  the equivalence class
  $\{(0, x), (1, \alpha \cdot x), \cdots, (n, \alpha^n \cdot x),
  \cdots\}$ and $\etaca_{\B} (y)$ is the equivalence class
  $\{(0, y), (1, \alpha \cdot y), \cdots, (n, \alpha^n \cdot y),
  \cdots\}$.  If they are equal, then in particular their (unique)
  elements with first component equal to $0$ are equal, namely $x=y$;
  so $\etaca_{\B}$ is injective.  \qed
\end{proof}

\begin{example}
  \label{exa:bary:alg:pointed:noembed}
  Let us consider $\B \eqdef \B^{KP}$, with the two points $x$ and $y$
  given in Example~\ref{exa:bary:alg:pointed:noembed:1}.  We have seen
  that $r \cdot x = r \cdot y$ for every $r \in {]0, 1[}$.  In
  particular, $\alpha \cdot x = \alpha \cdot y$, so
  $(0, x) \equiv_\alpha (1, \alpha \cdot x) = (1, \alpha \cdot y)
  \equiv_\alpha (0, y)$.  In other words,
  $\etaca_{\B} (x) = \etaca_{\B} (x)$, showing that $\etaca_{\B}$ is
  not injective.  It is not an order-embedding either, because $x
  \not\leq y$.
\end{example}

In general, we have the following equivalences.  The property
mentioned in item~3 below is called (OC2) by Keimel and Plotkin
\cite{KP:mixed}.
\begin{lemma}
  \label{lemma:bary:alg:pointed:embed}
  For a preordered pointed barycentric algebra $\B$, the following are
  equivalent:
  \begin{enumerate}
  \item $\etaca_{\B}$ is a preorder embedding for some $\alpha \in
    {]0, 1[}$;
  \item $\etaca_{\B}$ is a preorder embedding for every
    $\alpha \in {]0, 1[}$;
  \item for all $r \in {]0, 1[}$ and $x, y \in \B$, $r \cdot x \leq r
    \cdot y$ implies $x \leq y$;
  \item there is a linear preorder embedding of $\B$ in some
    preordered cone.
  \end{enumerate}
\end{lemma}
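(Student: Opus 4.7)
The plan is to prove the cycle of implications $(2) \Rightarrow (1) \Rightarrow (3) \Rightarrow (2) \Rightarrow (4) \Rightarrow (3)$, which establishes all four equivalences. The implication $(2) \Rightarrow (1)$ is trivial, and most of the remaining work is a matter of assembling the lemmas already developed.

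For $(1) \Rightarrow (3)$, assume that $\etaca_{\B}$ is a preorder embedding for some fixed $\alpha \in {]0,1[}$, and suppose $r \cdot x \leq r \cdot y$ for some $r \in {]0,1[}$. By Fact~\ref{fact:neumann}, this inequality holds for every $r \in {]0,1[}$, in particular for $r = \alpha$, so $\alpha \cdot x \leq \alpha \cdot y$ in $\B$. Now I observe that $\etaca_{\B}(x) = [(0,x)]_\alpha = [(1, \alpha \cdot x)]_\alpha$ and similarly $\etaca_{\B}(y) = [(1, \alpha \cdot y)]_\alpha$, by the defining relation of $\equiv_\alpha$. From $\alpha \cdot x \leq \alpha \cdot y$, the definition of $\leqca$ in Proposition~\ref{prop:tscope:bary:alg} (item~5) gives $\etaca_{\B}(x) \leqca \etaca_{\B}(y)$, and since $\etaca_{\B}$ reflects the preordering we conclude $x \leq y$.

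The implication $(3) \Rightarrow (2)$ is immediate from Lemma~\ref{lemma:bary:alg:pointed:embed:almost}: for any $\alpha \in {]0,1[}$, if $\etaca_{\B}(x) \leqca \etaca_{\B}(y)$, then that lemma gives $r \cdot x \leq r \cdot y$ for every $r \in {]0,1[}$, and $(3)$ then yields $x \leq y$; combined with the order-preservation from Theorem~\ref{thm:bary:alg:conify:ord:pointed}, item~1, this makes $\etaca_{\B}$ a preorder embedding. For $(2) \Rightarrow (4)$, I simply take $\C \eqdef \tscope_\alpha(\B)$, which is a preordered cone by Proposition~\ref{prop:tscope:bary:alg}, and $\etaca_{\B}$ itself is a linear preorder embedding, again by Theorem~\ref{thm:bary:alg:conify:ord:pointed}, item~1.

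Finally, for $(4) \Rightarrow (3)$, let $i \colon \B \to \C$ be a linear preorder embedding into a preordered cone $\C$, and suppose $r \cdot x \leq r \cdot y$ in $\B$ with $r \in {]0,1[}$. Applying $i$ and using linearity (hence $\Iu$-homogeneity) yields $r \cdot i(x) \leq r \cdot i(y)$ in $\C$. Since $\C$ is a preordered cone, the scalar multiplication $\_ \cdot z$ is monotonic in the scalar for every $z \in \C$ (Remark~\ref{rem:semiordered}); multiplying both sides by $1/r \in \Rp$ and using the cone axiom $(1/r) \cdot (r \cdot z) = z$, we obtain $i(x) \leq i(y)$, and since $i$ reflects the preordering, $x \leq y$. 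No step poses a real obstacle; the only point that requires slight care is the passage from an inequality at scale $r$ to an inequality at scale $\alpha$ via Fact~\ref{fact:neumann}, which avoids the need to compare arbitrary reals with the chosen telescope parameter.
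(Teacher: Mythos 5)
Your proof is correct and relies on the same toolkit as the paper (Fact~\ref{fact:neumann}, Lemma~\ref{lemma:bary:alg:pointed:embed:almost}, and the basic properties of $\tscope_\alpha(\B)$ and $\etaca_{\B}$), but the chain of implications is structured differently. The paper proves the tight cycle $1 \Rightarrow 4 \Rightarrow 3 \Rightarrow 2 \Rightarrow 1$, dispatching $1 \Rightarrow 4$ in one line: if $\etaca_{\B}$ is a preorder embedding for some $\alpha$, then $\tscope_\alpha(\B)$ together with $\etaca_{\B}$ already witnesses item~4. You instead prove $(1) \Rightarrow (3)$ directly, by noting that $r \cdot x \leq r \cdot y$ propagates (via Fact~\ref{fact:neumann}) to $\alpha \cdot x \leq \alpha \cdot y$, which by the representation $\etaca_{\B}(x) = [(1, \alpha \cdot x)]_\alpha$ gives $\etaca_{\B}(x) \leqca \etaca_{\B}(y)$ and hence $x \leq y$; you then hook in item~4 via $(2) \Rightarrow (4) \Rightarrow (3)$. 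Your route is one implication longer than the paper's, but the direct $(1) \Rightarrow (3)$ step is a little more concrete, showing explicitly how the telescope encoding converts an inequality at scale $\alpha$ into an inequality between images under $\etaca_{\B}$, rather than filtering it through the more abstract passage to a general preordered cone. One minor attribution slip: in your $(2) \Rightarrow (4)$ step, the assertion that $\etaca_{\B}$ is a \emph{preorder embedding} comes from the hypothesis $(2)$ of the lemma being proved, not from Theorem~\ref{thm:bary:alg:conify:ord:pointed}, item~1, which only supplies linearity and monotonicity; but the argument as a whole is sound.
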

\begin{proof}
  $1 \limp 4$ is obvious, in light of
  Proposition~\ref{prop:tscope:bary:alg} and
  Theorem~\ref{thm:bary:alg:conify:ord:pointed}, item~1.
  
  $4 \limp 3$.  Let $i \colon \B \to \C$ be a linear preorder
  embedding of $\B$ in some preordered cone $\C$.  Let us also
  assume that $r \cdot x \leq r \cdot y$, where $0 < r < 1$.  Then
  $i (r \cdot x) = r \cdot i (x)$ is less than or equal to
  $i (r \cdot y) = r \cdot i (y)$.  Multiplying by $1/r$ on both
  sides, $i (x) \leq i (y)$.  Since $i$ is a preorder embedding,
  $x \leq y$.

  $3 \limp 2$.  Let $\alpha \in {]0, 1[}$ be arbitrary, and let
  $x, y \in \B$ be such that $\etaca_{\B} (x) \leqca \etaca_{\B} (y)$.
  By Lemma~\ref{lemma:bary:alg:pointed:embed:almost},
  $r \cdot x \leq r \cdot y$ for some $r \in {]0, 1[}$.  Then item~3
  implies $x \leq y$.
  
  Finally, $2 \limp 1$ is clear.  \qed
\end{proof}

\subsection{Semitopological telescopes}
\label{sec:semit-telesc}

The construction $\tscope_\alpha (\B)$ works in the semitopological
case, too: we simply take a colimit in $\Topcat$ instead of $\Setcat$.
\begin{definition}
  \label{defn:tscope:topo}
  For every pointed semitopological barycentric algebra $\B$, we give
  $\tscope_\alpha (\B) = \quot {(\coprod_{n \in \nat} \B)}
  {\equiv_\alpha}$ the quotient topology, where
  $\coprod_{n \in \nat} \B$ has the coproduct topology.
\end{definition}
We will need the following characterization of its open subsets, whose
proof we leave as an exercise.
\begin{lemma}
  \label{lemma:tscope:open}
  Let $\B$ be a pointed semitopological barycentric algebra and
  $\alpha \in {]0, 1[}$.  For short, let
  $q \eqdef [\_]_\alpha \colon \coprod_{n \in \nat} \B \to
  \tscope_\alpha (\B)$ be the quotient map.  The open subsets of
  $\tscope_\alpha (\B)$ are the sets $q [V_\infty]$ with
  $V_\infty \eqdef \coprod_{n \in \nat} V_n = \{(n, x) \mid n \in
  \nat, x \in V_n\}$, where each $V_n$ is open in $\B$ and
  $V_n = (\alpha \cdot \_)^{-1} (V_{n+1})$ for every $n \in \nat$.
  Additionally, $V_\infty = q^{-1} (q [V_\infty])$.
\end{lemma}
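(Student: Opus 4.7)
The plan is to invoke the definition of the quotient topology and the standard description of open sets in a topological coproduct. A subset $U$ of $\tscope_\alpha (\B)$ is open if and only if $q^{-1} (U)$ is open in $\coprod_{n \in \nat} \B$, and the open subsets of that coproduct are exactly the disjoint unions $\coprod_{n \in \nat} V_n$ where each $V_n$ is open in $\B$. Accordingly, for any given open $U$, I would set $V_n \eqdef \{x \in \B \mid [(n, x)]_\alpha \in U\}$, which makes $V_\infty \eqdef \coprod_{n \in \nat} V_n$ equal to $q^{-1} (U)$; the sets $V_n$ are then open by the coproduct description. Surjectivity of $q$ gives $U = q [q^{-1} (U)] = q [V_\infty]$.

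Next I would translate the compatibility condition. Since $V_\infty = q^{-1} (U)$ is automatically saturated under $\equiv_\alpha$, and since $(n, x) \equiv_\alpha (n+1, \alpha \cdot x)$ by definition, we have $(n, x) \in V_\infty$ if and only if $(n+1, \alpha \cdot x) \in V_\infty$ for every $n \in \nat$ and $x \in \B$. Unfolding the definitions of the $V_n$'s, this is exactly $x \in V_n$ iff $\alpha \cdot x \in V_{n+1}$, i.e., $V_n = (\alpha \cdot \_)^{-1} (V_{n+1})$. This already proves $V_\infty = q^{-1} (q [V_\infty])$ in this direction, since the left-hand side equals $q^{-1} (U)$.

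For the converse, given any family ${(V_n)}_{n \in \nat}$ of open subsets of $\B$ satisfying $V_n = (\alpha \cdot \_)^{-1} (V_{n+1})$, I would verify that $V_\infty$ is saturated under $\equiv_\alpha$. Using Lemma~\ref{lemma:equiv:alpha}, $(m, x) \equiv_\alpha (n, y)$ means $\alpha^{k-m} \cdot x = \alpha^{k-n} \cdot y$ for some $k \geq m, n$, which is obtained by iterating the generating relation $(j, z) \sim (j+1, \alpha \cdot z)$ in both directions. Since the compatibility assumption gives precisely the bi-implication needed at each generating step, $V_\infty$ is closed under $\equiv_\alpha$. Hence $q^{-1} (q [V_\infty]) = V_\infty$ is open in the coproduct, so $q [V_\infty]$ is open in the quotient topology, and additionally $V_\infty = q^{-1} (q [V_\infty])$.

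The only non-routine step is the translation of saturation into the pointwise condition $V_n = (\alpha \cdot \_)^{-1} (V_{n+1})$; but this is essentially immediate once one observes that $\equiv_\alpha$ is generated by the neighboring-level relation, so checking closure on generators suffices.
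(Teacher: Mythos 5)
Your proof is correct. The paper explicitly leaves this lemma as an exercise, so there is no proof in the text to compare against; your argument -- unfolding the quotient topology, describing coproduct opens levelwise, identifying saturation of $V_\infty$ with closure under the generating relation $(n,x) \sim (n+1, \alpha \cdot x)$ in both directions, and translating that into $V_n = (\alpha \cdot \_)^{-1}(V_{n+1})$ -- is precisely the standard route one would expect the author had in mind.
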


\begin{theorem}
  \label{thm:bary:alg:conify:ord:pointed:semitop}
  Let $\alpha \in {]0, 1[}$.  For every pointed semitopological
  barycentric algebra $\B$,
  \begin{enumerate}
  \item $\tscope_\alpha (\B)$, with the quotient topology given in
    Definition~\ref{defn:tscope:topo}, is a semitopological cone;
  \item the map $\etaca_{\B}$ is linear and continuous;
  \item for every continuous map $f \colon \B \to \C$ to a
    semitopological cone $\C$ that commutes with $\alpha \cdot \_$,
    there is a unique continuous map
    $f^{\scope\alpha} \colon \tscope_\alpha (\B) \to \C$ that
    commutes with $\alpha \cdot \_$ and such that
    $f^{\scope\alpha} \circ \etaca_{\B} = f$; if $f$ is affine
    (resp.\ concave, convex, strict, linear, superlinear, sublinear),
    then so is $f^{\scope\alpha}$; if $f$ is $\Iu$-homogeneous, then
    $f^{\scope\alpha}$ is positively homogeneous.
  \item In particular, $\tscope_\alpha (\B)$ is the free
    semitopological cone over the pointed semitopological barycentric
    algebra $\B$, and this is independent of $\alpha$ (up to natural
    isomorphisms of semitopological cones).
  \end{enumerate}
\end{theorem}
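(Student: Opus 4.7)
The plan is to reduce everything to the algebraic Theorem~\ref{thm:bary:alg:conify:ord:pointed} and to handle the topological components via the universal property of the quotient topology. Throughout, let $q \colon \coprod_{n \in \nat} \B \to \tscope_\alpha(\B)$ denote the quotient map.

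For item~1, I would first show that $\tscope_\alpha(\B)$ is a pointed semitopological barycentric algebra and then invoke Proposition~\ref{prop:ba->cone}. Separate continuity of $+_a$ in its first argument goes as follows: fix $a \in [0,1]$ and $v \eqdef [(n_0, y_0)]_\alpha$, and define $\psi \colon \coprod_n \B \to \tscope_\alpha(\B)$ by $\psi(m, x) \eqdef [(m, x)]_\alpha +_a v$. Using the formula of Proposition~\ref{prop:tscope:bary:alg}, the restriction of $\psi$ to the $m$-th summand factors as scalar multiplication by $\alpha^{\max(m, n_0) - m}$ (continuous by Lemma~\ref{lemma:bary:alg:pointed}), followed by $x' \mapsto x' +_a (\alpha^{\max(m, n_0) - n_0} \cdot y_0)$ (continuous since $\B$ is semitopological), and then the continuous injection at level $\max(m, n_0)$ composed with $q$. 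Hence $\psi$ is continuous on the coproduct; it factors as $(u \mapsto u +_a v) \circ q$, so the universal property of the quotient yields continuity of $u \mapsto u +_a v$. Symmetric arguments handle continuity in $v$ with $u$ fixed, and continuity in $a$ follows by choosing a common level $N$ to write $u = [(N, x)]_\alpha$ and $v = [(N, y)]_\alpha$, so that $u +_a v = [(N, x +_a y)]_\alpha$ depends continuously on $a$ via the semitopological structure of $\B$. To show that $[(0, \bot)]_\alpha$ is least in the specialization preordering, note that any open $V$ containing it pulls back to an open subset of $\coprod_n \B$ that contains $(n, \bot)$ for every $n$ (since $(0, \bot) \equiv_\alpha (n, \bot)$); the open slice at each level thus contains the least element $\bot$ of $\B$, hence equals $\B$, so $V = \tscope_\alpha(\B)$. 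Finally, the doubling map $\dbl = 2 \cdot \_$ satisfies $\dbl \circ q = q \circ s$, where $s(n, x) \eqdef (n+k, (2\alpha^k) \cdot x)$ for any fixed $k$ with $2\alpha^k \leq 1$; since $s$ is continuous on the coproduct (using Lemma~\ref{lemma:bary:alg:pointed}), $\dbl$ is continuous. Proposition~\ref{prop:ba->cone} then upgrades $\tscope_\alpha(\B)$ to a semitopological cone.

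For items~2 and~3, the linearity of $\etaca_{\B}$ and the algebraic side of $f^{\scope\alpha}$ (existence, uniqueness, and preservation of affine, strict, linear, $\Iu$-homogeneous, concave, convex, superlinear, and sublinear properties) come directly from Theorem~\ref{thm:bary:alg:conify:ord:pointed}, viewing $\C$ as preordered by its specialization preordering, which is compatible with the cone operations. The continuity of $\etaca_{\B}$ is immediate: it is the composite of the injection $\B \hookrightarrow \coprod_n \B$ at level $0$ with the continuous quotient map $q$. For a continuous $f \colon \B \to \C$ commuting with $\alpha \cdot \_$, define $\tilde f \colon \coprod_n \B \to \C$ by $\tilde f(n, x) \eqdef (1/\alpha)^n \cdot f(x)$; each restriction $\tilde f_n$ is continuous as the composition of $f$ with scalar multiplication by $(1/\alpha)^n$ on the semitopological cone $\C$, and the identity $\tilde f(n+1, \alpha \cdot x) = (1/\alpha)^{n+1} \cdot \alpha \cdot f(x) = \tilde f(n, x)$ shows $\tilde f$ is constant on $\equiv_\alpha$-classes, so it factors through $q$ as the map $f^{\scope\alpha}$ already supplied by Theorem~\ref{thm:bary:alg:conify:ord:pointed}. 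This factorization is therefore continuous. Item~4 then follows at once, since left adjoints to forgetful functors are unique up to natural isomorphism.

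The main obstacle is the first verification: on the quotient, the definition of $+_a$ uses representatives at a common level chosen as $\max(m, n)$, so a direct attempt to prove separate continuity on $\tscope_\alpha(\B)$ is entangled with the choice of representatives. The correct discipline is to carry out all the continuity arguments on the coproduct $\coprod_n \B$, where each summand has a concrete formula, and only then to descend via the universal property of $q$. Once this pattern is in place, continuity of $\dbl$, of $\etaca_{\B}$, and of $f^{\scope\alpha}$ all fall out of the same template.
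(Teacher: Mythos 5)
Your proof is correct and takes essentially the same approach as the paper: all continuity verifications are factored through the quotient map $q$, and the algebraic and order-theoretic parts are delegated to Theorem~\ref{thm:bary:alg:conify:ord:pointed}. One small bonus of your write-up is that you spell out the continuity of the doubling map $\dbl = 2 \cdot \_$ via $s(n,x) = (n+k, (2\alpha^k)\cdot x)$, which is needed to invoke Proposition~\ref{prop:ba->cone} but is left implicit in the paper's proof of item~1.
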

\begin{proof}
  1.  We verify that $\tscope_\alpha (\B)$ is a pointed semitopological
  barycentric algebra.  In light of Proposition~\ref{prop:tscope:bary:alg},
  only separate continuity of the barycentric algebra operations has
  to be checked.

  Let us fix $a \in [0, 1]$ and $v \eqdef [(n, y)]_\alpha$ of
  $\tscope_\alpha (\B)$; we verify that $\_ +_a v$ is continuous from
  $\tscope_\alpha (\B)$ to itself.  Let $q$ be the quotient map of
  $\coprod_{n \in \nat} \B$ onto $\tscope_\alpha (\B)$.  By definition
  of topological quotients and of topologies on coproducts, it
  suffices to verify that $f_m \colon x \mapsto [(m, x)]_\alpha +_a v$
  is continuous for every $m \in \nat$.  Let $V$ be an open subset of
  $\tscope_\alpha (\B)$.  For every point $x \in \B$,
  $f_m (x) = [(\max (m, n), \alpha^{\max (m, n) - m} \cdot x]_\alpha
  +_a [(\max (m, n), \alpha^{\max (m, n) - n} \cdot y]_\alpha = [(\max
  (m, n), (\alpha^{\max (m, n) - m} \cdot x) +_a (\alpha^{\max (m, n)
    - n} \cdot y))]_\alpha$, so $x \in f_m^{-1} (V)$ if and only if
  $x \in g^{-1} (q^{-1} (V))$, where $g$ is the map
  $x \mapsto (\max (m, n), (\alpha^{\max (m, n) - m} \cdot x) +_a
  (\alpha^{\max (m, n) - n} \cdot y))$, which is continuous from $\B$
  to $\coprod_{n \in \nat} \B$, since $+_a$ is separately continuous
  and since $\alpha^{\max (m, n) - m} \cdot$ is continuous, by
  Lemma~\ref{lemma:bary:alg:pointed}.

  We now fix $u \eqdef [(n, x)]_\alpha$ and $v \eqdef [(n, y)]_\alpha$
  (with the same $n$), and we verify that $a \mapsto u +_a v$ is
  continuous from $[0, 1]$ to $\tscope_\alpha (\B)$.  Since that is
  the composition of $q$ with the map $a \mapsto (n, x +_a y)$,
  continuity is immediate.

  2.  The map $\etaca_{\B}$ is linear by
  Theorem~\ref{thm:bary:alg:conify:ord:pointed}, item~1.  A
  subtle point here is that linearity includes $\Iu$-homogeneity,
  which is defined with respect to a least element of $\B$ (viz.,
  $\bot$) and a least element of $\tscope_\alpha (\B)$.  Now
  $\etaca_{\B} (\bot) = [(0, \bot)]_\alpha$ is a least element of
  $\tscope_\alpha (\B)$ with respect to $\leqca$, and this is what we
  used in Theorem~\ref{thm:bary:alg:conify:ord:pointed}.  In the
  current setting, we need to check that $[(0, \bot)]_\alpha$ is a
  least element of $\tscope_\alpha (\B)$ in its specialization
  preordering instead.  In other words, we need to check that the only
  open subset $V$ of $\tscope_\alpha (\B)$ that contains
  $[(0, \bot)]_\alpha$ is the whole of $\tscope_\alpha (\B)$.  Using
  the notations of Lemma~\ref{lemma:tscope:open}, we must have
  $V = q [V_\infty]$ and
  $V_\infty = \coprod_{n \in \nat} V_n = \{(n, x) \mid n \in \nat, x
  \in V_n\}$, where $V_n = (\alpha \cdot \_)^{-1} (V_{n+1})$ for every
  $n \in \nat$.  Then $(0, \bot)$ is in $V_\infty$, so $\bot \in V_0$,
  and this entails that $V_0 = \B$.  Since
  $\bot \in V_0 = (\alpha \cdot \_)^{-1} (V_1)$,
  $\alpha \cdot \bot = \bot$ is in $V_1$, so $V_1 = \B$ as well.  By
  an easy induction on $n \in \nat$, we obtain that $\bot \in V_n$,
  hence $V_n = \B$, for every $n$.  It follows that
  $V = \tscope_\alpha (\B)$.

  Next, $\etaca_{\B}$ is the composition of $x \mapsto (0, x)$ from
  $\B$ to $\coprod_{n \in \nat} \B$ and of the quotient map onto
  $\tscope_\alpha (\B)$, so it is continuous.

  3.  Given any continuous map $f \colon \B \to \C$ that commutes with
  $\alpha \cdot \_$, $f$ is monotonic, so there is a unique map
  $f^{\scope\alpha} \colon \tscope_\alpha (\B) \to \C$ that commutes
  with $\alpha \cdot \_$ and such that
  $f^{\scope\alpha} \circ \etaca_{\B} = f$, and it is monotonic by
  Theorem~\ref{thm:bary:alg:conify:ord:pointed}, item~2.
  Additionally, it is affine (resp.\ concave, convex, strict, linear,
  superlinear, sublinear) if $f$ is, by the same theorem, items~2
  and~3.  It remains to show that $f^{\scope\alpha}$ is continuous.
  By definition of topological quotients and topologies on coproducts,
  it suffices to show that the map
  $x \mapsto f^{\scope\alpha} ([(n, x)]_\alpha)$ is continuous from
  $\B$ to $\C$, for every $n \in \nat$.  For every $x \in \B$,
  $f^{\scope\alpha} ([(n, x)]_\alpha) = (1/\alpha)^n \cdot f (x)$, and
  the result then follows from the continuity of $f$ and of scalar
  multiplication.

  4.  It follows immediately that $\tscope_\alpha (\B)$, with the
  quotient topology, is the free semitopological cone over the pointed
  semitopological barycentric algebra $\B$.  Since free objects are
  unique up to natural isomorphism, the construction is independent of
  $\alpha$ up to natural isomorphism of semitopological cones.  \qed
\end{proof}

\begin{remark}
  \label{rem:tscope:top:spec}
  There is no reason to believe that the specialization preordering of
  $\tscope_\alpha (\B)$, with the topology of
  Definition~\ref{defn:tscope:topo} would be the preordering $\leqca$
  defined in Proposition~\ref{prop:tscope:bary:alg}, in general.  We
  will see in Lemma~\ref{lemma:bary:alg:spec:pointed} when the two
  preorderings coincide.
\end{remark}

\begin{problem}
  \label{pb:tscope:topo}
  Let $\B$ be a pointed topological barycentric algebra and
  $\alpha \in {]0, 1[}$.  Is $\tscope_\alpha (\B)$ necessarily a
  topological cone?
\end{problem}

\begin{definition}[Strictly embeddable]
  \label{defn:bary:alg:embed:strict}
  A \emph{strict embedding} of a pointed semitopological barycentric
  algebra $\B$ in a semitopological cone $\C$ is a linear topological
  embedding of $\B$ in $\C$.  $\B$ is \emph{strictly embeddable} if
  and only if there is a strict embedding of $\B$ in some
  semitopological cone $\C$.
\end{definition}

\begin{lemma}
  \label{lemma:bary:alg:pointed:embed:nec}
  Let $\B$ be a pointed $T_0$ semitopological barycentric algebra.  If
  $\B$ is strictly embeddable, then for every $a \in {]0, 1]}$, for
  every $\Iu$-homogeneous lower semicontinuous map
  $h \colon \B \to \creal$, there is an $\Iu$-homogeneous lower
  semicontinuous map $h' \colon \B \to \creal$ such that
  $h' (a \cdot x) = h (x)$ for every $x \in \B$.  If $h$ is linear
  (resp., superlinear, sublinear), then so is $h'$.
\end{lemma}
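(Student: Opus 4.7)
The plan is to define $h'$ explicitly as the pointwise rescaling $h'(y) \eqdef (1/a) \cdot h(y)$ for every $y \in \B$, and then verify the required properties in turn. The key observation is that $\Iu$-homogeneity of $h$ already supplies the consistency that one might worry about: if $a \cdot x_1 = a \cdot x_2$, then $a \cdot h(x_1) = h(a \cdot x_1) = h(a \cdot x_2) = a \cdot h(x_2)$, which forces $h(x_1) = h(x_2)$ since $a > 0$. So the prescription $h'(a \cdot x) = h(x)$ is consistent, and extending it by $(1/a) h$ to the whole of $\B$ is the simplest uniform choice.

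I would then run through four short verifications. First, the target equation $h'(a \cdot x) = (1/a) \cdot h(a \cdot x) = (1/a) \cdot a \cdot h(x) = h(x)$ is immediate from $\Iu$-homogeneity of $h$. Second, $h'$ is lower semicontinuous because it is the composition of $h$ with Scott-continuous scalar multiplication by $1/a$ on $\creal$. Third, $h'$ is $\Iu$-homogeneous: $h'(b \cdot x) = (1/a) h(b \cdot x) = (1/a) \cdot b \cdot h(x) = b \cdot h'(x)$ for all $b \in [0, 1]$. Finally, multiplication by the positive scalar $1/a$ preserves affineness, concavity and convexity, so if $h$ is affine (resp.\ concave, convex) then so is $h'$, and combined with $\Iu$-homogeneity this makes $h'$ linear (resp.\ superlinear, sublinear) in the sense of Definition~\ref{defn:bary:alg:pointed:linear}.

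The striking feature of this plan is that it uses neither the $T_0$ hypothesis on $\B$ nor strict embeddability; I expect these hypotheses are retained here because they are in force throughout the section, and likely because the lemma is being set up as one half of a characterisation whose converse does need them. A more structural alternative, closer in spirit to the hypothesis, would pick a linear topological embedding $i \colon \B \hookrightarrow \C$ into some semitopological cone $\C$, produce a positively homogeneous lower semicontinuous extension $\tilde h \colon \C \to \creal$ of $h$ along $i$, and then set $h'(y) \eqdef \tilde h((1/a) \cdot i(y))$; the main obstacle on that route would be constructing the extension $\tilde h$, for which no general theorem in the paper so far applies, and which the direct approach happily avoids.
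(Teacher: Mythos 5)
Your proposal is correct, and it reaches the conclusion by a genuinely more direct route than the paper. The paper fixes some $\alpha \in {]0, 1[}$, passes to the telescope $\tscope_\alpha (\B)$ via the linear continuous map $\etaca_{\B}$, and defines $h' (x) \eqdef h^{\scope\alpha} (\frac 1 a \cdot \etaca_{\B} (x))$, relying on Theorem~\ref{thm:bary:alg:conify:ord:pointed:semitop} to guarantee the existence, $\Iu$-homogeneity and lower semicontinuity of $h^{\scope\alpha}$, and on the fact that division by $a$ is a legitimate cone operation inside $\tscope_\alpha (\B)$. If one unwinds the paper's formula using $h^{\scope\alpha} ([(n, x)]_\alpha) = (1/\alpha)^n h (x)$ and the $\Iu$-homogeneity of $h$, one finds $h' (x) = (1/\alpha)^k \cdot \frac{\alpha^k}{a} \cdot h (x) = \frac 1 a h (x)$, which is precisely your pointwise rescaling. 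Your observation that the rescaling by $1/a$ happens in $\creal$, not in $\B$, and hence is unproblematic, is exactly what makes the telescope detour unnecessary here; the paper's preliminary remark that ``$h' (x) \eqdef h (\frac 1 a \cdot x)$'' is forbidden only rules out rescaling the \emph{argument}, not the \emph{value}, and you have noticed the latter route. You are also right that neither $T_0$-ness nor strict embeddability is actually used, and right about why they are in the hypothesis: the lemma is invoked within the equivalences of Proposition~\ref{prop:bary:alg:embed:strict} (to show that strict embeddability implies that $\alpha \cdot \_$ is full), where these hypotheses are in force. Your argument is shorter, cleaner, and establishes the result under weaker hypotheses, while the paper's version fits naturally into its running $\tscope_\alpha$ formalism and does not need to notice the simplification.
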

\begin{proof}
  We cannot just define $h'$ by $h' (x) \eqdef h (\frac 1 a \cdot x)$,
  since only scalar multiplication by numbers in $[0, 1]$ is permitted
  in pointed barycentric algebras. %, and $1/a \geq 1$.

  Let us fix $\alpha \in {]0, 1[}$.  Given $h$ and $a$ as above, let
  $h' (x) \eqdef h^{\scope\alpha} (\frac 1 a \cdot \etaca_{\B} (x))$.
  Multiplying by $\frac 1 a$ is permitted, because we perform it in
  the cone $\tscope_\alpha (\B)$.  By
  Theorem~\ref{thm:bary:alg:conify:ord:pointed:semitop}, $h'$ is an
  $\Iu$-homogeneous map from $\B$ to $\creal$, which is linear (resp.\
  superlinear, sublinear) if $h$ is, owing notably to the fact that
  $\etaca_{\B}$ is linear.  For every $x \in \B$,
  $h' (a \cdot x) = h^{\scope\alpha} (\frac 1 a \cdot \etaca_{\B} (a
  \cdot x)) = h^{\scope\alpha} (\frac 1 a \cdot a \cdot \etaca_{\B}
  (x))$ (since $\etaca_{\B}$ is linear)
  $= h^{\scope\alpha} (\etaca_{\B} (x)) = h (x)$.
\end{proof}

\begin{proposition}
  \label{prop:bary:alg:embed:strict}
  The following are equivalent for a pointed $T_0$ semitopological
  barycentric algebra $\B$:
  \begin{enumerate}
  \item $\B$ is strictly embeddable;
  \item $\etaca_{\B}$ is a topological embedding for every $\alpha \in
    {]0, 1[}$;
  \item $\etaca_{\B}$ is a topological embedding for some $\alpha \in
    {]0, 1[}$;
  \item the proper open subsets of $\B$ are exactly the sets
    $h^{-1} (]1, \infty])$ where $h$ ranges over the $\Iu$-homogeneous
    lower semicontinuous maps from $\B$ to $\creal$;
  \item the sets $h^{-1} (]1, \infty])$ where $h$ ranges over the
    $\Iu$-homogeneous lower semicontinuous maps from $\B$ to $\creal$
    form a subbase of its topology;
  \item the map $\alpha \cdot \_ \colon \B \to \B$ is full, for every
    $\alpha \in {]0, 1[}$;
  \item the map $\alpha \cdot \_ \colon \B \to \B$ is full, for some
    $\alpha \in {]0, 1[}$.
  \end{enumerate}
\end{proposition}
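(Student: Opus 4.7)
The plan is to prove the cycle $(1) \limp (4) \limp (5) \limp (2) \limp (3) \limp (1)$ and then to establish $(2) \liff (6)$ and $(3) \liff (7)$ separately via Lemma~\ref{lemma:tscope:open}. This mirrors item~4 of Theorem~\ref{thm:conify:semitop}, with $\Iu$-homogeneous maps taking the place of semi-concave ones, and with conditions (6), (7) reflecting the role of the shift $\alpha \cdot \_$ in the telescope construction.

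For $(1) \limp (4)$, I take a strict embedding $i \colon \B \to \C$ and a proper open $U$ of $\B$; by fullness, $U = i^{-1}(V)$ for some open $V$ of $\C$, and $V$ must be proper since $i$ is strict (so $i(\bot) = 0$) and $\bot \notin U$. The Minkowski functional correspondence recalled in Section~\ref{sec:preliminaries} yields $V = M^{-1}(]1,\infty])$ with $M$ positively homogeneous and lsc on $\C$; setting $h \eqdef M \circ i$ gives an $\Iu$-homogeneous lsc map on $\B$ with $U = h^{-1}(]1,\infty])$. Conversely, for any $\Iu$-homogeneous lsc $h$, we have $h (\bot) = 0 \cdot h (\bot) = 0$, so $\bot \notin h^{-1}(]1,\infty])$, forcing this set to be proper (opens are upwards-closed and $\bot$ is least). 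The step $(4) \limp (5)$ is immediate. For $(5) \limp (2)$, each $\Iu$-homogeneous lsc $h \colon \B \to \creal$ commutes with $\alpha \cdot \_$, so Theorem~\ref{thm:bary:alg:conify:ord:pointed:semitop}, item~3 provides a continuous positively homogeneous extension $h^{\scope \alpha} \colon \tscope_\alpha(\B) \to \creal$ with $h^{-1}(]1,\infty]) = \etaca_{\B}^{-1}((h^{\scope \alpha})^{-1}(]1,\infty]))$; since inverse images commute with unions and intersections, every open of $\B$ pulls back from an open of $\tscope_\alpha(\B)$, so $\etaca_{\B}$ is full, and combined with continuity (item~2) and $T_0$-ness, it is a topological embedding. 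The step $(2) \limp (3)$ is trivial, and $(3) \limp (1)$ follows because $\tscope_\alpha(\B)$ is a semitopological cone (item~1) and $\etaca_{\B}$ is linear (item~2), so a topological embedding into it is precisely a strict embedding.

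For the fullness equivalences, $(2) \limp (6)$ proceeds as follows: given an open $U$ of $\B$, write $U = \etaca_{\B}^{-1}(V')$ with $V'$ open in $\tscope_\alpha(\B)$; Lemma~\ref{lemma:tscope:open} realizes $V'$ as $q[V_\infty]$ with opens $V_n \subseteq \B$ satisfying $V_n = (\alpha \cdot \_)^{-1}(V_{n+1})$, and a direct computation gives $\etaca_{\B}^{-1}(V') = V_0$, so $U = V_0 = (\alpha \cdot \_)^{-1}(V_1)$, proving $\alpha \cdot \_$ full. The implication $(6) \limp (7)$ is trivial. For $(7) \limp (3)$, fix $\alpha$ with $\alpha \cdot \_$ full; given $U$ open in $\B$, apply fullness inductively (dependent choice) to build opens $V_n$ with $V_0 \eqdef U$ and $V_n = (\alpha \cdot \_)^{-1}(V_{n+1})$ for every $n \in \nat$; then $V_\infty \eqdef \coprod_n V_n$ fits the hypothesis of Lemma~\ref{lemma:tscope:open}, so $q[V_\infty]$ is open in $\tscope_\alpha(\B)$ with $\etaca_{\B}^{-1}(q[V_\infty]) = V_0 = U$, making $\etaca_{\B}$ full and hence (using $T_0$) a topological embedding.

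The principal subtleties are twofold. First, in $(1) \limp (4)$, one must verify that the pullback set $V$ is proper before invoking the Minkowski functional; this is where strictness of $i$ (as opposed to merely being an affine embedding) is indispensable. Second, the $T_0$ hypothesis is essential in both $(5) \limp (2)$ and $(7) \limp (3)$ in order to upgrade ``full and continuous'' to ``topological embedding''; without it, Example~\ref{exa:bary:alg:pointed:noembed} shows that $\etaca_{\B}$ may fail to be injective. All remaining arguments are routine manipulations leveraging the universal property of $\tscope_\alpha(\B)$ established in Theorem~\ref{thm:bary:alg:conify:ord:pointed:semitop}.
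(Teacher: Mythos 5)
Your proof is correct and closely parallels the paper's, with one notable structural deviation worth pointing out. You route the cycle as $(1) \limp (4) \limp (5) \limp (2) \limp (3) \limp (1)$, proving $(1) \limp (4)$ directly by pulling back Minkowski functionals of the ambient cone $\C$ through the strict embedding $i$, whereas the paper goes $(1) \limp (2) \limp (3) \limp (4)$ and only invokes Minkowski functionals on $\tscope_\alpha(\B)$ itself; either works and the conceptual content is the same. The more interesting divergence is in the direction toward fullness of $\alpha \cdot \_$: the paper proves $1 \limp 6$ via the auxiliary Lemma~\ref{lemma:bary:alg:pointed:embed:nec}, which constructs, from an $\Iu$-homogeneous lsc $h$, a new $\Iu$-homogeneous lsc $h'$ with $h'(\alpha \cdot x) = h(x)$, whereas your $(2) \limp (6)$ goes directly through Lemma~\ref{lemma:tscope:open}: an open $V'$ of $\tscope_\alpha(\B)$ decomposes as $q[V_\infty]$ with $V_n = (\alpha \cdot \_)^{-1}(V_{n+1})$, and then $\etaca_{\B}^{-1}(V') = V_0 = (\alpha \cdot \_)^{-1}(V_1)$ gives fullness immediately. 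This reuses a lemma already needed for $(7) \limp (3)$ and avoids Lemma~\ref{lemma:bary:alg:pointed:embed:nec} entirely, which is a modest but genuine economy. All the subtleties you flag (properness of the pullback via strictness of $i$, and the role of $T_0$ in upgrading full-and-continuous to topological embedding) are exactly the right ones, and your handling of them is correct.
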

\begin{proof}
  $1 \limp 2$.  Let $i \colon \B \to \C$ be a strict embedding of $\B$
  in a semitopological cone $\C$.  In particular, $i$ is linear.  We
  fix an arbitrary $\alpha \in {]0, 1[}$.  For every open subset $U$
  of $\B$, there is an open subset $V$ of $\C$ such that
  $U = i^{-1} (V)$.  By
  Theorem~\ref{thm:bary:alg:conify:ord:pointed:semitop}, item~3,
  $i^{\scope\alpha} \colon \tscope_\alpha (\B) \to \C$ is linear,
  continuous and $i^{\scope\alpha} \circ \etaca_{\B} = i$.  In
  particular, since $i$ is injective, so is $\etaca_{\B}$.  Also,
  $U = (i^{\scope\alpha} \circ \etaca_{\B})^{-1} (V) =
  (\etaca_{\B})^{-1} \allowbreak ((i^{\scope\alpha})^{-1} (V))$,
  showing that $U = (\etaca_{\B})^{-1} (V')$ where $V'$ is the open
  set $(i^{\scope\alpha})^{-1} (V)$.  Hence $\etaca_{\B}$ is full.
  Since $\etaca_{\B}$ is continuous by
  Theorem~\ref{thm:bary:alg:conify:ord:pointed:semitop}, item~2,
  $\etaca_{\B}$ is a topological embedding.

  $2 \limp 3$ is obvious.
  
  $3 \limp 4$.  We fix $\alpha \in {]0, 1[}$ so that $\etaca_{\B}$ is
  a topological embedding, thanks to item~3.  The open subsets of $\B$
  are exactly the sets of the form $U \eqdef {(\etaca_{\B})}^{-1} (V)$
  where $V$ is open in $\tscope_\alpha (\B)$.  If $\bot \in U$, then
  $[(0, \bot)]_\alpha = \etaca_{\B} (\bot)$ is in $V$.  Conversely, if
  $[(0, \bot)]_\alpha \in V$, and since $[(0, \bot)]_\alpha $ is the
  zero element, hence is least in $\tscope_\alpha (\B)$, we must have
  $V = \tscope_\alpha (\B)$, so $U = \B$, whence $\bot \in U$.  Hence
  $\bot \in U$ if and only if $U$ is non-proper if and only if
  $[(0, \bot)]_\alpha \in V$ if and only if $V$ is non-proper.  It
  follows that the proper open subsets $U$ of $\B$ are exactly the
  sets ${(\etaca_{\B})}^{-1} (V)$ where $V$ is a proper open subset of
  $\tscope_\alpha (\B)$.

  If so, then $V = (M^V)^{-1} (]1, \infty])$, where $M^V$ is the upper
  Minkowski function of $V$ (see Section~\ref{sec:preliminaries}), and
  $M^V$ is positively homogeneous and lower semicontinuous.  Since
  $\etaca_{\B}$ is itself linear and continuous by
  Theorem~\ref{thm:bary:alg:conify:ord:pointed:semitop}, item~2,
  $h \eqdef M^V \circ \etaca_{\B}$ is $\Iu$-homogeneous and lower
  semicontinuous.  It follows that $U = h^{-1} (]1, \infty])$.

  Conversely, every set of the form $h^{-1} (]1, \infty])$ with $h$
  $\Iu$-homogeneous and lower semicontinuous is proper (since
  $h (\bot) = 0 \not\in {]1, \infty]}$) and open.

  $4 \limp 5$ is clear.

  $5 \limp 2 \limp 1$.  We assume that the sets $h^{-1} (]1, \infty])$
  where $h$ ranges over the $\Iu$-homogeneous lower semicontinuous
  maps from $\B$ to $\creal$ form a subbase of the topology on $\B$.
  Let $\alpha \in {]0, 1[}$ be arbitrary.  Since
  $h^{\scope\alpha} \circ \etaca_{\B} = h$, every subbasic open set
  $h^{-1} (]1, \infty])$ is equal to
  ${(\etaca_{\B})}^{-1} ((h^{\scope\alpha})^{-1} (]1, \infty]))$.
  Hence $\etaca_{\B}$ is full.  Since $\etaca_{\B}$ is continuous and
  since $\B$ is $T_0$, $\etaca_{\B}$ is a topological embedding.  This
  shows item~2, and then item~1, since $\tscope_\alpha (\B)$ is a
  semitopological cone, by
  Theorem~\ref{thm:bary:alg:conify:ord:pointed:semitop}, item~1.

  $1 \limp 6$.  The map $\alpha \cdot \_$ is linear and continuous,
  and since $\B$ is $T_0$, we only have to show that it is full,
  namely that every open subset $U$ of $\B$ can be written as
  $(\alpha \cdot \_)^{-1} (V)$ for some open subset $V$ of $\B$.  If
  $U = \B$, then we just take $V \eqdef \B$.  Otherwise, by
  Proposition~\ref{prop:bary:alg:embed:strict},
  $U = h^{-1} (]1, \infty])$ for some $\Iu$-homogeneous lower
  semicontinuous map from $\B$ to $\creal$.  By
  Lemma~\ref{lemma:bary:alg:pointed:embed:nec}, there is an
  $\Iu$-homogeneous lower semicontinuous map $h' \colon \B \to \creal$
  such that $h' (\alpha \cdot x) = h (x)$ for every $x \in \B$.  Let
  $V \eqdef {h'}^{-1} (]1, \infty])$.  Then
  $(\alpha \cdot \_)^{-1} (V) = \{x \in \B \mid h' (\alpha \cdot x) >
  1\} = h^{-1} (]1, \infty]) = U$.

  $6 \limp 7$ is clear.
  
  $7 \limp 3$.  We consider an arbitrary open subset $U$ of $\B$, and
  we wish to find an open subset of $\tscope_\alpha (\B)$ such that
  $(\etaca_{\B})^{-1} (V) = U$.  We let $U_0 \eqdef U$, then, by
  induction on $n$, we pick an open subset $U_{n+1}$ of $\B$ such that
  $U_n = (\alpha \cdot \_)^{-1} (U_{n+1})$, using the fact that
  $\alpha \cdot \_$ is full.  Then we define $U_\infty$ as
  $\coprod_{n \in \nat} U_n = \{(n, x) \mid n \in \nat, x \in U_n\}$,
  an open subset of $\coprod_{n \in \nat} \B$.  By
  Lemma~\ref{lemma:tscope:open}, the image $q [U_\infty]$ of
  $U_\infty$ under the quotient map $q \eqdef [\_]_\alpha$ is open in
  $\tscope_\alpha (\B)$.

  % We note that $U_\infty$ is $\equiv_\alpha$-saturated, in the sense
  % that for all $\equiv_\alpha$-equivalent points $u$ and $v$ of
  % $\coprod_{n \in \nat} \B$, $u \in U_\infty$ if and only if
  % $v \in U_\infty$.  In order to see this, it is enough to show that
  % for every $n \in \nat$, for every $x \in \B$, $(n, x) \in U_\infty$
  % if and only if $(n+1, \alpha \cdot x) \in U_\infty$.  This is clear
  % since $(n, x) \in U_\infty$ if and only if
  % $x \in U_n = (\alpha \cdot \_)^{-1} (U_{n+1})$, if and only if
  % $\alpha \cdot x \in U_{n+1}$, if and only if
  % $(n+1, x) \in U_\infty$.
  
  % Letting $q$ be the quotient map from $\coprod_{n \in \nat} \B$ onto
  % $\tscope_\alpha (\B)$, it follows that
  % $q^{-1} (q [U_\infty]) = U_\infty$.  Every element of the left-hand
  % side is a point $u$ that is $\equiv_\alpha$-equivalent to some point
  % in $U_\infty$, and we have just seen that $u$ must then be in
  % $U_\infty$; the reverse inclusion is obvious.

  % We have just seen that the inverse image of $q [U_\infty]$ under $q$
  % is open.  Since $q$ is a quotient map, $q [U_\infty]$ itself is open
  % in $\tscope_\alpha (\B)$.
  
  Then
  $(\etaca_{\B})^{-1} (q [U_\infty]) = \{x \in \B \mid [(0, x)]_\alpha
  \in q [U_\infty]\} = \{x \in \B \mid (0, x) \in q^{-1} (q
  [U_\infty])\} = \{x \in \B \mid (0, x) \in U_\infty\} = U_0 = U$.
  Hence $\etaca_{\B}$ is full.  Since $\B$ is $T_0$ and $\etaca_{\B}$
  is continuous
  (Theorem~\ref{thm:bary:alg:conify:ord:pointed:semitop}, item~2),
  $\etaca_{\B}$ is a topological embedding.  \qed
\end{proof}

The following is an example of a non-strictly embeddable
semitopological barycentric algebra.  It is embeddable, by a result we
will see later on, Corollary~\ref{corl:bary:alg:pointed:embed}.
% \begin{problem}
%   \label{pb:strict:embed}
%   Is every pointed semitopological barycentric algebra strictly
%   embeddable?  
% \end{problem}
% Corollary~\ref{corl:bary:alg:loclin:embed} will show
%   that this is the case for locally linear $T_0$ pointed
%   semitopological barycentric algebras.
\begin{example}
  \label{ex:pBA:nonstrictembed}
  Let $I \eqdef [0, 1]$, with its usual metric topology.  We give
  $\nat$ the discrete topology, and we form the product cone
  $\C \eqdef \Lform I \times \Lform \nat$, with the algebraic
  operations defined componentwise.  For every $n \in \nat$, let
  $e_n \in \Lform \nat$ map $n$ to $1$ and all other numbers to $0$.
  Let also $\one$ be the constant function with value $1$, and $0$ be
  the zero function.  For each $a \in I$, let $V_a \eqdef {]1-a, 1]}$
  and $f_a \eqdef \chi_{V_a}$, the characteristic function of $V_a$.
  Let $\B^{\text{nse}}$ be the convex hull of
  $\{(0, 0), (\one, 0)\} \cup \{(f_a, e_n) \mid a \in {]0, 1]}, n \in
  \nat\}$.  Its elements are of the form
  $\alpha.(\one, 0) + \sum_{i=1}^m \alpha_i \cdot (f_{a_i}, e_{n_i})$
  where $\alpha, \alpha_i \in \Rp$ and
  $\alpha + \sum_{i=1}^n \alpha_1 \leq 1$.  $\B^{\text{nse}}$ is a
  pointed barycentric algebra.

  We define two topologies on $\B^{\text{nse}}$, $\tau_1$ and
  $\tau_2$.  $\Lform I$ and $\Lform \nat$ are semitopological cones in
  the Scott topology; in fact they are even topological since $I$ and
  $X$ are locally compact (see Example~\ref{exa:dcone:LX}).  We give
  $\C$ the product topology, and this turns $\C$ into a topological
  cone.  The topology $\tau_1$ is the subspace topology on
  $\B^{\text{nse}}$.  We call \emph{$\tau_1$-closed} its closed
  subsets.  We call a subset $C$ of $\B^{\text{nse}}$ \emph{closed} if
  and only if it is $\tau_1$-closed and for all $a, b, c$ in $[0, 1]$
  such that $b+c \leq 1$, for every $(f, e) \in \B^{\text{nse}}$, such
  that $ba.(f_a, e_n) + c.(f, e) \in C$ for infinitely many values of
  $n \in \nat$, $ba.(\one, 0) + c.(f, e)$ is in $C$.  This
  construction forces the sequence $a. (f_a, e_n)$ to converge to
  $a.(\one, 0)$, while being compatible with the algebraic operations.
  The closed sets defined above form the closed sets of a topology
  $\tau_2$, and $\B^{\text{nse}}$ is a $T_0$ pointed semitopological
  barycentric algebra with the topology $\tau_2$.  For every
  $\alpha \in {]0, 1[}$, for every $b \in {[\max (a, \alpha), 1[}$, it
  can be shown that the set
  $C^b \eqdef \{(f, e) \in \B^{\text{nse}} \mid b.f \leq a.f_a \text{
    and } b.e \leq a.e_n\} \cup \dc (\one, 0)$ is $\tau_2$-closed.  If
  $C^b$ were the inverse image of a $\tau_2$-closed set $C'$ under
  $\alpha \cdot \_$, then since $(a/b) . (f_a, e_n)$ is in $C^b$ for
  every $n \in \nat$, $(\alpha a/b) . (f_a, e_n)$ would be in $C'$,
  and since $C'$ is $\tau_2$-closed, $(\alpha a/b) . (\one, 0)$ would
  also have to be in $C'$, forcing $(a/b) . (\one, 0)$ to be in $C^b$;
  but this is not the case.  Hence $\alpha \cdot \_$ is not full, and
  therefore $\B^{\text{nse}}$ is not strictly embeddable by
  Proposition~\ref{prop:bary:alg:embed:strict}.  The details are given
  in Appendix~\ref{sec:non-strictly-embedd}.
\end{example}

\begin{lemma}
  \label{lemma:bary:alg:spec:pointed}
  Let $\B$ be a pointed semitopological barycentric algebra and
  $\alpha \in {]0, 1[}$.  We write $\preceq$ for the specialization
  preordering of $\tscope_\alpha (\B)$, to distinguish it from the
  preordering $\leqca$ of Proposition~\ref{prop:tscope:bary:alg}.
  Then, for all $u, v \in \tscope_\alpha (\B)$, if $u \leqca v$ then
  $u \preceq v$, and the converse implication holds if $\B$ is
  strictly embeddable and $T_0$.
\end{lemma}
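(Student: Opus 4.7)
The plan is to prove the two implications separately, and in both cases I would first normalise $u$ and $v$ to representatives with a common first coordinate: write $u = [(n,x)]_\alpha$ and $v = [(n,y)]_\alpha$ for some common $n \in \nat$ (always possible, perhaps after raising indices via $(k,z) \equiv_\alpha (k+1, \alpha \cdot z)$).

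For the forward direction, assume $u \leqca v$, so by definition of $\leqca$ (Proposition~\ref{prop:tscope:bary:alg}, item~5) we may further assume $x \leq y$ in the specialization preordering of $\B$. Let $V$ be any open neighbourhood of $u$ in $\tscope_\alpha(\B)$. By Lemma~\ref{lemma:tscope:open}, I would write $V = q[V_\infty]$, where $V_\infty = \coprod_{k \in \nat} V_k$ with each $V_k$ open in $\B$, $V_k = (\alpha \cdot \_)^{-1}(V_{k+1})$, and $V_\infty = q^{-1}(V)$. Since $u \in V$, we have $(n,x) \in V_\infty$, hence $x \in V_n$. Because $V_n$ is open in $\B$ it is upwards-closed in the specialization preordering, so $y \in V_n$, whence $(n,y) \in V_\infty$ and $v \in V$. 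As $V$ is arbitrary, $u \preceq v$.

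For the converse, assume that $\B$ is $T_0$ and strictly embeddable, and suppose $u \preceq v$. The key observation is that $\alpha^n \cdot u = \etaca_{\B}(x)$ and $\alpha^n \cdot v = \etaca_{\B}(y)$. Indeed, by the formulas established in the proof of Theorem~\ref{thm:bary:alg:conify:ord:pointed}, item~2, $\alpha \cdot [(k,z)]_\alpha = [(k-1,z)]_\alpha$ for $k \geq 1$, and iterating this $n$ times brings $[(n,x)]_\alpha$ down to $[(0,x)]_\alpha = \etaca_{\B}(x)$. Since the map $\alpha^n \cdot \_$ is continuous on $\tscope_\alpha(\B)$ by Theorem~\ref{thm:bary:alg:conify:ord:pointed:semitop}, item~1, it is monotonic with respect to $\preceq$, so $u \preceq v$ yields $\etaca_{\B}(x) \preceq \etaca_{\B}(y)$.

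Finally, Proposition~\ref{prop:bary:alg:embed:strict} tells us that strict embeddability of $\B$ forces $\etaca_{\B}$ to be a topological embedding. Since $\B$ is $T_0$, a topological embedding reflects the specialization preordering (open sets of $\B$ are exactly the inverse images of open sets of $\tscope_\alpha(\B)$ under $\etaca_{\B}$, and the specialization preordering is determined by which open sets contain each point), so $\etaca_{\B}(x) \preceq \etaca_{\B}(y)$ implies $x \leq y$ in $\B$. This gives $u \leqca v$. I do not anticipate a serious obstacle here: the argument just chains together Lemma~\ref{lemma:tscope:open}, the identity $\alpha^n \cdot [(n,x)]_\alpha = \etaca_{\B}(x)$, and Proposition~\ref{prop:bary:alg:embed:strict}; the only mildly subtle point is recognising that applying $\alpha^n \cdot \_$ is exactly the device that pulls arbitrary elements of $\tscope_\alpha(\B)$ into the image of $\etaca_{\B}$, where the embedding hypothesis can be exploited.
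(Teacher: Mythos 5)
Your proof is correct and follows essentially the same lines as the paper's: the forward direction is identical (using Lemma~\ref{lemma:tscope:open} and upward-closure of $V_n$), and the converse direction rests on the same two facts — the identity $\alpha^n \cdot [(n,x)]_\alpha = \etaca_{\B}(x)$ and the fact that $\etaca_{\B}$ is a topological embedding by Proposition~\ref{prop:bary:alg:embed:strict}. The only difference is cosmetic: you argue the converse directly (via monotonicity of $\alpha^n \cdot \_$ and the order-reflecting property of embeddings), whereas the paper argues the contrapositive by explicitly exhibiting a separating open set; these are the same argument rearranged.
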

\begin{proof}
  Let $u \leqca v$.  We can write $u$ as $[(n, x)]_\alpha$ and $v$
  as $[(n, y)]_\alpha$ so that $x \leq y$.  Every open neighborhood of
  $[(n, x)]_\alpha$ in $\tscope_\alpha (\B)$ is of the form
  $q [V_\infty]$ as given in Lemma~\ref{lemma:tscope:open}.  Using the
  same notations as there, $[(n, x)]_\alpha = q (n, x)$, so
  $(n, x) \in q^{-1} (q [V_\infty]) = V_\infty$, and hence
  $x \in V_n$.  Since $x \leq y$, $y$ is in $V_n$, so
  $(n, y) \in V_\infty$ and therefore
  $[(n, y)]_\alpha \in q [V_\infty]$.  It follows that
  $[(n, x)]_\alpha \preceq [(n, y)]_\alpha$.

  We now assume that $\B$ is strictly embeddable and $T_0$.  Let
  $u, v \in \tscope_\alpha (\B)$ and let us assume that
  $u \not\leqca v$.  Let us write $u$ as $[(n, x)]_\alpha$ and $v$ as
  $[(n, y)]_\alpha$ for some common $n \in \nat$.  Whichever $n$ we
  take, and whichever $x$ and $y$, we must have $x \not\leq y$, so
  there must be an open neighborhood $U$ of $x$ in $\B$ that does not
  contain $y$.  By Proposition~\ref{prop:bary:alg:embed:strict},
  $\etaca_{\B}$ is a topological embedding, so there is an open subset
  $V_1$ of $\tscope_\alpha (\B)$ such that
  $U = {(\etaca_{\B})}^{-1} (V_1)$.  Let
  $V \eqdef {(\alpha^n \cdot \_)}^{-1} (V_1)$.

  We verify that $u \in V$.  It suffices to show that
  $\alpha^n \cdot u \in V_1$.  But
  $\alpha^n \cdot u = \alpha^n \cdot [(n, x)]_\alpha = [(n, \alpha^n
  \cdot x)]_\alpha$ (by Proposition~\ref{prop:tscope:bary:alg}, since
  $a \eqdef \alpha^n$ is in $[0, 1]$)
  $= [(n-1, \alpha^{n-1} \cdot x)]_\alpha = \cdots = [(0, x)]_\alpha =
  \etaca_{\B} (x)$.  Hence we have to show that
  $\etaca_{\B} (x) \in V_1$, namely that
  $x \in (\etaca_{\B})^{-1} (V_1) = U$; and indeed $x \in U$.
  Similarly, $\alpha^n \cdot v = [(0, y)]_\alpha = \etaca_{\B} (y)$,
  and since $y \not\in U = (\etaca_{\B})^{-1} (V_1)$,
  $\alpha^n \cdot v \not\in V_1$, so $v \not\in V$.  \qed
\end{proof}

\subsection{The free semitopological cone over $\Val_{\leq 1} X$}
\label{sec:free-semit-cone-1}

\begin{theorem}
  \label{thm:Vleq1->Vb}
  For every topological space $X$, $\Val_b X$ is the free cone over
  the pointed barycentric algebra $\Val_{\leq 1} X$ (with
  distinguished element the zero valuation), and $\Val_b X$ is the
  free semitopological cone over the pointed semitopological
  barycentric algebra $\Val_{\leq 1} X$.

  As a consequence, $\Val_b X$ and $\tscope_\alpha (\Val_{\leq 1} X)$
  are naturally isomorphic semitopological cones for every
  $\alpha \in {]0, 1[}$.
\end{theorem}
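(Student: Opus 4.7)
The plan is to reduce both halves of the statement to Theorem~\ref{thm:V1->Vb}, exploiting the fact that $\Val_1 X$ sits inside $\Val_{\leq 1} X$ and that the hard analytic work (continuity, via the Schr\"oder--Simpson decomposition) has already been done in that theorem. Concretely, given a cone $\C$ and a linear map $f \colon \Val_{\leq 1} X \to \C$ (equivalently, by Proposition~\ref{prop:strict:aff}, a strict affine map), I first restrict $f$ to $\Val_1 X$, obtaining an affine map $f_1 \colon \Val_1 X \to \C$. By Theorem~\ref{thm:V1->Vb}, there is a unique linear map $\hat f \colon \Val_b X \to \C$ extending $f_1$; if moreover $\C$ is semitopological and $f$ is continuous, then $f_1$ is continuous (as a restriction) and hence so is $\hat f$.

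The only content that is not immediate is the verification that this $\hat f$ extends $f$ on all of $\Val_{\leq 1} X$, not just on $\Val_1 X$. For the zero valuation this is just the strictness of $f$ and of $\hat f$. For a non-zero $\nu \in \Val_{\leq 1} X$, set $a \eqdef \nu (X) \in {]0,1]}$; then $(1/a) \cdot \nu \in \Val_1 X$, and the explicit formula from the proof of Theorem~\ref{thm:V1->Vb} gives $\hat f (\nu) = a \cdot f_1 ((1/a) \cdot \nu) = a \cdot f ((1/a) \cdot \nu)$. In the pointed barycentric algebra $\Val_{\leq 1} X$ the identity $\nu = a \cdot ((1/a) \cdot \nu)$ holds (both sides coincide in $\Val_b X$ and lie in $\Val_{\leq 1} X$), so the $\Iu$-homogeneity of $f$ yields $f (\nu) = a \cdot f ((1/a) \cdot \nu) = \hat f (\nu)$. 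Uniqueness of the extension transfers from Theorem~\ref{thm:V1->Vb}: any linear extension of $f$ also extends $f_1$, hence is forced to equal $\hat f$.

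This gives the first universal property in the plain cone category, and the argument in the semitopological category is identical once we observe that the restriction and extension steps both preserve continuity. For the final assertion, observe that $\tscope_\alpha (\Val_{\leq 1} X)$ is the free semitopological cone over $\Val_{\leq 1} X$ by Theorem~\ref{thm:bary:alg:conify:ord:pointed:semitop}, item~4, while $\Val_b X$ has just been shown to enjoy the same universal property; since free objects are unique up to natural isomorphism in the relevant category of semitopological cones, $\Val_b X \cong \tscope_\alpha (\Val_{\leq 1} X)$ naturally, for every $\alpha \in {]0,1[}$.

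The main---and essentially only---subtlety is the compatibility identity $\hat f (\nu) = f (\nu)$ on $\Val_{\leq 1} X$, which depends on the cone-level equality $\nu = a \cdot ((1/a) \cdot \nu)$ being realized inside the pointed barycentric structure via scalar multiplication by $a \in [0,1]$ and the distinguished element $\bot = 0$; this is what glues the pointed-barycentric view of $\Val_{\leq 1} X$ to the cone view of $\Val_b X$. No reworking of the Schr\"oder--Simpson decomposition or of the weak-topology estimates is required, since all of that is packaged inside Theorem~\ref{thm:V1->Vb}.
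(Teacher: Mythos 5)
Your proposal is correct and follows essentially the same route as the paper's own proof: restrict $f$ to $\Val_1 X$, invoke Theorem~\ref{thm:V1->Vb} for existence and uniqueness of the linear extension, verify that the extension recovers $f$ on all of $\Val_{\leq 1} X$ via $\Iu$-homogeneity, and deduce continuity in the semitopological case from continuity of the restriction. The paper treats the identification with $\tscope_\alpha(\Val_{\leq 1} X)$ as an unargued consequence of uniqueness of free objects, which is exactly what you do as well.
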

\begin{proof}
  Let $f$ be a linear map from $\Val_{\leq 1} X$ to a cone $\C$.  We
  must show that it has a unique linear extension $\hat f$ from
  $\Val_b X$ to $\C$.  Let $f_1$ be the restriction of $f$ to
  $\Val_1 X$: this is an affine map from $\Val_1 X$ to $\C$, and any
  linear extension of $f$ to $\Val_b X$ must also be one of $f_1$.
  But such linear extensions are unique, by
  Theorem~\ref{thm:V1->Vb}.

  As for existence, we define $\hat f$ as the extension $\hat f_1$
  obtained from $f_1$ by using Theorem~\ref{thm:V1->Vb}.  By this
  theorem, $\hat f$ is linear.  Also, $\hat f$ maps the zero valuation
  to $0$ and every non-zero element $\nu \in \Val_b X$ to
  $\nu (X) \cdot f (\frac 1 {\nu (X)} \cdot \nu)$; when
  $\nu \in \Val_{\leq 1} X$, the latter is equal to $f (\nu)$ since
  $f$ is $\Iu$-homogeneous.  Therefore $\hat f$ extends $f$, not just
  $f_1$.

  When $\C$ is a semitopological cone, the various spaces of
  continuous valuations are given their weak topologies, and $f$ is
  continuous, then $f_1$ is continuous, too, since $\Val_1 X$ is a
  topological subspace of $\Val_{\leq 1} X$, and then
  $\hat f = \hat f_1$ is continuous by Theorem~\ref{thm:V1->Vb}.  \qed
\end{proof}

\subsection{The free pointed barycentric algebra over a barycentric
  algebra}
\label{sec:free-point-baryc}

There is a free \emph{pointed} barycentric algebra over any
barycentric algebra, and it is very easily constructed.
\begin{definition}
  \label{defn:bary:alg:pointed:free}
  For every barycentric algebra $\B$, let $\conify_{\leq 1} (\B)$ be
  the subset of $\conify (\B)$ consisting of $0$ and the pairs
  $(r, x)$ with $0 < r \leq 1$; we take $0$ as distinguished element.
  If $\B$ is a preordered barycentric algebra, $\conify_{\leq 1} (\B)$
  is preordered by the restriction of the ordering $\leqc$ on
  $\conify (\B)$ (see Definition~\ref{defn:bary:alg:conify:ord}),
  which makes $0$ least.  If $\B$ is a semitopological barycentric
  algebra, $\conify_{\leq 1} (\B)$ is given the subspace topology
  induced by the inclusion in $\conify (\B)$ with the cone topology
  (see Definition~\ref{defn:cone:top}); this makes $0$ least in the
  specialization preordering.
\end{definition}

We have already seen the following notion in
Example~\ref{exa:bary:alg:notembed}.
\begin{definition}
  \label{defn:level}
  Let $\B$ be a barycentric algebra.  The \emph{level} $\ell (u)$ of
  an element $u \in \conify (\B)$ is defined by $\ell (0) \eqdef 0$,
  $\ell (r, x) \eqdef r$.
\end{definition}
Hence $\conify_{\leq 1} (\B)$ is the subset of $\conify (\B)$
consisting of elements of level at most $1$.  The following is
immediate from the definition of the algebraic operations on
$\conify (\B)$ (see Section~\ref{sec:plain-baryc-algebr}).
\begin{fact}
  \label{fact:level:lin}
  The level map $\ell$ is linear: $\ell (u + v) = \ell (u) + \ell
  (v)$, $\ell (0) = 0$, $\ell (a \cdot u) = a \, \ell (u)$.
\end{fact}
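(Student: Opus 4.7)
The plan is to verify each of the three equations by a direct case analysis on whether the arguments equal the distinguished element $0$ of $\conify(\B)$, using the explicit definition of the cone operations given in Section~\ref{sec:plain-baryc-algebr}. Since $\ell$ is defined by cases ($\ell(0) \eqdef 0$ and $\ell(r,x) \eqdef r$), and the operations $+$ and $\cdot$ on $\conify(\B)$ are also defined by cases, a short case analysis suffices.

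First, $\ell(0)=0$ is just the defining clause of $\ell$. Next, for $\ell(a \cdot u) = a\,\ell(u)$, I would split on whether $a=0$, $u=0$, or $a \in \Rp \diff \{0\}$ and $u=(r,x)$. In the first two cases both sides are $0$, directly from the equations $0 \cdot u = 0$ and $a \cdot 0 = 0$; in the remaining case, $a \cdot (r,x) = (ar,x)$, so $\ell(a \cdot (r,x)) = ar = a\,\ell(r,x)$.

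For additivity $\ell(u+v)=\ell(u)+\ell(v)$, I would similarly split on whether each of $u,v$ equals $0$. If either is $0$, the equation $u+0=u$ (or $0+v=v$) immediately gives the result. Otherwise, writing $u=(r,x)$ and $v=(s,y)$ with $r,s>0$, the defining equation $(r,x)+(s,y) = (r+s, x +_{r/(r+s)} y)$ yields $\ell(u+v) = r+s = \ell(u)+\ell(v)$.

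There is no real obstacle here: the statement is purely a bookkeeping check against the definitions, which is why it is recorded as a \emph{Fact} rather than a proposition. The only mild subtlety is remembering that the algebraic operations on $\conify(\B)$ are specified piecewise with $0$ as a separate case, so one should not forget to handle the cases where one of the arguments is $0$ before invoking the generic formulas $a \cdot (r,x) = (ar,x)$ and $(r,x)+(s,y) = (r+s, x +_{r/(r+s)} y)$.
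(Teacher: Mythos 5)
Your proof is correct and matches what the paper intends: the paper records this as a \emph{Fact} with no proof at all, remarking only that it is ``immediate from the definition of the algebraic operations on $\conify(\B)$,'' and your case analysis is exactly the routine verification being alluded to. Nothing to add.
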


We extend the notion of semi-concave map so that the codomain is a
pointed barycentric algebra, not necessarily a preordered cone, in the obvious
way.
\begin{definition}
  \label{defn:qaff:pointed}
  A \emph{semi-concave} map from a semitopological barycentric algebra
  $\B$ to a preordered pointed barycentric algebra $\Alg$ is a
  function $h \colon \B \to \Alg$ such that for all $x, y \in \B$ and
  $a \in [0, 1]$, $h (x +_a y) \geq a \cdot h (x)$.
\end{definition}

\begin{theorem}
  \label{thm:conify:leq1}
  For every barycentric algebra $\B$ (resp., preordered barycentric
  algebra),
  \begin{enumerate}
  \item $\conify_{\leq 1} (\B)$ is a pointed barycentric algebra
    (resp., a preordered pointed barycentric algebra);
  \item the map $\etac_{\B}$ is an injective affine map from $\B$
    (resp., an injective affine preorder embedding) into
    $\conify_{\leq 1} (\B)$;
  \item for every map (resp., monotonic map) $f \colon \B \to \Alg$ to
    a pointed barycentric algebra $\Alg$, there is a unique
    $\Iu$-homogeneous map
    $f^{\cext, \leq 1} \colon \conify_{\leq 1} (\B) \to \Alg$ such
    that $f^{\cext, \leq 1} \circ \etac_{\B} = f$; if $f$ is affine
    (resp.\ concave, convex) then $f^{\cext, \leq 1}$ is linear
    (resp.\ superlinear, sublinear).
  \item In particular, $\conify_{\leq 1} (\B)$ is the free pointed
    barycentric algebra (resp., the free preordered pointed
    barycentric algebra) over the barycentric algebra (resp., the
    preordered barycentric algebra) $\B$.
  \end{enumerate}
\end{theorem}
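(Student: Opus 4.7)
The plan is to imitate the proof of Proposition~\ref{prop:bary:alg:conify:ord}, but restricted to the sub-barycentric-algebra $\conify_{\leq 1}(\B)$ of $\conify(\B)$. The key observation is that the level map $\ell$ (Definition~\ref{defn:level}) is linear (Fact~\ref{fact:level:lin}), so for any $u, v$ with $\ell(u), \ell(v) \leq 1$ and $a \in [0,1]$, the combination $a \cdot u + (1-a) \cdot v$ has level $a\,\ell(u) + (1-a)\ell(v) \leq 1$; thus $\conify_{\leq 1}(\B)$ is closed under the cone-induced $+_a$ operations. Item~1 follows: the barycentric axioms pass from the cone $\conify(\B)$ to the convex subset $\conify_{\leq 1}(\B)$; in the preordered case, $0$ is a least element since it is least in $\conify(\B)$ by Remark~\ref{rem:bary:alg:ord}. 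Item~2 is then just the relevant part of Proposition~\ref{prop:bary:alg:conify:ord}, item~2, plus the observation that $\etac_{\B}(x) = (1, x)$ lands in $\conify_{\leq 1}(\B)$.

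For item~3, uniqueness is forced: for $(r, x) \in \conify_{\leq 1}(\B)$ with $0 < r \leq 1$ we have $(r, x) = r \cdot (1, x) = r \cdot \etac_{\B}(x)$, so $\Iu$-homogeneity dictates $f^{\cext, \leq 1}(r, x) = r \cdot f(x)$, and $\Iu$-homogeneity at $a=0$ forces $f^{\cext, \leq 1}(0) = \bot$. For existence I would define $f^{\cext, \leq 1}$ by precisely these formulas, and then verify $\Iu$-homogeneity by the direct computation $a \cdot (r, x) = (ar, x)$ (legal since $ar \leq 1$), yielding $f^{\cext, \leq 1}(a \cdot (r, x)) = ar \cdot f(x) = a \cdot (r \cdot f(x))$ by the second equation of Lemma~\ref{lemma:bary:alg:pointed}.

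The main obstacle is verifying that $f^{\cext, \leq 1}$ is affine (resp.\ concave, convex) when $f$ is, because the codomain $\Alg$ is only a pointed barycentric algebra, not a cone, so manipulations such as distributing scalar multiplication across sums are unavailable. My plan here is to use the affine embedding $\etac_{\Alg} \colon \Alg \to \conify(\Alg)$ of Section~\ref{sec:plain-baryc-algebr} to transport the verification into the free cone $\conify(\Alg)$. Concretely, given $u, v \in \conify_{\leq 1}(\B)$ and $a \in [0,1]$, the case where $u=0$ or $v=0$ reduces to $a\cdot x = x +_a \bot$ and the identity $b \cdot (c \cdot x) = (bc) \cdot x$ from Lemma~\ref{lemma:bary:alg:pointed}; in the case $u=(r,x), v=(s,y)$ with $r, s > 0$ and $0 < a < 1$, I would show that both $\etac_{\Alg}(f^{\cext, \leq 1}(u +_a v))$ and $\etac_{\Alg}(f^{\cext, \leq 1}(u) +_a f^{\cext, \leq 1}(v))$ equal $ar \cdot (1, f(x)) + (1-a)s \cdot (1, f(y)) + (1 - ar - (1-a)s) \cdot (1, \bot)$ in the cone $\conify(\Alg)$ using the formulas $\etac_{\Alg}(c \cdot z) = c (1, z) + (1-c)(1, \bot)$ and that $f$ is affine; then conclude by the injectivity of $\etac_{\Alg}$. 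For the concave (resp.\ convex) case, exactly the same cone computation gives $\geq$ (resp.\ $\leq$) instead of $=$ on the relevant component, and $\Iu$-homogeneity yields superlinearity (resp.\ sublinearity) by Remark~\ref{rem:bary:alg:pointed:linear}.

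For monotonicity in the preordered case, I would argue directly: if $u \leqc v$ in $\conify_{\leq 1}(\B)$ then by Definition~\ref{defn:bary:alg:conify:ord} we can write $u + u' = (s, y_1)$, $v = (s, y)$, $y_1 \leq y$, with $s \leq 1$ (since $v \in \conify_{\leq 1}(\B)$), and then (mirroring the proof of item~3 of Proposition~\ref{prop:bary:alg:conify:ord}) use that $f^{\cext,\leq 1}$ is linear on level-preserving combinations and that $\Alg$ is preordered with $\bot$ least to deduce $f^{\cext,\leq 1}(u) \leq f^{\cext,\leq 1}(v)$. Finally, item~4 is an immediate consequence of items~1--3. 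The whole proof is essentially a restriction of Proposition~\ref{prop:bary:alg:conify:ord} and Lemma~\ref{lemma:f*:conv:conc} to levels at most $1$, with the one genuinely new ingredient being the pullback of affinity computations to the free cone $\conify(\Alg)$ over the codomain.
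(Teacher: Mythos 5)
Your plan is substantially the same as the paper's: identify $\conify_{\leq 1}(\B)$ as a convex subset of $\conify(\B)$ via the linear level map and invoke Lemma~\ref{lemma:bary:in:cone}, derive item~2 from Proposition~\ref{prop:bary:alg:conify:ord}, fix $f^{\cext,\leq 1}$ by the forced formulas $0 \mapsto \bot$, $(r,x) \mapsto r\cdot f(x)$, check $\Iu$-homogeneity directly, and handle the affine/concave/convex verification by embedding the codomain $\Alg$ into the cone $\conify(\Alg)$ via $\etac_{\Alg}$. Your $\conify(\Alg)$-side target value matches the paper's computation.

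The gap is in the monotonicity step for the preordered case. You propose to ``mirror the proof of item~3 of Proposition~\ref{prop:bary:alg:conify:ord}'' and deduce $f^{\cext,\leq 1}(u) \leq f^{\cext,\leq 1}(v)$ from linearity of $f^{\cext,\leq 1}$ plus $\bot$ being least. But that cone-level proof rests squarely on additivity: from $u + u' = (s,y_1)$ it reads $f^{\cext}(u) \leq f^{\cext}(u) + f^{\cext}(u') = f^{\cext}(u+u')$ because $0 \leq f^{\cext}(u')$. Here the codomain $\Alg$ is a pointed barycentric algebra with no addition, so this step has no analogue; ``linearity on level-preserving combinations'' does not give it to you. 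And the conclusion is actually \emph{false} for bare monotonic $f$: take $\B = \Alg = [0,1]$ with $\bot = 0$ and $f(x) = 0$ for $x < 1$, $f(1) = 1$. Then $(1/2,1) + (1/2,0) = (1, 1/2)$ shows $(1/2,1) \leqc (1,1/2)$, yet $f^{\cext,\leq 1}(1/2,1) = \tfrac{1}{2}\cdot f(1) = 1/2 \not\leq 0 = 1\cdot f(1/2) = f^{\cext,\leq 1}(1,1/2)$. The paper's proof opens ``Let $f$ be semi-concave (resp.\ and monotonic)'' — semi-concavity (automatic for affine or concave $f$ by Fact~\ref{fact:qaff}, which is all that item~4's freeness uses) is the extra hypothesis that makes the argument go through, via the estimate $s\cdot f(y_1) = (r+r')\cdot f(x +_{\frac{r}{r+r'}} x') \geq (r+r')\cdot\frac{r}{r+r'}\cdot f(x) = r\cdot f(x)$ (Lemma~\ref{lemma:bary:alg:pointed}). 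You need to state this hypothesis and replace the ``mirroring'' by this direct estimate (or an equivalent one using affineness of $f$ together with $\bot$ least); the cone-additivity template cannot be salvaged.
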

\begin{proof}
  1.  For every $a \in [0, 1]$, for all $u, v \in \conify_{\leq 1}
  (\B)$, the levels $\ell (u)$ and $\ell (v)$ are less than or equal
  to $1$.  Using the linearity of $\ell$, $\ell (u +_a v) = a \, \ell
  (u) + (1-a) \, \ell (v) \leq 1$, so $u +_ a \in \conify_{\leq 1}
  (\B)$.  Hence $\conify_{\leq 1} (\B)$ is convex.  By
  Lemma~\ref{lemma:bary:in:cone}, $\conify_{\leq 1} (\B)$ is a (resp.\
  preordered) barycentric algebra.

  When $\B$ is preordered, the distinguished element $0$ is least, so
  $\conify_{\leq 1} (\B)$ is a preordered pointed barycentric algebra.

  2.  For every $x \in \B$, $\etac_{\B} (x) = (1, x)$ is in
  $\conify_{\leq 1} (\B)$.  The map $\etac_{\B}$ is injective and
  affine.  If $\B$ is preordered, then $\etac_{\B}$ is also a preorder
  embedding by Proposition~\ref{prop:bary:alg:conify:ord}, item~2.

  3.  Let $f \colon \B \to \Alg$ be semi-concave (resp., and monotonic).  If
  $f^{\cext, \leq 1}$ exists, then it must map $0$ to $\bot$, since
  every $\Iu$-homogeneous map is strict (see
  Proposition~\ref{prop:strict:aff}).  For every $r \in {]0, 1]}$, for every
  $x \in \B$, we must also have
  $f^{\cext, \leq 1} (r, x) = f^{\cext, \leq 1} (r \cdot (1, x)) = r
  \cdot f (x)$ since $f^{\cext, \leq 1}$ is $\Iu$-homogeneous and
  $f^{\cext, \leq 1} \circ \etac_{\B} = \identity {\B}$.  This shows
  uniqueness.

  As for existence, we simply define $f^{\cext, \leq 1}$ just like
  $f^\cext$: it maps $0$ to the least element $\bot$ of $\Alg$ and
  $(r, x)$ to $r \cdot f (x)$ for all $r \in {]0, 1]}$ and $x \in \B$.
  Let us check that $f^{\cext, \leq 1}$ is $\Iu$-homogeneous.  For
  every $a \in [0, 1]$, either $a=0$ and
  $f^{\cext, \leq 1} (a \cdot (r, x)) = f^{\cext, \leq 1} (0) = \bot$,
  or $a > 0$ and
  $f^{\cext, \leq 1} (a \cdot (r, x)) = f^{\cext, \leq 1} (ar, x) = ar
  \cdot f (x) = a \cdot (r \cdot f (x))$ (by the second equality of
  Lemma~\ref{lemma:bary:alg:pointed})
  $= a \cdot f^{\cext, \leq 1} (r, x)$.

  If $f$ is concave, we verify that $f^{\cext, \leq 1}$ is concave,
  too, hence superlinear.  Let $a \in [0, 1]$.  The inequality
  $f^{\cext, \leq 1} (u +_a v) \geq f^{\cext, \leq 1} (u) +_a
  f^{\cext, \leq 1} (v)$ is obvious (and is an equality) if $a=0$ or
  if $a=1$, so we assume that $0 < a < 1$.  If $u=0$, the inequality
  is clear (and is also an equality), too, and similarly if $v=0$.
  Hence let us write $u$ as $(r, x)$ and $v$ as $(s, y)$, with
  $0 < r, s \leq 1$ and $x, y \in \B$.  Then:
  \begin{align*}
    f^{\cext, \leq 1} (u +_a v)
    & = f^{\cext, \leq 1} (ar+(1-a)s, x +_{\frac {ar} {ar+(1-a)s}} y)
    \\
    & = (ar+(1-a) s) \cdot f (x +_{\frac {ar} {ar+(1-a)s}} y) \\
    & \geq (ar+(1-a) s) \cdot (f (x) +_{\frac {ar} {ar+(1-a)s}} f (y))
  \end{align*}
  since $f$ is concave.  We claim that the latter is equal to
  $f^{\cext, \leq 1} (u) +_a f^{\cext, \leq 1} (v)$.  We have:
  \begin{align*}
    f^{\cext, \leq 1} (u) +_a f^{\cext, \leq 1} (v)
    & = (r \cdot f (x)) +_a (s \cdot f (y))
  \end{align*}
  We recall the formula $c \cdot z = z +_c \bot$ defining $\cdot$ in
  $\Alg$.  In order to reach our goal, we embed $\Alg$ in a cone
  $\C \eqdef \conify (\Alg)$ through $\etac_{\Alg}$, using
  Theorem~\ref{thm:conify:semitop}.  We write $cz$ for the scalar
  multiplication of $c$ by $z$ in $\conify (\Alg)$, in order not to
  confuse it with $c \cdot z$.  Letting
  $x' \eqdef \etac_{\Alg} (f (x))$ and
  $y' \eqdef \etac_{\Alg} (f (y))$, it remains to verify that
  $(x' +_{\frac {ar} {ar+(1-a)s}} y') +_{ar+(1-a)s} \bot = (x' +_r
  \bot) +_a (y' +_s \bot)$:
  \newcommand\conifyeqone{(x' +_{\frac {ar} {ar+(1-a)s}} y')
    +_{ar+(1-a)s} \bot}
  \newcommand\conifyeqtwo{(x' +_r \bot) +_a (y' +_s \bot)}
  \ifta
  \begin{align*}
    \conifyeqone
    & = (ar+(1-a)s) (\frac {ar} {ar+(1-a)s} x' + \frac {(1-a)s}
      {ar+(1-a)s} y') \\
    & \qquad + (1-(ar+(1-a)s)) \bot \\
    & = ar x' + (1-a)s y' + (1-(ar+(1-a)s)) \bot \\
  \end{align*}
  \else
  \begin{align*}
    & \conifyeqone \\
    & = (ar+(1-a)s) (\frac {ar} {ar+(1-a)s} x' + \frac {(1-a)s}
      {ar+(1-a)s} y') \\
    & \qquad + (1-(ar+(1-a)s)) \bot \\
    & = ar x' + (1-a)s y' + (1-(ar+(1-a)s)) \bot \\
  \end{align*}
  \fi
  while:
  \ifta
  \begin{align*}
    \conifyeqtwo
    & = a (rx' + (1-r)\bot) + (1-a) (s y' + (1-s) \bot) \\
    & = ar x' + (1-a) s y' + (a (1-r) + (1-a) (1-s)) \bot,
  \end{align*}
  \else
  \begin{align*}
    & \conifyeqtwo \\
    & = a (rx' + (1-r)\bot) + (1-a) (s y' + (1-s) \bot) \\
    & = ar x' + (1-a) s y' + (a (1-r) + (1-a) (1-s)) \bot,
  \end{align*}
  \fi
  and those are indeed equal since
  $a (1-r) + (1-a) (1-s) = 1 - (ar+(1-a)s)$, as one easily checks.
  The fact that $f^{\cext, \leq 1}$ is sublinear if $f$ is convex, and
  linear if $f$ is affine is proved similarly, replacing $\geq$ by
  $\leq$, or by $=$.

  In case $\B$ is preordered, we show that $f^{\cext, \leq 1}$ is
  monotonic, assuming that $f$ is.  Let $u \leq v$ in
  $\conify_{\leq 1} (\B)$.  If $u=0$, then
  $f^{\cext, \leq 1} (u) = \bot \leq f^{\cext, \leq 1} (v)$.  Hence we
  may assume that $u$ is of the form $(r, x)$ with $0 < r \leq 1$ and
  $x \in \B$, $v$ is of the form $(s, y)$ with $0 < s \leq 1$ and
  $y \in \B$, $u+u' = (s, y_1)$ for some $u' \in \conify (\B)$ (not
  just $\conify_{\leq 1} (\B)$, although a quick check shows that
  necessarily such a $u'$ must in fact lie in $\conify_{\leq 1} (\B)$)
  and $y_1 \leq y$ in $\B$.  Then
  $f^{\cext, \leq 1} (u) = r \cdot f (x)$.  If $u'=0$, then
  $(s, y_1) = (r, x)$, so $x = y_1 \leq y$ and
  $r \cdot f (x) \leq r \cdot f (y) = s \cdot f (y)$ (since $r=s$)
  $= f^{\cext, \leq 1} (v)$.  Otherwise, $u'$ is of the form
  $(r', x')$ with $r' > 0$ and $x' \in \B$, and
  $(s, y_1) = (r+r', x +_{\frac r {r+r'}} x')$.  Then
  $f^{\cext, \leq 1} (v) = s \cdot f (y) \geq s \cdot f (y_1)$ (since
  $f$ is monotonic)
  $= (r+r') \cdot f (x +_{\frac r {r+r'}} x') \geq (r+r') \cdot \frac
  r {r+r'} \cdot f (x)$ (since $f$ is semi-concave) $= r \cdot f (x)$
  (by the second equation of Lemma~\ref{lemma:bary:alg:pointed})
  $= f^{\cext, \leq 1} (u)$.

  4.  Trivial consequence of item~3.  \qed
\end{proof}

We have a similar result with pointed semitopological barycentric
algebras, except that $\etac_{\B}$ is not necessarily a topological
embedding; we will see that $\B^-$
(Example~\ref{exa:bary:alg:notembed}) is a counterexample in
Example~\ref{exa:bary:alg:pointed:notembed}.
\begin{theorem}
  \label{thm:conify:leq1:semitop}
  For every semitopological barycentric algebra $\B$,
  \begin{enumerate}
  \item $\conify_{\leq 1} (\B)$ is a pointed semitopological
    barycentric algebra;
  \item its proper open subsets are the upwards-closed subsets of
    ${]0, 1]}_\sigma \times \B$, the latter being given the product
    topology, and where upwards-closed is with respect to the
    preordering $\leqc$ on $\conify (\B)$, restricted to
    $\conify_{\leq 1} (\B)$;
  \item the map $\etac_{\B}$ is an injective affine continuous map
    from $\B$ into $\conify_{\leq 1} (\B)$;
  \item for every semi-concave (resp., concave, affine) continuous map
    $f \colon \B \to \Alg$ to a pointed semitopological barycentric
    algebra $\Alg$, there is a unique $\Iu$-homogeneous (resp.,
    superlinear, linear) continuous map
    $f^{\cext, \leq 1} \colon \conify_{\leq 1} (\B) \to \Alg$ such
    that $f^{\cext, \leq 1} \circ \etac_{\B} = f$.

    In particular, $\conify_{\leq 1} (\B)$ with the cone topology is
    the free pointed semitopological barycentric algebras $\B$.
  \item The following are equivalent:
    \begin{enumerate}[label=(\alph*)]
    \item $\B$ embeds in a pointed semitopological algebra $\Alg$
      through an affine continuous map;
    \item $\etac_{\B} \colon \B \to \conify_{\leq 1} (\B)$ is a
      topological embedding;
    \item $\B$ is embeddable (in the sense of
      Definition~\ref{defn:bary:alg:embed}, namely in a
      semitopological cone).
    \end{enumerate}
  \end{enumerate}
\end{theorem}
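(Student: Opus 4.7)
Items~1 and~3 are essentially inherited from prior results. For item~1, I would first verify that $\conify_{\leq 1}(\B)$ is convex in $\conify(\B)$ (this falls out of the linearity of the level function $\ell(r, x) = r$, so $\ell(u +_a v) \leq 1$ whenever $\ell(u), \ell(v) \leq 1$), and then apply Lemma~\ref{lemma:bary:in:cone} to the semitopological cone $\conify(\B)$ (Theorem~\ref{thm:conify:semitop}, item~1) to obtain a (quasi)topological barycentric algebra structure on $\conify_{\leq 1}(\B)$ in the subspace topology. Pointedness is clear because $0$ is $\leqc$-least (Remark~\ref{rem:bary:alg:ord}), hence least in the specialization preordering of $\conify(\B)$ (Remark~\ref{rem:cone:top:spec}) and of any subspace containing it. Item~3 then follows because $\etac_{\B}$ has image in $\conify_{\leq 1}(\B)$, inheriting continuity from Theorem~\ref{thm:conify:semitop}, item~2, and injectivity and affineness from Theorem~\ref{thm:conify:leq1}, item~2.

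The crux of the proof is item~2, which I will prove by building an explicit extension. The $(\subseteq)$ direction is immediate from Definition~\ref{defn:cone:top}: every proper open of $\conify_{\leq 1}(\B)$ arises as the trace on $\conify_{\leq 1}(\B)$ of a proper open of $\conify(\B)$, and such a proper open is exactly an upwards-closed open subset of $(\Rp \diff \{0\})_\sigma \times \B$; its trace on $]0,1] \times \B$ is thus an upwards-closed open of $]0,1]_\sigma \times \B$. For the nontrivial converse, given such a $W$, I would set $V \eqdef W \cup (]1, \infty[ \times \B)$. Adding the region $]1, \infty[ \times \B$ absorbs every $\leqc$-successor of level $> 1$, making upwards-closure in $\leqc$ automatic; openness of $V$ in the product topology on $(\Rp \diff \{0\})_\sigma \times \B$ follows by enlarging a basic rectangle $]a, 1] \times U \subseteq W$ to $]a, \infty[ \times U$; and the identity $V \cap \conify_{\leq 1}(\B) = W$ is immediate. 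The main potential obstacle would have been extending $W$ while preserving upwards-closure into the region of level $> 1$, and this trick removes it cleanly.

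For item~4, the existence, uniqueness, and the $\Iu$-homogeneity/superlinearity/linearity of $f^{\cext, \leq 1}$ all come from Theorem~\ref{thm:conify:leq1}, item~3; only continuity remains. Given a proper open $V \subseteq \Alg$ (the case $\bot \in V$ is trivial since then $V = \Alg$), the preimage $(f^{\cext, \leq 1})^{-1}(V)$ lies in $]0,1] \times \B$, where $f^{\cext, \leq 1}$ coincides with $(r, x) \mapsto r \cdot f(x)$; this is continuous as a composite of $f$ with the jointly continuous scalar multiplication of Lemma~\ref{lemma:bary:alg:pointed}, so the preimage is open in $]0,1]_\sigma \times \B$. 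Upwards-closure in $\leqc|_{\conify_{\leq 1}(\B)}$ uses the semi-concavity of $f$: in the case $(r, x) + (s-r, x') = (s, y_1)$ with $y_1 \leq y$, one chains $s \cdot f(y) \geq s \cdot f(y_1) \geq s \cdot ((r/s) \cdot f(x)) = r \cdot f(x)$ using monotonicity, semi-concavity, and the identity $(ab) \cdot x = a \cdot (b \cdot x)$ of Lemma~\ref{lemma:bary:alg:pointed}. Item~2 then promotes the preimage to a proper open of $\conify_{\leq 1}(\B)$.

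Item~5 is a routine chain of implications. $(c) \Rightarrow (a)$ holds because a semitopological cone is a pointed semitopological barycentric algebra via Lemma~\ref{lemma:bary:in:cone:iff}. For $(a) \Rightarrow (b)$, any affine topological embedding $i$ is semi-concave (affinity combined with monotonicity of $+_a$ yields $i(x +_a y) = i(x) +_a i(y) \geq i(x) +_a \bot = a \cdot i(x)$), so item~4 produces a linear continuous $i^{\cext, \leq 1}$ with $i^{\cext, \leq 1} \circ \etac_{\B} = i$; fullness of $\etac_{\B}$ follows by pulling back open sets through $i^{\cext, \leq 1}$, and injectivity is inherited from $i$. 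For $(b) \Rightarrow (c)$, composing the topological embedding $\etac_{\B} \colon \B \to \conify_{\leq 1}(\B)$ with the inclusion into the semitopological cone $\conify(\B)$ yields an affine topological embedding, witnessing embeddability in the sense of Definition~\ref{defn:bary:alg:embed}.
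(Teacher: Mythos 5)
Your proof is correct and follows essentially the same route as the paper's. Your extension $V = W \cup ({]1, \infty[} \times \B)$ is literally the same set as the paper's $W' = \bigcup_i ({]t_i, \infty[} \times U_i) \cup ({]1, \infty[} \times \B)$, and the remaining arguments coincide up to phrasing: your invocation of the joint continuity of scalar multiplication from Lemma~\ref{lemma:bary:alg:pointed} is Ershov's observation one step removed, and your explicit semi-concavity chain for upwards-closure in item~4 is the unfolding of the monotonicity proof already present in Theorem~\ref{thm:conify:leq1}, item~3.
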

\begin{proof}
  1.  Just as in the proof of Theorem~\ref{thm:conify:leq1},
  $\conify_{\leq 1} (\B)$ is a convex subset of $\conify (\B)$ with a
  least element, and we conclude by Lemma~\ref{lemma:bary:in:cone}.

  2.  Let us agree to say ``upwards-closed'' for upwards-closed with
  respect to the preordering on $\conify (\B)$, or its restriction to
  $\conify_{\leq 1} (\B)$---not the componentwise preordering on $(\Rp
  \diff \{0\}) \times \B$.

  The proper open subsets of $\conify (\B)$ are the upwards-closed
  subsets of $(\Rp \diff \{0\}) \times \B$, where the latter is given
  the product topology.  Given such an open set $U$,
  $U \cap \conify_{\leq 1} (\B)$ is upwards-closed in
  $\conify_{\leq 1} (\B)$, and is a union of sets of the form
  $({]t, \infty[} \times U) \cap \B$ where $t > 0$ and $U$ is open in
  $\B$.  The latter are just sets of the form ${]t, 1]} \times U$ if
  $t < 1$ and empty otherwise, and they are all open in
  ${]0, 1]}_\sigma \times \B$.

  Conversely, let $W$ be an upwards-closed open subset of
  ${]0, 1]}_\sigma \times \B$, still in the preordering on
  $\conify_{\leq 1} (\B)$.  We write $W$ as
  $\bigcup_{i \in I} ({]t_i, 1]} \times U_i)$ where $0 < t_i < 1$ and
  $U_i$ is open in $\B$.  Let
  $W' \eqdef \bigcup_{i \in I} ({]t_i, \infty[} \times U_i) \cup ({]1,
    \infty[} \times \B)$.  $W'$ is open in
  $(\Rp \diff \{0\})_\sigma \times \B$ and
  $W' \cap \conify_{\leq 1} (\B) = W$.  Let us verify that $W'$ is
  upwards-closed in $\conify (\B)$.  Let $u$ be any point in $W'$ and
  $v$ be above $u$ in $\conify (\B)$.  Since $u \neq 0$, we write $u$
  as $(r, x)$ with $r > 0$ and $x \in \B$.  Since $u \leqc v$, $v$ must
  be of the form $(s, y)$, and there must be a point
  $u' \in \conify (\B)$ and an element $y_1 \in \B$ such that
  $u+u' = (s, y_1)$ and $y_1 \leq y$.  If $s > 1$, then
  $v \in {]1, \infty[} \times \B \subseteq W'$, so let us assume
  $s \leq 1$.  Then, since $u+u' = (r, x) + u' = (s, y_1)$, in
  particular $r \leq s \leq 1$.  The fact that $u \in W'$ and the
  definition of $W'$ implies that $u \in {]t_i, \infty[} \times U_i$
  for some $i \in I$; therefore $u$ is in $W$.  Since $W$ is
  upwards-closed in $\conify_{\leq 1} (\B)$, $u \leqc v$, and since $s
  \leq 1$ hence $v \in \conify_{\leq 1} (\B)$, $v$ is also in $W$.
  It is clear that $W \subseteq W'$, so $v$ is in $W'$.
    
  3.  For every $x \in \B$, $\etac_{\B} (x) = (1, x)$ is in
  $\conify_{\leq 1} (\B)$.  The remaining claims are from
  Theorem~\ref{thm:conify:semitop}, item~2.

  4.  Let $f \colon \B \to \Alg$ be semi-concave and continuous.
  Uniqueness is by Theorem~\ref{thm:conify:leq1}, item~3, considering
  $\B$ as a preordered cone with its specialization preordering.  We
  turn to existence.  We have to define $f^{\cext, \leq 1}$ as in
  Theorem~\ref{thm:conify:leq1} (whence the same notation): it maps
  $0$ to the least element $\bot$ of $\Alg$ and $(r, x)$ to
  $r \cdot f (x)$ for all $r \in {]0, 1]}$ and $x \in \B$.

  It remains to show that $f^{\cext, \leq 1}$ is continuous, and this
  is done as in the proof of Theorem~\ref{thm:conify:semitop}, item~3.
  As in item~2, ``upwards-closed'' will mean upwards-closed in the
  preordering on $\conify (\B)$ or its restriction to $\conify_{\leq
    1} (\B)$.

  Let $V$ be an open subset of $\Alg$.  If $\bot \in V$, then
  $V = \Alg$ since $\bot$ is least in $\Alg$, and then
  $(f^{\cext, \leq 1})^{-1} (V) = \conify_{\leq 1} (\B)$ is open.
  Hence we assume that $\bot \not\in V$.  Then
  $(f^{\cext, \leq 1})^{-1} (V)$ does not contain $0$, and we show
  that $(f^{\cext, \leq 1})^{-1} (V)$ is open in
  ${]0, 1]}_\sigma \times \B$ and upwards-closed in
  $\conify_{\leq 1} (\B)$, relying on item~2 above.

  For every $u \in (f^{\cext, \leq 1})^{-1} (V)$, for every
  $v \in \conify_{\leq 1} (\B)$ such that $u \leqc v$, we verify that
  $v \in (f^{\cext, \leq 1})^{-1} (V)$.  Since $u \neq 0$, $v$ is also
  different from $0$, and we write $v$ as $(s, y)$ with $0 < s \leq 1$
  and $y \in \B$.  There is a point $u' \in \conify (\B)$ and an
  element $y_1 \in \B$ such that $u+u' = (s, y_1)$ and $y_1 \leq y$.
  (Yes, we do take the sum $u+u'$ in $\conify (\B)$, and we expect it
  to yield $(s, y_1)$, which is in $\conify_{\leq 1} (\B)$.)  Then
  $f^{\cext, \leq 1} (v) = s \cdot f (y) \geq s \cdot f (y_1)$ (since
  $f$, being continuous, is monotonic)
  $= f^{\cext, \leq 1} (u+u') \geq f^{\cext, \leq 1} (u)$ (since $f$
  is monotonic and $u+u' \geq u$ by Remark~\ref{rem:bary:alg:ord}).
  Since $V$ is upwards-closed, $f^{\cext, \leq 1} (v) \in V$.
  Therefore $v \in (f^{\cext, \leq 1})^{-1} (V)$.

  Next, we check that $(f^{\cext, \leq 1})^{-1} (V)$ is open in
  ${]0, 1]}_\sigma \times \B$.  The restriction of $f^{\cext, \leq 1}$
  to ${]0, 1]}_\sigma \times \B$ is the function
  $(r, x) \mapsto r \cdot f (x)$.  This is Scott-continuous in $r$ and
  continuous in $x$, since $f$ is continuous.  We note that $]0, 1]$
  is a continuous dcpo, whose way-below relation is $<$.  By Ershov's
  observation, this restriction of $f^{\cext, \leq 1}$ is jointly
  continuous from ${]0, 1]}_\sigma \times \B$ to $\creal$, so
  $(f^{\cext, \leq 1})^{-1} (V)$ is open in
  ${]0, 1]}_\sigma \times \B$ with the product topology.

  5. $(a) \limp (b)$.  Let $i \colon \B \to \Alg$ be an affine
  topological embedding in a pointed semitopological algebra $\Alg$.
  By item~4, $i^{\cext, \leq 1}$ is a linear continuous map from
  $\conify_{\leq 1} (\B)$ to $\Alg$.  For every open subset $U$ of
  $\B$, there is an open subset $V$ of $\Alg$ such that
  $U = i^{-1} (V)$, since $i$ is a topological embedding.  Using the
  fact that $i = i^{\cext, \leq 1} \circ \etac_{\B}$, $U$ is the
  inverse image of the open set $(i^{\cext, \leq 1})^{-1} (V)$ under
  $\etac_{\B}$.  Hence $\etac_{\B}$ is full, and therefore a
  topological embedding.

  $(b) \limp (c)$.  The inclusion map from $\conify_{\leq 1} (\B)$
  into $\conify (\B)$ is an affine topological embedding by
  definition.  Composing it with
  $\etac_{\B} \colon \B \to \conify_{\leq 1} (\B)$, we obtain a
  topological embedding of $\B$ in $\conify (\B)$, which is affine
  by item~3.

  Finally, $(c) \limp (a)$ is obvious, since every semitopological
  cone is a pointed semitopological algebra, in particular.  \qed
\end{proof}

This leads to the following, perhaps surprising, result.
\begin{corollary}
  \label{corl:bary:alg:pointed:embed}
  Every pointed semitopological barycentric algebra is embeddable.
\end{corollary}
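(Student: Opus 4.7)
The plan is to apply item~5 of Theorem~\ref{thm:conify:leq1:semitop}, whose condition (c) coincides with the definition of embeddability (Definition~\ref{defn:bary:alg:embed}). By the equivalence with condition (a) provided there, it suffices to exhibit an affine topological embedding of $\B$ into \emph{some} pointed semitopological barycentric algebra $\Alg$. The choice $\Alg \eqdef \B$, with $i \eqdef \identity{\B}$, does the job trivially: the identity map on any space is an affine topological embedding of $\B$ into itself, viewed as a pointed semitopological barycentric algebra.

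Unpacking what the implication (a) $\limp$ (c) gives in this concrete case is instructive. By item~4 of the same theorem, the identity, being affine and continuous, extends to a unique linear continuous map $r \colon \conify_{\leq 1}(\B) \to \B$ satisfying $r \circ \etac_{\B} = \identity{\B}$; in other words, $r$ is a continuous retraction of the injective continuous map $\etac_{\B}$ of item~3. This retraction forces $\etac_{\B}$ to be full, since for every open subset $U$ of $\B$ we have $U = (\etac_{\B})^{-1}(r^{-1}(U))$ and $r^{-1}(U)$ is open in $\conify_{\leq 1}(\B)$. Together with continuity and injectivity, this shows that $\etac_{\B}$ is a topological embedding of $\B$ into $\conify_{\leq 1}(\B)$. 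Post-composing with the inclusion $\conify_{\leq 1}(\B) \hookrightarrow \conify(\B)$---a topological embedding by construction, since $\conify_{\leq 1}(\B)$ carries the subspace topology---yields the desired affine topological embedding of $\B$ into the semitopological cone $\conify(\B)$.

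There is essentially no obstacle to carrying out this argument: the substantive work has already been performed in Theorem~\ref{thm:conify:leq1:semitop} itself. The present corollary is merely the observation that its hypothesis (a) is vacuously satisfied by every pointed semitopological barycentric algebra, the identity map furnishing the required witness. It is worth noting that this does not contradict the non-embeddability example $\B^-$ of Example~\ref{exa:bary:alg:notembed}, because the distinguished element $0$ of $\B^-$ is the \emph{largest} element in its specialization order, not the least, so $\B^-$ is not pointed in the sense of Definition~\ref{defn:bary:alg:pointed}.
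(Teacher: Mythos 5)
Your proof is correct and takes the same route as the paper: apply the $(a)\limp(c)$ implication of Theorem~\ref{thm:conify:leq1:semitop}, item~5, with the identity map as the affine topological embedding into a pointed semitopological barycentric algebra. Your second paragraph, which unpacks the retraction $r = \identity{\B}^{\cext,\leq 1}$ and how it forces $\etac_{\B}$ to be full, is a sound expansion of what the theorem already packages, and your closing remark about $\B^-$ correctly identifies why it is not a counterexample (its $0$ is greatest, not least, in the specialization order).
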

\begin{proof}
  Let $\B$ be a pointed semitopological barycentric algebra.  The
  identity map is an affine topological embedding.  By the
  $(a) \limp (c)$ implication of
  Theorem~\ref{thm:conify:leq1:semitop}, item~5, $\B$ is embeddable.  \qed
\end{proof}
Corollary~\ref{corl:bary:alg:pointed:embed} does \emph{not} say that
every pointed semitopological barycentric algebra $\B$ embeds through
an affine topological embedding that preserves bottom elements (a
strict affine, or linear, map).  And indeed, the embedding obtained in
the proof, namely $\etac_{\B}$, maps $\bot$ to $(1, \bot)$, not to $0$.

\begin{remark}
  \label{rem:bary:alg:topo}
  For every topological barycentric algebra $\B$,
  $\conify_{\leq 1} (\B)$ is a pointed topological, not just pointed
  semitopological barycentric algebra.  Indeed, $\conify (\B)$ is a
  topological cone in this case by Theorem~\ref{thm:bary:alg:topo},
  and $\conify_{\leq 1} (\B)$ embeds through the identity map, namely
  is a convex subspace of $\conify (\B)$; we conclude by
  Lemma~\ref{lemma:bary:in:cone}.
\end{remark}

\begin{example}
  \label{exa:bary:alg:pointed:notembed}
  There is no affine topological embedding of the topological
  barycentric algebra $\B^-$ described in
  Example~\ref{exa:bary:alg:notembed} in any pointed semitopological
  barycentric algebra.  Indeed, by
  Theorem~\ref{thm:conify:leq1:semitop}, item~5, $\B^-$ embeds in a
  pointed semitopological barycentric algebra if and only if it is
  embeddable.  But $\B^-$ is not embeddable, as we have seen in
  Example~\ref{exa:bary:alg:notembed}.
\end{example}

\subsection{The free pointed barycentric algebra over $\Val_1 X$}
\label{sec:free-point-baryc-1}

Applying the $\conify_{\leq 1}$ construction to $\Val_1 X$ gives
$\Val_{\leq 1}$, up to isomorphism, thanks to the following result.
\begin{theorem}
  \label{thm:V1->Vleq1}
  For every topological space $X$, $\Val_{\leq 1} X$, with
  distinguished element the zero valuation, is the free pointed
  barycentric algebra over the barycentric algebra $\Val_1 X$, and
  $\Val_{\leq 1} X$ is the free pointed semitopological barycentric
  algebra over the semitopological barycentric algebra
  $\Val_{1} X$.

  As a consequence, $\Val_{\leq 1} X$ and
  $\conify_{\leq 1} (\Val_1 X)$ are naturally isomorphic
  semitopological cones.
\end{theorem}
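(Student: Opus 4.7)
The plan is to mirror the strategy of Theorem~\ref{thm:V1->Vb}, adapted for the pointed setting. Given an affine map $f \colon \Val_1 X \to \Alg$ into a pointed barycentric algebra $\Alg$, I first show uniqueness: by Proposition~\ref{prop:strict:aff}, any linear extension $\hat f$ is $\Iu$-homogeneous. For $\mu \neq \mathbf 0$ with $a \eqdef \mu(X) \in {]0,1]}$, the identity $\mu = (\mu/a) +_a \mathbf 0$ holds in $\Val_{\leq 1} X$ (i.e.\ $\mu = a \cdot_{\Val_{\leq 1} X} (\mu/a)$, using $c \cdot x = x +_c \bot$), forcing $\hat f(\mu) = a \cdot f(\mu/a)$. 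I would accordingly define $\hat f(\mathbf 0) \eqdef \bot$ and $\hat f(\mu) \eqdef \mu(X) \cdot f(\mu/\mu(X))$ for $\mu \neq \mathbf 0$. Strictness is immediate; affinity can be verified by a direct case analysis, or more cleanly by passing through $\etac_\Alg \colon \Alg \to \conify(\Alg)$ (Proposition~\ref{prop:bary:alg:conify:ord}) and checking the resulting identity in the cone $\conify(\Alg)$. This handles the plain (non-topological) statement.

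The semitopological case reduces to proving $\hat f$ continuous; this is the main obstacle. Let $V$ be a proper open subset of $\Alg$ and $\mu \in \hat f^{-1}(V)$, so $a \eqdef \mu(X) > 0$. By joint Scott-continuity of $[0,1]_\sigma \times \Alg \to \Alg$ (Lemma~\ref{lemma:bary:alg:pointed}), since $\hat f(\mu) = a \cdot f(\mu/a) \in V$, one can pick $\alpha \in {]0,1[}$ with $(\alpha a) \cdot f(\mu/a) \in V$. The map $g \colon \nu \mapsto (\alpha a) \cdot f(\nu)$ is continuous and affine from $\Val_1 X$ to $\Alg$, the latter thanks to $a \cdot (x +_b y) = a \cdot x +_b a \cdot y$ (Lemma~\ref{lemma:bary:alg:pointed}). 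Pick a basic weak-topology neighborhood $\bigcap_{i=1}^n [r_i \ll U_i]_1 \subseteq g^{-1}(V)$ of $\mu/a$, and enlarge the family $\{U_1,\dots,U_n\}$ to a finite lattice of open sets that also contains $X$ (this is crucial). Set $\mathcal U \eqdef \bigcap_i [\alpha \mu(U_i) \ll U_i]$, a basic open set of $\Val_b X$ that contains $\mu$ (using $\alpha < 1$).

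For $\nu \in \mathcal U \cap \Val_{\leq 1} X$, apply the Schr\"oder--Simpson decomposition (Proposition~\ref{prop:schsimp:decomp}) with the simple valuation $\alpha \mu$ to get $\nu = \nu_1 + \nu_2$ in $\Val_b X$ with $\alpha \mu(U_i) \leq \nu_1(U_i)$ and $\nu_1(X) = \alpha a$ (this last equality uses that $X \in \{U_i\}$). Then $\nu_1 \in \Val_{\leq 1} X$, $\nu_1/(\alpha a) \in \Val_1 X$, and $\nu(X) \leq 1$ together with $\alpha a < 1$ let us rewrite
\[
  \nu = (\nu_1/(\alpha a)) +_{\alpha a} (\nu_2/(1-\alpha a)),
\]
with both factors in $\Val_{\leq 1} X$. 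Applying affinity of $\hat f$ yields $\hat f(\nu) = f(\nu_1/(\alpha a)) +_{\alpha a} \hat f(\nu_2/(1-\alpha a))$. Monotonicity of $+_{\alpha a}$ in its second argument (from separate continuity of $+_a$, giving monotonicity in the specialization order) and $\hat f(\nu_2/(1-\alpha a)) \geq \bot$ give
\[
  \hat f(\nu) \geq f(\nu_1/(\alpha a)) +_{\alpha a} \bot = (\alpha a) \cdot f(\nu_1/(\alpha a)) = g(\nu_1/(\alpha a)).
\]
From $r_i \ll \mu(U_i)/a$ and $\alpha \mu(U_i) \leq \nu_1(U_i)$ one checks $r_i \ll \nu_1(U_i)/(\alpha a)$, so $\nu_1/(\alpha a) \in \bigcap_i [r_i \ll U_i]_1 \subseteq g^{-1}(V)$, hence $g(\nu_1/(\alpha a)) \in V$. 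Upward-closure of $V$ then gives $\hat f(\nu) \in V$, which establishes continuity.

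Finally, the natural isomorphism $\Val_{\leq 1} X \cong \conify_{\leq 1}(\Val_1 X)$ follows from uniqueness of free objects, combining the just-proved universal property with Theorem~\ref{thm:conify:leq1:semitop}; concretely the isomorphism is induced by $(r, \nu) \mapsto r \cdot \nu$. The hard part is unambiguously the Schr\"oder--Simpson-based continuity argument; everything else is a careful but routine unpacking of pointed-barycentric-algebra identities and the universal property.
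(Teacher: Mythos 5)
Your proof is correct, but the continuity step takes a genuinely different route from the paper's. You redo the Schr\"oder--Simpson decomposition from scratch, adapting the argument of Theorem~\ref{thm:V1->Vb} to a pointed barycentric-algebra codomain: you split $\nu$ as $(\nu_1/(\alpha a)) +_{\alpha a} (\nu_2/(1-\alpha a))$ with $\nu_1/(\alpha a) \in \Val_1 X$, and then use monotonicity of $+_{\alpha a}$ and $\bot$ being least to bound $\hat f(\nu)$ from below. The paper instead reuses Theorem~\ref{thm:V1->Vb} as a black box: it pushes $f$ into the free cone $\conify(\Alg)$ via $\etac_\Alg$, obtains a continuous linear extension $\hat g\colon \Val_b X \to \conify(\Alg)$ by that theorem, checks that $\etac_\Alg \circ \hat f$ agrees with $\hat g$ on $\Val_{\leq 1} X$, and then concludes continuity of $\hat f$ because $\etac_\Alg$ is a topological embedding (which rests on Corollary~\ref{corl:bary:alg:pointed:embed} and Theorem~\ref{thm:conify:semitop}, item~4). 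Your direct approach is longer but avoids that embeddability detour; the paper's approach is shorter and more modular, at the price of appealing to the embeddability machinery. One small correction: you claim that enlarging the lattice to include $X$ is ``crucial'' for the equality $\nu_1(X) = \alpha a$, but that is not so---item~3 of Proposition~\ref{prop:schsimp:decomp} gives $\mu(X) = \nu_1(X)$ unconditionally, regardless of whether $X$ is among the $U_i$. Adding $X$ is harmless but not needed, and in fact the proof of Theorem~\ref{thm:V1->Vb} does not add it either. This misstatement does not affect the validity of your argument.
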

\begin{proof}
  Let $f$ be an affine map from $\Val_1 X$ to a pointed barycentric
  algebra $\B$.  We need to show that $f$ extends to a unique linear
  map $\hat f$ from $\Val_{\leq 1} X$ to $\B$, which is continuous if
  $f$ is (assuming $\B$ semitopological, and giving $\Val_1 X$ and
  $\Val_{\leq 1} X$ their weak topologies.)
  
  Uniqueness is proved as in Theorem~\ref{thm:V1->Vb}: if $\hat f$
  exists, then it must map the zero valuation to $\bot$ and every
  non-zero element $\nu$ of $\Val_{\leq 1} X$ to
  $\nu (X) \cdot f (\frac 1 {\nu (X)} \cdot \nu)$, using the fact that
  $f$ is $\Iu$-homogenous.

  We therefore define $\hat f$ as mapping the zero valuation to $\bot$
  and every non-zero element $\nu$ of $\Val_{\leq 1} X$ to
  $\nu (X) \cdot f (\frac 1 {\nu (X)} \cdot \nu)$.  By definition,
  $\hat f$ is strict.  In order to see that $\hat f$ is linear, it
  suffices to show that it is affine, thanks to
  Proposition~\ref{prop:strict:aff}.  We observe that
  $g \eqdef \etac_{\B} \circ f$ is an affine map from $\Val_1 X$ to
  $\conify (\B)$, because $\etac_{\B}$ is affine.  By
  Theorem~\ref{thm:V1->Vb}, $g$ extends to a unique linear map
  $\hat g$ from $\Val_b X$ to $\conify (\B)$, and then we note that
  $\etac_{\B} \circ \hat f = \hat g$.  Indeed, by the uniqueness of
  $\hat g$, this follows from the fact that $\etac_{\B} \circ \hat f$
  coincides with $g$ on $\Val_1 X$, a consequence of the fact that
  $\hat f$ coincides with $f$ on $\Val_1 X$.  Then, for all
  $\mu, \nu \in \Val_{\leq 1} X$ and for every $a \in [0, 1]$,
  $\etac_{\B} (\hat f (\mu +_a \nu)) = \hat g (\mu +_a \nu) = \hat g
  (\mu) +_a \hat g (\nu)$ (since $\hat g$ is linear, hence affine)
  $= \etac_{\B} (\hat f (\mu)) +_a \etac_{\B} (\hat f (\nu)) =
  \etac_{\B} (\hat f (\mu) +_a \hat f (\nu))$ (since $\etac_{\B}$ is
  affine).  Since $\etac_{\B}$ is injective,
  $\hat f (\mu +_a \nu) = \hat f (\mu) +_a \hat f (\nu)$, showing that
  $\hat f$ is affine.

  When $\B$ is a pointed semitopological barycentric algebra,
  $\Val_1 X$ and $\Val_b X$ are given their weak topologies, and $f$
  is continuous, $\etac_{\B}$ is injective, affine and continuous by
  Theorem~\ref{thm:conify:semitop}, item~2.  In particular, $g$ is
  continuous, so $\hat g$ is continuous by Theorem~\ref{thm:V1->Vb}.
  In other words, $\etac_{\B} \circ f$ is continuous.  By
  Corollary~\ref{corl:bary:alg:pointed:embed}, $\B$ is embeddable, and
  by the equivalence between (a) and (b) in
  Theorem~\ref{thm:conify:semitop}, item~4, $\etac_{\B}$ is a
  topological embedding.  The continuity of $f$ then follows from that
  of $\etac_{\B} \circ f$.  \qed
\end{proof}

\subsection{Barycenters, part 3}
\label{sec:barycenters-part-3}

In pointed barycentric algebras, there is an extended notion of
barycenters.  The following is an analogue of Definition and
Proposition~\ref{prop:bary:1}.  It would be tempting to define
$\sum_{i=1}^n a_i \cdot x_i$, where $\sum_{i=1}^n a_i$ is now less
than or equal to $1$, not equal to $1$, by saying something like: ``it
is the only point $x$ that is mapped by $\etaca_{\B}$ to
$\sum_{i=1}^n a_i \cdot \etaca_{\B} (x_i)$''.  That would be wrong,
since $\etaca_{\B}$ is not injective in general.  Hence we give an ad
hoc definition, and then we verify a property in the spirit of the
latter sentence, but relying on $\etac_{\B}$ instead of $\etaca_{\B}$
(item~3).
\begin{defprop}
  \label{prop:bary:leq1}
  For every $n \in \nat$, let
  $\widetilde\Delta_n \eqdef \{(a_1, \cdots, a_n) \in \Rp^n \mid
  \sum_{i=1}^n a_i \leq 1\}$.

  Let $\B$ be a pointed barycentric algebra.  For every $n \in \nat$,
  for all points $x_1$, \ldots, $x_n$ of $\B$ and for every
  $(a_1, \cdots, a_n) \in \widetilde\Delta_n$, there is a point of
  $\B$, which we write as $\sum_{i=1}^n a_i \cdot x_i$, and defined as
  $(\sum_{i=1}^n a_i) \cdot \sum_{i=1}^n \frac {a_i} {\sum_{i=1}^n
    a_i} \cdot x_i$ (where the latter sum is defined in
  Proposition~\ref{prop:bary:1}) when $\sum_{i=1}^n a_i \neq 0$ (in
  particular $n \geq 1$), and to $\bot$ otherwise.  We call it the
  \emph{barycenter} of the points $x_i$ with weights $a_i$. Then:
  \begin{enumerate}
  \item this construction is invariant under permutations: for every
    bijection $\sigma$ of $\{1, \cdots, n\}$ onto itself,
    $\sum_{i=1}^n a_i \cdot x_i = \sum_{j=1}^n a_{\sigma (i)} \cdot
    x_{\sigma (i)}$;
  \item for every linear function $f$ from $\B$ to a pointed
    barycentric algebra $\Alg$,
    $f (\sum_{i=1}^n a_i \cdot x_i) = \sum_{i=1}^n a_i \cdot f
    (x_i)$;
  \item $\sum_{i=1}^n a_i \cdot x_i$ is the only point $x$ of $\B$
    such that
    $\etac_{\B} (x) = \sum_{i=1}^{n} a_i \cdot \etac_{\B} (x_i) +
    (1-\sum_{i=1}^n a_i) \cdot \etac_{\B} (\bot)$.
  \end{enumerate}
\end{defprop}
\begin{proof}
  Items~1 and ~2 are immediate consequences of Definition and
  Proposition~\ref{prop:bary:1}.  As for item~3, let
  $a \eqdef \sum_{i=1}^n a_i$, so that $a_0 = 1-a$.  We recall that
  $\etac_{\B}$ is affine and injective.  Injectivity entails the
  uniqueness claim of item~3.  Let
  $x \eqdef \sum_{i=1}^n a_i \cdot x_i$, as defined in the
  proposition.  If $a \neq 0$, then:
  \ifta
  \begin{align*}
    \etac_{\B} (x)
    & = \etac_{\B} (a \cdot \sum_{i=1}^n \frac {a_i} a \cdot x_i) \\
    & = \etac_{\B} (\sum_{i=1}^n \frac {a_i} a \cdot x_i +_a \bot)
    & \text{by definition of $\cdot$ on $\B$}
    \\
    & = a \cdot \etac_{\B}  (\sum_{i=1}^n \frac {a_i} a \cdot x_i) +
      (1-a) \cdot \etac_{\B} (\bot)
    & \text{since $\etac_{\B}$ is affine}
    \\
    & = a \cdot \sum_{i=1}^n \frac {a_i} a \cdot \etac_{\B} (x_i) +
      (1-a) \cdot \etac_{\B} (\bot)
    & \text{by Def.\ and Prop.~\ref{prop:bary:1}}
    \\
    & = \sum_{i=1}^n a_i \cdot \etac_{\B} (x_i) + (1-a) \cdot
      \etac_{\B} (\bot).
  \end{align*}
  \else
  \begin{align*}
    \etac_{\B} (x)
    & = \etac_{\B} (a \cdot \sum_{i=1}^n \frac {a_i} a \cdot x_i) \\
    & = \etac_{\B} (\sum_{i=1}^n \frac {a_i} a \cdot x_i +_a \bot) \\
    & \qquad\text{by definition of $\cdot$ on $\B$}
    \\
    & = a \cdot \etac_{\B}  (\sum_{i=1}^n \frac {a_i} a \cdot x_i) +
      (1-a) \cdot \etac_{\B} (\bot) \\
    & \qquad \text{since $\etac_{\B}$ is affine}
    \\
    & = a \cdot \sum_{i=1}^n \frac {a_i} a \cdot \etac_{\B} (x_i) +
      (1-a) \cdot \etac_{\B} (\bot) \\
    & \qquad \text{by Def.\ and Prop.~\ref{prop:bary:1}}
    \\
    & = \sum_{i=1}^n a_i \cdot \etac_{\B} (x_i) + (1-a) \cdot
      \etac_{\B} (\bot).
  \end{align*}
  \fi
  If $a=0$, then $\etac_{\B} (x) = \etac_{\B} (\bot) =
  \sum_{i=1}^n a_i \cdot \etac_{\B} (x_i) + (1-a) \etac_{\B} (\bot)$,
  since $a_i=0$ for every $i \in \{1, \cdots, n\}$.  \qed
\end{proof}

\begin{lemma}
  \label{lemma:bary:leq1:mono}
  Let $\B$ be a preordered pointed barycentric algebra, and $x_1$,
  \ldots, $x_n$ be finitely many points of $\B$.  The function
  $(a_1, \cdots, a_n) \mapsto \sum_{i=1}^n a_i \cdot x_i$ is monotonic
  from $\widetilde\Delta_n$, with the pointwise ordering, to $\B$.
\end{lemma}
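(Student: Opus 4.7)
\emph{Plan.} My plan is to reduce to a one-coordinate increase, then lift the comparison through the affine preorder embedding $\etac_{\B}\colon \B\to \conify(\B)$, and finally reduce the resulting inequality in the cone to the trivial $\etac_{\B}(\bot)\leqc \etac_{\B}(x_k)$.

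First, given $(a_1,\ldots,a_n)\leq (b_1,\ldots,b_n)$ pointwise in $\widetilde\Delta_n$, I would define the interpolants $c^{(k)}\eqdef (b_1,\ldots,b_k,a_{k+1},\ldots,a_n)$, and note that each $c^{(k)}$ still lies in $\widetilde\Delta_n$ since its sum is bounded by $\sum_i b_i\leq 1$. By transitivity of $\leq$ in $\B$, it suffices to prove $\sum_i c^{(k-1)}_i\cdot x_i\leq \sum_i c^{(k)}_i\cdot x_i$ for each $k$; relabelling, I may assume $b_i=a_i$ for all $i\neq k$, and set $\delta\eqdef b_k-a_k\geq 0$.

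Next, by Proposition~\ref{prop:bary:alg:conify:ord}, item~2, $\etac_{\B}$ is an affine preorder embedding, so it suffices to show the corresponding $\leqc$-inequality between the images in $\conify(\B)$. Writing $u_i\eqdef \etac_{\B}(x_i)$ and $u_\bot\eqdef \etac_{\B}(\bot)$, item~3 of Def.\ and Proposition~\ref{prop:bary:leq1} identifies these two images with $A\eqdef \sum_i a_i\cdot u_i+(1-\sum_i a_i)\cdot u_\bot$ and $B\eqdef \sum_i a_i\cdot u_i+\delta\cdot u_k+(1-\sum_i a_i-\delta)\cdot u_\bot$ respectively; the coefficient $1-\sum_i a_i-\delta = 1-\sum_i b_i$ is non-negative because $(b_i)\in \widetilde\Delta_n$. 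Since $1-\sum_i a_i = (1-\sum_i a_i-\delta)+\delta$, the cone equations~(\ref{eq:cone}) let me rewrite $A=\sum_i a_i\cdot u_i+(1-\sum_i a_i-\delta)\cdot u_\bot+\delta\cdot u_\bot$. Monotonicity of addition and of scalar multiplication in the preordered cone $\conify(\B)$ (Proposition~\ref{prop:bary:alg:conify:ord}, item~1) then reduces $A\leqc B$ to $\delta\cdot u_\bot\leqc \delta\cdot u_k$, which holds because $\bot\leq x_k$ in $\B$ (as $\bot$ is least) and $\etac_{\B}$ preserves order.

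The only delicate point I anticipate is that one cannot subtract in a cone when shifting weight from $u_\bot$ to $u_k$; splitting the coefficient $1-\sum_i a_i$ into the non-negative pieces $(1-\sum_i b_i)+\delta$ before applying distributivity is what circumvents this obstruction.
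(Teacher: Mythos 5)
Your proof is correct and follows essentially the same approach as the paper's: reduce to a one-coordinate increase, lift through the affine preorder embedding $\etac_{\B}$ into $\conify(\B)$, split the $\etac_{\B}(\bot)$-coefficient into two non-negative pieces, and invoke $\etac_{\B}(\bot)\leqc\etac_{\B}(x_k)$ together with monotonicity of the cone operations. The only cosmetic differences are that the paper reduces to the last coordinate via permutation invariance rather than interpolants, and recomputes $\etac_{\B}(\sum_i a_i\cdot x_i)$ directly instead of citing Def.\ and Prop.~\ref{prop:bary:leq1}, item~3 — but this is the same formula.
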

\begin{proof}
  This is clear if $n=0$, so let us assume $n \geq 1$.  We claim that
  this function is monotonic in
  $a_n \in [0, 1 - \sum_{i=1}^{n-1} a_i]$.  By invariance under
  permutations (Definition and Proposition~\ref{prop:bary:leq1},
  item~1), it will be monotonic in every $a_i$.  Let
  $f (a_n) \eqdef \sum_{i=1}^n a_i \cdot x_i$, understanding that
  $a_1$, \ldots, $a_{n-1}$, $x_1$, \ldots, $x_n$ are fixed.  We wish
  to show that $f$ is monotonic.  Since $\etac_{\B}$ is a preorder
  embedding in the preordered cone $\conify (\B)$
  (Proposition~\ref{prop:bary:alg:conify:ord}), it suffices to show
  that $\etac_{\B} \circ f$ is monotonic.  Now, and using $\cdot$ for
  scalar multiplication in $\B$ and $.$ for scalar multiplication in
  $\conify (\B)$,
  \ifta
  \begin{align*}
    & \etac_{\B} (f (a_n)) \\
    & = \etac_{\B} \left((\sum_{i=1}^n a_i) \cdot
      \sum_{i=1}^n \frac {a_i} {\sum_{i=1}^n
      a_i} \cdot x_i\right) \\
    & = \etac_{\B} \left(\sum_{i=1}^n \frac {a_i} {\sum_{i=1}^n
      a_i} \cdot x_i +_{\sum_{i=1}^n a_i} \bot\right)
    & \text{by definition of $\cdot$}
    \\
    & = (\sum_{i=1}^n a_i) .
      \etac_{\B} \left(\sum_{i=1}^n \frac {a_i} {\sum_{i=1}^n
      a_i} \cdot x_i\right)
      + (1-\sum_{i=1}^n a_i) .
      \etac_{\B} (\bot)
    & \text{since $\etac_{\B}$ is affine} \\
    & = (\sum_{i=1}^n a_i) . \sum_{i=1}^n \frac {a_i} {\sum_{i=1}^n
      a_i} . \etac_{\B} (x_i)
      + (1-\sum_{i=1}^n a_i) \etac_{\B} (\bot)
    & \text{by Def.\ and Prop.~\ref{prop:bary:1}} \\
    & = \sum_{i=1}^n a_i . \etac_{\B} (x_i)
      + (1-\sum_{i=1}^n a_i) \etac_{\B} (\bot) \\
    & = y + a_n . \etac_{\B} (x_n) + (A - a_n) . \etac_{\B} (\bot),
  \end{align*}
  \else
  \begin{align*}
    & \etac_{\B} (f (a_n)) \\
    & = \etac_{\B} \left((\sum_{i=1}^n a_i) \cdot
      \sum_{i=1}^n \frac {a_i} {\sum_{i=1}^n
      a_i} \cdot x_i\right) \\
    & = \etac_{\B} \left(\sum_{i=1}^n \frac {a_i} {\sum_{i=1}^n
      a_i} \cdot x_i +_{\sum_{i=1}^n a_i} \bot\right) \\
    & \qquad\text{by definition of $\cdot$}
    \\
    & = (\sum_{i=1}^n a_i) .
      \etac_{\B} \left(\sum_{i=1}^n \frac {a_i} {\sum_{i=1}^n
      a_i} \cdot x_i\right)
      + (1-\sum_{i=1}^n a_i) .
      \etac_{\B} (\bot) \\
    & \qquad\text{since $\etac_{\B}$ is affine} \\
    & = (\sum_{i=1}^n a_i) . \sum_{i=1}^n \frac {a_i} {\sum_{i=1}^n
      a_i} . \etac_{\B} (x_i)
      + (1-\sum_{i=1}^n a_i) \etac_{\B} (\bot) \\
    & \qquad\text{by Def.\ and Prop.~\ref{prop:bary:1}} \\
    & = \sum_{i=1}^n a_i . \etac_{\B} (x_i)
      + (1-\sum_{i=1}^n a_i) \etac_{\B} (\bot) \\
    & = y + a_n . \etac_{\B} (x_n) + (A - a_n) . \etac_{\B} (\bot),
  \end{align*}
  \fi
  where $y \eqdef \sum_{i=1}^{n-1} a_i . \etac_{\B} (x_i)$ and
  $A \eqdef 1 - \sum_{i=1}^{n-1} a_i$ are independent of $a_n$.
  Hence, for all $a, a' \in [0, 1 - \sum_{i=1}^{n-1} a_i]$ such that
  $a \leq a'$,
  \ifta
  \begin{align*}
    \etac_{\B} (f (a'))
    & = y + (a'-a) . \etac_{\B} (x_n) + a . \etac_{\B} (x_n) + (A-a')
      . \etac_{\B} (\bot) \\
    & \geq y + (a'-a) . \etac_{\B} (\bot) + a . \etac_{\B} (x_n) + (A-a')
      . \etac_{\B} (\bot),
  \end{align*}
  \else
  \begin{align*}
    \etac_{\B} (f (a'))
    & = y + (a'-a) . \etac_{\B} (x_n) + a . \etac_{\B} (x_n) \\
    & \qquad + (A-a')  . \etac_{\B} (\bot) \\
    & \geq y + (a'-a) . \etac_{\B} (\bot) + a . \etac_{\B} (x_n) \\
    & \qquad + (A-a')  . \etac_{\B} (\bot),
  \end{align*}
  \fi
  since $\bot \leq x_n$ and $\etac_{\B}$ and $(a'-a) . \_$ are
  monotonic; and that is equal to
  $y + a . \etac_{\B} (x_n) + (A-a) . \etac_{\B} (\bot) = \etac_{\B}
  (f (a))$.  \qed
\end{proof}

We topologize $\widetilde\Delta_n$ with the subspace topology induced
by the inclusion in $(\creal)^n$; we remember that $\creal$ has the
Scott topology of its usual ordering.  Since $\creal$ is a continuous
dcpo, $(\creal)^n$ is also a continuous dcpo, and the product topology
on $\creal^n$ coincides with the Scott topology of the componentwise
ordering \cite[Proposition 5.1.54]{JGL-topology}.  It is easy to see
that $\widetilde\Delta_n$ is Scott-closed in $\creal^n$.  It follows
that the topology on $\widetilde\Delta_n$ is also the Scott topology
of the componentwise ordering (the Scott topology on a Scott-closed
subset of a continuous dcpo coincides with the subspace topology
\cite[Exercise 5.1.52]{JGL-topology}).

\begin{proposition}
  \label{prop:bary:leq1:cont}
  Let $\B$ be a pointed semitopological (resp.\ topological)
  barycentric algebra.  For every $n \in \nat$, the function
  $(a_1, \cdots, a_n, x_1, \cdots, x_n) \mapsto \sum_{i=1}^n a_i \cdot
  x_i$ is separately (resp.\ jointly) continuous from
  $\widetilde\Delta_n \times \B \times \cdots \times \B$ to $\B$.
\end{proposition}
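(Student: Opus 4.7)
The plan is to reduce the pointed barycenter to the standard simplex barycenter $\beta_{n+1}$ of Prop.~\ref{prop:bary:1:cont}. Using item~3 of Def.\ and Prop.~\ref{prop:bary:leq1} together with item~4 of Def.\ and Prop.~\ref{prop:bary:1} applied inside the cone $\conify(\B)$, and then injectivity of $\etac_{\B}$, one obtains
\[
\tilde\beta_n(a_1, \ldots, a_n, x_1, \ldots, x_n) = \beta_{n+1}(1-\textstyle\sum a_i, a_1, \ldots, a_n, \bot, x_1, \ldots, x_n).
\]
The same style of computation yields, whenever $a_n < 1$, the recursion
\[
\tilde\beta_n(\vec a, \vec x) = \tilde\beta_{n-1}\bigl(\tfrac{a_1}{1-a_n}, \ldots, \tfrac{a_{n-1}}{1-a_n}, x_1, \ldots, x_{n-1}\bigr) +_{1-a_n} x_n,
\]
the analogue of Def.\ and Prop.~\ref{prop:bary:1} item~1.

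For the topological (joint continuity) case, the affine map $\vec a \mapsto (1-\sum a_i, \vec a)$ is continuous with respect to the \emph{metric} topologies on $\widetilde\Delta_n$ and $\Delta_{n+1}$; composing with Prop.~\ref{prop:bary:1:cont} shows that $\tilde\beta_n$ is jointly continuous from $\widetilde\Delta_n$ (with the metric topology) times $\B^n$ to $\B$. We then upgrade to the Scott topology on $\widetilde\Delta_n$ using monotonicity of $\tilde\beta_n$ in $\vec a$ from Lemma~\ref{lemma:bary:leq1:mono}: given $(\vec a^0, \vec x^0) \in \tilde\beta_n^{-1}(W)$ for $W \subseteq \B$ open, metric continuity furnishes $\delta > 0$ and an open $U \ni \vec x^0$ sending $(B(\vec a^0, \delta) \cap \widetilde\Delta_n) \times U$ into $W$. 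Then $V = \{\vec a \in \widetilde\Delta_n : a_i > a^0_i - \delta/2 \text{ for all } i\}$ is a Scott-open neighborhood of $\vec a^0$, and for any $\vec a \in V$ the truncation $a'_i := \min(a_i, a^0_i + \delta/2)$ lies in $\widetilde\Delta_n \cap B(\vec a^0, \delta)$ (because $\sum a'_i \leq \sum a_i \leq 1$) and satisfies $\vec a' \leq \vec a$ componentwise. Metric continuity at $\vec a'$, monotonicity of $\tilde\beta_n$ in $\vec a$, and upward-closedness of $W$ then give $\tilde\beta_n(\vec a, \vec x) \in W$.

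For the semitopological (separate continuity) case, we proceed by induction on $n$. The bases $n = 0$ (constant $\bot$) and $n = 1$ (scalar multiplication, by Lemma~\ref{lemma:bary:alg:pointed}) are immediate. For the inductive step we appeal to the recursion: for continuity in $x_j$, permute to put $j = n$, so that only the outer map $y \mapsto y +_{1-a_n} x_n$ depends on $x_n$, which is separately continuous by hypothesis (the special case $a_n = 1$ gives $\tilde\beta_n = x_n$ directly). For continuity in $a_j$, permute so that $j = 1$ and choose $n \geq 2$ with $a_n < 1$; the degenerate configurations where all other $a_i$ are $0$ (reducing to $\tilde\beta_n = a_1 \cdot x_1$, continuous by Lemma~\ref{lemma:bary:alg:pointed}) or where some other $a_i = 1$ (forcing $a_1 = 0$) are handled separately. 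Otherwise the rescaling $a_1 \mapsto a_1/(1-a_n)$ is Scott-continuous, the inductive hypothesis gives separate continuity of the inner $\tilde\beta_{n-1}$ in its first $a$-argument, and the outer $y \mapsto y +_{1-a_n} x_n$ has fixed weight. The central technical obstacle is the mismatch between the Scott topology on $\widetilde\Delta_n$—whose open sets only impose lower bounds on the $a_i$—and the metric topology required to invoke Prop.~\ref{prop:bary:1:cont}; it is resolved in the topological case by combining monotonicity in $\vec a$ with the global constraint $\sum a_i \leq 1$, which permits truncation from above without leaving $\widetilde\Delta_n$, and in the semitopological case by keeping the scaling factor $1 - a_n$ fixed throughout the induction.
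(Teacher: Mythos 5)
Your topological-case argument is correct and takes a genuinely different route from the paper. You reduce $\tilde\beta_n$ to the $\Delta_{n+1}$-barycenter map $\beta_{n+1}$ of Proposition~\ref{prop:bary:1:cont} via
$\tilde\beta_n(\vec a,\vec x)=\beta_{n+1}(1-\sum a_i,\vec a,\bot,\vec x)$, obtain joint \emph{metric} continuity directly, and then upgrade to Scott continuity on $\widetilde\Delta_n$ by a truncation trick combining Lemma~\ref{lemma:bary:leq1:mono} with the observation that $a'_i \eqdef \min(a_i, a^0_i+\delta/2)$ still lies in $\widetilde\Delta_n$ because lowering coordinates cannot violate $\sum a_i\leq 1$. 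The paper proceeds differently: it first establishes the semitopological case (separate continuity in $\vec a$ and in each $x_i$), and only then invokes Ershov's observation, using the fact that $\widetilde\Delta_n$ is a continuous dcpo. Your truncation argument avoids Ershov entirely and, arguably, makes the role of monotonicity on $\widetilde\Delta_n$ more transparent.

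Your semitopological-case argument, however, has a genuine gap. The statement asserts separate continuity from the \emph{product} $\widetilde\Delta_n\times\B\times\cdots\times\B$, where $\widetilde\Delta_n$ is a single factor; as the paper stresses in the sentence following the proposition, separate continuity in that factor means \emph{joint} continuity in $(a_1,\ldots,a_n)\in\widetilde\Delta_n$ with all $x_i$ fixed, not just continuity in each $a_j$ with both the other $a_k$'s and the $x_i$'s fixed. Your induction (``for continuity in $a_j$, permute so that $j=1$\ldots'') proves only the latter, weaker claim: at each step, the outer operation $y\mapsto y+_{1-a_n}x_n$ in the recursion is applied with $a_n$ \emph{fixed}, so nothing you propagate through the induction can see two $a$-coordinates move simultaneously, and in the semitopological case one cannot retrieve this from joint continuity of $+$ (which is exactly what is missing). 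The paper closes precisely this gap ``by hand'': fixing $\vec x$, it uses Lemma~\ref{lemma:bary:leq1:mono} to see that $h^{-1}(V)$ is upwards-closed, and then a Rudin-style directed-family argument upgrading separate Scott continuity in each coordinate to joint Scott continuity, a step your proof does not contain and which your $\beta_{n+1}$-reduction cannot substitute for, since Proposition~\ref{prop:bary:1:cont} is only available for \emph{topological} $\B$. To repair the proposal you would need to supply an analogue of that directed-family upgrade (or a semitopological analogue of your truncation step applied with $\vec x$ frozen, after independently proving metric joint continuity of $\vec a\mapsto\tilde\beta_n(\vec a,\vec x)$ for semitopological $\B$, which is itself nontrivial because $+_a$ is only separately continuous there).
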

Let $f$ be this function.  The semitopological case needs some
clarification: in this case, Proposition~\ref{prop:bary:leq1:cont}
says that
$f (a_1, \cdots, \allowbreak a_n, \allowbreak x_1, \cdots, x_n)$ is
separately continuous in the tuple $(a_1, \cdots, a_n)$ (jointly in
$(a_1, \allowbreak \cdots, \allowbreak a_n) \in \widetilde\Delta_n$,
hence also separately in each $a_i$) and in the points $x_1$, \ldots,
$x_n$.

\begin{proof}
  This is obvious if $n=0$, so we assume $n \geq 1$ in the sequel.  By
  Corollary~\ref{corl:bary:alg:pointed:embed}, $\B$ is embeddable.  By
  Theorem~\ref{thm:conify:semitop}, item~4, we may take $\etac_{\B}$
  as an affine topological embedding in the semitopological cone
  $\C \eqdef \conify (\B)$.  Beware that $\etac_{\B}$ is not linear in
  general, equivalently not $\Iu$-homogeneous: we will keep the notation
  $\cdot$ for scalar multiplication in $\B$, and we will use $.$ for
  scalar multiplication in $\C$, and we will remember that
  $\etac_{\B} (a \cdot x) \neq a . \etac_{\B} (x)$ in general.  For
  all $(a_1, \cdots, a_n) \in \widetilde\Delta_n$ and
  $x_1, \cdots, x_n \in \B$,
  $\etac_{\B} (f (a_1, \cdots, a_n, x_1, \cdots, \allowbreak x_n))$ is
  equal to $\etac_{\B} (\bot)$ if $a_1=\cdots=a_n=0$.  Otherwise,
  \ifta
  \begin{align*}
    & \etac_{\B} (f (a_1, \cdots, a_n, x_1, \cdots, x_n)) \\
    & = \etac_{\B} \left((\sum_{i=1}^n a_i) \cdot \sum_{i=1}^n \frac {a_i}
      {\sum_{i=1}^n a_i} \cdot x_i \right)
    & \text{by Def.\ and Prop.~\ref{prop:bary:leq1}} \\
    & = \etac_{\B} \left(\sum_{i=1}^n \frac {a_i}
      {\sum_{i=1}^n a_i} \cdot x_i +_{\sum_{i=1}^n a_i} \bot \right)
    & \text{by definition of $\cdot$} \\
    & = (\sum_{i=1}^n a_i) . \etac_{\B} \left(\sum_{i=1}^n \frac {a_i}
      {\sum_{i=1}^n a_i} \cdot x_i\right)
      + (1-\sum_{i=1}^n a_i) . \etac_{\B} (\bot)
    & \text{since $\etac_{\B}$ is affine} \\
    & = (\sum_{i=1}^n a_i) . \sum_{i=1}^n \frac {a_i}
      {\sum_{i=1}^n a_i} . \etac_{\B} (x_i)
      + (1-\sum_{i=1}^n a_i) . \etac_{\B} (\bot)
    & \text{by Def.\ and Prop.~\ref{prop:bary:1}} \\
    & = \sum_{i=1}^n a_i . \etac_{\B} (x_i)
      + (1-\sum_{i=1}^n a_i) . \etac_{\B} (\bot).
  \end{align*}
  \else
  \begin{align*}
    & \etac_{\B} (f (a_1, \cdots, a_n, x_1, \cdots, x_n)) \\
    & = \etac_{\B} \left((\sum_{i=1}^n a_i) \cdot \sum_{i=1}^n \frac {a_i}
      {\sum_{i=1}^n a_i} \cdot x_i \right) \\
    & \qquad \text{by Def.\ and Prop.~\ref{prop:bary:leq1}} \\
    & = \etac_{\B} \left(\sum_{i=1}^n \frac {a_i}
      {\sum_{i=1}^n a_i} \cdot x_i +_{\sum_{i=1}^n a_i} \bot \right) \\
    & \qquad \text{by definition of $\cdot$} \\
  \end{align*}
  \begin{align*}
    & = (\sum_{i=1}^n a_i) . \etac_{\B} \left(\sum_{i=1}^n \frac {a_i}
      {\sum_{i=1}^n a_i} \cdot x_i\right)
      + (1-\sum_{i=1}^n a_i) . \etac_{\B} (\bot) \\
    & \qquad \text{since $\etac_{\B}$ is affine} \\
    & = (\sum_{i=1}^n a_i) . \sum_{i=1}^n \frac {a_i}
      {\sum_{i=1}^n a_i} . \etac_{\B} (x_i)
      + (1-\sum_{i=1}^n a_i) . \etac_{\B} (\bot) \\
    & \qquad \text{by Def.\ and Prop.~\ref{prop:bary:1}} \\
    & = \sum_{i=1}^n a_i . \etac_{\B} (x_i)
      + (1-\sum_{i=1}^n a_i) . \etac_{\B} (\bot).
  \end{align*}
  \fi
  The equality of the first and last terms continues to hold when
  $a_1=\cdots=a_n=0$.  This formula shows that $\etac_{\B} \circ f$ is
  separately continuous in $x_1$, \ldots, $x_n$, and jointly
  continuous in $x_1$, \ldots, $x_n$ if $\B$ is topological.

  Fixing $a_1$, \ldots, $a_{n-1}$, $x_1$, \ldots, $x_n$,
  $\etac_{\B} (f (a_1, \cdots, a_n, x_1, \cdots, x_n))$ is a function
  $g (a_n)$ of $a_n$, which we can write as
  $y + a_n . \etac_{\B} (x_n) + (A-a_n) . \etac_{\B} (\bot)$, where
  $y \eqdef \sum_{i=1}^{n-1} a_i . \etac_{\B} (x_i)$ and
  $A \eqdef 1-\sum_{i=1}^{n-1} a_i$; $a_n$ varies in $[0, A]$.  If
  $A=0$, $g$ is vacuously continuous.  Otherwise,
  $g (a_n) = y + A . (\etac_{\B} (x_n) +_{a_n/A} \etac_{\B} (\bot))$,
  and that is continuous in $a_n \in [0, A]$ (with its usual metric
  topology) since every cone is semitopological as a preordered
  barycentric algebra (see Lemma~\ref{lemma:bary:in:cone}).
  Additionally, $g$ is monotonic, since
  $f (a_1, \cdots, a_n, x_1, \cdots, x_n)$ is monotonic in
  $(a_1, \cdots, a_n)$, by Lemma~\ref{lemma:bary:leq1:mono}, and since
  $\etac_{\B}$ is continuous hence monotonic.  It follows that $g$ is
  continuous from $[0, A]_\sigma$ to $\C$: for every open subset $V$
  of $\C$, $g^{-1} (V)$ is an upwards-closed, open subset of $[0, A]$,
  hence a Scott-open subset of $[0, A]$.

  We now fix $x_1$, \ldots, $x_n$ only, and we consider the function
  $h \colon (a_1, \cdots, a_n) \mapsto \etac_{\B} (f (a_1, \cdots,
  a_n, x_1, \cdots, x_n))$ from $\widetilde\Delta_n$ to $\C$.  By
  invariance under permutations (Definition and
  Proposition~\ref{prop:bary:leq1}, item~1), $h$ is separately
  continuous in each $a_i$, varying in
  $[0, 1-\sum_{\substack{1 \leq j \leq n\\j\neq i}} a_j]$.  We claim
  that it is jointly continuous.  It is tempting to appeal to Ershov's
  observation, but the domain of $h$ is not a Cartesian product.
  Instead, we prove it by hand.  Let $V$ be an open subset of $\C$.
  Since $V$ is upwards-closed, by Lemma~\ref{lemma:bary:leq1:mono}
  $h^{-1} (V)$ is upwards-closed in $\widetilde\Delta_n$.  Let
  ${(\vec a_i)}_{i \in I}$ be a directed family with supremum $\vec a$
  in $\widetilde\Delta_n$, and let us assume that
  $\vec a \in h^{-1} (V)$.  We write $\vec a$ as $(a_1, \cdots, a_n)$
  and $\vec a_i$ as $(a_{i1}, \cdots, a_{in})$.  For each
  $j \in \{0, 1, \cdots, n\}$, let
  $\vec a_i [j] \eqdef (a_{i1}, \cdots, a_{ij}, a_{j+1}, \cdots,
  a_n)$.  Then ${(\vec a_i [j])}_{i \in I}$ is a directed family in
  $\widetilde\Delta_n$ whose supremum is $\vec a$, and we claim that
  $\vec a_i [j] \in h^{-1} (V)$ for some $i \in I$.  This is proved by
  induction on $j$.  This is clear if $j=0$, since then
  $\vec a_i [j] = \vec a$ for every $i \in I$, and
  $\vec a \in h^{-1} (V)$.  In the induction step, we assume that
  $\vec a_i [j] \in h^{-1} (V)$, where $0 < j < n$.  The element
  $a_{j+1}$ is the supremum of the directed family
  ${(a_{i'(j+1)})}_{i' \in I}$, and since $h$ is continuous in its
  $(j+1)$st argument, there is an $i' \in I$ such that
  $(a_{i1}, \cdots, a_{ij}, a_{i'(j+1)}, a_{j+2}, \cdots, a_n)$ is in
  $h^{-1} (V)$.  By directedness, there is an index $i'' \in I$ such
  that $(a_{i1}, \cdots, a_{in})$ and $(a_{i'1}, \cdots, a_{i'n})$ are
  both less than or equal to $(a_{i''1}, \cdots, a_{i''n})$.  Since
  $h^{-1} (V)$ is upwards-closed,
  $(a_{i''1}, \cdots, a_{i''j}, a_{i''(j+1)}, a_{j+2}, \cdots, a_n)
  \in h^{-1} (V)$, namely $\vec a_{i''} [j+1] \in h^{-1} (V)$.  The
  induction is complete.  In particular, for $j \eqdef n$, there is an
  $i \in I$ such that $\vec a_i [n] \in h^{-1} (V)$.  But
  $\vec a_i [n] = \vec a_i$, so $\vec a_i \in h^{-1} (V)$.  This shows
  that $h^{-1} (V)$ is (Scott-)open in $\widetilde\Delta_n$, and
  therefore $h$ is continuous.

  When $\B$ is semitopological, we have just shown that
  $\etac_{\B} \circ f$ is separately continuous in
  $(a_1, \cdots, a_n)$, $x_1$, \ldots, $x_n$.  Since $\etac_{\B}$ is a
  topological embedding, $f$ is itself separately continuous.
  
  When $\B$ is topological, we have shown more: $\etac_{\B} \circ f$
  is separately continuous as a function of the tuple
  $(a_1, \cdots, a_n) \in \widetilde\Delta_n$ and of the tuple
  $(x_1, \cdots, x_n) \in \B^n$ (jointly in $(x_1, \cdots, x_n)$).
  Since $\widetilde\Delta_n$ is a continuous dcpo,
  $\etac_{\B} \circ f$ is jointly continuous, thanks to Ershov's
  observation.  Therefore $f$ is jointly continuous, using the fact
  that $\etac_{\B}$ is a topological embedding.  \qed
\end{proof}

\section{Convexity}
\label{sec:convexity}

We have already defined convex and concave sets, but only in cones.
The definition in barycentric algebras is the same.
\begin{definition}[Convex, concave]
  \label{defn:convex}
  A subset $A$ of a barycentric algebra $\B$ is \emph{convex} if and
  only if for all $x, y \in A$ and for every $a \in [0, 1]$, $x +_a y$
  is in $A$.  It is \emph{concave} if and only if $\B \diff A$ is
  convex.
\end{definition}

The theory of convex subsets of barycentric algebras to come in this
section largely parallels the theory of convex subsets of cones
\cite[Section~4]{Keimel:topcones2}.

\begin{lemma}
  \label{lemma:convex:inter}
  All intersections of convex subsets of a barycentric algebra $\B$ are
  convex.  All directed unions of convex subsets of $\B$ are convex.
\end{lemma}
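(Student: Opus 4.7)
The plan is to verify each closure property directly from the definition of convexity (Definition~\ref{defn:convex}), without invoking any machinery such as $\etac_{\B}$ or the cone $\conify (\B)$. Both parts amount to a one-line chase; the only subtlety lies in being careful with the directed union.

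For intersections, I would take an arbitrary family ${(A_i)}_{i \in I}$ of convex subsets of $\B$ and set $A \eqdef \bigcap_{i \in I} A_i$. Given $x, y \in A$ and $a \in [0, 1]$, for each $i \in I$ both $x$ and $y$ lie in $A_i$, so by convexity of $A_i$ we have $x +_a y \in A_i$; intersecting over $i \in I$ gives $x +_a y \in A$, which is the claim.

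For the directed union part, I would take a directed family ${(A_i)}_{i \in I}$ of convex subsets (so that for all $i, j \in I$ there exists $k \in I$ with $A_i \cup A_j \subseteq A_k$), and set $A \eqdef \bigcup_{i \in I} A_i$. Given $x, y \in A$ and $a \in [0, 1]$, pick $i, j \in I$ with $x \in A_i$ and $y \in A_j$, then use directedness to find $k \in I$ such that $A_i \cup A_j \subseteq A_k$; then $x, y \in A_k$, and convexity of $A_k$ yields $x +_a y \in A_k \subseteq A$.

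There is no real obstacle here. The only thing worth noting is that directedness is essential in the second statement (an arbitrary union of convex sets is obviously not convex, e.g.\ two disjoint singletons in $\Val_1 X$), and the argument uses directedness exactly once, to merge the two indices witnessing membership of $x$ and $y$.
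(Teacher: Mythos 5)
Your proof is correct and matches the paper's argument essentially verbatim: the paper declares the intersection case immediate (which you spell out) and proves the directed-union case exactly as you do, by merging the two witnessing indices via directedness.
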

\begin{proof}
  The first claim is immediate.  For the second claim, let us consider
  a directed family ${(A_i)}_{i \in I}$ of convex subsets of $\B$, and
  $A \eqdef \bigcup_{i \in I} A_i$.  Let us take any two points
  $x, y \in A$.  Then $x$ is in some $A_i$ and $y$ is in some $A_j$.
  By directedness, there is a $k \in I$ such that $A_k$ contains both
  $A_i$ and $A_j$.  Therefore $x$ and $y$ are in $A_k$.  For every
  $a \in [0, 1]$, $x +_a y$ is in $A_k$ since $A_k$ is convex, hence
  in $A$.   \qed
\end{proof}

\subsection{Convex hulls}
\label{sec:convex-hulls}

We recall the notion of barycenters $\sum_{i=1}^n a_i \cdot x_i$ from
Definition and Proposition~\ref{prop:bary:1}.  We also recall that
$\Delta_n$ is the standard simplex
$\{\vec \alpha \in \Rp^n \mid \sum_{i=1}^n \alpha_i=1\}$.
\begin{deflem}[Convex hull]
  \label{deflem:convex:simpleval}
  Let $\B$ be a barycentric algebra.  For every subset $A$ of $\B$,
  there is a smallest convex subset of $\B$ containing $A$.  This is
  called the \emph{convex hull} $\conv A$ of $A$, and is exactly the
  set of barycenters $\sum_{i=1}^n a_i \cdot x_i$ where $n \geq 1$,
  $(a_1, \cdots, a_n) \in \Delta_n$ and each $x_i$ is in $A$.
\end{deflem}
\begin{proof}
  Let $B$ be the indicated set of barycenters.  Let us show that $B$
  is a convex set.  We consider two elements of $B$.  Without loss of
  generality, we can write them as
  $x \eqdef \sum_{i=1}^n a_i \cdot x_i$ and
  $y \eqdef \sum_{i=1}^n b_i \cdot x_i$ with $a_i, b_i \geq 0$,
  $\sum_{i=1}^n a_i = \sum_{i=1}^n b_i = 1$, using the same sequence
  of points $x_i \in A$, $1\leq i\leq n$, since we are allowed to use
  zero as weights $a_i$ or $b_i$.  (We implicitly reason in a cone in
  which $\B$ embeds through an injective affine map, typically
  $\etac_{\B}$.)  For every $a \in [0, 1]$, $x +_a y$ is equal to
  $\sum_{i=1}^n (aa_i + (1-a)b_i) \cdot x_i$.  Since
  $aa_i + (1-a)b_i \geq 0$ and
  $\sum_{i=1}^n (aa_i + (1-a)b_i) = a \sum_{i=1}^n a_i + (1-a)
  \sum_{i=1}^n b_i = a + (1-a) = 1$, $a \cdot x + (1-a) \cdot y$ is in
  $B$.

  Let us now show that $B$ is the smallest convex set containing $A$.
  To that end, we consider any convex set $E$ containing $A$, and a
  barycenter $x \eqdef \sum_{i=1}^n a_i \cdot x_i$ where every $x_i$
  is in $A$, $n \geq 1$ and $(a_1, \cdots, a_n) \in \Delta_n$.  We
  show that $x$ is in $E$ by induction on $n \in \nat$, using the
  inductive formula of Definition and Proposition~\ref{prop:bary:1},
  item~1.  If $a_n=1$ (a case that includes the base case $n=1$),
  $x=x_n$ is in $A$, hence in $E$.  Otherwise,
  $x = (\sum_{i=1}^{n-1} \frac {a_i} {1-a_n} x_i) +_{1-a_n} x_n$.  By
  induction hypothesis
  $x' \eqdef \sum_{i=1}^{n-1} \frac {a_i} {1-a_n} x_i$ is in $E$;
  $x_n$ is in $A$ hence in $E$, so $x \eqdef x' +_{1-a_n} x_n$ is in
  $E$, which is convex.  \qed
\end{proof}

\begin{lemma}
  \label{lemma:conv:img}
  For every affine map $f \colon \Alg \to \B$ between barycentric
  algebras, for every subset $A$ of $\Alg$,
  $f [\conv A] = \conv {f [A]}$.
\end{lemma}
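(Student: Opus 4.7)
The plan is to use the explicit description of $\conv A$ as the set of barycenters $\sum_{i=1}^n a_i \cdot x_i$ with $x_i \in A$ and $(a_1, \ldots, a_n) \in \Delta_n$, provided by Def.\ and Lemma~\ref{deflem:convex:simpleval}, together with the fact (Def.\ and Prop.~\ref{prop:bary:1}, item~4) that affine maps commute with barycenters.

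For the inclusion $f[\conv A] \subseteq \conv{f[A]}$: pick an arbitrary element of $f[\conv A]$; by Def.\ and Lemma~\ref{deflem:convex:simpleval} it has the form $f(\sum_{i=1}^n a_i \cdot x_i)$ with $n \geq 1$, $(a_1, \ldots, a_n) \in \Delta_n$ and each $x_i \in A$. Applying Def.\ and Prop.~\ref{prop:bary:1}, item~4, this equals $\sum_{i=1}^n a_i \cdot f(x_i)$, which is a barycenter of points $f(x_i) \in f[A]$, hence an element of $\conv{f[A]}$ by Def.\ and Lemma~\ref{deflem:convex:simpleval} again.

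For the reverse inclusion $\conv{f[A]} \subseteq f[\conv A]$: take an arbitrary element of $\conv{f[A]}$, which by the same characterization can be written $\sum_{i=1}^n a_i \cdot y_i$ with $y_i \in f[A]$. For each $i$, choose a preimage $x_i \in A$ with $f(x_i) = y_i$. The barycenter $z \eqdef \sum_{i=1}^n a_i \cdot x_i$ lies in $\conv A$, and by Def.\ and Prop.~\ref{prop:bary:1}, item~4, $f(z) = \sum_{i=1}^n a_i \cdot f(x_i) = \sum_{i=1}^n a_i \cdot y_i$, so the chosen element is in $f[\conv A]$.

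There is no real obstacle: both directions reduce immediately to the explicit barycenter description and the affine preservation of barycenters already established. The only mild subtlety is the use of choice in picking preimages $x_i$ for each $y_i$, which is harmless since only finitely many choices are needed for each barycenter.
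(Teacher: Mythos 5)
Your proof is correct. For the inclusion $f[\conv A] \subseteq \conv{f[A]}$ you argue exactly as the paper does, via the explicit barycenter description of $\conv A$ (Def.\ and Lemma~\ref{deflem:convex:simpleval}) and the preservation of barycenters by affine maps (Def.\ and Prop.~\ref{prop:bary:1}, item~4). For the reverse inclusion you diverge from the paper: you again use the explicit barycenter description, lifting each $y_i \in f[A]$ to a preimage $x_i \in A$ and pushing the barycenter $\sum a_i \cdot x_i$ forward through $f$. The paper instead uses the universal characterization of $\conv{}$ as the smallest convex superset: it observes that $f[\conv A]$ is convex (the image of a convex set under an affine map is convex), contains $f[A]$, and therefore contains $\conv{f[A]}$ by minimality. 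Your version is more computational and needs (finite) choice of preimages, as you note; the paper's version is slightly more abstract and avoids unpacking barycenters a second time, at the cost of a small auxiliary fact (affine images of convex sets are convex) that the paper leaves implicit. Both arguments are complete and short; neither has a gap.
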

\begin{proof}
  Every element of $\conv A$ is of the form
  $\sum_{i=1}^n a_i \cdot x_i$ with $n \geq 1$,
  $(a_1, \cdots, a_n) \in A$ and $x_1, \cdots, x_n \in A$, by
  Definition and Lemma~\ref{deflem:convex:simpleval}.  By Definition
  and Proposition~\ref{prop:bary:1}, item~4, and since $f$ is affine,
  $f (\sum_{i=1}^n a_i \cdot x_i) = \sum_{i=1}^n a_i \cdot f (x_i)$ is
  therefore in $\conv {f [A]}$.  Conversely, $f [\conv A]$ is convex
  since $\conv A$ is convex and $f$ is affine, it contains $f [A]$,
  hence it contains the smallest convex set that contains $f [A]$,
  which is $\conv {f [A]}$ by definition.  \qed
\end{proof}

\begin{lemma}
  \label{lemma:conv:union}
  Let $\B$ be a barycentric algebra, and $A_1$, \ldots, $A_n$ be
  finitely many non-empty convex subsets of $\B$.  The set of all
  barycenters $\sum_{i=1}^n a_i \cdot x_i$ where $x_i \in A_i$ for
  each $i$, $n \geq 1$ and $(a_1, \cdots, a_n) \in \Delta_n$, is equal
  to $\conv {(A_1 \cup \cdots \cup A_n)}$.
\end{lemma}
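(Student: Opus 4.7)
Let $B$ denote the set of barycenters $\sum_{i=1}^n a_i \cdot x_i$ with $x_i \in A_i$ and $(a_1,\cdots,a_n) \in \Delta_n$ described in the statement. The plan is to show two inclusions. The inclusion $B \subseteq \conv(A_1 \cup \cdots \cup A_n)$ is immediate from Definition and Lemma~\ref{deflem:convex:simpleval}, since each $x_i$ lies in $A_1 \cup \cdots \cup A_n$.

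For the reverse inclusion $\conv(A_1 \cup \cdots \cup A_n) \subseteq B$, by minimality of the convex hull it is enough to show that (i) $A_1 \cup \cdots \cup A_n \subseteq B$, and (ii) $B$ is convex. For (i), fix $i \in \{1,\cdots,n\}$ and $x \in A_i$. Using the non-emptiness assumption, pick an arbitrary element $x_j$ in each $A_j$ for $j \neq i$, and set $x_i \eqdef x$. Let $a_i \eqdef 1$ and $a_j \eqdef 0$ for $j \neq i$. Then $(a_1, \cdots, a_n) \in \Delta_n$, and iterating item~3 of Definition and Proposition~\ref{prop:bary:1} to drop the indices with weight zero (combined with item~2 to permute them so that $x_i$ ends up in the last position), we obtain $\sum_{j=1}^n a_j \cdot x_j = x_i = x$, so $x \in B$.

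For (ii), take two elements $x \eqdef \sum_{i=1}^n a_i \cdot x_i$ and $y \eqdef \sum_{i=1}^n b_i \cdot y_i$ of $B$, with $x_i, y_i \in A_i$, and let $a \in [0,1]$. Set $c_i \eqdef a a_i + (1-a) b_i$; then $c_i \geq 0$ and $\sum_{i=1}^n c_i = 1$, so $(c_1, \cdots, c_n) \in \Delta_n$. For each index $i$ with $c_i \neq 0$, define
\[
z_i \eqdef \frac{a a_i}{c_i} \cdot x_i +_{\frac{a a_i}{c_i}} \text{(nothing; more precisely)} \,\,\sum \text{in } A_i,
\]
that is, $z_i$ is the barycenter in $A_i$ with weights $\tfrac{a a_i}{c_i}$ on $x_i$ and $\tfrac{(1-a) b_i}{c_i}$ on $y_i$; this lies in $A_i$ by convexity of $A_i$. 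For indices $i$ with $c_i = 0$, pick an arbitrary $z_i \in A_i$ using non-emptiness. To conclude $x +_a y = \sum_{i=1}^n c_i \cdot z_i$, I would transport the computation to the free cone $\conify(\B)$ via the affine injection $\etac_{\B}$ of Proposition~\ref{prop:bary:alg:conify:ord}: by Definition and Proposition~\ref{prop:bary:1}.4 applied to $\etac_{\B}$, both sides map to $\sum_{i=1}^n (a a_i \cdot \etac_{\B}(x_i) + (1-a) b_i \cdot \etac_{\B}(y_i))$ in $\conify(\B)$ (using ordinary cone arithmetic and splitting each $c_i z_i$ back into its two summands when $c_i \neq 0$, and noting that $c_i = 0$ contributes zero on both sides). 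Injectivity of $\etac_{\B}$ gives $x +_a y \in B$, proving convexity.

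The proof is essentially routine; the only subtle points are the need for non-emptiness of each $A_j$, which is used both in (i) to fill in dummy components for indices with weight zero, and in (ii) to choose $z_i$ when $c_i = 0$.
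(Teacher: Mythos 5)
Your proposal is correct and follows essentially the same route as the paper's proof: both argue by showing $B$ is convex via the decomposition $c_i \eqdef aa_i+(1-a)b_i$, $z_i \eqdef x_i +_{aa_i/c_i} y_i$ (for $c_i \neq 0$, with the $c_i = 0$ case handled separately using non-emptiness), all computed inside $\conify(\B)$ through the injective affine map $\etac_{\B}$, and then conclude by minimality of the convex hull. You are a bit more explicit about the inclusion $A_i \subseteq B$, where the paper merely asserts it; and note that the middle of your displayed definition of $z_i$ is garbled ("\text{(nothing; more precisely)}"), though the prose immediately after correctly states the intended meaning.
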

\begin{proof}
  Let $A$ be this set of barycenters.  We consider any two barycenters
  $\sum_{i=1}^n a_i \cdot x_i$ and $\sum_{i=1}^n b_i \cdot y_i$, where
  $x_i, y_i \in A_i$.  As usual, we implicitly reason in a cone in
  which $\B$ embeds through an injective affine map, typically
  $\etac_{\B}$.  For every $a \in {]0, 1[}$,
  $\sum_{i=1}^n a_i \cdot x_i +_a \sum_{i=1}^n b_i \cdot y_i =
  \sum_{i=1}^n aa_i \cdot x_i + (1-a)b_i \cdot y_i$.  For each $i$,
  let $c_i \eqdef {aa_i+(1-a)b_i}$.  If $c_i \neq 0$, we can write
  $aa_i \cdot x_i + (1-a)b_i \cdot y_i$ as $c_i$ times the point
  $z_i \eqdef \frac {aa_i} {c_i} x_i + \frac {(1-a) b_i} {c_i} y_i =
  x_i +_{\frac {aa_i} {c_i}} y_i$, which is in $A_i$ since $A_i$ is
  convex.  If $c_i = 0$, we define $z_i$ as $x_i$ or as $y_i$: again
  $z_i$ is in $A_i$, and since $c_i=0$, $aa_i=(1-a)b_i=0$, so
  $aa_i x_i + (1-a)b_i y_i = 0 = c_i \cdot z_i$ again.  Hence
  $\sum_{i=1}^n a_i \cdot x_i +_a \sum_{i=1}^n b_i \cdot y_i =
  \sum_{i=1}^n c_i \cdot z_i$.  Since $c_i \geq 0$ and
  $\sum_{i=1}^n c_i = a \sum_{i=1}^n a_i + (1-a) \sum_{i=1}^n b_i = a
  + (1-a) = 1$, $\sum_{i=1}^n c_i \cdot z_i$ is in $A$.

  This shows that $A$ is convex.  Since $A$ contains each $A_i$, it
  also contains $\conv {(A_1 \cup \cdots \cup A_n)}$.  Conversely,
  every point of $A$ is a barycenter of points from
  $A_1 \cup \cdots \cup A_n$, hence is in
  $\conv {(A_1 \cup \cdots \cup A_n)}$ by Definition and
  Lemma~\ref{deflem:convex:simpleval}; whence
  $A = \conv {(A_1 \cup \cdots \cup A_n)}$.  \qed
\end{proof}

\subsection{Closed convex hulls}
\label{sec:closed-convex-hulls}

\begin{defprop}[Closed convex hull]
  \label{defprop:clconv}
  In a semitopological barycentric algebra $\B$, the closure of every
  convex subset is convex.

  For every subset $A$ of $\B$, there is a smallest closed convex
  subset containing $A$.  This is called the \emph{closed convex hull}
  $\clconv A$ of $A$, and is equal to $cl (\conv A)$.
\end{defprop}
\begin{proof}
  Let $A$ be a convex subset of $\B$, and let us imagine that there
  were two points $x, y \in cl (A)$ and $a \in [0, 1]$ such that
  $x +_a y$ is not in $cl (A)$.  Let $f \colon \B \times \B \to \B$ be
  the map defined by $f (x', y') \eqdef x' +_a y'$.  This is a
  separately continuous map, and $f (x, y)$ is in the (open)
  complement $V$ of $cl (A)$.  Since $f$ is continuous in its second
  argument, $f (x, \_)^{-1} (V)$ is an open neighborhood of $y$, hence
  intersects $cl (A)$.  It must therefore also intersect $A$, say at
  $y'$.  Then $f (x, y')$ is in $V$.  Since $f$ is continuous in its
  first argument, $f (\_, y')^{-1} (V)$ is an open neighborhood of
  $x$, hence intersects $cl (A)$, hence also $A$, say at $x'$.  We
  have obtained that $f (x', y')$ is in $V$.  This is impossible,
  since $f (x', y')$ is in $A$, owing to the fact that $A$ is convex.

  It follows that, for every subset $A$ of $\B$, $cl (\conv A)$ is not
  only closed but also convex.  Every closed convex subset $B$ of $\B$
  containing $A$ must contain $\conv A$ (since $B$ is convex), and
  therefore also $cl (\conv A)$ (since $B$ is closed).  This shows
  that $cl (\conv A)$ is the smallest closed convex set containing
  $A$.  \qed
\end{proof}

\begin{lemma}
  \label{lemma:clconv:img}
  For every affine continuous map $f \colon \Alg \to \B$ between
  semitopological barycentric algebras, for every subset $A$ of
  $\Alg$, $f [\clconv A] \subseteq \clconv {f [A]}$.
\end{lemma}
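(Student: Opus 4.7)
The plan is to chain together three facts already at hand. First, unfold the definition of the closed convex hull from Definition and Proposition~\ref{defprop:clconv}: $\clconv A = cl(\conv A)$, and similarly $\clconv{f[A]} = cl(\conv{f[A]})$. So the inclusion to prove becomes $f[cl(\conv A)] \subseteq cl(\conv{f[A]})$.

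Next, I would invoke Lemma~\ref{lemma:conv:img} (the analogous statement for bare convex hulls), which gives $f[\conv A] = \conv{f[A]}$. This reduces the goal to $f[cl(\conv A)] \subseteq cl(f[\conv A])$, which is a purely topological statement: the image of a closure under a continuous map is contained in the closure of the image. Since $f$ is assumed continuous by hypothesis, this is immediate (for every $x \in cl(\conv A)$, every open neighborhood of $f(x)$ pulls back to an open neighborhood of $x$, which meets $\conv A$, so the neighborhood meets $f[\conv A]$).

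There is no serious obstacle here; the statement is a bookkeeping corollary of the fact that $\clconv = cl \circ \conv$ and that both $cl$ and $\conv$ behave well under continuous affine maps, respectively. The inclusion is generally strict because $f$ need not be a closed map.
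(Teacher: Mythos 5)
Your proof is correct and follows essentially the same route as the paper's: unfold $\clconv = cl \circ \conv$, apply Lemma~\ref{lemma:conv:img} to identify $f[\conv A]$ with $\conv{f[A]}$, and use continuity of $f$ to get $f[cl(\cdot)] \subseteq cl(f[\cdot])$. Nothing to add.
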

\begin{proof}
  Since $f$ is continuous, $f [\clconv A] = f [cl (\conv A)] \subseteq
  cl (f [\conv A])$.  By Lemma~\ref{lemma:conv:img}, $f [\conv A] =
  \conv {f [A]}$, and then $cl (\conv {f [A]}) = \clconv {f [A]}$.  \qed
\end{proof}

\subsection{Saturated convex hulls}
\label{sec:satur-conv-hulls}

\begin{defprop}[Saturated convex hull]
  \label{defprop:upc:conv}
  Let $\B$ be a preordered (e.g., semitopological) barycentric
  algebra.  For every convex subset $A$ of $\C$, $\upc A$ is convex.

  For every subset $A$ of $\C$, there is a smallest upwards-closed
  convex subset containing $A$, called the \emph{saturated convex
    hull} of $A$, and this is $\upc \conv A$.
\end{defprop}
\begin{proof}
  If $A$ is convex and $x, y \in \upc A$, then $x \geq x'$ and
  $y \geq y'$ for some $x', y' \in A$.  For every $a \in {]0, 1[}$,
  $x +_a y$ is above $x' +_a y'$ since $+_a$ is monotonic;
  $x' +_a y' \in A$ since $A$ is convex, so $x +_a y$ is in $\upc A$.

  By the first part of the lemma, for every subset $A$ of $\C$,
  $\upc \conv A$ is upwards-closed and convex.  We consider any
  upwards-closed convex subset $B$ of $\C$ containing $A$.  Then $B$
  contains $\conv A$ since $B$ is convex, hence $\upc \conv A$ since
  $B$ is upwards-closed.  \qed
\end{proof}

\begin{lemma}
  \label{lemma:upconv:img}
  For every affine monotonic map $f \colon \Alg \to \B$ between
  preordered barycentric algebras (in particular for every affine
  continuous map between semitopological barycentric algebras), for
  every subset $A$ of $\Alg$,
  $f [\upc \conv A] \subseteq \upc \conv {f [A]}$.
\end{lemma}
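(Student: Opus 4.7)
The plan is to chase an arbitrary element through the definitions, making crucial use of the preceding Lemma~\ref{lemma:conv:img} (which states that $f[\conv A] = \conv{f[A]}$) and the monotonicity of $f$.

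Specifically, I would pick any point $y \in f[\upc \conv A]$, and write it as $y = f(x)$ with $x \in \upc \conv A$. By definition of $\upc$, there is some $x' \in \conv A$ with $x' \leq x$. Since $f$ is monotonic, $f(x') \leq f(x) = y$. Now apply Lemma~\ref{lemma:conv:img} to see that $f(x') \in f[\conv A] = \conv{f[A]}$. Hence $y$ lies above an element of $\conv{f[A]}$, which is precisely what it means for $y$ to belong to $\upc \conv{f[A]}$.

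There is essentially no obstacle here: the whole argument is one line once Lemma~\ref{lemma:conv:img} is invoked. The parenthetical remark about the semitopological case also requires no extra work, since every continuous map between semitopological barycentric algebras is monotonic with respect to the specialization preorderings, and so the result in the preordered setting immediately specializes.
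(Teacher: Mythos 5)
Your proof is correct and takes essentially the same route as the paper: use monotonicity of $f$ to get $f[\upc \conv A] \subseteq \upc f[\conv A]$, then apply Lemma~\ref{lemma:conv:img}. You simply write out the element-chase explicitly where the paper states it in one line.
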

\begin{proof}
  Since $f$ is monotonic,
  $f [\upc \conv A] \subseteq \upc f [\conv A]$.  We conclude since
  $f [\conv A] = \conv {f [A]}$ by Lemma~\ref{lemma:conv:img}.  \qed
\end{proof}

\begin{proposition}
  \label{prop:conv:union:compact}
  Let $\B$ be a topological barycentric algebra, and $K_1$, \ldots,
  $K_n$ be finitely many convex compact subsets of $\B$.  The convex
  hull $\conv (K_1 \cup \cdots \cup K_n)$ is convex and compact, and
  the upwards-closed convex hull
  $\upc \conv (K_1 \cup \cdots \cup K_n)$ is convex and compact
  saturated.
\end{proposition}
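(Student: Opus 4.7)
The plan is to exploit the explicit description of $\conv(K_1 \cup \cdots \cup K_n)$ afforded by Lemma~\ref{lemma:conv:union} together with the joint continuity of barycenters established in Proposition~\ref{prop:bary:1:cont}. First, I would dispose of the trivial cases: if $n=0$ (or all $K_i$ are empty), then both sets are empty and there is nothing to prove. Otherwise, by discarding the empty $K_i$'s, I may assume that each $K_i$ is non-empty, so that Lemma~\ref{lemma:conv:union} applies and gives
\[
  \conv(K_1 \cup \cdots \cup K_n) = \{\textstyle\sum_{i=1}^n a_i \cdot x_i \mid (a_1,\ldots,a_n) \in \Delta_n,\ x_i \in K_i\}.
\]

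Next, I would observe that this set is precisely the image of $\Delta_n \times K_1 \times \cdots \times K_n$ under the barycenter map $\beta_n \colon (a_1, \ldots, a_n, x_1, \ldots, x_n) \mapsto \sum_{i=1}^n a_i \cdot x_i$, which is jointly continuous from $\Delta_n \times \B^n$ to $\B$ by Proposition~\ref{prop:bary:1:cont} (using that $\B$ is topological). By Lemma~\ref{lemma:Delta:scott}, $\Delta_n$ is compact (Hausdorff), and each $K_i$ is compact by hypothesis. Hence $\Delta_n \times K_1 \times \cdots \times K_n$ is compact as a finite product of compact spaces, and its continuous image $\conv(K_1 \cup \cdots \cup K_n)$ is therefore compact. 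Convexity of $\conv(K_1 \cup \cdots \cup K_n)$ is immediate from Definition and Lemma~\ref{deflem:convex:simpleval}.

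Finally, for the saturated convex hull $\upc \conv(K_1 \cup \cdots \cup K_n)$: convexity follows from Definition and Proposition~\ref{defprop:upc:conv}, since $\conv(K_1 \cup \cdots \cup K_n)$ is convex. For compactness, I invoke the standard fact that the saturation $\upc Q$ of a compact subset $Q$ of any topological space is compact saturated: every open cover of $\upc Q$ also covers $Q$, hence admits a finite subcover, which still covers $\upc Q$ because open sets are upwards-closed in the specialization preordering. Applying this to the compact set $\conv(K_1 \cup \cdots \cup K_n)$ just constructed finishes the proof. There is no real obstacle here; the only place demanding care is the appeal to Proposition~\ref{prop:bary:1:cont}, which crucially requires $\B$ to be topological (not merely semitopological), and the implicit use of the fact that the topology on $\Delta_n$ induced from $\real^n$ is indeed the relevant one for the joint-continuity statement.
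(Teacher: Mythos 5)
Your proof is correct and follows exactly the same route as the paper's: removing the empty $K_i$'s, identifying $\conv(K_1 \cup \cdots \cup K_n)$ via Lemma~\ref{lemma:conv:union} as the continuous image of the compact set $\Delta_n \times \prod_{i=1}^n K_i$ under the barycenter map of Proposition~\ref{prop:bary:1:cont}, and using Lemma~\ref{lemma:Delta:scott} for the compactness of $\Delta_n$. The only difference is that you spell out the (standard and correct) argument that the saturation of a compact set is compact saturated, whereas the paper simply declares the remaining assertions clear.
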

\begin{proof}
  Removing those sets $K_i$ that are empty if necessary, we may assume
  that every $K_i$ is non-empty.  By Lemma~\ref{lemma:conv:union},
  $\conv {(K_1 \cup \cdots \cup K_n)}$ is the image of the compact set
  $\Delta_n \times \prod_{i=1}^n K_i$ ($\Delta_n$ is compact by
  Lemma~\ref{lemma:Delta:scott}) under the map
  $(a_1, \cdots, \allowbreak a_n, x_1, \cdots, x_n) \mapsto
  \sum_{i=1}^n a_i \cdot x_i$ of Definition and
  Proposition~\ref{prop:bary:1}, which is continuous by
  Proposition~\ref{prop:bary:1:cont}; hence
  $\conv {(K_1 \cup \cdots \cup K_n)}$ is compact.  The remaining
  assertions are clear.  \qed
\end{proof}

\begin{corollary}
  \label{corl:conv:fin}
  For every finite subset $E$ of a topological barycentric algebra (in
  particular, of a topological cone), $\conv E$ is convex and compact,
  and $\upc \conv E$ is convex and compact saturated.
\end{corollary}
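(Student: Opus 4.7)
The plan is to observe that Corollary~\ref{corl:conv:fin} is essentially an immediate specialization of Proposition~\ref{prop:conv:union:compact} to the case where each $K_i$ is a singleton. Concretely, write $E$ as $\{x_1, \ldots, x_n\}$ and set $K_i \eqdef \{x_i\}$ for each $i$.

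Next I would check the hypotheses of Proposition~\ref{prop:conv:union:compact} for the chosen $K_i$. Each singleton $\{x_i\}$ is trivially convex: for any $a \in [0, 1]$, $x_i +_a x_i = x_i$ by the second barycentric axiom, so $\{x_i\}$ is closed under all the $+_a$ operations. Each singleton is also compact, as recalled in the preliminaries (every finite set is compact, without any separation requirement). Since $E = \{x_1\} \cup \cdots \cup \{x_n\}$, we have $\conv E = \conv(K_1 \cup \cdots \cup K_n)$ by definition of the convex hull, so Proposition~\ref{prop:conv:union:compact} directly yields that $\conv E$ is convex and compact, and that $\upc \conv E$ is convex and compact saturated.

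Finally, I would dispose of the degenerate case $E = \emptyset$ separately if needed (Proposition~\ref{prop:conv:union:compact} is stated for $n$ sets, and one could take $n \geq 1$ there). If $E = \emptyset$, then $\conv E = \emptyset$, which is vacuously convex and compact, and $\upc \conv E = \emptyset$ is also convex and compact saturated, so the statement holds trivially in that case as well. There is no real obstacle here; the corollary is just a convenient repackaging of the preceding proposition applied to singletons.
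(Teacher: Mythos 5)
Your proof is correct and takes essentially the same route as the paper's: write $E = \{x_1, \ldots, x_n\}$ and apply Proposition~\ref{prop:conv:union:compact} to the singletons $K_i \eqdef \{x_i\}$. The extra checks you perform (convexity and compactness of singletons, the empty case) are harmless and fine to leave implicit.
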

\begin{proof}
  Letting $E \eqdef \{x_1, \cdots, x_n\}$, we apply
  Proposition~\ref{prop:conv:union:compact} to the sets
  $K_1 \eqdef \{x_1\}$, \ldots, $K_n \eqdef \{x_n\}$.  \qed
\end{proof}

\begin{remark}
  \label{rem:upconv:compact:no}
  The sets $K_i$ in Proposition~\ref{prop:conv:union:compact} have to
  be compact \emph{and} convex already.  The following example shows
  that the convex hull of a compact set $K$ is \emph{not} compact in
  general, even in a topological cone.
\end{remark}

\begin{example}
  \label{exa:conv:compact:no}
  Let $X$ be a topological space, and ${(x_n)}_{n \in \nat}$ be a
  sequence converging to some point $x_\omega$ in $X$.  We assume
  that: $(a)$ for every $n \in \nat$, there is an open neighborhood
  $U_n$ of $x_n$ that does not contain any $x_m$ with $m \neq n$, in
  particular not $x_\omega$; $(b)$ for every $n \in \nat$, there is an
  open neighborhood $V_n$ of $x_\omega$ that does not contain $x_0$,
  \ldots, $x_n$.  (For example, $X \eqdef \real$ or $X$ equal to the
  compact Hausdorff space $[-1, 1]$ with its usual metric topology,
  $x_n \eqdef 1/2^n$, $x_\omega \eqdef 0$,
  $U_n \eqdef {]3/4 \cdot 1/2^n, 3/2\cdot 1/2^n[}$,
  $V_n \eqdef {]-1, 3/4 \cdot 1/2^n[}$.)  Let $\C$ be the topological
  cone $\Val X$, and
  $K \eqdef \{\delta_{x_n} \mid n \in \nat\} \cup
  \{\delta_{x_\omega}\}$.  Then $K$ is compact, but $\conv K$ is not.
\end{example}
\begin{proof}
  In order to show that $K$ is compact, we use Alexander's subbase
  lemma, which says that $K$ is compact if and only if for every open
  cover of $K$ by subbasic open sets has a finite subcover
  \cite[Theorem 4.4.29]{JGL-topology}.  Here, such an open cover is of
  the form ${([W_i > r_i])}_{i \in I}$, where each $W_i$ is open in
  $X$ and $r_i > 0$.  Since $\delta_{x_\omega} \in K$, there is an
  index $i \in I$ such that $\delta_{x_\omega} \in [W_i > r_i]$,
  namely such that $x_\omega \in W_i$ and $r_i < 1$.  Since
  ${(x_n)}_{n \in \nat}$ converges to $x_\omega$ in $X$, there is an
  $n_0 \in \nat$ such that $x_n \in U_i$ for every $n \geq n_0$, so
  that every $\delta_{x_n}$ is in $[W_i > r_i]$ for $n \geq n_0$.  The
  remaining finitely many continuous valuations $\delta_{x_n}$,
  $n < n_0$, can be covered by finitely many of the remaining open
  sets in the original cover.

  By Definition and Lemma~\ref{deflem:convex:simpleval}, the elements
  of $\conv K$ are the simple probability valuations of the form
  $\sum_{n \in N} a_n \delta_{x_n}$ with
  $N \subseteq \nat \cup \{\omega\}$ finite, $a_n > 0$ and
  $\sum_{n \in N} a_n=1$.

  Now let us assume that $\conv K$ is compact.  We consider the
  sequence of elements
  $\nu_k \eqdef \sum_{i=0}^k \frac 1 {2^{i+1}} \delta_{x_n} + \frac 1
  {2^{k+1}} \delta_{x_\omega}$, $k \in \nat$.  Each $\nu_k$ is in
  $\conv K$, because
  $\sum_{i=0}^k \frac 1 {2^{i+1}} + \frac 1 {2^{k+1}} = 1$.  Since
  $\conv K$ is compact, this sequence must have a cluster point
  $\nu \in \conv K$, namely, every open neighborhood of $\nu$ must
  contain $\nu_k$ for infinitely many indices $k \in \nat$
  \cite[Proposition 4.7.22]{JGL-topology}.  We have characterized the
  elements of $\conv K$ above: so $\nu$ must be of the form
  $\sum_{n \in N} a_n \delta_{x_n}$ with
  $N \subseteq \nat \cup \{\omega\}$ finite, $a_n > 0$ and
  $\sum_{n \in N} a_n=1$.

  If $\omega \in N$, then $a_\omega > 0$.  Let us fix
  $r \in {]0, a_\omega[}$.  For every $n \in \nat$, $\nu$ is in
  $[V_n > r]$.  By definition of a cluster point, there are infinitely
  many values of $k$ such that $\nu_k \in [V_n > r]$.  But, taking
  $k > n$,
  $\nu_k (V_n) = \sum_{n+1\leq i \leq k} \frac 1 {2^{i+1}} + \frac 1
  {2^{k+1}}$ since none of the points $x_0$, \ldots, $x_n$ are in
  $V_n$ by $(b)$.  Therefore $\nu_k (V_n) = \frac 1 {2^{n+1}}$.  This is
  impossible since $\nu_k (V_n) > r$ for every $n \in \nat$.
  Therefore $\omega$ is not in $N$.  In other words,
  $N \subseteq \nat$.

  For every $n \in N$, for every $r \in {]0, a_n[}$, $\nu$ is in
  $[U_n > r]$, so by definition of a cluster point, $\nu_k (U_n) > r$
  for infinitely many values of $k$.  Taking $k > n$,
  $\nu_k (U_n) = \frac 1 {2^{n+1}} $ since $U_n$ contains $x_n$ but
  not $x_m$ for any $m \neq n$ by $(a)$.  This implies that for every
  $n \in N$, for every $r \in {]0, a_n[}$, $\frac 1 {2^{n+1}} > r$, in
  other words, that $a_n \leq \frac 1 {2^{n+1}}$ for every $n \in N$.
  Since $\sum_{n \in N} a_n=1$, it follows that
  $\sum_{n \in N} \frac 1 {2^{n+1}} \geq 1$, and that is impossible
  since $N$ is finite.  We have reached a contradiction, and we must
  conclude that $\conv K$ is not compact.  \qed
\end{proof}

\section{Local convexity}
\label{sec:local-convexity}

In semitopological cones, there are at least three notions of local
convexity: local convexity in the sense of Keimel \cite[Definition
4.8]{Keimel:topcones2}, \emph{weak} local convexity or local convexity
in the sense of Heckmann \cite[Section~6.3]{heckmann96}, and local
linearity \cite[Definition 3.9]{GLJ:Valg}.  The author does not know
whether local convexity and weak local convexity are different
notions on semitopological cones, or whether there are any non-locally
convex semitopological cones at all.  An example of a locally convex,
non-locally linear topological cone is given in \cite[Example 3.15]{GLJ:Valg}.

There seems to be even more notions of local convexity in
semitopological barycentric algebras.  We will attempt to focus on the
closest analogues of the above three notions.  On top of that, we will
also consider the natural adaptation of Keimel's notion of local
convex-compactness \cite[Definition~4.8]{Keimel:topcones2} to
semitopological barycentric algebras.

\subsection{Locally convex semitopological barycentric algebras}
\label{sec:locally-conv-semit}

\begin{definition}[Locally convex]
  \label{defn:loc:convex}
  A semitopological cone, and more generally a semitopological
  barycentric algebra is \emph{locally convex} if and only if its
  topology has a base of convex open subsets.
\end{definition}
Equivalently, a semitopological barycentric algebra $\B$ is locally
convex if and only if for every $x \in \B$, every open neighborhood of
$x$ contains a convex open neighborhood of $x$.

Yet another equivalent characterization is: a semitopological
barycentric algebra is locally convex if and only if its topology has
a \emph{sub}base of convex open subsets.  This is because, given such
a subbase, the finite intersections of subbasic open sets form a base,
and are all convex by Lemma~\ref{lemma:convex:inter}.

\begin{example}
  \label{exa:VX:loc:convex}
  For every space $X$, $\Val X$ and $\Val_b X$ are locally convex
  topological cones and $\Val_1 X$ and $\Val_{\leq 1} X$ are locally
  convex topological barycentric algebras.  Local convexity stems from
  the fact that the subbasic open set $[h > r]$ is convex for every
  $h \in \Lform X$ and for every $r \in \Rp \diff \{0\}$.
  Additionally, $\Val X$ and $\Val_b X$ are topological by
  Example~\ref{exa:VX:top:cone}, and $\Val_1 X$ and $\Val_{\leq 1} X$
  are topological, being convex subspaces of $\Val X$, thanks to
  Lemma~\ref{lemma:bary:in:cone}.  Similarly, $\Val_\fin X$
  and $\Val_\pw X$ are locally convex topological cones.
\end{example}

\begin{example}
  \label{exa:LXp:loc:convex}
  For every space $X$, let $\Lform_\pw X$ be $\Lform X$ with the
  topology of pointwise convergence.  This topology has subbasic open
  sets $\{h \colon X \to \creal \mid h (x) > r\}$ where $x \in X$ and
  $r \in \Rp \diff \{0\}$, and they are all convex, so $\Lform_\pw X$
  is locally convex.
\end{example}

\begin{example}
  \label{exa:lattice:loc:convex}
  Every sup-semilattice $X$, seen as a semitopological barycentric
  algebra (Example~\ref{exa:lattice:top:bary:alg}), is locally convex.
  In fact, \emph{every} Scott-open subset $U$ of $X$ is convex, since
  the barycenters of any two points $x$ and $y$ in $U$ are $x$, $y$,
  and $x \vee y$, which are all in $U$, because $U$ is upwards-closed.
\end{example}

\begin{example}
  \label{exa:B-:loc:convex}
  The topological barycentric algebra $\B^-$ of
  Example~\ref{exa:bary:alg:notembed} is locally convex.  In fact, all
  its open subsets are convex.
\end{example}

The following result gives more examples of locally convex barycentric
algebras, and is a direct extension of a similar result on c-cones
\cite[Lemma~6.12]{Keimel:topcones2}.  The argument is due to
J. D. Lawson.  The same result appears in
\cite[Proposition~6.3]{heckmann96}.  We have introduced s-barycentric
algebras in Example~\ref{exa:cont:s-baryalg}.
% We call \emph{d-barycentric
  % algebra} any s-barycentric algebra 
\begin{proposition}
  \label{prop:dcone:loc:convex}
  Every semitopological barycentric algebra whose underlying
  topological space is a c-space is a locally convex topological
  barycentric algebra.  In particular:
  \begin{enumerate}
  \item every c-cone is a locally convex topological cone;
  % \item every continuous s-cone or continuous d-cone is a locally
  %   convex topological cone in its Scott topology;
  \item every pointed continuous s-barycentric algebra
    % , every pointed continuous d-barycentric algebra,
    is a locally convex pointed topological barycentric algebra in its
    Scott topology.
  \end{enumerate}
\end{proposition}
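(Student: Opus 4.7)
The plan is to first upgrade the semitopological structure to topological via Proposition~\ref{prop:c:bary:alg}, so that each $+_a$ becomes jointly continuous, and then to exhibit, at every point, a base of convex open neighborhoods. Fix $x \in \B$ and an open neighborhood $U$ of $x$. By the defining property of c-spaces there is $y \in U$ with $x \in \interior{\upc y}$; since $U$ is open and therefore upwards-closed in the specialization preordering, $\upc y \subseteq U$, and $V \eqdef \interior{\upc y}$ is an open neighborhood of $x$ contained in $U$. I will argue that $V$ itself is convex, which suffices. The easy half is that $\upc y$ is convex: for $z_1, z_2 \geq y$ and $a \in [0, 1]$, separate continuity of $+_a$ yields monotonicity in each argument with respect to the specialization preordering, so $z_1 +_a z_2 \geq y +_a y = y$.

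The crux is that the interior $V = \interior{\upc y}$ is itself convex. Given $z_1, z_2 \in V$ and $a \in [0, 1]$, I apply the c-space property inside $V$ to obtain $y_1, y_2 \in V$ with $z_1 \in \interior{\upc y_1}$ and $z_2 \in \interior{\upc y_2}$; since $V$ is upwards-closed, $\upc y_i \subseteq V \subseteq \upc y$, so $y_1, y_2 \geq y$ and therefore $y_1 +_a y_2 \geq y$. Joint continuity of $+_a$ sends the open rectangle $\interior{\upc y_1} \times \interior{\upc y_2} \ni (z_1, z_2)$ into $\upc(y_1 +_a y_2) \subseteq \upc y$. A further application of the c-space property at $z_1 +_a z_2$ should then produce an open neighborhood of $z_1 +_a z_2$ inside $\upc y$, witnessing that $z_1 +_a z_2 \in V$.

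The main technical difficulty is precisely this last step: one must ensure that the c-space refinement at $z_1 +_a z_2$ yields an approximant $w$ with $w \geq y$, or equivalently that $y_1 +_a y_2 \ll z_1 +_a z_2$ in the topological way-below sense $u \ll v \liff v \in \interior{\upc u}$. The natural tool is joint continuity of $+_a$ combined with the c-space property on $\B \times \B$ (which is itself a c-space, being a finite product of c-spaces): the open rectangle $\interior{\upc y_1} \times \interior{\upc y_2}$ witnesses $(y_1, y_2) \ll (z_1, z_2)$, and one transfers this to $+_a(y_1, y_2) \ll +_a(z_1, z_2)$ by showing that jointly continuous operations into a c-space preserve the topological way-below relation.

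For the listed special cases, item~1 follows because every c-cone is by definition a semitopological cone on a c-space, and by Lemma~\ref{lemma:bary:in:cone:iff} a semitopological cone is a semitopological barycentric algebra whose scalar multiplication is separately continuous; the general statement then delivers a locally convex topological barycentric algebra, which by the same lemma is a locally convex topological cone. Item~2 follows because every continuous poset is a c-space in its Scott topology, and by Example~\ref{exa:cont:s-baryalg} a continuous pointed s-barycentric algebra is then a semitopological barycentric algebra whose underlying space is a c-space, so the general statement applies directly.
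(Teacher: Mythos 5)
Your proof diverges from the paper's at the crucial step, and the divergence introduces a genuine gap.

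The paper does not try to show that $\interior{\upc y}$ is itself convex. Instead, it iterates the c-space property to build an infinite chain $x_1, x_2, \ldots$ inside $U$ with $x \in \interior{\upc x_1}$, $x_1 \in \interior{\upc x_2}$, \ldots, $x_n \in \interior{\upc x_{n+1}}$, \ldots, and then sets $V \eqdef \bigcup_{n} \interior{\upc x_n}$. This $V$ is clearly open and contained in $U$, and the sandwiching $\upc x_{n+1} \subseteq \interior{\upc x_n} \subseteq \upc x_n$ shows $V = \bigcup_n \upc x_n$, a directed union of convex sets, hence convex by Lemma~\ref{lemma:convex:inter}. No claim about preservation of way-below is needed.

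Your proposal, by contrast, hinges on the assertion that ``jointly continuous operations into a c-space preserve the topological way-below relation'' $u \ll v \liff v \in \interior{\upc u}$. This is false in general, and in fact it is precisely the notion of \emph{strong consistency} that the paper studies in Section~\ref{sec:cons-baryc-algebr}: Lemma~\ref{lemma:consistent:ccone} shows that a c-space barycentric algebra is strongly consistent if and only if each $+_a$ ($a \in {]0,1[}$) sends $x' \in \interior{\upc x}$, $y' \in \interior{\upc y}$ to $x' +_a y' \in \interior{\upc(x +_a y)}$, and Lemma~\ref{lemma:consistent:scone} identifies this with preservation of the way-below relation on continuous s-cones. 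The paper treats consistency as an extra hypothesis, not as an automatic property of c-spaces; it is the additional assumption in the Plotkin-Banach-Alaoglu theorem (Theorem~\ref{thm:alaoglu}). Continuity alone gives monotonicity, hence the monotone transport $z_1 +_a z_2 \geq y_1 +_a y_2 \geq y$ that you derive (so $z_1 +_a z_2 \in \upc y$), but it does not give $z_1 +_a z_2 \in \interior{\upc y}$. Your final step (``A further application of the c-space property at $z_1 +_a z_2$ should then produce an open neighborhood of $z_1 +_a z_2$ inside $\upc y$'') is circular: it requires finding $w$ with $z_1 +_a z_2 \in \interior{\upc w}$ and $w \geq y$, which already presupposes $z_1 +_a z_2 \in \interior{\upc y}$.

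There is also a signal you should have noticed: if your argument succeeded, you would have proved the strictly stronger statement that every set $\interior{\upc y}$ is convex in a c-space barycentric algebra. If that were true, the cleaner statement would be the one to record, and the chain construction would be superfluous. The fact that it is not available is exactly why the paper replaces $\interior{\upc y}$ with the carefully engineered $\bigcup_n \upc x_n$. Your treatment of the two special cases (via Lemma~\ref{lemma:bary:in:cone:iff} for cones, and via Example~\ref{exa:cont:s-baryalg} plus the fact that continuous posets are c-spaces for s-barycentric algebras) matches the paper and is fine; the gap is entirely in the convexity argument for the general statement.
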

\begin{proof}
  Let $\B$ be a semitopological barycentric algebra and a c-space.
  By Proposition~\ref{prop:c:bary:alg}, $\B$ is topological.

  1.  Let $x$ be a point of $\B$ and $U$ be an open neighborhood of
  $x$ in $\B$.  Since $\B$ is a c-space, there is a point $x_1 \in U$
  such that $x \in \interior {\upc x_1}$.  Since $x_1 \in U$ and $\B$
  is a c-space, there is a point $x_2 \in U$ such that
  $x_1 \in \interior {\upc x_2}$.  We continue this way and we obtain
  a chain of points $x_n$ in $U$, $n \geq 1$, with
  $x_n \in \interior {\upc {x_{n+1}}}$.  Let
  $V \eqdef \bigcup_{n \in \nat} \interior {\upc {x_n}}$.  $V$ is open,
  contains $x$, and is included in $U$.  $V$ is also equal to
  $\bigcup_{n \in \nat} \upc x_n$: the inclusion
  $V \subseteq \bigcup_{n \in \nat} \upc x_n$ is clear, while conversely
  $\bigcup_{n \in \nat} \upc x_n \subseteq \bigcup_{n \in \nat} \interior
  {\upc {x_{n+1}}} \subseteq V$.  Since each of the sets $\upc x_n$ is
  convex, $V$ is convex as well by Lemma~\ref{lemma:convex:inter}.

  2.  Every continuous s-barycentric algebra is a topological
  barycentric algebra (see Example~\ref{exa:cont:s-baryalg}), and a
  c-space in its Scott topology.  \qed
\end{proof}

We conclude this section with an alternate characterization of local
convexity for cones, and of (pointed) semitopological barycentric
algebras whose free (pointed) semitopological cone is locally convex.
\begin{proposition}
  \label{prop:cone:locconvex:alt}
  A semitopological cone $\C$ is locally convex if and only if every
  positively homogeneous lower semicontinuous map
  $h \colon \C \to \creal$ is a pointwise supremum of superlinear
  lower semicontinuous maps.
\end{proposition}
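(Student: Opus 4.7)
My plan is to use the bijective correspondence between positively homogeneous lower semicontinuous functions $h \colon \C \to \creal$ and proper open subsets $U$ of $\C$ recalled in Section~\ref{sec:preliminaries}, namely $U \mapsto M^U$ and $h \mapsto h^{-1} (]1, \infty])$, together with its monotonicity and its specialization to the correspondence between superlinear lsc functions and proper convex open sets. The main calculation I would need is the easy fact that if $U = \bigcup_{i \in I} U_i$ is a union of proper open sets, then $M^U = \sup_{i \in I} M^{U_i}$ pointwise; this follows directly from the definition $M^U (x) = \sup \{r \in \Rp \diff \{0\} \mid (1/r) \cdot x \in U\}$.

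For the forward implication, let $h \colon \C \to \creal$ be positively homogeneous lower semicontinuous. If $h \equiv 0$, then $h$ is the supremum of the (superlinear lsc) family $\{0\}$. Otherwise, pick $x_0 \in \C$ with $h (x_0) > 0$; by positive homogeneity some $n x_0$ has $h (n x_0) > 1$, so $U \eqdef h^{-1} (]1, \infty])$ is a non-empty proper open subset of $\C$ (nonemptiness and properness both use $h (0) = 0$). By local convexity, $U$ is a union of proper convex open sets $U_i$, $i \in I$. Each $M^{U_i}$ is superlinear lower semicontinuous, and by the observation above together with $h = M^U$, we obtain $h = \sup_{i \in I} M^{U_i}$.

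For the converse, fix $x \in \C$ and an open neighborhood $U$ of $x$. The case $U = \C$ is trivial, so assume $U$ is proper and set $h \eqdef M^U$, which is positively homogeneous lsc. By hypothesis, write $h = \sup_{i \in I} f_i$ with each $f_i$ superlinear lower semicontinuous. Since $h (x) > 1$, there exists $i \in I$ with $f_i (x) > 1$. Then $V \eqdef f_i^{-1} (]1, \infty])$ is a proper convex open neighborhood of $x$ (by the bijection specialized to superlinear lsc maps and proper convex open sets), and $V \subseteq U$ because $f_i \leq h$ pointwise implies $f_i^{-1} (]1, \infty]) \subseteq h^{-1} (]1, \infty]) = U$. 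Hence every open neighborhood of every point contains a convex open neighborhood, showing that $\C$ is locally convex.

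There is no real obstacle here; the proof is essentially a direct translation through the Minkowski-functional correspondence, and the only small care required is handling the trivial cases ($h \equiv 0$, $U = \C$) and checking that $M^U$ behaves well with respect to unions, which reduces to a one-line computation.
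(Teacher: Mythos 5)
Your proof is correct and goes through the same Minkowski-functional correspondence as the paper's proof. The converse direction is essentially identical. In the forward direction, however, you use a slightly cleaner decomposition: you write $U \eqdef h^{-1}(]1,\infty])$ as a union of proper convex open sets $U_i$ (which local convexity gives directly, since it means the topology has a base of convex open subsets), and then invoke the structural fact that $M^{\bigcup_{i} U_i} = \sup_i M^{U_i}$ to conclude $h = M^U = \sup_i M^{U_i}$. The paper instead argues by contradiction: it supposes $h$ is not the pointwise supremum of the superlinear lsc maps below it, picks a point $x$ where the gap occurs, chooses $a$ strictly between the two sides, and produces a convex open neighborhood $V$ of $x$ inside $h^{-1}(]a,\infty])$ whose scaled Minkowski functional $a M^V$ then contradicts the assumption. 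Both arguments do the same bookkeeping; yours makes the key observation (that $M^{(\cdot)}$ sends unions to pointwise suprema) explicit, which turns the proof into a one-liner once that lemma is in place. The only points to double-check, which you implicitly use correctly, are that each $U_i \subseteq U$ is automatically proper once $U$ is, and that $M^U(x) > 1$ for $x \in U$ proper (which follows from the continuity of $a \mapsto a \cdot x$: the inverse image of $U$ is a Scott-open subset of $\Rp$ containing $1$, hence $]t,\infty[$ with $t < 1$ or all of $\Rp$).
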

\begin{proof}
  Let $\C$ be locally convex, and $h \colon \C \to \creal$ be
  positively homogeneous and lower semicontinuous.  Let $\mathcal C$
  be the collection of superlinear lower semicontinuous maps from $\C$
  to $\creal$ that are pointwise below $h$.  We claim that the
  pointwise supremum $\sup \mathcal C$ is equal to $h$.  Otherwise,
  there is a point $x \in \C$ such that
  $\sup_{g \in \mathcal C} g (x) < h (x)$.  Let $a \in \Rp$ be such
  that $\sup_{g \in \mathcal C} g (x) < a < h (x)$.  The set
  $h^{-1} (]a, \infty])$ is open in $\C$, and is proper since it does
  not contain $0$; indeed, $h (0) = 0$ is not in $]a, \infty]$.  Since
  $\C$ is locally convex, there is a convex open neighborhood $U$ of
  $x$ included in $h^{-1} (]a, \infty])$.  Since the latter is proper,
  so is $U$.  Therefore its upper Minkowski functional $M^U$ is a
  superlinear lower semicontinuous map from $\C$ to $\creal$.  Since
  $U \subseteq h^{-1} (]a, \infty]) = ((1/a) \cdot h)^{-1} (]1,
  \infty])$ and $(1/a) \cdot h$ is positively homogeneous and lower
  semicontinuous, the order isomorphism between positively homogeneous
  maps and proper open subsets tells us that $M^U \leq (1/a) \cdot h$.
  Now $g \eqdef a \cdot M^U$ is superlinear, lower semicontinuous, and
  below $h$, hence in $\mathcal C$.  But $M^U (x) > 1$ since
  $x \in U$, so $g (x) > a$.  This contradicts the fact that
  $\sup_{g \in \mathcal C} g (x) < a$.

  Conversely, let us assume that every positively homogeneous lower
  semicontinuous map $h \colon \C \to \creal$ is a pointwise supremum
  of superlinear lower semicontinuous maps.  For every $x \in \C$, for
  every open neighborhood $U$ of $x$, we look for a convex open
  neighborhood of $x$ included in $U$.  If $U = \C$, we simply take
  $\C$ itself.  Otherwise, $U$ is proper, hence $M^U$ is positively
  homogeneous and lower semicontinuous.  Additionally, $M^U (x) > 1$.
  By assumption, $M^U$ is a pointwise supremum of superlinear lower
  semicontinuous maps, so one of them, call it $g$, is such that
  $g (x) > 1$; we note that we must have $g \leq M^U$.  Then
  $g^{-1} (]1, \infty])$ is (proper) convex and open, contains $x$ and
  is included in $(M^U)^{-1} (]1, \infty]) = U$.  Hence $\C$ is
  locally convex.  \qed
\end{proof}

\begin{proposition}
  \label{prop:tscope:loc:convex}
  For every pointed semitopological barycentric algebra $\B$, for
  every $\alpha \in {]0, 1[}$, the following are equivalent:
  \begin{enumerate}
  \item the free semitopological cone $\tscope_\alpha (\B)$ is locally
    convex;
  \item every $\Iu$-homogeneous lower semicontinuous map
    $h \colon \B \to \creal$ is a pointwise supremum of superlinear
    lower semicontinuous maps.
  \end{enumerate}
\end{proposition}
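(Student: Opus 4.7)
The plan is to reduce both sides of the equivalence to the intrinsic characterization of local convexity for semitopological cones established in Proposition~\ref{prop:cone:locconvex:alt}, and then transport the statement across the free-cone adjunction of Theorem~\ref{thm:bary:alg:conify:ord:pointed:semitop}, using $\etaca_{\B}$ as the canonical linear continuous map and $f \mapsto f^{\scope\alpha}$ as its inverse on the relevant classes of maps. Throughout, we will exploit the fact, established in the proof of Theorem~\ref{thm:bary:alg:conify:ord:pointed:semitop}, that every point of $\tscope_\alpha (\B)$ can be written as $[(n, x)]_\alpha$, and that $\alpha^n \cdot [(n, x)]_\alpha = \etaca_{\B} (x)$, so that $[(n, x)]_\alpha = (1/\alpha)^n \cdot \etaca_{\B} (x)$ in the cone $\tscope_\alpha (\B)$.

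For the implication $(1) \limp (2)$, let $h \colon \B \to \creal$ be $\Iu$-homogeneous and lower semicontinuous. Since every $\Iu$-homogeneous map commutes with $\alpha \cdot \_$, Theorem~\ref{thm:bary:alg:conify:ord:pointed:semitop}, item~3, produces a positively homogeneous lower semicontinuous map $h^{\scope\alpha} \colon \tscope_\alpha (\B) \to \creal$ extending $h$ along $\etaca_{\B}$. Assuming $\tscope_\alpha (\B)$ locally convex, Proposition~\ref{prop:cone:locconvex:alt} writes $h^{\scope\alpha}$ as a pointwise supremum $\sup_i g_i$ of superlinear lower semicontinuous maps $g_i \colon \tscope_\alpha (\B) \to \creal$. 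Since $\etaca_{\B}$ is linear and continuous (Theorem~\ref{thm:bary:alg:conify:ord:pointed:semitop}, item~2), each composite $g_i \circ \etaca_{\B}$ is $\Iu$-homogeneous and concave, hence superlinear in the sense of Definition~\ref{defn:bary:alg:pointed:linear}, and lower semicontinuous; and pointwise $\sup_i (g_i \circ \etaca_{\B}) = (h^{\scope\alpha}) \circ \etaca_{\B} = h$.

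For the implication $(2) \limp (1)$, Proposition~\ref{prop:cone:locconvex:alt} reduces the goal to writing every positively homogeneous lower semicontinuous map $H \colon \tscope_\alpha (\B) \to \creal$ as a pointwise supremum of superlinear lower semicontinuous maps on $\tscope_\alpha (\B)$. Let $h \eqdef H \circ \etaca_{\B}$: since $\etaca_{\B}$ is linear and continuous, $h$ is $\Iu$-homogeneous and lower semicontinuous, so by assumption we may write $h = \sup_i g_i$ with each $g_i \colon \B \to \creal$ superlinear and lower semicontinuous. Each $g_i$ commutes with $\alpha \cdot \_$ (as any $\Iu$-homogeneous map does), so Theorem~\ref{thm:bary:alg:conify:ord:pointed:semitop}, item~3, yields a superlinear continuous extension $g_i^{\scope\alpha} \colon \tscope_\alpha (\B) \to \creal$ with $g_i^{\scope\alpha} \circ \etaca_{\B} = g_i$.

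The main step, and the only nontrivial verification, is to show that $\sup_i g_i^{\scope\alpha} = H$ pointwise on $\tscope_\alpha (\B)$. For any point $[(n, x)]_\alpha$ we compute, using the explicit formula $g_i^{\scope\alpha} ([(n, x)]_\alpha) = (1/\alpha)^n \cdot g_i (x)$ from Theorem~\ref{thm:bary:alg:conify:ord:pointed:semitop}, that
\[
\sup_i g_i^{\scope\alpha} ([(n, x)]_\alpha) = (1/\alpha)^n \cdot \sup_i g_i (x) = (1/\alpha)^n \cdot h (x) = (1/\alpha)^n \cdot H (\etaca_{\B} (x)),
\]
and the last expression equals $H ((1/\alpha)^n \cdot \etaca_{\B} (x)) = H ([(n, x)]_\alpha)$ by positive homogeneity of $H$ together with the identity $[(n, x)]_\alpha = (1/\alpha)^n \cdot \etaca_{\B} (x)$ recalled above. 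This finishes the proof. The only subtlety is keeping scalar multiplication by $(1/\alpha)^n$ inside the cone $\tscope_\alpha (\B)$ (where it is legitimate) rather than attempting it in the pointed barycentric algebra $\B$ (where it would not be), and this is exactly what the free-cone construction $\tscope_\alpha$ is designed to permit.
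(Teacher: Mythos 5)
Your proof is correct and takes essentially the same approach as the paper: both directions reduce to the cone-level characterization of local convexity (Proposition~\ref{prop:cone:locconvex:alt}) and transport it across the free-cone adjunction via $\etaca_{\B}$ and $(-)^{\scope\alpha}$. The only difference is in $(2) \limp (1)$, where the paper concludes $\sup_i g_i^{\scope\alpha} = H$ by invoking the uniqueness clause of Theorem~\ref{thm:bary:alg:conify:ord:pointed:semitop} (both maps are positively homogeneous, continuous, and agree after composing with $\etaca_{\B}$), whereas you verify the same identity by unfolding the explicit formula $g_i^{\scope\alpha}([(n,x)]_\alpha) = (1/\alpha)^n \cdot g_i(x)$ pointwise — which is exactly the computation underlying that uniqueness claim, so it amounts to the same thing.
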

\begin{proof}
  $1 \limp 2$.  Let $h \colon \B \to \creal$ be $\Iu$-homogeneous and
  lower semicontinuous.  By
  Theorem~\ref{thm:bary:alg:conify:ord:pointed:semitop}, item~3,
  $h^{\scope\alpha}$ is positively homogeneous.  By
  Proposition~\ref{prop:cone:locconvex:alt} and item~1,
  $h^{\scope\alpha}$ is a pointwise supremum of superlinear lower
  semicontinuous maps $g_i$, $i \in I$.  Then
  $h = h^{\scope\alpha} \circ \etaca_{\B} = \sup_{i \in I} g_i \circ
  \etaca_{\B}$, and each map $g_i \circ \etaca_{\B}$ is superlinear
  and lower semicontinuous, since $\etaca_{\B}$ is itself linear and
  continuous, by
  Theorem~\ref{thm:bary:alg:conify:ord:pointed:semitop}, item~2.

  $2 \limp 1$.  Let $g$ be a positively homogeneous lower
  semicontinuous map from $\tscope_\alpha (\B)$ to $\creal$.  Then
  $g \circ \etaca_{\B}$ is $\Iu$-homogeneous and lower semicontinuous.
  By item~2, it is a supremum of superlinear lower semicontinuous maps
  $h_i \colon \B \to \creal$, $i \in I$.  Hence
  $g \circ \etaca_{\B} = \sup_{i \in I} h_i = \sup_{i \in I}
  (h_i^{\scope\alpha} \circ \etaca_{\B}) = (\sup_{i \in I}
  h_i^{\scope\alpha}) \circ \etaca_{\B}$.  For every $\Iu$-homogeneous
  lower semicontinuous function $f$, $f^{\scope\alpha}$ is unique , so
  $g = \sup_{i \in I} h_i^{\scope\alpha}$.  For each $i \in I$, since
  $h_i$ is superlinear and lower semicontinuous, so is
  $h_i^{\scope\alpha}$, by
  Theorem~\ref{thm:bary:alg:conify:ord:pointed:semitop}, item~3.  \qed
\end{proof}

\begin{proposition}
  \label{prop:conify:loc:convex}
  For every semitopological barycentric algebra $\B$, the free
  semitopological cone $\conify (\B)$ is locally convex if and only if
  every semi-concave lower semicontinuous map $h \colon \B \to \creal$
  is a pointwise supremum of concave lower semicontinuous maps.
\end{proposition}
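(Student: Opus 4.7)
The plan is to imitate the proof of Proposition~\ref{prop:tscope:loc:convex}, but with $\conify(\B)$ in place of $\tscope_\alpha(\B)$ and $\etac_{\B}$ in place of $\etaca_{\B}$, invoking the characterization of locally convex cones given by Proposition~\ref{prop:cone:locconvex:alt}, together with the bijective correspondence $f \mapsto f^\cext$ of Remark~\ref{rem:bary:alg:f*}. The key observation is that this correspondence restricts to bijections between semi-concave (resp.\ concave) lower semicontinuous maps from $\B$ to $\creal$ and positively homogeneous (resp.\ superlinear) lower semicontinuous maps from $\conify(\B)$ to $\creal$, and that it preserves arbitrary pointwise suprema (again by Remark~\ref{rem:bary:alg:f*}).

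For the forward direction, assume $\conify(\B)$ is locally convex. Given a semi-concave lower semicontinuous map $h \colon \B \to \creal$, the map $h^\cext \colon \conify(\B) \to \creal$ is positively homogeneous and lower semicontinuous by Theorem~\ref{thm:conify:semitop}, item~3. Proposition~\ref{prop:cone:locconvex:alt} then writes $h^\cext$ as a pointwise supremum $\sup_{i \in I} g_i$ of superlinear lower semicontinuous maps $g_i \colon \conify(\B) \to \creal$. Composing with $\etac_{\B}$, which is affine and continuous by Theorem~\ref{thm:conify:semitop}, item~2, yields $h = h^\cext \circ \etac_{\B} = \sup_{i \in I} (g_i \circ \etac_{\B})$. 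Each $g_i \circ \etac_{\B}$ is lower semicontinuous and concave (the composition of a concave map with an affine map is concave), which gives the desired supremum representation.

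For the converse, let $g \colon \conify(\B) \to \creal$ be positively homogeneous and lower semicontinuous. Then $g$ is semi-concave by Fact~\ref{fact:qaff:homog}, and since $\etac_{\B}$ is affine continuous, $g \circ \etac_{\B}$ is semi-concave and lower semicontinuous on $\B$. By hypothesis, $g \circ \etac_{\B} = \sup_{i \in I} h_i$ for some family of concave lower semicontinuous maps $h_i \colon \B \to \creal$. Using Remark~\ref{rem:bary:alg:f*}, the inverse of $f \mapsto f^\cext$ is $M \mapsto M \circ \etac_{\B}$, so $g = (g \circ \etac_{\B})^\cext$; and since $f \mapsto f^\cext$ preserves pointwise suprema, $g = \sup_{i \in I} h_i^\cext$. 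Each $h_i^\cext$ is superlinear and lower semicontinuous by Theorem~\ref{thm:conify:semitop}, item~3, so $g$ is a pointwise supremum of superlinear lower semicontinuous maps, and Proposition~\ref{prop:cone:locconvex:alt} concludes that $\conify(\B)$ is locally convex.

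No real obstacle is expected: the whole argument is a transport along the free-cone adjunction, and every ingredient (the characterization of local convexity in cones, the bijection $f \mapsto f^\cext$ with its preservation of pointwise suprema, and the fact that this bijection respects the semi-concave/positively homogeneous and concave/superlinear distinctions) has already been established. The only minor point to be careful about is reminding the reader that $g \circ \etac_{\B}$ is semi-concave in the converse direction, which follows from Fact~\ref{fact:qaff:homog} combined with affinity of $\etac_{\B}$.
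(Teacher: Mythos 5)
Your proposal is correct and follows essentially the same route as the paper's own proof: both hinge on Proposition~\ref{prop:cone:locconvex:alt} to characterize local convexity of $\conify(\B)$ via superlinear decompositions, and both transport back and forth along the bijection $f \mapsto f^\cext$ of Remark~\ref{rem:bary:alg:f*}, which preserves pointwise suprema and the semi-concave/positively homogeneous and concave/superlinear correspondences. The only difference is presentational—you spell out the two implications separately, whereas the paper argues more compactly through the bijection in a single pass.
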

\begin{proof}
  We use Remark~\ref{rem:bary:alg:f*}.  The positively homogeneous
  lower semicontinuous maps from $\conify (\B)$ to $\creal$ are
  exactly the maps $h^\cext$ where $h$ is semi-concave and lower
  semicontinuous.  By Proposition~\ref{prop:cone:locconvex:alt},
  $\conify (\B)$ is locally convex if and only if every such map is a
  pointwise supremum of superlinear lower semicontinuous maps $q_i$
  from $\conify (\B)$ to $\creal$, where $i$ ranges over some index
  set $I$.  If so, then by Remark~\ref{rem:bary:alg:f*},
  $q_i = g_i^\cext$ for some unique concave lower semicontinuous map
  $g_i \colon \B \to \creal$, and
  $h = h^\cext \circ \etac_{\B} = \sup_{i \in I} g_i^\cext \circ
  \etac_{\B} = \sup_{i \in } g_i$.  Conversely, if $h$ is a pointwise
  supremum of concave lower semicontinuous maps $g_i$, $i \in I$, then
  $h^\cext = \sup_{i \in I} g_i^\cext$, so $h^\cext$ is a pointwise
  supremum of superlinear lower semicontinuous maps; as $h$ is
  arbitrary, $\conify (\B)$ is locally convex.  \qed
\end{proof}

\subsection{Locally affine semitopological barycentric algebras}
\label{sec:locally-affine-semit}

A semitopological cone $\C$ is \emph{locally linear} if and only if
its topology has a subbase of open half-spaces \cite[Definition
3.9]{GLJ:Valg}.  We may restrict to proper open half-spaces, and this
will remain a subbase.  Because of the order-isomorphism between
proper open half-spaces and linear lower semicontinuous maps, we
obtain the following, which justifies the name.
\begin{fact}
  \label{fact:loclin}
  A semitopological cone $\C$ is locally linear if and only if its
  topology has a subbase of open sets of the form
  $\Lambda^{-1} (]1, \infty])$, where $\Lambda$ ranges over the linear
  lower semicontinuous maps from $\C$ to $\creal$.
\end{fact}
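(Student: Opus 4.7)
The plan is to derive this fact directly from the bijective correspondence recalled in Section~\ref{sec:preliminaries} (following \cite[Section~7]{Keimel:topcones2}) between proper open subsets of $\C$ and positively homogeneous lower semicontinuous maps $\C \to \creal$. Under $U \mapsto M^U$ and $f \mapsto f^{-1}(]1,\infty])$, this correspondence restricts, as mentioned right after its introduction in the preliminaries, to a bijection between proper open half-spaces and linear lower semicontinuous maps $\Lambda \colon \C \to \creal$.

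First I would note that the phrase ``subbase of open half-spaces'' in the definition of local linearity can, without loss of generality, be replaced by ``subbase of proper open half-spaces.'' Indeed, the non-proper open half-spaces in $\C$ are $\emptyset$ and $\C$ itself, and removing them from a given subbase still leaves a subbase (the empty intersection yields $\C$, and $\emptyset$ is the intersection of any two disjoint subbasic members or can simply be added back if needed). Hence $\C$ is locally linear iff its topology has a subbase of proper open half-spaces.

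Next I would apply the correspondence in both directions. If $\mathcal S$ is a subbase of proper open half-spaces, then writing each $H \in \mathcal S$ as $H = (M^H)^{-1}(]1,\infty])$ with $M^H$ linear and lower semicontinuous exhibits $\mathcal S$ as a subbase of the form in the statement. Conversely, given a subbase consisting of sets $\Lambda^{-1}(]1,\infty])$ with $\Lambda \colon \C \to \creal$ linear and lower semicontinuous, each such set is a proper open half-space (open by lower semicontinuity of $\Lambda$; a half-space by the correspondence; proper since $\Lambda(0)=0 \notin {]1,\infty]}$), so we obtain a subbase of proper open half-spaces.

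There is no real obstacle: the statement is essentially a translation of the definition through a bijection already established in the preliminaries. The only point requiring a brief comment is the reduction to \emph{proper} open half-spaces, but this is a trivial manipulation of subbases and does not demand anything beyond a line of justification.
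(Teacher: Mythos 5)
Your proposal is correct and follows exactly the same route as the paper: the paper's one-sentence justification before the fact likewise observes that one may restrict to proper open half-spaces and then invokes the order-isomorphism between proper open half-spaces and linear lower semicontinuous maps. The only minor blemish is your parenthetical about recovering $\emptyset$ as an intersection of disjoint subbasic sets, which is unnecessary (and not always possible) — $\emptyset$ is the empty union, so it never needs to be in a subbase — but this does not affect the argument.
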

This leads us to the following generalization of the notion to
barycentric algebras.
\begin{definition}[Locally affine, locally linear]
  \label{defn:loc:affine}
  A pointed semitopological barycentric algebra $\B$ is \emph{locally
    affine} if and only if its topology has a subbase of open sets of
  the form $\Lambda^{-1} (]1, \infty])$, where $\Lambda$ ranges over
  the affine lower semicontinuous maps from $\B$ to $\creal$.

  A pointed semitopological barycentric algebra $\B$ is \emph{locally
    linear} if and only if its topology has a subbase of open sets of
  the form $\Lambda^{-1} (]1, \infty])$, where $\Lambda$ ranges over
  the linear lower semicontinuous maps from $\B$ to $\creal$.
\end{definition}

% added jgl 10 juin 2026
\begin{remark}
  \label{rem:locaff=loclin}
  For a pointed semitopological barycentric algebra $\B$ (or a cone),
  ``locally affine'' and ``locally linear'' mean the same thing; we
  have kept the two denominations separate because ``locally linear''
  is the standard denomination on cones, but would not make sense on
  non-pointed barycentric algebras.  In order to see this, in one
  direction every linear map is affine.  In the other direction, if
  $\Lambda$ is affine and lower semicontinuous, then
  $\Lambda^{-1} (]1, \infty])$ is equal to the whole of $\B$ (if
  $\Lambda (\bot) > 1$) or else can be expressed as a union of sets of
  the form ${\Lambda'}^{-1} (]1, \infty])$ where $\Lambda'$ is linear
  (i.e., strict and affine) and lower semicontinuous.  If
  $\Lambda (\bot) < 1$,
  $\Lambda^{-1} (]1, \infty]) = {\Lambda'}^{-1} (]1, \infty])$ where
  $\Lambda'$ is the linear lower semicontinuous map
  $x \in \B \mapsto \frac {\Lambda (x) - \Lambda (\bot)} {1 - \Lambda
    (\bot)}$.  In the special case where $\Lambda (\bot)=1$,
  $\Lambda^{-1} (]1, \infty]) = \bigcup_{r > 1} (\frac 1 r \cdot
  \Lambda)^{-1} (]1, \infty])$ is therefore equal to
  $\bigcup_{r > 1} ((\frac 1 r \cdot \Lambda)')^{-1} (]1, \infty])$,
  with $((\frac 1 r \cdot \Lambda)')$ mapping every $x \in \B$ to
  $\frac {\Lambda (x) - 1} {r - 1}$.
  % Lambda(x) > 1 iff Lambda(x) - Lambda(bot) > 1 - Lambda (bot)
  % si Lambda(bot)=1, iff Lambda (x)-1 > 0 iff 
\end{remark}
% end added

\begin{example}
  \label{exa:VX:loclin}
  For every topological space $X$, $\Val X$, $\Val_b X$, $\Val_\pw X$,
  $\Val_\fin X$ are locally linear.  Indeed, every subbasic open set
  $[h > r]$ is an open half-space.
\end{example}

\begin{example}
  \label{exa:Vleq1X:loclin}
  Given a pointed semitopological barycentric algebra $\B$ that embeds
  in another one $\Alg$ (e.g., in a semitopological cone) through a
  linear topological embedding, if $\Alg$ is locally linear, then so
  is $\B$.  Hence, for every topological space $X$, the $T_0$ pointed
  semitopological barycentric algebras $\Val_{\leq 1} X$,
  $\Val_{\leq 1, \pw} X$ and $\Val_{\leq 1, \fin} X$, whose inclusion
  maps into $\Val X$, $\Val_\pw X$ and $\Val_\fin X$ respectively are
  linear topological embeddings, are locally linear.
\end{example}

\begin{example}
  \label{exa:V1X:loclin}
  Given a semitopological barycentric algebra $\B$ that embeds in
  another one $\Alg$ (e.g., in a semitopological cone) through an
  affine topological embedding, if $\Alg$ is locally affine, then so
  is $\B$.  Hence, for every topological space $X$, the $T_0$
  topological barycentric algebras $\Val_1 X$, $\Val_{1, \pw} X$ and
  $\Val_{1, \fin} X$, whose inclusion maps into $\Val X$, $\Val_\pw X$
  and $\Val_\fin X$ respectively are affine topological embeddings,
  are locally affine.
\end{example}

\begin{example}
  \label{exa:LXp:loclin}
  For every space $X$, $\Lform_\pw X$ (see
  Example~\ref{exa:LXp:loc:convex}) is locally linear, since the
  subbasic open set $\{h \colon X \to \creal \mid h (x) > r\}$ is an
  open half-space.
\end{example}

By definition, every locally affine semitopological barycentric
algebra is linearly separated in the following sense.
\begin{definition}[Linearly separated]
  \label{defn:convexT0}
  A semitopological barycentric algebra $\B$ is \emph{linearly
    separated} if and only if for all $x, y \in \B$ such that
  $x \not\leq y$, there is an affine lower semicontinuous map
  $\Lambda$ such that $\Lambda (x) > \Lambda (y)$.
\end{definition}
This is called \emph{linearly} separated and not \emph{affinely}
separated because the notion was originally intended on cones, where
we can require $\Lambda$ to be linear, not affine, as the following
lemma shows.  Keimel \cite[Section~7]{Keimel:topcones2} calls
\emph{convex-$T_0$} the $T_0$ semitopological cones that are linearly
separated; and indeed, all locally convex semitopological cones, not
just the locally linear ones, are linearly separated: this is stated
for $T_0$ semitopological cones in
\cite[Corollary~9.3]{Keimel:topcones2}, but is valid even without the
$T_0$ requirement.
\begin{lemma}
  \label{lemma:convexT0}
  \begin{enumerate}
  \item A pointed semitopological barycentric algebra, in particular a
    semitopological cone, $\B$ is linearly separated if and only if
    for all $x, y \in \B$ such that $x \not\leq y$, there is a linear
    lower semicontinuous map $\Lambda$ such that
    $\Lambda (x) > \Lambda (y)$.
  \item A semitopological cone $\C$ is linearly separated if and only
    if for all $x, y \in \C$, $x \leq y$ if and only if every open
    half-space that contains $x$ also contains $y$.
  \end{enumerate}
\end{lemma}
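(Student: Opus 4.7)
The "if" direction of item~1 is immediate, since every linear map is affine. For the "only if" direction, I would reduce from an affine lsc $\Lambda$ to a linear lsc $\Lambda'$ by subtracting a suitable constant. The key preliminary observation is that since $\Lambda(x) > \Lambda(y)$, we must have $\Lambda(y) < \infty$; and because $\Lambda$, being lsc, is monotonic with respect to the specialization preordering and $\bot$ is least in $\B$ by definition of a pointed semitopological barycentric algebra, we get $c \eqdef \Lambda(\bot) \leq \Lambda(y) < \infty$.

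I would then define $\Lambda' \colon \B \to \creal$ by $\Lambda'(z) \eqdef \Lambda(z) - c$, with the convention that $\infty - c = \infty$. A direct arithmetic check shows that $\Lambda'$ is affine (the additive constants on both sides of the affinity identity cancel) and $\Iu$-homogeneous: unfolding $a \cdot z = z +_a \bot$ and using the affinity of $\Lambda$, one gets $\Lambda(a \cdot z) = a\Lambda(z) + (1-a)c$, so $\Lambda'(a \cdot z) = a\Lambda(z) - ac = a \Lambda'(z)$. Being affine and $\Iu$-homogeneous, $\Lambda'$ is linear by Proposition~\ref{prop:strict:aff}. It is lsc since ${(\Lambda')}^{-1}({]t,\infty]}) = \Lambda^{-1}({]t+c,\infty]})$ is open, and it separates: $\Lambda'(x) - \Lambda'(y) = \Lambda(x) - \Lambda(y) > 0$ when both are finite, while $\Lambda'(x) = \infty > \Lambda'(y)$ if $\Lambda(x) = \infty$.

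For item~2, I would invoke the order-isomorphism between proper open half-spaces of $\C$ and linear lsc maps $\C \to \creal$ recalled in Section~\ref{sec:preliminaries}. In the forward direction, if $x \leq y$ then every open set (hence every open half-space) containing $x$ contains $y$ by definition of the specialization preordering; conversely, if $x \not\leq y$, then by item~1 there is a linear lsc $\Lambda$ with $\Lambda(x) > \Lambda(y)$, and picking any $t$ strictly between these two values, the set $\Lambda^{-1}({]t,\infty]})$ is an open half-space (since $\Lambda$ is affine, both it and its complement are convex) containing $x$ but not $y$. In the backward direction, if the half-space separation property holds and $x \not\leq y$, pick a proper open half-space $U$ with $x \in U$ and $y \notin U$; its upper Minkowski functional $M^U$ is linear lsc and satisfies $M^U(x) > 1 \geq M^U(y)$, giving linear separation, hence linear separation in the sense of Definition~\ref{defn:convexT0}.

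The only nontrivial step is constructing $\Lambda'$ from $\Lambda$ in item~1; the rest is bookkeeping with the established correspondences. The main subtlety is ensuring that the subtraction $\Lambda(z) - c$ is unambiguously defined in $\creal$, which is guaranteed by the monotonicity argument pinning $c$ below $\infty$.
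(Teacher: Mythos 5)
Your proof is correct and follows essentially the same route as the paper: for item~1 you subtract $\Lambda(\bot)$ (after noting it is finite) to obtain a strict affine, hence linear, map, and for item~2 you pass through the upper Minkowski correspondence between proper open half-spaces and linear lower semicontinuous maps. The only small simplification available is that, having defined $\Lambda' = \Lambda - c$ with $c = \Lambda(\bot)$, one can observe directly that $\Lambda'(\bot) = 0$, so $\Lambda'$ is strict and affine, hence linear by Proposition~\ref{prop:strict:aff} at once, without computing $\Iu$-homogeneity separately.
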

\begin{proof}
  1.  Let $\B$ be a pointed semitopological barycentric algebra.  The
  if direction is clear, since every linear map is affine.
  Conversely, let us assume that $\B$ is linearly separated, and let
  $x, y \in \B$ be such that $x \not\leq y$.  There is a lower
  semi-continuous affine map $\Lambda$ such that
  $\Lambda (x) > \Lambda (y)$.  It cannot be the case that
  $\Lambda (\bot) = \infty$, otherwise $\Lambda$ would be the constant
  function with value $\infty$, since $\Lambda$ is lower
  semicontinuous hence monotonic, and then
  $\infty = \Lambda (y) < \Lambda (x)$ would be impossible.
  Therefore the function $\Lambda'$ defined by $\Lambda' (x) \eqdef
  \Lambda (x) - \Lambda (\bot)$ is well-defined.  $\Lambda'$ is
  lower semicontinuous, affine, and strict, hence it is linear by
  Proposition~\ref{prop:strict:aff}.  Additionally, $\Lambda' (x) >
  \Lambda' (y)$ (even when $\Lambda (x) = \infty$; note that $\Lambda
  (y) < \infty$ since $\Lambda (y) < \Lambda (x)$).

  2.  This follows from item~1 and the fact that proper half-spaces
  $H$ are in bijection with linear lower semicontinuous maps.  For the
  unique non-proper half-space $\C$, the implication
  $x \in \C \limp y \in \C$ is vacuously true.  \qed
\end{proof}

\begin{example}
  \label{exa:VX:convexT0}
  For every topological space $X$, and whether $\rast$ is nothing,
  ``$\leq 1$'', ``$1$'' or ``$b$'', $\Val_{\rast} X$ is linearly
  separated, and similarly with $\Val_{\rast, \fin} X$ and
  $\Val_{\rast, \pw} X$.  Indeed, if $\mu \not\leq \nu$, then there is
  an open subset $U$ of $X$ such that $\mu (U) > \nu (U)$.  We define
  $\Lambda \colon \Val_{\rast} X \to \creal$ by
  $\Lambda (\xi) \eqdef \xi (U)$: this is an affine lower
  semicontinuous map, and $\Lambda (\mu) > \Lambda (\nu)$.
\end{example}

\begin{example}
  \label{exa:subspace:convexT0}
  Since the restriction of an affine map to any convex subset is
  affine, every convex subspace of a linearly separated
  semitopological barycentric algebra is itself linearly separated.
  This is another way of deriving the fact that the spaces
  $\Val_{\rast} X$, $\Val_{\rast, \fin} X$ and $\Val_{\rast, \pw} X$
  are linearly separated, from just the knowledge that $\Val X$ is .
\end{example}

\begin{example}
  \label{exa:LX:convexT0}
  For every topological space $X$, $\Lform X$ is linearly separated,
  and that is not only true for the Scott topology, but also for the
  coarser compact-open topology and for the even coarser topology of
  pointwise convergence.  Indeed, if $h \not\leq h'$ in $\Lform X$,
  then $h (x) > h' (x)$ for some $x \in X$.  The map
  $\Lambda \colon \Lform X \to \creal$ defined by
  $\Lambda (k) \eqdef k (x)$ for every $k \in \Lform X$ is linear and
  lower semicontinuous, whatever the topology we take among the three
  listed above on $\Lform X$, and $\Lambda (h) > \Lambda (h')$.
\end{example}

Local linearity and linear separation play related roles through the
following construction, a direct generalization of the topological
duals of cones \cite[Example~5.5]{Keimel:topcones2}.
\begin{deflem}
  \label{deflem:B'*}
  For every semitopological barycentric algebra $\B$ (resp.\ pointed
  semitopological barycentric algebra $\B$), there is a topological
  cone $\B^\astar$ (resp.\ $\B^*$) of affine (resp.\ linear) lower
  semicontinuous maps from $\B$ to $\creal$, with topology generated
  by subbasic open sets
  $[x > r] \eqdef \{\Lambda \in \C \mid \Lambda (x) > r\}$ where $x$
  ranges over $\creal$ and $r$ over $\Rp \diff \{0\}$.  Then:
  \begin{enumerate}
  \item $\B^{\astar*}$ (resp.\ $\B^{**}$) are locally linear topological
    cones;
  \item the map $\etaps_{\B} \colon \B \to \B^{\astar*}$ (resp.\
    $\etass_{\B} \colon \B \to \B^{**}$) defined by
    $\etaps_{\B} (x) (\Lambda) \eqdef \Lambda (x)$ for every
    $\Lambda \in \B^\astar$ (resp.\
    $\etass_{\B} (x) (\Lambda) \eqdef \Lambda (x)$ for every
    $\Lambda \in \B^*$) is affine (resp.\ linear) and continuous.
  \end{enumerate}
  Additionally, $\etaps_{\B}$ (resp.\ $\etass_{\B}$) is:
  \begin{enumerate}[resume]
  \item a preorder embedding if and only if $\B$ is linearly
    separated;
  \item full if and only if $\B$ is locally affine (resp.\ linear);
  \item an affine topological embedding if and only if $\B$ is locally
    affine (resp.\ locally linear) and $T_0$.
  \end{enumerate}
\end{deflem}
\begin{proof}
  1.  Let $\C \eqdef \B^\astar$ (resp.\ $\B^*$).  With pointwise
  addition and scalar multiplication, $\C$ is a cone.  The inverse
  image of $[x > r]$ by addition is
  $\{(\Lambda, \Lambda') \in \C \times \C \mid \Lambda (x) + \Lambda'
  (x) > r\} = \{(\Lambda, \Lambda') \in \C \times \C \mid \text{for
    some } a, b \in \Rp\text{ such that }a+b>r, \Lambda (x) > a,
  \Lambda' (x) > b\} = \bigcup_{a, b \in \Rp, a+b > r} [x > a] \times
  [x > b]$, so addition is (jointly) continuous.  For every
  $a \in \Rp \diff \{0\}$, the inverse image of $[x > r]$ by
  $(a \cdot \_)$ is $[x>r/a]$, so $(a \cdot \_)$ is continuous; it is
  also continuous when $a=0$, since it is a constant map in that case.
  For every $\Lambda \in \C$, the inverse image of $[x > r]$ by
  $(\_ \cdot \Lambda)$ is $]r / \Lambda (x), \infty]$ if
  $0 < \Lambda (x) < \infty$, empty if $\Lambda (x)=0$, and
  $]0, \infty]$ if $\Lambda (x) = \infty$.  $\C$ is therefore a
  topological cone.

  2.  In the following, we will write $\eta$ for $\etaps_{\B}$ (resp.\
  for $\etass_{\B}$) and $\C$ for $\B^\astar$ (resp.\ $\B^*$).  The function
  $\eta$ is continuous because
  $\eta^{-1} ([\Lambda > r]) = \Lambda^{-1} (]r, \infty])$ for all
  $\Lambda \in \C$ and $r \in \Rp \diff \{0\}$.  It is affine: for all
  $x, y \in \B$ and $a \in [0, 1]$, $\eta (x +_a y)$ maps every
  $\Lambda \in \C$ to
  $\Lambda (x +_a y) = a \, \Lambda (x) + (1-a) \, \Lambda (y)$ (since
  $\Lambda$ is affine)
  $= a \, \eta (x) (\Lambda) + (1-a) \, \eta (y) (\Lambda)$.
  Additionally, when $\B$ is pointed (and $\C = \B^{**}$ consists of
  linear maps), $\eta$ is strict, since
  $\eta (\bot) (\Lambda) = \Lambda (\bot) = 0$ for every
  $\Lambda \in \C$; therefore $\eta = \etass_{\B}$ is linear, thanks
  to Proposition~\ref{prop:strict:aff}.

  3.  Let $\B$ be linearly separated.  Since $\eta$ is continuous, it
  is monotonic.  In order to see that it is a preorder embedding, we
  must show that for every two points $x, y \in \B$ such that
  $x \not\leq y$, $\eta (x) \not\leq \eta (y)$.  In the non-pointed
  case, since $\B$ is linearly separated, there is a $\Lambda \in \B^\astar$
  such that $\Lambda (x) > \Lambda (y)$.  In the pointed case, we can
  take $\Lambda \in \B^*$ by Lemma~\ref{lemma:convexT0}, item~1.  In
  any case, there is a $\Lambda \in \C$ such that
  $\Lambda (x) > \Lambda (y)$, namely such that
  $\eta (x) (\Lambda) > \eta (x) (\Lambda)$.  Therefore
  $\eta (x) \not\leq \eta (y)$.

  Conversely, if $\eta$ is a preorder embedding, then for every two
  points $x, y \in \B$ such that $x \not\leq y$,
  $\eta (x) \not\leq \eta (y)$, so there is a $\Lambda \in \B^\astar$
  (in $\B^*$ is $\B$ is pointed) such that
  $\eta (x) (\Lambda) \not\leq \eta (y) (\Lambda)$.  Therefore
  $\Lambda (x) > \Lambda (y)$, so $\B$ is linearly separated.

  4.  If $\B$ is locally affine (resp.\ pointed and locally linear),
  then $\B$ has a subbase of open sets of the form
  $\Lambda^{-1} (]1, \infty])$ with $\Lambda \in \C$, and every one of
  them is the inverse image of the open set $[\Lambda > 1]$ under
  $\eta$, so $\eta$ is full.

  Conversely, if $\eta$ is full, then a subbase of the topology on
  $\B$ is given by sets of the form $\eta^{-1} (U)$, where $U$ ranges
  over any given subbase of the topology of $\B^{\astar*}$ ($\B^{**}$
  if $\B$ is pointed).  By taking the standard subbase consisting of
  sets $[\Lambda > r]$ with $\Lambda \in \B^\astar$ (resp.\ $\B^*$)
  and $r \in \Rp$, we obtain a subbase of the topology on $\B$
  consisting of sets of the form $\eta^{-1} ([\Lambda > r])
  = \{x \in \B \mid \Lambda (\eta (x)) > r\} = (\Lambda \circ
  \eta)^{-1} (r)$.  Since $\eta$ and $\Lambda$ are both affine (resp.\
  linear), $\B$ is locally linear.

  5.  If $\B$ is $T_0$, then $\B$ is locally affine if and only if
  $\eta$ is full by item~4, hence a topological embedding.
  Conversely, if $\eta$ is a topological embedding, then $\B$ is
  locally affine by item~4, and we claim that $\B$ is $T_0$.  For all
  $x, y \in \B$ such that $x \leq y$ and $y \leq x$, we have
  $\eta (x) \leq \eta (y)$ and $\eta (y) \leq \eta (x)$, namely
  $\Lambda (x) \leq \Lambda (y)$ and $\Lambda (y) \leq \Lambda (x)$
  for every $\Lambda \in \B^\astar$ (resp.\ $\B^*$); so
  $\eta (x) = \eta (y)$.  Since $\eta$ is injective, $x=y$.  \qed
\end{proof}

\begin{lemma}
  \label{lemma:loclin:bary:alg}
  Let $\B$ be a semitopological barycentric algebra (resp.\ a pointed
  semitopological barycentric algebra).  Then:
  \begin{enumerate}
  \item $\B$ is linearly separated if and only if there is an affine
    (resp.\ linear) continuous map $\eta \colon \B \to \C$ of $\B$ to
    a locally linear semitopological cone $\C$ that is a preorder
    embedding with respect to the underlying specialization orderings.
  \item $\B$ is locally affine (resp.\ locally linear) if and only if
    there is an affine (resp.\ linear) full continuous map
    $\eta \colon \B \to \C$ of $\B$ into a locally linear
    semitopological cone $\C$.
  \item $\B$ is $T_0$ and locally affine (resp.\ $T_0$ and locally
    linear) if and only if it embeds in a locally linear
    semitopological cone $\C$ through an affine (resp.\ linear)
    topological embedding.
  \end{enumerate}
\end{lemma}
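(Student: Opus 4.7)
The plan is to use the constructions $\B^\astar$ and $\B^{\astar*}$ (resp.\ $\B^*$ and $\B^{**}$) of Definition and Lemma~\ref{deflem:B'*} throughout, which provides the ``canonical'' candidate for $\C$ in each direction.

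For item~1, in the only if direction, assume $\B$ is linearly separated and take $\C \eqdef \B^{\astar*}$ (resp.\ $\B^{**}$) with $\eta \eqdef \etaps_{\B}$ (resp.\ $\etass_{\B}$).  Then $\C$ is a locally linear topological cone by item~1 of Def.\ and Lemma~\ref{deflem:B'*}, $\eta$ is affine (resp.\ linear) and continuous by item~2, and is a preorder embedding by item~3.  Conversely, suppose that $\eta \colon \B \to \C$ is affine (resp.\ linear), continuous, and a preorder embedding, with $\C$ a locally linear semitopological cone.  Given $x, y \in \B$ with $x \not\leq y$, we have $\eta (x) \not\leq \eta (y)$ in $\C$.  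By Lemma~\ref{lemma:convexT0}, item~2, applied to the locally linear, hence linearly separated cone $\C$, there is a linear lower semicontinuous $\Lambda \colon \C \to \creal$ with $\Lambda (\eta (x)) > \Lambda (\eta (y))$.  Then $\Lambda \circ \eta$ is affine lower semicontinuous (and linear if $\eta$ is linear, hence strict) on $\B$ and separates $x$ from $y$.

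For item~2, the only if direction again uses $\eta \eqdef \etaps_{\B}$ (resp.\ $\etass_{\B}$) into $\C \eqdef \B^{\astar*}$ (resp.\ $\B^{**}$): this is full by item~4 of Def.\ and Lemma~\ref{deflem:B'*}, affine (resp.\ linear) and continuous by item~2, and the target is locally linear by item~1.  For the if direction, let $\eta \colon \B \to \C$ be full, affine (resp.\ linear), and continuous, with $\C$ locally linear.  Since $\eta$ is full, the open subsets of $\B$ are exactly the sets $\eta^{-1} (U)$ with $U$ open in $\C$.  Taking for $U$ the subbasic open sets $\Lambda^{-1} (]1, \infty])$ of the locally linear cone $\C$, with $\Lambda \colon \C \to \creal$ linear and lower semicontinuous (using Fact~\ref{fact:loclin}), we obtain a subbase of the topology of $\B$ consisting of the sets $\eta^{-1} (\Lambda^{-1} (]1, \infty])) = (\Lambda \circ \eta)^{-1} (]1, \infty])$.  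Each $\Lambda \circ \eta$ is affine (resp.\ linear) and lower semicontinuous, so $\B$ is locally affine (resp.\ locally linear).

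For item~3, the only if direction takes again $\eta \eqdef \etaps_{\B}$ (resp.\ $\etass_{\B}$) into $\C \eqdef \B^{\astar*}$ (resp.\ $\B^{**}$), which is an affine (resp.\ linear) topological embedding by item~5 of Def.\ and Lemma~\ref{deflem:B'*}.  Conversely, if $\eta \colon \B \to \C$ is an affine (resp.\ linear) topological embedding into a locally linear semitopological cone, then $\eta$ is continuous and full, so $\B$ is locally affine (resp.\ locally linear) by item~2, and $\B$ is $T_0$ because it is homeomorphic to its image in the $T_0$ space $\C$ (any locally linear cone is $T_0$, since distinct points are separated by open half-spaces by Fact~\ref{fact:loclin}).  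The only step with any real content is the argument, in item~1, that linear separation of $\C$ transfers back to linear separation of $\B$ via $\eta$; everything else is a direct application of Definition and Lemma~\ref{deflem:B'*}.  \qed
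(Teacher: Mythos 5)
Your proof follows the paper's route closely: in every item, the ``only if'' direction is read off from Definition and Lemma~\ref{deflem:B'*} with $\C \eqdef \B^{\astar*}$ (resp.\ $\B^{**}$), and the ``if'' direction transfers separating or characterizing maps along $\eta$ by composition. Two points deserve mention, one cosmetic and one substantive.

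First, in item~1 you invoke Lemma~\ref{lemma:convexT0}, item~2, to produce a linear separating functional on $\C$; the relevant reference is item~1 of that lemma (item~2 is phrased in terms of open half-spaces, not linear functionals), though the argument is otherwise correct and this is easily repaired. Second, and more seriously, the parenthetical in your treatment of item~3 --- that ``any locally linear cone is $T_0$, since distinct points are separated by open half-spaces'' --- is false. Having a \emph{subbase} of open half-spaces (Fact~\ref{fact:loclin}) does not mean distinct points are separated. For a concrete counterexample, take $\Rp^2$ with the topology generated by the sets $\{(a,b) \mid a+b > r\}$, $r > 0$: this is a semitopological cone, locally linear via $\Lambda (a,b) \eqdef a+b$, yet $(1,0)$ and $(0,1)$ have exactly the same open neighbourhoods. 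So the $T_0$-ness of $\B$ in the ``if'' direction of item~3 cannot be obtained from local linearity of $\C$ alone. The clean way to close the gap is to observe that the specific codomain $\B^{\astar*}$ (resp.\ $\B^{**}$) \emph{is} $T_0$, since its specialization ordering is the pointwise ordering, hence antisymmetric, by Lemma~\ref{lemma:weak*}; alternatively one must require $\C$ itself to be $T_0$. The paper's own proof of item~3 simply writes ``immediate consequence of item~2, since full maps with a $T_0$ domain are topological embeddings,'' which justifies the ``only if'' direction cleanly but, read literally, elides the same $T_0$-transfer you attempted to supply.
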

\begin{proof}
  1.  If $\eta \colon \B \to \C$ is affine (resp.\ linear), continuous
  and a preorder embedding, and if $\C$ is a locally linear
  semitopological cone, then given any two points $x, y \in \B$ such
  that $x \not\leq y$, we have $\eta (x) \not\leq \eta (y)$.  Since
  $\C$ is locally linear, there is an affine lower semicontinuous map
  $\Lambda \colon \C \to \creal$ such that
  $\Lambda (\eta (x)) > \Lambda (\eta (y))$ (resp.\ and we can take
  $\Lambda$ to be linear in the pointed case, thanks to
  Lemma~\ref{lemma:convexT0}, item~1).  Then $\Lambda \circ \eta$ is
  affine (resp.\ linear) and maps $x$ to a strictly larger value than
  $y$.  It follows that $\B$ is linearly separated.  The converse
  implication is by Definition and Lemma~\ref{deflem:B'*}, item~3.

  2.  If $\eta \colon \B \to \C$ is an affine (resp.\ linear) full
  continuous map, where $\C$ is a locally linear semitopological cone,
  then every open subset of $\B$ is a union of finite intersections of
  sets of the form $\eta^{-1} (V)$ with $V$ open in $\C$, and every
  such set $V$ is a union of finite intersections of sets of the form
  $\Lambda^{-1} (]1, \infty])$ with $\Lambda \colon \C \to \creal$
  affine (resp.\ linear, using Lemma~\ref{lemma:convexT0}, item~1) and
  lower semicontinuous.  The maps $\Lambda \circ \eta$ are then affine
  (resp.\ linear) and lower semicontinuous, and since the
  corresponding sets $(\Lambda \circ \eta)^{-1} (]1, \infty])$ form a
  subbase of the topology of $\B$, $\B$ is locally affine (resp.\
  locally linear).  The converse implication is by Definition and
  Lemma~\ref{deflem:B'*}, item~4.

  3.  Immediate consequence of item~2, since full maps with a $T_0$
  domain are topological embeddings.  \qed
\end{proof}

\begin{corollary}
  \label{corl:bary:alg:loclin:embed}
  Every locally affine $T_0$ semitopological barycentric algebra is
  embeddable.  Every locally linear $T_0$ pointed semitopological
  barycentric algebra is strictly embeddable.
\end{corollary}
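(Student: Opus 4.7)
The plan is to observe that both statements follow immediately from Lemma~\ref{lemma:loclin:bary:alg}, item~3, together with the definitions of embeddability and strict embeddability. Indeed, item~3 of that lemma asserts that a semitopological barycentric algebra $\B$ is $T_0$ and locally affine if and only if it admits an affine topological embedding into a locally linear semitopological cone, and that a pointed semitopological barycentric algebra $\B$ is $T_0$ and locally linear if and only if it admits a linear topological embedding into a locally linear semitopological cone. So the first step is to invoke this lemma in each of its two forms, and the second step is to note that a locally linear semitopological cone is, in particular, a semitopological cone, so that the target of the embedding meets the requirements of Definition~\ref{defn:bary:alg:embed} in the first case and of Definition~\ref{defn:bary:alg:embed:strict} in the second case.

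More precisely, for the first assertion, let $\B$ be a locally affine $T_0$ semitopological barycentric algebra. By Lemma~\ref{lemma:loclin:bary:alg}, item~3, there exists an affine topological embedding $\eta \colon \B \to \C$ where $\C$ is a locally linear semitopological cone. This map $\eta$ is precisely an embedding of $\B$ into a semitopological cone in the sense of Definition~\ref{defn:bary:alg:embed}, so $\B$ is embeddable. For the second assertion, let $\B$ be a locally linear $T_0$ pointed semitopological barycentric algebra. Again by Lemma~\ref{lemma:loclin:bary:alg}, item~3, there is a linear topological embedding $\eta \colon \B \to \C$ with $\C$ a locally linear semitopological cone, and this is exactly a strict embedding in the sense of Definition~\ref{defn:bary:alg:embed:strict}; hence $\B$ is strictly embeddable. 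There is no real obstacle in this argument: all the substantive content has already been absorbed into Lemma~\ref{lemma:loclin:bary:alg} (which in turn relies on Definition and Lemma~\ref{deflem:B'*}, giving concrete embeddings into the second duals $\B^{\astar*}$ and $\B^{**}$).
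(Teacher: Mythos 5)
Your proof is correct and follows exactly the route the paper intends: the corollary appears immediately after Lemma~\ref{lemma:loclin:bary:alg} with no separate proof given, precisely because it is the specialization of item~3 of that lemma combined with Definitions~\ref{defn:bary:alg:embed} and~\ref{defn:bary:alg:embed:strict}, and the fact that a locally linear semitopological cone is in particular a semitopological cone. You have filled in the same (short) argument the author left implicit.
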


Local linearity does not just imply local convexity, but also
topologicity.
\begin{proposition}
  \label{prop:loclin:topo}
  Every locally affine semitopological barycentric algebra $\B$, in
  particular every locally linear pointed semitopological barycentric
  algebra, and every locally linear semitopological cone is locally
  convex and topological.
\end{proposition}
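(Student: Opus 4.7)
The plan is to deduce both conclusions from a single lemma: for every affine lower semicontinuous map $\Lambda \colon \B \to \creal$, the composite $F_\Lambda \colon (x, a, y) \mapsto \Lambda(x +_a y)$ is jointly continuous from $\B \times [0, 1] \times \B$ (with the usual metric topology on $[0, 1]$) to $\creal$ with its Scott topology. Granting this, local convexity is immediate: each subbasic open set $U = \Lambda^{-1} (]1, \infty])$ is convex, since if $\Lambda(x), \Lambda(y) > 1$ then $\Lambda(x +_a y) = a \Lambda(x) + (1-a) \Lambda(y) > 1$ by affineness; Lemma~\ref{lemma:convex:inter} then shows that the finite intersections of such subbasic opens form a base of convex open subsets of $\B$.

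For the key lemma, the affineness of $\Lambda$ rewrites $\Lambda(x +_a y)$ as $a \Lambda(x) + (1-a) \Lambda(y)$, which factors $F_\Lambda$ as $(x, a, y) \mapsto (\Lambda(x), a, \Lambda(y))$ followed by $\mu \colon (s, a, t) \mapsto as + (1-a) t$. The first map is jointly continuous because each coordinate depends on only one of the three inputs and $\Lambda$ is continuous. So it suffices to show that $\mu$ is jointly continuous from $\creal \times [0, 1] \times \creal$ to $\creal$. Since addition on $\creal$ is jointly Scott-continuous and the sum of two jointly continuous maps into $\creal$ is jointly continuous, it suffices to verify that multiplication $(a, s) \mapsto as$ is jointly continuous from $[0, 1] \times \creal$ to $\creal$ (and then to apply the same argument to $(a, t) \mapsto (1-a) t$, using that $a \mapsto 1-a$ is a homeomorphism of $[0, 1]$). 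For this I would check directly: any Scott-open neighborhood of the product $a_0 s_0$ has the form $]r, \infty]$ with $a_0 s_0 > r \geq 0$, forcing $a_0 > 0$ and $s_0 > 0$; if $s_0 < \infty$, pick $0 < a' < a_0$ and $0 < s' < s_0$ with $a' s' > r$ and use the open box $]a', 1] \times {]s', \infty]}$; if $s_0 = \infty$, pick a finite $s_1 > 2r/a_0$ and use $]a_0/2, 1] \times {]s_1, \infty]}$. When $a_0 s_0 = 0$ (so $a_0 = 0$ or $s_0 = 0$), the only Scott-open containing $0$ is $\creal$ itself, and continuity is automatic.

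With the key lemma in hand, topologicity follows at once: for any subbasic open $U = \Lambda^{-1} (]1, \infty])$ of $\B$, the preimage of $U$ under $(x, a, y) \mapsto x +_a y$ is $F_\Lambda^{-1} (]1, \infty])$, which is open in $\B \times [0, 1] \times \B$; since arbitrary opens of $\B$ are unions of finite intersections of such $U$'s and preimages commute with unions and intersections, the ternary operation $+_\cdot$ is jointly continuous. The two special cases reduce at once: every locally linear pointed semitopological barycentric algebra is a fortiori locally affine (linear maps are affine), and every locally linear semitopological cone is locally linear as a pointed semitopological barycentric algebra (with $0$ as distinguished element), so that $+_\cdot$ is jointly continuous; Lemma~\ref{lemma:bary:in:cone:iff} then transfers joint continuity from the barycentric operation to the cone operations $+$ and $\cdot$. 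The principal obstacle is the case analysis establishing joint continuity of multiplication $\creal \times [0, 1] \to \creal$ across the different topologies, particularly when $s_0 = \infty$, where the metric topology on $[0, 1]$ must interact correctly with the Scott topology on $\creal$.
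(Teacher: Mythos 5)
Your proof is correct and follows the same route as the paper: reduce joint continuity of $(x,a,y)\mapsto x+_a y$ to joint continuity of $(s,a,t)\mapsto as+(1-a)t$ on $\creal\times[0,1]\times\creal$ by composing with $\Lambda$, and read off local convexity from the convexity of each subbasic $\Lambda^{-1}(]1,\infty])$. The only difference is that the paper establishes the joint continuity of the barycentric operation on $\creal$ by citing that $\creal$ is a topological cone (Example~\ref{exa:cone:R}, via Ershov's observation and Lemma~\ref{lemma:bary:in:cone}), whereas you give an elementary hands-on case analysis of multiplication $(a,s)\mapsto as$; this is a cosmetic rather than structural difference.
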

\begin{proof}
  Local convexity is obvious, since $\Lambda^{-1} (]1, \infty])$ is
  convex for every affine lower semicontinuous map
  $\Lambda \colon \B \to \creal$.  Let $f$ be the function
  $(x, a, y) \mapsto x +_a y$ from $\B \times [0, 1] \times \B$ to
  $\creal$.  For every affine (resp.\ linear) lower semicontinuous map
  $\Lambda \colon \B \to \creal$, let
  $(x, a, y) \in f^{-1} (\Lambda^{-1} (]1, \infty]))$.  Hence
  $\Lambda (x +_a y) > 1$, namely
  $a \, \Lambda (x) + (1-a) \, \Lambda (y) > 1$.  Since $\creal$ is a
  topological cone (Example~\ref {exa:cone:R}), there is an open
  neighborhood $U$ of $\Lambda (x)$ in $\creal$, an open neighborhood
  $V$ of $\Lambda (y)$ in $\creal$ and an an open neighborhood $I$ of
  $a$ in $[0, 1]$ such that $a' \, s + (1-a') \, t > 1$ for all
  $(s, a', t') \in U \times I \times V$.  In particular, for all
  $(x', a', y') \in \Lambda^{-1} (U) \times I \times \Lambda^{-1}
  (V)$, $x' +_{a'} y' \in \Lambda^{-1} (]1, \infty])$, so
  $(x', a', y') \in f^{-1} (\Lambda^{-1} (]1, \infty]))$.

  It remains to see that $\Lambda^{-1} (U)$ is open in $\B$: if $U$ is
  of the form $]r, \infty]$ with $r > 0$, then
  $\Lambda^{-1} (U) = ((1/r) \cdot \Lambda)^{-1} (]1, \infty])$, which
  is open since $(1/r) \cdot \Lambda$ is affine (resp.\ linear) and
  lower semicontinuous; if $U = ]0, \infty]$, then
  $\Lambda^{-1} (U) = \bigcup_{r > 0} \Lambda^{-1} (]r, \infty])$, and
  if $U = [0, \infty]$ then $\Lambda^{-1} (U) = \B$.  Similarly with
  $\Lambda^{-1} (V)$.  \qed
\end{proof}

\begin{remark}
  \label{rem:lattice:not:loclin}
  Local linearity is strictly stronger than local convexity.  Let us
  consider any continuous complete lattice $L$, namely any complete
  lattice that is continuous as a dcpo.  Being a pointed
  sup-semilattice, $L$ is a cone and $L_\sigma$ is a semitopological
  cone (see Example~\ref{exa:semilatt:cone}).  It is shown in
  \cite[Remark~3.15]{GLJ:Valg} that letting $L \eqdef \Open \real$,
  where $\real$ has its usual metric topology, is locally convex but
  not locally linear.
\end{remark}
We will give another example of a non-locally linear cone in
Example~\ref{exa:LQ:not:strong:refl}.

\subsection{Dual cones}
\label{sec:dual-cones}

We have introduced the topological cones $\B^\astar$ and $\B^*$ in
Definition and Lemma~\ref{deflem:B'*}.  We call the topology defined
there the \emph{weak$^*$Scott topology}.  This is the coarsest
topology that makes the map
$\Lambda \in \B^* \text{ (resp.\ }\in \B^\astar\text{) }\mapsto
\Lambda (x)$ lower semicontinuous for every $x \in \B$, and extends
the notion of dual cone $\C^*$ of a semitopological cone
\cite[Example~5.5]{Keimel:topcones2}.

\begin{remark}
  \label{rem:dual:subbase}
  Given a pointed semitopological barycentric algebra $\B$, the sets
  $[x > 1]$, $x \in \B$, already form a subbase of the weak$^*$Scott
  topology on $\B^*$, since for every $r \in \Rp \diff \{0\}$,
  $[x > r]$ can be expressed as $[(1/r) \cdot x > 1]$.
\end{remark}
\begin{lemma}
  \label{lemma:weak*}
  Let $\B$ be a (resp.\ pointed) semitopological barycentric algebra .
  The specialization ordering of $\B^\astar$ (resp.\ $\B^*$) with its
  weak$^*$Scott topology is the pointwise ordering.
\end{lemma}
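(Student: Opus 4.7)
The plan is to verify both directions of the equivalence between the specialization ordering and the pointwise ordering, using the subbase $\{[x > r] \mid x \in \B, r \in \Rp \diff \{0\}\}$ of the weak$^*$Scott topology.

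First I would show that if $\Lambda_1 \leq \Lambda_2$ pointwise, then $\Lambda_1 \leq \Lambda_2$ in the specialization preordering. It suffices to show that every subbasic open neighborhood of $\Lambda_1$ contains $\Lambda_2$, since arbitrary opens are unions of finite intersections of subbasic opens and the specialization preordering is determined by subbasic neighborhoods. Given $\Lambda_1 \in [x > r]$ we have $\Lambda_2 (x) \geq \Lambda_1 (x) > r$, hence $\Lambda_2 \in [x > r]$.

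Conversely, suppose $\Lambda_1 \leq \Lambda_2$ in the specialization preordering. Fix $x \in \B$. For every $r \in \Rp \diff \{0\}$ with $r < \Lambda_1 (x)$, the set $[x > r]$ is a subbasic open neighborhood of $\Lambda_1$, hence contains $\Lambda_2$, so $\Lambda_2 (x) > r$. Taking the supremum over all such $r$ yields $\Lambda_2 (x) \geq \Lambda_1 (x)$ in $\creal$ (the case $\Lambda_1 (x) = 0$ is trivial, and if $\Lambda_1 (x) = \infty$, the supremum over all $r \in \Rp \diff \{0\}$ gives $\infty$). Since $x$ was arbitrary, $\Lambda_1 \leq \Lambda_2$ pointwise.

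There is no real obstacle here; the only mildly delicate point is handling $\Lambda_1 (x) = \infty$, which is dispatched by noting that $[x > r]$ is a neighborhood of $\Lambda_1$ for \emph{every} $r \in \Rp \diff \{0\}$ in that case. The argument is uniform in the two cases $\B^\astar$ and $\B^*$, since it uses only the subbase and not the affine or linear nature of the maps.
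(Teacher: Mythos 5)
Your proof is correct. It is worth noting that the paper proves the lemma in one line by a different, more conceptual route: it observes that $\B^\astar$ (resp.\ $\B^*$) is a topological subspace of the product ${(\creal)}^\B$ with the product topology, whose specialization ordering is the pointwise ordering, and that specialization preorderings restrict to subspaces. Your argument instead verifies both inclusions directly from the subbase $[x > r]$, $x \in \B$, $r \in \Rp \diff \{0\}$. The two arguments are equivalent in substance---the subspace/product fact the paper invokes is itself proved by exactly the kind of subbase manipulation you carry out---so yours is essentially an unpacked version of the paper's. What the paper's phrasing buys is brevity and reuse of a general topological fact; what yours buys is self-containedness and an explicit treatment of the $\Lambda_1 (x) = \infty$ and $\Lambda_1 (x) = 0$ edge cases, which is a sensible thing to spell out.

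One very small remark: you justify passing from subbasic opens to all opens in the first direction, which is correct, but you could equally have observed (as you implicitly do in the converse direction) that the specialization preordering induced by a topology equals the one induced by any subbase of it; this is standard and would let you phrase both directions symmetrically.
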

\begin{proof}
  Because $\B^\astar$ (resp.\ $\B^*$) is a subspace of
  ${(\creal)}^\B$, with the product topology, whose specialization
  ordering is pointwise.  \qed
\end{proof}

\begin{example}
  \label{exa:V=C*}
  For every topological space $X$, $\Val X$ is isomorphic to
  $(\Lform X)^*$ as a topological cone \cite[Satz~8.1]{kirch93}.  In
  one direction, every $\nu \in \Val X$ is mapped to the linear lower
  semicontinuous functional $h \in \Lform X \mapsto \int h \,d\nu$.
  In the other direction, every linear lower semicontinuous functional
  $\Lambda$ is mapped to $U \in \Open X \mapsto \Lambda (\chi_U)$,
  where $\chi_U$ is the characteristic function of $U$.
\end{example}

Several properties of $\Val X$ generalize to dual cones.
\begin{proposition}
  \label{prop:C*:topological}
  For every (resp.\ pointed) semitopological barycentric algebra $\B$,
  $\B^\astar$ (resp.\ $\B^*$) is a locally linear topological cone.
  In particular, for every semitopological cone $\C$, $\C^*$ is a
  locally linear topological cone.
\end{proposition}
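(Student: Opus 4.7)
The plan is to observe that Definition and Lemma~\ref{deflem:B'*} has already done most of the work, leaving only the local linearity claim to verify. Indeed, the statement of Definition and Lemma~\ref{deflem:B'*} asserts that $\B^\astar$ (resp.\ $\B^*$) is a topological cone, so the only real content of Proposition~\ref{prop:C*:topological} is that this cone is locally linear. By Fact~\ref{fact:loclin}, local linearity of a semitopological cone amounts to exhibiting a subbase of open sets of the form $\Lambda^{-1} (]1, \infty])$ with $\Lambda$ linear lower semicontinuous. Since the weak${}^*$Scott topology is defined by the subbase $[x > r] = \{\Lambda \mid \Lambda (x) > r\}$, $x \in \B$, $r \in \Rp \diff \{0\}$, the natural candidates are the evaluation maps.

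For each $x \in \B$, define $\Psi_x \colon \B^\astar \to \creal$ (resp.\ $\B^* \to \creal$) by $\Psi_x (\Lambda) \eqdef \Lambda (x)$. Since addition and scalar multiplication on $\B^\astar$ (resp.\ $\B^*$) are defined pointwise (using the cone structure of $\creal$), we have $\Psi_x (\Lambda + \Lambda') = \Lambda (x) + \Lambda' (x) = \Psi_x (\Lambda) + \Psi_x (\Lambda')$ and $\Psi_x (a \cdot \Lambda) = a \cdot \Lambda (x) = a \cdot \Psi_x (\Lambda)$ for all $a \in \Rp$; hence $\Psi_x$ is linear in the cone sense. It is lower semicontinuous because $\Psi_x^{-1} (]r, \infty]) = [x > r]$ is subbasic open for every $r \in \Rp \diff \{0\}$ (for $r = 0$, this inverse image is the whole space).

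Now every subbasic open $[x > r]$ with $r > 0$ can be rewritten as $((1/r) \cdot \Psi_x)^{-1} (]1, \infty])$, and $(1/r) \cdot \Psi_x$ is again linear and lower semicontinuous by Fact~\ref{fact:qaff} (or by direct computation). Thus the standard subbase of the weak${}^*$Scott topology on $\B^\astar$ (resp.\ $\B^*$) consists of sets of the form $\Lambda^{-1} (]1, \infty])$ with $\Lambda$ linear lower semicontinuous, and Fact~\ref{fact:loclin} delivers local linearity.

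The ``in particular'' clause for a semitopological cone $\C$ follows by noting that $\C$, viewed with its specialization preordering, is a pointed semitopological barycentric algebra (with $0$ as distinguished element, since $0$ is least in every preordered cone, as recalled in Section~\ref{sec:preliminaries}), and the construction $\B^*$ from Definition and Lemma~\ref{deflem:B'*} specializes to the usual dual cone $\C^*$. No step presents a serious obstacle; the whole argument is essentially bookkeeping around the evaluation maps $\Psi_x$.
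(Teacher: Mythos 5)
Your proof is correct and takes essentially the same route as the paper: the paper also defers the topological cone structure to Definition and Lemma~\ref{deflem:B'*} and then observes that the subbasic open sets $[x > r]$ are open half-spaces, which by Fact~\ref{fact:loclin} is equivalent to your more explicit exhibition of the linear lower semicontinuous maps $(1/r)\cdot\Psi_x$. (One small slip: Fact~\ref{fact:qaff} concerns semi-concave maps rather than linear ones, so it is not quite the right citation for closure of linear lsc maps under positive scaling, but as you note the direct computation is immediate.)
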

\begin{proof}
  $\B^\astar$ (resp.\ $\B^*$) is a topological cone by Definition and
  Lemma~\ref{deflem:B'*}.  The sets $[x > r]$ with $x \in \B$ and
  $r \in \Rp \diff \{0\}$ are open half-spaces and form a subbase of
  the topology, so $\B^*$ (resp.\ $\B^*$) is locally linear.  \qed
\end{proof}

Another property is sobriety (see \cite[Chapter~8]{JGL-topology}).  A
closed set $C$ is \emph{irreducible} if and only if it is non-empty,
and for any two closed subsets $C_1$ and $C_2$ such that
$C \subseteq C_1 \cup C_2$, $C$ is included in $C_1$ or in $C_2$.  For
every point $x \in X$, $\dc x$ is irreducible closed.  A space in
which those are the only irreducible closed subsets is called
\emph{quasi-sober}.  A \emph{sober space} is a $T_0$ quasi-sober
space.  All $T_2$ spaces, all continuous dcpos are sober.  Every
retract of a sober space is sober, every equalizer
$[f=g] \eqdef \{x \in X \mid f (x)=g(x)\}$ of two continuous maps
$f, g \colon X \to Y$ where $X$ is sober and $Y$ is $T_0$ is itself
sober.

Now $\Val X$ is sober \cite[Proposition~5.1]{heckmann96}, and this
generalizes as follows.  The proof is the same as the one Heckmann
gives for $\Val X$.
\begin{theorem}
  \label{thm:C*:sober}
  For every (resp.\ pointed) semitopological barycentric algebra $\B$,
  $\B^\astar$ (resp.\ $\B^*$) is sober.  In particular, $\C^*$ is
  sober for every semitopological cone $\C$.
\end{theorem}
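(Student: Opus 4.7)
The plan is to realize $\B^\astar$ (and similarly $\B^*$) as a $T_0$ subspace of the sober product $(\creal)^\B$, and to produce a generic point for each irreducible closed subset as the pointwise supremum, with irreducibility replacing the directedness needed to preserve affinity.

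First, the subbasic opens $[x>r]=\{\Lambda\mid \Lambda(x)>r\}$ of the weak$^*$Scott topology on $\B^\astar$ are precisely the traces of the standard subbasic opens $\{f\mid f(x)>r\}$ of the product topology on $(\creal)^\B$, so the inclusion $\B^\astar \hookrightarrow (\creal)^\B$ is a topological embedding. Since $\creal$ is sober (being a continuous dcpo in its Scott topology) and arbitrary products of sober spaces are sober, $(\creal)^\B$ is sober. Its specialization preorder is pointwise, agreeing with Lemma~\ref{lemma:weak*}; in particular, because $\creal$ is a poset, $\B^\astar$ is $T_0$.

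Now let $C$ be an irreducible closed subset of $\B^\astar$. Its closure $D$ in $(\creal)^\B$ is again irreducible closed, and by sobriety $D = \dc\Lambda_0$ for a (necessarily unique) $\Lambda_0\in (\creal)^\B$. The condition that every subbasic open $[x>t]$ containing $\Lambda_0$ meets $C$ yields $\Lambda_0(x)=\sup_{\Lambda\in C}\Lambda(x)$ for every $x\in \B$. If $\Lambda_0$ can be shown to belong to $\B^\astar$, then $C = D\cap \B^\astar = \dc\Lambda_0 \cap \B^\astar = \overline{\{\Lambda_0\}}^{\B^\astar}$ exhibits $\Lambda_0$ as the generic point, and the result for $\C^*$ is subsumed by that for $\B^*$ since every semitopological cone is a pointed semitopological barycentric algebra.

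Verifying $\Lambda_0 \in \B^\astar$ (resp.\ $\B^*$) is the crux. Lower semicontinuity is automatic, since a pointwise supremum of lower semicontinuous maps is lower semicontinuous. Affinity of $\Lambda_0$ at a triple $(x, a, y)$ reduces to $\sup_{\Lambda\in C}[a\Lambda(x)+(1-a)\Lambda(y)] = a\Lambda_0(x)+(1-a)\Lambda_0(y)$: the inequality $\leq$ is immediate; for $\geq$, given $t_1<\Lambda_0(x)$ and $t_2<\Lambda_0(y)$, the two opens $[x>t_1]\cap C$ and $[y>t_2]\cap C$ of $C$ are nonempty, and irreducibility forces them to meet, so some single $\Lambda\in C$ satisfies $\Lambda(x)>t_1$ and $\Lambda(y)>t_2$; taking the supremum over $t_1,t_2$ yields the equality. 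In the linear case $\B^*$, one additionally has $\Lambda_0(\bot)=\sup_\Lambda 0=0$, so $\Lambda_0$ is strict, and then linear by Proposition~\ref{prop:strict:aff}. The main obstacle is exactly this use of irreducibility in the affinity step: a pointwise supremum of affine maps only satisfies the $\leq$ inequality in general, and restoring the $\geq$ direction requires picking a single $\Lambda$ realizing large values simultaneously at both $x$ and $y$, which is precisely what irreducibility provides pairwise.
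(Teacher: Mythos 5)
Your proof is correct, but it takes a more hands-on route than the paper. The paper embeds $\B^\astar$ into $\Lform_\pw \B$ (which is $\Lform \B$ with the topology of pointwise convergence, and is known to be sober by work of Tix), observes that the affinity equations $h(x+_ay)=a\,h(x)+(1-a)\,h(y)$ exhibit $\B^\astar$ as an equalizer $[f=g]$ of two continuous maps $f,g\colon\Lform_\pw\B\to\creal^I$ where $I=\B\times[0,1]\times\B$, and then invokes the standard fact (stated in the preliminaries) that an equalizer of continuous maps from a sober space to a $T_0$ space is sober; the pointed case adds the equation $\Lambda(\bot)=0$. You instead embed directly into $(\creal)^\B$, take the generic point $\Lambda_0$ of the closure of an irreducible closed $C$ there, and verify by hand that $\Lambda_0$ lands in $\B^\astar$: lower semicontinuity is automatic as a pointwise supremum of lower semicontinuous maps, and affinity is restored precisely because irreducibility lets you find one $\Lambda\in C$ that simultaneously approximates $\Lambda_0(x)$ and $\Lambda_0(y)$. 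Structurally the two proofs are the same (embed into a sober function space, show the generic point stays in the subspace), but yours opens up the black box: the irreducibility step in your argument is exactly the mechanism that the equalizer lemma encapsulates. The paper's version is shorter once the equalizer fact is available; yours is self-contained and makes explicit why a pointwise supremum of affine maps is affine when taken over an irreducible set, a point worth understanding even if one ultimately prefers to cite the general lemma.
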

\begin{proof}
  $\B^\astar$ is a subspace of $\Lform_\pw \B$ (see
  Example~\ref{exa:LXp:loc:convex}), which is sober, see
  \cite[Lemma~5.8]{Tix:bewertung} for example.  This is the subspace
  of elements $h$ satisfying the equations
  $h (x +_a y) = a\, h (x) + (1-a) \, h (y)$ for all $x, y \in \B$ and
  $a \in [0, 1]$, hence it occurs as the equalizer $[f=g]$ of the
  continuous maps $f, g \colon \Lform_\pw \B \to \creal^I$, where
  $I \eqdef \B \times [0, 1] \times \B$,
  $f (h) \eqdef {(h (x +_a y))}_{(x, a, y) \in I}$,
  $g (h) \eqdef {(a\, h (x) + (1-a) \, h (y) )}_{(x, a, y) \in I}$ for
  every $h \in \Lform_\pw X$.

  We reason similarly with $\B^*$ when $\B$ is pointed, using the
  additional equation $\Lambda (\bot) = 0$.  We recall that a linear
  map is the same thing as a strict affine map, by
  Proposition~\ref{prop:strict:aff}.  \qed
\end{proof}

Dually to Example~\ref{exa:V=C*}, we must mention the
\emph{Schr\"oder-Simpson theorem}, announced in
\cite{SchSimp:obs,Simpson:2009}.  The first published proof is due to
Keimel \cite{Keimel:SchSimp}, and an elementary proof can be found in
\cite{GL-mscs13}.  Precisely, Theorem~2.3 (resp.\ Corollary~2.4) of
\cite{GL-mscs13} states that every linear lower semicontinuous map
$\Lambda \colon \Val_b X \to \creal$ (resp.\
$\Lambda \colon \Val X \to \creal$) is of the form
$\nu \mapsto \int h \, d\nu$ for a unique lower semicontinuous map
$h \colon X \to \creal$; explicitly, $h (x) \eqdef \Lambda (\delta_x)$
for every $x \in X$.  This yields a bijection between $(\Val_b X)^*$
(resp.\ $(\Val X)^*$) and $\Lform X$, which is an isomorphism of
cones.  Its inverse maps every $h \in \Lform X$ to
$\nu \mapsto \int h \, d\nu$.

This is also a homeomorphism, provided that we equip
$\Lambda X$ with the weakScott topology, defined as follows.
\begin{definition}[weakScott topology]
  \label{defn:weakScott}
  The \emph{weakScott topology} on a semitopological barycentric
  algebra $\B$ is the coarsest topology that makes every
  $\Lambda \in \B^\astar$ lower semicontinuous.  A subbase is given by
  the sets $\Lambda^{-1} (]r, \infty])$ with $\Lambda \in \B^\astar$
  and $r \in \Rp \diff \{0\}$.  We write $\B_\wS$ for $\B$ with the
  weakScott topology.
\end{definition}

\begin{remark}
  \label{rem:weakScott}
  If $\B$ is a pointed semitopological barycentric algebra (for
  example, if $\B$ is a semitopological cone $\C$), then the weakScott
  topology is also the coarsest topology that makes every
  $\Lambda \in \B^*$ (namely, where $\Lambda$ is linear, not just
  affine) lower semicontinuous.  A subbase is given by the sets
  $\Lambda^{-1} (]1, \infty])$ with $\Lambda \in \B^*$; equivalently,
  when $\B$ is a cone $\C$, by the open half-spaces of $\C$.  Indeed,
  every set $\Lambda^{-1} (]r, \infty])$, where $\Lambda$ is affine
  and lower semicontinuous and $r > 0$, can be rewritten as
  ${\Lambda_r}^{-1} (]1, \infty])$ where
  $\Lambda_r (x) \eqdef \frac 1 {r-\Lambda (\bot)} \cdot (\Lambda (x)
  - \Lambda (\bot))$ for every $x \in \B$ if $\Lambda (\bot) < r$; if
  $\Lambda (\bot) = r$,
  $\Lambda^{-1} (]r, \infty]) = \bigcup_{s > r} \Lambda^{-1} (]s,
  \infty]) = \bigcup_{s > r} \Lambda_s^{-1} (]1, \infty])$, and if
  $\Lambda (\bot) > r$, then $\Lambda^{-1} (]r, \infty])$ is the whole
  space $\B$.  We note that $\Lambda_r$ is linear and lower
  semicontinuous; linearity is because $\Lambda_r$ is affine and
  strict, and relying on Proposition~\ref{prop:strict:aff}.
\end{remark}

Explicitly, the weakScott topology on $(\Val_b X)^*$, or on $(\Val
X)^*$, equated with $\Lform X$, is given by subsets of the form $[\nu
> r] \eqdef \{h \in \Lform X \mid \int h \, d\nu > r\}$, with $\nu \in
\Val_b X$ (resp.\ $\nu \in \Val X$) and $r \in \Rp \diff \{0\}$.  We
sum the above mentioned theorem by Schr\"oder and Simpson as follows.
\begin{theorem}
  \label{thm:schsimp}
  For every topological space $X$, $(\Val_b X)^*$ (resp.\
  $(\Val X)^*$) and $(\Lform X)_\wS$ are isomorphic as topological
  cones through the maps
  $h \in \Lform X \mapsto (\nu \mapsto \int h \,d\nu)$ and
  $\psi \in (\Val_b X)^* \text{( resp.}(\Val X)^*\text{)} \mapsto (x
  \in X \mapsto \psi (\delta_x))$.
\end{theorem}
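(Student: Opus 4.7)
The algebraic content --- the cone bijection between $(\Val_b X)^*$ or $(\Val X)^*$ and $\Lform X$ --- has already been established in the text preceding the statement, via the Schr\"oder-Simpson theorem: every linear lower semicontinuous functional $\psi$ on $\Val_b X$ (resp.\ $\Val X$) is uniquely of the form $\nu \mapsto \int h\,d\nu$ with $h(x) = \psi(\delta_x)$. The two maps in the statement are thus inverse to one another, and linearity with respect to the cone structure is immediate from the linearity of integration in both $h$ and $\nu$. It remains to verify that each map is continuous.

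The continuity of $h \mapsto (\nu \mapsto \int h\,d\nu)$ is almost automatic: the preimage of a subbasic open $[\nu > r]$ of the target equals $\{h \in \Lform X \mid \int h\,d\nu > r\}$, and by Example~\ref{exa:V=C*} combined with Remark~\ref{rem:weakScott}, the subbasic opens of the weakScott topology on $\Lform X$ are precisely the sets of this form with $\nu$ ranging over $\Val X$; the bounded case uses $\Val_b X \subseteq \Val X$. The continuity of the inverse $\psi \mapsto (x \mapsto \psi(\delta_x))$ in the $(\Val X)^*$ case is also immediate: the preimage of $\{h \mid \int h\,d\mu > r\}$ with $\mu \in \Val X$ is $\{\psi \in (\Val X)^* \mid \psi(\mu) > r\} = [\mu > r]$, a subbasic open, using the identity $\psi(\mu) = \int h_\psi\,d\mu$.

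The main obstacle is the continuity of $\psi \mapsto (x \mapsto \psi(\delta_x))$ from $(\Val_b X)^*$ to $(\Lform X)_\wS$, since for $\mu \in \Val X$ unbounded the preimage $\{\psi \in (\Val_b X)^* \mid \int h_\psi\,d\mu > r\}$ is not visibly subbasic ($\psi(\mu)$ is not even defined). My plan is to factor through the extension map $e \colon (\Val_b X)^* \to (\Val X)^*$, $\psi \mapsto \tilde\psi$ with $\tilde\psi(\mu) \eqdef \int h_\psi\,d\mu$; this is a bijective cone morphism by Schr\"oder-Simpson on both ends, and its inverse (restriction to $\Val_b X$) is clearly continuous. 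Continuity of $e$ itself is the real content, and the plan is as follows: given $\psi_0$ with $\tilde\psi_0(\mu) > r$, produce a bounded continuous valuation $\nu \leq \mu$ with $\psi_0(\nu) > r$, so that the subbasic open $[\nu > r]$ is a weak$^*$Scott neighborhood of $\psi_0$ inside the preimage (since for every $\psi \in [\nu > r]$, $\tilde\psi(\mu) \geq \tilde\psi(\nu) = \psi(\nu) > r$ by monotonicity of $\tilde\psi$ and $\nu \leq \mu$).

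The construction of such a $\nu$ proceeds via the Choquet formula $\int h_{\psi_0}\,d\mu = \int_0^\infty \mu(U_t)\,dt$ with $U_t \eqdef \{x \in X \mid \psi_0(\delta_x) > t\}$ open (thanks to continuity of $x \mapsto \delta_x$ and lower semicontinuity of $\psi_0$), a lower Riemann sum approximation $\sum_j (t_j - t_{j-1}) \mu(U_{t_j}) > r$ over a suitable partition $0 = t_0 < t_1 < \cdots < t_n$, and a combination of bounded valuations built from constrictions $\mu_{|C}$ of $\mu$ to crescents of the finite lattice generated by the $U_{t_j}$, in the spirit of the Schr\"oder-Simpson decomposition lemma (Proposition~\ref{prop:schsimp:decomp}). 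Infinite-mass strata $\mu(U_{t_j}) = \infty$ are absorbed by interpolating scaled Dirac valuations $c \delta_x$ with $x \in U_{t_j}$ and $c \delta_x \leq \mu$ for appropriate $c$. This step is the technical heart of the proof and is where I anticipate spending the most effort.
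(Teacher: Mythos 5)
Your structure is sound: the cone bijection is the Schr\"oder--Simpson theorem, the two easy continuity checks are correct, and the reduction of the one hard direction (continuity of $\psi \mapsto h_\psi$ from $(\Val_b X)^*$ to $(\Lform X)_\wS$) to producing, for each $\mu \in \Val X$ and $\psi_0$ with $\int h_{\psi_0}\,d\mu > r$, a \emph{bounded} continuous valuation $\nu \leq \mu$ with $\psi_0(\nu) > r$ is exactly right, as is the factoring through the extension map $e \colon (\Val_b X)^* \to (\Val X)^*$.

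However, the technical construction you propose for that step --- Choquet formula, lower Riemann sums, constrictions to crescents of a finite lattice, and Dirac interpolations to absorb infinite-mass strata --- is both unverified (you say so yourself) and much heavier than needed. The step closes immediately using Heckmann's Theorem~4.2 of \cite{heckmann96}, which the paper invokes for precisely this purpose in the proof of Theorem~\ref{thm:VX:consistent:strong}: every $\mu \in \Val X$ is the supremum of a \emph{directed} family of bounded continuous valuations $\nu_i \leq \mu$. Since $\nu \mapsto \int h_{\psi_0}\,d\nu$ is Scott-continuous, $\int h_{\psi_0}\,d\mu = \sup_i \int h_{\psi_0}\,d\nu_i = \sup_i \psi_0(\nu_i)$, and $\int h_{\psi_0}\,d\mu > r$ forces $\psi_0(\nu_i) > r$ for some $i$. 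This $\nu_i$ is the bounded valuation you wanted. No stratification, no constrictions, no special treatment of infinite strata. Note also that the paper itself states the theorem as a direct consequence of the cited Schr\"oder--Simpson results and the preceding description of the weakScott topology; the directed-sup-of-bounded-valuations observation is the implicit ingredient making that description correct, and it is the entire content of the topological part of the argument.

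Until you replace the Choquet/constriction plan with this (or carry it out in full, which I suspect is genuinely delicate at the infinite strata), the proof has a gap at its self-described technical heart. With the replacement, it is complete and matches what the paper leaves to the reader.
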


Schr\"oder and Simpson also show that, for every topological space
$X$, every linear lower semicontinuous map from $\Val_{\leq 1} X$ to
$\creal$ is of the form $\nu \mapsto \int h \, d\nu$ for a unique
lower semicontinuous map $h$ \cite[Corollary~2.5]{GL-mscs13}, leading
to the following.
\begin{theorem}
  \label{thm:schsimp:pba}
  For every topological space $X$, $(\Val_{\leq 1} X)^*$ and
  $(\Lform X)_\wS$ are isomorphic as topological cones through the
  maps $h \in \Lform X \mapsto (\nu \mapsto \int h \,d\nu)$ and
  $\psi \in (\Val_{\leq 1} X)^* \mapsto (x \in X \mapsto \psi
  (\delta_x))$.
\end{theorem}
This can also be deduced from the previous theorem and
Theorem~\ref{thm:Vleq1->Vb}; we leave this as an exercise (replay the
proof below, replacing Theorem~\ref{thm:V1->Vb} by
Theorem~\ref{thm:Vleq1->Vb}).  The third and final theorem that
Schr\"oder and Simpson show, and which is not mentioned in
\cite{GL-mscs13}, is the following.
\begin{proposition}
  \label{prop:schsimp:prob}
  For every topological space $X$, every affine lower semicontinuous
  map $\psi$ from $\Val_1 X$ to $\creal$ is of the form
  $\nu \mapsto \int h \, d\nu$ for a unique lower semicontinuous map
  $h \colon X \to \creal$.
\end{proposition}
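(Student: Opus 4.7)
The plan is to reduce the statement to the previously established Schröder-Simpson theorem for $\Val_b X$ by extending $\psi$ through the free-cone construction of Theorem~\ref{thm:V1->Vb}. Recall that a map into $\creal$ with its Scott topology is lower semicontinuous if and only if it is continuous, and that $\creal$ is a semitopological (in fact topological) cone by Example~\ref{exa:cone:R}. Moreover, $\Val_1 X$ is a semitopological barycentric algebra by Example~\ref{exa:V1X:top:bary:alg}. Hence the hypothesis on $\psi$ means exactly that $\psi$ is an affine continuous map from $\Val_1 X$ to the semitopological cone $\creal$.

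First, I would invoke Theorem~\ref{thm:V1->Vb}, which asserts that $\Val_b X$ is the free semitopological cone over the semitopological barycentric algebra $\Val_1 X$, with unit the inclusion $\Val_1 X \hookrightarrow \Val_b X$. Applying the universal property to $\psi \colon \Val_1 X \to \creal$, we obtain a unique linear continuous extension $\hat\psi \colon \Val_b X \to \creal$ such that $\hat\psi(\nu) = \psi(\nu)$ for every $\nu \in \Val_1 X$.

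Next, I would apply the Schröder-Simpson theorem (Theorem~\ref{thm:schsimp}) to $\hat\psi$: since $\hat\psi$ is a linear lower semicontinuous map from $\Val_b X$ to $\creal$, there is a unique $h \in \Lform X$ such that $\hat\psi(\nu) = \int h\,d\nu$ for every $\nu \in \Val_b X$, and explicitly $h(x) = \hat\psi(\delta_x)$ for every $x \in X$. Restricting to $\nu \in \Val_1 X$ yields $\psi(\nu) = \int h\,d\nu$, as required.

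For uniqueness of $h$, note that $\delta_x \in \Val_1 X$ for every $x \in X$, so any candidate $h$ must satisfy $h(x) = \int h\,d\delta_x = \psi(\delta_x)$, which pins down $h$ pointwise. There is no real obstacle here: the content of the proposition is essentially already inside Theorems~\ref{thm:V1->Vb} and~\ref{thm:schsimp}, and the only thing worth double-checking is that the universal extension from $\Val_1 X$ to $\Val_b X$ can be carried out with codomain $\creal$, which is legitimate because $\creal$ is a semitopological cone.
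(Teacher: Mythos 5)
Your proposal follows exactly the paper's own proof: extend $\psi$ to a linear continuous $\hat\psi \colon \Val_b X \to \creal$ via the universal property of Theorem~\ref{thm:V1->Vb}, then apply the Schröder-Simpson theorem to $\hat\psi$ and restrict back, with uniqueness coming from $h(x) = \psi(\delta_x)$. The argument is correct and there is nothing to add.
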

\begin{proof}
  Uniqueness is trivial: we must have $h (x) = \psi (\delta_x)$ for
  every $x \in X$.  As for existence, since $\Val_b X$ is the free
  semitopological cone over the semitopological barycentric algebra
  $\Val_1 X$ by Theorem~\ref{thm:V1->Vb}, $\psi$ has a (unique)
  linear, lower semicontinuous extension
  $\hat\psi \colon \Val_b X \to \creal$.  By
  Theorem~\ref{thm:schsimp}, there is an $h \in \Lform X$ such that
  for every $\nu \in \Val_b X$, $\hat\psi (\nu) = \int h \,d\nu$.
  This holds in particular for every $\nu \in \Val_1 X$, for which
  $\hat\psi (\nu) = \psi (\nu)$.  \qed
\end{proof}

We obtain the following immediately.
\begin{theorem}
  \label{thm:schsimp:ba}
  For every topological space $X$, $(\Val_1 X)^\astar$ and
  $(\Lform X)_\wS$ are isomorphic as topological cones through the
  maps $h \in \Lform X \mapsto (\nu \mapsto \int h \,d\nu)$ and
  $\psi \in (\Val_1 X)^* \mapsto (x \in X \mapsto \psi (\delta_x))$.
\end{theorem}

\begin{remark}
  \label{rem:weakScott:facts}
  The following facts hold about the weakScott topology.
  \begin{enumerate}[label=(\roman*),leftmargin=*]
  \item The weakScott topology is always coarser than the original
    topology on $\B$, and sometimes strictly coarser, even on
    semitopological cones (we will illustrate this in
    Example~\ref{exa:LQ:not:strong:refl} below).
  \item The affine (resp.\ linear when $\B$ is pointed) lower
    semicontinuous maps from $\B$ or from $\B_{\wS}$ to $\creal$ are
    the same: notably, for every affine lower semicontinuous map
    $\Lambda \colon \B \to \creal$, $\Lambda$ is also lower
    semicontinuous from $\B_{\wS}$ to $\creal$, because for every
    $r \in \Rp \diff \{0\}$, $\Lambda^{-1} (]r, \infty])$ is open in
    $\B_{\wS}$ by definition.
  \item Hence ${(\B_{\wS})}^\astar = \B^\astar$, and
    ${(\B_{\wS})}^* = \B^*$ if $\B$ is pointed.  In particular,
    ${(\B_{\wS})}^{**} = \B^{**}$.
    % , so a semitopological cone $\C$ is
    % semi-reflexive if and only if $\C_{\wS}$ is.
  \item $\B_{\wS}$ is always locally linear, because its topology has
    a base of open sets of the form $\Lambda^{-1} (]1, \infty])$ with
    $\Lambda \in \B^\astar = {(\B_{\wS})}^\astar$.
  \item In particular, the weakScott topology coincides with the
    original topology on $\B$ if and only if $\B$ is locally linear.
  \item On cones $\C$ of the form $\Lform X$, the weakScott topology
    is the coarsest one such that
    $h \mapsto \int_{x \in X} h (x) \,d\nu$ is lower semicontinuous
    from $\Lform X$ to $\creal$, for every $\nu \in \Val X$.  This
    follows from the isomorphism between $\Val X \cong (\Lform X)^*$
    of Example~\ref{exa:V=C*}, and Remark~\ref{rem:weakScott}.
  % \item For every semitopological cone $\C$, if $\C$ is
  %   semi-reflexive, then $\C_{\wS}$ is reflexive.  Indeed,
  %   $\eta_\C \colon \C \to \C^{**} = {(\C_{\wS})}^{**}$ is bijective,
  %   hence also $\eta_{\C_{\wS}}$.  This shows that $\C_{\wS}$ is
  %   semi-reflexive, and we have seen in item~(ii) that it is locally
  %   linear, and that locally linear semi-reflexive cones are reflexive
  %   (Remark~\ref{rem:refl}).
  \end{enumerate}
\end{remark}

% \begin{example}
%   \label{exa:lattice:dual}
%   Let $L$ be a complete lattice, considered as a semitopological cone
%   with its Scott topology, as in Examples~\ref{exa:semilatt:cone},
%   \ref{exa:lattice:top:bary:alg}, \ref{exa:lattice:loc:convex} and
%   Remark~\ref{rem:lattice:not:loclin}.  Continuing
%   Exercise~\ref{ex:lattice:not:loclin}, show that:
%   \begin{enumerate*}[label=(\roman*)]
%   \item the lower semicontinuous linear maps $\Lambda$ from $L$ to
%     $\creal$ are the maps $\Lambda_{x_0}$, $x_0 \in L$, where
%     $\Lambda_{x_0} \eqdef 0$ for every $x \leq x_0$,
%     $\Lambda_{x_0} \eqdef \infty$ for all other points $x$;
%   \item the map $x_0 \mapsto \Lambda_{x_0}$ defines an isomorphism of
%     semitopological cones from $L^{op}$, with the upper topology, onto
%     the dual cone $L^*$;
%   \item $L$ is semi-reflexive;
%   \item $L$ is reflexive if and only if its Scott topology coincides
%     with its upper topology.
%   \end{enumerate*}
% \end{example}

\begin{lemma}
  \label{lemma:weakScott}
  For every space $X$, the weakScott topology on $\Lform X$ is finer
  than the topology of pointwise convergence on $\Lform X$, and is
  coarser than the original, Scott topology on $\Lform X$.  These three
  topologies coincide if $X$ is a c-space, in particular if $X$ is a
  continuous dcpo in its Scott topology.
\end{lemma}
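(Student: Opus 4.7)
The plan is to establish three inclusions: pointwise $\subseteq$ weakScott $\subseteq$ Scott in full generality, and Scott $\subseteq$ pointwise when $X$ is a c-space, which forces all three topologies to coincide. The inclusion pointwise $\subseteq$ weakScott is almost a tautology: by Example~\ref{exa:V=C*}, every $\nu \in \Val X$ yields an element $h \mapsto \int h \, d\nu$ of $(\Lform X)^*$, and Remark~\ref{rem:weakScott}(vi) records that the sets $\{h \mid \int h \, d\nu > r\}$ with $\nu \in \Val X$ and $r > 0$ form a subbase of the weakScott topology; choosing $\nu = \delta_x$ gives $\int h \, d\delta_x = h(x)$, so each pointwise subbasic open $[x > r]$ is weakScott-open. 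The inclusion weakScott $\subseteq$ Scott is even more immediate: every $\Lambda \in (\Lform X)^\astar$ is, by definition, lower semicontinuous for the original Scott topology on $\Lform X$, so each subbasic weakScott open $\Lambda^{-1}(]r, \infty])$ is Scott-open.

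The bulk of the work is to show Scott $\subseteq$ pointwise when $X$ is a c-space. Since $X$ is then core-compact, $\Lform X$ is a continuous dcpo (Example~\ref{exa:dcone:LX}) and its Scott topology has a base of sets $\uuarrow h_0$, $h_0 \in \Lform X$. I show each $\uuarrow h_0$ is pointwise-open. Fix $h \in \uuarrow h_0$, so $h_0 \ll h$, and use the interpolation property of $\ll$ in $\Lform X$ to pick $h_0^*$ with $h_0 \ll h_0^* \ll h$. The key step is to approximate $h$ from below by \emph{basic step functions} of the form $s = \max_{j=1}^n r_j \chi_{\interior{\upc y_j}}$ with $y_j \in X$ and $0 < r_j < h(y_j)$: for each $z \in X$ and $t < h(z)$, the c-space property combined with the continuity of $\Open X$ (where $U \ll V$ iff $U \subseteq \interior{\upc F}$ for some finite $F \subseteq V$) produces a finite set $F \subseteq h^{-1}(]t, \infty])$ with $z \in \interior{\upc F}$; the step function $\max_{y \in F} t \cdot \chi_{\interior{\upc y}}$ is then of the required form (each $y \in F$ satisfies $h(y) > t$) and takes value at least $t$ at $z$. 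Hence the basic step functions below $h$ form a directed family whose pointwise, and therefore Scott, supremum is $h$.

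From $h_0^* \ll h$ and this directed approximation, we extract a basic step function $s = \max_{j=1}^n r_j \chi_{\interior{\upc y_j}}$ with $h_0^* \leq s$ and $r_j < h(y_j)$ for each $j$. Let $V := \bigcap_{j=1}^n [y_j > r_j]$, a pointwise-open neighborhood of $h$. For any $h' \in V$, monotonicity of $h'$ (every lower semicontinuous map is monotonic in the specialization preordering) combined with $h'(y_j) > r_j$ yields $h'(z) \geq h'(y_j) > r_j$ on $\upc y_j \supseteq \interior{\upc y_j}$, whence $s \leq h'$ pointwise. Then $h_0 \ll h_0^* \leq s \leq h'$, and since $\ll$ is preserved by $\leq$ on the right, $h_0 \ll h'$, i.e., $h' \in \uuarrow h_0$. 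The main obstacle is justifying the approximation step: it requires pairing the c-space structure of $X$ with the continuity of $\Open X$ to refine an open $h^{-1}(]t, \infty])$ as a directed union of basic opens $\interior{\upc y}$ with $h(y) > t$; once this is in hand the rest is bookkeeping.
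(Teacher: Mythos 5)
Your proof is correct. For the two easy inclusions (pointwise $\subseteq$ weakScott $\subseteq$ Scott) you argue essentially as the paper does: the pointwise subbasic opens come from the Dirac valuations $\delta_x$, and the weakScott subbasic sets $\Lambda^{-1}(]r, \infty])$ are Scott-open simply because each $\Lambda \in (\Lform X)^\astar$ is by definition lower semicontinuous for the original topology.

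Where you diverge is in the c-space case, and this is the interesting part. The paper dispatches it in one line by citing an external result (\cite[Proposition 11.2]{JGL:fullabstr:I}, which says that for a c-space $X$ and a bc-domain $Y$ the Scott and pointwise topologies on $[X \to Y]$ coincide). You instead give a direct, self-contained derivation: interpolate $h_0 \ll h_0^* \ll h$ in the continuous dcpo $\Lform X$, approximate $h$ from below by the directed family of step functions $\max_j r_j \chi_{\interior{\upc y_j}}$ with $r_j < h(y_j)$ (this directed approximation is exactly where the c-space property of $X$ enters), extract a step function $s$ with $h_0^* \leq s$, and observe that the pointwise-open set $V = \bigcap_j [y_j > r_j]$ contains $h$ and is contained in $\uuarrow h_0$, because $h' \in V$ and monotonicity of $h'$ give $s \leq h'$, whence $h_0 \ll h_0^* \leq s \leq h'$. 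The bookkeeping is correct; one small overcomplication is that you invoke finite sets $F$ and the continuity of $\Open X$ where a single point $y$ with $z \in \interior{\upc y}$ suffices by the c-space property directly, but this does no harm. The net gain of your approach is that it exposes the mechanism (step-function approximation plus interpolation) that underlies the cited proposition, at the cost of being a bit longer than the paper's citation.
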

\begin{proof}
  For the first part, the subbasic open subset $[x > r] = \{h \in
  \Lform X \mid h (x) > r\}$ of the topology of pointwise convergence
  is equal to the subbasic open subset $[\delta_x > r]$ of the
  weakScott topology.

  For the second part, it suffices to show that $[\nu > r]$ is
  Scott-open for every $\nu \in \Val X$ and every
  $r \in \Rp \diff \{0\}$, and this is easy.

  Finally, if $X$ is a c-space, and since $\creal$ is a bc-domain
  (even a complete lattice), then the Scott topology coincides with
  the topology of pointwise convergence by \cite[Proposition
  11.2]{JGL:fullabstr:I}.  Since the weakScott topology lies
  inbetween, it is equal to both of them.  \qed
\end{proof}

\begin{example}
  \label{exa:LQ:not:strong:refl}
  The Scott topology on $\Lform \rat$ is strictly finer than the
  weakScott topology.  In particular, $\Lform \rat$ is not locally
  linear.
\end{example}
\begin{proof}
  We proceed in a series of steps.  $(A)$ For every space $X$, the
  topology of pointwise convergence is coarser than the weakScott
  topology.  Indeed, the subbasic open subset
  $[x > r] = \{h \in \Lform X \mid h (x) > r\}$ of the topology of
  pointwise convergence is equal to the subbasic open subset
  $[\delta_x > r]$ of the weakScott topology.
  
  Let a \emph{discrete valuation} be a possibly infinite sum $\sum_{i
    \in I} a_i \delta_{x_i}$, where $a_i \in \creal$ (not just $\Rp$
  as with simple valuations).
  
  We claim: $(B)$ given any topological space $X$ on which every
  continuous valuation is discrete, $(\Lform X)_\wS = \Lform_\pw X$,
  namely the weakScott topology on $\Lform X$ coincides with the
  topology of pointwise convergence.  In light of $(A)$, it suffices
  to show that $[\nu > r]$ is open in the topology of pointwise
  convergence, where $\nu$ is any continuous valuation and
  $r \in \Rp \diff \{0\}$.  By assumption, $\nu$ is of the form
  $\sum_{i \in I} a_i \delta_{x_i}$.  Without loss of generality, we
  can also assume $a_i \neq 0$ for every $i \in I$.  Let
  $h \in [\nu > r]$, hence $\sum_{i \in I} a_i h (x_i) > r$, or
  equivalently $r \ll \sum_{i \in I} a_i h (x_i)$.  There is a finite
  subset $J$ of $I$ such that $r < \sum_{i \in J} a_i h (x_i) $.
  Because multiplication and addition are continuous on $\creal$,
  there are numbers $r_i$ ($i \in J$) such that $r_i \ll a_i h (x_i)$
  for every $i \in J$ and $r \ll \sum_{i \in J} r_i$.  Then
  $h \in \bigcap_{i \in J} [(r_i/a_i) \ll x_i]$ (where $[s \ll x]$
  denotes $[x > s]$ if $s > 0$, $\bigcup_{t > 0} [x > t]$ if $s=0$,
  hence is open in the topology of pointwise convergence), and for
  every $k \in \bigcap_{i \in J} [(r_i/a_i) \ll x_i]$,
  $\sum_{i \in I} a_i k (x_i) \geq \sum_{i \in J} a_i k (x_i) \geq
  \sum_{i \in J} r_i > r$; therefore $k \in [\nu > r]$.

  $(B)$ applies to $X = \rat$.  Indeed, let $\nu$ be any continuous
  valuation on $\rat$.  For every $q \in \rat$, let
  $a_q \eqdef \inf_{\epsilon > 0} \nu (]q-\epsilon, q+\epsilon[)$.  We
  verify that $\nu = \sum_{q \in \rat} a_q \delta_q$, namely that
  $\nu (U) = \sum_{q \in U \cap \rat} a_q$ for every open subset
  $U$ of $\real$.
  \begin{itemize}
  \item Let us check that $\nu (U) \leq \sum_{q \in U \cap \rat} a_q$.
    If $a_q = \infty$ for some $q \in U \cap \rat$, this is
    obvious. Otherwise, we enumerate the elements of $\rat$ as
    ${(q_n)}_{n \in \nat}$.  For every $\epsilon > 0$, for every
    $n \in \nat$ such that $q_n \in U$, one can find $\epsilon_n > 0$
    such that
    $a_q \geq \nu (]q_n - \epsilon_n, q_n + \epsilon_n[) -
    \epsilon/2^{n+1}$.  Then $U$ is included in
    $\bigcup_{n \in \nat, q_n \in U} ]q_n - \epsilon_n, q_n +
    \epsilon_n[$, so
    $\nu (U) \leq \sum_{n \in \nat, q_n \in U} \nu (]q_n - \epsilon_n,
    q_n + \epsilon_n[)$
    $\leq \sum_{n \in \nat, q_n \in U} a_{q_n} + \sum_{n \in \nat}
    \epsilon/2^{n+1} = \sum_{q \in U \cap \rat} a_q + \epsilon$.
    Taking infima over $\epsilon > 0$,
    $\nu (U) \leq \sum_{q \in U \cap \rat} a_q$.
  \item Conversely, let us check that
    $\nu (U) \geq \sum_{q \in U \cap \rat} a_q$.  If $a_q = \infty$
    for some $q \in U \cap \rat$, then, picking $\epsilon > 0$ such
    that ${]q-\epsilon, q+\epsilon[} \subseteq U$, we have
    $\nu (]q-\epsilon, q+\epsilon[) = \infty$, hence
    $\nu (U) = \infty$, so that the inequality is obvious.  Otherwise,
    we show that $\sum_{q \in A} a_q \leq \nu (U)$ for every finite
    subset $A$ of $U \cap \rat$.  The result will follow by taking
    suprema over $A$.  We enumerate the finitely many elements of $A$
    as $q_1$, \ldots, $q_n$.  Let
    $\epsilon \eqdef \min_{1\leq i < j \leq n} |q_i-q_j|$.  This is
    strictly positive since $A$ is finite.  For each $i$,
    $1\leq i\leq n$, let $\epsilon_i > 0$ be such that
    ${]q_i-\epsilon_i, q_i+\epsilon_i[} \subseteq U$, and such that
    $\epsilon_i < \epsilon/2$.  Then all the intervals
    $]q_i-\epsilon_i, q_i+\epsilon_i[$ are pairwise disjoint, and
    therefore
    $\sum_{q \in A} a_q \leq \sum_{i=1}^n \nu (]q_i-\epsilon_i,
    q_i+\epsilon_i[) = \nu (\bigcup_{i=1}^n ]q_i-\epsilon_i,
    q_i+\epsilon_i[) \leq \nu (U)$.
  \end{itemize}

  Next, we claim: $(C)$ given any infinite compact subset $Q$ of a
  $T_1$ topological space $X$,
  $[Q > 1] \eqdef \{h \in \Lform \mid \forall x \in Q, h (x) > 1\}$ is
  not open in the topology of pointwise convergence.  Indeed, let us
  imagine that $[Q > 1]$ contains a basic open set
  $A \eqdef \{h \in \Lform X \mid h (x_1) > r_1, \cdots, h (x_n) >
  r_n\}$ in the topology of pointwise convergence, where the elements
  $x_i$ are pairwise distinct and $r_1, \cdots, r_n > 0$.  We pick any
  $x \in Q \diff \{x_1, \cdots, x_n\}$.  This is possible because $Q$
  is infinite.  Since $X$ is $T_1$, there are open neighborhoods $U_i$
  of $x_i$ that do not contain $x$, for each $i$, $1\leq i\leq n$.  We
  pick $s_1 > r_1$, \ldots, $s_n > r_n$, and we let
  $h \eqdef \sup_{i=1}^n s_i \chi_{U_i}$.  Then $h$ is lower
  semicontinuous, and $h (x_i) \geq s_i > r_i$ for every $i$, so $h$
  is in $A$.  However, $h (x) = 0$, by construction.  It follows that
  $h$ is not in $[Q > 1]$.

  The sets
  $[Q > r] \eqdef \{h \in \Lform \mid \forall x \in Q, h (x) > r\}$
  with $Q$ compact and $r \in \Rp \diff \{0\}$ form a subbase of the
  compact-open topology on $\Lform X$.  It is also finer than the
  topology of pointwise convergence, because the subbasic open subsets
  $[x > r]$ of the latter can be written as $[\upc x > r]$.  The
  compact-open topology is coarser than the Scott topology.  The core
  of the argument consists in showing that for every directed family
  ${(h_i)}_{i \in I}$ with (pointwise) supremum $h$ in $\Lform X$, if
  $h \in [Q > r]$ then some $h_i$ is in $[Q > r]$.  But
  $h \in [Q > r]$ means that
  $Q \subseteq h^{-1} (]r, \infty]) = \bigcup_{i \in I} h_i^{-1} (]r,
  \infty])$, and since $Q$ is compact, there is an $i \in I$ such that
  $Q \subseteq h_i^{-1} (]r, \infty])$, that is, $h_i \in [Q > r]$.

  We conclude.  The space $\rat$ is Hausdorff hence $T_1$.  Let
  $Q \eqdef \{1/2^n \mid n \in \nat\} \cup \{0\}$.  It is easy to see
  that $Q$ is compact and infinite in $\rat$.  By $(C)$, the
  compact-open topology on $\Lform \rat$ is strictly finer than the
  topology of pointwise convergence.  We have just argued that it is
  coarser than the Scott topology, so the Scott topology is also
  strictly finer than the topology of pointwise convergence.  The
  latter coincides with the weakScott topology by $(B)$, which applies
  to $X = \rat$, as we have seen.  We have proved that the weakScott
  and Scott topologies differ on $\Lform \rat$.  Hence $\Lform \rat$
  is not locally linear, thanks to Remark~\ref{rem:weakScott:facts},
  item~$(v)$.  \qed
\end{proof}

\subsection{Weak local convexity and barycenters, part 4}
\label{sec:weakly-locally-conv}

The following notion of local convexity was introduced and studied by
Heckmann \cite[Section~6.3]{heckmann96}, on cones.  The name he used
for that notion was ``locally convex'', but that should not be
confused with local convexity, as defined in
Section~\ref{sec:locally-conv-semit}.
\begin{definition}[Weakly locally convex]
  \label{defn:loc:convex:heckmann}
  A semitopological barycentric algebra $\B$ is \emph{weakly locally
    convex} if and only if every point $x \in \B$ has a neighborhood
  base of convex sets, namely if and only if every open neighborhood
  $U$ of $x$ contains a convex (not necessarily open) neighborhood of
  $x$.
\end{definition}
% This in particular applies to semitopological cones, too.

\begin{remark}
  \label{rem:locconv:weak}
  Every locally convex semitopological barycentric algebra is weakly
  locally convex.  In the realm of topological real vector spaces,
  there would be no difference between the two notions.  We will see
  that there is no difference either on consistent barycentric
  algebras and cones, which are the subject of
  Section~\ref{sec:cons-baryc-algebr} (see
  Corollary~\ref{corl:locconv:cons}).  See also
  Corollary~\ref{corl:locconvcomp:locconv} for another case where
  local convexity is implied by apparently weaker properties.
\end{remark}

\begin{problem}
  \label{pb:locconv:weak}
  Is every weakly locally convex semitopological barycentric algebra
  in fact locally convex?  If so, local convexity and weak local
  convexity would be the same notion.  The same question applies to
  semitopological cones.
\end{problem}

% added jgl 11 juin 2026
\begin{example}
  \label{exa:lp:heckmann}
  Not every semitopological barycentric algebra, in fact not every
  every topological barycentric algebra, is weakly locally convex, as
  the following counterexample, inspired by Tychonoff's example of a
  non-locally convex real vector space shows
  \cite[page~768]{Tychonoff:fixpunkt}.  Let $p \in {]0, 1[}$.  We
  write sequence of numbers as ${(x_n)}_{n \in \nat}$, abbreviated as
  $\vec x$; $-\vec x$ denotes ${(-x_n)}_{n \in \nat}$.  Let
  $\ell_-^p (\nat)$ be the set of all sequences $-\vec x$ such that
  $x_n \in \Rp$ for every $n \in \nat$ and
  $\sum_{n \in \nat} x_n^p < \infty$, with pointwise addition and
  scalar multiplication.  Its topology is the open ball topology of
  the quasi-metric $d^p$ defined by
  $d^p (-\vec x, -\vec y) \eqdef \max (y_n - x_n, 0)^p$; in other
  words, the open balls
  $B_{-\vec x, < r} \eqdef \{-\vec y \in \ell_-^p (\nat) \mid d^p
  (-\vec x, -\vec y) < r\}$ are a base of the topology.  The details
  are given in Appendix~\ref{sec:topol-baryc-algebra}.  Taking
  non-positive sequences $-\vec x$ is necessary: we let the reader
  check that the analogous topological cone $\ell_+^p (\nat)$ of
  sequences $\vec x$ such that $\sum_{n \in \nat} x_n^p < \infty$ is
  locally linear, using the fact that the projections
  $\vec x \mapsto x_i$ are linear and lower semicontinuous.
\end{example}

\begin{example}
  \label{exa:peck:nonlocconv}
  Not every semitopological cone is weakly locally convex either.  The
  following counterexample is based on similar construction principles
  as Example~\ref{ex:pBA:nonstrictembed}.  Let $\C$ be the cone of all
  functions from $[0, 1[$ to $\Rp$, with pointwise addition and scalar
  multiplication.  The zero is the constant function $0$.  We order
  $\C$ pointwise.  For every $n \in \nat$, let
  $U_n \eqdef \bigcup_{i=0}^{2^n-1} \left[\frac {2i} {2^{n+1}}, \frac
    {2i+1} {2^{n+1}}\right[$ and let $\one$ be the constant function
  equal to $1$.  We declare a subset $C$ of $\C$ \emph{closed} if and
  only if $C$ is Scott-closed (with respect to the pointwise
  ordering---we make this precise because there is no reason to
  believe that the specialization preordering of the forthcoming
  topology on $\C$ would be the pointwise ordering) and for all
  $a \in \Rp$ and $g \in \C$ such that $2a .  \chi_{U_n} + g \in C$
  for infinitely many values of $n \in \nat$, $a . \one + g$ is in
  $C$.  The closed sets in this sense form the closed sets of a
  topology, turning $\C$ into a semitopological cone in which the
  sequence ${(2.\chi_{U_n})}_{n \in \nat}$ converges to $\one$.  Let
  $\nat^* \eqdef \nat \diff \{0\}$.  For every $n \in \nat^*$, let
  $f_n \eqdef \frac 1 n \sum_{i=n}^{2n-1} 2.\chi_{U_i}$.  Let
  $f_\infty \eqdef \limsup_{n \in \nat^*} f_n$, namely
  $f_\infty (x) \eqdef \inf_{n \in \nat^*} \sup_{m \in \nat^*, m\geq
    n} f_m (x)$ for every $x \in {[0, 1[}$.  Let
  $C \eqdef \dc \{f_n \mid n \in \nat^*\} \cup \dc f_\infty$, where
  $\dc$ denotes downward-closure with respect to the pointwise
  ordering.  One can check that $C$ is closed in $\C$ and
  $\one \not\in C$, so ${(f_n)}_{n \in \nat^*}$ does not converge to
  $\one$.  However, for every open neighborhood $U$ of $\one$ in $\C$,
  $\conv U$ must intersect $C$, so $\C$ is not weakly locally convex.
  See Appendix~\ref{sec:semit-cone-that} for details.
\end{example}
% end added

\begin{problem}
  \label{pb:locconv:weak:1}
  % added jgl 11 juin 2026
  Is every topological (not just semitopological) cone weakly locally
  convex?
  % end added
  % removed jgl 11 juin 2026
  % Is every semitopological barycentric algebra weakly locally convex?
  % If so, the notion would be redundant.  There are several examples of
  % non-locally convex real vector spaces.  The most well-known are the
  % space $\ell^p (\nat)$ of sequences of natural numbers with finite
  % $\ell^p$ norm, with the topology induced by that norm, when
  % $0 < p < 1$ \cite[page~768]{Tychonoff:fixpunkt}, or the space of
  % Lebesgue measurable maps from $[0, 1]$ to $\real$ up to measure $0$
  % under the norm $||f|| \eqdef \int_0^1 |f (t)| / (1+|f (t)|)\,dt$
  % \cite{Peck:nonlocconv:ex}.  None of these examples seem to adapt to
  % the case of cones or of barycentric algebras.
  % end removed  
\end{problem}

The following is due to Heckmann in the case of cones
\cite[Theorems~6.7, 6.8]{heckmann96}.  The map $\beta$ constructed
below is the \emph{barycenter map}, and $\beta (\nu)$ is the
\emph{barycenter} of $\nu$.
\begin{theorem}[Barycenters, part 4]
  \label{thm:locconv:heckmann:beta}
  Let $\C$ be an ordered cone (resp.\ an ordered pointed barycentric
  algebra $\B$, resp.\ an ordered barycentric algebra $\B$).

  There is a unique linear (resp.\ linear, resp.\ affine) map $\beta$
  from $\Val_\fin \C$ (resp.\ $\Val_{\leq 1, \fin} \B$, resp.\
  $\Val_{1, \fin} \B$) to $\C$ (resp., to $\B$) such that
  $\beta (\delta_x) = x$ for every $x \in \C$ (resp., in $\B$), and it
  is monotonic.  Explicitly,
  $\beta (\sum_{i=1}^n a_i \delta_{x_i}) = \sum_{i=1}^n a_i \cdot x_i$
  for every simple (resp.\ simple subnormalized, resp.\ simple
  normalized) valuation $\sum_{i=1}^n a_i \delta_{x_i}$.

  If $\C$ (resp.\ $\B$) is a weakly locally convex $T_0$
  semitopological cone (resp., a weakly locally convex $T_0$ pointed
  semitopological barycentric algebra, resp.\ a weakly locally convex
  $T_0$ semitopological barycentric algebra), then $\beta$ is
  continuous.
\end{theorem}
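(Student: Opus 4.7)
Start by noting that $\Val_\fin \C$, $\Val_{\leq 1, \fin} \B$, and $\Val_{1, \fin} \B$ are convex subspaces of $\Val \C$ (resp.\ $\Val \B$), hence semitopological barycentric algebras by Lemma~\ref{lemma:bary:in:cone}; the first is itself a cone and the second is pointed. Every simple valuation is generated from its Diracs, in a form matching its type: as a finite $\Rp$-linear combination in the cone case, as a $\widetilde\Delta_n$-indexed barycenter in the pointed case (Def.\ and Prop.~\ref{prop:bary:leq1}), and as a $\Delta_n$-indexed barycenter in the probability case (Def.\ and Prop.~\ref{prop:bary:1}). Any linear (resp.\ affine) map sending $\delta_x \mapsto x$ must therefore coincide with the displayed formula, settling uniqueness. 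Taking that formula as the definition of $\beta$ gives existence: well-definedness reduces to invariance under permutation of indices, merging of repeated Diracs, and removal of zero-weight summands, all of which follow from items~2 and~3 of Def.\ and Prop.~\ref{prop:bary:1} and item~1 of Def.\ and Prop.~\ref{prop:bary:leq1}; the required linearity (resp.\ affineness) is then a direct computation. For monotonicity, suppose $\mu = \sum_i a_i \delta_{x_i} \leq \nu = \sum_j b_j \delta_{y_j}$; Jones' splitting lemma (recalled inside the proof of Proposition~\ref{prop:schsimp:decomp}) produces $(t_{ij}) \geq 0$ with $\sum_j t_{ij} = a_i$, $\sum_i t_{ij} \leq b_j$, and $t_{ij} > 0 \Rightarrow x_i \leq y_j$, so that $\beta(\mu) = \sum_{i,j} t_{ij}\cdot x_i \leq \sum_{i,j} t_{ij}\cdot y_j \leq \sum_j b_j\cdot y_j = \beta(\nu)$, using monotonicity of scalar multiplication and the minimality of $0$ (resp.\ $\bot$); the probability case collapses the second inequality to an equality by total-mass preservation.

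Continuity is the main challenge. Fix $\nu_0 = \sum_{i=1}^n a_i \delta_{x_i}$ in canonical form (distinct $x_i$, all $a_i > 0$) and an open $U \ni \beta(\nu_0)$. By weak local convexity, pick a convex $V \subseteq U$ with $\beta(\nu_0)$ in some open $V^\circ \subseteq V$. Invoke Proposition~\ref{prop:bary:1:cont} (resp.\ Proposition~\ref{prop:bary:leq1:cont}) applied to the $n$-ary barycenter map to produce open neighborhoods $W_i \ni x_i$ and some $\epsilon > 0$ such that whenever $(a'_i)$ lies within $\epsilon$ of $(a_i)$ in $\Delta_n$ (resp.\ $\widetilde\Delta_n$) and $x'_i \in W_i$, the barycenter $\sum_i a'_i \cdot x'_i$ belongs to $V^\circ$. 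Then $\mathcal U \eqdef \bigcap_i [W_i > a_i - \epsilon]$ is a weak-topology neighborhood of $\nu_0$, and the plan is to show $\beta(\mathcal U) \subseteq U$. In the cone and pointed cases, apply the Schr\"oder--Simpson decomposition (Proposition~\ref{prop:schsimp:decomp}) to the finite lattice generated by the $W_i$: any $\nu \in \mathcal U$ splits as $\nu = \nu_1 + \nu_2$ where $\nu_1$ carries mass close to $a_i$ on each $W_i$ and $\nu_2$ is residual with small total mass; linearity of $\beta$ yields $\beta(\nu) = \beta(\nu_1) + \beta(\nu_2)$; the construction places $\beta(\nu_1)$ in $V^\circ$, and convexity of $V$ absorbs $\beta(\nu_2)$ into $V \subseteq U$.

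Two points merit flagging as the real obstacles. First, the appeals to Propositions~\ref{prop:bary:1:cont} and~\ref{prop:bary:leq1:cont} formally require $\B$ to be topological rather than merely semitopological; one must either verify that weak local convexity together with $T_0$ promotes separate to joint continuity (in the spirit of Proposition~\ref{prop:c:bary:alg}), or avoid joint continuity by closing the slack coordinate-by-coordinate, relying again on convexity of $V$. Second and more seriously, the probability case resists the Schr\"oder--Simpson splitting since total-mass equality forces $\nu_2 = 0$; the expected workaround is to push the problem along the affine continuous map $\etac_{\B} \colon \B \to \conify(\B)$ from Theorem~\ref{thm:conify:semitop}, using the identity $\etac_{\B} \circ \beta = \beta_{\conify(\B)} \circ \Val_\fin(\etac_{\B})$ (valid because the $a_i$ sum to $1$ and $\etac_{\B}$ is affine) to reduce to the cone case for $\conify(\B)$, together with the delicate task of transferring continuity back to $\B$ under only the hypotheses of weak local convexity and $T_0$.
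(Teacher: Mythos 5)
Your treatment of uniqueness, the explicit formula, well-definedness, and monotonicity is on the right track and matches the paper in spirit; in particular, the paper also uses Jones' splitting lemma on the stochastic ordering to obtain $\sum_i a_i \cdot x_i \leq \sum_j b_j \cdot y_j$ whenever $\sum_i a_i \delta_{x_i} \leq \sum_j b_j \delta_{y_j}$, which establishes well-definedness and monotonicity simultaneously.

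The continuity argument, however, has a genuine gap that you do not flag, and it is more serious than the two you do. The central unjustified step is ``the construction places $\beta(\nu_1)$ in $V^\circ$.'' The Schr\"oder--Simpson decomposition (Proposition~\ref{prop:schsimp:decomp}) only controls the scalar quantities $\nu_1(W_i)$ and $\nu_1(\B)$; it says nothing about where $\nu_1$'s mass is concentrated \emph{within} the $W_i$, nor about where its barycenter lands. If the $W_i$ overlap (which they will when the $x_i$ are close), a simple valuation $\nu_1$ can have $\nu_1(W_i) \geq \alpha a_i$ for every $i$ while all its mass sits on a single point $z \in \bigcap_i W_i$, so that $\beta(\nu_1) = \nu_1(\B)\cdot z$, which need not lie in $V^\circ$. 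This is the analogue of why Schr\"oder--Simpson works in the proof of Theorem~\ref{thm:V1->Vb}: there, one has an externally given continuous map $f$, and controlling the weak-topology neighborhood of $\nu_1$ suffices because $f$ then does the rest. Here $\beta$ \emph{is} the map whose continuity is at stake, so one must control its value on $\nu_1$ directly, and Schr\"oder--Simpson does not do that.

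The paper's proof closes this gap with a different, sharper tool, namely Jones' splitting lemma applied in Heckmann's ``non-standard'' fashion: one introduces the auxiliary partial order $\sqsubseteq$ on the disjoint sum $I+J$ of index sets, with $i \sqsubseteq j$ iff $y_j \in \interior{C_i}$, and checks the stochastic ordering on the associated simple valuations by requiring $\mu(V_A) > c_A$ for \emph{every} nonempty $A \subseteq I$, where $V_A = \bigcup_{i\in A} \interior{C_i}$ --- not merely the singleton conditions $\mu(\interior{C_i}) > c_i$ that your $\mathcal U$ encodes. The resulting transport matrix $(t_{ij})$ explicitly routes each unit of mass from $i\in I$ to points $y_j\in\interior{C_i}$, so that the weighted average $z_i = \sum_{j:\,y_j\in\interior{C_i}} \tfrac{t_{ij}}{c_i}\cdot y_j$ is a convex combination of points of $C_i$, hence lies in $C_i$ (this is where weak local convexity enters, not in absorbing a residual term), and consequently $\beta(\mu) \geq \sum_i c_i\cdot z_i \in V$. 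This construction works uniformly across all three cases, so your second flagged obstacle (failure of Schr\"oder--Simpson in the probability case) disappears once the right decomposition is used. Your first flagged concern, by contrast, is a genuine observation: the paper's own continuity proof opens with ``assuming additionally that $\C$ is weakly locally convex, $T_0$ and topological,'' invoking \emph{joint} continuity of $(\vec c,\vec z)\mapsto\sum_i c_i\cdot z_i$, which the statement's ``semitopological'' hypothesis does not deliver.
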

The notation $\sum_{i=1}^n a_i \cdot x_i$ is clear if $\C$ is a cone;
it is justified by Definition and Proposition~\ref{prop:bary:1} in the
case of barycentric algebras and by Definition and
Proposition~\ref{prop:bary:leq1} in the case of pointed barycentric
algebras.

\begin{proof}
  If $\beta$ exists and $\C$ is a cone, then linearity implies that
  $\beta (\sum_{i=1}^n a_i \delta_{x_i}) = \sum_{i=1}^n a_i \cdot x_i$
  for every simple valuation $\sum_{i=1}^n a_i \delta_{x_i}$, showing
  uniqueness.  When $\C$ is an barycentric algebra and
  $\sum_{i=1}^n a_i = 1$, we claim that we must have a similar
  formula, by induction on $n \geq 1$, and using Definition and
  Proposition~\ref{prop:bary:1}: if $a_n=1$, then $a_i=0$ for every
  $i \neq n$, so $\sum_{i=1}^n a_i \delta_{x_i} = \delta_{x_n}$, whose
  image under $\beta$ must be equal to
  $x_n = \sum_{i=1}^n a_i \cdot x_i$; otherwise,
  $\beta (\sum_{i=1}^n a_i \delta_{x_i}) = \beta ((1-a_n) \cdot
  \sum_{i=1}^n \frac {a_i} {1-a_n} \delta_{x_i} + a_n \delta_{x_n}) =
  \beta (\sum_{i=1}^n \frac {a_i} {1-a_n} \delta_{x_i}) +_{1-a_n}
  \beta (\delta_{x_n}) = (\sum_{i=1}^{n-1} \frac {a_i} {1-a_n} \cdot
  x_i) +_{1-a_n} x_n$ (by induction hypothesis and since
  $\beta (\delta_{x_n}) = x_n$) $= \sum_{i=1}^n a_i \cdot x_i$.  When
  $\C$ is a pointed barycentric algebra, we use Definition and
  Proposition~\ref{prop:bary:leq1}, then
  $\beta (\sum_{i=1}^n a_i \delta_{x_i})$ must be equal to
  $(\sum_{i=1}^n a_i) \cdot \beta (\sum_{i=1}^n \frac {a_i}
  {\sum_{i=1}^n a_i} \delta_{x_i}) = (\sum_{i=1}^n a_i) \cdot
  (\sum_{i=1}^n \frac {a_i} {\sum_{i=1}^n a_i} \cdot x_i) =
  \sum_{i=1}^n a_i \cdot x_i$ if $\sum_{i=1}^n a_i \neq 0$, since
  $\beta$ is linear; if $\sum_{i=1}^n a_i = 0$,
  $\beta (\sum_{i=1}^n a_i \delta_{x_i}) = \beta (0) = \beta (0 \cdot
  \delta_\bot) = 0 \cdot \bot = \bot$.

  In order to show that $\beta$ exists, we simply define
  $\beta (\sum_{i=1}^n a_i \cdot x_i)$ as
  $\sum_{i=1}^n a_i \cdot x_i$; if $\C$ is a barycentric algebra and
  $\sum_{i=1}^n a_i=1$, we rely on Definition and
  Proposition~\ref{prop:bary:1} to make sense of it, and if $\C$ is a
  pointed barycentric algebra and $\sum_{i=1}^n a_i \leq 1$, we rely
  on Definition and Proposition~\ref{prop:bary:leq1}.  We additionally
  need to show that if
  $\sum_{i=1}^n a_i \delta_{x_i} = \sum_{j=1}^m b_j \delta_{y_j}$,
  then $\sum_{i=1}^n a_i \cdot x_i = \sum_{j=1}^m b_j \cdot y_j$.  We
  will show that if
  $\sum_{i=1}^n a_i \delta_{x_i} \leq \sum_{j=1}^m b_j \delta_{y_j}$,
  then $\sum_{i=1}^n a_i \cdot x_i \leq \sum_{j=1}^m b_j \cdot y_j$,
  which will show this (since
  $\sum_{i=1}^n a_i \delta_{x_i} = \sum_{j=1}^m b_j \delta_{y_j}$ will
  imply that
  $\sum_{i=1}^n a_i \cdot x_i \leq \sum_{j=1}^m b_j \cdot y_j$ and
  $\sum_{i=1}^n a_i \cdot x_i \geq \sum_{j=1}^m b_j \cdot y_j$, and
  since $\C$ is ordered, namely since $\leq$ is antisymmetric), and
  also that $\beta$ is monotonic.

  We use Jones' splitting lemma \cite[Theorem 4.10]{Jones:proba}, see
  also \cite[Proposition IV-9.18]{GHKLMS:contlatt}.  We have already
  used it in the proof of Proposition~\ref{prop:schsimp:decomp}.  By
  this splitting lemma, there is a transport matrix
  ${(t_{ij})}_{\substack{1\leq i\leq n\\1\leq j\leq m}}$ of
  non-negative real numbers such that: $(a)$ the only non-zero numbers
  $t_{ij}$ are such that $x_i \leq y_j$, $(b)$
  $\sum_{j=1}^m t_{ij} = a_i$ for every $i \in \{1, \cdots, n\}$, and
  $(c)$ $\sum_{i=1}^n t_{ij} \leq b_j$ for every
  $j \in \{1, \cdots, m\}$.  In case $\C$ is a (pointed) ordered
  barycentric algebra $\B$, we use the injective affine preorder
  embedding $\etac_{\B}$ of $\C=\B$ into $\conify (\B)$ of
  Proposition~\ref{prop:bary:alg:conify:ord}, and we obtain:
  \ifta
  \begin{align*}
    \sum_{i=1}^n a_i \cdot \etac_{\B} (x_i)
    & = \sum_{\substack{1\leq i\leq n\\1\leq j\leq m}} t_{ij} \etac_{\B}
    (x_i)
    & \text{by $(b)$} \\
    & = \sum_{\substack{1\leq i\leq n\\1\leq j\leq m\\x_i \leq y_j}} t_{ij} \cdot \etac_{\B}
    (x_i)
    & \text{by $(a)$} \\
    & \leqc \sum_{\substack{1\leq i\leq n\\1\leq j\leq m\\x_i \leq
    y_j}} t_{ij} \cdot \etac_{\B} (y_j)
    & \text{since $\etac_{\B}$ is monotonic} \\
    & = \sum_{\substack{1\leq i\leq n\\1\leq j\leq m}} t_{ij} \cdot
    \etac_{\B} (y_j)
    & \text{by $(a)$} \\
    & \leqc \sum_{j=1}^m b_j \cdot \etac_{\B} (y_j)
    & \text{by $(c)$,}
  \end{align*}
  \else
  \begin{align*}
    & \sum_{i=1}^n a_i \cdot \etac_{\B} (x_i) \\
    & = \sum_{\substack{1\leq i\leq n\\1\leq j\leq m}} t_{ij} \etac_{\B}
    (x_i)
    & \text{by $(b)$} \\
    & = \sum_{\substack{1\leq i\leq n\\1\leq j\leq m\\x_i \leq y_j}} t_{ij} \cdot \etac_{\B}
    (x_i)
    & \text{by $(a)$} \\
    & \leqc \sum_{\substack{1\leq i\leq n\\1\leq j\leq m\\x_i \leq
    y_j}} t_{ij} \cdot \etac_{\B} (y_j)
    & \text{since $\etac_{\B}$ is monotonic} \\
    & = \sum_{\substack{1\leq i\leq n\\1\leq j\leq m}} t_{ij} \cdot
    \etac_{\B} (y_j)
    & \text{by $(a)$} \\
    & \leqc \sum_{j=1}^m b_j \cdot \etac_{\B} (y_j)
    & \text{by $(c)$,}
  \end{align*}
  \fi
  and since the cone operations on $\conify (\B)$ are monotonic.
  Since $\etac_{\B}$ is an affine preorder embedding, we conclude that
  $\sum_{i=1}^n a_i \cdot x_i \leq \sum_{j=1}^m b_j \cdot y_j$.  In
  case $\C$ is an ordered cone, the proof is the same, simply removing
  all mention of $\etac_{\B}$.

  We have shown that $\beta$ is well-defined and monotonic.  When $\C$
  is a cone, it is clearly linear.  When $\C$ is a barycentric
  algebra, we show that $\beta$ is affine as follows.  Let
  $a \in [0, 1]$ and $\mu \eqdef \sum_{i=1}^n a_i \delta_{x_i}$
  $\nu \eqdef \sum_{i=1}^n b_i \delta_{x_i}$ be in
  $\Val_{1, \fin} \B$, where $\mu$ and $\nu$ are built using the same
  list of points $x_i$, without loss of generality, and $n \geq 1$.
  Using $\etac_{\B}$ as above,
  $\etac_{\B} (\beta (\mu +_a \nu)) = \etac_{\B} (\sum_{i=1}^n (aa_i +
  (1-a)b_i) \delta_{x_i}) = \sum_{i=1}^n (aa_i + (1-a)b_i) \cdot
  \etac_{\B} (x_i)$, while
  $\etac_{\B} (\beta (\mu) +_a \beta (\nu)) = a \cdot \beta (\mu) +
  (1-a) \cdot \beta (\nu)$ (since $\etac_{\B}$ is affine)
  $= a \cdot \sum_{i=1}^n a_i \cdot \etac_{\B} (x_i) + (1-a) \cdot
  \sum_{i=1}^n b_i \cdot \etac_{\B} (x_i)$, and that is the same
  value.  Since $\etac_{\B}$ is injective,
  $\beta (\mu +_a \nu) = \beta (\mu) +_a \beta (\nu)$.  When $\C$ is
  an ordered pointed barycentric algebra $\B$, additionally, the least
  element of $\Val_{\leq 1, \fin} \B$ is the zero valuation $0$, and
  then $\etac_{\B} (\beta (0))$ is equal to $\etac_{\B} (\bot)$ by
  Definition and Proposition~\ref{prop:bary:leq1}, item~3; since
  $\etac_{\B}$ is injective, $\beta (0) = \bot$, showing that $\beta$
  is strict, hence linear, by Proposition~\ref{prop:strict:aff}.

  We now claim that $\beta$ is continuous, assuming additionally that
  $\C$ is weakly locally convex, $T_0$ and topological (as a cone,
  resp.\ a pointed barycentric algebra, resp.\ a barycentric algebra).
  Let $\nu \eqdef \sum_{i \in I} a_i \delta_{x_i}$, where $I$ is
  finite, $a_i > 0$ and $x_i \in \C$ for each $i \in I$, and let $V$
  be an open neighborhood of $\beta (\nu)$.  Our aim is to find an
  open neighborhood $\mathcal U$ of $\mu$ included in
  $\beta^{-1} (V)$.

  Let $m$ be the cardinality of $I$.  If $\C$ is a topological
  cone---not just a semitopological cone---, the map that sends
  $(\vec c, \vec z) \in {(\creal)}^m \times \C^m$ to
  $\sum_{i \in I} c_i \cdot z_i$ is jointly continuous, so there are
  open neighborhoods $]r_i, \infty]$ of $a_i$ in $\creal$ ($r_i > 0$)
  and $V_i$ of $x_i$ in $\C$, for each $i \in I$, such that for all
  $c_i > r_i$ and $z_i \in V_i$, $i \in I$,
  $\sum_{i \in I} c_i \cdot z_i$ is in $V$.  If $\C$ is a topological
  barycentric algebra, the similar map
  $(\vec c, \vec z) \mapsto \sum_{i \in I} c_i \cdot z_i$ is jointly
  continuous from $\Delta_m \times \C^m$ to $\C$ by
  Proposition~\ref{prop:bary:1:cont}.  Additionally,
  Lemma~\ref{lemma:Delta:scott} tells us that $\Delta_m$ has the
  subspace topology induced by the inclusion in ${(\creal)}^m$, so
  there are open neighborhoods $]r_i, \infty]$ of $a_i$ in $\creal$
  ($r_i > 0$) and $V_i$ of $x_i$ in $\C$, for each $i \in I$, such
  that for all $\vec c \in \Delta_m$ with $c_i > r_i$ for every
  $i \in I$, and for all $z_i \in V_i$ with $i \in I$,
  $\sum_{i \in I} c_i \cdot z_i$ is in $V$, as in the cone case.  If
  $\C$ is a pointed topological barycentric algebra, then we use
  Proposition~\ref{prop:bary:leq1:cont}:
  $(\vec c, \vec z) \mapsto \sum_{i \in I} c_i \cdot z_i$ is jointly
  continuous from $\widetilde\Delta_m \times \C^m$ to $\C$, and
  therefore there are open neighborhoods $]r_i, \infty]$ of $a_i$ in
  $\creal$ ($r_i > 0$) and $V_i$ of $x_i$ in $\C$, for each
  $i \in I$, such that for all $\vec c \in \widetilde\Delta_m$ with
  $c_i > r_i$ for every $i \in I$, and for all $z_i \in V_i$ with
  $i \in I$, $\sum_{i \in I} c_i \cdot z_i$ is in $V$, as in the cone
  and topological barycentric algebra cases.

  By weak local convexity, we can pick a further convex neighborhood
  $C_i$ of $z_i$ included in $V_i$ for each $i \in I$.  We also pick
  real numbers $c_i$ such that $r_i < c_i < a_i$ for every $i \in I$.

  For every non-empty subset $A$ of $I$, let
  $V_A \eqdef \bigcup_{i \in A} \interior {C_i}$ and
  $c_A \eqdef \sum_{i \in A} c_i$.  We define $\mathcal U$ as
  $\bigcap_{A \subseteq I, A \neq \emptyset} [V_A > c_A]$, seen as an
  open subset of $\Val_\fin \C$, or $\Val_{1,\fin} \C$, or $\Val_{\leq
    1, \fin} \C$, depending on the case.

  For every non-empty subset $A$ of $I$, $V_A$ contains every $z_i$
  with $i \in A$, so $\nu (V_A) \geq \sum_{i \in A} a_i > c_A$, and
  this shows that $\nu$ is in $\mathcal U$.

  We must now show that $\mathcal U$ is included in $\beta^{-1} (V)$.
  Let $\mu \eqdef \sum_{j \in J} b_j \delta_{y_j}$, where $J$ is a
  finite set, $b_j \in \Rp$ and $y_j \in \C$ for every $j \in J$, and
  let us assume that $\mu$ is in $\mathcal U$.  By definition,
  $\mu (V_A) > c_A$ for every non-empty subset $A$ of $I$, that is:
  $(*)$ $\sum_{j \in J, y_j \in V_A} b_j > \sum_{i \in A} c_i$.  We
  will now use the splitting lemma in a non-standard way---but in the
  same way that Heckmann used it \cite[Theorem~6.7]{heckmann96}.

  Let us consider the disjoint sum $I+J$, with the ordering
  $\sqsubseteq$ defined by: for all $i \in I$ and $j \in J$,
  $i \sqsubseteq j$ if and only if $y_j \in \interior {C_i}$; no
  $j \in J$ is below any $i \in I$, all elements of $I$ are pairwise
  incomparable, and all elements of $J$ are pairwise incomparable.
  For every non-empty $A \subseteq I$, let $A^+$ be the set of indices
  $j \in J$ such that $i \sqsubseteq j$ for some $i \in A$.  Then
  $A^+ = \{j \in J \mid \exists i \in A, y_j \in \interior {C_i}\} =
  \{j \in J \mid y_j \in V_A\}$.  Property $(*)$, proved above, states
  that $\sum_{i \in A} c_i < \sum_{j \in A^+} b_j$, for every
  non-empty subset $A$ of $I$.  By Jones' splitting lemma, there is a
  transport matrix ${(t_{ij})}_{i \in I, j \in J}$ whose only non-zero
  entries are such that $i \sqsubseteq j$, such that
  $\sum_{j \in J} t_{ij} = c_i$ for every $i \in I$, and
  $\sum_{i \in I} t_{ij} \leq b_j$ for every $j \in J$.

  Using the latter inequality,
  $\mu = \sum_{j \in J} b_j \delta_{y_j} \geq \sum_{i \in I, j \in J}
  t_{ij} \delta_{y_j} = \sum_{i \in I} \sum_{j \in J} t_{ij}
  \delta_{y_j} = \sum_{i \in I} \sum_{j \in J, y_j \in \interior
    {C_i}} t_{ij} \delta_{y_j}$---the latter equality rests on the
  fact that only the entries with $i \sqsubseteq j$, namely
  $y_j \in \interior {C_i}$, are non-zero.  We will show that
  $\beta (\mu) \in V$, distinguishing the cases where $\C$ is a cone
  or a (pointed) barycentric algebra.

  If $\C$ is a cone, since $\beta$ is monotonic,
  $\beta (\mu) \geq \sum_{i \in I} \sum_{j \in J, y_j \in \interior
    {C_i}} t_{ij} \cdot y_j$.  For each $i \in I$,
  $z_i \eqdef \sum_{j \in J, y_j \in \interior {C_i}} \frac {t_{ij}}
  {c_i} \cdot y_j$ is a linear combination of points $y_j$ that are
  all in the convex set $C_i$, with coefficients that sum up to
  $\sum_{j \in J, i \sqsubseteq j} \frac {t_{ij}} {c_i} = \sum_{j \in
    J} \frac {t_{ij}} {c_i} = 1$ (since $t_{ij}=0$ when
  $i \not\sqsubseteq j$).  Therefore $z_i$ is in $C_i$ for every
  $i \in I$.  Moreover,
  $\beta (\mu) \geq \sum_{i \in I} c_i \cdot z_i$.  Since
  $z_i \in C_i \subseteq V_i$ and $c_i > r_i$ for each $i \in I$, by
  definition of $V_i$ and $r_i$ we have
  $\sum_{i \in I} c_i \cdot z_i \in V$.  Since $V$ is upwards-closed,
  $\beta (\mu)$ is in $V$.

  If $\C$ is a pointed barycentric algebra $\B$,
  $\sum_{j \in J} b_j \leq 1$ and $\sum_{i \in I} c_i \leq 1$, then we
  embed $\B$ in $\conify (\B)$ through the injective affine preorder
  embedding $\etac_{\B}$ (see
  Proposition~\ref{prop:bary:alg:conify:ord}).  From
  $\mu \geq \sum_{i \in I, j \in J, y_j \in \interior {C_i}} t_{ij}
  \delta_{y_j}$ and since $\etac_{\B} \circ \beta$, we obtain that
  $\etac_{\B} (\beta (\mu)) \geq \sum_{i \in I, j \in J, y_j \in
    \interior {C_i}} t_{ij} \cdot \etac_{\B} (y_j)$.  (The computation
  of $\beta (\mu)$ is valid in this case, in other words, we apply
  $\beta$ to a subnormalized simple valuation, since
  $\sum_{i \in I, j \in J, y_j \in \interior {C_i}} t_{ij} \leq
  \sum_{j \in J} b_j \leq 1$.)  We define $z_i$ as above, so that
  $z_i \in C_i \subseteq V_i$, and then
  $\etac_{\B} (\beta (\mu)) \geq \sum_{i \in I} c_i \cdot \etac_{\B}
  (z_i) = \etac_{\B} (\sum_{i \in I} c_i \cdot z_i)$.  Since
  $\etac_{\B}$ is a preorder embedding,
  $\beta (\mu) \geq \sum_{i \in I} c_i \cdot z_i$.  Since each $c_i$
  is in $V_i$ and $c_i > r_i$, $\sum_{i \in I} c_i \cdot z_i$ is in
  $V$, and then $\beta (\mu) \in V$ since $V$ is upwards-closed, as in
  the cone case.

  If $\C$ is a (not necessarily pointed) barycentric algebra $\B$ and
  $\sum_{j \in J} b_j = \sum_{i \in I} c_i = 1$, the argument is the
  same.  We only have to pay attention to the validity of the
  computation of $\beta (\mu)$, which relies on the fact that
  $\sum_{i \in I, j \in J, y_j \in \interior {C_i}} t_{ij} = \sum_{i
    \in I, j \in J, i \sqsubseteq j} t_{ij} = \sum_{i \in I, j \in J}
  t_{ij}$ (since $t_{ij}=0$ when $i \not\sqsubseteq j$)
  $= \sum_{i \in I} c_i = 1$.  \qed
\end{proof}

\subsection{Linear and affine retractions}
\label{sec:line-affine-retr}

A \emph{retraction} $r \colon X \to Y$ is a continuous map between
topological spaces such that there is continuous map
$s \colon Y \to X$ (the \emph{section}) with $r (s (y)) = y$ for every
$y \in Y$.  $Y$ is then a \emph{retract} of $X$.  It is easy to see
that a retract of a $T_0$ space is $T_0$.  Retracts preserve many
other properties, and we will state those we need along the way.

The notion of linear retractions below is due to Heckmann
\cite[Proposition~6.6]{heckmann96}, and affine retractions are the
natural extension to barycentric algebras.
\begin{definition}[Linear retraction, affine retraction]
  \label{defn:lin:retr}
  An \emph{affine retraction} is any retraction $r \colon \B \to \Alg$
  between semitopological barycentric algebras that is also affine.
  Then $\Alg$ is a \emph{affine retract} of $\B$.

  A \emph{strict affine retraction}, also known as a \emph{linear
    retraction}, is an affine retraction between pointed
  semitopological algebras that is strict.  If $r \colon \B \to \Alg$
  is a linear retraction, then $\Alg$ is a \emph{linear retract} of
  $\B$.
\end{definition}
We remember that strict affine maps and linear maps between pointed
barycentric algebras are the same thing, by
Proposition~\ref{prop:strict:aff}.  Hence a linear retraction $r$ is
simply one that is linear as a function.

\begin{remark}
  \label{rem:lin:retr}
  In Definition~\ref{defn:lin:retr}, beware that we do not require the
  associated section to be affine, strict or linear in any way.
\end{remark}

\begin{remark}
  \label{rem:lin:retr:pointed}
  Given a pointed $T_0$ semitopological barycentric algebra $\Alg$,
  every affine retraction $r \colon \B \to \Alg$ from a pointed
  semitopological barycentric algebra is strict, hence linear.
  Indeed, letting $s$ be the associated section, for every
  $y \in \Alg$, we have $\bot \leq s (y)$, hence
  $r (\bot) \leq r (s (y)) = y$, since $r$, being continuous, is
  monotonic.  Hence $r (\bot)$ is least in $\Alg$, and is the unique
  least element in $\Alg$ because $\Alg$ is $T_0$.
\end{remark}

% Item~6 of the following proposition is due to \cite{heckmann96}.
\begin{proposition}
  \label{prop:locconv:retract}
  The following hold.
  \begin{enumerate}
  \item Every affine retract of a weakly locally convex
    semitopological barycentric algebra is weakly locally convex; in
    particular, every linear retract of a weakly locally convex
    pointed semitopological barycentric algebra (resp.\ of a weakly
    locally convex semitopological cone) is weakly locally convex.
  \item Every affine retract of a topological barycentric algebra is
    topological; in particular, every linear retract of a pointed
    topological barycentric algebra (resp.\ of a topological cone) is
    topological.
  \item Every weakly locally convex $T_0$ topological barycentric
    algebra $\B$ is an affine retract of the $T_0$ locally affine
    topological barycentric algebra $\Val_{1, \fin} \B$, through the
    retraction $\beta \colon \Val_{1,\fin} \B \to \B$ and the section
    $\eta_{\B} \colon x \mapsto \delta_x$.

    If $\B$ is a weakly locally convex pointed $T_0$ topological
    barycentric algebra, then $\B$ is a linear retract of the $T_0$
    locally linear pointed topological barycentric algebra
    $\Val_{\leq 1, \fin} \B$ through
    $\beta \colon \Val_{\leq 1, \fin} \B \to \B$ and $\eta_{\B}$.

    If $\B$ is a weakly locally convex $T_0$ topological cone $\C$,
    then $\C$ is a linear retract of the $T_0$ locally linear
    topological cone $\Val_\fin \C$ through
    $\beta \colon \Val_\fin \C \to \C$ and $\eta_\C$.
  \end{enumerate}
  As consequences,
  \begin{enumerate}[resume]
  \item the weakly locally convex $T_0$ topological barycentric
    algebras are exactly the semitopological barycentric algebras $\B$
    that occur as affine retracts of locally linear $T_0$ topological
    barycentric algebras;
  \item the weakly locally convex $T_0$ pointed topological
    barycentric algebras are exactly the semitopological barycentric
    algebras $\B$ that occur as linear retracts of locally linear
    $T_0$ pointed topological barycentric algebras;
  \item the weakly locally convex pointed $T_0$ topological cones are
    exactly the semitopological cones $\C$ that occur as linear
    retracts of locally convex $T_0$ topological, or even of locally
    linear $T_0$ topological cones.
  \end{enumerate}
\end{proposition}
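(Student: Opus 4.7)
The plan is to treat items 1 and 2 as direct diagram chases from the definition of retraction, establish item 3 as the substantive content by invoking the barycenter machinery of Theorem~\ref{thm:locconv:heckmann:beta}, and then read off items 4--6 as immediate consequences.

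For item 1, let $r \colon \B \to \Alg$ be an affine retraction with section $s$, with $\B$ weakly locally convex. Given $y \in \Alg$ and an open neighborhood $V$ of $y$, the set $r^{-1}(V)$ is open in $\B$ and contains $s(y)$, so weak local convexity supplies a convex $C \subseteq r^{-1}(V)$ with $s(y) \in \interior{C}$. Then $r[C]$ is convex because $r$ is affine (Lemma~\ref{lemma:conv:img}), lies in $V$, and contains the open set $s^{-1}(\interior{C})$ which is a neighborhood of $y$: indeed, any $y' \in s^{-1}(\interior{C})$ satisfies $y' = r(s(y')) \in r[C]$. Hence $r[C]$ is a convex neighborhood of $y$ inside $V$. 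For item 2, joint continuity of $+_a$ on $\Alg$ factors as $(y_1, a, y_2) \mapsto (s(y_1), a, s(y_2)) \mapsto s(y_1) +_a s(y_2) \mapsto r(s(y_1) +_a s(y_2))$, and since $r$ is affine and $r \circ s = \identity{\Alg}$, the last expression equals $y_1 +_a y_2$; each step is continuous (the first because $s$ is continuous, the second because $\B$ is topological, the third because $r$ is continuous).

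For item 3, I combine three ingredients. First, since $\B$ is $T_0$, its specialization ordering makes it an ordered topological barycentric algebra (resp.\ pointed, resp.\ cone), so Theorem~\ref{thm:locconv:heckmann:beta} applies and furnishes an affine (resp.\ linear, resp.\ linear) continuous barycenter map $\beta$ from $\Val_{1, \fin} \B$ (resp.\ $\Val_{\leq 1, \fin} \B$, resp.\ $\Val_\fin \C$) to $\B$ (resp.\ $\C$). Second, the map $\eta_{\B} \colon x \mapsto \delta_x$ is continuous, since the preimage of a subbasic open set $[U > r]$ equals $U$ when $r < 1$ and is empty otherwise. Third, $\beta \circ \eta_{\B} = \identity{\B}$ by the defining property $\beta(\delta_x) = x$. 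It remains to observe that the relevant space of simple valuations is a $T_0$ locally affine (resp.\ $T_0$ pointed locally linear, resp.\ $T_0$ locally linear) topological barycentric algebra: topologicity comes from Example~\ref{exa:V1X:top:bary:alg}, local affineness (resp.\ local linearity) from Examples~\ref{exa:V1X:loclin}, \ref{exa:Vleq1X:loclin}, and \ref{exa:VX:loclin}, and the $T_0$ property from the fact that these spaces are subspaces of the $T_0$ topological cone $\Val \B$ (resp.\ $\Val \C$).

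Items 4--6 follow at once: the ``if'' direction is item 3, and the converse combines items 1 and 2 with Proposition~\ref{prop:loclin:topo} and Remark~\ref{rem:locconv:weak} (locally linear implies locally convex implies weakly locally convex) plus the elementary fact that a retract of a $T_0$ space is $T_0$. The main obstacle is item 1, where the delicate point is that the section $s$ is \emph{not} assumed affine or strict (cf.\ Remark~\ref{rem:lin:retr}), so one cannot simply push a convex neighborhood of $y$ through $s$; the trick is to push forward through the affine map $r$ instead and exploit $r \circ s = \identity{\Alg}$ to locate an open neighborhood $s^{-1}(\interior{C})$ of $y$ inside $r[C]$.
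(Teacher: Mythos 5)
Your proof is correct and follows essentially the same route as the paper: items 1 and 2 by pushing structure forward through the affine retraction $r$ (exploiting $r \circ s = \identity\relax$ to locate neighborhoods), item 3 by invoking Theorem~\ref{thm:locconv:heckmann:beta} together with the continuity of $\eta_{\B}$, and items 4--6 as immediate consequences. The only cosmetic difference is that you invoke Lemma~\ref{lemma:conv:img} by name for the convexity of $r[C]$, whereas the paper states this inline, but the underlying argument is identical.
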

\begin{proof}
  Let $r \colon \B \to \Alg$ be an affine retraction, with associated
  section $s$, from a semitopological barycentric algebra $\B$ onto a
  semitopological barycentric algebra $\Alg$.

  1.  Let $\B$ be weakly locally convex, $y \in \Alg$ and $V$ be an
  open neighborhood of $y$ in $\Alg$.  Then $r^{-1} (V)$ is an open
  neighborhood of $s (y)$.  Since $\B$ is weakly locally convex, there
  is a convex subset $C$ of $r^{-1} (V)$ whose interior contains
  $s (y)$.  Then $s^{-1} (\interior C)$ is an open neighborhood of
  $y$.  The image $r [C]$ is convex because $r$ is affine, and is
  included in $V$.  Moreover, $s^{-1} (\interior C)$ is included in
  $r [C]$: for every $z \in s^{-1} (\interior C)$, $z = r (s (z))$ is
  the image of $s (z) \in \interior C \subseteq C$ under $r$.
  Therefore $\Cb$ is weakly locally convex.

  2. We now assume that $\B$ is topological.  Let
  $f \colon \Alg \times [0, 1] \times \Alg \to \Alg$ map $(x, a, y)$
  to $x +_a y$ in $\Alg$, and let $g$ be the similarly defined
  function on $\B$.  We know that $g$ is continuous, and we wish to
  show that $f$ is, too.  For all $x, y \in \Alg$ and $a \in [0, 1]$,
  $f (x, a, y) = x +_a y = r (s (x)) +_a r (s (y)) = r (s (x) +_a s
  (y)) = r (g (s (x), a, s (y)))$ since $r$ is affine.  Hence
  $f = r \circ g \circ (s \times \identity\relax \times s)$, which is
  therefore continuous.

  In the case of cones, we recall that a semitopological cone is
  topological as a cone if and only if it is topological as a
  barycentric algebra, by Lemma~\ref{lemma:bary:in:cone:iff}.

  Before we proceed with item~3, we recall that every retract of a
  $T_0$ space is $T_0$.  Hence every semitopological barycentric
  algebra (resp.\ pointed, resp.\ every semitopological cone) that
  occurs as an affine retract of a locally convex (hence weakly
  locally convex) $T_0$ topological barycentric algebra (resp.\
  pointed, resp.\ semitopological cone) is itself weakly locally
  convex, $T_0$ and topological.

  3.  Let $\B$ be a weakly locally convex $T_0$ topological
  barycentric algebra.  Using Theorem~\ref{thm:locconv:heckmann:beta},
  $\beta \colon \Val_{1,\fin} \B \to \B$ is a retraction with section
  the map $\eta_{\B} \colon x \mapsto \delta_x$, and $\beta$ is
  affine.  The map $\eta_{\B}$ is continuous since the inverse image
  of $[U > r]$ is $U$ if $r < 1$, empty otherwise.  We reason
  similarly in the case where $\B$ is pointed, or a cone, in which
  case $\beta$ is linear.

  Moreover, $\Val_\fin \B$ is $T_0$, so its subspaces
  $\Val_{1, \fin} \B$ and $\Val_{\leq 1, \fin} \B$ are, too.
  $\Val_\fin \B$ is topological (Example~\ref{exa:VX:top:cone}) and
  locally linear (Example~\ref{exa:VX:loclin}).  $\Val_1 X$ and
  $\Val_{\leq 1} X$ are topological, being convex subspaces of
  $\Val X$, thanks to Lemma~\ref{lemma:bary:in:cone}.
  $\Val_{\leq 1} X$ is locally linear (see
  Example~\ref{exa:Vleq1X:loclin}) and $\Val_1 X$ is locally affine
  (see Example~\ref{exa:V1X:loclin}).

  Items~4, 5, 6 follow immediately.  \qed
\end{proof}

\subsection{The free weakly locally convex (pointed) topological barycentric algebras}
\label{sec:free-point-topol}

Heckmann shows that $\Val_\fin X$ is the free weakly locally convex
$T_0$ topological cone \cite[Theorem~6.7]{heckmann96}.  We retrieve
this here, as well as natural extensions of this result to (pointed)
barycentric algebras.
\begin{theorem}
  \label{thm:locconv:heckmann:free}
  For every $T_0$ topological cone $\C$, $\Val_\fin \C$ is the free
  weakly locally convex $T_0$ topological cone over $\C$.

  For every pointed $T_0$ topological barycentric algebra $\B$,
  $\Val_{\leq 1, \fin} \B$ is the free weakly locally convex pointed
  $T_0$ topological barycentric algebra over $\B$.

  For every $T_0$ topological barycentric algebra $\B$,
  $\Val_{1, \fin} \B$ is the free weakly locally convex $T_0$
  topological barycentric algebra over $\B$.

  Namely, $\Val_\fin \C$ is a weakly locally convex (even locally
  linear) $T_0$ topological cone, and for every weakly locally convex
  $T_0$ topological cone $\Cb$ and every linear continuous map
  $f \colon \C \to \Cb$, there is a unique linear continuous map
  $\hat f \colon \Val_\fin \C \to \Cb$ such that
  $\hat f (\delta_x) = f (x)$ for every $x \in \C$, namely
  $\hat f \eqdef f \circ \beta$.  Similarly with pointed $T_0$
  topological barycentric algebras and linear continuous maps, or with
  $T_0$ topological barycentric algebras and affine continuous maps
  instead of $T_0$ topological cones and linear continuous maps.
\end{theorem}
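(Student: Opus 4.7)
The plan is to handle the three cases in turn, since they are parallel; I start with the cone case. First I would record that $\Val_\fin \C$ is a $T_0$ topological cone (Example~\ref{exa:VX:top:cone}) that is locally linear (Example~\ref{exa:VX:loclin}), hence locally convex by Proposition~\ref{prop:loclin:topo}, hence weakly locally convex by Remark~\ref{rem:locconv:weak}; so the target is indeed an object of the category in which freeness is being claimed. The map $\eta_\C \colon x \mapsto \delta_x$ is linear and continuous: the inverse image of $[U > r]$ is $U$ for $r < 1$ and empty for $r \geq 1$. Uniqueness of $\hat f$ is then forced, since every element of $\Val_\fin \C$ has the form $\nu = \sum_{i=1}^n a_i \delta_{x_i}$, and linearity together with $\hat f(\delta_{x_i}) = f(x_i)$ compels $\hat f(\nu) = \sum_{i=1}^n a_i \cdot f(x_i)$.

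The main step is existence. I would define $\hat f \eqdef f \circ \beta_\C$ using the linear barycenter map $\beta_\C \colon \Val_\fin \C \to \C$ from Theorem~\ref{thm:locconv:heckmann:beta}; linearity of $\hat f$ and the equation $\hat f(\delta_x) = f(x)$ are then automatic. The obstacle is continuity: $\beta_\C$ need not be continuous, because $\C$ is only assumed $T_0$ and topological, not weakly locally convex, so Theorem~\ref{thm:locconv:heckmann:beta} does not deliver continuity on the source side. The key observation that defeats this obstacle is the identity
\[
  f \circ \beta_\C \;=\; \beta_\Cb \circ f_*,
\]
where $f_* \colon \Val_\fin \C \to \Val_\fin \Cb$ is the image-valuation map sending $\sum_i a_i \delta_{x_i}$ to $\sum_i a_i \delta_{f(x_i)}$; the identity follows at once from the linearity of $f$. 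The map $f_*$ is continuous because $(f_* \nu)(V) = \nu(f^{-1}(V))$, so that $f_*^{-1}([V > r]) = [f^{-1}(V) > r]$ is open by continuity of $f$; and $\beta_\Cb$ is continuous by Theorem~\ref{thm:locconv:heckmann:beta}, precisely because $\Cb$ is weakly locally convex, $T_0$ and topological. Hence $\hat f = \beta_\Cb \circ f_*$ is continuous.

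For the pointed barycentric algebra case over $\B$, replace $\Val_\fin$ by $\Val_{\leq 1, \fin}$ throughout: $\Val_{\leq 1, \fin} \B$ is a pointed topological barycentric algebra by Example~\ref{exa:V1X:top:bary:alg} and locally linear by Example~\ref{exa:Vleq1X:loclin}, hence weakly locally convex. The corresponding $\beta \colon \Val_{\leq 1, \fin} \B \to \B$ supplied by Theorem~\ref{thm:locconv:heckmann:beta} is linear, and since linear maps between pointed barycentric algebras coincide with strict affine maps (Proposition~\ref{prop:strict:aff}), $\hat f = f \circ \beta$ is linear; continuity follows from the same identity $f \circ \beta_{\B} = \beta_{\Alg} \circ f_*$, using that $f_*$ restricts to a continuous map $\Val_{\leq 1, \fin} \B \to \Val_{\leq 1, \fin} \Alg$. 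The case of the (unpointed) barycentric algebra $\B$ with $\Val_{1, \fin} \B$ is the same, invoking Example~\ref{exa:V1X:loclin} for local affinity (hence weak local convexity) of $\Val_{1, \fin} \B$, except that now $\beta$ and $\hat f$ are merely affine and continuous, since $\B$ need not be pointed; $f_*$ preserves total mass $1$, so everything stays inside $\Val_{1, \fin}$ and the same identity resolves continuity.
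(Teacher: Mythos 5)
Your proposal is correct, and it follows the same overall route as the paper's proof (define $\hat f \eqdef f \circ \beta$ where $\beta$ is the barycenter map from Theorem~\ref{thm:locconv:heckmann:beta}, and derive uniqueness from the fact that a linear map is determined on Dirac valuations). But you do something the paper's terse proof glosses over: you justify the \emph{continuity} of $\hat f$. The paper simply asserts that ``$\hat f \eqdef f \circ \beta$ fits,'' where $\beta = \beta_\C \colon \Val_\fin \C \to \C$ — yet Theorem~\ref{thm:locconv:heckmann:beta} only delivers continuity of $\beta_\C$ when $\C$ is itself weakly locally convex, which is not assumed here (the hypothesis is only that $\C$ is $T_0$ and topological). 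Your naturality identity
\[
  f \circ \beta_\C \;=\; \beta_\Cb \circ \Val_\fin(f),
\]
checked on Dirac valuations by linearity of all maps involved, resolves this cleanly: $\Val_\fin(f)$ is continuous because $\Val_\fin(f)^{-1}([V > r]) = [f^{-1}(V) > r]$, and $\beta_\Cb$ is continuous because $\Cb$ \emph{is} weakly locally convex $T_0$ and topological, so the composite $\hat f$ is continuous even though $\beta_\C$ alone need not be. Your handling of the pointed and unpointed barycentric algebra cases (using Proposition~\ref{prop:strict:aff} for strictness, and noting that $\Val_\fin(f)$ preserves total mass so restricts to $\Val_{\leq 1, \fin}$ and $\Val_{1, \fin}$) is likewise correct. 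In short, this is the paper's argument with the one non-trivial verification actually carried out.
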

\begin{proof}
  That $\Val_\fin \C$ is a locally linear $T_0$ topological cone is
  part of Proposition~\ref{prop:locconv:retract}.  Given $f$ and $\Cb$
  as above, if $\hat f$ exists then by linearity it must map every
  simple valuation $\nu \eqdef \sum_{x \in A} a_x \delta_x$ to
  $\sum_{x \in A} a_x f (x)$, so $\hat f$ is unique.  Conversely,
  $\hat f \eqdef f \circ \beta$ fits, where $\beta$ is given by
  Theorem~\ref{thm:locconv:heckmann:beta}.  The argument is the same
  with (pointed) $T_0$ topological barycentric algebras and linear (or
  affine) continuous maps.  \qed
\end{proof}

\subsection{Sober spaces, and barycenters, part 5}
\label{sec:sober-spac-baryc}

Heckmann also shows that $\Val_\pw X$ is the free weakly locally
convex \emph{sober} topological cone over $X$
\cite[Theorem~6.8]{heckmann96}.  We show a similar result for
(pointed) barycentric algebras, with a similar technique.

Every space $X$ has a \emph{sobrification}, and that is a sober space
$X^s$, together with a continuous map $\eta \colon X \to X^s$, such
that for every continuous map $f \colon X \to Y$ where $Y$ is sober,
there is a unique continuous map $\widehat f \colon X^s \to Y$ such
that $\widehat f \circ \eta = f$ (see \cite[Chapter~8]{JGL-topology}).
Sobrifications are unique up to isomorphism, and the \emph{canonical
  sobrification} $\Sober X$ is built as the space of all irreducible
closed subsets of $X$ with the \emph{hull-kernel topology}, whose open
subsets are the sets of the form
$\diamond U \eqdef \{C \in \Sober X \mid C \cap U \neq \emptyset\}$,
$U \in \Open X$.  The map $\eta$ is defined as
$\etaS \colon X \to \Sober X$, where $\etaS (x) \eqdef \dc x$ for
every $x \in X$.  The specialization ordering of $\Sober X$ is
inclusion.  The map $U \mapsto \diamond U$ defines an order
isomorphism between $\Open X$ and $\Open {\Sober X}$.

Heckmann shows that $\Val_\pw X$ (not $\Val X$ in general) is a
sobrification of $\Val_\fin X$, with inclusion map for $\eta$
\cite[Theorem~5.5]{heckmann96}.  We establish a similar statement for
$\Val_{\leq 1, \pw} X$, and for $\Val_{1, \pw} X$ when $X$ is pointed.
The latter requires a simple trick, originally due to Abbas Edalat
\cite[Section~3]{edalat95a}.
\begin{lemma}
  \label{lemma:VwX:pointed}
  For every topological space $X$, $\Val_1 X$ is homeomorphic to
  $\Val_{\leq 1} (X \diff \dc \bot)$.  In one direction, the
  homeomorphism maps every $\nu \in \Val_1 X$ to
  $\nu' \in \Val_{\leq 1} (X \diff \dc \bot)$, defined by
  $\nu' (U) \eqdef \nu (U)$ for every
  $U \in \Open (X \diff \dc \bot)$; in the other direction, every
  $\nu' \in \Val_{\leq 1} (X \diff \dc \bot)$ is mapped to the
  $\nu \in \Val_1 X$ defined by $\nu (U) \eqdef \nu' (U)$ if
  $\bot \not\in U$, $\nu (U) \eqdef 1$ otherwise.

  The homeomorphism restricts to one between $\Val_{1, \fin} X$ and
  $\Val_{\leq 1, \fin} (X \diff \dc \bot)$.
\end{lemma}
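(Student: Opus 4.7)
The plan is to unpack the two definitions given in the statement, verify they give mutually inverse valuation maps, and then check continuity and the restriction to simple valuations. The crucial implicit ingredient, which the label ``pointed'' refers to, is that $\bot$ is least in the specialization preordering of $X$; since open sets are upwards-closed, this is equivalent to the statement that any open $U \subseteq X$ with $\bot \in U$ satisfies $U = X$, and therefore that any open $U$ with $\bot \notin U$ is contained in $X \diff \dc \bot$. I would state and use this reformulation at the outset.

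First I would define $\phi \colon \Val_1 X \to \Val_{\leq 1}(X \diff \dc \bot)$ by $\phi(\nu)(V) \eqdef \nu(V)$ for every open $V \subseteq X \diff \dc \bot$; strictness, modularity, and Scott-continuity transfer immediately from $\nu$, and $\phi(\nu)(X \diff \dc \bot) \leq \nu(X) = 1$. Then I would define $\psi \colon \Val_{\leq 1}(X \diff \dc \bot) \to \Val_1 X$ by $\psi(\nu')(U) \eqdef \nu'(U)$ when $\bot \notin U$ (so that $U \subseteq X \diff \dc \bot$) and $\psi(\nu')(U) \eqdef 1$ when $\bot \in U$ (so that $U = X$). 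The checks that $\psi(\nu')$ is a probability valuation are case analyses where the main step is modularity: if $\bot \in U$ and $\bot \notin V$, then $U = X$, so $U \cup V = X$ and $U \cap V = V$, giving $\psi(U \cup V) + \psi(U \cap V) = 1 + \nu'(V) = \psi(U) + \psi(V)$. Scott-continuity is also a case analysis, the non-trivial case being when the directed supremum $U = \bigcup_i U_i$ satisfies $\bot \in U$: then $\bot \in U_{i_0}$ for some $i_0$, and that $U_{i_0}$ already equals $X$, so $\psi(\nu')(U_{i_0}) = 1$.

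The relations $\phi \circ \psi = \mathrm{id}$ and $\psi \circ \phi = \mathrm{id}$ are immediate from the definitions. For continuity of $\phi$, the preimage of a subbasic open set $[V > r] \cap \Val_{\leq 1}(X \diff \dc \bot)$ is $[V > r] \cap \Val_1 X$, which is subbasic open in $\Val_1 X$. For continuity of $\psi$, the preimage of $[U > r] \cap \Val_1 X$ splits into three trivial cases: if $\bot \in U$ (so $U = X$) it is everything or empty depending on whether $r < 1$ or $r \geq 1$, and if $\bot \notin U$ it is the subbasic open $[U > r] \cap \Val_{\leq 1}(X \diff \dc \bot)$.

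Finally, for the restriction to simple valuations, I would verify by direct computation that $\phi$ sends $\sum_{i=1}^n a_i \delta_{x_i} \in \Val_{1, \fin} X$ to $\sum_{i : x_i \notin \dc \bot} a_i \delta_{x_i}$, which is simple on $X \diff \dc \bot$, and that $\psi$ sends $\sum_{i=1}^n a_i \delta_{y_i} \in \Val_{\leq 1, \fin}(X \diff \dc \bot)$ to $\sum_{i=1}^n a_i \delta_{y_i} + (1-\sum_{i=1}^n a_i) \delta_\bot$, which is simple on $X$; the latter is checked by evaluating on $U$ open in $X$ and distinguishing whether $\bot \in U$. The only real subtlety anywhere in the proof is the modularity/continuity case analysis for $\psi(\nu')$, and it is entirely driven by the single observation that open neighborhoods of $\bot$ exhaust $X$.
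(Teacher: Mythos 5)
Your proof is correct and follows essentially the same route as the paper's: both arguments hinge on the single observation that $\bot$ being least forces any open set containing $\bot$ to be all of $X$, and both then verify the valuation axioms, continuity, and mutual inversion by the same routine case analysis, ending with the same explicit formulas on simple valuations. No gaps.
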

\begin{proof}
  Let $g$ be the map $\nu \mapsto \nu'$.  For every
  $U \in \Open (X \diff \dc \bot)$, $U$ is also open in $X$, because
  $\dc \bot$ is closed in $X$, so $X \diff \dc X$ is open in $X$.
  Hence the definition of $\nu' = g (\nu)$ makes sense.  It is also
  clear that $\nu'$ is a subprobability valuation.  For every open
  subset $U$ of $X \diff \dc \bot$, for every $r \in \Rp \diff \{0\}$,
  $g^{-1} ([U > r]) = [U > r]$, so $g$ is continuous.

  Let $f$ be the map $\nu' \to \nu$.  For every subprobability
  valuation $\nu'$ on $X$, $\nu \eqdef f (\nu')$ is strict
  ($\nu (\emptyset) = 0$); it is modular: for all open subsets $U$ and
  $V$ of $X$, $\nu (U) + \nu (V) = \nu (U \cup V) + \nu (U \cap V)$ if
  $\bot \not\in U, V$, while if $\bot \in U$ (hence $U=X$),
  $\nu (U) + \nu (V) = \nu (U \cup V) + \nu (U \cap V)$ (because
  $U = X = U \cup V$ and $U \cap V = V$), and symmetrically if
  $\bot \in V$.  Additionally, $\nu$ is Scott-continuous, because any
  directed family of open sets ${(U_i)}_{i \in I}$ has a union that
  contains $\bot$ if and only if some $U_i$ contain $\bot$.  The map
  $f$ is continuous, because $f^{-1} ([U > r]) = [U > r]$ if
  $\bot \not\in U$, otherwise $U=X$ and then $[U > r]$ is the whole of
  $\Val_1 X$ or the empty set depending on whether $r < 1$ or not, and
  its inverse image under $f$ is open, too.  It is clear that $f$ and
  $g$ are inverse of each other.

  Finally, $g$ maps every simple probability valuation
  $\sum_{i=1}^n a_i \delta_{x_i}$ on $X$ to
  $\sum_{\substack{i=1\\x_i \not\leq \bot}}^n a_i \delta_{x_i}$, and
  $g$ maps every simple subprobability valuation
  $\sum_{i=1}^n a_i \delta_{x_i}$ on $X \diff \dc \bot$ to
  $\sum_{i=1}^n a_i \delta_{x_i} + (1-\sum_{i=1}^n a_i) \delta_\bot$.
  \qed
\end{proof}

\begin{theorem}
  \label{thm:VpX:sobrif}
  Let $X$ be a topological space, and $\rast$ be nothing or
  ``$\leq 1$''.  Then $\Val_{\rast,\pw} X$ is a sobrification of
  $\Val_{\rast, \fin} X$.  When $\rast$ is ``1'', $\Val_{1,\fin} X$ is
  a sobrification of $\Val_{1,\fin X}$ if $X$ is pointed, namely if
  $X$ has a least element in its specialization preordering.
\end{theorem}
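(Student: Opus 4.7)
The plan is to handle the three cases successively. For $\rast$ the empty decoration, the result is exactly Heckmann's \cite[Theorem~5.5]{heckmann96}, which I will cite directly. It remains to address $\rast = {}$``$\leq 1$'' and $\rast = {}$``$1$''; I also note that the final phrase of the theorem contains a typo and should read ``$\Val_{1,\pw} X$ is a sobrification of $\Val_{1,\fin} X$''. Throughout, I will use the standard characterization: a continuous map $\iota \colon X \to Y$ makes $Y$ into a sobrification of $X$ as soon as $Y$ is sober and $V \mapsto \iota^{-1}(V)$ is a bijection between $\Open Y$ and $\Open X$.

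For $\rast = {}$``$\leq 1$'', I will verify both conditions for the inclusion $\iota \colon \Val_{\leq 1,\fin} X \hookrightarrow \Val_{\leq 1,\pw} X$. Sobriety of $\Val_{\leq 1,\pw} X$ comes for free: it is the complement of the open set $[X > 1]$ inside the sober space $\Val_\pw X$ (sober by Heckmann's Proposition~5.1 in \cite{heckmann96}), hence a closed subspace of a sober space, which is sober. Surjectivity of $V \mapsto V \cap \Val_{\leq 1,\fin} X$ is automatic since both $\Val_{\leq 1,\fin} X$ and $\Val_{\leq 1,\pw} X$ carry the weak topology, so the former is a topological subspace of the latter. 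Injectivity, the density statement, is the key step. I will use Heckmann's structural result that every $\nu \in \Val_{\leq 1,\pw} X$ is a directed supremum, in the stochastic ordering, of simple valuations $\mu_i$ with $\mu_i \leq \nu$; since $\mu_i \leq \nu$ forces $\mu_i(X) \leq \nu(X) \leq 1$, each $\mu_i$ actually lies in $\Val_{\leq 1,\fin} X$. Given two distinct open sets $V_1, V_2$ of $\Val_{\leq 1,\pw} X$ with, say, $\nu \in V_1 \diff V_2$, the subbasic opens $[U > r]$ are Scott-open in the stochastic order (because continuous valuations preserve directed suprema of opens), so some $\mu_i$ lands in $V_1$; and since $V_2$ is upwards-closed and $\mu_i \leq \nu \notin V_2$, this $\mu_i$ lies in $V_1 \cap \Val_{\leq 1,\fin} X$ but not in $V_2 \cap \Val_{\leq 1,\fin} X$, establishing injectivity.

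For $\rast = {}$``$1$'' with $X$ pointed, I reduce to the previous case via Lemma~\ref{lemma:VwX:pointed}. The lemma furnishes a homeomorphism $\Val_1 X \cong \Val_{\leq 1}(X \diff \dc \bot)$ that already restricts to simple valuations; I will first check the routine but slightly delicate extension that the same homeomorphism restricts to $\Val_{1,\pw} X \cong \Val_{\leq 1,\pw}(X \diff \dc \bot)$. The forward direction follows because the inclusion $\Open_\pw(X \diff \dc \bot) \hookrightarrow \Open_\pw X$ is continuous for the pointwise topologies (since $X \diff \dc \bot$ is open in $X$), so restricting a point-continuous valuation yields a point-continuous one. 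The reverse direction, extending $\nu'$ to $\nu$ by setting $\nu(U) = 1$ when $\bot \in U$, requires a case analysis: preimages $\nu^{-1}(]r,\infty])$ decompose as $[\bot \in]\, \cup\, \{U \in \Open_\pw X \mid \bot \notin U,\ \nu'(U) > r\}$, and the second piece is open in $\Open_\pw X$ because it is the image, under the identification between opens of $X$ not containing $\bot$ and opens of $X \diff \dc \bot$, of a pointwise-open set. Once these homeomorphisms are in place (and they identify $\Val_{1,\fin} X$ with $\Val_{\leq 1,\fin}(X \diff \dc \bot)$), the $\rast = {}$``$1$'' case is immediate from the $\rast = {}$``$\leq 1$'' case applied to $X \diff \dc \bot$. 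The main obstacle will be the point-continuity verification for the extension map in Lemma~\ref{lemma:VwX:pointed}, together with a careful invocation of Heckmann's approximation theorem strictly within the subprobability setting.
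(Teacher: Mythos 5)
Your overall approach is sound but takes a genuinely different route from the paper's. The paper identifies $\Val_{\leq 1,\pw} X$ as the Skula closure of $\Val_{\leq 1,\fin} X$ inside the sober space $\Val_\pw X$, using the characterization of sobrifications via Skula-closed hulls from \cite[Corollaries~3.5 and~3.6]{KL:dtopo}. You instead verify directly that the inclusion induces a frame isomorphism on open sets and that the codomain is sober, which is equivalent but more elementary. Your two side observations are both correct and worth recording: the theorem statement does contain a typo (it should read that $\Val_{1,\pw}X$ is a sobrification of $\Val_{1,\fin}X$), and the $\rast={}$``$1$'' reduction does tacitly require checking that the homeomorphism of Lemma~\ref{lemma:VwX:pointed} restricts to point-continuous valuations, a step the paper leaves implicit and you fill in.

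Two steps need repair. First, for the density (injectivity) step you invoke ``Heckmann's structural result that every $\nu \in \Val_{\leq 1,\pw} X$ is a directed supremum, in the stochastic ordering, of simple valuations $\mu_i \leq \nu$''. That is not what Heckmann proves where the paper relies on him: his Lemma~5.4 only asserts that for every bounded point-continuous $\nu$ lying in a finite intersection $\bigcap_{i=1}^n[U_i>r_i]$ there is a simple $\mu\leq\nu$ in the same intersection. Whether the simple valuations below $\nu$ form a directed family is a separate, stronger claim which you would have to establish or cite accurately. Fortunately you do not need directedness: apply Lemma~5.4 to a basic open neighborhood of $\nu$ contained in $V_1$ to get a simple $\mu\leq\nu$ in $V_1$, and then $\mu\notin V_2$ follows from $\nu\notin V_2$ and upward-closure of $V_2$, exactly as you argued.

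Second, in the reverse direction of the point-continuity check for Lemma~\ref{lemma:VwX:pointed}, you claim the ``second piece'' $\{U\in\Open X\mid\bot\notin U,\ \nu'(U)>r\}$ is open in $\Open_\pw X$ because it is the image of a pointwise-open subset of $\Open_\pw(X\diff\dc\bot)$. But the identification of opens of $X$ not containing $\bot$ with $\Open(X\diff\dc\bot)$ is a homeomorphism onto $[\bot\in]^c$, which is a \emph{closed} (not open) subspace of $\Open_\pw X$; so the second piece is only open relative to that subspace. The repair is routine: write the second piece as $W''\cap[\bot\in]^c$ with $W''$ open in $\Open_\pw X$, so that $\nu^{-1}(]r,\infty])=[\bot\in]\cup\bigl(W''\cap[\bot\in]^c\bigr)=[\bot\in]\cup W''$, which is open. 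Equivalently, note that $\nu=\max(\nu'\circ\phi,\ \chi_{[\bot\in]})$ where $\phi(U)\eqdef U\cap(X\diff\dc\bot)$ is continuous from $\Open_\pw X$ to $\Open_\pw(X\diff\dc\bot)$ and $\chi_{[\bot\in]}$ is the characteristic map of the open set $[\bot\in]$; a pointwise maximum of two continuous maps to $\creal$ is continuous.
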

\begin{proof}
  The case where $\rast$ is nothing was dealt with by Heckmann.  When
  $\rast$ is ``$\leq 1$'', $\Val_{\leq 1, \pw} X$ is a closed subspace
  of $\Val_\pw X$, since it is equal to the complement of $[X > 1]$,
  which is open.  Every closed subspace of a sober space is sober.  In
  fact, the sober subspaces of a sober space are exactly those that
  are closed in the \emph{Skula topology}, which is the coarsest
  topology containing both the open and the closed subsets of $X$
  \cite[Corollary~3.5]{KL:dtopo}.

  We will now use the fact that given any space $Y$ included in a
  sober space $Z$, the \emph{Skula closure} $cl_s (Y)$ of $Y$ in $Z$,
  namely its closure in the Skula topology, is a sobrification of $Y$
  \cite[Corollary~3.6]{KL:dtopo}.

  We take $Y \eqdef \Val_{\leq 1, \fin} X$, $Z \eqdef \Val_\pw X$; $Z$
  is sober \cite[Proposition~5.1]{heckmann96}.  We have seen that
  $\Val_{\leq 1, \pw} X$ is closed, hence Skula-closed in $Z$.  Since
  it contains $Y$, $cl_s (Y) \subseteq \Val_{\leq 1, \pw} X$.  We show
  the reverse implication by showing that $\Val_{\leq 1, \pw} X$ is
  included in every Skula-closed subset of $Z$ that contains $Y$;
  equivalently, by showing that every Skula-open subset of $Z$ that
  intersects $\Val_{\leq 1, \pw} X$ intersects $Y$.  Given any
  Skula-open subset $\mathcal O$ of $Z$ that intersects
  $\Val_{\leq 1, \pw} X$, we pick $\nu$ in the intersection, and then
  $\nu$ is in $\mathcal U \cap \mathcal C \subseteq \mathcal O$ for
  some open subset $\mathcal U$ of $Z$ and some closed subset
  $\mathcal C$ of $Z$, by definition of the Skula topology.  By
  definition of the weak topology (on $Z = \Val_\pw X$), $\nu$ is in
  some intersection of subbasic open sets
  $\bigcap_{i=1}^n [U_i > r_i]$ included in $\mathcal U$.  Lemma~5.4
  of \cite{heckmann96} states that for every bounded point-continuous
  valuation $\nu$ on $X$, if $\nu$ is in some intersection of subbasic
  open sets $\bigcap_{i=1}^n [U_i > r_i]$, then there is a simple
  valuation $\mu \leq \nu$ in the same intersection.  In our case,
  $\mu$ is then in $\mathcal U$, and is also in $\mathcal C$ since
  $\mu \leq \nu$ and every closed set is downwards-closed.  Since
  $\mu \leq \nu$, $\mu$ is a simple valuation and
  $\nu \in \Val_{\leq 1, \pw} X$, $\mu$ is in $\Val_{\leq 1, \fin} X$.
  Hence $\mathcal O$ intersects $\Val_{\leq 1, \fin} X$ at $\mu$.

  When $\rast$ is ``$1$'' and $X$ is pointed, we use
  Lemma~\ref{lemma:VwX:pointed}: $\Val_1 X$ is homeomorphic to
  $\Val_{\leq 1} (X \diff \dc \bot)$, by a homeomorphism that
  restricts to one between $\Val_{1,\fin} X$ and
  $\Val_{\leq 1, \fin} (X \diff \dc \bot)$.  The result then follows
  from the first part of the theorem, applied to the space
  $X \diff \dc \bot$.  \qed
\end{proof}

The map $\beta$ below extends the map $\beta$ of
Theorem~\ref{thm:locconv:heckmann:beta}.  According, we also call it
the \emph{barycenter map}.
\begin{theorem}[Barycenters, part 5]
  \label{thm:locconv:heckmann:beta:pcont}
  Let $\C$ be a weakly locally convex, sober topological cone.  There
  is a unique linear continuous map $\beta \colon \Val_\pw \C \to \C$
  such that $\beta (\delta_x) = x$ for every $x \in
  \C$.  % Explicitly, $\beta
  % (\sum_{x \in A} a_x \delta_x) = \sum_{x \in A} a_x \cdot x$ for
  % every simple valuation $\sum_{x \in A} a_x \delta_x$.

  Given a weakly locally convex sober pointed topological barycentric
  algebra $\B$, there is a unique linear continuous map $\beta$ from
  $\Val_{\leq 1,\pw} \B$ to $\B$ such that $\beta (\delta_x) = x$ for
  every $x \in \B$, and a unique affine continuous map $\beta$ from
  $\Val_{1, \pw} \B$ to $\B$ such that $\beta (\delta_x) = x$ for
  every $x \in \B$.
  % Given a weakly locally convex sober topological barycentric algebra
  % $\B$, there is a unique affine continuous map $\beta$ from
  % $\Val_{1,\pw} \B$ to $\B$ such that $\beta (\delta_x) = x$ for every
  % $x \in \B$.
  %!!!: non, car V_{1,p} B n'est pas la sobrification de V_{1,f} B a
  %moins que B ne soit pointe
\end{theorem}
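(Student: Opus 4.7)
The plan is to obtain $\beta$ as the unique continuous extension of the finite-support barycenter map from Theorem~\ref{thm:locconv:heckmann:beta} along the sobrification inclusion of Theorem~\ref{thm:VpX:sobrif}, and then verify linearity (resp.\ affinity) by applying the uniqueness of sobrification extensions one argument at a time.

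First, since sober spaces are $T_0$, the hypotheses of Theorem~\ref{thm:locconv:heckmann:beta} are met in each of the three cases, yielding a linear (resp.\ linear, resp.\ affine) continuous map $\beta_\fin$ from $\Val_\fin \C$ (resp.\ $\Val_{\leq 1, \fin} \B$, resp.\ $\Val_{1, \fin} \B$) to $\C$ (resp.\ to $\B$) sending $\delta_x$ to $x$. By Theorem~\ref{thm:VpX:sobrif}, $\Val_{\rast, \pw} \B$ is a sobrification of $\Val_{\rast, \fin} \B$, with the inclusion as its $\eta$-map (the probability case uses that $\B$ is pointed, which is part of the hypothesis). Since $\B$ is sober, the universal property of sobrifications yields a unique continuous extension $\widehat\beta \colon \Val_{\rast, \pw} \B \to \B$ of $\beta_\fin$, which automatically satisfies $\widehat\beta (\delta_x) = x$ for every $x$.

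The main step is to verify that $\widehat\beta$ is linear (resp.\ affine). For fixed $a \in [0, 1]$, consider the continuous maps $F, G \colon \Val_{\rast, \pw} \B \times \Val_{\rast, \pw} \B \to \B$ given by $F (\mu, \nu) \eqdef \widehat\beta (\mu +_a \nu)$ and $G (\mu, \nu) \eqdef \widehat\beta (\mu) +_a \widehat\beta (\nu)$; $F$ is continuous because $+_a$ on $\Val_{\rast, \pw} \B$ and $\widehat\beta$ are, and $G$ is continuous because $\B$ is topological (not merely semitopological), so $+_a$ on $\B$ is jointly continuous. These two maps coincide on $\Val_{\rast, \fin} \B \times \Val_{\rast, \fin} \B$ by linearity (resp.\ affinity) of $\beta_\fin$. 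Fixing $\nu \in \Val_{\rast, \fin} \B$, the continuous maps $\mu \mapsto F (\mu, \nu)$ and $\mu \mapsto G (\mu, \nu)$ from $\Val_{\rast, \pw} \B$ to the sober space $\B$ agree on $\Val_{\rast, \fin} \B$, hence coincide by the uniqueness clause of the sobrification universal property. Then, fixing $\mu \in \Val_{\rast, \pw} \B$, the continuous maps $\nu \mapsto F (\mu, \nu)$ and $\nu \mapsto G (\mu, \nu)$ agree on $\Val_{\rast, \fin} \B$ by the previous step, and so coincide by the same argument. In the cone case, positive homogeneity is obtained identically: for fixed $r \in \Rp$, the continuous maps $\nu \mapsto \widehat\beta (r \cdot \nu)$ and $\nu \mapsto r \cdot \widehat\beta (\nu)$ agree on simple valuations and hence everywhere. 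In the pointed subprobability case, strictness follows from $\widehat\beta (0) = \beta_\fin (0) = \bot$, so by Proposition~\ref{prop:strict:aff} the affine extension is actually linear.

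Uniqueness of $\beta$ as a linear (resp.\ affine) continuous map satisfying $\beta (\delta_x) = x$ follows because any such map agrees with $\beta_\fin$ on simple valuations by Definition and Proposition~\ref{prop:bary:1} (resp.\ \ref{prop:bary:leq1}), so must equal $\widehat\beta$ by the uniqueness of sobrification extensions. The main obstacle is the linearity/affinity step: the two-variable identity cannot be read off directly from the universal property, since $\Val_{\rast, \pw} \B \times \Val_{\rast, \pw} \B$ need not be the sobrification of the product $\Val_{\rast, \fin} \B \times \Val_{\rast, \fin} \B$. The escape is to argue separately in each variable, which works precisely because $\B$ is jointly topological, making $G$ continuous and hence eligible for an appeal to the one-variable uniqueness of sobrification extensions.
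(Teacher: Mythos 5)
Your proof is correct and follows essentially the same route as the paper: extend the finite-support barycenter map of Theorem~\ref{thm:locconv:heckmann:beta} along the sobrification of Theorem~\ref{thm:VpX:sobrif}, then verify linearity/affinity one variable at a time using the uniqueness clause of the sobrification universal property (the paper does the $\Val_{1, \pw}$ case by restricting the $\Val_{\leq 1, \pw}$ map, whereas you invoke the sobrification directly, but both work). One small remark on your closing diagnosis: the one-variable sections $\mu \mapsto \widehat\beta(\mu) +_a \widehat\beta(\nu)$ are already continuous from \emph{separate} continuity of $+_a$ on $\B$, so joint continuity of $G$ is never actually needed; the topologicality hypothesis is really used upstream, in Theorem~\ref{thm:locconv:heckmann:beta}, to make $\beta_\fin$ continuous in the first place.
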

One should beware that the existence and uniqueness of $\beta$ in the
last case, from $\Val_{1, \pw} \B$ to $\B$, still requires $\B$ to be
pointed.

\begin{proof}
  We deal with the first part of the Theorem first.  This is essentially
  Theorem~6.8 of \cite{heckmann96}.
  
  Let us consider the continuous map
  $\beta \colon \Val_\fin \C \to \C$ of
  Theorem~\ref{thm:locconv:heckmann:beta}.  Since $\Val_\pw \C$ is the
  sobrification of $\Val_\fin \C$, $\beta$ extends to unique
  continuous map from $\Val_\pw \C$ to $\C$, which we temporarily
  write as $\hat\beta$.

  For every $a \in \Rp$, the maps $\nu \mapsto a \hat\beta (\nu)$ and
  $\nu \mapsto \hat\beta (a\cdot \nu)$ are continuous from
  $\Val_\pw \C$ to $\C$, and coincide on simple valuations:
  $a \hat\beta (\sum_{x \in A} a_x \delta_x) = a \sum_{x \in A} a_x
  \cdot x = \sum_{x \in A} a a_x \cdot x = \hat\beta (\sum_{x \in A}
  aa_x \delta_x)$.  The continuous extension of a continuous map from
  $\Val_\fin \C$ to its sobrification $\Val_\pw \C$ is unique, so
  $a \hat\beta (\nu) = \hat\beta (a \cdot \nu)$ for all $a \in \Rp$
  and $\nu \in \Val_\pw \C$.

  Next, we show that $\hat\beta$ preserves sums.  Given a simple
  valuation $\mu \eqdef \sum_{x \in A} a_x \delta_x$, the maps
  $\nu \mapsto \beta (\mu) + \hat\beta (\nu)$ and
  $\nu \mapsto \hat\beta (\mu + \nu)$ are continuous from
  $\Val_\pw \C$ to $\C$, and coincide on simple valuations.  By the
  same argument on sobrifications as above, they define the same map.
  Explicitly,
  $\hat\beta (\mu+\nu) = \hat\beta (\mu) + \hat\beta (\nu)$ for every
  simple valuation $\mu$ and every point-continuous valuation $\nu$.
  It follows that, given any fixed point-continuous valuation $\nu$,
  the maps $\mu \mapsto \hat\beta (\mu) + \hat\beta (\nu)$ and
  $\mu \mapsto \hat\beta (\mu+\nu)$ are continuous from $\Val_\pw \C$
  to $\C$, and coincide on simple valuations.  Again, they coincide by
  the universal property of sobrifications, hence
  $\hat\beta (\mu+\nu) = \hat\beta (\mu) + \hat\beta (\nu)$ for all
  $\mu, \nu \in \Val_\pw \C$.

  This much shows the existence of a linear continuous map
  $\hat\beta \colon \Val_\pw \C \to \C$ such that
  $\hat\beta (\delta_x) = x$ for every $x \in \C$.  Uniqueness is
  clear: any such $\hat\beta$ must restrict to a linear continuous map
  $\beta \colon \Val_\fin \C \to \C$ such that $\beta (\delta_x) = x$
  for every $x \in \C$; such a $\beta$ is unique by
  Theorem~\ref{thm:locconv:heckmann:beta}, and then $\hat\beta$ is its
  unique continuous extension to its sobrification $\Val_\pw \C$.

  Second, we deal with the case of a weakly locally convex pointed
  sober topological barycentric algebra $\B$.  We consider the linear
  continuous map $\beta \colon \Val_{\leq 1, \fin} \B \to \B$ of
  Theorem~\ref{thm:locconv:heckmann:beta}, which we extend to a unique
  continuous map $\hat\beta \colon \Val_{\leq 1, \pw} \B \to \B$,
  using Theorem~\ref{thm:VpX:sobrif}.  Since the least element of
  $\Val_{\leq 1, \pw} \B$ is the zero valuation $0$, which is in
  $\Val_{\leq 1, \fin} \B$, $\hat\beta (0) = \beta (0) = \bot$, using
  the fact that $\beta$ is linear hence strict.  Hence $\hat\beta$ is
  itself strict.

  We claim that $\hat\beta$ is affine, which will show that it is
  linear, thanks to Proposition~\ref{prop:strict:aff}.  The argument
  is the same as for sums.  For every $a \in [0, 1]$, for every
  $\mu \in \Val_{\leq 1, \fin} \B$, the functions
  $\nu \mapsto \beta (\mu) +_a \hat\beta (\nu)$ and
  $\nu \mapsto \hat\beta (\mu +_a \nu)$ are continuous from
  $\Val_{\leq 1, \pw} \B$ to $\B$, and coincide on
  $\Val_{\leq 1, \fin} \B$, since $\beta$ is affine.  Hence they
  coincide on $\Val_{\leq 1, \pw} \B$, by the universal property of
  sobrifications.  Since $\hat\beta (\mu) = \beta (\mu)$, this shows
  that $\hat\beta (\mu +_a \nu) = \hat\beta (\mu) +_a \hat\beta (\nu)$
  for all $\mu \in \Val_{\leq 1, \fin} \B$ and
  $\nu \in \Val_{\leq 1, \pw} \B$.  It follows that, for every
  $\nu \in \Val_{\leq 1, \pw} \B$, the maps
  $\mu \mapsto \hat\beta (\mu +_a \nu)$ and
  $\mu \mapsto \hat\beta (\mu) +_a \hat\beta (\nu)$, which are
  continuous from $\Val_{\leq 1, \pw} \B$ to $\B$, coincide on
  $\Val_{\leq 1, \fin} \B$.  Hence they coincide on
  $\Val_{\leq 1, \pw} \B$, by the universal property of
  sobrifications.

  We have just shown the existence of a linear continuous map
  $\hat\beta \colon \Val_{\leq 1, \pw} \B \to \B$ such that
  $\hat\beta (\delta_x) = x$ for every $x \in \B$.  Uniqueness is
  proved as in the case of cones.

  Third, we still consider a weakly locally convex sober pointed
  topological barycentric algebra $\B$, but we focus on
  $\Val_{1, \pw}$ rather than $\Val_{\leq 1, \pw}$.  We have just
  built a linear continuous map
  $\beta \colon \Val_{\leq 1, \pw} \B \to \B$.  By restriction to
  $\Val_{1, \pw} \B$, we obtain an affine continuous map $\beta_1$
  from $\Val_{1, \pw} \B$ to $\B$ such that $\beta_1 (\delta_x) = x$
  for every $x \in \B$.  Uniqueness is, once again, proved as in the
  case of cones: given any affine continuous map $\hat\beta$ from
  $\Val_{1, \pw} \B$ to $\B$ such that $\hat\beta (\delta_x) = x$ for
  every $x \in \B$, its restriction to $\Val_{1, \fin} \B$ is uniquely
  determined by Theorem~\ref{thm:locconv:heckmann:beta}, and then
  $\hat\beta$ is its unique continuous extension to its sobrification
  $\Val_{1, \pw} \B$.  The fact that $\Val_{1, \pw} \B$ is a
  sobrification of $\Val_{1, \fin} \B$ is by
  Theorem~\ref{thm:VpX:sobrif}, which applies because $\B$ is
  pointed.  \qed
\end{proof}

\begin{remark}
  \label{rem:locconv:heckmann:beta:pcont}
  Theorem~\ref{thm:locconv:heckmann:beta:pcont} does not have a
  statement about the existence of $\beta \colon \Val_{1, \pw} \B \to
  \B$ for non-pointed weakly locally convex sober topological
  barycentric algebras $\B$.  This is because using
  Theorem~\ref{thm:VpX:sobrif} as we did in the proof requires $\B$ to
  be pointed anyway.
\end{remark}

\begin{corollary}
  \label{corl:locconv:retract:sober}
  The weakly locally convex sober topological cones are exactly the
  semitopological cones $\C$ that occur as linear retracts of locally
  convex (even locally linear) sober topological cones; explicitly,
  one such linear retraction is $\beta \colon \Val_\pw \C \to \C$.

  The weakly locally convex sober pointed topological barycentric
  algebras are exactly the pointed semitopological barycentric
  algebras $\B$ that occur as linear retracts of locally linear sober
  pointed topological barycentric algebras; one such linear retraction
  is $\beta \colon \Val_{\leq 1, \pw} \B \to \B$.
\end{corollary}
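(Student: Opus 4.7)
\medskip

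\noindent\textbf{Proof plan.} The corollary packages two implications, and I would handle them separately for each of the two settings.

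For the easier direction, suppose $\C$ is a linear retract of a locally convex (or locally linear) sober topological cone $\Cb$, with retraction $r \colon \Cb \to \C$ and section $s \colon \C \to \Cb$. Every locally convex semitopological cone is weakly locally convex (Remark~\ref{rem:locconv:weak}), so by Proposition~\ref{prop:locconv:retract}, item~1, $\C$ is weakly locally convex; by item~2 of the same proposition, $\C$ is topological. Sobriety is preserved under retracts, since $s \circ r \colon \Cb \to \Cb$ and $\identity\Cb$ are continuous maps of the sober space $\Cb$ into the $T_0$ space $\Cb$ (note that $\Cb$ is $T_0$ because it is sober), so their equalizer $\{x \in \Cb \mid s(r(x)) = x\} = s[\C]$ is sober; and this equalizer is homeomorphic to $\C$ via $r$ and $s$. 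Hence $\C$ is a weakly locally convex sober topological cone. The same argument works for the pointed barycentric algebra case, using that every affine retract of a weakly locally convex $T_0$ topological barycentric algebra (pointed) is weakly locally convex and topological (Proposition~\ref{prop:locconv:retract}), and that a linear retraction between pointed algebras is automatically strict and continuous.

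For the nontrivial direction in the cone case, let $\C$ be a weakly locally convex sober topological cone. Take $\Cb \eqdef \Val_\pw \C$. Then $\Cb$ is a locally linear (hence locally convex) topological cone by Example~\ref{exa:VX:loclin} and Example~\ref{exa:VX:top:cone}, and is sober by Heckmann's result, cited in the proof of Theorem~\ref{thm:VpX:sobrif}. I would take as retraction the map $\beta \colon \Val_\pw \C \to \C$ produced by Theorem~\ref{thm:locconv:heckmann:beta:pcont}, which is linear and continuous, and as section the unit $\eta_\C \colon x \mapsto \delta_x$. Continuity of $\eta_\C$ is immediate from $\eta_\C^{-1}([U > r]) = U$ when $r < 1$ and $\eta_\C^{-1}([U > r]) = \emptyset$ when $r \geq 1$. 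The retraction identity $\beta \circ \eta_\C = \identity\C$ is precisely the defining property $\beta(\delta_x) = x$ from Theorem~\ref{thm:locconv:heckmann:beta:pcont}. This exhibits $\C$ as a linear retract of the locally linear sober topological cone $\Val_\pw \C$.

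For the pointed barycentric algebra case, the argument is parallel: given a weakly locally convex sober pointed topological barycentric algebra $\B$, take $\Alg \eqdef \Val_{\leq 1, \pw} \B$. Local linearity of $\Alg$ is by Example~\ref{exa:Vleq1X:loclin}, topologicality is because $\Val_{\leq 1, \pw} \B$ is a convex subspace of the topological cone $\Val_\pw \B$ (Lemma~\ref{lemma:bary:in:cone}), and sobriety is by Theorem~\ref{thm:VpX:sobrif}. The retraction is $\beta \colon \Val_{\leq 1, \pw} \B \to \B$ from Theorem~\ref{thm:locconv:heckmann:beta:pcont}, which is linear and continuous, with section $\eta_\B \colon x \mapsto \delta_x$; continuity of $\eta_\B$ is verified as above, noting that $\delta_x \in \Val_{\leq 1, \pw} \B$ for every $x \in \B$.

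The only subtle point, and therefore the main obstacle to watch, is the sobriety half of the easy direction: one must either invoke the standard equalizer characterization of retracts or check directly that the irreducible closed subsets of $\C$ lift to irreducible closed subsets of $\Cb$. All other ingredients are direct citations to the theorems and examples established earlier in the paper.
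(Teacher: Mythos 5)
Your proposal is correct and follows essentially the same route as the paper's proof: the easy direction cites Proposition~\ref{prop:locconv:retract} for weak local convexity and topologicity and the preservation of sobriety under retracts, and the substantive direction exhibits $\beta \colon \Val_\pw \C \to \C$ (resp.\ $\beta \colon \Val_{\leq 1, \pw}\B \to \B$) from Theorem~\ref{thm:locconv:heckmann:beta:pcont} as the retraction with section $\eta_\C$ (resp.\ $\eta_\B$), using Example~\ref{exa:VX:loclin} (resp.\ Example~\ref{exa:Vleq1X:loclin}), Example~\ref{exa:VX:top:cone} together with Lemma~\ref{lemma:bary:in:cone}, and Theorem~\ref{thm:VpX:sobrif} for the required properties of the target. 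Your additional spell-out of sobriety preservation via the equalizer $\{x \mid s(r(x))=x\}$ and your explicit check of $\eta$'s continuity and the retraction identity are both fine, but they are facts the paper already records (in Section~\ref{sec:dual-cones} and in the proof of Proposition~\ref{prop:locconv:retract}, item~3), so they do not constitute a different argument.
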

\begin{proof}
  Every linear retract of a weakly locally convex sober topological
  cone is weakly locally convex and topological by
  Proposition~\ref{prop:locconv:retract} and sober, because sobriety
  is preserved by retracts.  Conversely, if $\C$ is a weakly locally
  convex sober topological cone, then the map $\beta$ of
  Theorem~\ref{thm:locconv:heckmann:beta:pcont} is a linear
  retraction, with section $\eta_\C \colon \C \to \Val_\pw \C$.
  Moreover, $\Val_\pw \C$ is topological
  (Example~\ref{exa:VX:top:cone}), locally linear
  (Example~\ref{exa:VX:loclin}), and sober, being a sobrification
  (Theorem~\ref{thm:VpX:sobrif}).

  The argument is the same for pointed barycentric algebras $\B$
  instead of cones.  $\Val_{\leq 1, \pw} \B$ is topological since it
  is a subspace of the topological cone $\Val_\pw \B$, locally linear
  (Example~\ref{exa:Vleq1X:loclin}) and sober
  (Theorem~\ref{thm:VpX:sobrif}).  \qed
\end{proof}

The following is also due to Heckmann \cite{heckmann96}, in the case
of cones.
\begin{theorem}
  \label{thm:locconv:heckmann:sober:free}
  For every sober topological cone $\C$, $\Val_\pw \C$ is the free
  weakly locally convex sober topological cone over $\C$.

  For every sober pointed topological barycentric algebra $\B$,
  $\Val_{\leq 1, \pw} \B$ is the free weakly locally convex sober
  pointed topological barycentric algebras over $\B$.

  Namely, $\Val_\pw \C$ is a weakly locally convex, even locally
  linear, sober topological cone, and for every weakly locally convex
  sober topological cone $\Cb$ and every linear continuous map
  $f \colon \C \to \Cb$, there is a unique linear continuous map
  $\hat f \colon \Val_\pw \C \to \Cb$ such that
  $\hat f (\delta_x) = f (x)$ for every $x \in \C$, namely
  $\hat f \eqdef f \circ \beta$.  Similarly with $\B$ instead of $\C$,
  pointed topological barycentric algebras instead of topological
  cones, and $\Val_{\leq 1, \pw}$ instead of $\Val_\pw$.
\end{theorem}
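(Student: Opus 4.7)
The plan is to first verify that $\Val_\pw \C$ has the required structural properties, then prove the universal property by bootstrapping from the sobrification theorem (Theorem~\ref{thm:VpX:sobrif}) and the corresponding free-algebra result on simple valuations (Theorem~\ref{thm:locconv:heckmann:free}).

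First I would collect the structural facts about $\Val_\pw \C$. It is a topological cone by Example~\ref{exa:VX:top:cone}, it is locally linear by Example~\ref{exa:VX:loclin}, and hence it is weakly locally convex and topological by Proposition~\ref{prop:loclin:topo}. It is sober because, by Theorem~\ref{thm:VpX:sobrif}, it is the sobrification of $\Val_\fin \C$, and sobrifications are sober. The same applies to $\Val_{\leq 1,\pw} \B$: it is a convex subspace of $\Val_\pw \B$, hence topological by Lemma~\ref{lemma:bary:in:cone}, locally linear by Example~\ref{exa:Vleq1X:loclin}, and sober by Theorem~\ref{thm:VpX:sobrif}.

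Next, for the universal property, fix a weakly locally convex sober topological cone $\Cb$ and a linear continuous map $f \colon \C \to \Cb$. Since sober spaces are $T_0$, Theorem~\ref{thm:locconv:heckmann:free} yields a unique linear continuous extension $g \colon \Val_\fin \C \to \Cb$ with $g(\delta_x) = f(x)$; explicitly, $g = \beta_{\Cb} \circ (\Val_\fin f)$ using the barycenter map of Theorem~\ref{thm:locconv:heckmann:beta}, although the formula is irrelevant for what follows. Since $\Val_\pw \C$ is a sobrification of $\Val_\fin \C$ (Theorem~\ref{thm:VpX:sobrif}) and $\Cb$ is sober, $g$ extends uniquely to a continuous map $\hat f \colon \Val_\pw \C \to \Cb$. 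This gives both existence of a continuous extension of $f$ and the uniqueness of $\hat f$ on $\Val_\pw \C$: any competitor restricts to a continuous extension of $f$ on $\Val_\fin \C$ (necessarily equal to $g$ by Theorem~\ref{thm:locconv:heckmann:free}) and then agrees with $\hat f$ by the universal property of the sobrification.

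It remains to show that $\hat f$ is linear, and this is the main technical step, handled exactly as in the proof of Theorem~\ref{thm:locconv:heckmann:beta:pcont}. For scalar multiplication, fix $a \in \Rp$; the two continuous maps $\nu \mapsto a \cdot \hat f(\nu)$ and $\nu \mapsto \hat f(a \cdot \nu)$ from $\Val_\pw \C$ to $\Cb$ agree on $\Val_\fin \C$ since $g$ is linear, so they agree everywhere by uniqueness of continuous extensions to the sobrification. For additivity, first fix a simple $\mu \in \Val_\fin \C$: the maps $\nu \mapsto \hat f(\mu + \nu)$ and $\nu \mapsto \hat f(\mu) + \hat f(\nu)$ are continuous from $\Val_\pw \C$ to $\Cb$ and coincide on $\Val_\fin \C$, hence everywhere; then, fixing $\nu \in \Val_\pw \C$, the maps $\mu \mapsto \hat f(\mu + \nu)$ and $\mu \mapsto \hat f(\mu) + \hat f(\nu)$ are continuous from $\Val_\pw \C$ to $\Cb$ and coincide on $\Val_\fin \C$ by the previous step, hence everywhere. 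Strictness follows from $\hat f(0) = g(0) = 0$. The pointed barycentric algebra version is proved by the identical template, using $\Val_{\leq 1,\fin} \B$ and $\Val_{\leq 1, \pw} \B$ in place of $\Val_\fin \C$ and $\Val_\pw \C$, with Theorem~\ref{thm:locconv:heckmann:free} providing the initial extension and Theorem~\ref{thm:VpX:sobrif} the sobrification step. The real obstacle throughout is exactly this linearity argument, which must be executed carefully because it relies twice on the universal property of sobrifications to transport algebraic identities from the simple-valuation layer to all point-continuous valuations.
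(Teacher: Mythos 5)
Your proof is correct, and it takes a genuinely different route from the paper for the existence part of the universal property. The paper proves existence in one line by setting $\hat f \eqdef f \circ \beta$ and appealing to Theorem~\ref{thm:locconv:heckmann:beta:pcont} for linearity and continuity of $\beta$ (strictly speaking, the composition that makes sense when $\C$ is only assumed sober topological is $\beta_{\Cb} \circ \Val_\pw f$, since $\beta$ on $\Val_\pw \C$ itself requires $\C$ weakly locally convex, which is not among the hypotheses; this also requires a quiet verification that the image valuation of a point-continuous valuation under a continuous map is point-continuous). You instead extend the simple-valuation level map $g \colon \Val_\fin \C \to \Cb$ along the sobrification $\Val_\fin \C \hookrightarrow \Val_\pw \C$ and then re-run, on the pair $(g, \hat f)$, the transport-of-identities argument from the proof of Theorem~\ref{thm:locconv:heckmann:beta:pcont}: scalar homogeneity, then additivity with one argument frozen at a simple valuation, then additivity with one argument frozen at a general point-continuous valuation, each step using the uniqueness of continuous extensions to the sobrification. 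This is longer than the paper's one-liner, but it is self-contained, sidesteps the question of which composition is meant, and does not require checking that $\Val_\pw f$ lands in $\Val_{\pw}\Cb$. The structural facts you collect about $\Val_\pw\C$ and $\Val_{\leq 1,\pw}\B$ (topological, locally linear hence weakly locally convex, sober via the sobrification theorem), the uniqueness argument, and the strictness of $\hat f$ are all handled correctly, and the pointed barycentric algebra case indeed goes through by the same template.
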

\begin{proof}
  That $\Val_\pw \C$ is a locally convex $T_0$ topological cone is
  part of Corollary~\ref{corl:locconv:retract:sober}.  Given $f$ and
  $\Cb$ as above, if $\hat f$ exists then by linearity it must map
  every simple valuation $\nu \eqdef \sum_{x \in A} a_x \delta_x$ to
  $\sum_{x \in A} a_x \cdot f (x) = f (\beta (\nu))$, so $\hat f$ is
  determined uniquely on $\Val_\fin \C$.  Since $\Cb$ is sober, by the
  uniqueness part in the universal property of sobrifications and
  since $\Val_\pw \C$ is a sobrification of $\Val_\fin \C$
  (Theorem~\ref{thm:VpX:sobrif}), $\hat f$ is determined uniquely on
  the whole of $\Val_\pw \C$.  Conversely,
  $\hat f \eqdef f \circ \beta$ fits, where $\beta$ is given by
  Theorem~\ref{thm:locconv:heckmann:beta:pcont}.
  The argument is similar with pointed topological barycentric
  algebras instead of topological cones and adding ``$\leq 1$''
  indices to $\Val$ everywhere.  \qed
\end{proof}

\section{Consistent barycentric algebras}
\label{sec:cons-baryc-algebr}

There are a few theorems on semitopological cones $\C$ that require
addition to be \emph{almost open}, in the sense that $\upc (U + V)$ is
open for all open subsets $U$ and $V$ of $\C$
\cite[Definition~4.6]{Keimel:topcones2}.  There are two natural
extensions of the notion to semitopological barycentric algebras.
\begin{definition}[Consistent barycentric algebra]
  \label{defn:consistent:bary:alg}
  A semitopological barycentric algebra $\B$ is \emph{consistent} if
  and only if for all open subsets $U$ and $V$ of $\B$, for every open
  subset $I$ of $[0, 1]$, $\upc (U +_I V)$ is open in $\B$, where
  $U +_I V \eqdef \{x +_a y \mid x \in U, y \in V, a \in I\}$.

  It is \emph{strongly consistent} if and only if for all open subsets
  $U$ and $V$ of $\B$, for every $a \in {[0, 1]}$, $\upc (U +_a V)$ is
  open in $\B$, where
  $U +_a V \eqdef \{x +_a y \mid x \in U, y \in V\}$.
\end{definition}

\begin{fact}
  \label{fact:strong:consistent}
  A semitopological barycentric algebra $\B$ is strongly consistent if
  and only if for all open subsets $U$ and $V$ of $\B$, for every
  $a \in {]0, 1[}$, $\upc (U +_a V)$ is open in $\B$.  Indeed, for $a
  \eqdef 0$, $\upc (U +_a V) = \upc V = V$ and for $a \eqdef 1$, $\upc
  (U +_a V) = \upc U = U$, and both are open by assumption.
\end{fact}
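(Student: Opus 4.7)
The statement is essentially an observation, and its proof is already sketched in the fact itself; I would simply flesh out the two endpoint cases carefully. My plan is to split the equivalence into the two obvious directions and reduce the backward direction to computing $U +_a V$ when $a \in \{0, 1\}$ using the barycentric algebra axioms.

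For the forward direction, if $\B$ is strongly consistent then in particular $\upc(U +_a V)$ is open for every $a \in {]0,1[}$, so nothing is to be done.

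For the backward direction, I would assume that $\upc(U +_a V)$ is open for all open $U, V$ and all $a \in {]0,1[}$, and handle the two endpoints $a = 0$ and $a = 1$ separately. Using the axiom $x +_1 y = x$, the set $U +_1 V = \{x +_1 y \mid x \in U, y \in V\}$ equals $U$ if $V \neq \emptyset$ and is empty otherwise; either way, $\upc(U +_1 V)$ is either $U$ or $\emptyset$, hence open. For $a = 0$, the axiom $x +_0 y = y +_1 x = y$ (combining $x +_a y = y +_{1-a} x$ with $y +_1 x = y$) gives $U +_0 V = V$ if $U \neq \emptyset$ and $\emptyset$ otherwise; again $\upc(U +_0 V)$ is open. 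This exhausts all $a \in [0,1]$, giving strong consistency.

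There is no real obstacle; the only mildly subtle point is remembering to invoke the axiom $x +_0 y = y$, which is not listed as a defining axiom but is derived from $x +_a y = y +_{1-a} x$ and $y +_1 x = y$, and to treat the degenerate cases where $U$ or $V$ may be empty (in which case the relevant set reduces to $\emptyset$, which is also open). This proof is short enough that I would simply fold it into the statement of the fact, as the author has done.
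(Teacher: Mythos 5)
Your proposal is correct and follows exactly the same route as the paper: the forward direction is a trivial restriction, and the backward direction handles the two endpoints $a=0$ and $a=1$ via the identities $x +_1 y = x$ and (the derived) $x +_0 y = y$, which force $\upc(U +_a V)$ to be $U$, $V$, or $\emptyset$, all open. You are in fact slightly more careful than the paper's inline justification, which writes $\upc(U +_0 V) = V$ without noting that this requires $U$ nonempty; your explicit treatment of the empty cases closes that small gap.
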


\begin{fact}
  \label{fact:strong:consistent:consistent}
  Every strongly consistent semitopological barycentric algebra is
  consistent.
\end{fact}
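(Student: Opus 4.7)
The plan is to observe the set-theoretic identity
\[
  U +_I V \;=\; \bigcup_{a \in I} (U +_a V),
\]
which follows directly from the definition of $U +_I V$. Taking upward closures distributes over unions, so
\[
  \upc (U +_I V) \;=\; \bigcup_{a \in I} \upc (U +_a V).
\]
By strong consistency, each set $\upc (U +_a V)$ is open in $\B$ (for $a \in [0,1]$, in particular for every $a \in I$). A union of open sets is open, hence $\upc (U +_I V)$ is open. Note that the hypothesis that $I$ is open in $[0,1]$ plays no role in the argument; the conclusion is valid for arbitrary $I \subseteq [0,1]$.

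There is no obstacle here: the proof is purely a distributivity observation, and a two-line argument should suffice in the final write-up.
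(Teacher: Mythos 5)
Your proof is correct and is essentially the same as the paper's: both decompose $\upc(U +_I V)$ as $\bigcup_{a \in I} \upc(U +_a V)$ and invoke strong consistency pointwise. Your remark that the openness of $I$ is not needed is accurate but incidental.
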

Indeed, for all open subsets $U$ and $V$ of a strongly consistent
barycentric algebra $\B$, for every open subset $I$ of $[0, 1]$,
$\upc (U +_I V) = \bigcup_{a \in I} \upc (U +_a V)$, which is a union
of open sets if $\B$ is strongly consistent.

\begin{example}
  \label{exa:lattice:consistent:strong}
  Every sup-semilattice $L$ (see
  Example~\ref{exa:lattice:top:bary:alg}) is strongly consistent,
  hence consistent by Fact~\ref{fact:strong:consistent:consistent}.
  Indeed, let us rely on Fact~\ref{fact:strong:consistent}, and let us
  fix $a \in {]0, 1[}$.  For all Scott-open subsets $U$ and $V$,
  $U +_a V = \{x \vee y \mid x \in U, y \in V\} = U \cap V$: for all
  $x \in U$ and $y \in V$, $x \vee y$ is both in $U$ and in $V$, since
  both are upwards-closed, and conversely, every element
  $z \in U \cap V$ can be written as $z \vee z$, where $z \in U$ and
  $z \in V$.  Hence $\upc (U +_a V) = U \cap V$ is open.
\end{example}

\begin{lemma}
  \label{lemma:consistent:bary:alg}
  A semitopological barycentric algebra $\B$ is consistent if and only
  if $U +_I V$ is open for all open subsets $U$ and $V$ of $\B$ and
  every non-empty open interval $I$ included in $]0, 1[$.
\end{lemma}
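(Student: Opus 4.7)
The lemma asserts an equivalence, so I would prove both directions separately, noting that the reverse implication is the easier one.

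For the easy direction ($\Leftarrow$): Assume $U+_I V$ is open whenever $U, V$ are open and $I$ is a nonempty open interval included in ${]0,1[}$. Given open $U, V$ and an arbitrary open subset $I$ of $[0,1]$, I would decompose $I$ into its maximal connected components, each a relatively open subinterval of $[0,1]$ of one of the shapes $(a,b) \subseteq {]0,1[}$, $[0,b)$, $(a,1]$, or $[0,1]$. Using $U+_{\{0\}} V = V$, $U+_{\{1\}} V = U$, and the identity $U+_{J_1 \cup J_2} V = (U+_{J_1} V) \cup (U+_{J_2} V)$, each component contributes a union of $U$, $V$, and sets $U+_{J'} V$ where $J' \subseteq {]0,1[}$ is an open interval; the last kind is open by hypothesis. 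Hence $U+_I V$ is a union of open sets, therefore open. Since every open set is upwards-closed in the specialization preorder (open sets are saturated), $\upc(U+_I V) = U+_I V$, which is open, giving consistency.

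For the harder direction ($\Rightarrow$): Assume $\B$ is consistent, and fix open $U, V$ and an open interval $I = (a_0, b_0) \subseteq {]0,1[}$. By consistency, $W := \upc(U +_I V)$ is open. The natural strategy is to show $U +_I V = W$, i.e.\ that $U +_I V$ is upwards-closed in the specialization preorder. Concretely, given $z' \geq x +_a y$ with $x \in U$, $y \in V$, $a \in I$, I need to produce a decomposition $z' = x' +_{a'} y'$ with $x' \in U$, $y' \in V$, $a' \in I$; then $z' \in U +_I V$, so $U +_I V = W$ is open.

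The main obstacle lies precisely in this decomposition. In a cone, it would be immediate: take $x' := x$ and $y' := y + (z'-z)/(1-a)$, which lies in $V$ by upwards-closure, with $1-a > 0$ since $a \in I \subseteq {]0,1[}$. In a general semitopological barycentric algebra, however, no subtraction is available. My plan is to reduce to the cone case by working inside the free semitopological cone $\conify(\B)$ via the affine continuous map $\etac_{\B}$ of Theorem~\ref{thm:conify:semitop}, performing the weight-shifting there, and then descending the conclusion to $\B$; the strict containment $a \in (a_0,b_0) \subseteq {]0,1[}$ provides the slack to keep the shifted parameter inside $I$. An alternative approach is to argue directly that for a slightly smaller open subinterval $J \subsetneq I$ around $a$, the open set $\upc(U +_J V)$ supplied by consistency is already contained in $U +_I V$, which would furnish the required open neighborhood of $z$ without explicitly verifying upwards-closure.
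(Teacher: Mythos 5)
The statement as printed almost certainly contains a typographical omission: comparing with Definition~\ref{defn:consistent:bary:alg} (which defines consistency by requiring $\upc(U+_I V)$ to be open) and with the paper's own proof of this lemma (which writes ``by assumption, $\upc(U+_{I_k} V)$ is open''), the condition should read ``$\upc(U+_I V)$ is open'', not ``$U+_I V$ is open''. With the $\upc$ restored, the ``only if'' direction is genuinely clear --- it is a special case of the definition --- and all the work goes into the ``if'' direction. The very fact that you found $(\Rightarrow)$ to be the hard direction, whereas the paper dismisses the ``only if'' direction in one sentence, is a reliable signal that you and the author are not reading the same statement.

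Your $(\Leftarrow)$ argument is correct and is essentially the paper's proof of its ``if'' direction: decompose an arbitrary open $I \subseteq [0,1]$ into connected components, peel off the endpoints $0$ and $1$ (which contribute $V$ and $U$ respectively), and apply the hypothesis to the interior pieces. The remark that open sets are saturated, so that $\upc(U+_I V)$ coincides with the open set $U+_I V$, neatly bridges your stronger hypothesis to the $\upc$-formulation used in the definition.

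Your sketch of $(\Rightarrow)$ has a concrete gap. Cones have no subtraction, so the expression $(z'-z)/(1-a)$ is not defined; even in a cone, $z' \geq z$ does not in general yield a $d$ with $z' = z+d$. In $\Lform X$, for instance, take $z' \eqdef 1$ (constant) and $z \eqdef \chi_U$ for a non-clopen open $U$: then $z' \geq z$, but any candidate difference $1 - \chi_U$ is the characteristic function of a proper closed set, hence not lower semicontinuous, hence not an element of the cone. So the proposed ``reduce to $\conify(\B)$ via $\etac_{\B}$ and subtract there'' strategy already breaks at the cone stage, and the alternative (finding a small $J$ with $\upc(U+_J V) \subseteq U+_I V$) runs into the same wall: to place an arbitrary $z' \geq x+_{a'} y$ back into $U+_I V$ one would again need a decomposition of $z'$ that is not available. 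None of this is needed once the $\upc$ is restored in the statement.
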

\begin{proof}
  The only if direction is clear.  In the if direction, let $U$ and
  $V$ be open in $\B$ and $I$ be an open subset of $[0, 1]$.  Then $I$
  is a union of a family ${(I_k)}_{k \in K}$ of non-empty intervals of
  the form $]a, b[$ with $0\leq a < b\leq 1$, or $[0, b[$ or $]a, 1]$.
  We rewrite the latter two as the union of $]0, b[$ with $\{0\}$, and
  as the union of $]a, 1[$ with $\{1\}$, if necessary; so we may
  assume that every $I_k$ is a non-empty interval included in
  $]0, 1[$, or $\{0\}$ or $\{1\}$.  By assumption, if $I_k$ is of the
  first kind, $\upc (U +_{I_k} V)$ is open in $\B$, while
  $\upc (U +_{\{0\}} V) = \upc V = V$ and
  $\upc (U +_{\{1\}} V) = \upc U = U$.  Therefore
  $\upc (U +_I V) = \bigcup_{k \in K} \upc (U +_{I_k} V)$ is open in
  $\B$.  \qed
\end{proof}
\begin{proposition}
  \label{prop:cons:equiv}
  The following are equivalent for a semitopological cone $\C$:
  \begin{enumerate}
  \item addition is almost open on $\C$;
  \item $\C$ is strongly consistent, seen as a semitopological
    barycentric algebra;
  \item $\C$ is consistent, seen as a semitopological barycentric
    algebra.
  \end{enumerate}
\end{proposition}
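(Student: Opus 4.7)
The plan is to close a three-way cycle $(1) \Rightarrow (2) \Rightarrow (3) \Rightarrow (1)$, exploiting the fact that in a cone the barycentric operation $+_a$ factors as $x +_a y = a \cdot x + (1-a) \cdot y$, and that scalar multiplication by any strictly positive real is a homeomorphism of $\C$ (it is continuous by assumption, and $(1/a) \cdot \_$ is its two-sided inverse).

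The implication $(2) \Rightarrow (3)$ is a free lunch: it is just Fact~\ref{fact:strong:consistent:consistent}. For $(1) \Rightarrow (2)$, fix open subsets $U, V$ of $\C$ and $a \in [0,1]$. The boundary cases $a=0,1$ are handled by Fact~\ref{fact:strong:consistent}. When $a \in {]0,1[}$, the sets $a \cdot U$ and $(1-a) \cdot V$ are open, because scalar multiplication by a strictly positive real is a homeomorphism; almost openness of $+$ then gives that $\upc(a \cdot U + (1-a) \cdot V) = \upc(U +_a V)$ is open.

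The interesting direction is $(3) \Rightarrow (1)$, and this is where I expect the one small subtlety. Given open subsets $U, V$ of $\C$, take $I \eqdef {]0,1[}$; by consistency, $\upc(U +_I V)$ is open. The claim is that $\upc(U + V) = \upc(U +_I V)$. One inclusion is immediate from the fact that, in any preordered cone, $a \cdot u + (1-a) \cdot v \leq u + v$ for $a \in [0,1]$ (since $(1-a) \cdot u + a \cdot v \geq 0$); hence $U +_I V \subseteq \dc (U+V)$ and so $\upc(U +_I V) \subseteq \upc(U+V)$. For the reverse inclusion, given $u \in U, v \in V$, the point $u +_{1/2} v = \frac 1 2 (u+v)$ lies in $U +_I V$ and satisfies $u +_{1/2} v \leq u + v$; therefore $U + V \subseteq \upc(U +_I V)$, whence $\upc(U+V) \subseteq \upc(U +_I V)$. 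Combining the two inclusions, $\upc(U+V)$ is open.

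The only point that deserves a moment of care is the observation that $a \cdot \_ \colon \C \to \C$ is an open map when $a > 0$, but this follows from its being a homeomorphism. No serious obstacle is anticipated; the proof is essentially a bookkeeping exercise interconverting between the cone-theoretic $+$ and the barycentric $+_a$ via the relations $U +_a V = a \cdot U + (1-a) \cdot V$ and, conversely, $U + V = 2 \cdot (U +_{1/2} V)$ (the latter not even needed explicitly, thanks to the $I = {]0,1[}$ trick above).
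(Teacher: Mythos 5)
Your directions $(1) \Rightarrow (2)$ and $(2) \Rightarrow (3)$ are correct and match the paper's proof. The direction $(3) \Rightarrow (1)$, however, contains a fatal error.

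You claim that $\upc (U+V) = \upc (U +_I V)$ with $I \eqdef {]0,1[}$. The inclusion $\upc(U+V) \subseteq \upc(U +_I V)$, which you obtain from $u + v \geq u +_{1/2} v$, is fine. But the reverse inclusion is false, and the step from ``$U +_I V \subseteq \dc(U+V)$'' to ``$\upc(U +_I V) \subseteq \upc(U+V)$'' is a non sequitur: in general $\upc \dc A$ is strictly larger than $\upc A$. A concrete counterexample lives already in $\C \eqdef \Rp$ with the Scott topology: take $U \eqdef V \eqdef {]1, \infty[}$. Then $U + V = \upc(U + V) = {]2, \infty[}$, whereas $U +_{]0,1[} V = \upc(U +_{]0,1[} V) = {]1, \infty[}$ (take $u = v = t$ and $a = 1/2$ to reach any $t > 1$). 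So the two saturations genuinely differ, and you are left only with $\upc(U+V)$ being a subset of an open set, which does not make it open.

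The paper's argument is local, and this locality is essential. Fix $z \in \upc(U+V)$ and $x \in U$, $y \in V$ with $x + y \leq z$. Scott-continuity of $\_ \cdot x$ and $\_ \cdot y$ yields $b, c \in {]0,1[}$ with $b \cdot x \in U$ and $c \cdot y \in V$; set $U' \eqdef (b \cdot \_)^{-1}(U)$ and $V' \eqdef (c \cdot \_)^{-1}(V)$, which are open and contain $x$ and $y$. Pick $\epsilon > 0$ with $1 - 2\epsilon > \max(b,c)$. One then verifies that $2 \cdot (U' +_{]1/2-\epsilon, 1/2+\epsilon[} V') \subseteq U + V$, so that $W \eqdef 2 \cdot \upc(U' +_{]1/2-\epsilon, 1/2+\epsilon[} V')$ is an open neighborhood of $z$ (open because $\C$ is consistent and $2 \cdot \_$ is a homeomorphism) contained in $\upc(U+V)$. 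Two ingredients are missing from your attempt: the interval has to be kept \emph{narrow} around $1/2$ rather than the whole of $]0,1[$, precisely to rule out the ``$au + (1-a)v$ can shrink too far'' phenomenon your counterexample exhibits; and $U, V$ have to be replaced by the enlargements $U', V'$ to compensate for the scaling, which is where Scott-continuity of scalar multiplication enters. Neither appears in your proposal.
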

\begin{proof}
  $1 \limp 2$.  If addition is almost open on $\C$, then let $U$ and
  $V$ be two open subsets of $\C$ and $a \in {]0, 1[}$.  We have
  $U +_a V = (a \cdot U) + ((1-a) \cdot V) = (\frac 1 a \cdot \_)^{-1}
  (U) + (\frac 1 {1-a} \cdot \_)^{-1} (V)$.  Since
  $(\frac 1 a \cdot \_)^{-1} (U)$ and
  $(\frac 1 {1-a} \cdot \_)^{-1} (V)$ are open in $\C$,
  $\upc (U +_a V)$ is open.  Hence $\C$ is strongly consistent as a
  semitopological barycentric algebra, thanks to
  Fact~\ref{fact:strong:consistent}.

  $2 \limp 3$ is by Fact~\ref{fact:strong:consistent:consistent}.

  $3 \limp 1$.  Let us assume that $\C$ is consistent as a
  semitopological barycentric algebra.  Let $U$ and $V$ be two open
  subsets of $\C$.  For every $z \in \upc (U + V)$, there are points
  $x \in U$ and $y \in V$ such that $x+y \leq z$.  Since $\_ \cdot x$
  is continuous from $\Rp$ to $\C$, there is a number $b \in {]0, 1[}$
  such that $b \cdot x \in U$, and similarly there is a number
  $c \in {]0, 1[}$ such that $c \cdot y \in V$.  Let
  $U' \eqdef (b \cdot \_)^{-1} (U)$ and
  $V' \eqdef (c \cdot \_)^{-1} (V)$.  Since $b \cdot x \in U$, $x$ is
  in $U'$, and similarly $y \in V'$.  Let $\epsilon > 0$ be such that
  $1-2\epsilon > \max (b, c)$.  For all $x' \in U'$, $y' \in U'$ and
  $a \in {]\frac 1 2 - \epsilon, \frac 1 2 + \epsilon[}$,
  $2a > 1 - 2 \epsilon > b$ and $2 (1-a) > 1 - 2 \epsilon > c$, so
  $2a \cdot x' \geq b \cdot x'$ is in $U$,
  $2(1-a) \cdot y' \geq c \cdot y'$ is in $V$, and therefore
  $2 \cdot (x' +_a y') = 2a \cdot x' + 2 (1-a) \cdot y'$ is in
  $U + V$.  We have just shown that
  $2 \cdot (U' +_{]\frac 1 2 - \epsilon, \frac 1 2 + \epsilon[} V')$
  is included in $U+V$.  Since $2 \cdot \_$ is monotonic, it follows
  that $W \subseteq \upc (U+V)$, where
  $W \eqdef 2 \cdot \upc (U' +_{]\frac 1 2 - \epsilon, \frac 1 2 +
    \epsilon[} V')$.  By assumption,
  $\upc (U' +_{]\frac 1 2 - \epsilon, \frac 1 2 + \epsilon[} V')$ is
  open in $\C$.  Using the equality
  $2 \cdot A = (\frac 1 2 \cdot \_)^{-1} (A)$, $2 \cdot A$ is open for
  every open set $A$; so $W$ is open in $\C$.  Since $x \in U'$ and
  $y \in V'$, $2 \cdot (x +_{\frac 1 2} y)$ is in $W$, equivalently
  $x + y \in W$, and since $x + y \leq z$, $z$ is in $W$.  We have
  therefore found an open neighborhood $W$ of $z$ included in
  $\upc (U + V)$, for every $z \in \upc (U+V)$, and this shows that
  $\upc (U+V)$ is open in $\C$.  \qed
\end{proof}

There is another case where consistency and strong consistency
coincide.
\begin{lemma}
  \label{lemma:topo:consistent}
  For every topological barycentric algebra $\B$, $\B$ is strongly
  consistent if and only if $\B$ is consistent.
\end{lemma}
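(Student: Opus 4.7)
The forward direction is covered by Fact~\ref{fact:strong:consistent:consistent}, so the plan is to prove the reverse: assuming $\B$ is topological and consistent, I show $\upc(U +_a V)$ is open for every $a \in [0,1]$ and every pair of open subsets $U, V$ of $\B$. By Fact~\ref{fact:strong:consistent} I may reduce to $a \in {]0,1[}$. Fix $z \in \upc(U +_a V)$ with witnesses $x \in U$, $y \in V$ satisfying $x +_a y \leq z$; the goal is to exhibit an open neighborhood of $z$ contained in $\upc(U +_a V)$.

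The main idea is to exploit two barycentric identities which, for $b$ close to $a$, recast $x' +_b y'$ as an expression of the form $x'' +_a y''$. Specifically, for $b \geq a$ one has $x' +_b y' = x' +_a (x' +_c y')$ with $c = (b-a)/(1-a)$, and for $b \leq a$ (using $a > 0$) one has $x' +_b y' = (y' +_{c'} x') +_a y'$ with $c' = 1 - b/a$. Both are straightforward computations in a cone, and they transfer to any barycentric algebra via the injective affine map $\etac_{\B} \colon \B \to \conify(\B)$ of Proposition~\ref{prop:bary:alg:conify:ord}. The crucial feature is that when $b = a$, both $c$ and $c'$ vanish, so $x' +_c y' = y'$ and $y' +_{c'} x' = x'$ respectively.

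Because $\B$ is \emph{topological}, the two maps $(x', c, y') \mapsto x' +_c y'$ and $(x', c', y') \mapsto y' +_{c'} x'$ from $\B \times [0,1] \times \B$ to $\B$ are jointly continuous. Evaluated at $(x, 0, y)$, their values are respectively $y \in V$ and $x \in U$, so a routine product-topology argument yields open neighborhoods $U_0 \ni x$ (with $U_0 \subseteq U$), $V_0 \ni y$ (with $V_0 \subseteq V$), and constants $c_0, c_0' > 0$ such that $x' +_c y' \in V$ whenever $(x', c, y') \in U_0 \times [0, c_0) \times V_0$, and $y' +_{c'} x' \in U$ whenever $(x', c', y') \in U_0 \times [0, c_0') \times V_0$. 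Setting $I \eqdef (a - a c_0', a + (1-a) c_0) \cap {]0,1[}$, which is an open interval containing $a$, the two identities together imply $U_0 +_I V_0 \subseteq U +_a V$.

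It only remains to apply the consistency hypothesis to $U_0$, $V_0$, $I$: the set $\upc(U_0 +_I V_0)$ is open in $\B$, it contains $z$ (since $x +_a y \in U_0 +_I V_0$ and $x +_a y \leq z$), and the inclusion $U_0 +_I V_0 \subseteq U +_a V$ gives $\upc(U_0 +_I V_0) \subseteq \upc(U +_a V)$. Hence $z$ is an interior point of $\upc(U +_a V)$, as required. The main obstacle will be setting up the joint-continuity arguments cleanly and checking the two identities via the free cone rather than by direct manipulation of the barycentric axioms; once that is done, the conclusion follows by a single application of consistency.
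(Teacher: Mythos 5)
Your proof is correct and takes essentially the same approach as the paper's: use joint continuity of the barycentric operation to find neighborhoods $U_0$, $V_0$ and an open interval $I$ around $a$ on which a decomposition identity holds, thereby showing $U_0 +_I V_0 \subseteq U +_a V$, then invoke consistency once to conclude that $\upc(U_0 +_I V_0)$ is an open neighborhood of $z$ inside $\upc(U +_a V)$. Your identities $x' +_b y' = x' +_a (x' +_c y')$ with $c = (b-a)/(1-a)$ and $x' +_b y' = (y' +_{c'} x') +_a y'$ with $c' = 1 - b/a$ are reparametrizations of the paper's $x' +_a (x' +_r y') = x' +_{a+(1-a)r} y'$ and $(x' +_r y') +_a y' = x' +_{ra} y'$, so the content matches despite the cosmetic differences in how the continuity arguments are set up.
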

\begin{proof}
  Let $U$ and $V$ be two open subsets of $\B$ and $a \in {]0, 1[}$.
  In order to show that $\upc (U +_a V)$ is open, we consider any
  point $z \in \upc (U +_a V)$, and we will find an open neighborhood
  $W$ of $z$ contained in $\upc (U +_a V)$.  Since
  $z \in \upc (U +_a V)$, there are two points $x \in U$ and $y \in V$
  such that $x +_a y \leq z$.  The function
  $f \colon (x', a', y') \mapsto x' +_{a'} y'$ is continuous since
  $\B$ is topological, and $f (x, 1, y) = x \in U$, so there is a
  basic open set $U' \times I \times V'$ (with $U'$, $I$, $V'$ open in
  $\B$, $[0, 1]$ and $\B$ respectively) containing $(x, 1, y)$ and
  mapped by $f$ to points of $U$.  Replacing $U'$ by $U \cap U'$ and
  $V'$ by $V \cap V'$ if necessary, we may assume that
  $U' \subseteq U$ and $V' \subseteq V$.  We may also assume that
  $I = {]1-\epsilon, 1]}$ for some $\epsilon \in {]0, 1[}$.  For all
  $x' \in U'$, $r \in I$ and $y' \in V'$, $x' +_r y' \in U$, and since
  $y' \in V' \subseteq V$, $(x' +_r y') +_a y' \in \upc (U +_a V)$.
  Now $(x' +_r y') +_a y' = x' +_{ra} y'$ (a fact that is easier to
  see by embedding $\B$ in a cone first), and $ra$ ranges over
  ${](1-\epsilon)a, a]}$ when $r$ ranges over $I$.  Hence
  $U' +_{](1-\epsilon)a, a]} V' \subseteq \upc (U +_a V)$.

  Symmetrically, we can find an open neighborhood $U'' \subseteq U$ of
  $x$, an open neighborhood $I' \eqdef {[0, \epsilon'[}$ of $0$ and an
  open neighborhood $V'' \subseteq V$ of $y$ such that
  $x' +_r y' \in V$ for all $x' \in U''$, $r \in I'$ and $y' \in V''$.
  We now use the equality
  $x' +_a (x' +_r y') = x' +_{1-(1-a)(1-r)} y'$, and we realize that
  $1-(1-a)(1-r) = a + (1-a) r$ ranges over $[a, a+(1-a)\epsilon'[$
  when $r$ ranges over $I'$.  Hence
  $U'' +_{[a, a+(1-a)\epsilon'[} V'' \subseteq \upc (U +_a V)$.
  Together with
  $U' +_{](1-\epsilon)a, a]} V' \subseteq \upc (U +_a V)$, this allows
  us to infer that
  $(U' \cap U'') +_{](1-\epsilon)a, a+(1-a)\epsilon'[} (V' \cap V'')$
  is included in $\upc (U +_a V)$.  Let
  $W \eqdef \upc ((U' \cap U'') +_{](1-\epsilon)a, a+(1-a)\epsilon[}
  (V' \cap V''))$; this is open since $\B$ is consistent, and
  $W \subseteq \upc (U +_a V)$.  Since $x \in U' \cap U''$,
  $a \in {](1-\epsilon)a, a+(1-a)\epsilon[}$ and $y \in V' \cap V''$,
  $z$ is in $W$, and this completes the proof.  \qed
\end{proof}

\begin{problem}
  \label{pb:consistent:strong}
  Is every consistent semitopological barycentric algebra strongly
  consistent?  A counterexample, if it exists, cannot be topological
  (because of Lemma~\ref{lemma:topo:consistent}), and cannot be a cone
  (by Proposition~\ref{prop:cons:equiv}).
\end{problem}

\subsection{Convex open sets in consistent semitopological barycentric
  algebras}
\label{sec:convex-open-sets}

\begin{proposition}
  \label{prop:int:conv}
  Let $\B$ be a semitopological barycentric algebra.
  \begin{enumerate}
  \item If $\B$ is consistent, then the interior of every convex
    saturated subset is convex.
  \item If $\B$ is consistent, then for every $n \geq 1$, for every
    open subset $V$ of $\Delta_n$, for all open subsets $U_1$, \ldots,
    $U_n$ of $\B$, the set
    $\upc \{\sum_{i=1}^n a_i \cdot x_i \mid (a_1, \cdots, a_n) \in V,
    x_1 \in U_1, \cdots, x_n \in U_n\}$ is open in $\B$.
  \item If $\B$ is consistent and weakly locally convex, then for
    every open subset $U$ of $\B$, $\upc \conv U$ is open and convex,
    and is the smallest convex open set containing $U$.
  \end{enumerate}
\end{proposition}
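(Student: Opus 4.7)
The plan is to establish item~1 directly from consistency, to prove item~2 by induction on $n$ via a surjective parametrization of $\Delta_n$, and then to derive item~3 by applying item~2 with $V\eqdef\Delta_n$.

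For item~1, fix a convex saturated subset $C$, write $U\eqdef\interior C$, and take $x,y\in U$ and $a\in[0,1]$. The cases $a=0$ and $a=1$ are immediate from the axioms $x+_0y=y$ and $x+_1y=x$. For $a\in{]0,1[}$, choose a non-empty open interval $I\subseteq{]0,1[}$ containing $a$. By consistency (via Lemma~\ref{lemma:consistent:bary:alg}), $\upc(U+_IU)$ is open in $\B$ and contains $x+_ay$. For every $z$ in this set there exist $x',y'\in U\subseteq C$ and $b\in I$ with $z\geq x'+_by'$; convexity of $C$ puts $x'+_by'$ in $C$, and saturation of $C$ then places $z$ in $C$. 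Hence $\upc(U+_IU)\subseteq C$, so $x+_ay\in\interior C=U$.

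For item~2 I induct on $n$. The case $n=1$ is trivial: $\upc W=U_1$. For $n\geq 2$ I use the surjective continuous parametrization $g_n\colon\Delta_{n-1}\times[0,1]\to\Delta_n$ defined by $g_n(\vec b,c)\eqdef(cb_1,\ldots,cb_{n-1},1-c)$. Unfolding Definition and Proposition~\ref{prop:bary:1}, item~1, one checks that
\[
\sum_{i=1}^{n} g_n(\vec b,c)_i\cdot x_i \;=\; \Bigl(\sum_{i=1}^{n-1} b_i\cdot x_i\Bigr)+_c x_n
\]
for all $c\in[0,1]$, $\vec b\in\Delta_{n-1}$, and $x_i\in\B$. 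Set $V'\eqdef g_n^{-1}(V)$, an open subset of $\Delta_{n-1}\times[0,1]$, and cover it by open rectangles $W_0\times I_0\subseteq V'$ with $W_0$ open in $\Delta_{n-1}$ and $I_0$ an open interval in $[0,1]$. Writing $S_0\eqdef\{\sum_{i=1}^{n-1}b_i\cdot x_i\mid\vec b\in W_0,\,x_i\in U_i\}$, surjectivity of $g_n$ yields $W=\bigcup(S_0+_{I_0}U_n)$, hence $\upc W=\bigcup\upc(S_0+_{I_0}U_n)$. Monotonicity of $+_c$ gives $\upc(S_0+_{I_0}U_n)=\upc(\upc S_0+_{I_0}U_n)$; by the induction hypothesis $\upc S_0$ is open, so consistency makes each $\upc(\upc S_0+_{I_0}U_n)$ open, and $\upc W$ is open as a union of opens.

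For item~3, $\upc\conv U$ is convex by Definition and Proposition~\ref{defprop:upc:conv}. By Definition and Lemma~\ref{deflem:convex:simpleval}, $\conv U=\bigcup_{n\geq 1}W_n$ where $W_n\eqdef\{\sum_{i=1}^{n}a_i\cdot x_i\mid\vec a\in\Delta_n,\,x_i\in U\}$, so $\upc\conv U=\bigcup_n\upc W_n$, each summand being open by item~2 with $V\eqdef\Delta_n$ and all $U_i\eqdef U$. For minimality, every open set is upwards-closed in the specialization preordering, so any convex open $C\supseteq U$ satisfies $\conv U\subseteq C$ and thus $\upc\conv U\subseteq\upc C=C$; weak local convexity appears superfluous for the bare statement and is presumably retained for coherence with subsequent uses. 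The main obstacle will be item~2's induction: verifying the $g_n$-identity cleanly and juggling upwards-closure, the monotonicity of $+_c$, and the open-rectangle cover of $V'$ so that the inductive hypothesis feeds correctly into consistency.
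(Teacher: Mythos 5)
Your proof is correct. Items 1 and 2 follow essentially the paper's route: item 1 is the same argument (the paper simply takes $I$ to be any open neighborhood of $a$ in $[0,1]$ rather than splitting off $a=0,1$); item 2 uses essentially the same parametrization of the simplex, the only difference being that you use the \emph{surjective} continuous map $g_n\colon\Delta_{n-1}\times[0,1]\to\Delta_n$, which absorbs the vertex $(0,\ldots,0,1)$ automatically, whereas the paper uses the homeomorphism onto $\Delta_n\diff\{(0,\ldots,0,1)\}$ and treats that vertex by a separate case. That is a small cosmetic improvement, not a different idea.

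Item 3 is where your route genuinely diverges, and it is the more interesting observation. By Def.\ and Lemma~\ref{deflem:convex:simpleval}, $\conv U=\bigcup_{n\geq 1}W_n$ with $W_n$ exactly the set of $n$-point barycenters with weights in $\Delta_n$; hence $\upc\conv U=\bigcup_n\upc W_n$, and each $\upc W_n$ is open by your item~2 applied with $V\eqdef\Delta_n$ and $U_1=\cdots=U_n\eqdef U$. This bypasses weak local convexity entirely. The paper instead fixes $x\in\upc\conv U$, invokes weak local convexity to shrink $U$ around each $x_i$ to a convex $C_i$, and then routes the argument through Lemma~\ref{lemma:conv:union} to show the resulting $W$ lies in $\upc\conv U$; but since each barycenter of points of $U$ is already in $\conv U$ by definition, that detour is unnecessary. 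Your argument therefore proves a slightly stronger statement (dropping the weak local convexity hypothesis from item~3), and your parenthetical remark that the hypothesis looks superfluous is justified --- it is retained in the paper presumably because it is needed anyway for Corollary~\ref{corl:locconv:cons}, which uses item~1.
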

\begin{proof}
  1.  Let $A$ be a convex saturated subset of $\C$.  Let
  $x, y \in \interior A$, $a \in [0, 1]$ and $I$ be any open
  neighborhood of $a$ in $[0, 1]$ with its standard topology.  Since
  $\B$ is consistent, $U \eqdef \upc (\interior A +_I \interior A)$ is
  open, and $x +_a y \in U$.  Additionally, for every $z \in U$, there
  are two points $x', y' \in \interior A$ (hence in $A$) and a number
  $a' \in I$ such that $x' +_{a'} y' \leq z$.  Since $A$ is convex and
  saturated, $z \in A$.  This shows that $U \subseteq A$, hence
  $U \subseteq \interior A$, and therefore $x +_a y \in \interior A$.

  2.  Let
  $\mathrm{Bary} (V; U_1, \cdots, U_n) \eqdef \upc \{\sum_{i=1}^n a_i
  \cdot x_i \mid (a_1, \cdots, a_n) \in V, x_1 \in U_1, \cdots, x_n
  \in U_n\}$, for each $n \geq 1$.  We show that
  $\upc\mathrm{Bary} (V; U_1, \cdots, U_n)$ is open by induction on
  $n \geq 1$, using the formula of Definition and
  Proposition~\ref{prop:bary:1}, item~1, for the sum
  $\sum_{i=1}^n a_i \cdot x_i$.  When $n=1$, either $V$ is empty and
  $\upc\mathrm{Bary} (V; U_1) = \emptyset$, or $V = \{(1)\}$ and
  $\upc\mathrm{Bary} (V; U_1) = \upc U_1 = U_1$, which is open.  In
  the induction case, let $V' \eqdef V \diff \{(0, \cdots, 0, 1)\}$.
  This is also open in $\Delta_n$, and
  $\upc\mathrm{Bary} (V; U_1, \cdots, U_n)$ is equal to
  $\upc\mathrm{Bary} (V'; U_1, \cdots, U_n)$ if
  $(0, \cdots, 0, 1) \not\in V$, to
  $\upc\mathrm{Bary} (V'; U_1, \cdots, U_n) \cup U_n$ otherwise.
  Hence it suffices to show that
  $\upc\mathrm{Bary} (V'; U_1, \cdots, U_n)$ is open in $\B$.  Let $Z$
  be the subspace of $\Delta_{n-1} \times {]0, 1]}$ with the product
  of the usual metric topologies.  Let
  $f \colon \Delta_n \diff \{(0, \cdots, 0, 1)\} \to Z$ map every
  $(a_1, \cdots, a_n)$ to
  $((\frac {a_1} {1-a_n}, \cdots, \frac {a_{n-1}} {1-a_n}), 1-a_n)$,
  and
  $g \colon Z \to \Delta_n \diff \{(0, \cdots, 0, \allowbreak 1)\}$
  map every $((b_1, \cdots, b_{n-1}), b_n)$ to
  $(b_n b_1, \allowbreak \cdots, \allowbreak b_n b_{n-1}, \allowbreak
  1-b_n)$.  The functions $f$ and $g$ are inverse of each other, and
  continuous.  In particular, $f [V'] = g^{-1} (V')$ is open in $Z$,
  so we can write it as a union $\bigcup_{j \in J} V_j \times I_j$
  where each $V_j$ is open in $\Delta_{n-1}$ and $I_j$ is open in
  $]0, 1]$.  Then:
  \ifta
  \begin{align*}
    & \upc\mathrm{Bary} (V'; U_1, \cdots, U_n) \\
    & = \upc \left\{\left(\sum_{i=1}^{n-1} \frac {a_i} {1-a_n} \cdot x_i\right)
      +_{1-a_n} x_n
      \mid (a_1, \cdots, a_n) \in V', x_1 \in U_1, \cdots, x_n \in
      U_n\right\} \\
    & = \upc \left\{\left(\sum_{i=1}^{n-1} b_i \cdot x_i\right) +_{b_n} x_n \mid ((b_1,
      \cdots, b_{n-1}), b_n) \in f [V'], x_1 \in U_1, \cdots, x_n \in
      U_n\right\} \\
    & = \bigcup_{j \in J} \upc \left\{\left(\sum_{i=1}^{n-1} b_i \cdot
      x_i\right) +_{b_n} x_n \mid (b_1, \cdots, b_{n-1}) \in V_j, b_n
      \in I_j, x_1 \in U_1, \cdots, x_n \in U_n\right\} \\
    & = \bigcup_{j \in J} \upc (\mathrm{Bary} (V_j; U_1, \cdots,
      U_{n-1}) +_{I_j} U_n).
  \end{align*}
  \else
    \begin{align*}
    & \upc\mathrm{Bary} (V'; U_1, \cdots, U_n) \\
    & = \upc \biggl\{\left(\sum_{i=1}^{n-1} \frac {a_i} {1-a_n} \cdot x_i\right)
      +_{1-a_n} x_n \\
      & \qquad
      \mid (a_1, \cdots, a_n) \in V', x_1 \in U_1, \cdots, x_n \in
      U_n\biggr\} \\
    & = \upc \biggl\{\left(\sum_{i=1}^{n-1} b_i \cdot x_i\right)
      +_{b_n} x_n \\
      & \qquad \mid ((b_1,
        \cdots, b_{n-1}), b_n) \in f [V'], \\
      & \qquad\qquad x_1 \in U_1, \cdots, x_n \in
        U_n\biggr\} \\
    \end{align*}
    \begin{align*}
    & = \bigcup_{j \in J} \upc \biggl\{\left(\sum_{i=1}^{n-1} b_i \cdot
      x_i\right) +_{b_n} x_n \\
      & \qquad \mid (b_1, \cdots, b_{n-1}) \in V_j, b_n
        \in I_j, \\
      & \qquad\qquad x_1 \in U_1, \cdots, x_n \in U_n\biggr\} \\
    & = \bigcup_{j \in J} \upc (\mathrm{Bary} (V_j; U_1, \cdots,
      U_{n-1}) +_{I_j} U_n).
  \end{align*}
  \fi
  For every $j \in J$,
  $\upc (\mathrm{Bary} (V_j; U_1, \cdots, U_{n-1}) +_{I_j} U_n)
  \subseteq \upc (\upc \mathrm{Bary} (V_j; U_1, \allowbreak \cdots,
  \allowbreak U_{n-1}) +_{I_j} U_n)$ since
  $\mathrm{Bary} (V_j; U_1, \cdots, U_{n-1}) \subseteq \upc
  \mathrm{Bary} (V_j; U_1, \cdots, U_{n-1})$.  Conversely, every
  element $z$ of
  $\upc (\upc \mathrm{Bary} (V_j; U_1, \cdots, U_{n-1}) +_{I_j} U_n)$
  is larger than or equal to $x +_a y$ for some
  $x \in \upc \mathrm{Bary} (V_j; U_1, \cdots, U_{n-1})$, some
  $a \in I_j$ and some $y \in U_n$.  Then $x \geq x'$ for some
  $x' \in \mathrm{Bary} (V_j; U_1, \cdots, U_{n-1})$, so
  $z \geq x' +_a y$, since $+_a$ is continuous, hence monotonic.  It
  follows that
  $\upc (\upc \mathrm{Bary} (V_j; U_1, \cdots, U_{n-1}) +_{I_j} U_n)$
  is included in, and therefore equal to, the set
  $\upc (\mathrm{Bary} (V_j; U_1, \cdots, U_{n-1}) +_{I_j} U_n)$.  We
  have shown that
  $\upc\mathrm{Bary} (V'; U_1, \cdots, \allowbreak U_n) = \bigcup_{j
    \in J} \upc (\upc \mathrm{Bary} (V_j; U_1, \cdots, U_{n-1})
  +_{I_j} U_n)$.  By induction hypothesis,
  $\upc \mathrm{Bary} (V_j; U_1, \cdots, U_{n-1})$ is open.  Since
  $\B$ is consistent,
  $\upc (\upc \mathrm{Bary} (V_j; U_1, \cdots, \allowbreak U_{n-1})
  +_{I_j} U_n)$ is open for every $j \in J$, so
  $\upc\mathrm{Bary} (V'; \allowbreak U_1, \allowbreak \cdots,
  \allowbreak U_n)$ is open in $\B$.

  3.  $\upc \conv U$ is convex by Definition and
  Proposition~\ref{defprop:upc:conv}.  In order to show that it is
  open, we take any $x \in \upc \conv U$, and we find an open
  neighborhood of $x$ included in $\upc \conv U$.  By Definition and
  Lemma~\ref{deflem:convex:simpleval},
  $x \geq \sum_{i=1}^n a_i \cdot x_i$, for some points $x_i$ is in
  $U$, some coefficients $a_i \in \Rp$, and where
  $\sum_{i=1}^n a_i = 1$.  Let $V$ be some open neighborhood of
  $(a_1, \cdots, a_n)$ in $\Delta_n$.  Using weak local convexity, for
  each $i$, we can find a convex subset $C_i$ of $U$ whose interior
  $U_i \eqdef \interior {C_i}$ contains $x_i$.

  Let $W \eqdef \upc \mathrm{Bary} (V; U_1, \cdots, U_n)$, as defined
  and studied in item~2.  Notably, $W$ is open in $\B$.  By
  construction, $x \in W$.  $W$ is the upward closure of a collection
  of barycenters $\sum_{i=1}^n a'_i \cdot x'_i$ where
  $(a'_1, \cdots, a'_n) \in V \subseteq \Delta_n$ and
  $x'_1 \in U_1 \subseteq C_1$, \ldots, $x'_n \in U_n \subseteq C_n$.
  Therefore $W$ is included in
  $\upc \{\sum_{i=1}^n a'_i \cdot x'_i \mid (a'_1, \cdots, a'_n) \in
  \Delta_n, x'_1 \in C_1, \cdots, x'_n \in C_n\}$, which is equal to
  $\upc \conv {(C_1 \cup \cdots \cup C_n)}$ by
  Lemma~\ref{lemma:conv:union}.  By definition of saturated convex
  hulls (Definition and Proposition~\ref{defprop:upc:conv}), and since
  $C_1 \cup \cdots \cup C_n \subseteq U \subseteq \upc \conv U$,
  $\upc \conv {(C_1 \cup \cdots \cup C_n)}$ is included in
  $\upc \conv U$, hence $W \subseteq \upc \conv U$.  This shows that
  $W$ is an open neighborhood of $x$ included in $\upc \conv U$, and
  since $x$ is arbitrary in $\upc \conv U$, $\upc \conv U$ is open in
  $\B$.

  In order to show that $\upc \conv U$ is the smallest convex open set
  containing $U$, we consider any convex open set $V$ containing
  $U$.  $V$ contains $\conv U$ by definition, and therefore also
  $\upc \conv U$, since $V$ is upwards-closed.  \qed
\end{proof}

\begin{corollary}
  \label{corl:locconv:cons}
  Every consistent, weakly locally convex semitopological barycentric
  algebra is locally convex.
\end{corollary}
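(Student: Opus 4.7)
The plan is to apply Proposition~\ref{prop:int:conv} item 3 almost immediately: that proposition already produces, for any open subset $V$ of $\B$, a convex open set $\upc \conv V$ containing $V$. So the only work is to find, inside a given open neighborhood $U$ of a point $x$, an open set $V$ whose saturated convex hull still fits inside $U$. Weak local convexity provides exactly the intermediate object we need.

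First, I fix $x \in \B$ and an arbitrary open neighborhood $U$ of $x$. Since $\B$ is weakly locally convex, there is a convex subset $C$ of $U$ whose interior $V \eqdef \interior C$ contains $x$. Now $V$ is open, so by Proposition~\ref{prop:int:conv} item 3 (which applies because $\B$ is consistent and weakly locally convex), $\upc \conv V$ is open and convex.

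It remains to verify that $\upc \conv V \subseteq U$. Since $C$ is convex and $V \subseteq C$, we have $\conv V \subseteq \conv C = C \subseteq U$. Because $U$ is open in $\B$, it is upwards-closed in the specialization preordering, so $\upc \conv V \subseteq \upc U = U$. Thus $\upc \conv V$ is a convex open neighborhood of $x$ contained in $U$. Since $x$ and $U$ were arbitrary, the convex open sets form a base of the topology, so $\B$ is locally convex.

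There is no real obstacle here: the substantive content is already packaged inside Proposition~\ref{prop:int:conv} item 3, and the only subtlety is remembering that open sets in any topological space are upwards-closed in the specialization preordering, so that $\upc \conv V$ cannot escape $U$.
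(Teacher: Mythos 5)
Your proof is correct, and it takes a genuinely different route from the paper's. The paper instead invokes Proposition~\ref{prop:int:conv}, item~1: starting from the convex subset $C$ with $x \in \interior C \subseteq C \subseteq U$ given by weak local convexity, it argues that $\interior C$ is convex (the interior of a convex saturated set is convex in a consistent barycentric algebra), so $\interior C$ itself is the desired convex open neighborhood. Your argument instead routes through item~3, forming the saturated convex hull $\upc \conv V$ of the open set $V = \interior C$ and then confirming it stays inside $U$. Both arguments work; yours uses slightly heavier machinery (item~3 is proved via an induction over barycenters of arbitrary length, whereas item~1 is a short direct argument), but in exchange it sidesteps a small wrinkle in the paper's version: item~1 is stated for convex \emph{saturated} subsets, and the $C$ produced by weak local convexity need not be saturated, so the paper's argument really ought to pass to $\upc C$ first (which is still convex, is saturated, contains $\interior C \ni x$, and stays inside $U$ because $U$ is upwards-closed). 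Your use of item~3, whose hypothesis is just that $V$ be open, avoids that intermediate step entirely.
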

\begin{proof}
  Let $\B$ be a consistent, weakly locally convex semitopological
  barycentric algebra.  Let also $x \in \B$, and $U$ be an open
  neighborhood of $x$ in $\B$.  Since $\B$ is weakly locally convex,
  there is a convex subset $C$ of $\B$ such that
  $x \in \interior C \subseteq C \subseteq U$.  By
  Proposition~\ref{prop:int:conv}, item~1, $\interior C$ is convex.  \qed
\end{proof}

Henceforth we will therefore say ``consistent locally convex'' instead
of ``consistent weakly locally convex''.

\subsection{Lower semicontinuous envelopes}
\label{sec:lower-semic-envel}

Finally, we will use the notion of lower semicontinuous envelope of a
monotonic map $f \colon X \to \creal$.  This is the largest lower
semicontinuous map $\check f$ less than or equal to $f$, and is
obtained as the pointwise supremum of all lower semicontinuous maps
less than or equal to $f$.  This section parallels
\cite[Lemma~5.7]{Keimel:topcones2} for barycentric algebras instead of
cones.

\begin{lemma}
  \label{lemma:lsc:env}
  For every semitopological barycentric algebra $\B$, for every map
  $f \colon \B \to \creal$, if $f$ is convex, then so is $\check f$.
\end{lemma}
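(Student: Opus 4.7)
The plan is to use the pointwise formula $\check f(z) = \sup_{U \ni z \text{ open}} \inf_{z' \in U} f(z')$, which expresses the lower semicontinuous envelope explicitly, together with the separate continuity of $+_a$ in $\B$. This formula always yields the largest lower semicontinuous function pointwise below $f$, so I would first briefly note why: the set $\{z : \check f(z) > r\} = \bigcup \{U \text{ open} : f > r \text{ on } U\}$ is open, and for any lower semicontinuous $g \le f$ and $r < g(z)$, the set $\{g > r\}$ is an open neighborhood of $z$ on which $f > r$, showing $\check f(z) \ge g(z)$.

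Fix $x, y \in \B$ and $a \in [0,1]$. The goal is $\check f(x +_a y) \le a\,\check f(x) + (1-a)\,\check f(y)$. The cases $a=0$ or $a=1$ reduce to a tautology, and the case where one of $\check f(x), \check f(y)$ equals $\infty$ makes the right-hand side equal to $\infty$ (since $0 < a < 1$), so the interesting case is $0 < a < 1$ with $\check f(x), \check f(y) < \infty$. Pick any $r > a\,\check f(x) + (1-a)\,\check f(y)$ and, choosing $\delta > 0$ small, set $s \eqdef \check f(x) + \delta$ and $t \eqdef \check f(y) + \delta$ so that $as + (1-a)t < r$. It suffices to show that for every open neighborhood $W$ of $x +_a y$ there is some $z' \in W$ with $f(z') < r$, since this yields $\inf_{z' \in W} f(z') \le r$, hence $\check f(x +_a y) \le r$, and finally the desired inequality by taking the infimum over admissible $r$.

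To produce such $z'$, apply separate continuity twice. The map $y' \mapsto x +_a y'$ is continuous, so $V \eqdef \{y' \in \B \mid x +_a y' \in W\}$ is an open neighborhood of $y$. Since $\inf_{y' \in V} f(y') \le \check f(y) < t$, there is some $y' \in V$ with $f(y') < t$. Now the map $x' \mapsto x' +_a y'$ is continuous, and $x \in U \eqdef \{x' \in \B \mid x' +_a y' \in W\}$ because $x +_a y' \in W$ by choice of $y'$. Since $\inf_{x' \in U} f(x') \le \check f(x) < s$, there is some $x' \in U$ with $f(x') < s$. Then $z' \eqdef x' +_a y' \in W$, and by convexity of $f$,
\[
  f(z') \le a\,f(x') + (1-a)\,f(y') < as + (1-a)t < r.
\]
The principal subtlety is simply the careful two-step approximation: one needs separate (not joint) continuity of $+_a$, which is exactly what a semitopological barycentric algebra provides, so no further assumption (such as consistency, local convexity, or topologicity) is required.
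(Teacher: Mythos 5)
Your proof is correct and takes a genuinely different route from the paper's. The paper proves the lemma by a two-stage bootstrap: first it fixes $a$ and $x$ and defines $g(y) \eqdef \frac{1}{1-a}\max(\check f(x+_a y) - a f(x), 0)$, checks that $g$ is lower semicontinuous and (using convexity of $f$) that $g \leq f$, hence $g \leq \check f$, and rearranges to obtain $\check f(x+_a y) \leq a f(x) + (1-a)\check f(y)$; it then repeats the same maneuver in the other variable to replace $f(x)$ by $\check f(x)$. You instead invoke the pointwise formula $\check f(z) = \sup_{U \ni z}\inf_{z' \in U} f(z')$ (which you correctly justify) and establish the inequality directly by a two-step approximation, first choosing $y'$ close to $y$ using continuity of $y' \mapsto x +_a y'$, then $x'$ close to $x$ using continuity of $x' \mapsto x' +_a y'$. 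Both arguments exploit precisely separate continuity and neither needs consistency, local convexity, or joint continuity, which you correctly note; the point of interest is that your sequential choice of $y'$ then $x'$ plays exactly the role of the paper's two-stage bootstrap, but does so at the level of points and open neighborhoods rather than at the level of auxiliary functions. Your version is arguably more elementary and transparent, while the paper's version has the advantage of producing, as a by-product of its first step, the intermediate mixed inequality $\check f(x+_a y) \leq a f(x) + (1-a)\check f(y)$, which may be useful in its own right. One small notational hiccup in your preliminary paragraph: you reuse the letter $g$ both for a generic lower semicontinuous map below $f$ and (implicitly) for the candidate formula, which momentarily reads as circular; it would be clearer to name the formula $g_0$, say, and then compare $g_0$ to an arbitrary lower semicontinuous $h \leq f$.
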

\begin{proof}
  Let $f \colon \B \to \creal$ be convex.  We fix $a \in {]0, 1[}$ and
  $x \in \B$, and we show that
  $\check f (x +_a y) \leq a f (x) + (1-a) \check f (y)$ for every
  $y \in \B$.  The claim is clear if $f (x) = \infty$, since in that
  case $a f (x) + (1-a) \check f (y) = \infty$.  Hence, let us assume
  that $f (x) < \infty$.  We define $g \colon \B \to \creal$ by
  $g (y) \eqdef \frac 1 {1-a} \max (\check f (x +_a y) - a f (x), 0)$
  for every $y \in \B$.  This defines a lower semicontinuous function
  $g$.  For every $y \in \B$,
  $g (y) \leq \frac 1 {1-a} \max (f (x +_a y) - a f (x), 0) \leq \frac
  1 {1-a} \max ((1-a) f (y), 0) = f (y)$ since $f$ is convex.  Since
  $\check f$ is the largest lower semicontinuous function below $f$,
  $g \leq \check f$.  This entails that
  $\check f (x +_a y) - a f (x) \leq (1-a) \check f (y)$, namely
  $\check f (x +_a y) \leq a f (x) + (1-a) \check f (y)$, for every
  $y \in \B$.

  Knowing that, we fix $a \in {]0, 1[}$ and $y \in \B$, and we show
  that $\check f (x +_a y) \leq a \check f (x) + (1-a) \check f (y)$
  for every $x \in \B$.  The argument is similar.  This is obvious if
  $\check f (y) = \infty$.  Otherwise, we let
  $g (x) \eqdef \frac 1 a \max (\check f (x +_a y) - (1-a) \check f
  (y), 0)$, defining a lower semicontinuous function
  $g \colon \B \to \creal$ such that $g \leq f$ by the result of the
  previous paragraph; hence $g \leq \check f$, so that
  $\check f (x +_a y) \leq a \check f (x) + (1-a) \check f (y)$ for
  every $x \in \B$.  This inequality remains true for $a \eqdef 0$ or
  for $a \eqdef 1$, trivially, so $\check f$ is convex.  \qed
\end{proof}

\begin{lemma}
  \label{lemma:lsc:env:superlinear}
  For every concave monotonic map $f$ from a consistent
  semitopological barycentric algebra $\B$ to $\creal$, $\check f$ is
  concave, too.
\end{lemma}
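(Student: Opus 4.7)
The plan is to adapt the envelope argument of Lemma~\ref{lemma:lsc:env}, but since concavity involves lower bounds on $f(x +_a y)$ rather than upper bounds, I cannot simply define $\check f$ algebraically from $\check f$ on one side. Instead, I will produce, for each pair $x, y \in \B$ and each $a \in [0,1]$, explicit lower-semicontinuous minorants of $f$ whose values at $x +_a y$ approximate $a\check f(x)+(1-a)\check f(y)$ from below. The algebra of barycentric operations can be pushed inside open sets thanks to consistency, and this is the sole place where the hypothesis enters.

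Fix $a \in {]0,1[}$ (the cases $a = 0$ and $a = 1$ are trivial since $+_a$ is projection) and fix $x, y \in \B$. I would first show that for all $r \in \Rp$ with $r < \check f(x)$, all $s \in \Rp$ with $s < \check f(y)$, and every $\eta > 0$, one has $\check f (x +_a y) \geq ar + (1-a)s - \eta$; then take suprema over $r, s$ to conclude $\check f(x +_a y) \geq a \check f(x) + (1-a)\check f(y)$. Since $\check f$ is lower semicontinuous and $\check f \leq f$, the sets $U \eqdef \check f^{-1}(]r, \infty])$ and $V \eqdef \check f^{-1}(]s, \infty])$ are open neighborhoods of $x$ and $y$ respectively on which $f > r$ and $f > s$.

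Next I would choose $\epsilon > 0$ small enough that $\inf_{a' \in I}(a'r + (1-a')s) > ar + (1-a)s - \eta$, where $I \eqdef {]a-\epsilon, a+\epsilon[} \cap [0,1]$, which is possible since $a' \mapsto a'r + (1-a')s$ is affine and continuous. Here is where consistency is used: by Definition~\ref{defn:consistent:bary:alg}, the set $W \eqdef \upc(U +_I V)$ is open in $\B$, and it contains $x +_a y$. For every $u \in U$, $v \in V$, $a' \in I$, concavity of $f$ gives $f(u +_{a'} v) \geq a'f(u) + (1-a')f(v) \geq a'r + (1-a')s > ar + (1-a)s - \eta$; since $f$ is monotonic, this inequality extends to all $z \in W$.

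Finally, I would define $h \colon \B \to \creal$ by $h(z) \eqdef ar + (1-a)s - \eta$ if $z \in W$ and $h(z) \eqdef 0$ otherwise. Then $h$ is lower semicontinuous (its only nontrivial superlevel set is $W$) and $h \leq f$ (trivial off $W$, guaranteed on $W$ by the previous paragraph), so $h \leq \check f$ pointwise. Evaluating at $x +_a y \in W$ yields $\check f(x +_a y) \geq ar + (1-a)s - \eta$, as desired. Letting $\eta \to 0$ and taking suprema over $r < \check f(x)$ and $s < \check f(y)$ (with the convention that if, say, $\check f(y) = 0$ one simply uses $s = 0$, and if $\check f(x) = \infty$ one lets $r$ grow unboundedly) finishes the proof. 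The main obstacle is the fact that consistency only yields openness of $\upc(U +_I V)$ for open $I$, not for singletons (strong consistency is not assumed), which forces the $\epsilon$-perturbation and the final $\eta \to 0$ limit; everything else is routine manipulation with lower envelopes.
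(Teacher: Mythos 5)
Your proof shares the key mechanism with the paper's: consistency makes $W \eqdef \upc(U +_I V)$ open for an open interval $I$ around $a$, and then a lower semicontinuous minorant of $f$ supported on $W$ pushes $\check f(x +_a y)$ up. The paper frames this as a proof by contradiction, defining $g \eqdef \max(\check f, (b+c)\chi_W)$ and invoking maximality of $\check f$ to get $g = \check f$; you work directly with the step minorant $h$ and take limits, which is arguably a cleaner organization. The way you calibrate the interval $I$ (by continuity of $a' \mapsto a'r + (1-a')s$) also differs superficially from the paper's choice of $I = {]\beta, 1-\gamma[}$ via way-below approximants, but it buys the same inequality.

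There is, however, a small gap in the degenerate case. When $\check f(y) = 0$, your ``convention'' to take $s = 0$ does not repair the argument: you still set $V \eqdef \check f^{-1}(]0,\infty])$, which then \emph{fails to contain} $y$, so $x +_a y$ need not lie in $W = \upc(U +_I V)$ and the key lower bound on $\check f(x +_a y)$ evaporates. The correct fix is to use $V = \B$ when $s = 0$ (and similarly $U = \B$ when $r = 0$); equivalently, parametrize by $r \ll \check f(x)$, $s \ll \check f(y)$ and set $U \eqdef \check f^{-1}(\uuarrow r)$, $V \eqdef \check f^{-1}(\uuarrow s)$. Since $0 \ll t$ for every $t \in \creal$, one has $\uuarrow 0 = \creal$, which automatically gives $V = \B$ when $\check f(y) = 0$. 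This is in effect what the paper's proof does, by writing its open sets as $\check f^{-1}(\uuarrow \frac b\beta)$ and $\check f^{-1}(\uuarrow \frac c\gamma)$ and noting that $c \ll \gamma\check f(y) = 0$ forces $c = 0$. Once this boundary case is patched, your argument is correct.
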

\begin{proof}
  We deal with the first part first.  Let us assume that $\check f$ is
  not concave.  There are two points $x, y \in \B$ and a real number
  $a \in [0, 1]$ such that
  $\check f (x +_a y) < a \check f (x) + (1-a)\check f (y)$.
  Additionally, $a$ must be different from $0$ and $1$, since
  $\check f (x +_0 y) = \check f (y) = 0 \check f (x) + 1 \check f
  (y)$, and similarly when $a=1$.

  Let $r \in \Rp$ be such that
  $\check f (x +_a y) < r < a \check f (x) + (1-a)\check f (y)$.  We
  recall that: $(*)$ on $\creal$, $s \ll t$ if and only if $s=0$ or
  $s < t$.  In particular, $r \ll a \check f (x) + (1-a)\check f (y)$.
  Since scalar multiplication and addition are Scott-continuous on
  $\creal$, and $a$ is the supremum of the strictly positive numbers
  strictly below $a$, and similarly for $1-a$, there are two numbers
  $\beta, \gamma > 0$ such that $\beta < a$, $\gamma < 1-a$, and
  $r \ll \beta \check f (x) + \gamma \check f (y)$.  Then
  $a \in {]\beta, 1-\gamma[} \subseteq {]0, 1[}$.  By There are
  $b, c \in \Rp$ such that $r \leq b+c$, $b \ll \beta \check f (x)$,
  and $c \ll \gamma \check f (y)$.  Using $(*)$,
  $\frac b \beta \ll \check f (x)$ and
  $\frac c {\gamma} \ll \check f (y)$.  Hence $x$ is in the open set
  $U \eqdef {\check f}^{-1} (\uuarrow \frac b \beta)$ and $y$ is in
  the open set $V \eqdef {\check f}^{-1} (\uuarrow \frac c {\gamma})$.
  Since $\B$ is consistent,
  $W \eqdef \upc (U +_{]\beta, 1-\gamma[} V)$ is open in $\B$.

  Let $g \colon \B \to \creal$ be defined by
  $g (z) \eqdef \max (\check f (z), (b+c) \chi_W (z))$ for every
  $z \in \B$. This is lower semicontinuous, and above $\check f$. We
  claim that $g \leq f$. Since $\check f \leq f$, it suffices to show
  that $(b+c) \chi_W \leq f$, namely that for every $z \in W$,
  $b+c \leq f (z)$.  Given that $z \in W$, we find $x' \in U$,
  $y' \in V$ and $a' \in {]\beta, 1-\gamma[}$ such that
  $x' +_{a'} y' \leq z$. Then
  $\frac b \beta \ll \check f (x') \leq f (x')$ and
  $\frac c {\gamma} \ll \check f (y') \leq f (y')$, in particular
  $b \leq \beta f (x')$ and $c \leq \gamma f (y')$, so
  $b + c \leq \beta f (x') + \gamma f (y') \leq a' f (x') + (1-a') f
  (y')$, where the second inequality comes from
  $a' \in {]\beta, 1-\gamma[}$.  Using the fact that $f$ is concave
  and monotonic,
  $a' f (x') + (1-a') f (y') \leq f (x' +_{a'} y') \leq f (z)$, so
  $b+c \leq f (z)$, as desired.

  Hence $g \leq f$ is lower semicontinuous, and $\check f \leq g$. By
  the maximality of $\check f$, $g = \check f$. Therefore
  $\check f (x +_a y) = g (x +_a y) \geq (b+c) \chi_W (x +_a y) = b+c
  \geq r > \check f (x +_a y)$, which is impossible.  \qed
\end{proof}

\begin{lemma}
  \label{lemma:checkf:cspace}
  For every c-space $X$, for every monotonic map
  $f \colon X \to \creal$, for every $x \in X$, $\check f (x)$ is the
  directed supremum of the values $f (y)$ where $y$ ranges over the
  points of $X$ such that $x \in \interior {\upc y}$.

  Given any subspace $L$ of $\creal^X$ consisting of monotonic maps,
  The map $\mathfrak r \colon f \mapsto \check f$ is a retraction of
  $L$ onto its image $L \cap \Lform X$, with the subspace topology;
  the section is the inclusion map.
\end{lemma}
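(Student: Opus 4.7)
The plan is to first establish the explicit pointwise formula for $\check f$, and then to deduce continuity of $\mathfrak r$ from it. Fixing $x \in X$, I would define $g(x) \eqdef \sup\{f(y) \mid x \in \interior{\upc y}\}$. The family is nonempty by the c-space property applied to the open neighborhood $X$ of $x$, and I would verify directedness: given $y_1, y_2$ with $x \in \interior{\upc{y_1}} \cap \interior{\upc{y_2}}$, the c-space property applied to this open neighborhood of $x$ yields some $y_3$ in it with $x \in \interior{\upc{y_3}}$, whence $y_1, y_2 \leq y_3$ and $f(y_1), f(y_2) \leq f(y_3)$ by monotonicity of $f$.

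Next, I would show $g \leq f$ and that $g$ is lower semicontinuous, and then argue $\check f = g$. The inequality $g \leq f$ holds because $x \in \interior{\upc y} \subseteq \upc y$ forces $y \leq x$, hence $f(y) \leq f(x)$. Lower semicontinuity follows from the identity
\[
  g^{-1}(]t, \infty]) \;=\; \bigcup\{\interior{\upc y} \mid f(y) > t\},
\]
whose two inclusions are both straightforward. Maximality of the lower semicontinuous envelope then gives $g \leq \check f$. For the converse, given $t < \check f(x)$, the set ${\check f}^{-1}(]t, \infty])$ is an open neighborhood of $x$, so the c-space property produces some $y$ in it with $x \in \interior{\upc y}$; then $f(y) \geq \check f(y) > t$, hence $g(x) \geq f(y) > t$, which gives $\check f(x) \leq g(x)$. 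Therefore $\check f = g$, and the supremum is directed by the argument above.

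For the second part, the restriction of $\mathfrak r$ to $L \cap \Lform X$ is the identity, since $\check f = f$ whenever $f$ is already lower semicontinuous; this is the retraction identity. For continuity of $\mathfrak r$ with the subspace topologies inherited from the product topology on $\creal^X$, I would use the pointwise formula: the preimage of a subbasic open set $\{h \mid h(x) > r\}$ of $L \cap \Lform X$ under $\mathfrak r$ is
\[
  \{f \in L \mid \check f(x) > r\} \;=\; \bigcup_{y \,:\, x \in \interior{\upc y}} \{f \in L \mid f(y) > r\},
\]
a union of subbasic open sets of $L$, hence open. The one subtlety worth flagging is that for $\mathfrak r$ to actually take values in $L \cap \Lform X$ one needs $\check f \in L$ for every $f \in L$; this should be read as an implicit hypothesis on $L$, and is automatic in the intended applications where $L$ is closed under the lower semicontinuous envelope operation (in particular when $L \subseteq \Lform X$, where $\mathfrak r$ is just the identity).
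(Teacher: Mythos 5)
Your proof is correct and follows the paper's argument step for step: the same pointwise formula $g(x) = \sup\{f(y) \mid x \in \interior{\upc y}\}$, the same directedness check via the c-space property, the same sandwich $g \leq \check f$ and $\check f \leq g$ (the latter by applying the c-space property to $\check f^{-1}(]r, \infty])$), and the same computation of preimages of subbasic opens to establish continuity of $\mathfrak r$. Your parenthetical caution that the statement implicitly requires $L$ to be closed under $f \mapsto \check f$ is a valid observation that applies equally to the paper's own proof, which also leaves this unstated; in the paper's two uses of the lemma (Theorem~\ref{thm:alaoglu} and Proposition~\ref{prop:C*:cocompact=loweropen}), $L$ is the set of affine or linear monotonic maps, and Lemmas~\ref{lemma:lsc:env} and~\ref{lemma:lsc:env:superlinear} supply the missing closure property.
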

\begin{proof}
  The first part is sometimes referred to as \emph{Scott's formula}
  when $X$ is a continuous dcpo.  For every $x \in X$, the set of
  points $y$ such that $x \in \interior {\upc y}$ is directed, since
  $X$ is a c-space.  Indeed, given any finite collection of such
  points $y_1$, \ldots, $y_n$, $\bigcap_{i=1}^n \interior {\upc y_i}$
  is an open neighborhood of $x$, which therefore contains a point $y$
  such that $x \in \interior {\upc y}$.  Let $g (x)$ be the directed
  supremum $\sup_{y \in X, x \in \interior {\upc y}} f (y)$, for
  every $x \in X$.

  The function $g$ is lower semicontinuous, since
  $g^{-1} (]t, \infty]) = \bigcup_{y \in X, f (y) > r} \interior {\upc
    y}$.  Since $f$ is monotonic and $x \in \interior {\upc y}$
  implies $y \leq x$, $f \leq g$.  Hence $g \leq \check f$.  Let us
  imagine that the inequality is strict.  For some $x \in X$,
  $g (x) < \check f (x)$, and therefore there is an $r \in \Rp$ such
  that $g (x) < r < \check f (x)$.  Then $x$ is in the open set
  $U \eqdef {(\check f)}^{-1} (]r, \infty])$, and since $X$ is a
  c-space, there is a point $y \in U$ such that
  $x \in \interior {\upc y}$.  In particular, $f (y) > r$, and the
  definition of $g$ entails that $g (x) > r$, which is impossible.
  Therefore $g = \check f$.

  For the second part, we let $\mathfrak s$ be the inclusion map of
  $L \cap \Lform X$ into $L$.  For every $f \in L \cap \Lform X$,
  $\mathfrak r (\mathfrak s (f))$ is the largest lower semicontinuous
  map less than or equal to $f$, and that must be $f$ since $f$ is
  already lower semicontinuous.  The inclusion map $\mathfrak s$ is
  clearly continuous.

  The sets of the form $[x > r]_L \eqdef \{f \in L \mid f (x) > r\}$
  with $x \in X$ and $r \in \Rp \diff \{0\}$ form a subbase of the
  topology on $L$.  In order to show that $\mathfrak r$ is continuous,
  it is enough to show that
  $\mathfrak r^{-1} ([x > r]_L \cap \Lform X)$ is open in $L$.  Its
  elements are the (monotonic) functions $f$ such that
  $\sup_{y \in X, x \in \interior {\upc y}} f (y) > r$, namely such
  that $f \in [y > r]$ for some $y \in X$ such that
  $x \in \interior {\upc y}$.  In other words,
  $\mathfrak r^{-1} ([x > r]_L \cap \Lform X) = \bigcup_{y \in X, x
    \in \interior {\upc y}} [y > r]_L$, so $\mathfrak r$ is
  continuous.  \qed
\end{proof}

\subsection{The Plotkin-Banach-Alaoglu theorem}
\label{sec:plotk-banach-alaoglu}

The Banach-Alaoglu theorem states that the closed unit ball in a
topological dual of a topological vector space is compact.  Plotkin
gives a variant of this result on continuous d-cones whose addition
preserves the way-below relation \cite[Theorem~2]{Plotkin:alaoglu}; we
will see how this related to consistency in
Section~\ref{sec:consistency-c-spaces}.  We show a similar
generalization to duals of consistent semitopological barycentric
algebras.  The proof is basically the same.

In a non-Hausdorff setting, compactness has to be replaced by stable
compactness (see \cite[Chapter~9]{JGL-topology}).  A \emph{stably
  compact space} is a sober, locally compact, coherent, and compact
topological space.  We explain each adjective.
\begin{itemize}
\item A space is \emph{locally compact} if and only if every point has
  a base of compact neighborhoods, or equivalently of compact
  saturated neighborhoods.  One should beware that, in a non-Hausdorff
  setting, this property is not equivalent with the more usual
  definition (every point has a compact neighborhood), which is
  strictly weaker.  Every locally compact space is core-compact
  \cite[Theorem 5.2.9]{JGL-topology}, and the two notions are
  equivalent for a sober space \cite[Theorem 8.3.10]{JGL-topology}.
\item A space is \emph{coherent} if and only if the intersection of
  any two compact saturated subsets is compact (and saturated).
\end{itemize}
$\creal$ and $[0, 1]_\sigma$ are stably compact spaces.  In general,
every \emph{compact pospace} $(X, \preceq)$, namely every compact
space $X$ with a partial ordering $\preceq$ whose graph is closed in
$X \times X$, gives rise to a stably compact space $X^\uparrow$, whose
elements are those of $X$, and whose open subsets are the open subsets
of $X$ that are upwards-closed with respect to $\preceq$.  The
specialization ordering of $X^\uparrow$ is $\preceq$, and this
construction exhausts all possible stably compact spaces: given any
stably compact space $X$, $(X^\patch, \leq)$ is a compact pospace and
$(X^\patch)^\uparrow = X$, where $\leq$ is the specialization ordering
of $X$ and $X^\patch$ is $X$ with the \emph{patch topology}, the
coarser topology that contains the open subsets of $X$ and the
complements of compact saturated subsets of $X$.  The patch topology
makes sense for any space, not just stably compact spaces, but the
properties just mentioned only hold for stably compact spaces.

Every product of stably compact spaces is stably compact, and the
product of the patch topologies is the patch topology of the product;
and retracts preserve sobriety, local compactness, coherence, and
compactness, hence also stable compactness.  A subset that is closed
in $X^\patch$ is called \emph{patch-closed} (in $X$), and similarly
with patch-compact.  A \emph{patch-continuous} map $f \colon X \to Y$
between stably compact spaces is a continuous map from $X^\patch$ to
$Y^\patch$.  Every compact pospace is compact Hausdorff.

We will use the following gimmick \cite[Definition~5.4]{GL:duality}.
Let $T$ be a set, and $A$ be a topological space.  A
\emph{patch-continuous inequality} on $T, A$ is any formula $E$ of the
form:
\[
  f (\_ (t_1), \ldots, \_ (t_m)) \dotleq g (\_ (t'_1), \ldots, \_ (t'_n))
\]
where $f$ and $g$ are patch-continuous maps from $A^n$ to $A$, and
$t_1$, \ldots, $t_m$, $t'_1$, \ldots, $t'_n$ are $m+n$ fixed elements
of $T$.  $E$ \emph{holds} at $\alpha \colon T \to A$ iff
$f (\alpha (t_1), \ldots, \alpha (t_m)) \leq g (\alpha (t'_1), \ldots,
\alpha (t'_n))$, where $\leq$ is the specialization quasi-ordering of
$A$.  A \emph{patch-continuous system} $\Sigma$ on $T, A$ is a
(possibly infinite) set $\Sigma$ of patch-continuous inequalities on
$T,A$.  $\Sigma$ \emph{holds} at $\alpha \colon T \to A$ iff every
element of $\Sigma$ holds at $\alpha$.  We also allow equations, since
$a \dot= b$ is equivalent to $a \dotleq b$ and $b \dotleq a$.  In this
setting, the subset $[\Sigma]$ of $A^T$ of all maps
$\alpha \colon T \to A$ such that $\Sigma$ holds at $\alpha$ is
patch-closed in $A^T$ \cite[Proposition~5.5]{GL:duality}.

Here is the variant of the Plotkin-Banach-Alaoglu theorem on
barycentric algebras.
\begin{theorem} %[Plotkin-Banach-Alaoglu theorem]
  \label{thm:alaoglu}
  For every consistent semitopological barycentric algebra $\B$ whose
  underlying topological space is a c-space, $\B^\astar$ is stably
  compact.  If $\B$ is also pointed (for example, a consistent
  semitopological cone), then $\B^*$ is stably compact.
\end{theorem}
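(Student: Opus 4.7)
The plan is to identify $\B^\astar$ (resp.\ $\B^*$) as a retract of a patch-closed subset of the stably compact space $(\creal)^\B$, and then invoke the closure of stable compactness under patch-closed subspaces and under retracts.

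First I would observe that $\creal$ is stably compact in its Scott topology, so $(\creal)^\B$ with the product topology is stably compact, and its patch topology is the product of the patch topologies on the factors. Inside it, let $A$ be the collection of monotonic maps $\Lambda \colon \B \to \creal$ satisfying the affine identity
\[
  \Lambda(x +_a y) \;\dot=\; a \, \Lambda(x) + (1-a)\, \Lambda(y)
\]
for all $x, y \in \B$ and $a \in [0,1]$ (and in the pointed case the extra equation $\Lambda(\bot) \dot= 0$). Monotonicity is the family of patch-continuous inequalities $\Lambda(x) \dotleq \Lambda(y)$ indexed by pairs with $x \leq y$ in $\B$, and the affine identity is a patch-continuous equation for each choice of $(x, a, y)$, since scalar multiplication and addition are patch-continuous operations $\creal^n \to \creal$. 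Hence $A$ is the solution set of a patch-continuous system, so $A$ is patch-closed in $(\creal)^\B$ and therefore stably compact in the subspace topology.

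Next I would use the lower semicontinuous envelope to retract $A$ onto $\B^\astar$ (resp.\ $\B^*$). By Lemma~\ref{lemma:checkf:cspace}, since $\B$ is a c-space, the map $\mathfrak r \colon f \mapsto \check f$ is a continuous retraction of any subspace $L$ of $(\creal)^\B$ consisting of monotonic maps onto $L \cap \Lform \B$, with section the inclusion. To apply this with $L = A$ I must verify that $\mathfrak r(A) \subseteq A$, that is, that the envelope of an affine monotonic map is still affine (and, in the pointed case, strict). Convexity of $\check\Lambda$ comes from Lemma~\ref{lemma:lsc:env}; concavity of $\check\Lambda$ comes from Lemma~\ref{lemma:lsc:env:superlinear}, which is exactly where the consistency hypothesis on $\B$ is used. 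Together these give affinity of $\check\Lambda$. Strictness in the pointed case is immediate: $0 \leq \check\Lambda(\bot) \leq \Lambda(\bot) = 0$. Since the weak$^*$Scott topology on $\B^\astar$ (resp.\ $\B^*$) agrees with the subspace topology from $(\creal)^\B$, the image $A \cap \Lform\B$ is exactly $\B^\astar$ (resp.\ $\B^*$).

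Finally, since retracts of stably compact spaces are stably compact, $\B^\astar$ (resp.\ $\B^*$) is stably compact, completing the proof. The main obstacle is the envelope-closure step: establishing that $\check\Lambda$ inherits affinity when $\Lambda$ is affine and monotonic. The convex side is the routine part of Lemma~\ref{lemma:lsc:env}, but the concave side is delicate and forces the consistency hypothesis through Lemma~\ref{lemma:lsc:env:superlinear}; without it, one could only conclude that $\B^\astar$ lies inside a stably compact set of monotonic maps, not that it is itself a retract of such a set.
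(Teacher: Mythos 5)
Your proposal is correct and follows essentially the same route as the paper: realize the affine (resp.\ linear) monotonic maps as a patch-closed subset of $\creal^\B$ via a patch-continuous system, then use the lower semicontinuous envelope $\mathfrak r$ from Lemma~\ref{lemma:checkf:cspace} (valid since $\B$ is a c-space) to retract onto $\B^\astar$ (resp.\ $\B^*$), with Lemmas~\ref{lemma:lsc:env} and~\ref{lemma:lsc:env:superlinear} guaranteeing the envelope stays affine. Your explicit remark that one must check $\mathfrak r(A) \subseteq A$, and that consistency is precisely what makes the concave half of that check go through, is exactly where the hypothesis enters in the paper's argument as well.
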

\begin{proof}
  We consider the space $\creal^\B$ with the product topology.  Since
  $\creal$ is stably compact, $\creal^\B$ is stably compact, too.  The
  subset $L$ of all affine (resp.\ linear) monotonic maps from $\B$ to
  $\creal$ is the collection defined by the following inequalities:
  $\_ (x) \leq \_ (y)$ for all pairs $x \leq y$ in $\B$,
  $\_ (x +_a y) = a \, \_ (x) + (1-a) \, \_ (y)$ for all $x, y \in \B$
  and $a \in [0, 1]$ (resp.\ and $\_ (\bot) = 0$, since a map is
  linear if and only if it is strict and affine, by
  Proposition~\ref{prop:strict:aff}).  We note that $+$ is
  patch-continuous on $\creal$, equivalently it is continuous with
  respect to the usual Hausdorff topology on $\creal$, and
  multiplication by $a \in \Rp$ is, too.  This is therefore a
  patch-continuous system, from which it follows that $L$ is
  patch-closed, hence stably compact when given the subspace topology
  from $\creal^\B$.

  For every $f \in L$, $\check f$ is convex by
  Lemma~\ref{lemma:lsc:env}, and concave by
  Lemma~\ref{lemma:lsc:env:superlinear}, which applies since $\B$ is
  consistent.  Therefore $\check f$ is in $\B^\astar$ (resp.\ $\B^*$).
  Since $\B$ is a c-space, the map
  $\mathfrak r \colon f \mapsto \check f$ of
  Lemma~\ref{lemma:checkf:cspace} is a retraction of $\creal^\B$ onto
  $L \cap \Lform_\pw \B$, namely onto $\B^\astar$ (resp.\ $\B^*$).  As
  a retract of a stably compact space, $\B^\astar$ (resp.\ $\B^*$) is
  then stably compact.  \qed
\end{proof}

Given any topological space $X$, the complements of compact saturated
sets form subbasic open sets for a topology, the \emph{cocompact}
topology.  If $X$ is stably compact, the cocompact topology consists
exactly of the complements of compact saturated sets.  The set of
points of $X$ with the cocompact topology is the \emph{de Groot dual}
$X^\dG$ of $X$, and $X^{\dG\dG} = X$ when $X$ is stably compact.  The
compact saturated subsets of $X$ are the closed subsets of $X^\dG$,
and also the upwards-closed patch-closed subsets of $X$.  (Once again,
see \cite[Chapter~9]{JGL-topology}.)  We characterize the cocompact
topology on $\B^\astar$ and on $\B^*$.  Once again, the proof strategy
is due to Plotkin \cite{Plotkin:alaoglu}.
\begin{definition}[Lower-open topology]
  \label{defn:U>=r}
  Given a (resp.\ pointed) semitopological barycentric algebra $\B$,
  an open subset $U$ of $\B$, and $r \in \Rp$, the set $[U \geq r]$ is
  defined as the collection of elements $\Lambda$ of $\B^\astar$
  (resp.\ $\B^*$) such that $U \subseteq \Lambda^{-1} ([r, \infty])$.

  The \emph{lower-open}{topology} on $\B^\astar$ (resp.\ $\B^*$)
  is generated by the complements $[U < r]$ of the sets $[U \geq r]$
  where $U \in \Open \B$ and $r \in \Rp \diff \{0\}$.
\end{definition}
Alternatively, $[U \geq r]$ is the set of all $\Lambda \in \B^\astar$
(resp.\ $\B^*$) such that $\Lambda (x) \geq r$ for every $x \in U$.

Given a topological space $X$ with specialization preordering $\leq$,
a \emph{dual topology} $\Open'$ on $X$ is a topology whose
specialization preordering is the opposite preordering $\geq$.  A
\emph{separating dual topology} is a dual topology $\Open'$ on
$X$ such that, for all points $x, y \in X$ such that $x \not\leq y$,
there is an open neighborhood $U \in \Open X$ of $x$ and an element
$U' \in \Open'$ containing $y$ such that $U \cap U' = \emptyset$.

Given a topological space $X$, the cocompact topology on $X$ is always
dual, because saturated subsets are upwards-closed, hence their
complements are downwards-closed.  If $X$ is locally compact, then the
cocompact topology is also separating.  Indeed, if $x \not\leq y$,
then $x$ is in the open set $X \diff \dc y$.  Since $X$ is locally
compact, there is a compact saturated subset $Q$ of $X$ and an open
subset $U$ of $Q$ such that
$x \in U \subseteq Q \subseteq X \diff \dc y$.  Hence $x$ is in $U$,
$y$ is in $X \diff Q$, and $U \cap (X \diff Q) = \emptyset$.  The
cocompact topology is also coarser than the patch topology, by
definition.

Conversely, if $X$ is stably compact, then the only separating dual
topology coarser than the patch topology on $X$ is the cocompact
topology \cite[Lemma~1]{Plotkin:alaoglu}.

\begin{proposition}
  \label{prop:C*:cocompact=loweropen}
  Let $\B$ be a consistent (resp.\ pointed) semitopological
  bary\-centric algebra whose underlying space is a c-space. For every
  $U \in \Open \B$ and every $r \in \Rp$, $[U \geq r]$ is compact
  saturated in $\B^\astar$ (resp.\ $\B^*$).

  The cocompact topology coincides with the lower-open topology on
  $\B^\astar$ (resp.\ $\B^*$). A subbase is given by the sets
  $[\interior {\upc x} < r]$ with $x \in \B$ and
  $r \in \Rp \diff \{0\}$.
\end{proposition}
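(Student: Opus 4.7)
The plan is to invoke Theorem~\ref{thm:alaoglu}, which gives that $\B^\astar$ (resp.\ $\B^*$) is stably compact, and to apply the Plotkin uniqueness lemma cited just before the proposition: on a stably compact space the only separating dual topology coarser than the patch topology is the cocompact topology. So it suffices to show that the lower-open topology is such a topology. I would first handle compact saturation of $[U \geq r]$. Saturation follows from Lemma~\ref{lemma:weak*}, since the specialization preordering is pointwise and $[U \geq r]$ is manifestly upwards-closed in it. For compactness, I would adapt the Plotkin-Banach-Alaoglu argument used in Theorem~\ref{thm:alaoglu}, enriching the patch-continuous system defining $L$ with the further patch-continuous inequalities ``value at $x$ is at least $r$'' for each $x \in U$. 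The retraction $\mathfrak r \colon f \mapsto \check f$ still lands in this enlarged patch-closed set: for each $x \in U$ the c-space property gives a $y \in U$ with $x \in \interior{\upc y}$, and Scott's formula (Lemma~\ref{lemma:checkf:cspace}) forces $\check f (x) \geq f (y) \geq r$. The enlarged system thus has $[U \geq r]$ as a retract, so $[U \geq r]$ is stably compact in its subspace topology, compact in $\B^\astar$, and saturated---that is, compact saturated. Consequently $[U < r]$ is cocompact-open, yielding lower-open $\subseteq$ cocompact $\subseteq$ patch.

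Next I would check that the lower-open topology is a dual topology. If $\Lambda_1 \geq \Lambda_2$ pointwise, every subbasic $[U < r] \ni \Lambda_1$ via a witness $y \in U$ satisfying $\Lambda_1 (y) < r$ has $\Lambda_2 (y) \leq \Lambda_1 (y) < r$, so also contains $\Lambda_2$; conversely if $\Lambda_2 (x) > \Lambda_1 (x)$ at some $x$, the lower-open set $[\Lambda_2^{-1} (]r, \infty]) < r]$ for intermediate $r$ contains $\Lambda_1$ (via $x$) but not $\Lambda_2$. The harder point is separation. Given $\Lambda_1 \not\leq \Lambda_2$ pointwise, pick $x$ with $\Lambda_1 (x) > \Lambda_2 (x)$ and reals $\Lambda_1 (x) > s > t > \Lambda_2 (x)$. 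Using $x$ directly fails, because a subbasic $[U < t]$ only constrains $\Lambda$ at some witness in $U$, not at $x$. The c-space property provides the fix: $\Lambda_1^{-1} (]s, \infty])$ is an open neighborhood of $x$, so there is $y$ in it with $x \in \interior{\upc y}$; then $\Lambda_1 (y) > s$, and $y \leq x$ gives $\Lambda_2 (y) \leq \Lambda_2 (x) < t$ by monotonicity. Setting $W \eqdef [y > s]$ (weak$^*$Scott open, containing $\Lambda_1$) and $V \eqdef [\interior{\upc y} < t]$ (lower-open, containing $\Lambda_2$ via the witness $x$), any $\Lambda \in V$ comes with $z \in \interior{\upc y}$, $z \geq y$, satisfying $\Lambda (z) < t$, so $\Lambda (y) \leq \Lambda (z) < t < s$ and $\Lambda \notin W$. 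Hence $V \cap W = \emptyset$, separation holds, and the Plotkin lemma forces lower-open $=$ cocompact.

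For the subbase claim, the same c-space argument gives $[U < r] = \bigcup_{y \in U} [\interior{\upc y} < r]$: for $\Lambda \in [U < r]$, pick a witness $z \in U$ and use the c-space property to find $y \in U$ with $z \in \interior{\upc y}$, whence $\Lambda \in [\interior{\upc y} < r]$; conversely $U$ is upwards-closed so $\interior{\upc y} \subseteq \upc y \subseteq U$ whenever $y \in U$, and membership in $[\cdot < r]$ is monotone in its argument. Thus the sets $[\interior{\upc x} < r]$ already form a subbase of the lower-open, equivalently the cocompact, topology. The main obstacle is the separation step: one must use the c-space property to descend from the awkward point $x$ to a point $y \leq x$ at which $\Lambda_1$ still exceeds $s$, which is exactly what makes the pointwise subbasic set $[\interior{\upc y} < t]$ strong enough to separate $\Lambda_2$ from $\Lambda_1$.
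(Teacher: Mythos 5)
Your proof is correct and follows essentially the same route as the paper's: you establish compactness of $[U \geq r]$ via the patch-compact set $A$ of monotonic affine maps satisfying the extra inequalities and the retraction $\mathfrak r \colon f \mapsto \check f$ (using Scott's formula and the c-space property to land back in $A$), then show the lower-open topology is dual, separating, and coarser than the patch topology, and invoke Plotkin's uniqueness lemma. The only cosmetic differences are that the paper works with a single intermediate $r$ in the separation step rather than a pair $s > t$, and it organizes the final identification by first proving that the coarser topology $\Open'$ generated by $[\interior{\upc x} < r]$ equals the cocompact topology and then squeezing the lower-open topology between them, whereas you apply Plotkin's lemma to the lower-open topology directly and derive the subbase claim afterwards.
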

\begin{proof}
  Let $U$ be open in $\B$ and $r \in \Rp$.  As in the proof of
  Theorem~\ref{thm:alaoglu}, the subset $L$ of all affine (resp.\
  linear) monotonic maps from $\B$ to $\creal$ is patch-compact, and
  $\mathfrak r \colon \Lambda \mapsto \check \Lambda$ defines a
  retraction of $L$ onto $\B^\astar$ (resp.\ $\B^*$). The set $A$ of
  all maps $\Lambda \in L$ such that $\Lambda (x) \geq r$ for every
  $x \in U$ is defined by patch-continuous inequalities, so $A$ is
  patch-closed in $L$. Since $A$ is upwards-closed in $L$, it is
  compact saturated. It follows that its image $B$ under
  $\Lambda \mapsto \check \Lambda$ is compact in $\B^*$.

  We claim that $B = [U \geq r]$. For every $\Lambda \in A$, for every
  $x \in U$,
  $\check \Lambda (x) = \sup_{y \in \B, x \in \interior {\upc y}}
  \Lambda (y)$ by Lemma~\ref{lemma:checkf:cspace}.  Since $U$ is
  Scott-open, some $y \in \B$ such that $x \in \interior {\upc x}$ is
  in $U$, and then $\Lambda (y) \geq r$, whence
  $\check \Lambda (x) \geq r$; so $\check \Lambda$ is in
  $B$. Conversely, every map $\Lambda$ in $[U \geq r]$ is in $A$, and
  equal to $\check \Lambda$, hence in $B$.

  Since $B$ is compact and $B = [U \geq r]$, $[U \geq r]$ is compact.
  It is clear that $[U \geq r]$ is upwards-closed in $\B^\astar$
  (resp.\ $\B^*$), hence it is compact saturated.

  Let $\Open'$ be the topology generated by subbasic open sets
  $[\interior {\upc x} < r]$ with $x \in \B$ and
  $r \in \Rp \diff \{0\}$.

  We claim that $\Open'$ is a dual topology on $\B^\astar$ (resp.\
  $\B^*$). It suffices to show that, for all
  $\Lambda, \Lambda' \in \B^\astar$ (resp.\ $\B^*$),
  $\Lambda \geq \Lambda'$ if and only if for every $x \in \B$ and
  every $r \in \Rp \diff \{0\}$,
  $\Lambda \in [\interior {\upc x} < r]$ implies
  $\Lambda' \in [\interior {\upc x} < r]$.  If
  $\Lambda \in [\interior {\upc x} < r]$ and $\Lambda \geq \Lambda'$,
  clearly $\Lambda'$ is in $[\interior {\upc x} < r]$. Conversely, if
  for every $x \in \B$ and every $r \in \Rp \diff \{0\}$,
  $\Lambda \in [\interior {\upc x} < r]$ implies
  $\Lambda' \in [\interior {\upc x} < r]$, then for every $y \in \B$,
  we claim that $\Lambda (y) \leq \Lambda' (y)$.  Otherwise, there
  would be an $r \in \Rp \diff \{0\}$ such that
  $\Lambda (y) > r > \Lambda' (y)$.  Since
  $y \in \Lambda^{-1} (]r, \infty])$ and $\B$ is a c-space, there is
  an $x \in \Lambda^{-1} (]r, \infty])$ such that
  $y \in \interior {\upc x}$.  In particular, $x \leq y$, so
  $\Lambda (y) \geq \Lambda (x) > r$, so
  $\Lambda \in [\interior {\upc x} < r]$, and therefore
  $\Lambda' \in [\interior {\upc x} < r]$.  But the latter implies
  $\Lambda' (y) < r$, which is impossible.  Hence
  $\Lambda \geq \Lambda'$.

  We claim that $\Open'$ is separating. Let
  $\Lambda, \Lambda' \in \B^\astar$ (resp.\ $\B^*$) be such that
  $\Lambda \not\leq \Lambda'$. There is a point $x \in \B$ such that
  $\Lambda (x) > \Lambda' (x)$. Let $r \in \Rp \diff \{0\}$ be such
  that $\Lambda (x) > r > \Lambda' (x)$, and $y \in B$ be such that
  $x \in \interior {\upc y}$ and $\Lambda (y) > r$. Then $\Lambda$ is
  in $[y > r] \in \Open \B$; $\Lambda'$ is in
  $[\interior {\upc y} < r] \in \Open'$, otherwise $\Lambda'$ would be
  in $[\interior {\upc y} \geq r]$, so $\interior {\upc y}$ would be
  included in ${\Lambda'}^{-1} ([r, \infty])$, which is impossible
  since $x$ belongs to the former and not to the latter.  Finally,
  $[y > r]$ and $ [\interior {\upc y} < r]$ are disjoint.  Indeed,
  otherwise there would be an affine (resp.\ linear) continuous map
  $h$ such that $h (y) > r$ and $h (z) < r$ for every
  $z \in \interior {\upc y}$, in particular $h (x) < r$. Since
  continuous maps are monotonic and $y \leq x$, this is impossible.

  Since $\Open'$ is dual and separating, $\Open'$ coincides with the
  cocompact topology.  It is coarser than the lower-open topology by
  definition, and since every set $[U \geq r]$ with $U \in \Open \B$
  and $r \in \Rp \diff \{0\}$ is compact saturated, the lower-open
  topology is coarser than the cocompact topology.  Hence all these
  topologies coincide.  \qed
\end{proof}

\subsection{Consistency and c-spaces}
\label{sec:consistency-c-spaces}

Extending the notion described earlier, a map $h \colon X \to Y$ is
\emph{almost open} if and only if $\upc h [U]$ is open for every open
subset $U$ of $X$.  A monotonic map $h \colon X \to Y$ \emph{preserves
  the way-below relation} if and only if for all $x, x' \in X$ such
that $x \ll x'$, $h (x) \ll h (x')$.
\begin{lemma}
  \label{lemma:quasi-open:poset}
  % \begin{enumerate}
  % \item 
    For every c-space $X$, for every topological space $Y$, a
    monotonic map $h \colon X \to Y$ is almost open if and only if for
    all $x, x' \in X$ such that $x' \in \interior {\upc x}$,
    $h (x') \in \interior {\upc h (x)}$.
    % \item
    For every continuous poset $X$ and every poset $Y$, considered
    with their Scott topologies, the monotonic map $h$ is quasi-open
    if and only if it preserves the way-below relation.
  % \item For all continuous posets $X_1$, \ldots, $X_k$ ($k \in \nat$),
  %   for every poset $Y$, a monotonic map
  %   $h \colon \prod_{i=1}^k X_i \to Y$ is quasi-open, as a map from
  %   $\prod_{i=1}^k (X_k)_\sigma$ to $Y_\sigma$, if and only if it
  %   preserves the way-below relation, namely: for all
  %   $x_1, x'_1 \in X_1$, \ldots, $x_k, x'_k \in X_k$ such that
  %   $x_1 \ll x'_1$, \ldots, $x_k \ll x'_k$,
  %   $h (x_1, \cdots, x_k) \ll h (x'_1, \cdots, x'_k)$.
  % \item For all continuous posets $X_1$, \ldots, $X_k$ ($k \in \nat$),
  %   for every poset $Y$, a map $h \colon \prod_{i=1}^k X_i \to Y$ is
  %   continuous, as a map from $\prod_{i=1}^k (X_i)_\sigma$ to
  %   $Y_\sigma$, if and only if it is Scott-continuous.
  % \end{enumerate}
\end{lemma}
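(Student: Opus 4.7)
\smallskip\noindent\textbf{Proof plan.} I would prove the two halves of the lemma independently, since only the first requires $X$ to be a c-space in an arbitrary topology while the second invokes the Scott topology.

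For the first half, consider the forward direction. Assuming $h$ is almost open, let $x' \in \interior{\upc x}$. The set $V := \upc h[\interior{\upc x}]$ is open by almost-openness, and it contains $h(x')$. By monotonicity of $h$, every $h(x'')$ with $x'' \in \interior{\upc x} \subseteq \upc x$ satisfies $h(x'') \geq h(x)$, so $V \subseteq \upc h(x)$. Hence $V$ is an open neighborhood of $h(x')$ inside $\upc h(x)$, giving $h(x') \in \interior{\upc h(x)}$. For the converse, suppose the condition holds and let $U \subseteq X$ be open; fix $y \in \upc h[U]$, so there is some $x \in U$ with $y \geq h(x)$. Using the c-space hypothesis, pick $x_0 \in U$ with $x \in \interior{\upc x_0}$. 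The assumption yields $h(x) \in \interior{\upc h(x_0)}$, and because this set is open (hence upwards-closed in the specialization preordering), $y$ also lies in it. Finally $\interior{\upc h(x_0)} \subseteq \upc h(x_0) \subseteq \upc h[U]$, showing $\upc h[U]$ is open at $y$.

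For the second half, every continuous poset is a c-space in its Scott topology with $\interior{\upc x} = \uuarrow x$, so part~1 applies. For the forward direction, assume $h$ is almost open and $x \ll x'$ in $X$. Part~1 gives $h(x') \in \interior{\upc h(x)}$ in $Y$; I would then argue that in any poset $Y$ with its Scott topology, membership $h(x') \in \interior{\upc h(x)}$ forces $h(x) \ll h(x')$. Indeed, given any directed $D \subseteq Y$ whose supremum exists and dominates $h(x')$, the Scott-open set $\interior{\upc h(x)}$ contains $\bigvee D$ (as it is upwards-closed and contains $h(x')$), hence meets $D$, producing some $d \in D$ with $d \geq h(x)$.

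For the converse of the second half, I would \emph{not} route through part~1 (since $\uuarrow h(x)$ need not be Scott-open in a general $Y$), but prove directly that if $h$ preserves $\ll$ then $\upc h[U]$ is Scott-open for every Scott-open $U \subseteq X$. Upwards-closedness is immediate; for directed inaccessibility, let $D$ be directed in $Y$ with $\bigvee D$ existing and in $\upc h[U]$, say $\bigvee D \geq h(x)$ for some $x \in U$. Continuity of $X$ plus Scott-openness of $U$ yields an interpolating $x_0 \in U$ with $x_0 \ll x$; the preservation hypothesis gives $h(x_0) \ll h(x) \leq \bigvee D$, so by definition of $\ll$ there is $d \in D$ with $d \geq h(x_0)$, placing $d$ in $\upc h[U]$. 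The main subtlety, and the only spot requiring real care, is precisely this asymmetry between $X$ and $Y$ in the second half: one cannot symmetrically translate $\interior{\upc \_}$ into $\uuarrow \_$ on the $Y$ side, which is why the backward direction must be argued directly via the definition of the way-below relation rather than as a mechanical corollary of part~1.
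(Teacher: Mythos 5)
Your proof of the first half is essentially identical to the paper's: the forward direction produces the open set $\upc h[\interior{\upc x}]$, and the converse uses the c-space hypothesis at each point of $\upc h[U]$ in exactly the same way. For the second half, the paper merely says it is a direct consequence of the first and of the fact that in a continuous poset $y \in \interior{\upc x}$ iff $x \ll y$; you correctly observe that this equivalence is only available on the $X$ side, and that translating ``$h(x') \in \interior{\upc h(x)}$'' into ``$h(x) \ll h(x')$'' is one-directional in a general poset $Y$ (the Scott-open set $\interior{\upc h(x)}$ witnesses way-belowness, but $\uuarrow h(x)$ need not be Scott-open). Your direct argument for the converse --- pick $x_0 \in U$ with $x_0 \ll x$ from continuity of $X$ and Scott-openness of $U$, push through $h$ to get $h(x_0) \ll h(x) \leq \bigvee D$, and extract $d \in D$ above $h(x_0)$ --- is exactly what the paper's one-liner is tacitly relying on, and it is correct; indeed one can also reach the same conclusion by applying part~1 to $U := \uuarrow x''$ for an interpolant $x \ll x'' \ll x'$, but the core directed-set step is the same either way. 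One small terminological nit: your $x_0$ is not really an ``interpolating'' element (you are not placing it between two given $\ll$-related points); you are using Scott-openness of $U$ together with $x = \sup \ddarrow x$, and the mathematics is sound.
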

\begin{proof}
  % 1.
  Let us assume that $h$ is quasi-open.  If
  $x' \in U \eqdef \interior {\upc x}$, then $h (x')$ is in the
  (Scott-)open set $\upc h [U]$.  In turn, $\upc h [U]$ is included in
  $\upc h (x)$, hence in its interior.

  Conversely, let us assume that $x' \in \interior {\upc x}$ implies
  $h (x') \in \interior {\upc h (x)}$.  For every open subset $U$ of
  $X$, let $y$ be any point of $\upc h [U]$.  There is a point
  $x' \in U$ such that $h (x') \leq y$.  Since $X$ is a c-space, there
  is a point $x \in U$ such that $x' \in \interior {\upc x}$.  By
  assumption, $h (x') \in \interior {\upc h (x)}$, so $y$ is in the
  open set $\interior {\upc h (x)}$, which is included in
  $\upc h [U]$.  This shows that $\upc h [U]$ is an open neighborhood
  of each of its points, hence is itself open.

  % 2.
  The second claim is a direct consequence of the first one, and the
  fact that in a continuous poset, $y \in \interior {\upc x}$ if and
  only if $x \ll y$.  \qed
%
  % 3.  Using the definition of $\ll$, it is easy to show that for all
  % $x_1, x'_1 \in X_1$, \ldots, $x_k, x'_k \in X_k$,
  % $(x_1, \cdots, x_k) \ll (x'_1, \cdots, x'_k)$ is equivalent to the
  % conjunction of $x_1 \ll x'_1$, \ldots, $x_k \ll x'_k$.  The claim
  % then follows from item~2.
%
  % 4. The same observation shows that
  % $\uuarrow (x_1, \cdots, x_k) = \prod_{i=1}^k \uuarrow {x_i}$, so the
  % product topology on $\prod_{i=1}^k (X_i)_\sigma$ coincides with the
  % Scott topology on the poset product $\prod_{i=1}^k X_i$.  \qed
\end{proof}

\begin{lemma}
  \label{lemma:consistent:ccone}
  Let $\B$ be a semitopological barycentric algebra and a c-space.
  $\B$ is strongly consistent if and only if for all $x, x', y', y'
  \in \B$ such that $x' \in \interior {\upc x}$ and $y' \in \interior
  {\upc y}$, for every $a \in {]0, 1[}$, $x'+y'$ is in $\interior
  {\upc (x +_a y)}$.

  Let $\C$ be a c-cone. $\C$ is consistent if and only if for all
  $x, x', y, y' \in \C$, if $x' \in \interior {\upc x}$ and
  $y' \in \interior {\upc y}$, then
  $x'+y' \in \interior {\upc (x+y)}$.
\end{lemma}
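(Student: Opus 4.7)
The plan is to apply Lemma~\ref{lemma:quasi-open:poset} to the monotonic map $+_a \colon \B \times \B \to \B$, with $a \in {]0, 1[}$ fixed (respectively, to $+ \colon \C \times \C \to \C$), where the domains are given the product topology and the product specialization preordering.  The two parts of the lemma will then follow by translating ``almost open'' into the hypothesis on the barycentric algebra (resp.\ on the cone).

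First I would check that the product of two c-spaces is a c-space, and compute $\interior {\upc (x, y)}$ inside $\B \times \B$.  Given any $(x, y)$ and any open neighborhood $W$ of $(x, y)$ in the product, one can find an open rectangle $U_1 \times U_2 \subseteq W$ around $(x, y)$, then apply the c-space property in each factor to obtain $y_i \in U_i$ with $x_i \in \interior {\upc y_i}$; the product $\interior {\upc y_1} \times \interior {\upc y_2}$ is an open neighborhood of $(x, y)$ included in $\upc y_1 \times \upc y_2 = \upc (y_1, y_2) \subseteq \upc (y_1, y_2)$, witnessing $(x, y) \in \interior {\upc (y_1, y_2)}$.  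The same rectangle argument shows $\interior {\upc (x, y)} = \interior {\upc x} \times \interior {\upc y}$: any open set sitting inside $\upc x \times \upc y$ that contains $(x', y')$ must contain a rectangle $U' \times V'$ with $x' \in U' \subseteq \upc x$ and $y' \in V' \subseteq \upc y$, whence $x' \in \interior {\upc x}$ and $y' \in \interior {\upc y}$; the reverse inclusion is immediate.

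Second, I would translate strong consistency into almost-openness of $+_a$.  Every open subset of $\B \times \B$ is a union of basic rectangles $U \times V$ with $U, V \in \Open \B$, and $+_a[U \times V] = U +_a V$; since $\upc$ commutes with arbitrary unions, $+_a$ is almost open iff $\upc (U +_a V)$ is open for all open $U, V \subseteq \B$.  Combining this over $a \in {]0, 1[}$ and using Fact~\ref{fact:strong:consistent} yields: $\B$ is strongly consistent iff $+_a$ is almost open for every $a \in {]0, 1[}$.  In the cone case, the analogous translation identifies consistency of $\C$ (equivalently, almost-openness of $+$ by Proposition~\ref{prop:cons:equiv}) with almost-openness of $+ \colon \C \times \C \to \C$.

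Finally, I would apply Lemma~\ref{lemma:quasi-open:poset} to the c-space $\B \times \B$ and the monotonic map $+_a$ (respectively to $\C \times \C$ and $+$).  The lemma asserts that almost-openness is equivalent to $(x', y') \in \interior {\upc (x, y)} \Rightarrow x' +_a y' \in \interior {\upc (x +_a y)}$ (resp.\ $x' + y' \in \interior {\upc (x + y)}$); using the identification $\interior {\upc (x, y)} = \interior {\upc x} \times \interior {\upc y}$ from the first step, this is precisely the pointwise condition in the statement.  I do not anticipate any real obstacle: the argument is just the compilation of the definitions of strong consistency (resp.\ consistency in cones), of the product topology on a product of c-spaces, and of Lemma~\ref{lemma:quasi-open:poset}; the only mildly delicate point is the product c-space computation, which is nonetheless routine.
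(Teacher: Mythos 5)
Your proposal is correct and takes essentially the same route as the paper: the paper's own proof simply invokes Lemma~\ref{lemma:quasi-open:poset} for $+_a$ (resp.\ $+$) and the identity $\interior {\upc (x, y)} = \interior {\upc x} \times \interior {\upc y}$ in a product of c-spaces. You supply the routine verifications that the paper leaves implicit (product of c-spaces is a c-space, the rectangle computation of the interior, the translation of strong consistency into almost-openness of $+_a$), but the argument is the same.
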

\begin{proof}
  This is Lemma~\ref{lemma:quasi-open:poset}, %item~1,
  applied to the operator $+_a$ in the first case, to $+$ in the
  second case, and realizing that in a product of two c-spaces,
  $\interior {\upc (x, y)} = \interior {\upc x} \times \interior {\upc
    y}$.  \qed
\end{proof}

The following is due to \cite[Proposition~2.2]{TKP:nondet:prob} in the
case of continuous d-cones, namely continuous s-cones (see
Example~\ref{exa:cont:s-baryalg}) that are directed-complete.
\begin{lemma}
  \label{lemma:consistent:scone}
  An continuous s-barycentric algebra $\B$ is strongly consistent in
  its Scott topology if and only if $+_a$ preserves the way-below
  relation on $\B$ for every $a \in {]0, 1[}$, namely: for all
  $x, x', y, y' \in \B$, if $x \ll x'$ and $y \ll y'$ then
  $x +_a y \ll x' +_a y'$.
  
  A continuous s-cone $\C$ is consistent in its Scott topology if and
  only if addition preserves the way-below relation on $\C$, namely:
  for all $x, x', y, y' \in \C$,$x \ll x'$ and $y \ll y'$ imply
  $x+y \ll x'+y'$.
\end{lemma}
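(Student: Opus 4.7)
The plan is to reduce both statements to Lemma~\ref{lemma:consistent:ccone} using two facts from the Preliminaries: (i) a continuous poset $P$ is a c-space in its Scott topology, and (ii) in such a $P$ one has $\interior{\upc x} = \uuarrow x$, so $y \in \interior{\upc x}$ if and only if $x \ll y$. Combined with Example~\ref{exa:cont:s-baryalg}, which tells us that every continuous s-barycentric algebra is a topological barycentric algebra in its Scott topology, this means both statements are immediate translations of the criteria from Lemma~\ref{lemma:consistent:ccone} via the way-below relation.

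First I would handle the s-barycentric algebra case. Let $\B$ be a continuous s-barycentric algebra, equipped with its Scott topology. By (i), $\B$ is a c-space, and it is a semitopological (in fact topological) barycentric algebra by Example~\ref{exa:cont:s-baryalg}, so Lemma~\ref{lemma:consistent:ccone} applies: $\B$ is strongly consistent iff for all $x, x', y, y' \in \B$ with $x' \in \interior{\upc x}$ and $y' \in \interior{\upc y}$, and every $a \in {]0,1[}$, we have $x' +_a y' \in \interior{\upc(x +_a y)}$. Translating via (ii), the condition $x' \in \interior{\upc x}$ becomes $x \ll x'$, likewise $y \ll y'$, and $x' +_a y' \in \interior{\upc(x +_a y)}$ becomes $x +_a y \ll x' +_a y'$. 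This is exactly the statement that $+_a$ preserves the way-below relation.

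For the s-cone case, $\C$ with its Scott topology is a c-cone (being a continuous poset-cone), and by Lemma~\ref{lemma:consistent:ccone} (cone part), $\C$ is consistent iff for all $x, x', y, y' \in \C$ with $x' \in \interior{\upc x}$ and $y' \in \interior{\upc y}$, we have $x' + y' \in \interior{\upc(x+y)}$. The same translation via (ii) gives the stated characterization that addition preserves $\ll$. (If one prefers to phrase this via strong consistency, Proposition~\ref{prop:cons:equiv} identifies the two notions on cones, so either formulation works.)

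There is no real obstacle: the entire content has been packaged into Lemma~\ref{lemma:consistent:ccone} and the standard domain-theoretic identity $\interior{\upc x} = \uuarrow x$ on continuous posets. The only thing to state carefully is that Example~\ref{exa:cont:s-baryalg} provides the semitopological structure needed to invoke Lemma~\ref{lemma:consistent:ccone} in the barycentric algebra case.
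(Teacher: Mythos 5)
Your proof is correct and follows essentially the same route as the paper: reduce to Lemma~\ref{lemma:consistent:ccone} via the facts that continuous posets are c-spaces in their Scott topology and that $\interior{\upc z} = \uuarrow z$ on such posets. The paper's own proof is a two-sentence version of exactly this argument.
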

\begin{proof}
  Since every continuous poset is a c-space in its Scott topology, a
  continuous s-cone is a c-cone, and a continuous s-barycentric
  algebra is a c-space in its Scott topology.  We can therefore apply
  Lemma~\ref{lemma:consistent:ccone}, and we conclude since
  $\uuarrow z = \interior {\upc z}$ for every point $z$.  \qed
\end{proof}

\begin{remark}
  \label{lemma:smult:<<}
  The situation is simpler with scalar multiplication: on any s-cone
  $\C$, for every $a \in \Rp \diff \{0\}$, for all $x, y \in \C$, $x
  \ll y$ if and only if $a \cdot x \ll a \cdot y$.  This is an easy
  exercise, since the map $x \mapsto a \cdot x$ is an order isomorphism.
\end{remark}

\begin{remark}
  \label{rem:+:reflect:<<}
  It is sometimes desirable to consider a dual property: addition
  \emph{reflects} $\ll$ (instead of preserving it) if and only if for
  all $z \ll x+y$, there are points $x' \ll x$ and $y' \ll y$ such
  that $z \ll x'+y'$.  That property holds on every continuous s-cone,
  consistent or not.  Indeed, since $+$ is jointly continuous,
  $+^{-1} (\uuarrow z)$ is an open neighborhood of $(x, y)$, hence
  contains a product of basic open sets
  $\interior {\upc x}' \times \uuarrow y'$ such that $x \in \interior {\upc x}'$ and
  $y \in \uuarrow y'$.  Similarly, $+_a$ reflects $\ll$ for every
  $a \in {]0, 1[}$ on every ordered barycentric algebra that is also a
  continuous poset.
\end{remark}

% ordonne a l'envers dans le livre
% faire aussi les lemmes d'avant (s-ba is consistent iff +_a preserve <<)

\subsection{Strongly consistent spaces of valuations}
\label{sec:cons-spac-valu}

We will see below that all our familiar barycentric algebras of
continuous valuations are strongly consistent.  We need the following
second decomposition lemma.  Its proof is somewhat similar to the
first one (Proposition~\ref{prop:schsimp:decomp}).
\begin{proposition}
  \label{prop:decomp:2}
  Let $X$ be a set with a lattice of subsets $\Latt$, and $\mu$,
  $\nu$, $\varpi$ be three bounded valuations on $(X, \Latt)$.  Let
  $L \eqdef \{U_1, \cdots, U_n\}$ be a finite lattice of subsets of
  $X$ included in $\Latt$, and let us assume that
  $\mu (U_i) + \nu (U_i) \leq \varpi (U_i)$ for every $i$,
  $1\leq i\leq n$.  Then there are two bounded valuations $\mu'$ and
  $\nu'$ on $(X, \Latt)$ such that:
  \begin{enumerate}
  \item $\mu' + \nu' \leq \varpi$;
  \item $\mu (U_i) \leq \mu' (U_i)$ and $\nu (U_i) \leq \nu' (U_i)$
    for every $i$, $1\leq i\leq n$.
  \end{enumerate}
  Additionally, $\mu'$ and $\nu'$ are finite linear combinations of
  constrictions $\varpi_{|C}$ of $\varpi$ to pairwise disjoint
  crescents $C$.
  % $\sum_{i=1}^n c_{1i}\,\varpi_{|C_i}$
  % and $\sum_{i=1}^n c_{2i}\,\varpi_{|C_i}$ of constrictions
  % $\varpi_{|C_i}$ of $\varpi$ to crescents $C_i$ (see
  % Proposition~\ref{prop:restr}), with coefficients such that
  % $c_{1i}+c_{2i} \leq 1$ for every index $i$.
  In particular, if $\Latt$ is a topology and $\varpi$ is continuous,
  then $\mu'$ and $\nu'$ are continuous.
\end{proposition}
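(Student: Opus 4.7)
The argument parallels the proof of Proposition~\ref{prop:schsimp:decomp}, but applies Jones' splitting lemma to the sum $\mu+\nu$ compared with $\varpi$.

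First I would use the Smiley--Horn--Tarski theorem to extend $\mu, \nu, \varpi$ to bounded valuations on the algebra $\mathcal A(\Latt)$, and reuse the constructions from the first decomposition lemma: let $M \eqdef \pow(\{1,\ldots,n\})$ with the Alexandroff topology, define crescents $C_I \eqdef \bigcap_{i\in I} U_i \diff \bigcup_{i\notin I} U_i$, and the map $f \colon X \to M$ sending $x$ to $\{i : x\in U_i\}$. Recall $f^{-1}(\upc I) = \bigcap_{i\in I} U_i \in L$, so $f^{-1}(A) \in L$ for every upwards-closed $A \subseteq M$, and moreover $f^{-1}(A)$ is one of the $U_k$'s because $L$ is finite and closed under unions/intersections. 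Set $a_I \eqdef \mu(C_I)$, $b_I \eqdef \nu(C_I)$, $c_I \eqdef \varpi(C_I)$, so that $f[\mu] = \sum_I a_I \delta_I$, and similarly for $f[\nu]$, $f[\varpi]$, all simple valuations on $M$.

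Next I would check that $f[\mu] + f[\nu] \leq f[\varpi]$ in the stochastic order. For every upwards-closed $A$, $f^{-1}(A) = U_k$ for some $k$, so $f[\mu](A)+f[\nu](A) = \mu(U_k)+\nu(U_k) \leq \varpi(U_k) = f[\varpi](A)$ by hypothesis. Jones' splitting lemma (\cite[Proposition IV-9.18]{GHKLMS:contlatt}) yields a transport matrix ${(r_{IJ})}_{I,J\in M}$ of non-negative reals with $r_{IJ}\neq 0$ implying $I\subseteq J$, $\sum_J r_{IJ} = a_I+b_I$, and $\sum_I r_{IJ} \leq c_J$.

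Now comes the key trick: I would split $r_{IJ}$ proportionally into $s_{IJ} \eqdef r_{IJ} \cdot a_I/(a_I+b_I)$ and $t_{IJ} \eqdef r_{IJ} \cdot b_I/(a_I+b_I)$ whenever $a_I+b_I > 0$, and $s_{IJ} = t_{IJ} = 0$ otherwise (note that then $a_I = b_I = 0$). This ensures $s_{IJ}+t_{IJ} = r_{IJ}$, both are supported on $I \subseteq J$, and $\sum_J s_{IJ} = a_I$, $\sum_J t_{IJ} = b_I$. Then define
\[
  \mu' \eqdef \sum_{J\in M} d^{\mu}_J\, \varpi_{|C_J}, \qquad
  \nu' \eqdef \sum_{J\in M} d^{\nu}_J\, \varpi_{|C_J},
\]
where $d^{\mu}_J \eqdef (\sum_I s_{IJ})/c_J$ and $d^{\nu}_J \eqdef (\sum_I t_{IJ})/c_J$ if $c_J\neq 0$, otherwise $d^{\mu}_J = d^{\nu}_J = 0$. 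These are finite linear combinations of constrictions of $\varpi$ to pairwise disjoint crescents, hence bounded valuations, and continuous when $\varpi$ is (by the remark preceding the proposition on constrictions of continuous valuations, plus the fact, already used in Theorem~\ref{thm:V1->Vb}, that point-continuous valuations---or continuous ones, as appropriate---are stable under finite linear combinations).

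It remains to verify the three items. For item~1, $\mu'+\nu' = \sum_J (d^{\mu}_J+d^{\nu}_J)\,\varpi_{|C_J} = \sum_J ((\sum_I r_{IJ})/c_J)\,\varpi_{|C_J}$, and since $\sum_I r_{IJ} \leq c_J$ the coefficient is at most $1$ for every $J$, hence $\mu'+\nu' \leq \sum_J \varpi_{|C_J} = \varpi$. For item~2, fix $i$ and let $A \eqdef \{I : i\in I\} = \upc\{i\}$, which is upwards-closed, so $U_i = f^{-1}(A) = \bigsqcup_{I\in A} C_I$. Thus $\mu(U_i) = \sum_{I\in A} a_I = \sum_{I\in A, J} s_{IJ}$. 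The nonzero terms have $I\subseteq J$, and whenever $I\in A$ and $I\subseteq J$ we also have $J\in A$; therefore $\mu(U_i) = \sum_{I\in A, J\in A} s_{IJ} \leq \sum_{J\in A} \sum_I s_{IJ}$. On the other hand, $\mu'(U_i) = \sum_J d^{\mu}_J\,\varpi(C_J\cap U_i)$; for $J\in A$, $C_J\subseteq U_i$ so $\varpi(C_J\cap U_i)=c_J$, and for $J\notin A$, $C_J\cap U_i=\emptyset$. Hence $\mu'(U_i) = \sum_{J\in A} \sum_I s_{IJ} \geq \mu(U_i)$, and symmetrically $\nu(U_i)\leq \nu'(U_i)$.

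The main obstacle is the proportional splitting step: Jones' lemma yields a joint transport for $\mu+\nu$, and one must verify that scaling it by the marginals produces two transports that simultaneously cover $\mu$ and $\nu$ and still fit under $\varpi$. Once the scaling is set up and the upper-set argument used for item~2 above is carried out, the rest is a routine bookkeeping check.
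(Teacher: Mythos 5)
Your proposal is correct and follows essentially the same route as the paper's own proof: extend via Smiley--Horn--Tarski, apply Jones' splitting lemma to $f[\mu]+f[\nu] \leq f[\varpi]$, split the transport matrix proportionally by the marginals of $\mu$ and $\nu$, and read off the coefficients for the constrictions $\varpi_{|C_J}$. Only the notation differs (your $r_{IJ}, s_{IJ}, t_{IJ}, d^\mu_J, d^\nu_J$ versus the paper's $t_{IJ}, u_{IJ}, v_{IJ}, a'_J, b'_J$); the upper-set bookkeeping for item~2 and the inequality $\sum_I r_{IJ} \leq c_J$ for item~1 are used exactly as in the paper.
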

\begin{proof}
  We extend $\mu$, $\nu$, and $\varpi$ uniquely to bounded valuations
  on $\mathcal A (\Latt)$, using the Smiley-Horn-Tarski theorem.  As
  in the proof of Proposition~\ref{prop:schsimp:decomp}, let
  $M \eqdef \pow (\{1, \cdots, n\})$, with the Alexandroff topology of
  inclusion, let
  $C_I \eqdef \bigcap_{i \in I} U_i \diff \bigcup_{i \not\in I} U_i$,
  and $f \colon X \to M$ be defined by
  $f (x) \eqdef \{i \in \{1, \cdots, n\} \mid x \in U_i\}$.  The
  crescents $C_I$ are pairwise disjoint, and, as in the proof of
  Proposition~\ref{prop:schsimp:decomp}, $f^{-1} (\upc I)$ is in $L$
  for every $I \in M$.

  We claim that $f [\mu] + f [\nu] \leq f [\varpi]$.  For every open
  subset $V$ of $M$, $V$ is a finite union of sets $\upc I$,
  $I \in M$, and $f^{-1} (\upc I)$ is in $L$; so $f^{-1} (V)$, which
  is a finite union of elements of $L$, is also in $L$, hence equal to
  $U_i$ for some $i$.  Then
  $f [\mu] (V) + f [\nu] (V) = \mu (f^{-1} (V)) + \nu (f^{-1} (V)) =
  \mu (U_i) + \nu (U_i) \leq \varpi (U_i) = \varpi (f^{-1} (V)) = f
  [\varpi] (V)$.

  Since $M$ is finite, $f [\mu]$, $f [\nu]$, and $f [\varpi]$ are
  simple valuations.  Explicitly,
  $f [\mu] = \sum_{I \in M} a_I \delta_I$ where
  $a_I \eqdef \mu (C_I)$, $f [\nu] = \sum_{I \in M} b_I \delta_I$
  where $b_I \eqdef \nu (C_I)$, and
  $f [\varpi] = \sum_{I \in M} a_I \delta_I$ where
  $c_I \eqdef \varpi (C_I)$,

  % Let us write $f [\mu]$ as $\sum_{I \in M} a_I \delta_I$, where
  % $a_I \eqdef f [\mu] (\upc I) - f [\mu] (\upc I \diff \{I\}) = f
  % [\mu] (\{I\})$, namely $a_I \eqdef \mu (C_I)$; $f [\nu]$ as
  % $\sum_{I \in M} b_I \delta_I$, where $b_I \eqdef \nu (C_I)$; and
  % $f [\varpi]$ as $\sum_{I \in M} c_I \delta_I$, where
  % $c_I \eqdef \varpi (C_i)$.

  By Jones' splitting lemma (see \cite[Theorem 4.10]{Jones:proba} or
  also \cite[Proposition IV-9.18]{GHKLMS:contlatt}), there is a
  transport matrix ${(t_{IJ})}_{I \in M, J \in M}$ of non-negative
  real numbers such that:
  \begin{enumerate*}[series=decomp:2,label=(\roman*)]
  \item\label{q:decomp:2:a} for all $I, J \in M$, if $t_{IJ} \neq 0$
    then $I \subseteq J$;
  \item\label{q:decomp:2:b} $\sum_{J \in M} t_{IJ} = a_I + b_I$ for
    every $I \in M$, and
  \item\label{q:decomp:2:c} $\sum_{I \in M} t_{IJ} \leq c_J$ for every
    $J \in M$.
  \end{enumerate*}
  
  We define two new transport matrices ${(u_{IJ})}_{I \in M, J \in M}$
  and ${(v_{IJ})}_{I \in M, J \in M}$ by:
  $u_{IJ} \eqdef \frac {a_I} {a_I+b_I} t_{IJ}$ and
  $v_{IJ} \eqdef \frac {b_I} {a_I+b_I} t_{IJ}$ if $a_I+b_I \neq 0$,
  otherwise $u_{IJ} \eqdef v_{IJ} \eqdef 0$.  We note that:
  \begin{enumerate*}[resume*=decomp:2]
  \item\label{q:decomp2:tuv} if $t_{IJ}=0$ then $u_{IJ}=0$ and $v_{IJ}=0$;
  \item\label{q:decomp2:uIJ} if $u_{IJ} \neq 0$ then $I \subseteq J$
    (if $I \not\subseteq J$, then $t_{IJ}=0$ by~\ref{q:decomp:2:a}, so
    $u_{IJ}=0$ by~\ref{q:decomp2:tuv});
  \item\label{q:decomp:2:vIJ} if $v_{IJ} \neq 0$ then $I \subseteq J$
    (similar);
  % \item\label{q:decomp:2:d} $(a_I+b_I) u_{IJ} = a_I t_{IJ}$ (if $a_I+b_I=0$, this holds
  %   because then $a_I=0$),
  % \item\label{q:decomp:2:e} $(a_I+b_I) v_{IJ} = b_I t_{IJ}$;
  \item\label{q:decomp:2:f} $u_{IJ} + v_{IJ} = t_{IJ}$ (even when
    $a_I+b_I = 0$, in which case, by~\ref{q:decomp:2:b}, $t_{IJ}=0$
    for every $J \in M$);
  \item\label{q:decomp:2:sumJu} for every $I \in M$,
    $\sum_{J \in M} u_{IJ} \geq a_I$ (if $a_I + b_I \neq 0$, then
    $\sum_{J \in M} u_{IJ} = \frac {a_I} {a_I+b_I} (a_I + b_I)$
    by~\ref{q:decomp:2:b}, which is equal to $a_I$; otherwise,
    $a_I=0$);
  \item\label{q:decomp:2:sumJv} for every $I \in M$,
    $\sum_{J \in M} v_{IJ} \geq b_I$ (similar).
  \end{enumerate*}

  For every $J \in M$, we let
  $a'_J \eqdef \frac {\sum_{I \in M} u_{IJ}} {c_J}$ if $c_J \neq 0$,
  $a'_J \eqdef 0$ otherwise; similarly,
  $b'_J \eqdef \frac {\sum_{I \in M} v_{IJ}} {c_J}$ if $c_J \neq 0$,
  $b'_J \eqdef 0$ otherwise.  We note that:
  \begin{enumerate*}[resume*=decomp:2]
  \item\label{q:decomp:2:g} $a'_J + b'_J \leq 1$ (since
    $a'_J + b'_J = \frac {\sum_{I \in M} t_{IJ}} {c_J} \leq 1$
    by~\ref{q:decomp:2:f} and~\ref{q:decomp:2:c}, when $c_J \neq 0$);
  \item\label{q:decomp:2:h} $a'_J c_J = \sum_{I \in M} u_{IJ}$ (even
    if $c_J=0$, since in that case $t_{IJ}=0$ by~\ref{q:decomp:2:c},
    so $u_{IJ}=0$ by definition); and
  \item\label{q:decomp:2:i} $b'_J c_J = \sum_{I \in M} v_{IJ}$
    (similar).
  \end{enumerate*}

  Finally, we let $\mu' \eqdef \sum_{J \in M} a'_J \varpi_{|C_J}$ and
  $\nu' \eqdef \sum_{J \in M} b'_J \varpi_{|C_J}$.  If $\Latt$ is a
  topology and $\varphi$ is continuous, we note that $\mu'$ and $\nu'$
  are continuous.

  Let us verify that $\mu' + \nu' \leq \varpi$.  We have
  $\mu' + \nu' = \sum_{J \in M} (a'_J + b'_J) \varpi_{|C_J} \leq
  \sum_{J \in M} \varpi_{|C_J} = \varpi$, using~\ref{q:decomp:2:g}.

  We now check that $\mu (U_i) \leq \mu' (U_i)$ for every $i \in I$.
  We note that $\varpi_{|C_J} (U_i) = \varpi (C_J \cap U_i)$, which is
  equal to $\varpi (C_J) = c_J$ if $i \in J$, and to $0$ otherwise.
  Therefore $\mu' (U_i) = \sum_{J \in M, i \in J} a'_J c_J$.  This is
  equal to $\sum_{I, J \in M, i \in J} u_{IJ}$ by~\ref{q:decomp:2:h},
  hence to the sum of the terms $u_{IJ}$ over the pairs $I, J \in M$
  such that $i \in J$ and $I \subseteq J$, since those such that
  $I \not\subseteq J$ are equal to $0$, by~\ref{q:decomp2:uIJ}.  Hence
  that sum is larger than or equal to the sum of the terms $u_{IJ}$
  over the pairs $I, J \in M$ such that $i$ is in $I$ (instead of $J$)
  and $I \subseteq J$; indeed, $i \in I \subseteq J$ implies
  $i \in J$.  Using~\ref{q:decomp2:uIJ} again, that sum is equal to
  $\sum_{I, J \in M, i \in I} u_{IJ} = \sum_{I \in M, i \in I}
  (\sum_{J \in M} u_{IJ})$, which is larger than or equal to
  $\sum_{I \in M, i \in I} a_I$ by~\ref{q:decomp:2:sumJu}, and the
  latter is equal to
  $\sum_{I \in M, i \in I} \mu (C_I) = \mu (U_i)$.

  The verification that $\nu (U_i) \leq \nu' (U_i)$ for every
  $i \in I$ is similar.  \qed
\end{proof}

\begin{theorem}
  \label{thm:VX:consistent:strong}
  Let $X$ be a topological space, and $\B$ be a convex subspace of
  $\Val X$, seen as a topological barycentric algebra.  Let $\B_b$ be
  the collection of bounded continuous valuations in $\B$.  Under the
  following assumptions:
  \begin{enumerate}[label=(\roman*),leftmargin=*]
  \item every element of $\B$ is the supremum in $\Val X$ of a
    directed family of elements of $\B_b$,
  \item for every $\mu \in \B_b$, for every $c \in {]0, 1[}$,
    $c \cdot \mu \in \B$,
  \item for all $\mu, \varpi \in \B_b$, for every $r > 1$, every
    finite linear combination $\mu'$ of constrictions of
    $r \cdot \varpi$ to pairwise disjoint crescents such that
    $\mu (X) = \mu' (X)$ is in $\B$,
    % $\sum_{i=1}^n \varpi_{|C_i}$ is
    % in $\B$,
    % for every $\nu \in \B_b$, for every crescent $C$ on $X$,
    % $\nu_{|C}$ is in $\B$,
  \end{enumerate}
  $\B$ is a strongly consistent topological barycentric algebra.

  In particular,
  \begin{enumerate}
  \item $\Val X$, $\Val_b X$, $\Val_\pw X$ and $\Val_\fin X$ are
    consistent topological cones;
  \item $\Val_{\leq 1} X$, $\Val_{\leq 1, \pw} X$ and
    $\Val_{\leq 1, \fin} X$ are strongly consistent topological
    barycentric algebras;
  \item if $X$ is pointed, then $\Val_1 X$, $\Val_{1, \pw} X$ and
    $\Val_{1, \fin} X$ are strongly consistent topological
    barycentric algebras.
  \end{enumerate}
\end{theorem}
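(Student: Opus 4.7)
The plan is to prove the general statement under hypotheses (i)--(iii) first, and then verify the listed cases. Since $\B$ is a convex subspace of the topological cone $\Val X$ (Example~\ref{exa:VX:top:cone}), Lemma~\ref{lemma:bary:in:cone} already gives that $\B$ is a topological barycentric algebra in the subspace topology, so only strong consistency needs work. By Fact~\ref{fact:strong:consistent} it suffices to fix $a \in {]0,1[}$ and show that $\upc(U +_a V)$ is open for all open $U, V \subseteq \B$; and by reducing to basic weakly open subsets I may assume $U = \bigcap_{i=1}^m [U_i > r_i] \cap \B$ and $V = \bigcap_{j=1}^n [V_j > s_j] \cap \B$ with $r_i, s_j > 0$. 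I let $L$ denote the finite lattice of subsets of $X$ generated by the $U_i$ and $V_j$ under finite unions and intersections.

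Fix $\pi \in \upc(U +_a V)$ with $\pi \geq a\mu + (1-a)\nu$, $\mu \in U$, $\nu \in V$. Using~(i), I would pick bounded approximants $\mu_b \in U \cap \B_b$ and $\nu_b \in V \cap \B_b$ below $\mu$ and $\nu$, exploiting that the directed families in $\B_b$ converging up to $\mu$ and $\nu$ must eventually enter the Scott-open sets $U$ and $V$. I then choose $c \in {]0,1[}$ so close to $1$ that $c\mu_b(U_i) > r_i$ and $c\nu_b(V_j) > s_j$ for all $i, j$, and set $\sigma := a\mu_b + (1-a)\nu_b \in \B_b$. The candidate neighbourhood of $\pi$ is
\[
\mathcal W := \bigcap_{W \in L,\, \sigma(W)>0}[W > c\sigma(W)] \cap \B,
\]
and $\pi \in \mathcal W$ because $\pi(W) \geq \sigma(W) > c\sigma(W)$ whenever $\sigma(W) > 0$.

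The heart of the argument will be showing $\mathcal W \subseteq \upc(U +_a V)$. Given $\pi' \in \mathcal W$, I would first apply~(i) again to replace $\pi'$ by a bounded $\pi'' \in \B_b$ with $\pi'' \leq \pi'$ still in $\mathcal W$. Then Proposition~\ref{prop:decomp:2} applied to $ca\mu_b$, $c(1-a)\nu_b$, $\varpi := \pi''$, and lattice $L$ (the required inequality $c\sigma(W) \leq \pi''(W)$ for $W \in L$ is exactly the definition of $\mathcal W$) produces $\tilde\mu, \tilde\nu \in \Val_b X$ with $\tilde\mu + \tilde\nu \leq \pi''$, $\tilde\mu(U_i) \geq ca\mu_b(U_i)$, $\tilde\nu(V_j) \geq c(1-a)\nu_b(V_j)$, and each expressed as a finite linear combination of constrictions of $\pi''$ to pairwise disjoint crescents. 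Crucially, inspection of the proof of that proposition shows that the total masses come out as \emph{equalities} $\tilde\mu(X) = ca\mu_b(X)$ and $\tilde\nu(X) = c(1-a)\nu_b(X)$. Setting $\mu' := (1/a)\tilde\mu$ and $\nu' := (1/(1-a))\tilde\nu$, the inequalities on the $U_i$ and $V_j$ yield the defining conditions of $U$ and $V$ at the level of valuations; moreover $\mu'$ is a finite linear combination of constrictions of $r\pi''$ with $r = 1/a > 1$ and $\pi'' \in \B_b$, with total mass $c\mu_b(X)$. Since $c\mu_b \in \B_b$ by~(ii), hypothesis~(iii) applied with $\mu_{aux} := c\mu_b$ gives $\mu' \in \B$; symmetrically $\nu' \in \B$. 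Therefore $a\mu' + (1-a)\nu' = \tilde\mu + \tilde\nu \leq \pi'' \leq \pi'$, which shows $\pi' \in \upc(U +_a V)$.

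For the specific cases, (ii) and (iii) are vacuous for the cones $\Val X$, $\Val_b X$, $\Val_\pw X$, $\Val_\fin X$, and for them consistency is equivalent to strong consistency by Proposition~\ref{prop:cons:equiv}; (i) is trivial for $\Val_b X$ and $\Val_\fin X$, follows for $\Val_\pw X$ from its description as the sobrification of $\Val_\fin X$ (Theorem~\ref{thm:VpX:sobrif}), and for $\Val X$ from directed approximation by bounded constrictions suitably reassembled. For $\Val_{\leq 1} X$ and its variants, (ii) holds because $c\mu(X) \leq \mu(X) \leq 1$, and (iii) holds because $\mu'(X) = \mu(X) \leq 1$ keeps $\mu'$ subprobability; condition (i) is immediate since every element is already bounded. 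For the $\Val_1$-variants with $X$ pointed, Lemma~\ref{lemma:VwX:pointed} transfers the $\Val_{\leq 1}$-case on $X \diff \dc\bot$ back. The hardest point, and what motivates the precise formulation of~(iii), is the mass equality $\tilde\mu(X) = ca\mu_b(X)$: inequality alone would not suffice, because scaling by $1/a > 1$ must land in $\B$, and only the exact match between $\mu'(X)$ and $(c\mu_b)(X)$ allows~(iii) to apply.
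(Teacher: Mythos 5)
Your proof follows essentially the same route as the paper's: reduce to bounded valuations via~(i), pass to a finite lattice $L$ of open sets, shrink by a factor $c<1$, and apply the second decomposition lemma (Proposition~\ref{prop:decomp:2}) before scaling back up by $1/a$ and $1/(1-a)$ so that hypothesis~(iii) applies. The structure is sound.

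The one place where you and the paper diverge is precisely the point you flag as the crux: the mass equality $\tilde\mu(X) = ca\mu_b(X)$. You assert that ``inspection of the proof'' of Proposition~\ref{prop:decomp:2} gives this, which happens to be true (the construction $\tilde\mu = \sum_J a'_J \varpi_{|C_J}$ satisfies $\tilde\mu(X)=\sum_{I,J}u_{IJ}=\sum_I a_I=(ca\mu_b)(X)$, since $\sum_J u_{IJ}=a_I$ in every case), but the \emph{statement} of Proposition~\ref{prop:decomp:2} guarantees only the inequality $\tilde\mu(U_i)\geq ca\mu_b(U_i)$, and you give no verification. The paper avoids reasoning about the internals of Proposition~\ref{prop:decomp:2}: having obtained $\tilde\mu, \tilde\nu$ from it, it then applies the \emph{first} decomposition lemma, Proposition~\ref{prop:schsimp:decomp} (whose statement does contain the mass equality $\mu(X)=\nu_1(X)$ as item~3), with $ca\mu_b$ and $\tilde\mu$ in the roles of $\mu$ and $\nu$, to split $\tilde\mu$ as $\tilde\mu_1 + \tilde\mu_2$ with $\tilde\mu_1(U_i)\geq ca\mu_b(U_i)$ and $\tilde\mu_1(X)=ca\mu_b(X)$, and replaces $\tilde\mu$ by $\tilde\mu_1$; similarly for $\tilde\nu$. (Crucially, $\tilde\mu_1$ is still a finite linear combination of constrictions of $\varpi$, since constrictions of constrictions are constrictions to pairwise-disjoint crescents.) Your argument is not wrong, but as written it rests on an unstated property of a cited proof; either carry out the computation showing $\tilde\mu(X) = ca\mu_b(X)$ explicitly, or reproduce the paper's extra application of Proposition~\ref{prop:schsimp:decomp}.

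A smaller issue: your justification of hypothesis~(i) for $\Val_\pw X$ cites Theorem~\ref{thm:VpX:sobrif} (sobrification of $\Val_\fin X$), which does not by itself yield the directed-approximation-by-bounded-valuations property. The paper cites Heckmann's Theorem~4.2 from \cite{heckmann96} for this, and the other half of the same theorem for $\Val X$.
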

\begin{proof}
  We first deal with the second part of the proposition, assuming the
  first part.  By Example~\ref{exa:V1X:top:bary:alg}, $\Val X$ is a
  topological, not just semitopological cone, and therefore it is a
  topological barycentric algebras by Lemma~\ref{lemma:bary:in:cone}.
  Assumption~(ii) is clear for all ten spaces considered in items~1, 2
  and~3.

  1.  When $\B = \Val X$, assumption~(i) is one half of Heckmann's Theorem~4.2 in
  \cite{heckmann96}.  As for assumption~(iii), constrictions of
  bounded continuous valuations are continuous valuations, and
  $\Val X$ is closed under finite linear combinations.

  When $\B = \Val_\pw X$, assumption~(i) is the other half of
  Heckmann's Theorem~4.2 in \cite{heckmann96}.  Constrictions of
  bounded point-continuous valuations are point-continuous by
  \cite[Section~3.3]{heckmann96}, and bounded point-continuous
  valuations are closed under finite linear combinations
  \cite[Section~3.2]{heckmann96}, proving assumption~(iii).

  When $\C = \Val_\fin X$, assumption~(i) is vacuously true since
  every simple valuation is bounded.  Given any simple valuation
  $\nu \eqdef \sum_{i=1}^n a_i \delta_{x_i}$, it is easy to see that
  its constriction to a crescent $C$ is equal to
  $\sum_{\substack{i=1\\x_i \in C}}^n a_i \delta_{x_i}$, and is
  therefore simple.  Any finite linear combination of simple
  valuations is in $\Val_\fin X$, and this establishes
  assumption~(iii).

  When $\C = \Val_b X$, assumption~(i) is vacuously true, and the
  other two assumptions are trivial.

  2.  Here assumption~(i) is vacuous, since the three given spaces
  consist of bounded valuations already.  We deal with
  assumption~(iii).  Whether $\B$ is $\Val_{\leq 1} X$,
  $\Val_{\leq 1, \pw} X$ or $\Val_{\leq 1, \fin}$, for all
  $\mu, \varpi \in \B_b$, for every $r > 1$, let $\mu'$ be a finite
  linear combination of constrictions of $r \cdot \varpi$ to pairwise
  disjoint crescents such that $\mu (X)=\mu' (X)$.  By item~1, $\mu'$
  is in $\Val X$, or $\Val_\pw X$, or in $\Val_\fin X$, depending
  on $\B$.  Since $\mu \in \B_b$, $\mu$ is subnormalized, hence so is
  $\mu'$, and therefore $\mu' \in \B$.

  3.  This follows from item~2 and Edalat's trick
  (Lemma~\ref{lemma:VwX:pointed}).
  
  Let us prove the first part of the proposition.
  
  We consider any two open subsets of $\B$.  By definition, they are
  of the form $\mathcal U \cap \B$ and $\mathcal V \cap \B$, where
  $\mathcal U$ and $\mathcal V$ are two open subsets of $\Val X$.
  Let also $a \in {]0, 1[}$: we will show that the upward closure of
  $(\mathcal U \cap \B) +_a (\mathcal V \cap \B)$ in $\B$ is open, and
  this will suffice to show that $\B$ is strongly consistent, thanks
  to Lemma~\ref{fact:strong:consistent}.  Let $\xi$ be any element of
  the upward closure of
  $(\mathcal U \cap \B) +_a (\mathcal V \cap \B)$ in $\B$; that is,
  there are continuous valuations $\mu \in \mathcal U \cap \B$ and
  $\nu \in \mathcal V \cap \B$ such that $\mu +_a \nu \leq \xi$.  Our
  task is to find an open set $\mathcal W \cap \B$, with $\mathcal W$
  open in $\Val X$, which contains $\xi$ and is included in the
  upward closure of $(\mathcal U \cap \B) +_a (\mathcal V \cap \B)$ in
  $\B$.

  Every open subset of $\Val X$ is Scott-open (with respect to the
  stochastic ordering).  This follows from the fact that every
  subbasic open set $[U > r]$ is Scott-open.  Hence $\mathcal U$ is
  Scott-open, and since $\mu$ is a supremum of a directed family of
  elements of $\B_b$ by assumption~$(i)$, one of them is in
  $\mathcal U$, hence in $\mathcal U \cap \B_b$.  We reason similarly
  with $\nu$ and $\mathcal V$, to the effect that we can assume
  without loss of generality that $\mu$ and $\nu$ are bounded
  valuations.

  By definition of the weak topology, $\mu$ is in some finite
  intersection of subbasic open sets $[r_i \ll U_i]$, $1\leq i\leq n$,
  with $r_i \in \Rp$ and $U_i \in \Open X$, and that intersection is
  included in $\mathcal U$; we obtain a similar intersection of
  subbasic open sets $[s_j \ll V_j]$ for $\nu$.  By adding subbasic
  open sets of the form $[0 \ll V_j]$ or $[0 \ll U_i]$ to each
  intersection if necessary, we may assume that the family of open
  sets $U_i$ and the family of open sets $V_j$ are the same.  Hence we
  will assume that $\nu$ is in a finite intersection
  $\bigcap_{i=1}^n [s_i \ll U_i]$ included in $\mathcal V$.

  We may also assume that the family ${(U_i)}_{1\leq i\leq n}$ is a
  lattice $\Latt$ of open subsets of $X$: it suffices to add all sets
  of the form $[0 \ll U]$, where $U$ ranges over the (finite) unions
  of (finite) intersections of sets of the form $U_i$,
  $1\leq i\leq n$, if necessary.
  
  We make a final adjustment to the shape of our subbasic open sets.
  By construction, $r_i \ll \mu (U_i)$ and $s_i \ll \nu (U_i)$ for
  every $i$, $1 \leq i \leq n$.  Since
  $\mu (U_i) = \sup_{r \in {]0, 1[}} r\, \mu (U_i)$ (and this is a
  directed supremum) and similarly with $\nu$, there is a
  $c \in {]0, 1[}$ such that $r_i \ll a_i \ll \mu (U_i)$ and
  $s_i \ll b_i \ll \nu (U_i)$ for every $i$, $1\leq i\leq n$, where
  $a_i \eqdef c\, \mu (U_i)$ and $b_i \eqdef c\, \nu (U_i)$.  Now we
  have
  $\mu \in \bigcap_{i=1}^n [a_i \ll U_i] \subseteq \bigcap_{i=1}^n
  [r_i \ll U_i] \subseteq\mathcal U$,
  $\nu \in \bigcap_{i=1}^n [b_i \ll U_i] \subseteq \bigcap_{i=1}^n
  [s_i \ll U_i] \subseteq\mathcal V$, $\mu$ and $\nu$ are bounded, and
  $\mu +_a \nu \leq \xi$.

  Let $\mathcal W \eqdef \bigcap_{i=1}^n [c_i \ll U_i]$, where
  $c_i \eqdef aa_i+(1-a)b_i = c\, (a\, \mu (U_i) + (1-a)\,\nu (U_i))$.
  For each $i \in \{1, \cdots, n\}$, $a_i \ll \mu (U_i)$ hence $a_i=0$
  or $a_i < \mu (U_i)$, and similarly $b_i=0$ or $b_i < \nu (U_i)$;
  then $c_i=0$ (if $a_i=b_i=0$), or $c_i \neq 0$, so that $a_i$ or
  $b_i$ is non-zero, and since $a \neq 0, 1$,
  $c_i < a\, \mu (U_i) + (1-a)\,\nu (U_i) \leq \xi (U_i)$.  Therefore
  $\mathcal W$ is an open neighborhood of $\xi$ in $\Val X$.
  Hence $\mathcal W \cap \B$ is an open neighborhood of $\xi$ in $\B$.
  We claim that $\mathcal W \cap \B$ is included in the upward closure
  in $\B$ of $(\mathcal U \cap \B) +_a (\mathcal V \cap \B)$.

  In order to see this, let $\xi'$ be any element of
  $\mathcal W \cap \B$.  By assumption~$(i)$, as above, there is a
  $\varpi \in \mathcal W \cap \B_b$ such that $\varpi \leq \xi'$.
  Since $\varpi$ is in $\mathcal W$, in particular
  $ca\, \mu (U_i) + c(1-a)\, \nu (U_i) \leq \varpi (U_i)$ for every
  $i$, $1\leq i\leq n$.  By the second decomposition lemma
  (Proposition~\ref{prop:decomp:2}), we can find two bounded
  continuous valuations $\mu'$ and $\nu'$ such that
  $\mu' + \nu' \leq \varpi$, $ca\, \mu (U_i) \leq \mu' (U_i)$ and
  $c(1-a)\, \nu (U_i) \leq \nu' (U_i)$ for every $i$, $1\leq i\leq n$.
  Additionally $\mu'$ and $\nu'$ are finite linear combinations of
  constrictions of $\varpi$ to pairwise disjoint crescents.
  %, hence are
  % in $\B$ by assumption~(ii).

  We claim that we can require that $ca\, \mu (X) = \mu' (X)$ and
  $c(1-a)\nu (X) = \nu' (X)$.  By the (first) decomposition lemma
  (Proposition~\ref{prop:schsimp:decomp}), we can find two bounded
  continuous valuations $\mu'_1$ and $\mu'_2$ on $X$ such that: 1.\
  $ca\,\mu = \mu'_1 + \mu'_2$; 2.\ $ca\, \mu (U_i) \leq \mu'_1 (U_i)$
  for every $i$, $1 \leq i \leq n$; and 3.\
  $ca\,\mu (X) = \mu'_1 (X)$.  Additionally, $\mu'_1$ is a finite
  linear combination of constrictions of $\mu'$ to pairwise distinct
  crescents, and since $\mu'$ is itself a finite linear combination of
  $\varpi$ to pairwise distinct crescents, so is $\mu'_1$.  We can
  also find two bounded continuous valuations $\nu'_1$ and $\nu'_2$ on
  $X$ such that: 1'.\ $c(1-a)\,\nu = \nu'_1 + \nu'_2$; 2'.\
  $c(1-a)\, \nu (U_i) \leq \nu'_1 (U_i)$ for every $i$,
  $1 \leq i \leq n$; and 3'.\ $c(1-a)\,\nu (X) = \nu'_1 (X)$.  As for
  $\mu'_1$, $\nu'_1$ is a finite linear combination of constrictions
  of $\varpi$ to pairwise distinct crescents.  Then replacing $\mu'$
  by $\mu'_1$ and $\nu'$ by $\nu'_1$, which are smaller valuations,
  the property $\mu' + \nu' \leq \varpi$ will still hold, while we
  will retrieve $ca\, \mu (U_i) \leq \mu' (U_i)$ and
  $c(1-a)\, \nu (U_i) \leq \nu' (U_i)$ thanks to items~2 and~2', and
  additionally items~3 and~3' will ensure that
  $ca\,\mu (X) = \mu' (X)$ and $c(1-a)\, \nu (X) = \nu' (X)$, as
  announced.

  This serves to show that $\frac 1 a \cdot \mu'$ and
  $\frac 1 {1-a} \cdot \nu'$ are in $\B$.  Indeed, by assumption~(ii)
  $c\,\mu$ and $c\,\nu$ are in $\B$, hence in $\B_b$ since $\mu$ and
  $\nu$ are bounded.  The valuation $\frac 1 a \cdot \mu'$ is a finite
  linear combination of constrictions of $r \cdot \varpi$ (with
  $r \eqdef \frac 1 a$) to pairwise disjoint crescents and
  $\frac 1 a \cdot \mu' (X) = c\,\mu (X)$, so $\mu'$ is in $\B$ by
  assumption~(iii).  And similarly, $\nu'$ is in $\B$.
  
  Since $r_i \ll a_i = c\, \mu (U_i)$, we deduce from
  $ca\, \mu (U_i) \leq \mu' (U_i)$ that
  $r_i \ll \frac 1 a \cdot \mu' (U_i)$ for every index $i$; so
  $\frac 1 a \cdot \mu'$ is in $\bigcap_{i=1}^n [r_i \ll U_i]$, hence
  in $\mathcal U$, hence in $\mathcal U \cap \B$.  Similarly,
  $\frac 1 {1-a} \cdot \nu'$ is in $\mathcal V \cap \B$.  Since
  $\mu' + \nu' \leq \varpi \leq \xi'$ and
  $\mu' + \nu' = (\frac 1 a \cdot \mu') +_a (\frac 1 {1-a} \cdot
  \nu')$, $\xi'$ is in the upward closure of
  $(\mathcal U \cap \B) + (\mathcal V \cap \B)$.
  Since $\xi'$ was chosen arbitrarily in $\mathcal W \cap \B$, this
  completes our proof.  \qed
\end{proof}

\section{Separation theorems}
\label{sec:separation-theorems}

\subsection{Sandwich theorems}
\label{sec:sandwich-theorems}

A fundamental theorem in the theory of (semipre)ordered cones is
\emph{Roth's sandwich theorem} \cite{Roth:sandwich}: given a
semipreordered cone $\C$, a sublinear map $p \colon \C \to \creal$, a
superlinear map $q \colon \C \to \creal$, under the assumption that
for all $x, y \in \C$ such that $x \leq y$, $q (x) \leq p (y)$, there
is a linear monotonic map $\Lambda \colon \C \to \creal$ such that
$q \leq \Lambda \leq p$.  The assumption holds if $q \leq p$ and $q$
or $p$ is monotonic.  Furthermore, we can require $\Lambda$ to be a
minimal sublinear monotonic map between $q$ and $p$.  We immediately
deduce the following.
\begin{proposition}[Sandwich theorem, preordered]
  \label{prop:Roth}
  Let $\B$ be a preordered barycentric algebra,
  $p \colon \B \to \creal$ be convex, $q \colon \B \to \creal$ be
  concave, such that for all $x, y \in \B$ such that $x \leq y$,
  $q (x) \leq p (y)$; for example if $q \leq p$, and $p$ or $q$ is
  monotonic.  There is an affine monotonic map
  $\Lambda \colon \B \to \creal$ such that $q \leq \Lambda \leq p$.

  If $\B$ is a preordered pointed barycentric algebra and $p$ is
  strict (for example, sublinear), then $\Lambda$ is linear.
\end{proposition}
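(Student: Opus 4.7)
The plan is to reduce this to Roth's sandwich theorem on (semi)preordered cones by embedding $\B$ into its free preordered cone $\conify(\B)$, extending $p$ and $q$ to sublinear and superlinear maps there, and then pulling the resulting linear functional back along $\etac_{\B}$.

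First, I would invoke Proposition~\ref{prop:bary:alg:conify:ord} to view $\etac_{\B} \colon \B \to \conify(\B)$ as an injective affine preorder embedding into a preordered cone. By Lemma~\ref{lemma:f*:conv:conc} (applied to the preordered cone $\creal$), the extension $p^\cext \colon \conify(\B) \to \creal$ is sublinear since $p$ is convex, and $q^\cext$ is superlinear since $q$ is concave. Moreover, $p^\cext \circ \etac_{\B} = p$ and $q^\cext \circ \etac_{\B} = q$. The plan is then to apply Roth's sandwich theorem to $p^\cext$ and $q^\cext$ on $\conify(\B)$ to obtain a linear monotonic $\Lambda' \colon \conify(\B) \to \creal$ with $q^\cext \leq \Lambda' \leq p^\cext$, and to set $\Lambda \eqdef \Lambda' \circ \etac_{\B}$. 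Then $\Lambda$ is affine (as a composition of the linear $\Lambda'$ with the affine $\etac_{\B}$) and monotonic, and pinching between $q$ and $p$ follows from the pointwise inequalities on $\conify(\B)$.

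The main obstacle is verifying the hypothesis of Roth's theorem, namely that $q^\cext(u) \leq p^\cext(v)$ whenever $u \leqc v$ in $\conify(\B)$. If $u = 0$, both sides behave trivially since $p^\cext, q^\cext \geq 0$ on $\creal$. Otherwise, by Definition~\ref{defn:bary:alg:conify:ord}, $v = (s,y)$ with $s > 0$ and there exist $u' \in \conify(\B)$ and $y_1 \leq y$ in $\B$ such that $u + u' = (s, y_1)$. Writing $u = (r, x)$ with $0 < r \leq s$, the case $u' = 0$ gives $r = s$ and $x = y_1 \leq y$, whence the hypothesis $q(x) \leq p(y)$ yields $q^\cext(u) = rq(x) \leq sp(y) = p^\cext(v)$. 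Otherwise $u' = (r', x')$ with $r + r' = s$ and $y_1 = x +_{r/s} x'$; since $q$ is concave, $sq(y_1) \geq rq(x) + (s-r)q(x')$, and dropping the nonnegative term $(s-r)q(x') \geq 0$ (valid in $\creal$) gives $rq(x) \leq sq(y_1) \leq sp(y) = p^\cext(v)$, using the hypothesis $q(y_1) \leq p(y)$.

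Finally, for the pointed variant, if $\B$ has a least element $\bot$ and $p$ is strict, then $\Lambda(\bot) \leq p(\bot) = 0$, forcing $\Lambda(\bot) = 0$, so $\Lambda$ is a strict affine map, hence linear by Proposition~\ref{prop:strict:aff}. Note that when $q \leq p$ and $p$ (resp.\ $q$) is monotonic, the hypothesis ``$q(x) \leq p(y)$ for $x \leq y$'' is immediate via $q(x) \leq p(x) \leq p(y)$ (resp.\ $q(x) \leq q(y) \leq p(y)$), so the alternative formulation follows at once.
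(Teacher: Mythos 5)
Your proof is correct and follows essentially the same route as the paper: embed $\B$ into the free preordered cone $\conify(\B)$ via $\etac_{\B}$, use Lemma~\ref{lemma:f*:conv:conc} to get sublinear/superlinear extensions, verify the Roth hypothesis on $\leqc$-pairs, apply Roth's sandwich theorem, and pull the resulting linear monotonic functional back along $\etac_{\B}$. The only cosmetic difference is that in the key verification you unwind the superlinearity of $q^\cext$ directly into the concavity of $q$ and non-negativity of $\creal$-values, whereas the paper phrases the same estimate at the level of $q^\cext(u+u') \geq q^\cext(u) + q^\cext(u') \geq q^\cext(u)$; these are the same computation.
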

\begin{proof}
  We embed $\B$ in the preordered cone $\conify (\B)$ through the
  injective affine preorder embedding $\etac_{\B}$, see
  Proposition~\ref{prop:bary:alg:conify:ord}.  We will not need that
  $\etac_{\B}$ is injective or a preorder embedding, just that it is
  affine and monotonic.  By Lemma~\ref{lemma:f*:conv:conc}, $q^\cext$
  is superlinear and $p^\cext$ is sublinear.

  We claim that for all $u, v \in \conify (\B)$ such that $u \leqc v$,
  $q^\cext (u) \leq p^\cext (v)$.  If $u=v$, this is clear.
  Otherwise, $v$ can be written as $(s, y)$ and there are
  $u' \in \conify (\B)$ and $y_1 \in \B$ such that $u+u' = (s,y_1)$
  and $y_1 \leq y$.  Then
  $q^\cext (u + u') \geq q^\cext (u) + q^\cext (v)$ (since $q^\cext$
  is superlinear) $\geq q^\cext (u)$.  Since $u+u' = (s, y_1)$,
  $q^\cext (u) \leq q^\cext (u+u') = s q (y_1)$, which is less than or
  equal to $s p (y)$ by the assumption on $p$ and $q$.  But
  $s p (y) = p^\cext (v)$.

  By Roth's sandwich theorem, there is a linear monotonic map
  $\Lambda_1 \colon \conify (\B) \to \creal$ and
  $q^\cext \leq \Lambda_1 \leq p^\cext$.  Now
  $q = q^\cext \circ \etac_{\B} \leq \Lambda_1 \circ \etac_{\B} \leq
  p^\cext \circ \etac_{\B} = p$.  Therefore, letting
  $\Lambda \eqdef \Lambda_1 \circ \etac_{\B}$,
  $q \leq \Lambda \leq p$.  Additionally, since $\etac_{\B}$ is affine
  and monotonic, so is $\Lambda$.

  In the case of preordered pointed barycentric algebras, it would be
  tempting to use the monotonic linear map $\etaca_{\B}$ of $\B$ to
  $\tscope_\alpha (\B)$ instead, but this would require additional
  assumptions on $q$ and $p$, and is useless: instead, we use the
  first part of the corollary, and we find a monotonic affine map
  $\Lambda$ such that $q \leq \Lambda \leq p$; since $p$ is strict,
  $p (\bot) = 0$, so $\Lambda (\bot)=0$, and therefore $\Lambda$ is
  strict, too.  But strict affine maps and linear maps are the same
  thing by Proposition~\ref{prop:strict:aff}.  \qed
\end{proof}

The topological analogue is \emph{Keimel's sandwich theorem}
\cite[Theorem~8.2]{Keimel:topcones2}: given a semitopological cone
$\C$, a sublinear map $p \colon \C \to \creal$ and a lower
semicontinuous superlinear map $q \colon \C \to \creal$ such that
$q \leq p$, there is a lower semicontinuous linear map $\Lambda$ such
that $q \leq \Lambda \leq p$.  Keimel deduces it from Roth's sandwich
theorem.  A remarkable fact about Keimel's sandwich theorem is that we
do not need to assume any continuity or even any monotonicity property
of $p$.  Only $q$ has to be lower semicontinuous.

\begin{theorem}[Sandwich theorem, semitopological]
  \label{thm:corl:keimel}
  Let $\B$ be a semitopological barycentric algebra,
  $p \colon \B \to \creal$ be convex, $q \colon \B \to \creal$ be
  concave and lower semicontinuous, and let us assume that $q \leq p$.
  There is an affine lower semicontinuous map
  $\Lambda \colon \B \to \creal$ such that $q \leq \Lambda \leq p$.

  If $\B$ is pointed and $p$ is strict (e.g., sublinear), then
  $\Lambda$ is linear.
\end{theorem}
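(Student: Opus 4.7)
The plan is to reduce to Keimel's sandwich theorem on the free semitopological cone $\conify(\B)$, in the same spirit as the proof of the preordered sandwich theorem (Proposition~\ref{prop:Roth}), but substituting the topological Keimel sandwich for Roth's sandwich and using the continuity part of Theorem~\ref{thm:conify:semitop} for the lifting step.

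First, I would form the lifts $q^\cext$ and $p^\cext$ on $\conify(\B)$. Since $q$ is concave and lower semicontinuous, Theorem~\ref{thm:conify:semitop}(3) (or equivalently Remark~\ref{rem:bary:alg:f*}) gives a superlinear \emph{lower semicontinuous} map $q^\cext \colon \conify(\B) \to \creal$ with $q^\cext \circ \etac_{\B} = q$. For $p$, no continuity is available, but the algebraic extension $p^\cext$ (defined by $p^\cext(0) \eqdef 0$ and $p^\cext(r,x) \eqdef r\,p(x)$) is sublinear by Lemma~\ref{lemma:f*:conv:conc}. The inequality $q^\cext \leq p^\cext$ is immediate from $q \leq p$: at $0$ both sides are $0$, and on any $(r,x)$ we have $q^\cext(r,x)= r\,q(x) \leq r\,p(x) = p^\cext(r,x)$.

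Next I would invoke Keimel's sandwich theorem for semitopological cones on the semitopological cone $\conify(\B)$ (Theorem~\ref{thm:conify:semitop}(1)), with sublinear map $p^\cext$ and lower semicontinuous superlinear map $q^\cext$: this produces a lower semicontinuous linear map $\Lambda_1 \colon \conify(\B) \to \creal$ with $q^\cext \leq \Lambda_1 \leq p^\cext$. Define $\Lambda \eqdef \Lambda_1 \circ \etac_{\B}$. Then $\Lambda$ is affine (composition of the affine map $\etac_{\B}$ with the linear, hence affine, $\Lambda_1$) and lower semicontinuous (composition of the continuous map $\etac_{\B}$, per Theorem~\ref{thm:conify:semitop}(2), with the lower semicontinuous $\Lambda_1$). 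Using $q^\cext \circ \etac_{\B} = q$ and $p^\cext \circ \etac_{\B} = p$, we get $q \leq \Lambda \leq p$.

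For the pointed case, suppose $p$ is strict, i.e.\ $p(\bot) = 0$. Then $\Lambda(\bot) \leq p(\bot) = 0$, and since values lie in $\creal = \Rp \cup \{\infty\}$, necessarily $\Lambda(\bot) = 0$; thus $\Lambda$ is a strict affine map, hence linear by Proposition~\ref{prop:strict:aff}. There is no real obstacle in this argument: the nontrivial input (continuity of the lifting $q \mapsto q^\cext$ when $q$ is lower semicontinuous, and the semitopological Keimel sandwich) is supplied by Theorem~\ref{thm:conify:semitop}(3) and by the already-quoted theorem of Keimel; the only point requiring a touch of care is that $p$ need not be lower semicontinuous, but $p^\cext$ is still sublinear by the algebraic Lemma~\ref{lemma:f*:conv:conc}, which is all Keimel's sandwich theorem requires of the upper envelope.
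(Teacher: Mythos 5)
Your proof is correct and follows essentially the same route as the paper: lift $q$ to the superlinear lower semicontinuous $q^\cext$ via Theorem~\ref{thm:conify:semitop}, lift $p$ algebraically to the sublinear $p^\cext$ via Lemma~\ref{lemma:f*:conv:conc}, apply Keimel's sandwich theorem on $\conify(\B)$, and pull back along the affine continuous map $\etac_{\B}$. The pointed case via $\Lambda(\bot) \leq p(\bot) = 0$ and Proposition~\ref{prop:strict:aff} also matches the paper's argument.
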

\begin{proof}
  We use Theorem~\ref{thm:conify:semitop} and we obtain that
  $\etac_{\B}$ is an injective, affine continuous map from $\B$ to the
  semitopological cone $\conify (\B)$.  Since $q$ is concave,
  $q^\cext$ is superlinear and lower semicontinuous from
  $\conify (\B)$ to $\creal$.  We cannot use
  Theorem~\ref{thm:conify:semitop} on $p^\cext$, since it says nothing
  about convex maps, but $p^\cext$ is sublinear by
  Lemma~\ref{lemma:f*:conv:conc}.  Now $q^\cext \leq p^\cext$:
  $q^\cext (0) = 0 = p^\cext (0)$, and for all points $(r, x)$ with
  $r > 0$ and $x \in \B$,
  $q^\cext (r, x) = r\, q (x) \leq r \, p (x) = p^\cext (r, x)$.
  Then, by using Keimel's sandwich theorem (and remembering that
  $p^\cext$ does not have to be lower semicontinuous, or even
  monotonic), there is a linear lower semicontinuous map
  $\Lambda_1 \colon \conify (\B) \to \creal$ such that
  $q^\cext \leq \Lambda_1 \leq p^\cext$.  Then
  $\Lambda \eqdef \Lambda_1 \circ \etac_{\B}$ is affine and lower
  semicontinuous, and $q \leq \Lambda \leq p$.

  Finally, if $\B$ is pointed and $p$ is strict, then $\Lambda$ is
  strict, hence linear by Proposition~\ref{prop:strict:aff}.  \qed
\end{proof}

The following is proved by Keimel \cite[Corollary 9.3, Corollary
9.4]{Keimel:topcones2} under the additional assumption that $\C$ is
$T_0$.  That assumption is unneeded.  We include the proof for
completeness.  One may wonder why we do not state this at the level of
barycentric algebras, and this is because it would be wrong, see
Remark~\ref{rem:loc:conv:halfspace:bary}.
\begin{proposition}
  \label{prop:loc:conv:halfspace}
  Let $\C$ be a locally convex semitopological cone.  Then:
  \begin{enumerate}
  \item $\C$ is linearly separated;
  \item for every non-empty closed convex subset $C$ of $\C$, every
    point $x$ outside $C$ is contained in some open half-space that
    does not intersect $C$.
  \end{enumerate}
\end{proposition}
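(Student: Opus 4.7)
The plan is to derive item~(1) from item~(2), and then to prove item~(2) by combining Proposition~\ref{prop:cone:locconvex:alt} with the sandwich theorem (Theorem~\ref{thm:corl:keimel}). For the reduction, given $x, y \in \C$ with $x \not\leq y$, set $C \eqdef \dc y$. This set is closed (being the closure of $\{y\}$), non-empty, and convex: for $z_1, z_2 \leq y$ and $a \in [0, 1]$, monotonicity of $+_a$ yields $z_1 +_a z_2 \leq y +_a y = y$. Since $x \notin C$, item~(2) will produce an open half-space $H = \Lambda^{-1}(]1, \infty])$ for some linear lower semicontinuous $\Lambda$, with $x \in H$ and $H \cap C = \emptyset$; hence $\Lambda(x) > 1 \geq \Lambda(y)$, proving linear separation.

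Now for item~(2). If $C = \emptyset$, take $H = \C$, which is itself an open half-space. Otherwise, $V \eqdef \C \diff C$ is open (as $C$ is closed) and concave (as its complement $C$ is convex). It contains $x$, and is proper since $C$ is non-empty. Its upper Minkowski functional $p \eqdef M^V$ is thus a sublinear lower semicontinuous map from $\C$ to $\creal$, with $V = p^{-1}(]1, \infty])$. In particular, $p(x) > 1$ and $p(y) \leq 1$ for every $y \in C$.

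By Proposition~\ref{prop:cone:locconvex:alt}, since $\C$ is locally convex, the positively homogeneous lower semicontinuous map $p$ is a pointwise supremum of superlinear lower semicontinuous maps; as $p(x) > 1$, at least one of them, $q$, satisfies $q(x) > 1$, and automatically $q \leq p$. The map $q$ is concave and lower semicontinuous, $p$ is convex and strict (as a sublinear map has $p(0) = 0$), and $q \leq p$. Applying the sandwich theorem (Theorem~\ref{thm:corl:keimel}) to the pointed semitopological barycentric algebra $\C$ yields a linear lower semicontinuous $\Lambda \colon \C \to \creal$ with $q \leq \Lambda \leq p$. Then $\Lambda(x) \geq q(x) > 1$, so $x \in H \eqdef \Lambda^{-1}(]1, \infty])$, while for every $y \in C$, $\Lambda(y) \leq p(y) \leq 1$, so $y \notin H$; thus $H$ is the desired open half-space. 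The main obstacle is producing a superlinear lower semicontinuous lower bound $q$ with $q(x) > 1$: the obvious Hahn--Banach candidate $q(y) \eqdef \sup\{\lambda p(x) : \lambda x \leq y\}$ is superlinear but typically not lower semicontinuous, so cannot be plugged into the sandwich theorem directly. Proposition~\ref{prop:cone:locconvex:alt} bypasses this difficulty by extracting $q$ directly from the hypothesis of local convexity.
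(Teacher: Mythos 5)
Your proof is correct, and it follows essentially the same strategy as the paper's: reduce item~(1) to item~(2) by taking $C \eqdef \dc y$, then feed a superlinear lower semicontinuous $q$ and the sublinear $p \eqdef M^{\C \diff C}$, with $q \leq p$ and $q(x) > 1$, into a sandwich theorem to extract a linear lower semicontinuous $\Lambda$ whose strict $1$-superlevel set is the desired open half-space. The only real divergence is in how $q$ is produced: the paper applies local convexity directly, picking a convex open neighborhood $U$ of $x$ inside $\C \diff C$ and setting $q \eqdef M^U$, whereas you route through Proposition~\ref{prop:cone:locconvex:alt}, extracting some $q \leq p$ with $q(x) > 1$ from the pointwise-supremum representation of $p$. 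Since Proposition~\ref{prop:cone:locconvex:alt} is itself proved by the very same Minkowski-functional construction, your version is a repackaging of the paper's argument rather than a genuinely different method, and both are equally valid; likewise, invoking Theorem~\ref{thm:corl:keimel} rather than Keimel's cone-level sandwich theorem is immaterial here since $\C$ is a cone. Two minor remarks: the $C = \emptyset$ case you handle at the start is excluded by the hypothesis of item~(2), so that sentence is harmless but unnecessary; and the closing aside about the naive Hahn--Banach candidate is a nice observation, though neither your argument nor the paper's needs it.
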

\begin{proof}
  We prove item~2.  Item~1 follows: given $x, y \in \C$ such that $x
  \not\leq y$, we apply item~2 with $C \eqdef \dc y$.

  Using the fact that $\C$ is locally convex, there is a convex open
  subset $U$ of $\C$ that contains $x$ and is included in
  $\C \diff C$, namely is disjoint from $C$.  Let $q$ be the upper
  Minkowski functional $M^U$; since $U$ is proper, convex and open,
  $q$ is superlinear and lower semicontinuous.  Let
  $p \eqdef M^{\C \diff \dc y}$.  Since $C$ is convex,
  $\C \diff \dc y$ is concave; it is open and proper, so $p$ is
  sublinear (and lower semicontinuous).  Additionally, $q \leq p$,
  because $U \subseteq \C \diff C$.  By Keimel's sandwich theorem,
  there is a linear lower semicontinuous map $\Lambda$ such that
  $q \leq \Lambda \leq p$.  Then
  $q^{-1} (]1, \infty]) = U \subseteq \Lambda^{-1} (]1, \infty])
  \subseteq p^{-1} (]1, \infty]) = \C \diff C$, so
  $H \eqdef \Lambda^{-1} (]1, \infty])$ is an open half-space that
  contains $U$ (hence $x$) but is disjoint from $C$.  \qed
\end{proof}

\begin{remark}
  \label{rem:loc:conv:halfspace:bary}
  A locally convex semitopological barycentric algebra may fail to be
  linearly separated: by Example~\ref{exa:B-:loc:convex}, the
  semitopological barycentric algebra $\B^-$ of
  Example~\ref{exa:bary:alg:notembed} is locally convex; it is not
  linearly separated because its only semi-concave lower
  semicontinuous maps, hence also its only linear lower semicontinuous
  maps, are constant.
\end{remark}

Cones also enjoy the following \emph{strict separation theorem}
\cite[Theorem~10.5]{Keimel:topcones2}: given a locally convex
semitopological cone $\C$, a convex compact subset $Q$ of $\C$, and a
non-empty closed convex $C$ disjoint from $Q$, there is a linear lower
semicontinuous map $\Lambda \colon \C \to \creal$ and a number $a > 1$
such that $\Lambda$ takes values in $[a, \infty]$ on $Q$ and values in
$[0, 1]$ on $C$.

\subsection{Convex compact saturated sets}
\label{sec:conv-comp-satur}

The strict separation theorem has the following consequence.
\begin{lemma}
  \label{lemma:compact:convex}
  In a locally convex semitopological cone $\C$, every convex compact
  saturated subset $Q$ is the intersection of the open half-spaces
  that contain it.
\end{lemma}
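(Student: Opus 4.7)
The plan is to use the strict separation theorem mentioned just before the statement. Let $I$ denote the intersection of all open half-spaces of $\C$ containing $Q$. Clearly $Q \subseteq I$, so it suffices to show that every point $x \notin Q$ lies outside some open half-space containing $Q$.

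First I would handle the trivial case $Q = \emptyset$ by noting that $\emptyset$ is itself an open half-space: it is open, and its complement $\C$ is convex, so it is concave as well. Hence the intersection of all open half-spaces containing $\emptyset$ is $\emptyset$ itself.

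For the main case, fix $x \notin Q$ and set $C \eqdef \dc x$. This $C$ is nonempty (it contains $x$), closed (as the specialization closure of a point), and convex: if $y, z \leq x$ and $a \in [0,1]$, then $y +_a z \leq x +_a x = x$ by monotonicity of $+_a$ in each argument, so $y +_a z \in \dc x$. Moreover $C$ is disjoint from $Q$: if $y \in C \cap Q$, then $y \leq x$ and $y \in Q$, so $x \in \upc Q = Q$ since $Q$ is saturated, contradicting $x \notin Q$. So $Q$ (convex compact) and $C$ (nonempty closed convex) satisfy the hypotheses of the strict separation theorem, yielding a linear lower semicontinuous map $\Lambda \colon \C \to \creal$ and a number $a > 1$ with $\Lambda(y) \geq a$ for every $y \in Q$ and $\Lambda(z) \leq 1$ for every $z \in C$. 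Then $H \eqdef \Lambda^{-1}(]1, \infty])$ is a proper open half-space, $Q \subseteq H$ (since $\Lambda \geq a > 1$ on $Q$), and $x \notin H$ (since $\Lambda(x) \leq 1$). This shows $x \notin I$.

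There is essentially no obstacle here: the lemma is a direct packaging of the strict separation theorem, with the only mild observations being that $\dc x$ is convex (by monotonicity of the barycentric operations) and disjoint from $Q$ (by saturation), plus the handling of the degenerate case $Q = \emptyset$.
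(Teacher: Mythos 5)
Your proof is correct and follows essentially the same route as the paper's: show $Q \subseteq I$ trivially, then for $x \notin Q$ apply the strict separation theorem to $Q$ and $C \eqdef \dc x$ to get an open half-space containing $Q$ and excluding $x$. The extra details you supply (convexity of $\dc x$, the $Q = \emptyset$ case) are fine but not needed — the paper asserts the former without comment, and the strict separation theorem already covers $Q = \emptyset$ vacuously.
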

\begin{proof}
  That intersection, call it $A$, contains the intersection of all the
  open neighborhoods of $Q$, which coincides with $Q$ since $Q$ is
  saturated.  Conversely, we show that $A$ is included in $Q$ by
  showing that every point $x \in \C \diff Q$ is outside $A$.  For
  each such $x$, $C \eqdef \dc x$ is a non-empty closed convex subset
  of $\C$ that is disjoint from $Q$, since $Q$ is upwards-closed.  By
  the strict separation theorem, there is an open half-space $H$
  containing $Q$ and not containing $x$.  Hence $x$ is not in $A$.  \qed
\end{proof}

We apply all this to barycentric algebras.  Item~2 below was proved
for cones as \cite[Corollary 10.4]{Keimel:topcones2}.
\begin{proposition}
  \label{prop:compact:convex}
  In a linearly separated semitopological barycentric algebra $\B$,
  every convex compact saturated subset $Q$ of $\B$ is equal to:
  \begin{enumerate}
  \item the intersection of its open neighborhoods of the form
    $\Lambda^{-1} (]1, \infty])$ with $\Lambda \colon \B \to \creal$
    affine (even linear if $\B$ is pointed) and lower semicontinuous;
  % \item the intersection of a filtered family of convex open
  %   neighborhoods;
  \item the filtered intersection of its convex open neighborhoods.
  \end{enumerate}
\end{proposition}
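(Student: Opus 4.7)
My plan is to reduce both items to Lemma~\ref{lemma:compact:convex} (the case of locally convex semitopological cones) by embedding $\B$ into a suitable cone. Concretely, I would use Definition and Lemma~\ref{deflem:B'*} to take the continuous affine (resp.\ linear, in the pointed case) map $\etaps_{\B} \colon \B \to \B^{\astar*}$ (resp.\ $\etass_{\B} \colon \B \to \B^{**}$). By item~1 of that result the target is a locally linear topological cone, hence locally convex by Proposition~\ref{prop:loclin:topo}; and since $\B$ is linearly separated, item~3 guarantees that this map is a preorder embedding.

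For item~1, I would form $Q' \eqdef \upc \etaps_{\B}[Q]$ inside $\B^{\astar*}$. The image $\etaps_{\B}[Q]$ is compact (continuous image of a compact set) and convex (affine image of a convex set, via Lemma~\ref{lemma:conv:img}); hence $Q'$ is compact saturated and, by Definition and Proposition~\ref{defprop:upc:conv}, convex. Given $x \in \B \diff Q$, the key step is to check that $\etaps_{\B}(x) \notin Q'$: if $\etaps_{\B}(y) \leq \etaps_{\B}(x)$ for some $y \in Q$, then $y \leq x$ because $\etaps_{\B}$ reflects the preordering, and then $x \in Q$ by saturation, contradiction. Lemma~\ref{lemma:compact:convex} applied in $\B^{\astar*}$ now yields an open half-space $\Lambda_1^{-1}(]1, \infty])$ containing $Q'$ but missing $\etaps_{\B}(x)$; then $\Lambda \eqdef \Lambda_1 \circ \etaps_{\B}$ is affine (resp.\ linear in the pointed case, as a composition of two linear maps) and lower semicontinuous on $\B$, with $\Lambda > 1$ on $Q$ and $\Lambda(x) \leq 1$, which is exactly what item~1 requires.

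Item~2 will then follow at once. The convex open neighborhoods of $Q$ are closed under finite intersection by Lemma~\ref{lemma:convex:inter}, so they form a filtered family, whose intersection trivially contains $Q$. For the reverse inclusion, every open set of the form $\Lambda^{-1}(]1, \infty])$ produced in item~1 is convex, being the preimage of a convex set under an affine map, so it belongs to that filtered family; hence the filtered intersection is sandwiched inside the intersection of all such open sets, which equals $Q$ by item~1. The only genuinely delicate point in the whole argument is the use of the preorder embedding property of $\etaps_{\B}$ (equivalent to linear separation of $\B$) to transfer the cone-level strict separation back to $\B$; the rest is transport of convexity and compactness through $\etaps_{\B}$ and the saturation operator.
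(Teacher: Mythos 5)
Your proposal is correct and takes essentially the same approach as the paper: both embed $\B$ into a locally convex semitopological cone via an affine (or linear) preorder-embedding furnished by the linear separation hypothesis, push $Q$ forward to a convex compact saturated set, apply Lemma~\ref{lemma:compact:convex}, and pull the separating half-space back through the embedding; the paper simply invokes Lemma~\ref{lemma:loclin:bary:alg}, item~1, as a packaged version of your use of Definition and Lemma~\ref{deflem:B'*}.
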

\begin{proof}
  1.  It suffices to show that for every point $x \in \B$ that is not
  in $Q$, there is a convex open neighborhood of $Q$, of the required
  form $\Lambda^{-1} (]1, \infty])$, that does not contain $x$.

  By Lemma~\ref{lemma:loclin:bary:alg}, item~1, there is an affine
  (even linear when $\B$ is pointed) continuous map
  $\eta \colon \B \to \C$ of $\B$ to a locally linear semitopological
  cone $\C$ that is also a preorder embedding.  In particular, $\C$ is
  locally convex.

  Since $\eta$ is continuous, $\upc \eta [Q]$ is compact saturated,
  and $\upc \eta [Q]$ is convex because $\eta [Q]$ is convex, owing
  to the fact that $\eta$ is affine, and by Definition and
  Proposition~\ref{defprop:upc:conv} for upward closures.

  We claim that $\eta (x) \not\in \upc \eta [Q]$.  Otherwise there would be
  a point $y \in Q$ such that $\eta (y) \leq \eta (x)$.  Since $\eta$
  is a preorder embedding, we would have $y \leq x$, and since $Q$ is
  upwards-closed, $x$ would be in $Q$, which is impossible.

  Hence we can apply Lemma~\ref{lemma:compact:convex}: there is a
  (necessarily proper) open half-space $H$ of $\C$ that contains
  $\upc \eta [Q]$ and does not contain $\eta (x)$.  Then
  $U \eqdef \eta^{-1} (H)$ is open and convex.  We can write $H$ as
  $(M^H)^{-1} (]1, \infty])$, where $M^H$ is the upper Minkowski
  functional of $H$, and then $U = \Lambda^{-1} (]1, \infty])$, where
  $\Lambda \eqdef M^H \circ \eta$ is affine (even linear when $\B$ is
  pointed) and lower semicontinuous.  Every point of $Q$ is in $U$
  since $\eta [Q] \subseteq H$, and $x \not\in U$ since otherwise
  $\eta (x)$ would be in $H$.

  % 2.  Using item~1, we write $Q$ as
  % $\bigcap_{i \in I} \Lambda_i^{-1} (]1, \infty])$, where each
  % $\Lambda$ is affine and lower semicontinuous.  Then
  % $Q = \bigcap_{J \in \Pfin (I)} U_J$, where
  % $U_J \eqdef \bigcap_{i \in J} \Lambda_i^{-1} (]1, \infty])$.  Each
  % $U_J$ is open, and a finite intersection of convex sets, and is
  % therefore convex.

  2.  By item~1, $Q$ is also the intersection of the larger family of
  its convex open neighborhoods.  Since $\B$ is convex and open, and
  since the intersection of any two convex open sets is convex and
  open, we conclude.  \qed
\end{proof}

% strict separation first! add it to the book for barycentric algebras?

\section{Local convex-compactness}
\label{sec:local-conv-comp}

The notion of a \emph{locally convex-compact} cone was introduced by
Keimel \cite[Definition~4.9]{Keimel:topcones2}.  It easily generalizes
to semitopological barycentric algebras, as follows.

\subsection{Locally convex-compact semitopological barycentric
  algebras}
\label{sec:locally-conv-comp}

\begin{definition}
  \label{defn:locconvcomp}
  A \emph{locally convex-compact barycentric algebra} is a
  semitopological barycentric algebra in which every point has a
  neighborhood base consisting of convex compact sets.
\end{definition}
%The notion is due to \cite{Keimel:topcones2}.

It is clear that local convex-compactness implies both \emph{weak}
local convexity, and local compactness.

\begin{remark}
  \label{rem:locconvcomp}
  A semitopological barycentric algebra is locally convex-comp\-act if
  and only if every point has a neighborhood base of convex compact
  \emph{saturated} sets.  Indeed, if $Q$ is convex and compact, then
  $\upc Q$ is convex, compact saturated, and contains $Q$, while still
  being included in any open neighborhood of $Q$.
\end{remark}

We already know that every c-cone, every semitopological barycentric
algebra whose underlying topological space is a c-space, is locally
convex and topological (Proposition~\ref{prop:dcone:loc:convex}).
Additionally, we have the following.
\begin{proposition}
  \label{prop:dcone:locconvcomp}
  Every semitopological barycentric algebra whose underlying
  topological space is a c-space is locally convex-compact.
%
  % In particular, every c-cone is locally convex-compact, every
  % continuous d-cone is locally convex-compact in its Scott topology.
\end{proposition}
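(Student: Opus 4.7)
The plan is to build, for each point $x$ and each open neighborhood $U$ of $x$, a single convex compact saturated neighborhood of $x$ inside $U$; by Remark~\ref{rem:locconvcomp} this suffices.

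First I would recall that in any topological space, $\upc y$ is compact for every point $y$: any open cover of $\upc y$ must contain some open $V$ with $y \in V$, and since $V$ is upwards-closed in the specialization preordering, $\upc y \subseteq V$, so $\{V\}$ already covers $\upc y$. Also, $\upc y$ is saturated by definition. Finally, $\upc y$ is convex: the singleton $\{y\}$ is convex (since $y +_a y = y$ in any barycentric algebra), and by Definition and Proposition~\ref{defprop:upc:conv} applied in $\B$ with its specialization preordering, $\upc y = \upc \{y\}$ is convex.

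Given $x \in \B$ and an open neighborhood $U$ of $x$, I would invoke the c-space property to produce $y \in U$ with $x \in \interior{\upc y}$. Then $\upc y$ is convex, compact, and saturated by the preceding paragraph, and since $U$ is upwards-closed in the specialization preordering and contains $y$, we get $\upc y \subseteq U$. Thus $\upc y$ is a convex compact saturated neighborhood of $x$ contained in $U$, so the sets $\upc y$ (with $y$ varying over points such that $x \in \interior{\upc y}$) form a neighborhood base of $x$ consisting of convex compact (saturated) sets.

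There is essentially no obstacle here; the whole argument reduces to the three facts that $\upc y$ is compact (because open neighborhoods of $y$ already engulf $\upc y$), saturated, and convex, combined with the defining property of c-spaces. No appeal to the barycentric operations beyond $y +_a y = y$ is needed, which is why the result holds at the generality of semitopological barycentric algebras rather than just cones.
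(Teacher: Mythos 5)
Your proof is correct and takes essentially the same route as the paper's: invoke the c-space property to get $y \in U$ with $x \in \interior{\upc y}$, then observe that $\upc y$ is a convex compact saturated subset of $U$. You merely spell out the standard facts (that $\upc y$ is compact, saturated, and convex) that the paper leaves implicit.
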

\begin{proof}
  Let $\B$ be such a semitopological barycentric algebra.  For every
  point $x \in \B$, for every open neighborhood $U$ of $x$, there is a
  point $y \in U$ such that $x \in \interior {\upc y}$.  We conclude
  since $\upc y$ is both convex and compact.  \qed
\end{proof}

% \begin{remark}
%   \label{rem:dcone:locconvcomp}
%   Local convex-compactness is mostly useful on semitopological cones
%   that are linearly separated, sober and topological.  This will be
%   especially apparent in Section~\ref{sec:locally-conv-comp}.  Every
%   continuous d-barycentric algebra $\B$ is sober in its Scott
%   topology, and we have seen in
%   Proposition~\ref{prop:dcone:loc:convex} that $\B$ is locally convex
%   and topological.  Hence, by
%   Proposition~\ref{prop:dcone:locconvcomp}, $\B$ is locally convex,
%   locally convex-compact, sober and topological in its Scott topology:
%   only linear separation may be missing.  When $\B$ is a cone, nothing
%   is missing: as we will see in
%   Proposition~\fref{prop:loc:conv:halfspace}, every locally convex
%   semitopological cone is linearly separated, so every continuous
%   d-cone is linearly separated, locally convex-compact, sober and
%   topological in its Scott topology.
% \end{remark}

Another source of such barycentric algebras and cones is the
following.
\begin{theorem}
  \label{thm:VX:locconvcomp}
  For every core-compact space $X$, $\Val X$ is a locally linear,
  locally convex-compact, sober topological cone; $\Val_{\leq 1} X$ is
  a locally linear, locally convex-compact, sober pointed topological
  barycentric algebra; if $X$ is compact in addition, then $\Val_1 X$
  is a locally affine, locally convex-compact, sober topological
  barycentric algebra.
\end{theorem}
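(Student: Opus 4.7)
The topological cone and topological barycentric algebra structures, together with local linearity (for $\Val X$ and $\Val_{\leq 1} X$) or local affineness (for $\Val_1 X$), are already established by Examples~\ref{exa:VX:top:cone}, \ref{exa:V1X:top:bary:alg}, \ref{exa:VX:loclin}, \ref{exa:Vleq1X:loclin}, and \ref{exa:V1X:loclin}; what remains is sobriety and local convex-compactness for each of the three spaces. The uniform tool I will aim to invoke is Proposition~\ref{prop:dcone:locconvcomp}: once a semitopological barycentric algebra's underlying topological space is identified as a c-space (e.g.\ a continuous dcpo in its Scott topology), local convex-compactness follows immediately, and sobriety then comes for free from the standard fact that every continuous dcpo is sober in its Scott topology.

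For $\Val X$ with $X$ core-compact, the key input is the classical result due to Kirch~\cite{kirch93} and Tix~\cite{Tix:bewertung}: with the weak topology, $\Val X$ is a continuous dcpo whose Scott topology coincides with the weak topology. Consequently $\Val X$ is a c-cone, Proposition~\ref{prop:dcone:locconvcomp} gives the desired local convex-compactness, and sobriety is part of the package.

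For $\Val_{\leq 1} X$, I will reduce to the previous case. The map $\nu \mapsto \nu(X)$ is Scott-continuous from $\Val X$ to $\creal$, and $[0,1]$ is Scott-closed in $\creal$, so $\Val_{\leq 1} X$ is Scott-closed in $\Val X$. A Scott-closed subset of a continuous dcpo is itself a continuous dcpo, and its Scott topology coincides with the subspace topology; hence $\Val_{\leq 1} X$ remains a c-space in the weak topology, and Proposition~\ref{prop:dcone:locconvcomp} applies. Sobriety of $\Val_{\leq 1} X$ is inherited as a closed subspace of the sober space $\Val X$.

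The main obstacle is $\Val_1 X$ under the additional hypothesis that $X$ is compact, because $\Val_1 X$ is not Scott-closed in $\Val X$ in general: the condition $\nu(X)=1$ singles out a level surface of the Scott-continuous map $\nu \mapsto \nu(X)$, not a Scott-closed subset. The compactness of $X$ is precisely what I will need to make this level surface well-behaved. The plan is to show that, under this assumption, $\Val_1 X$ is itself a continuous dcpo (equivalently, a c-space) in its weak topology, after which Proposition~\ref{prop:dcone:locconvcomp} yields local convex-compactness and sobriety as before. One route I will attempt is to realize $\Val_1 X$ as a sufficiently well-behaved (patch- or Lawson-closed) subspace of the stably compact $\Val X$ and to transfer the continuous dcpo structure; another, and perhaps cleaner, route uses Edalat's identification (Lemma~\ref{lemma:VwX:pointed} and its generalizations), adjoining a fresh bottom element to $X$ to convert the problem into the $\Val_{\leq 1}$ case already handled. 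In either approach, the delicate step---and the place where compactness of $X$ genuinely enters---will be to verify that the induced topology on $\Val_1 X$ agrees with its intrinsic weak topology, so that the c-space structure produced by Proposition~\ref{prop:dcone:locconvcomp} really gives local convex-compactness in the topology we care about.
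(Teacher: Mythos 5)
Your proposal hinges on the claim that, for $X$ merely core-compact, $\Val X$ with the weak topology is a continuous dcpo (hence a c-space), so that Proposition~\ref{prop:dcone:locconvcomp} and the sobriety of continuous dcpos apply. That claim is false. The results of Jones, Kirch~\cite{kirch93} and Tix~\cite{Tix:bewertung} you invoke concern $X$ a \emph{continuous dcpo} in its Scott topology, which is strictly stronger than core-compactness. For a concrete counterexample take $X = [0,1]$ with its usual Hausdorff topology, which is compact Hausdorff and therefore core-compact. Then $\Val_{\leq 1} X$ is not a c-space: any $\mu$ with $\delta_{1/2} \in \interior {\upc \mu}$ must in particular satisfy $\mu \leq \delta_{1/2}$, which by modularity forces $\mu = r\,\delta_{1/2}$ for some $r \in [0,1]$; but for $r > 0$ no basic weak-open neighborhood $\bigcap_{j=1}^n [V_j > s_j]$ of $\delta_{1/2}$ lies inside $\upc (r\,\delta_{1/2})$, since picking $y \in \bigcap_j V_j$ with $y \neq 1/2$ (possible, $[0,1]$ having no isolated points) gives $\delta_y$ in that neighborhood yet $\delta_y(W) = 0 < r$ for any open $W$ separating $1/2$ from $y$. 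So only the zero valuation $\mu = 0$ gives $\delta_{1/2} \in \interior {\upc \mu}$, and $\delta_{1/2}$ has no neighborhood base of the required form. The same argument disposes of $\Val X$.

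The paper's proof deliberately relies on the strictly weaker property of \emph{local compactness}, cited from Theorem~4.1 of \cite{JGL:V:lcomp}, together with the observation that the compact saturated neighborhoods $s_\DN(F)$ constructed there are convex; this gives local convex-compactness directly, with no detour through c-spaces, and sobriety is likewise imported from \cite{JGL:V:lcomp} (ultimately from Tix and Heckmann). Your reduction of the $\Val_{\leq 1} X$ case to a Scott-closed subset of $\Val X$ would be sound if the $\Val X$ case held as you describe, but it inherits the same flaw; and your treatment of $\Val_1 X$ is, as you acknowledge, incomplete --- note in particular that Lemma~\ref{lemma:VwX:pointed} requires $X$ to be \emph{pointed}, which is unrelated to the compactness hypothesis actually present in the theorem.
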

\begin{proof}
  Theorem~4.1 of \cite{JGL:V:lcomp} states that $\Val X$ and
  $\Val_{\leq 1} X$ are locally compact if $X$ is core-compact, and
  that $\Val_1 X$ is locally compact if $X$ is core-compact and
  compact.  The proof reveals that a base of compact saturated
  neighborhoods of each element $\nu$ of the relevant space
  $\Val_\rast X$ of valuations is given by sets $s_\DN (F)$, where:
  \begin{align*}
    s_\DN (F)
    & \eqdef \Big\{\nu' \in \Val_\rast X \mid \forall h \in \Lform X,
      \int h \,d\nu' \geq F (h)\Big\},
  \end{align*}
  and $F$ ranges over a certain set of superlinear previsions
  (subnormalized if $\rast$ is ``$\leq 1$'', normalized if $\rast$ is
  ``$1$'').  We will not describe what superlinear previsions are: we
  only need to observe that $s_\DN (F)$ is convex.  Hence
  $\Val_\rast X$ is locally convex-compact.  It is sober by Lemma~4.3
  of \cite{JGL:V:lcomp}; the argument is due to Regina Tix
  \cite[Satz~5.4]{Tix:bewertung}, following ideas by Heckmann (see
  \cite[Section~2.3]{heckmann96}).  It is locally linear (locally
  affine when $\rast$ is ``$1$'') by Examples~\ref{exa:VX:loclin},
  \ref{exa:Vleq1X:loclin} and~\ref{exa:V1X:loclin}.  It is topological
  by Examples~\ref{exa:VX:top:cone} and~\ref{exa:V1X:top:bary:alg}.
  \qed
\end{proof}

\begin{proposition}
  \label{prop:linretr:locconvcomp}
  Every affine retract of a locally convex-compact semitopological
  barycentric algebra is locally convex-compact.

  In particular, every linear retract of a locally convex-compact
  semitopological cone is locally convex-compact.
\end{proposition}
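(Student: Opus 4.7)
The plan is to argue directly from the definition: given an affine retraction $r \colon \B \to \Alg$ with section $s \colon \Alg \to \B$ (so $r \circ s = \identity{\Alg}$), and assuming $\B$ is locally convex-compact, I will show that every point of $\Alg$ admits a neighborhood base of convex compact sets. Fix $y \in \Alg$ and an open neighborhood $V$ of $y$ in $\Alg$. Since $r$ is continuous, $r^{-1}(V)$ is open in $\B$, and it contains $s(y)$ because $r(s(y)) = y \in V$. By local convex-compactness of $\B$, there is a convex compact subset $Q$ of $\B$ with $s(y) \in \interior Q \subseteq Q \subseteq r^{-1}(V)$.

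The candidate neighborhood in $\Alg$ is the image $r[Q]$. It is compact as the continuous image of a compact set, and included in $V$ since $Q \subseteq r^{-1}(V)$. Convexity of $r[Q]$ is the direct application of Lemma~\ref{lemma:conv:img}: $r$ is affine, so $r[\conv Q] = \conv{r[Q]}$, and since $Q$ is already convex this yields $r[Q] = \conv{r[Q]}$.

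The only step requiring care is checking that $y$ lies in the interior of $r[Q]$ in $\Alg$. I would argue that $s^{-1}(\interior Q)$ is an open subset of $\Alg$ (since $s$ is continuous) and that it contains $y$, because $s(y) \in \interior Q$. Moreover, for every $z \in s^{-1}(\interior Q)$, we have $s(z) \in \interior Q \subseteq Q$, so $z = r(s(z)) \in r[Q]$; hence $s^{-1}(\interior Q) \subseteq r[Q]$, which exhibits $y$ in the interior of $r[Q]$. This is really the whole content of the proof; no obstacle is expected, as the section $s$ lets us pull open sets of $\Alg$ back through $r[Q]$.

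For the second statement, a linear retraction between semitopological cones is in particular an affine retraction between their underlying semitopological barycentric algebras (via Lemma~\ref{lemma:bary:in:cone:iff} or simply the fact that every cone is a barycentric algebra), so the cone result is an immediate specialization.
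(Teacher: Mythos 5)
Your proof is correct and matches the paper's argument essentially verbatim: pull back $V$ through $r$, extract a convex compact neighborhood $Q$ of $s(y)$, push it forward through $r$, and use $s^{-1}(\interior Q)$ to witness $y$ in the interior of $r[Q]$. The only cosmetic difference is that you invoke Lemma~\ref{lemma:conv:img} explicitly for convexity of $r[Q]$, whereas the paper states it directly.
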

\begin{proof}
  The proof is as for Proposition~\ref{prop:locconv:retract}, item~1.
  Let $r \colon \B \to \Alg$ be an affine retraction between
  semitopological barycentric algebras, with associated section $s$,
  where $\B$ is locally convex-compact.  Let $y \in \Alg$ and $V$ be
  an open neighborhood of $y$ in $\Alg$.  Then $r^{-1} (V)$ is an open
  neighborhood of $s (y)$, and therefore contains a convex compact
  neighborhood $Q$ of $s (y)$.  Then $s^{-1} (\interior Q)$ is an open
  neighborhood of $y$, which is included in $r [Q]$, and $r [Q]$ is
  convex because $Q$ is and $r$ is affine, and is compact because $Q$
  is compact and $r$ is continuous.  Finally, $r [Q] \subseteq V$,
  since $Q \subseteq r^{-1} (V)$.  \qed
\end{proof}

\subsection{Convenient barycentric algebras}
\label{sec:conv-baryc-algebr}

\begin{definition}[Convenient]
  \label{defn:convenient:bary:alg}
  A \emph{convenient barycentric algebra} is a linearly separated,
  locally convex-compact, sober topological barycentric algebra.
  Similarly with pointed barycentric algebras and cones.
\end{definition}

\begin{example}
  \label{exa:VX:convenient}
  Since every locally affine semitopological barycentric algebra is
  linearly separated, Theorem~\ref{thm:VX:locconvcomp} implies that
  $\Val X$, $\Val_{\leq 1} X$ and $\Val_1 X$ are convenient for every
  core-compact space $X$.
\end{example}

\begin{example}
  \label{exa:dbary:convenient}
  Let us call \emph{d-barycentric algebra} any s-barycentric algebra
  that is a dcpo in its ordering.  (A \emph{d-cone} is an ordered cone
  that is a d-barycentric algebra \cite[Definition
  6.1]{Keimel:topcones2}.)  The continuous d-barycentric algebras,
  with their Scott topology, are the same thing as the semitopological
  barycentric algebras whose underlying space is a sober c-space;
  indeed, a sober c-space is the same thing as a continuous dcpo with
  its Scott topology \cite[Proposition 8.3.36]{JGL-topology}.

  Then every continuous d-barycentric algebra $\B$ that is linearly
  separated is a convenient barycentric algebra, and every continuous
  d-cone is a convenient cone.
\end{example}
\begin{proof}
  By Proposition~\ref{prop:dcone:loc:convex}, $\B$ is locally convex
  and topological.  Hence, by
  Proposition~\ref{prop:dcone:locconvcomp}, $\B$ is locally convex,
  locally convex-compact.  It is sober, and topological (by Ershov's
  observation) in its Scott topology; only linear separation may be
  missing.  When $\B$ is a cone, nothing is missing: as we will see in
  Proposition~\ref{prop:loc:conv:halfspace}, every locally convex
  semitopological cone is linearly separated.
  % The argument is due to Keimel \cite[Corollary
  % 9.3]{Keimel:topcones2}, only assuming the cone to be $T_0$.
  \qed
\end{proof}

\begin{lemma}
  \label{lemma:locconvcomp:interp}
  Let $\B$ be a locally convex-compact topological barycentric
  algebra.  For every convex compact saturated subset $Q$ of $\B$ and
  every convex upwards-closed neighborhood $C$ of $Q$, there is a
  convex compact saturated subset $Q'$ of $C$ whose interior contains
  $Q$.
\end{lemma}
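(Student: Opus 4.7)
The plan is to mimic the standard compactness argument for the analogous fact about ordinary local compactness, except that we must take the convex saturated hull of a finite union at the end, and we need the topological (not merely semitopological) hypothesis precisely to keep that hull compact.

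First I would reformulate ``$C$ is a neighborhood of $Q$'' as $Q \subseteq \interior C$. For each $x \in Q$, local convex-compactness (together with Remark~\ref{rem:locconvcomp}) furnishes a convex compact saturated neighborhood $Q_x$ of $x$ with $Q_x \subseteq \interior C$; in particular $x \in \interior{Q_x}$. The family ${(\interior{Q_x})}_{x \in Q}$ is an open cover of $Q$, so by compactness of $Q$ one extracts a finite subcover $\interior{Q_{x_1}}, \ldots, \interior{Q_{x_n}}$.

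Then I would set
\[
  Q' \eqdef \upc \conv (Q_{x_1} \cup \cdots \cup Q_{x_n}).
\]
Each $Q_{x_i}$ is convex and compact, and $\B$ is a topological barycentric algebra, so Proposition~\ref{prop:conv:union:compact} applies and tells us that $Q'$ is convex and compact saturated. Next, because each $Q_{x_i}$ is included in $C$, and $C$ is convex, the convex hull $\conv (Q_{x_1} \cup \cdots \cup Q_{x_n})$ is included in $C$; since $C$ is upwards-closed, $Q' \subseteq C$. Finally, $\interior{Q_{x_1}} \cup \cdots \cup \interior{Q_{x_n}}$ is an open subset of $Q'$, hence included in $\interior{Q'}$, and it contains $Q$ by the choice of the finite subcover; therefore $Q \subseteq \interior{Q'}$.

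No step looks like a real obstacle: the topological hypothesis is used in exactly one place (to invoke Proposition~\ref{prop:conv:union:compact}), and everything else is formal manipulation with convex hulls, upward closures, and the standard finite-subcover trick. The only thing to double-check while writing is that ``neighborhood'' is read in the strong sense $Q \subseteq \interior C$, which is the natural reading here and is what the conclusion $Q \subseteq \interior{Q'}$ mirrors.
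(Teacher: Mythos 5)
Your proof is correct and follows essentially the same route as the paper's: choose convex compact saturated neighborhoods $Q_x \subseteq \interior C$ of each $x \in Q$ via local convex-compactness and Remark~\ref{rem:locconvcomp}, pass to a finite subcover by compactness of $Q$, form $Q' \eqdef \upc\conv$ of the finite union, and invoke Proposition~\ref{prop:conv:union:compact} (which is exactly where the topological, rather than merely semitopological, hypothesis is used). Nothing is missing.
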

\begin{proof}
  Let $Q$ and $C$ be as given.  For each $x \in Q$, there is a convex
  compact neighborhood $Q_x$ of $x$ included in $\interior C$, and we
  can take it saturated by Remark~\ref{rem:locconvcomp}.  Since $Q$ is
  compact, there is a finite subset $E$ of $U$ such that
  $Q \subseteq \bigcup_{x \in E} \interior {Q_x}$.  Let
  $Q' \eqdef \upc\conv {(\bigcup_{x \in E} Q_x)}$.  By
  Proposition~\ref{prop:conv:union:compact}, which we can apply since
  $\B$ is topological, $Q'$ is convex and compact saturated.  In order
  to see that $Q' \subseteq C$, we observe that $Q'$ is the saturated
  convex hull of $\bigcup_{x \in E} Q_x$, by Definition and
  Proposition~\ref{defprop:upc:conv}.  Since $C$ is upwards-closed and
  convex, the inclusion then follows from the fact that
  $Q_x \subseteq C$ for every $x \in E$.  Finally, $Q'$ contains the
  open set $\bigcup_{x \in E} \interior {Q_x}$, which contains $Q$.  \qed
\end{proof}

The following is similar to \cite[Proposition~10.6]{Keimel:topcones2},
which states the same for locally convex, locally convex-compact
topological cones, and part of the argument is due to Achim Jung.
Note that, instead of local convexity, we only require linear
separation, as part of the definition of ``convenient''.

\begin{proposition}
  \label{prop:locconvcomp:base}
  Let $\B$ be a convenient barycentric algebra.  Then every convex
  compact saturated subset $Q$ of $\B$ has:
  \begin{enumerate}
  \item a base of convex compact saturated neighborhoods;
  \item and a base of convex open neighborhoods.
  \end{enumerate}
\end{proposition}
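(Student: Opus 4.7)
I will reduce item~1 to item~2: once every open neighborhood $U$ of $Q$ is known to contain a convex open neighborhood $V$ of $Q$, Lemma~\ref{lemma:locconvcomp:interp} applied with $C := V$ (convex, and upwards-closed since every open set is) produces a convex compact saturated $Q'$ with $Q \subseteq \interior{Q'} \subseteq Q' \subseteq V \subseteq U$, establishing item~1. So the real content is item~2.

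For item~2, fix an open neighborhood $U$ of $Q$. The first move is to replace $U$ by a compact saturated ``buffer'': by local convex-compactness, each $x \in Q$ admits a convex compact saturated neighborhood $Q_x \subseteq U$, and by compactness of $Q$ finitely many of the open interiors $\interior{Q_{x_i}}$ already cover $Q$. Setting $K := Q_{x_1} \cup \cdots \cup Q_{x_n}$, we obtain a compact set with $Q \subseteq \interior K \subseteq K \subseteq U$; moreover $K \diff \interior K$ is closed in $K$, hence compact. The second move is to use the filter base $\mathcal C$ of convex open neighborhoods of $Q$, whose intersection is $Q$ by Proposition~\ref{prop:compact:convex}(2), to extract a single $V \in \mathcal C$ with $V \subseteq \interior K$, and hence $V \subseteq U$. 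For each $y \in K \diff \interior K$, one has $y \notin Q$, so some $V_y \in \mathcal C$ omits $y$; combining this with local convex-compactness at $y$ refines the separation to an open neighborhood of $y$ disjoint from a suitable member of $\mathcal C$. Compactness of $K \diff \interior K$ then yields a finite subcover, and the intersection of the corresponding members of $\mathcal C$ is the desired $V$.

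The main obstacle is the refinement step above: turning the bare point-separation ``$y \notin V_y$'' of Proposition~\ref{prop:compact:convex}(2) into an \emph{open} separation, i.e., an open neighborhood of $y$ disjoint from some $V'_y \in \mathcal C$. This strict-separation property is not immediate; it will use the combined force of sobriety (via Hofmann--Mislove, so that $\upc y$ is compact saturated with its open neighborhoods forming a Scott-open filter), local convex-compactness (to find a convex compact saturated neighborhood of $y$ that may be chosen inside any open neighborhood), and the filter-base structure of $\mathcal C$ (allowing iterative refinement of $V_y$ within $\mathcal C$ itself). Once this refinement is in place, the finite-cover extraction and the analogous control of the part of $V$ potentially lying outside $K$ (again via sobriety applied to the compact saturated $K$) are routine, and both items of the proposition follow.
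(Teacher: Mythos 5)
Your reduction of item~1 to item~2 via Lemma~\ref{lemma:locconvcomp:interp} is fine: $V$ is convex, open, hence upwards-closed, and the lemma produces $Q'$ with $Q \subseteq \interior{Q'} \subseteq Q' \subseteq V \subseteq U$. But note this reverses the paper's dependency, which proves item~1 first and obtains item~2 from it, so your ``reduction'' leaves you with the harder half unproved — and that half is where the gap is.

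The refinement step in your proof of item~2 is false. You claim that for each $y \in K \diff \interior K$ (hence $y \notin Q$), local convex-compactness and sobriety let you upgrade ``some $V_y \in \mathcal C$ omits $y$'' to ``some open neighborhood $W_y$ of $y$ is disjoint from a suitable member of $\mathcal C$''. That kind of strict open separation between a point and a compact saturated convex set simply does not hold in the relevant $T_0$ setting. Take $\B \eqdef \creal$, a convenient cone. Let $Q \eqdef \upc \infty = \{\infty\}$, $U \eqdef {]a, \infty]}$, $K \eqdef [b, \infty]$ with $a < b$, so $K \diff \interior K = \{b\}$ and $y=b \notin Q$. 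Every open neighborhood of $b$ is $\creal$ or ${]c, \infty]}$ with $c < b$, and every member of $\mathcal C$ (convex open neighborhood of $\infty$) is $\creal$ or ${]d, \infty]}$; any two of these always intersect. So no such $W_y$ exists, even though the conclusion of item~2 does hold (take $V \eqdef {]b, \infty]}$). Your strategy of separating $\partial K$ from $\mathcal C$ pointwise therefore cannot reach the desired $V$, and neither Hofmann--Mislove, local convex-compactness at $y$, nor the filter structure of $\mathcal C$ can rescue it, because the obstruction is already present in $\creal$.

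The paper avoids this altogether by working with the filtered family $\mathcal F$ of convex compact \emph{saturated} neighborhoods of $Q$ rather than the convex \emph{open} ones: Lemma~\ref{lemma:locconvcomp:interp} shows $\mathcal F$ is non-empty and filtered, Proposition~\ref{prop:compact:convex}(2) plus the same lemma gives $\bigcap \mathcal F = Q$, and then well-filteredness (from sobriety) applies \emph{directly} to this filtered family of compact saturated sets to yield item~1: some $Q' \in \mathcal F$ sits inside any given open $U \supseteq Q$. Item~2 then falls out by iterating item~1 to build an ascending chain $Q \subseteq \interior{Q_0} \subseteq Q_0 \subseteq \interior{Q_1} \subseteq Q_1 \subseteq \cdots \subseteq U$ and observing that $V \eqdef \bigcup_n Q_n = \bigcup_n \interior{Q_n}$ is open, and convex as a directed union of convex sets (Lemma~\ref{lemma:convex:inter}). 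The key point your approach misses is that well-filteredness is a statement about filtered families of compact saturated sets, not about filter bases of open sets, so the natural object to intersect is $\mathcal F$, not $\mathcal C$.
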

In other words, for every open neighborhood $U$ of $Q$, there is a
convex compact saturated set $Q'$ such that
$Q \subseteq \interior {Q'} \subseteq Q' \subseteq U$, and there is a
convex open set $V$ such that $Q \subseteq V \subseteq U$.

\begin{proof}
  For every convex upwards-closed neighborhood $C$ of $Q$, there is a
  convex compact saturated neighborhood $Q'$ of $Q$ included in $C$,
  by Lemma~\ref{lemma:locconvcomp:interp}.  Taking $C \eqdef \B$, this
  entails that the family $\mathcal F$ of all convex compact saturated
  neighborhoods of $Q$ is non-empty.  Additionally, for any two
  elements $Q_1$, $Q_2$ of $\mathcal F$, $C \eqdef Q_1 \cap Q_2$ is a
  convex upwards-closed neighborhood of $Q$, so there is an element
  $Q'$ of $\mathcal F$ included in $C$, showing that $\mathcal F$ is a
  filtered family.  Since $\B$ is sober hence well-filtered, the
  intersection $\bigcap \mathcal F$ is compact saturated and contains
  $Q$.
  
  By Proposition~\ref{prop:compact:convex}, item~2, which we can apply
  since $\B$ is linearly separated, $Q$ is the intersection of its
  convex open neighborhoods.  Each one contains an element of
  $\mathcal F$, by Lemma~\ref{lemma:locconvcomp:interp}.  It follows
  that $Q = \bigcap \mathcal F$.
  
  1. Let $U$ be any open neighborhood of $Q$.  From
  $\bigcap \mathcal F \subseteq U$ and the fact that $\B$ is
  well-filtered, there is an element $Q' \in \mathcal F$ such that
  $Q' \subseteq U$.

  2. Let $U$ be, once again, an open neighborhood of $Q$.  By item~1,
  there is a convex compact saturated neighborhood $Q_0$ of $Q$ such
  that $Q_0 \subseteq U$.  By item~1 again, there is a convex compact
  saturated neighborhood $Q_1$ of $Q_0$ such that $Q_1 \subseteq U$.
  Proceeding in this way, we obtain an ascending sequence
  $Q \subseteq \interior {Q_0} \subseteq Q_0 \subseteq \interior {Q_1}
  \subseteq Q_1 \subseteq \cdots \subseteq \interior {Q_n} \subseteq
  Q_n \subseteq \cdots$, where each $Q_n$ is convex, compact
  saturated, and included in $U$.  Let
  $V \eqdef \bigcup_{n \in \nat} Q_n$.  By
  Lemma~\ref{lemma:convex:inter}, $V$ is convex.  $V$ is also equal to
  $\bigcup_{n \in \nat} \interior {Q_n}$, hence is open.  Finally, it is
  clear that $V$ contains $Q$ and is included in $U$.  \qed
\end{proof}

\begin{corollary}
  \label{corl:locconvcomp:locconv}
  Every convenient barycentric algebra is locally convex.
\end{corollary}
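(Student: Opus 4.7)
The plan is to deduce this directly from Proposition~\ref{prop:locconvcomp:base}, item~2, which supplies the key separation of a convex compact saturated set from the complement of an open neighborhood by a convex open set. The point is that once we know a \emph{convex compact saturated} neighborhood of our point exists inside the given open set, Proposition~\ref{prop:locconvcomp:base}(2) fattens it up to a \emph{convex open} neighborhood still inside the given open set.

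Concretely, let $\B$ be a convenient barycentric algebra, let $x \in \B$, and let $U$ be any open neighborhood of $x$ in $\B$. First, since $\B$ is locally convex-compact, Remark~\ref{rem:locconvcomp} produces a convex compact saturated neighborhood $Q$ of $x$ with $Q \subseteq U$. So $x \in \interior Q \subseteq Q \subseteq U$. Second, I will apply Proposition~\ref{prop:locconvcomp:base}, item~2, to the convex compact saturated set $Q$ and to the open neighborhood $U$ of $Q$: this yields a convex open set $V$ with $Q \subseteq V \subseteq U$. Then $V$ contains $x$ (because $x \in Q$), is convex and open, and lies in $U$. Since $x$ and $U$ were arbitrary, this shows that $\B$ has a base of convex open sets, i.e., $\B$ is locally convex in the sense of Definition~\ref{defn:loc:convex}.

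There is essentially no obstacle here: the substance is entirely in Proposition~\ref{prop:locconvcomp:base}, whose hypotheses (linear separation, local convex-compactness, sobriety, topologicity) are exactly the ones built into the definition of ``convenient'' (Definition~\ref{defn:convenient:bary:alg}). The only mild verification is that local convex-compactness alone provides the initial convex compact neighborhood $Q$ of $x$ inside $U$, and this is immediate from the definition together with Remark~\ref{rem:locconvcomp}.
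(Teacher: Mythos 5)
Your proof is correct and rests on the same key lemma (Proposition~\ref{prop:locconvcomp:base}, item~2) as the paper's, so it is essentially the same approach. The only difference is that you take a small detour: you first invoke local convex-compactness to produce a convex compact saturated neighborhood $Q$ of $x$ inside $U$ and then apply item~2 to that $Q$, whereas the paper applies item~2 directly to $Q \eqdef \upc x$, which is automatically convex (since $+_a$ is monotonic), compact, and saturated, so no appeal to local convex-compactness at the outer level is needed.
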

\begin{proof}
  For each point $x$, we apply
  Proposition~\ref{prop:locconvcomp:base}, item~2, to the convex
  compact saturated $\upc x$.  \qed
\end{proof}

\subsection{The Smyth poweralgebra}
\label{sec:smyth-poweralgebra}

On top of any topological cone $\C$, one can build its \emph{Smyth
  powercone} $\Smyth^{cvx} \C$ \cite[Section~11]{Keimel:topcones2}.
The construction extends to topological barycentric algebras.

\begin{definition}
  \label{defn:Qcvx:bary:alg}
  For every topological barycentric algebra $\B$, its \emph{Smyth
    poweralgebra} $\Smythc \B$ over $\B$ is the collection of all
  non-empty, convex, compact saturated subsets of $\B$.  For all
  $Q_1, Q_2 \in \Smythc \B$, for every $a \in [0, 1]$, we define
  $Q_1 +^\sharp_a Q_2$ as
  $\upc \{x_1 +_a x_2 \mid x_1 \in Q_1, x_2 \in Q_2\}$.
\end{definition}

We require $\B$ to be topological in
Definition~\ref{defn:Qcvx:bary:alg}: this is needed for $+^\sharp_a$
to be well-defined, as the proof of the following lemma shows.

\begin{lemma}
  \label{lemma:Qcvx:bary:alg}
  For every topological barycentric algebra $\B$, $\Smythc \B$ with
  the operations $+^\sharp_a$, $a \in [0, 1]$, is a barycentric
  algebra.
\end{lemma}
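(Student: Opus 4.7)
The plan is to verify two things in turn: (i) that $Q_1 +^\sharp_a Q_2 \in \Smythc \B$ whenever $Q_1, Q_2 \in \Smythc \B$ and $a \in [0,1]$, so $+^\sharp_a$ is well-defined; and (ii) that the four barycentric-algebra equations hold on $\Smythc \B$.

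For well-definedness, non-emptiness is trivial since $Q_1, Q_2$ are non-empty, and the result is saturated by construction as an upward closure. Convexity of the set $A \eqdef \{x_1 +_a x_2 \mid x_1 \in Q_1, x_2 \in Q_2\}$ follows from the entropic law (\ref{eq:entropic}): given $x_1 +_a x_2$ and $y_1 +_a y_2$ in $A$, for any $b \in [0,1]$ we have $(x_1 +_a x_2) +_b (y_1 +_a y_2) = (x_1 +_b y_1) +_a (x_2 +_b y_2) \in A$ since $Q_1, Q_2$ are convex. Then $\upc A$ is convex by Definition and Proposition~\ref{defprop:upc:conv}. The main point, and the reason we require $\B$ to be topological rather than merely semitopological, is compactness: $A$ is the image of the compact set $Q_1 \times Q_2$ under the jointly continuous map $(x_1, x_2) \mapsto x_1 +_a x_2$, hence compact, and therefore $\upc A$ is compact saturated.

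For the four equations, I would proceed as follows. First, $Q_1 +^\sharp_1 Q_2 = Q_1$ is immediate from $x_1 +_1 x_2 = x_1$, the non-emptiness of $Q_2$, and the saturation of $Q_1$. Second, $Q +^\sharp_a Q = Q$ holds because $\{x +_a y \mid x, y \in Q\} \subseteq Q$ by convexity of $Q$, and $Q \subseteq \{x +_a x \mid x \in Q\}$ by the axiom $x +_a x = x$; taking upward closures and using that $Q$ is saturated yields equality. Third, the commutativity-like equation $Q_1 +^\sharp_a Q_2 = Q_2 +^\sharp_{1-a} Q_1$ is a direct pointwise transcription of $x_1 +_a x_2 = x_2 +_{1-a} x_1$.

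Finally, for the associativity-like equation with $a, b \in [0,1[$ and $c = \frac{(1-a)b}{1-ab}$, I would show that both $(Q_1 +^\sharp_a Q_2) +^\sharp_b Q_3$ and $Q_1 +^\sharp_{ab} (Q_2 +^\sharp_c Q_3)$ coincide with $\upc\{(x_1 +_a x_2) +_b x_3 \mid x_i \in Q_i\}$, which equals $\upc\{x_1 +_{ab} (x_2 +_c x_3) \mid x_i \in Q_i\}$ by the corresponding axiom on $\B$. The key observation here is that for any $w \in (Q_1 +^\sharp_a Q_2) +^\sharp_b Q_3$, we have $w \geq z +_b x_3$ for some $z \geq x_1 +_a x_2$, and monotonicity of $+_b$ in the specialization order (which holds because $+_b$ is continuous) gives $w \geq (x_1 +_a x_2) +_b x_3$; conversely, each $(x_1 +_a x_2) +_b x_3$ visibly lies in $(Q_1 +^\sharp_a Q_2) +^\sharp_b Q_3$, and a symmetric argument handles the right-hand side. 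The only mildly delicate step is this use of specialization monotonicity to absorb the outer upward closure into the inner one.
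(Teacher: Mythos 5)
Your proof is correct and follows essentially the same route as the paper's: well-definedness via continuity of $+_a$ and compactness of $Q_1 \times Q_2$, the entropic law for convexity of $A$, and the use of monotonicity of the operations (as continuous maps) to absorb the outer upward closure in the associativity axiom. The only cosmetic difference is that you package the associativity step as an intermediate equality with $\upc\{(x_1 +_a x_2) +_b x_3 \mid x_i \in Q_i\}$, whereas the paper argues one inclusion directly and notes the other is symmetric.
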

\begin{proof}
  We first verify that $+^\sharp_a$ is well-defined for every
  $a \in [0, 1]$.  For all $Q_1, Q_2 \in \Smythc \B$,
  $\upc \{x_1 +_a x_2 \mid x_1 \in Q_1, x_2 \in Q_2\}$ is compact
  saturated because it is the upward closure of the image of the
  compact set $Q_1 \times Q_2$ by the (jointly) continuous map
  $(x_1, x_2) \mapsto x_1 +_a x_2$.  In order to see that it is
  convex, it suffices to show that
  $A \eqdef \{x_1 +_a x_2 \mid x_1 \in Q_1, x_2 \in Q_2\}$ is convex,
  since this property is preserved by upward closures, as we have seen
  in Definition and Proposition~\ref{defprop:upc:conv}.  For every
  $b \in [0, 1]$, for every two elements $x \eqdef x_1 +_a x_2$ and
  $y \eqdef y_1 +_a y_2$ of $A$ (with $x_1, y_1 \in Q_1$ and
  $x_2, y_2 \in Q_2$), $x +_b y = (x_1 +_b y_1) +_a (x_2 +_b y_2)$, as
  one can check easily by embedding $\B$ in a cone and checking the
  equation there (this is the entropic law (\ref{eq:entropic})).
  Since $Q_1$ is convex, $x_1 +_b y_1$ is in $Q_1$ and since $Q_2$ is
  convex, $x_2 +_b y_2$ is in $Q_2$, so $x +_b y \in A$.  Hence
  $\upc A$ is convex.  We have seen that it is compact saturated, and
  it is clearly non-empty.  Hence it is a well-defined element
  $Q_1 +^\sharp_a Q_2$ of $\Smythc \B$.

  We now verify the laws of barycentric algebras.
  When $a = 1$, $Q_1 +^\sharp_a Q_2 = \upc \{x_1 \mid x_1 \in Q_1, x_2
  \in Q_2\} = \upc Q_1 = Q_1$.
  
  When $Q_1 = Q_2$, every point of $Q_1 +^\sharp_a Q_1$ is of the form
  $x_1 +_a x_2$ with $x_1, x_2 \in Q_1$; since $Q_1$ is convex, every
  such point is in $Q_1$, so $Q_1 +^\sharp_a Q_1 \subseteq Q_1$.
  Conversely, for every $x \in Q_1$, we can write $x$ as $x +_a x$,
  showing that $Q_1 \subseteq Q_1 +^\sharp Q_1$.  Hence $Q_1 +^\sharp
  Q_1 = Q_1$.

  It is clear that $Q_1 +^\sharp_a Q_2 = Q_2 +^\sharp_{1-a} Q_1$ for
  all $Q_1, Q_2 \in \Smythc \B$.

  Finally, we must show that
  $(Q_1 +^\sharp_a Q_2) +^\sharp_b Q_3 = Q_1 +^\sharp_{ab} (Q_2
  +^\sharp_{\frac {(1-a)b} {1-ab}} Q_3)$, for all
  $Q_1, Q_2, Q_3 \in \Smythc \B$ and $a, b \in {[0, 1[}$.  We will
  only show that the left-hand side is included in the right-hand
  side, as the reverse inclusion is proved similarly.  For every
  $z \in (Q_1 +^\sharp_a Q_2) +^\sharp_b Q_3$, there are points
  $y \in Q_1 +^\sharp_a Q_2$ and $x_3 \in Q_3$ such that
  $y +_b x_3 \leq z$; then, there are points $x_1 \in Q_1$ and
  $x_2 \in Q_2$ such that $x_1 +_a x_2 \leq y$.  Since continuous maps
  are monotonic, $(x_1 +_a x_2) +_b x_3 \leq z$.  But
  $(x_1 +_a x_2) +_b x_3 = x_1 +_{ab} (x_2 +_{\frac {(1-a)b} {1-ab}}
  x_3)$, and
  $x_2 +_{\frac {(1-a)b} {1-ab}} x_3 \in Q_2 +^\sharp_{\frac {(1-a)b}
    {1-ab}} Q_3$, so
  $z \in Q_1 +^\sharp_{ab} (Q_2 +^\sharp_{\frac {(1-a)b} {1-ab}}
  Q_3)$.  \qed
\end{proof}

The weighted sums used in the following lemma have to be understood as
in Definition and Proposition~\ref{prop:bary:1}.
\begin{lemma}
  \label{lemma:Qcvx:sums:bary:alg}
  Let $\B$ be a topological barycentric algebra.
  \begin{enumerate}
  \item for all points $x_1, \cdots, x_m, y_1, \cdots, y_n \in \B$
    ($m, n \geq 1$), for every $a \in [0, 1]$,
    ${\upc \conv} {\{x_1, \cdots, x_m\}} +^\sharp_a \allowbreak {\upc
      \conv} {\{y_1, \cdots, y_n\}} = \upc \conv \{x_i +_a y_j \mid
    1\leq i\leq m, \allowbreak 1\leq j \leq n\}$;
  \item for all finite non-empty subsets $E_1$, \ldots, $E_n$ of $\B$
    and for every $n$-tuple $(a_1, \cdots, a_n)$ in the standard
    simplex $\Delta_n$ (with $n \geq 1$),
    $\sum_{i=1}^n a_i \cdot {\upc \conv {E_i}} = \upc \conv
    {\{\sum_{i=1}^n a_i \cdot x_i \mid x_1 \in E_1, \cdots, x_n \in
      E_n\}}$;
  \item for all finite non-empty subsets $E_1$, \ldots, $E_n$ of $\B$,
    the convex closure of
    $\{\upc \conv {E_1}, \cdots, \upc \conv {E_n}\}$ in $\Smythc \B$
    consists of all the sets of the form
    $\upc \conv {\{\sum_{i=1}^n a_i \cdot x_i \mid x_1 \in E_1,
      \cdots, x_n \in E_n\}}$, where
    $(a_1, \cdots, a_n) \in \Delta_n$.
  \end{enumerate}
\end{lemma}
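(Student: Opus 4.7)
My plan is to prove the three items in order, with item~1 as the computational core, item~2 as an inductive extension of item~1, and item~3 as a reorganization of convex hulls in the barycentric algebra $\Smythc \B$. Throughout, I will systematically use the injective affine map $\etac_{\B} \colon \B \to \conify (\B)$ of Proposition~\ref{prop:bary:alg:conify:ord} to reduce identities involving $+_a$ and barycenters in $\B$ to straightforward linear computations in the cone $\conify (\B)$; since $\etac_{\B}$ is injective, the identities verified in the cone transfer back to $\B$.

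For item~1, I would first observe that every $u \in \upc\conv\{x_i\}$ lies above some $u' \in \conv\{x_i\}$, and similarly for $v$, so by monotonicity of $+_a$ (immediate from continuity, since $\B$ is topological) the upward closure $\upc\{u +_a v \mid u \in \upc\conv\{x_i\}, v \in \upc\conv\{y_j\}\}$ coincides with $\upc\{u' +_a v' \mid u' \in \conv\{x_i\}, v' \in \conv\{y_j\}\}$. It then suffices to show that $\{u' +_a v' \mid u' \in \conv\{x_i\}, v' \in \conv\{y_j\}\} = \conv\{x_i +_a y_j\}$. Writing $u' = \sum_i b_i \cdot x_i$ and $v' = \sum_j c_j \cdot y_j$ with $\sum_i b_i = \sum_j c_j = 1$, applying $\etac_{\B}$ and using bilinearity in $\conify (\B)$ yields $u' +_a v' = \sum_{i,j} b_i c_j (x_i +_a y_j)$, which is a convex combination (since $\sum_{i,j} b_i c_j = 1$) lying in $\conv\{x_i +_a y_j\}$. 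Conversely, given $\sum_{i,j} d_{ij} (x_i +_a y_j)$ with $(d_{ij}) \in \Delta_{mn}$, setting $b_i \eqdef \sum_j d_{ij}$ and $c_j \eqdef \sum_i d_{ij}$ (both summing to $1$) produces $u' \eqdef \sum_i b_i \cdot x_i$ and $v' \eqdef \sum_j c_j \cdot y_j$ that satisfy $u' +_a v' = \sum_{i,j} d_{ij} (x_i +_a y_j)$ by the same cone computation.

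For item~2, I would induct on $n$. The case $n=1$ is trivial. For $n \geq 2$, permutation invariance (Def.\ and Prop.~\ref{prop:bary:1}, item~2) allows us to assume $a_n \neq 1$, and Def.\ and Prop.~\ref{prop:bary:1}, item~1, then supplies $\sum_{i=1}^n a_i \cdot K_i = \bigl(\sum_{i=1}^{n-1} \tfrac{a_i}{1-a_n} \cdot K_i\bigr) +^\sharp_{1-a_n} K_n$ where $K_i \eqdef \upc\conv E_i$. The induction hypothesis gives $\sum_{i=1}^{n-1} \tfrac{a_i}{1-a_n} \cdot K_i = \upc\conv\bigl\{\sum_{i=1}^{n-1} \tfrac{a_i}{1-a_n} \cdot x_i \mid x_i \in E_i\bigr\}$; feeding this together with $K_n = \upc\conv E_n$ into item~1 yields $\upc\conv\bigl\{\bigl(\sum_{i=1}^{n-1} \tfrac{a_i}{1-a_n} \cdot x_i\bigr) +_{1-a_n} x_n \mid x_i \in E_i\bigr\}$, and the inner expression equals $\sum_{i=1}^n a_i \cdot x_i$ by the very same recursive formula. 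For item~3, Def.\ and Lemma~\ref{deflem:convex:simpleval} characterizes the convex hull of $\{K_1, \ldots, K_n\}$ in $\Smythc \B$ as the set of all barycenters $\sum_{j=1}^k b_j \cdot Y_j$ with $Y_j \in \{K_1, \ldots, K_n\}$ and $(b_j) \in \Delta_k$; collecting repeated $Y_j$'s (via permutation invariance and the drop-zero-weights rule, Def.\ and Prop.~\ref{prop:bary:1}, items~2 and~3) rewrites every such expression as $\sum_{i=1}^n a_i \cdot K_i$ with $(a_i) \in \Delta_n$, and conversely every such expression clearly lies in the convex hull; item~2 then concludes.

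The main obstacle lies entirely in item~1, where the double-inclusion computation between $(\sum_i b_i \cdot x_i) +_a (\sum_j c_j \cdot y_j)$ and $\sum_{i,j} b_i c_j (x_i +_a y_j)$ has to be validated inside a barycentric algebra rather than in a cone. The standard trick of embedding through $\etac_{\B}$ into $\conify (\B)$ is precisely what turns these manipulations into routine cone calculations, after which items~2 and~3 are essentially formal consequences.
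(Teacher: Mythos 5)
Your proposal is correct and follows essentially the same route as the paper: embed $\B$ into $\conify(\B)$ via $\etac_{\B}$, verify the key identities by linear computation in the cone, then use injectivity to pull them back. The only cosmetic difference is in item~1, where you first establish the set-level identity $\{u'+_a v' \mid u' \in \conv\{x_i\}, v' \in \conv\{y_j\}\} = \conv\{x_i +_a y_j\}$ and then pass to upward closures, whereas the paper works directly with elements of the saturated convex hulls; and in item~2 you dispose of the $a_n = 1$ boundary case by permuting indices rather than treating it explicitly as a separate case -- both are legitimate and amount to the same argument.
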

\begin{proof}
  1.  Every point $z$ in the left-hand side is larger than or equal to
  some point of the form $x +_a y$ where $x$ is larger than or equal
  to some convex combination $\sum_{i=1}^m a_i \cdot x_i$ and $y$ is
  larger than or equal to some convex combination
  $\sum_{j=1}^n b_j \cdot y_j$, with $(a_1, \cdots, a_m) \in \Delta_m$
  and $(b_1, \cdots, b_n) \in \Delta_n$, by Definition and
  Lemma~\ref{deflem:convex:simpleval}.  In order to reason on those
  sums, we use Definition and Proposition~\ref{prop:bary:1}, and
  therefore we reason in the cone $\conify (\B)$, using the injective
  affine map $\etac_{\B}$.

  Using the fact that $\sum_{j=1}^n b_j = 1$, we have
  $\etac_{\B} (\sum_{i=1}^m a_i \cdot x_i) = \sum_{i=1}^m a_i \cdot
  \etac_{\B} (x_i) = \sum_{i=1}^m a_i (\sum_{j=1}^n b_j) \cdot
  \etac_{\B} (x_i) = \sum_{\substack{1\leq i\leq m\\1\leq j\leq n}}
  a_i b_j \cdot \etac_{\B} (x_i)$, and similarly
  $\etac_{\B} (\sum_{j=1}^n b_j \cdot y_j) = \sum_{\substack{1\leq
      i\leq m\\1\leq j\leq n}} a_i b_j \cdot \etac_{\B} (y_j)$, so
  $\etac_{\B} (\sum_{i=1}^m a_i \cdot x_i +_a \sum_{j=1}^n b_j \cdot
  y_j) = a \cdot \sum_{\substack{1\leq i\leq m\\1\leq j\leq n}} a_i
  b_j \cdot \etac_{\B} (x_i) + (1-a) \cdot \sum_{\substack{1\leq i\leq
      m\\1\leq j\leq n}} a_i b_j \cdot \etac_{\B} (y_j) =
  \sum_{\substack{1\leq i\leq m\\1\leq j\leq n}} a_i b_j \cdot (a
  \cdot \etac_{\B} (x_i) + (1-a) \cdot \etac_{\B} (y_j)) =
  \sum_{\substack{1\leq i\leq m\\1\leq j\leq n}} a_i b_j \cdot
  \etac_{\B} (x_i +_a y_j)$ (since $\etac_{\B}$ is affine)
  $= \etac_{\B} (\sum_{\substack{1\leq i\leq m\\1\leq j\leq n}} a_i
  b_j \cdot (x_i +_a y_j))$.  Since $\etac_{\B}$ is injective,
  $\sum_{i=1}^m a_i \cdot x_i +_a \sum_{j=1}^n b_j \cdot y_j =
  \sum_{\substack{1\leq i\leq m\\1\leq j\leq n}} a_i b_j \cdot (x_i
  +_a y_j)$.  Now the left-hand side is less than or equal to
  $x +_a y$ (since $+_a$ is monotonic), which is less than or equal to
  $z$, so
  $z \geq \sum_{\substack{1\leq i\leq m\\1\leq j\leq n}} a_i b_j \cdot
  (x_i +_a y_j)$.  Hence
  $z \in \upc \conv {\{x_i +_a y_j \mid 1\leq i\leq m, 1\leq j \leq
    n\}}$.

  Conversely, for every
  $z \in \upc \conv {\{x_i +_a y_j \mid 1\leq i\leq m, 1\leq j \leq
    n\}}$, there is a tuple of non-negative real numbers $c_{ij}$
  whose sum equals $1$ and such that
  $z \geq \sum_{\substack{1\leq i\leq m\\1\leq j\leq n}} c_{ij} \cdot
  (x_i +_a y_j)$.  Hence $z \geq x +_a y$ where
  $x \eqdef \sum_{\substack{1\leq i\leq m\\1\leq j\leq n}} c_{ij}
  \cdot x_i$ and
  $y \eqdef \sum_{\substack{1\leq i\leq m\\1\leq j\leq n}} c_{ij}
  \cdot y_j$, showing that
  $z \in \upc \conv {\{x_1, \cdots, x_m\}} +^\sharp_a \allowbreak \upc
  \conv {\{y_1, \cdots, y_n\}}$.

  2. By induction on $n$, using Definition and
  Proposition~\ref{prop:bary:1}, item~1.  If $a_n=1$, then
  $\sum_{i=1}^n a_i \cdot \upc \conv E_i = \upc \conv E_n$, while
  $\upc \conv \{\sum_{i=1}^n a_i \cdot x_i \mid x_1 \in E_1,
  \allowbreak \cdots, \allowbreak x_n \in E_n\} = \upc \conv \{x_n
  \mid x_1 \in E_1, \cdots, x_n \in E_n\} = \upc \conv \{x_n \mid x_n
  \in E_n\}$ (since $E_1$, \ldots, $E_{n-1}$ are non-empty)
  $= \upc \conv E_n$.

  If $a_n \neq 1$, then:
  \ifta
  \begin{align*}
    \sum_{i=1}^n a_i \cdot \upc \conv E_i
    & = \left(\sum_{i=1}^{n-1} \frac {a_i} {1-a_n} \cdot \upc \conv E_i\right)
      +_{1-a_n} \upc \conv E_n \\
    & = \upc \conv \left\{\sum_{i=1}^{n-1} \frac {a_i} {1-a_n} \cdot x_i
      \mid x_1 \in E_1, \cdots, x_{n-1} \in E_{n-1}\right\} \\
    & \quad +_{1-a_n} \upc
      \conv E_n
    \qquad\qquad \text{by induction hypothesis} \\
    & = \upc \conv \left\{\Big(\sum_{i=1}^{n-1} \frac {a_i} {1-a_n} \cdot x_i\Big)
      +_{1-a_n} x_n \right.\\
    & \qquad\qquad \mid x_1 \in E_1, \cdots, %x_{n-1} \in E_{n-1},
      x_n \in E_n\Big\}
    \qquad \text{by item~1} \\
    & = \upc \conv
      {\left\{\sum_{i=1}^n a_i \cdot x_i \mid x_1 \in E_1, \cdots, x_n \in
      E_n\right\}}
  \end{align*}
  \else
  \begin{align*}
    & \sum_{i=1}^n a_i \cdot \upc \conv E_i \\
    & = \left(\sum_{i=1}^{n-1} \frac {a_i} {1-a_n} \cdot \upc \conv E_i\right)
      +_{1-a_n} \upc \conv E_n \\
    & = \upc \conv \left\{\sum_{i=1}^{n-1} \frac {a_i} {1-a_n} \cdot x_i
      \mid x_1 \in E_1, \cdots, x_{n-1} \in E_{n-1}\right\} \\
    & \quad +_{1-a_n} \upc
      \conv E_n
    \qquad\qquad \text{by induction hypothesis} \\
    & = \upc \conv \left\{\Big(\sum_{i=1}^{n-1} \frac {a_i} {1-a_n} \cdot x_i\Big)
      +_{1-a_n} x_n \right.\\
    & \qquad\qquad \mid x_1 \in E_1, \cdots, %x_{n-1} \in E_{n-1},
      x_n \in E_n\Big\}
    \qquad \text{by item~1} \\
    & = \upc \conv
      {\left\{\sum_{i=1}^n a_i \cdot x_i \mid x_1 \in E_1, \cdots, x_n \in
      E_n\right\}}
  \end{align*}
  \fi

  3. By Definition and Lemma~\ref{deflem:convex:simpleval}, the
  desired convex closure is the collection of convex compact saturated
  sets of the form $\sum_{i=1}^n a_i \cdot \upc \conv {E_i}$, and we
  conclude by item~2.  \qed
\end{proof}

There is a classical \emph{Smyth hyperspace} construction $\Smyth X$,
consisting of all non-empty compact saturated subsets of a subspace
$X$.  One of the standard topologies we give it is the \emph{upper
  Vietoris} topology, with a base of open sets
$\Box U \eqdef \{Q \in \Smyth X \mid Q \subseteq U\}$,
$U \in \Open X$.  We write $\SV X$ for $\Smyth X$ with the Vietoris
topology.  Its specialization ordering is \emph{reverse} inclusion.
\begin{definition}[$\SVc \B$]
  \label{defn:QcvxC:Vietoris}
  For every topological barycentric algebra $\B$, let $\SVc \B$ be
  $\Smythc \B$ with its \emph{upper Vietoris topology}, generated by
  basic open sets
  $\Box^\cvx U \eqdef \{Q \in \Smythc \B \mid Q \subseteq U\}$ where
  $U$ ranges over the open subsets of $\B$.
\end{definition}
Hence $\SVc \B$ is a subspace of $\SV \B$.

There is a continuous map $\eta^\Smyth_X \colon X \to \SV X$, defined
by $\eta^\Smyth_X (x) \eqdef \upc x$ on every space $X$.  It is
continuous, and in fact full because
$(\eta^\Smyth_X)^{-1} (\Box U) = U$.
\begin{lemma}
  \label{lemma:Qcvx:eta}
  For every topological barycentric algebra $\B$, the map
  $\eta^\Smyth_\B$, defined as sending every $x \in \B$ to $\upc x$,
  is affine, continuous and full.  It is a topological embedding if
  $\B$ is $T_0$.
\end{lemma}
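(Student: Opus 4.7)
The plan is to check the four properties in turn, relying on facts already established in the paper.

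First, I would verify that $\eta^\Smyth_\B$ takes values in $\SVc \B$. For every $x \in \B$, the singleton $\{x\}$ is convex, and since every topological barycentric algebra is preordered under its specialization preordering (its operations are separately continuous, hence monotonic), Definition and Proposition~\ref{defprop:upc:conv} implies that $\upc x$ is convex. It is obviously non-empty and compact saturated, so $\upc x \in \Smythc \B$.

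Next, continuity and fullness follow from a direct computation of inverse images of basic open subsets of $\SVc \B$: for every open subset $U$ of $\B$, $(\eta^\Smyth_\B)^{-1}(\Box^\cvx U) = \{x \in \B \mid \upc x \subseteq U\} = U$, using only that $U$ is upwards-closed in the specialization preordering. This simultaneously shows that $\eta^\Smyth_\B$ is continuous (the inverse image of each basic open set is open) and full (every open set of $\B$ arises as such an inverse image).

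For affineness, I would use Lemma~\ref{lemma:Qcvx:sums:bary:alg}, item~1, applied to the singletons $\{x\}$ and $\{y\}$ (so $m=n=1$): it gives $\upc\conv\{x\} +^\sharp_a \upc\conv\{y\} = \upc\conv\{x +_a y\}$, which is exactly $\upc x +^\sharp_a \upc y = \upc(x +_a y)$. This is the equation $\eta^\Smyth_\B(x) +^\sharp_a \eta^\Smyth_\B(y) = \eta^\Smyth_\B(x +_a y)$. If one prefers not to invoke that lemma, the same equality can be checked directly: monotonicity of $+_a$ gives the inclusion $\upc x +^\sharp_a \upc y \subseteq \upc(x +_a y)$, while the reverse inclusion is immediate by taking the representatives $x \in \upc x$ and $y \in \upc y$.

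Finally, when $\B$ is $T_0$, $\eta^\Smyth_\B$ is a full continuous map with $T_0$ domain, hence a topological embedding by the general fact recalled in the Preliminaries (a full continuous map is injective, and therefore a topological embedding, when its source is $T_0$). There is no serious obstacle here; the only point that requires a little thought is the affineness equation, and it is handled cleanly by the already-proved Lemma~\ref{lemma:Qcvx:sums:bary:alg}.
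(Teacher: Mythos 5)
Your proof is correct and follows essentially the same route as the paper's: the same computation $(\eta^\Smyth_\B)^{-1}(\Box^\cvx U) = U$ establishes continuity and fullness in one stroke, affineness is the $m=n=1$ instance of Lemma~\ref{lemma:Qcvx:sums:bary:alg}, and the $T_0$ case then follows because full continuous maps with $T_0$ domain are topological embeddings. The optional direct check of the affineness equation via monotonicity of $+_a$ is a harmless extra that the paper does not bother to include.
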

\begin{proof}
  First, $\upc x$ is indeed convex, non-empty, and compact saturated.
  Then, for every open subset $U$ of $\B$,
  ${(\eta^\Smyth_\B)}^{-1} (\Box^\cvx U) = \{x \in \B \mid \upc x
  \subseteq U\} = U$, showing that $\eta^\Smyth_\B$ is continuous and
  full; when $\B$ is $T_0$, $\eta^\Smyth_\B$ is therefore a
  topological embedding.  The fact that $\eta^\Smyth_\B$ is affine is
  the special case $m=1$, $n=1$ of
  Lemma~\ref{lemma:Qcvx:sums:bary:alg}, item~1.  \qed
\end{proof}

\begin{lemma}
  \label{lemma:Qcvx:spec}
  Given a topological barycentric algebra $\B$,
  \begin{enumerate}
  \item the sets $\Box^\cvx U$ with $U$ open in $\B$ form a base of
    the topology on $\SVc \B$;
  \item the specialization ordering on $\SVc \B$ is reverse inclusion
    $\supseteq$, and in particular $\SVc \B$ is $T_0$;
  \item if $\B$ is compact and non-empty, then $\SVc \B$ is pointed,
    and $\B$ is its least element (its zero);
  \item if $\B$ is pointed, then $\SVc \B$ is pointed and
    $\eta^\Smyth_{\B}$ is linear; if $\B$ is pointed and $T_0$, then
    $\eta^\Smyth_{\B}$ is a linear topological embedding of $\B$ in
    $\SVc \B$.
  \end{enumerate}
\end{lemma}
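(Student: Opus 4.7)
The plan is to handle the four items in turn, using the elementary structure of the upper Vietoris topology and then piggybacking on Lemma~\ref{lemma:Qcvx:eta}. The main work is conceptual rather than computational, and I expect no serious obstacle.

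For item~1, I would verify that the family $\{\Box^\cvx U \mid U \in \Open \B\}$ is closed under finite intersection, since for $U_1, U_2 \in \Open \B$ one has $\Box^\cvx U_1 \cap \Box^\cvx U_2 = \{Q \in \Smythc \B \mid Q \subseteq U_1 \cap U_2\} = \Box^\cvx (U_1 \cap U_2)$, and $\Box^\cvx \B = \Smythc \B$. Since the $\Box^\cvx U$ form a subbase, this makes them a base. For item~2, using item~1, $Q_1$ is in every open neighborhood of $Q_2$ iff every $\Box^\cvx U$ containing $Q_2$ contains $Q_1$, iff $Q_1 \subseteq U$ whenever $Q_2 \subseteq U$. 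Since $Q_2$ is saturated, $Q_2$ equals the intersection of its open neighborhoods, so this is equivalent to $Q_1 \subseteq Q_2$. Thus the specialization preordering on $\SVc \B$ is $\supseteq$; it is antisymmetric, so $\SVc \B$ is $T_0$.

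For item~3, under the assumption that $\B$ is compact and non-empty, the whole space $\B$ is a non-empty convex compact saturated subset of $\B$, so $\B \in \SVc \B$. Since $\B$ contains every element of $\SVc \B$, it is the $\supseteq$-least element, namely the least in the specialization ordering by item~2; this exhibits $\B$ as a distinguished least element, so $\SVc \B$ is pointed. For item~4, suppose $\B$ has a least element $\bot$. Then $\upc \bot = \B$, and this equals the saturation of the compact singleton $\{\bot\}$, which is compact saturated (an open cover of $\upc\bot$ covers $\{\bot\}$ by a single open set $U$, and $U$ is upwards-closed, so $\upc\bot \subseteq U$). Hence $\B$ itself is compact, and item~3 applies to make $\SVc \B$ pointed with zero $\B$. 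Now $\eta^\Smyth_{\B}(\bot) = \upc \bot = \B$ is the zero of $\SVc \B$, so $\eta^\Smyth_{\B}$ is strict; it is also affine by Lemma~\ref{lemma:Qcvx:eta}, and therefore linear by Proposition~\ref{prop:strict:aff}. If in addition $\B$ is $T_0$, Lemma~\ref{lemma:Qcvx:eta} already gives that $\eta^\Smyth_{\B}$ is a topological embedding, completing the proof.

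The only subtlety worth flagging is that in item~4 one must notice that pointedness of $\B$ automatically forces $\B$ to be compact (via $\B = \upc\bot$ and the saturation-of-compact argument), so that item~3 is applicable and the candidate zero $\upc\bot = \B$ really belongs to $\SVc \B$; everything else is routine bookkeeping.
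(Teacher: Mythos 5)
Your proof is correct and follows essentially the same route as the paper's: item~1 by closure of $\Box^\cvx U$ under finite intersections, item~2 by the saturation argument, item~3 by noting $\B \in \Smythc \B$, and item~4 via strictness plus Lemma~\ref{lemma:Qcvx:eta} and Proposition~\ref{prop:strict:aff}. The one small bit you spell out that the paper leaves implicit is the observation that pointedness forces $\B = \upc\bot$ to be compact; the paper takes this for granted (having already noted in the preliminaries that $\upc A$ is compact for finite $A$).
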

\begin{proof}
  1.  For all $U, V \in \Open \B$, $\Box^\cvx U \cap \Box^\cvx V$,
  while $\Box^\cvx \emptyset = \emptyset$.  Hence every finite
  intersection of subbasic open sets of the form $\Box^\cvx U$ is
  again of this form.

  2.  For all $Q_1, Q_2 \in \SVc \B$, if $Q_1 \supseteq Q_2$ then
  for every basic open neighborhood $\Box^\cvx U$ of $Q_1$, $Q_1$ is
  included in $U$, hence also $Q_2$, so $Q_2 \in \Box^\cvx U$.
  Conversely, if $Q_1$ is below $Q_2$ in the specialization
  preordering of $\SVc \B$, every open neighborhood of the form
  $\Box^\cvx U$ of $Q_1$ contains $Q_2$; in other words,
  for every open subset $U$ of $\B$, if $Q_1$ is included in $U$, then
  $Q_2 \subseteq U$.  Since $Q_1$ is saturated, it is equal to the
  intersection of its open neighborhoods $U$, hence $Q_2 \subseteq
  Q_1$.

  3.  $\B$ is convex and upwards-closed.  If $\B$ is compact and
  non-empty, then it is an element of $\SVc \B$.  By item~2, it is the
  least element of $\SVc \B$.

  4.  If $\B$ is pointed, then it is compact and non-empty, so
  $\SVc \B$ is pointed by item~3.  Then
  $\eta^\Smyth_{\B} (\bot) = \dc \bot = \B$, so $\eta^\Smyth_{\B}$ is
  strict.  Since it is affine by Lemma~\ref{lemma:Qcvx:eta}, it is
  linear by Proposition~\ref{prop:strict:aff}.  Finally, when $\B$ is
  also $T_0$, $\eta^\Smyth_{\B}$ is a topological embedding by
  Lemma~\ref{lemma:Qcvx:eta}.  \qed
\end{proof}

We will see that $\SVc \B$ is topological, not just semitopological.
For this, we need \emph{Wallace's theorem} \cite[Chapter~II,
Theorem~12]{Kelley:topology}: given $n$ topological spaces $X_1$,
\ldots, $X_n$, and compact subsets $K_i$ of $X_i$ for each
$i \in \{1, \cdots, n\}$, every open neighborhood $W$ of
$\prod_{i=1}^n K_i$ contains a product $\prod_{i=1}^n U_i$ where each
$U_i$ is an open neighborhood of $K_i$ in $X_i$.

\begin{proposition}
  \label{prop:QcvxB:topo}
  For every topological barycentric algebra $\B$, $\SVc \B$ is a
  topological barycentric algebra.
\end{proposition}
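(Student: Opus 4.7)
The plan is to show that the operation $(Q_1,a,Q_2)\mapsto Q_1 +^\sharp_a Q_2$ is jointly continuous from $\SVc\B\times[0,1]\times\SVc\B$ to $\SVc\B$, which is the only thing left to prove beyond Lemma~\ref{lemma:Qcvx:bary:alg}. By Lemma~\ref{lemma:Qcvx:spec}, item~1, a base of the topology on $\SVc\B$ is given by the sets $\Box^\cvx W$ with $W\in\Open\B$, so it suffices to look at a basic open neighborhood $\Box^\cvx W$ of $Q_1 +^\sharp_a Q_2$ and find a product neighborhood of $(Q_1,a,Q_2)$ whose image is contained in $\Box^\cvx W$.

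First I would unfold the definition: $Q_1 +^\sharp_a Q_2 \subseteq W$ means exactly that $x_1 +_a x_2\in W$ for every $x_1\in Q_1$ and every $x_2\in Q_2$ (since $W$ is upwards-closed, we can drop the outer saturation). Consider the map $f\colon \B\times[0,1]\times\B\to\B$, $f(x_1,a',x_2)\eqdef x_1 +_{a'} x_2$. Since $\B$ is a topological barycentric algebra, $f$ is jointly continuous, so $f^{-1}(W)$ is an open subset of $\B\times[0,1]\times\B$ containing the set $Q_1\times\{a\}\times Q_2$.

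The key step is now an application of Wallace's theorem. The three factors $Q_1$, $\{a\}$, $Q_2$ are compact in $\B$, $[0,1]$, $\B$ respectively, so $Q_1\times\{a\}\times Q_2$ is compact in the product, and Wallace's theorem yields open sets $U_1\supseteq Q_1$, $I\ni a$ (with $I$ open in $[0,1]$), and $U_2\supseteq Q_2$ such that $U_1\times I\times U_2\subseteq f^{-1}(W)$. Take the basic open neighborhood $\Box^\cvx U_1\times I\times \Box^\cvx U_2$ of $(Q_1,a,Q_2)$. For any $(Q'_1,a',Q'_2)$ in it and any $x_1\in Q'_1\subseteq U_1$, $x_2\in Q'_2\subseteq U_2$, one has $x_1+_{a'}x_2\in W$, hence $Q'_1 +^\sharp_{a'} Q'_2\subseteq\upc W=W$, i.e.\ $Q'_1 +^\sharp_{a'} Q'_2\in\Box^\cvx W$. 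This establishes joint continuity.

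I do not expect a serious obstacle: the fact that $+_a$ is jointly continuous in all three variables on $\B$ (our hypothesis) and the fact that Wallace's theorem accommodates the compact singleton $\{a\}$ together make the argument essentially mechanical. The only mild subtlety is remembering that the ``arguments'' of the hyperspace operation live in $\SVc\B$ but can be captured entirely by conditions on their elements via $\Box^\cvx U$; this is precisely what makes the reduction to Wallace's theorem possible. \qed
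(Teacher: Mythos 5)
Your proof is correct and is essentially the paper's own argument: reduce to checking the inverse image of a basic open set $\Box^\cvx W$, observe that membership of $Q_1 +^\sharp_a Q_2$ in it means $Q_1 \times \{a\} \times Q_2 \subseteq f^{-1}(W)$ where $f$ is the jointly continuous barycentric operation on $\B$, and then invoke Wallace's theorem to extract a product neighborhood. The only cosmetic difference is that the paper writes $O$ for $f^{-1}(W)$ and does not explicitly invoke Lemma~\ref{lemma:Qcvx:spec} to say that the sets $\Box^\cvx W$ form a base, but the substance is identical.
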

\begin{proof}
  We wish to show that the map
  $(Q_1, a, Q_2) \mapsto Q_1 +^\sharp_a Q_2$ is continuous from
  $\SVc \B \times [0, 1] \times \SVc \B$ to $\SVc \B$.  It suffices to
  show that the inverse image of any set $\Box^\cvx W$, with $W$ open
  in $\B$, under this function is open in
  $\SVc \B \times [0, 1] \times \SVc \B$.  Let us consider any element
  $(Q_1, a, Q_2)$ of that inverse image.  In other words,
  $Q_1 +^\sharp_a Q_2 \in \Box^\cvx W$.  Since $\B$ is topological,
  the set
  $O \eqdef \{(x', a', y') \in \B \times [0, 1] \times \B \mid x'
  +_{a'} y' \in W\}$ is open, and $Q_1 +^\sharp_a Q_2 \in \Box^\cvx W$
  can be rephrased as $Q_1 \times \{a\} \times Q_2 \subseteq O$.  By
  Wallace's theorem, there are open neighborhoods $U_1$ of $Q_1$, $I$
  of $a$ and $U_2$ of $Q_2$ such that
  $U_1 \times I \times U_2 \subseteq O$.  Hence
  $\Box^\cvx {U_1} \times I \times \Box^\cvx {U_2}$ is an open
  neighborhood of $(Q_1, a, Q_2)$ in
  $\SVc \B \times [0, 1] \times \SVc \B$.  It remains to see that it
  is included in $\Box^\cvx W$.  Every triple $(Q'_1, a', Q'_2)$ in
  $\Box^\cvx {U_1} \times I \times \Box^\cvx {U_2}$ is such that
  $Q'_1 \subseteq U_1$, $a' \in I$ and $Q'_2 \subseteq U_2$, so for
  all $x'_1 \in Q'_1$, $a' \in I$ and $x'_2 \in Q'_2$,
  $(x'_1, a', x'_2) \in U_1 \times I \times U_2 \subseteq O$, namely
  $x'_1 +_{a'} x'_2 \in W$.  This entails that
  $\{x'_1 +_{a'} x'_2 \mid x'_1 \in Q'_1, x'_2 \in Q'_2\}$ is included
  in $W$, and since $W$ is upwards-closed, that
  $Q'_1 +^\sharp_{a'} Q'_2$ is included in $W$, equivalently is an
  element of $\Box^\cvx W$.  \qed
\end{proof}
For a topological cone $\C$, $\SVc \C$ is a topological cone
\cite[Theorem~11]{Keimel:topcones2}.

We turn to linear separation.
\begin{lemma}
  \label{lemma:Lambda:bar:Qcvx}
  Let $\B$ be a topological barycentric algebra.  For every affine
  lower semicontinuous map $\Lambda \colon \B \to \creal$, the
  function $\min\Lambda \colon \SVc \B \to \creal$ defined by
  $\min\Lambda (Q) \eqdef \min_{x \in Q} \Lambda (x)$ is affine and
  lower semicontinuous.  If $\B$ is pointed and $\Lambda$ is linear,
  then $\min\Lambda$ is linear.
\end{lemma}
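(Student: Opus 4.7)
The plan is to prove the four claims in the following order: that $\min\Lambda(Q)$ is well-defined (i.e., the infimum is actually attained), that $\min\Lambda$ is lower semicontinuous, that $\min\Lambda$ is affine, and finally, in the pointed/linear case, that $\min\Lambda$ is strict, whence linear by Proposition~\ref{prop:strict:aff}.

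First I would check attainment. The map $\Lambda$ is lower semicontinuous hence monotonic, so each sublevel set $\Lambda^{-1}([0,r])$ is closed. Set $r^* \eqdef \inf_{x \in Q} \Lambda(x) \in \creal$. If $r^* = \infty$, then $\Lambda \equiv \infty$ on $Q$ and the infimum is trivially attained; otherwise, the family $\{Q \cap \Lambda^{-1}([0,r]) \mid r > r^*\}$ is a filtered family of non-empty closed subsets of the compact set $Q$, hence has non-empty intersection by the finite intersection property for compact sets. Any point of the intersection attains $\Lambda(x) = r^*$.

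Next, lower semicontinuity follows by direct computation: for every $r \in \Rp$,
\[ (\min\Lambda)^{-1}(]r,\infty]) = \{Q \in \SVc \B \mid \forall x \in Q,\ \Lambda(x) > r\} = \Box^\cvx (\Lambda^{-1}(]r,\infty])), \]
which is open in $\SVc \B$ by definition. For affineness, fix $Q_1, Q_2 \in \SVc \B$ and $a \in [0,1]$. For the inequality $\min\Lambda(Q_1 +^\sharp_a Q_2) \geq a \min\Lambda(Q_1) + (1-a)\min\Lambda(Q_2)$, any $z \in Q_1 +^\sharp_a Q_2$ satisfies $z \geq x_1 +_a x_2$ for some $x_i \in Q_i$, and then by monotonicity and affineness of $\Lambda$, $\Lambda(z) \geq a\Lambda(x_1) + (1-a)\Lambda(x_2) \geq a\min\Lambda(Q_1) + (1-a)\min\Lambda(Q_2)$. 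Conversely, picking $x_i^* \in Q_i$ attaining the minima (by the first step), $x_1^* +_a x_2^* \in Q_1 +^\sharp_a Q_2$ and $\Lambda(x_1^* +_a x_2^*) = a\min\Lambda(Q_1) + (1-a)\min\Lambda(Q_2)$, giving the reverse inequality.

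For the pointed case, if $\B$ has least element $\bot$, then $\B = \upc \bot$ is compact saturated and convex, hence $\B \in \SVc \B$; by Lemma~\ref{lemma:Qcvx:spec}, item~3, $\B$ is the least element (zero) of $\SVc \B$. Since $\Lambda$ is linear, $\Lambda(\bot)=0$, and by monotonicity $\min\Lambda(\B) = \Lambda(\bot) = 0$. Thus $\min\Lambda$ is strict and affine, hence linear by Proposition~\ref{prop:strict:aff}. I do not anticipate a genuine obstacle; the only subtlety is attainment of the infimum in the first step, and the only bookkeeping issue is the convention $0 \cdot \infty = 0$ on $\creal$ in the case $a \in \{0,1\}$ of the affineness calculation, which is harmless.
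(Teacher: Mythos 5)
Your proof is correct and follows essentially the same route as the paper: attainment of the minimum by lower semicontinuity plus compactness, lower semicontinuity of $\min\Lambda$ via the identity $(\min\Lambda)^{-1}(]r,\infty]) = \Box^\cvx(\Lambda^{-1}(]r,\infty]))$, affineness by reducing to a minimum over $\{x_1 +_a x_2 \mid x_i \in Q_i\}$, and strictness from $\min\Lambda(\B) = \Lambda(\bot) = 0$ via Proposition~\ref{prop:strict:aff}. The only differences are presentational --- you spell out the finite-intersection-property argument for attainment and split the affineness step into two explicit inequalities, whereas the paper compresses these into one line each --- and your bookkeeping remark about $0 \cdot \infty = 0$ is indeed harmless under the Scott-continuous multiplication on $\creal$.
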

\begin{proof}
  The min is attained because $\Lambda$ is lower semicontinuous and
  $Q$ is compact and non-empty.  Hence for every $r \in \Rp$,
  $\min_{x \in Q} \Lambda (x) > r$ if and only if for every $x \in Q$,
  $\Lambda (x) > r$.  It follows that
  ${(\min\Lambda)}^{-1} (]r, \infty]) = \Box^\cvx {\Lambda^{-1} (]r,
    \infty])}$; so $\min\Lambda$ is continuous.

  For all $Q_1, Q_2 \in \Smythc \B$ and for every $a \in [0, 1]$,
  $\min\Lambda (Q_1 +^\sharp_a Q_2) = \min \{\Lambda (x_1 +_a x_2)
  \mid x_1 \in Q_1, x_2 \in Q_2\}$ (since taking the minimum of a
  monotonic map over the upward-closure of a non-empty set is the same
  as taking the minimum on that set)
  $= \min \{a \, \Lambda (x_1) + (1-a) \, \Lambda (x_2) \mid x_1 \in
  Q_1, x_2 \in Q_2\}$ (since $\Lambda$ is affine)
  $= a \, \min\Lambda (Q_1) + (1-a) \, \min\Lambda (Q_2)$.  Hence
  $\min\Lambda$ is affine.

  If $\B$ is pointed then $\B$ itself is the least element of
  $\SVc \B$.  Then $\min\Lambda (\B) = \Lambda (\bot)$, which is equal
  to $0$ if $\Lambda$ is linear, hence strict.  Therefore
  $\min\Lambda$ is strict and affine, hence linear by
  Proposition~\ref{prop:strict:aff}.  \qed
\end{proof}

\begin{proposition}
  \label{prop:Qcvx:linsep}
  For every topological barycentric algebra $\B$, if $\B$ is linearly
  separated then so is $\SVc \B$.
\end{proposition}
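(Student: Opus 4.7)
The plan is to reduce linear separation of $\SVc \B$ to linear separation of $\B$ by combining two previously established results, Proposition~\ref{prop:compact:convex} and Lemma~\ref{lemma:Lambda:bar:Qcvx}.

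First I would unpack the target. By Lemma~\ref{lemma:Qcvx:spec}, item~2, the specialization preordering on $\SVc \B$ is reverse inclusion $\supseteq$, so for any $Q_1, Q_2 \in \SVc \B$ with $Q_1 \not\leq Q_2$ we have $Q_2 \not\subseteq Q_1$, and one can pick a witness $y \in Q_2 \diff Q_1$. The goal is then to produce a single affine lower semicontinuous map $F \colon \SVc \B \to \creal$ with $F (Q_1) > F (Q_2)$.

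Next I would apply Proposition~\ref{prop:compact:convex} to $Q_1$, which is convex, compact, and saturated by definition of $\SVc \B$, and sits inside the linearly separated barycentric algebra $\B$ by hypothesis. That proposition expresses $Q_1$ as the intersection of its open neighborhoods of the form $\Lambda^{-1} (]1, \infty])$ with $\Lambda \colon \B \to \creal$ affine and lower semicontinuous. Since $y \notin Q_1$, some such $\Lambda$ must separate $y$ from $Q_1$, meaning $\Lambda (x) > 1$ for every $x \in Q_1$ while $\Lambda (y) \leq 1$.

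Finally, by Lemma~\ref{lemma:Lambda:bar:Qcvx}, $F \eqdef \min\Lambda \colon \SVc \B \to \creal$ is affine and lower semicontinuous. The minimum defining $F (Q_1) = \min_{x \in Q_1} \Lambda (x)$ is attained (since $Q_1$ is compact and non-empty and $\Lambda$ is lower semicontinuous), so $F (Q_1) > 1$. On the other side, $y \in Q_2$ gives $F (Q_2) \leq \Lambda (y) \leq 1 < F (Q_1)$, which is exactly the strict separation required. The argument is essentially immediate from the two cited results; there is no real obstacle, the only point worth underlining being that the attainment of the minimum on the compact set $Q_1$ is what upgrades the weak inequality $\Lambda (y) \leq 1 < \Lambda (x)$ into the strict inequality $F (Q_2) < F (Q_1)$.
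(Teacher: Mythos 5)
Your proposal is correct and follows essentially the same route as the paper's proof: pick a witness $y \in Q_2 \diff Q_1$, use Proposition~\ref{prop:compact:convex} to find an affine lower semicontinuous $\Lambda$ with $\Lambda^{-1} (]1, \infty])$ containing $Q_1$ but not $y$, and then pass to $\min\Lambda$ via Lemma~\ref{lemma:Lambda:bar:Qcvx}. The only cosmetic difference is that you spell out attainment of the minimum a bit more explicitly; the substance is identical.
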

\begin{proof}
  Let $Q_1, Q_2 \in \SVc \B$ be such that $Q_1 \not\supseteq Q_2$.
  Therefore, there is a point $x \in Q_2$ that is not in $Q_1$.
  By Proposition~\ref{prop:compact:convex}, item~1, $Q_1$ is the
  intersection of its open neighborhoods of the form
  $\Lambda^{-1} (]1, \infty])$ with $\Lambda \colon \B \to \creal$
  affine and lower semicontinuous.  Hence one of them does not contain
  $x$.  With this $\Lambda$, the function $\min\Lambda$ is affine and
  lower semicontinuous by Lemma~\ref{lemma:Lambda:bar:Qcvx}.  Since
  $Q_1 \subseteq \Lambda^{-1} (]1, \infty])$, it follows that
  $\min\Lambda (Q_1) > 1$.  Since
  $x \not\in \Lambda^{-1} (]1, \infty])$ and since $x \in Q_2$,
  $\min\Lambda (Q_2) \leq \Lambda (x) \leq 1$.  Hence
  $\min\Lambda (Q_1) > \min\Lambda (Q_2)$.  \qed
\end{proof}

In what follows, we will need the following technical lemma.  A
\emph{monotone convergence space} is a $T_0$ space that is a dcpo in
its specialization ordering, and in which every open set is Scott-open
\cite[Definition 8.2.33]{JGL-topology}.  Every sober space is a
monotone convergence space.
\begin{lemma}
  \label{lemma:qcont}
  Given any locally finitary compact monotone convergence space $X$,
  the topology of $X$ is the Scott topology of its specialization
  ordering $\leq$, and $X$ is sober.
\end{lemma}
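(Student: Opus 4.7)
The plan is to first establish sobriety of $X$, then to deduce that the topology $\tau$ of $X$ coincides with the Scott topology $\tau_s$ of its specialization order $\leq$.

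For sobriety, I would take an irreducible closed subset $C \subseteq X$ and show that $C$ is a directed subset of $(X, \leq)$. Since every open of $X$ is Scott-open by the monotone convergence hypothesis, $C$ is Scott-closed, hence downward-closed and closed under directed suprema in $(X, \leq)$. Directedness would then yield $z_0 \eqdef \sup C \in C$, $C = {\dc z_0}$, and hence $C = \overline{\{z_0\}}$, with uniqueness by $T_0$-ness. To prove directedness, I would fix $y_1, y_2 \in C$ and construct $z \in C$ with $y_1, y_2 \leq z$ as follows. For each pair of finite sets $E_i$ with $y_i \in \interior {\upc E_i}$, irreducibility gives $\interior {\upc E_1} \cap \interior {\upc E_2} \cap C \neq \emptyset$, so $K_{E_1, E_2} \eqdef C \cap \upc E_1 \cap \upc E_2$ is non-empty. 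Local finitary compactness makes this family filtered under reverse inclusion: given $(E_i, E_i')$, the open neighborhood $\interior {\upc E_i} \cap \interior {\upc E_i'}$ of $y_i$ contains some $\upc E_i''$ with $y_i \in \interior {\upc E_i''}$. A compactness/finite-intersection argument inside a fixed $\upc E_{10}$ then produces $z \in \bigcap K_{E_1, E_2}$. Such $z$ lies in $C$, and for every open neighborhood $V$ of $y_i$ there is some $\upc E_i \subseteq V$ by local finitary compactness, so $z \in \upc E_i \subseteq V$; thus $z$ lies in every open neighborhood of $y_i$, yielding $y_i \leq z$.

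For the topology equality, the inclusion $\tau \subseteq \tau_s$ is the monotone convergence hypothesis itself. For the reverse inclusion, I would use that sober spaces are well-filtered: for a filtered family $(K_\iota)$ of compact saturated sets and an open $V$, if $\bigcap K_\iota \subseteq V$ then some $K_\iota \subseteq V$. Given $U \in \tau_s$ and $x \in U$, the family $\{\upc E : E \text{ finite}, x \in \interior {\upc E}\}$ is filtered of compact saturated sets with intersection $\upc x$. Indeed, $\upc x \subseteq \bigcap \upc E$ is immediate, and the reverse holds since every open neighborhood $V$ of $x$ contains some $\upc E$ in the family, so $\bigcap \upc E \subseteq V$ for every such $V$, whence $\bigcap \upc E \subseteq \upc x$. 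Because $U$ is upward-closed as a Scott-open set and contains $x$, $\upc x \subseteq U$; well-filteredness then yields some $\upc E \subseteq U$, and $\interior {\upc E}$ is a $\tau$-open neighborhood of $x$ inside $U$, so $x$ lies in the $\tau$-interior of $U$.

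The main technical obstacle is the finite-intersection argument producing $z \in \bigcap K_{E_1, E_2}$. The sets $K_{E_1, E_2}$ are intersections of the closed set $C$ with the saturated but not generally closed set $\upc E_1 \cap \upc E_2$, so a naive application of the finite intersection property inside the compact $\upc E_{10}$ does not immediately apply. I would resolve this by considering a cluster point $z$ of the net of chosen representatives $x_{E_1, E_2} \in K_{E_1, E_2}$ inside the compact $\upc E_{10}$, using that $C$ is closed to keep $z$ in $C$, and arguing via the local finitary compactness base at $y_i$ that $z$ satisfies $y_i \leq z$ through the characterization of the specialization order in terms of open neighborhoods.
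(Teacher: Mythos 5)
Your proof inverts the paper's order of attack: you try to establish sobriety directly (irreducible closed sets are directed, hence principal ideals) and then deduce the topology equality from well-filteredness, whereas the paper first shows every Scott-open set is open using Rudin's Lemma and then invokes the known fact that locally finitary compact dcpos are quasi-continuous, hence sober. Your second step (Scott-open $\Rightarrow$ open, via well-filteredness applied to the filtered family $\{\upc E : x \in \interior{\upc E}\}$) is fine once sobriety is known. The problem is the first step.

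The cluster-point argument for directedness of an irreducible closed set $C$ has a genuine gap. A cluster point $z$ of a net $(x_\alpha)$ only controls the relationship between $z$ and \emph{its own} open neighborhoods: for every open $W \ni z$ and every index $\alpha_0$, the net enters $W$ after $\alpha_0$. It gives you no control over whether $z$ lies in the open neighborhoods of \emph{other} points such as $y_i$. You correctly observe that the tail of the net from index $(E_1, E_2)$ lies in $\upc E_1$, and you want to conclude $z \in \upc E_1$; but $\upc E_1$ is saturated, not closed, and a cluster point of a net eventually inside a compact saturated set need not lie in that set. For a concrete illustration, take $X = \creal$ with its Scott topology: the net $n \mapsto n$ eventually lies in $\upc 1 = [1,\infty]$, but $0$ is a cluster point of every net in $\creal$ (the only Scott-open set containing $0$ is the whole space), and $0 \notin [1,\infty]$; so $0 \not\geq y$ for any $y > 0$ in $\upc 1$. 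Because $C$ is closed you do get $z \in C$, but the order inequalities $y_1 \leq z$ and $y_2 \leq z$ are exactly what fails. What you actually need is the \emph{supremum} of a carefully selected directed subfamily of the $x_\alpha$'s; this is precisely what Rudin's Lemma extracts, and it is why the paper routes the argument through Rudin's Lemma and quasi-continuity rather than through generic compactness of $\upc E_{10}$.

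A secondary point: as you note yourself, the sets $K_{E_1,E_2} = \upc E_1 \cap \upc E_2 \cap C$ need not be compact — $\upc E_1 \cap \upc E_2$ is an intersection of two compact saturated sets, which requires coherence to be compact, and coherence is not available. So the ``finite intersection inside a fixed compact'' variant you first contemplate does not apply, and the net/cluster-point substitute you propose as a fix does not close the gap for the reason above. The overall strategy (sobriety, then well-filteredness) is not unreasonable, but as written the directedness lemma is unproved.
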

\begin{proof}
  We define $\leq^\sharp$ on the collection of non-empty finite
  subsets of $X$ by $A \leq^\sharp B$ if and only if
  $\upc A \supseteq \upc B$.  \emph{Rudin's Lemma}
  \cite[Proposition~5.2.25]{JGL-topology} states that in a poset $P$,
  for every directed family ${(A_i)}_{i \in I}$ of non-empty finite
  subsets of $P$ with respect to $\leq^\sharp$, there is a directed
  subset $D$ of $\bigcup_{i \in I} A_i$ such that $D$ intersects every
  $A_i$.  As a consequence, if $P$ is a dcpo, then for any such
  directed family, for every Scott-open subset $U$ of $P$ that
  contains $\bigcap_{i \in I} \upc A_i \subseteq U$, $U$ contains some
  $A_i$ already \cite[Proposition~5.8.28]{JGL-topology}.
  
  Every open subset is Scott-open, by definition.  Conversely, let $U$
  be a Scott-open subset of $X$.  We show that $U$ is an open
  neighborhood (in the original topology) of any of its points $x$.
  Let $D$ be the collection of all the non-empty finite sets $A$ such
  that $x \in \interior {\upc A}$.  By definition of locally finitary
  compactness: $(*)$ for every open neighborhood $V$ (in the original
  topology) of $x$, there is an $A \in D$ such that $A \subseteq V$.
  (Note that $A$ cannot be empty, since $x \in \upc A$.)  Taking
  $V \eqdef X$ in $(*)$ shows that $D$ is non-empty.  Given
  $A, B \in D$, taking
  $V \eqdef \interior {\upc A} \cap \interior {\upc B}$ in $(*)$ shows
  that there is a $C \in D$ such that $C \subseteq \interior {\upc A}$
  and $C \subseteq \interior {\upc B}$, in particular
  $A, B \leq^\sharp C$.  Hence $D$ is directed in $\Pfin^* (X)$ with
  respect to $\leq^\sharp$.

  We claim that $\bigcap_{A \in D} \upc A \subseteq \upc x$.  It
  suffices to note that every open neighborhood $V$ of $x$ contains
  $\bigcap_{A \in D} \upc A$: $(*)$ even tells us that $V$ contains
  $\upc A$ for some $A \in D$.  Since $U$ is upwards-closed, it
  follows that $\bigcap_{A \in D} \upc A \subseteq U$.  Now $U$ is
  Scott-closed, and $X$ is a dcpo in its specialization ordering, so,
  using the consequence of Rudin's Lemma stated at the beginning of
  this proof, some $A \in D$ is included in $U$.  This says that
  $x \in \interior {\upc A} \subseteq \upc A \subseteq U$, so $U$ is
  an open neighborhood of $x$ in the original topology.

  We now know that the topology of $X$ is the Scott topology.  By
  \cite[Exercise 5.2.31]{JGL-topology}, the dcpos that are locally
  finitary compact are exactly the quasi-continuous dcpos, and every
  quasi-continuous dcpo is sober in its Scott topology \cite[Exercise
  8.2.15]{JGL-topology}.  \qed
\end{proof}

\begin{proposition}
  \label{prop:QcvxC:dcone}
  For every convenient barycentric algebra $\B$, $\Smythc \B$, with
  the Scott topology of reversed inclusion $\supseteq$, is a linearly
  separated continuous d-barycentric algebra; in particular, it is
  convenient.  Directed suprema are filtered intersections.  Moreover,
  $\Smythc \B = \SVc \B$, namely the topology of $\SVc \B$ coincides
  with the Scott topology.
\end{proposition}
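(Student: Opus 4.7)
The plan is to establish the proposition in four stages, with the continuous dcpo structure of $(\Smythc \B, \supseteq)$ being the core.

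First, I would verify that $(\Smythc \B, \supseteq)$ is a dcpo whose directed suprema are filtered intersections. Convexity and saturation are preserved by arbitrary intersections, the latter trivially and the former by Lemma~\ref{lemma:convex:inter}. Since $\B$ is convenient, hence sober, it is well-filtered, so the filtered intersection of any directed (under $\supseteq$) family of non-empty compact saturated subsets is compact saturated and non-empty. Hence this intersection lies in $\Smythc \B$ and is the supremum for $\supseteq$.

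Second, I would identify the way-below relation on $(\Smythc \B, \supseteq)$ as $Q \ll Q'$ iff $Q' \subseteq \interior Q$, and deduce continuity. For the implication $Q' \subseteq \interior Q \Rightarrow Q \ll Q'$, if $\bigcap_{i \in I} Q_i \subseteq Q'$ for a directed family, then $\bigcap_i Q_i \subseteq \interior Q$, so by well-filteredness some $Q_i \subseteq \interior Q \subseteq Q$, i.e., $Q \leq Q_i$ in the $\supseteq$-ordering. For continuity, I would introduce $D \eqdef \{Q \in \Smythc \B \mid Q' \subseteq \interior Q\}$ and use Proposition~\ref{prop:locconvcomp:base}: item~1 shows $D$ is non-empty and directed (given $Q_1, Q_2 \in D$, apply item~1 to the open neighborhood $\interior{Q_1} \cap \interior{Q_2}$ of $Q'$ to produce a common refinement in $D$), and item~2 shows $\bigcap D = Q'$ (any $x \notin Q'$ is excluded by some convex open neighborhood of $Q'$, which then contains an element of $D$). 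Since every element of $D$ is way-below $Q'$, $\ddarrow Q'$ contains a directed family with supremum $Q'$, so $\Smythc \B$ is a continuous dcpo. The converse way-below implication then follows: if $Q \ll Q'$, pick $Q_0 \in D$ with $Q \supseteq Q_0$, giving $Q' \subseteq \interior{Q_0} \subseteq \interior Q$.

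Third, I would match the Scott topology with the upper Vietoris topology of $\SVc \B$. By the way-below computation, the Scott-open basis $\uuarrow Q$ equals $\Box^\cvx{\interior Q}$, so every Scott-open set is upper Vietoris open. Conversely, every basic $\Box^\cvx U$ is upwards-closed under $\supseteq$ (trivial) and, by well-filteredness, is Scott-open; hence the two topologies coincide.

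Finally, I would assemble the conclusion. By Proposition~\ref{prop:QcvxB:topo}, $\SVc \B$ is already a topological barycentric algebra, so $+^\sharp_a$ is continuous for every $a$ and $a \mapsto Q_1 +^\sharp_a Q_2$ is continuous from $[0,1]$ to $\SVc \B$; transferring via the agreement of topologies, these maps are Scott-continuous, making $\Smythc \B$ an s-barycentric algebra, and thus (being a continuous dcpo) a continuous d-barycentric algebra. Linear separation is Proposition~\ref{prop:Qcvx:linsep}. The ``in particular'' clause follows from Example~\ref{exa:dbary:convenient}: a continuous d-barycentric algebra is automatically topological (by Ershov's observation), sober and locally convex-compact (by Proposition~\ref{prop:dcone:locconvcomp}, since a continuous dcpo is a sober c-space in its Scott topology), so together with linear separation it is convenient.

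The main obstacle is the characterization of $\ll$, which crucially combines both halves of Proposition~\ref{prop:locconvcomp:base}---the convex-compact interpolation yielding directedness of $D$, and the linear-separation consequence that $Q'$ is the intersection of its convex open neighborhoods yielding $\bigcap D = Q'$---together with the sobriety of $\B$ for well-filteredness. Everything else, including the topology identification, flows from this characterization.
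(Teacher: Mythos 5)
Your proof is correct, and it reaches the same conclusion through a genuinely different organization of the argument. Where the paper avoids computing the way-below relation explicitly---it shows instead that $\SVc\B$ is a c-space and a monotone convergence space, then invokes Lemma~\ref{lemma:qcont} to conclude at once that the space is sober and that its topology is the Scott topology (whence, as a sober c-space, $\Smythc\B$ is a continuous dcpo)---you compute $\ll$ directly, showing $Q \ll Q'$ iff $Q' \subseteq \interior Q$, and prove continuity from the interpolating family $D = \{Q \mid Q' \subseteq \interior Q\}$. Both approaches consume the same two ingredients from Proposition~\ref{prop:locconvcomp:base}: the convex-compact interpolation (item~1) yields the c-space property in the paper's version and the directedness of $D$ in yours; the base of convex open neighborhoods (item~2) is what forces $\bigcap D = Q'$ in your version and is absorbed into the appeal to Lemma~\ref{lemma:qcont} in the paper's. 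Your version is more concrete and gives the formula $\uuarrow Q = \Box^\cvx{\interior Q}$ explicitly, which is useful information in its own right; the paper's version is shorter because it delegates the work to the equivalence ``sober c-space $=$ continuous dcpo'' once and for all. One tiny caveat: when you write that ``item~2 shows $\bigcap D = Q'$'', the reasoning actually needs both that $Q'$ is the intersection of its \emph{open} neighborhoods (because it is saturated, using that $\B$ is $T_0$, which follows from sobriety) and that each open neighborhood of $Q'$ contains a convex open one (item~2) and hence an element of $D$ (item~1); your parenthetical sketches this correctly, so this is only a remark on attribution, not a gap.
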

\begin{proof}
  Given any directed family ${(Q_i)}_{i \in I}$ in $\Smythc \B$,
  namely any filtered family of non-empty convex compact saturated
  sets, its intersection $Q \eqdef \bigcap_{i \in I} Q_i$ is compact
  saturated and non-empty, because $\B$ is sober hence well-filtered
  \cite[Proposition~8.3.5]{JGL-topology}.  A \emph{well-filtered}
  space is one in which if a filtered intersection
  $\bigcap_{i \in I} Q_i$ of compact saturated subsets $Q_i$ is
  included in an open set $U$, then some $Q_i$ is already included in
  $U$.  In a well-filtered space, every such filtered intersection is
  compact saturated \cite[Proposition 8.3.6]{JGL-topology}.  If $Q$
  were empty, then it would be covered by the empty family, hence some
  $Q_i$ would, too, by well-filteredness; but that is impossible.
  %, since $Q_i$ is non-empty.
  Finally, any intersection of convex sets is convex, so $Q$ is in
  $\Smythc \B$.  It is then clear that $Q$ is the least upper bound of
  ${(Q_i)}_{i \in I}$.  Since least upper bound always exists,
  $\Smythc \B$ is a dcpo, and directed suprema are filtered
  intersections.

  Every basic open subset $\Box^\cvx U$ of $\SVc \B$ is then
  Scott-open, since $\bigcap_{i \in Q_i} \subseteq U$ implies
  $Q_i \subseteq U$ for some $i \in I$, by well-filteredness.  This
  shows that $\SVc \B$ is a monotone convergence space.

  Next, we claim that $\SVc \B$ is a c-space.  Let us consider an
  element $Q \in \Smythc \B$, and an open neighborhood $\mathcal U$ of
  $Q$.  The sets $\Box^\cvx U$, $U \in \Open \B$, form a base by
  Lemma~\ref{lemma:Qcvx:spec}, item~1.  Hence, $\mathcal U$ contains a
  basic open subset $\Box^\cvx U$ that contains $Q$; namely,
  $Q \subseteq U$.  By Proposition~\ref{prop:locconvcomp:base}, $Q$
  has a base of convex compact saturated neighborhoods, so one of
  them, call it $Q'$, is included in $U$.  Since
  $Q \in \Box^\cvx {(\interior {Q'})}$, and since
  $\Box^\cvx {(\interior {Q'})}$ is included in the upward closure
  $\upc_{\SVc \B} Q' \eqdef \{Q'' \in \SVc \B \mid Q'' \subseteq Q'
  \}$ of $Q'$ in $\SVc \B$, $Q$ is in the interior of
  $\upc_{\SVc \B} Q'$.  Moreover,
  $Q' \in \Box^\cvx U \subseteq \mathcal U$.  Hence $\SVc \B$ is a
  c-space.

  Every c-space that is a monotone convergence space is sober, and its
  topology coincides with the Scott topology, by
  Lemma~\ref{lemma:qcont}; hence $\Smythc \B = \SVc \B$.  Since $\B$
  is linearly separated, so is $\SVc \B$ by
  Proposition~\ref{prop:Qcvx:linsep}, hence so is $\Smythc \B$, with
  its Scott topology.  Finally, a continuous d-barycentric algebra
  that is linearly separated is convenient, by
  Example~\ref{exa:dbary:convenient}.  \qed
\end{proof}

\section{Barycenters, part 6}
\label{sec:barycenters-part-6}

\subsection{Barycenters, after Choquet}
\label{sec:baryc-after-choq}

There is a general notion of barycenter of a continuous valuation,
imitating a definition due to Gustave Choquet \cite[Chapter~6,
26.2]{Choquet:analysis:2}, and due to Ben Cohen, Mart\'in Escard\'o
and Klaus Keimel \cite{CEK:prob:scomp} and to the author and Xiaodong
Jia \cite{GLJ:bary} on semitopological cones $\C$: a \emph{barycenter}
of $\nu \in \Val \C$ is any point $x_0 \in C$ such that, for every
lower semicontinuous linear map $\Lambda \colon \C \to \creal$,
$\Lambda (x_0) = \int \Lambda \,d\nu$.

We adapt the definition to barycentric algebras and pointed
barycentric algebras.
\begin{definition}[Barycenter]
  \label{defn:bary:choquet:bary:alg}
  Let $\B$ be a semitopological barycentric algebra, and
  $\nu \in \Val_1 \B$.  A \emph{barycenter} of $\nu$ is any point
  $x_0$ of $\B$ such that for every lower semicontinuous affine map
  $\Lambda \colon \B \to \creal$,
  $\Lambda (x_0) = \int \Lambda \,d\nu$.

  If $\B$ is pointed, and $\nu \in \Val_{\leq 1} \B$, a
  \emph{barycenter} of $\nu$ is any point $x_0$ of $\B$ such that for
  every lower semicontinuous linear map
  $\Lambda \colon \B \to \creal$,
  $\Lambda (x_0) = \int \Lambda \,d\nu$.
\end{definition}
This makes quite a few competing definitions of barycenters.  We argue
that any two of them coincide on the intersections of their domains of
definition.  We start with the previous definition on cones
\cite{CEK:prob:scomp,GLJ:bary} and
Definition~\ref{defn:bary:choquet:bary:alg}.
\begin{itemize}
\item A semitopological cone $\C$ is also a pointed semitopological
  barycentric algebra, hence the definition of
  \cite{CEK:prob:scomp,GLJ:bary} and the second part of
  Definition~\ref{defn:bary:choquet:bary:alg} run a risk of being in
  conflict.  They are not because linear means the same on $\C$,
  whether we consider it as a cone or as a pointed barycentric
  algebra, see Remark~\ref{rem:bary:alg:pointed:linear}.
\item A pointed barycentric algebra $\B$ is also a barycentric
  algebra, so the two definitions of
  Definition~\ref{defn:bary:choquet:bary:alg} appear to be in
  conflict, at least for probability valuations $\nu \in \Val_1 \B$.
  If $x_0$ is a barycenter in the sense of the first part of
  Definition~\ref{defn:bary:choquet:bary:alg}, then it is one in the
  sense of the second part, because every linear map is affine.  If
  $x_0$ is a barycenter in the sense of the second part, then let
  $\Lambda \colon \B \to \creal$ be any lower semicontinuous affine
  map.  If $\Lambda (\bot) = \infty$, then $\Lambda$ is the constant
  map with value $\infty$ by monotonicity, hence
  $\Lambda (x_0) = \infty$, while
  $\int_{x \in \B} \Lambda (x) \,d\nu = \infty . \nu (\B) = \infty$.
  If $\Lambda (\bot) < \infty$, then the function $\Lambda'$ defined
  by $\Lambda' (x) \eqdef \Lambda x) - \Lambda (\bot)$ is affine and
  strict, hence linear by Proposition~\ref{prop:strict:aff}.
  Therefore $\Lambda' (x_0) = \int_{x \in \B} \Lambda' (x) \,d\nu$, so
  $\Lambda (x_0) = \Lambda' (x_0) + \Lambda (\bot) = \int_{x \in \B}
  \Lambda' (x) \,d\nu + \int_{x \in \B} \Lambda (\bot)\,d\nu = \int_{x
    \in \B} \Lambda (x) \,d\nu$.
\end{itemize}
We also have to compare the Choquet-like definitions of barycenters of
Definition~\ref{defn:bary:choquet:bary:alg} with our previous notions
of barycenters.  We will return to this shortly.  We first make a
trivial but important observation.

\begin{lemma}
  \label{lemma:bary:unique}
  On a linearly separated $T_0$ semitopological barycentric algebra,
  every probability valuation has at most one barycenter.  Similarly
  for subprobability valuations on linearly separated $T_0$ pointed
  semitopological barycentric algebras, for continuous valuations on
  linearly separated $T_0$ semitopological cones.
\end{lemma}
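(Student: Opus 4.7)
The plan is very short because the statement follows essentially by definition-chasing, once linear separation is invoked at the right place. I would argue all three cases uniformly, by contradiction.

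Assume $x_0$ and $x_1$ are both barycenters of the same $\nu$. By the relevant part of Definition~\ref{defn:bary:choquet:bary:alg}, for every affine (respectively, linear) lower semicontinuous map $\Lambda \colon \B \to \creal$ (respectively, $\Lambda \colon \C \to \creal$), we have
\[
\Lambda (x_0) = \int \Lambda \,d\nu = \Lambda (x_1).
\]
So it suffices to show that, in each of the three settings, ``$\Lambda (x_0) = \Lambda (x_1)$ for every affine (resp.\ linear) lower semicontinuous $\Lambda$'' forces $x_0 = x_1$.

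Suppose towards a contradiction that $x_0 \neq x_1$. Since the underlying space is $T_0$, its specialization preordering is antisymmetric, so at least one of $x_0 \not\leq x_1$ or $x_1 \not\leq x_0$ must hold; by symmetry of the hypothesis on $\Lambda$ I may assume $x_0 \not\leq x_1$. For the barycentric-algebra case I invoke Definition~\ref{defn:convexT0} of linear separation directly: there is an affine lower semicontinuous $\Lambda$ with $\Lambda (x_0) > \Lambda (x_1)$, contradicting the equality above. For the pointed and cone cases, where the notion of barycenter only tests against \emph{linear} $\Lambda$, I use Lemma~\ref{lemma:convexT0}(1), which says that in a linearly separated pointed semitopological barycentric algebra (in particular, in a linearly separated semitopological cone) the separating functional may be chosen linear rather than merely affine. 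The same contradiction then arises.

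The only minor subtlety—and it is not an obstacle, only a point to record carefully—is to check that the three parts of the statement really do reduce to the corresponding clause of Definition~\ref{defn:convexT0}/Lemma~\ref{lemma:convexT0}; the compatibility of the various notions of barycenter (plain vs.\ pointed vs.\ cone) has already been verified in the paragraphs following Definition~\ref{defn:bary:choquet:bary:alg}, so no additional work is needed there. No computation, no appeal to deeper sandwich or separation theorems, and no hypothesis of local convexity or topologicity is required.
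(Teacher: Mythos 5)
Your proof is correct and takes essentially the same approach as the paper's: use $T_0$ to get $x_0 \not\leq x_1$ (up to symmetry), then apply linear separation (with Lemma~\ref{lemma:convexT0}, item~1, to upgrade affine to linear in the pointed/cone cases) to produce a $\Lambda$ with $\Lambda(x_0) > \Lambda(x_1)$, contradicting the defining equality of barycenters.
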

\begin{proof}
  Let $x_0$ and $x_1$ be two distinct barycenters of the same
  probability valuation $\nu$ (resp.\ subprobability valuation, resp.\
  continuous valuation).  The $T_0$ property implies that
  $x_0 \not\leq x_1$ or $x_1 \not\leq x_0$.  By symmetry, let us
  assume $x_0 \not\leq x_1$.  By linear separation, there is an affine
  (resp.\ linear, by Lemma~\ref{lemma:convexT0}, item~1) lower
  semicontinuous map $\Lambda$ with values in $\creal$ such that
  $\Lambda (x_0) > \Lambda (x_1)$.  But by definition of barycenters,
  $\Lambda (x_0)$ are $\Lambda (x_1)$ are both equal to the integral
  of $\Lambda$ with respect to $\nu$, leading to a contradiction.
  \qed
\end{proof}

Things can go pretty badly without linear separation, as the following
example shows.
\begin{example}
  \label{exa:B-:bary:notunique}
  Every point of the locally convex $T_0$ topological barycentric
  algebra $\B^-$ (see Example~\ref{exa:bary:alg:notembed} and
  Example~\ref{exa:B-:loc:convex}) is a barycenter of every
  probability valuation on $\B^-$.  Indeed, by
  Example~\ref{exa:bary:alg:notembed}, item~2, every affine lower
  semicontinuous map $\Lambda$, which must be semi-concave by
  Fact~\ref{fact:qaff}, is constant, say with value $c$.  For every
  point $x_0 \in \B$, $\Lambda (x_0) = c$, and for every probability
  valuation $\nu$ on $\B$,
  $\int_{x \in \B} \Lambda (x) \,d\nu = \int_{x \in \B} c \, d\nu = c
  \nu (\B) = c$.
\end{example}

We recall the barycenter map $\beta$ of
Theorem~\ref{thm:locconv:heckmann:beta}, which maps simple (possibly
subnormalized or normalized) valuations
$\sum_{i=1}^n a_i \delta_{x_i}$ to their barycenters
$\sum_{i=1}^n a_i \cdot x_i$, as defined in Definition and
Proposition~\ref{prop:bary:1} or Definition and
Proposition~\ref{prop:bary:leq1} in the case of (possibly pointed)
barycentric algebras.  We also recall the map, also named $\beta$,
which sends (possibly subnormalized or normalized) point-continuous
valuations to what we claimed to be an extension of the notion of
barycenter in Theorem~\ref{thm:locconv:heckmann:beta:pcont}.

\begin{lemma}
  \label{lemma:bary:simple}
  \begin{enumerate}
  \item For every simple normalized valuation $\nu$ on a
    semitopological barycentric algebra $\B$, $\beta (\nu)$ as defined
    in Theorem~\ref{thm:locconv:heckmann:beta} is a barycenter of
    $\nu$.
  \item For every simple subnormalized valuation $\nu$ on a pointed
    semitopological barycentric algebra $\B$, $\beta (\nu)$ as defined
    in Theorem~\ref{thm:locconv:heckmann:beta} is a barycenter of
    $\nu$.
  \item For every simple valuation $\nu$ on a semitopological cone
    $\C$, $\beta (\nu)$ as defined in
    Theorem~\ref{thm:locconv:heckmann:beta} is a barycenter of $\nu$.
    \index{point-continuous?valuation, barycenter}%
  \item For every weakly locally convex sober pointed topological
    barycentric algebra $\B$, for every
    $\nu \in \Val_{\leq 1, \pw} \B$, $\beta (\nu)$ as defined in
    Theorem~\ref{thm:locconv:heckmann:beta:pcont} is a barycenter of
    $\nu$.
  \item For every weakly locally convex, sober topological cone $\C$,
    for every $\nu \in \Val_\pw \C$, $\beta (\nu)$ as defined in
    Theorem~\ref{thm:locconv:heckmann:beta:pcont} is a
    barycenter of $\nu$.
  \end{enumerate}
\end{lemma}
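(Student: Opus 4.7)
The plan is to dispatch items~1--3 by direct computation on the formula for $\beta$ at a simple valuation, and then to bootstrap to items~4--5 via the universal property of sobrifications.

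For items~1--3, fix a simple valuation $\nu \eqdef \sum_{i=1}^n a_i \delta_{x_i}$ in the appropriate ambient space. By Theorem~\ref{thm:locconv:heckmann:beta}, $\beta(\nu) = \sum_{i=1}^n a_i \cdot x_i$, where the sum is interpreted via Definition and Proposition~\ref{prop:bary:1} (case~1), Definition and Proposition~\ref{prop:bary:leq1} (case~2), or as an ordinary linear combination in the cone (case~3). Let $\Lambda$ be an affine (case~1) or linear (cases~2--3) lower semicontinuous map to $\creal$. By Definition and Proposition~\ref{prop:bary:1} item~4 (case~1), by Definition and Proposition~\ref{prop:bary:leq1} item~2 (case~2), or directly by linearity (case~3), we obtain
\[
  \Lambda(\beta(\nu)) \;=\; \sum_{i=1}^n a_i \, \Lambda(x_i).
\]
Linearity of the integral in $\nu$ together with $\int \Lambda \, d\delta_x = \Lambda(x)$ gives the same value for $\int \Lambda \, d\nu$, so $\beta(\nu)$ is a Choquet barycenter of $\nu$.

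For items~4--5, fix $\Lambda$ affine (resp.\ linear) and lower semicontinuous, and consider the two maps
\[
  f_1 \colon \nu \mapsto \Lambda(\beta(\nu)) \qquad\text{and}\qquad f_2 \colon \nu \mapsto \int \Lambda \, d\nu
\]
from $\Val_{\leq 1, \pw} \B$ (resp.\ $\Val_\pw \C$) to $\creal$. The map $f_1$ is continuous as a composition, since $\beta$ is continuous by Theorem~\ref{thm:locconv:heckmann:beta:pcont} and $\Lambda$ is lower semicontinuous. The map $f_2$ is continuous because the weak topology has a subbase of sets of the form $[h > r]$, so $\nu \mapsto \int h \, d\nu$ is lower semicontinuous for every $h \in \Lform \B$ (resp.\ $\Lform \C$). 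By the preceding paragraph, $f_1$ and $f_2$ coincide on the subspace of simple valuations. By Theorem~\ref{thm:VpX:sobrif} (and Heckmann's original result for the cone case), $\Val_{\leq 1, \pw} \B$ (resp.\ $\Val_\pw \C$) is a sobrification of $\Val_{\leq 1, \fin} \B$ (resp.\ $\Val_\fin \C$); since $\creal$ is sober, the uniqueness part of the universal property of sobrifications forces $f_1 = f_2$ on the whole space, giving the Choquet barycenter identity.

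The only real subtlety is keeping the flavor of linearity required of $\Lambda$ aligned with the shape of valuation considered---affine with normalized weights, linear with subnormalized weights, linear with unconstrained weights---as dictated by Definition~\ref{defn:bary:choquet:bary:alg}; once the bookkeeping is straight, the proof reduces to the commutation of affine or linear maps with combinatorial barycenters (items~1--3) and to the uniqueness of continuous extensions to a sobrification, targeting the sober space $\creal$ (items~4--5).
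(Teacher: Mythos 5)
Your proof is correct and follows essentially the same route as the paper: for items~1--3, compute $\Lambda(\beta(\nu))$ on a simple valuation via Definition and Proposition~\ref{prop:bary:1} item~4, Definition and Proposition~\ref{prop:bary:leq1} item~2, or linearity, and match it with $\int \Lambda\,d\nu$; for items~4--5, note that $\Lambda\circ\beta$ and $\nu\mapsto\int\Lambda\,d\nu$ are two continuous maps into the sober space $\creal$ that agree on the simple valuations, then invoke Theorem~\ref{thm:VpX:sobrif} and the uniqueness of continuous extensions to a sobrification. One small slip: in your items~4--5 paragraph you say ``affine (resp.\ linear)'', but item~4 concerns subprobability valuations on a pointed $\B$, so by Definition~\ref{defn:bary:choquet:bary:alg} the witnessing maps $\Lambda$ should be linear in both items~4 and~5 (your own closing remark about ``linear with subnormalized weights'' has it right); this does not affect the structure of the argument.
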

In all cases, this will be the unique barycenter under an additional
assumption of linear separability and $T_0$-ness, by
Lemma~\ref{lemma:bary:unique}.

\begin{proof}
  $1, 2, 3$.  Let $\nu \eqdef \sum_{i=1}^n a_i \delta_{x_i}$ be a
  simple valuation, which we assume normalized in case~1,
  subnormalized in case~2, and arbitrary in case~3.  Let $\Lambda$ be
  an affine (in case~1) or linear (in cases~2 and~3) lower
  semicontinuous map with values in $\creal$.  Then
  $\int_{x \in \C} \Lambda (x) \,d\nu = \sum_{i=1}^n a_i \Lambda
  (x_i)$ since integration is linear, and that is equal to
  $\Lambda (\sum_{i=1}^n a_i \cdot x_i)$, by Definition and
  Proposition~\ref{prop:bary:1}, item~4 in case~1, by Definition and
  Proposition~\ref{prop:bary:leq1}, item~2 in case~2, and by
  definition of linearity in the case of cones (case~3).  Therefore
  $\int_{x \in \C} \Lambda (x) \,d\nu = \Lambda (\beta (\nu))$, so
  $\beta (\nu)$ is a barycenter of $\nu$.
  
  $4, 5$.  Let $\Lambda$ be a lower semicontinuous linear map from
  $\B$ (in case~4) or $\C$ (in case~5) to $\creal$.  By the first part
  of the lemma, the functions that map every $\nu$ in
  $\Val_{\leq 1} \B$ (resp.\ $\Val \C$) to
  $\int_{x \in \B \text{ (resp.\ $\C$)}} \Lambda (x) \,d\nu$ and
  $\Lambda \circ \beta$ coincide on simple valuations.  They are both
  lower semicontinuous, by definition of the weak topology for the
  first one, and because $\beta$ is continuous
  (Theorem~\ref{thm:locconv:heckmann:beta:pcont}) for the second one.
  By the universal property of sobrifications, and recalling that
  $\Val_{\rast, \pw} X$ is a sobrification of $\Val_{\rast, \fin} X$
  for every space $X$ and where $\rast$ is nothing or ``$\leq 1$''
  (Theorem~\ref{thm:VpX:sobrif}), they must coincide.
\end{proof}
In items~4 and~5 of Lemma~\ref{lemma:bary:simple}, sobriety is
required: we will see in Example~\ref{exa:step:nobary} an example of
point-continuous valuation on a weakly locally convex (in fact, even a
locally linear) cone that has no barycenter.

We recall the map $\etass_\B \colon \B \to \B^{**}$, defined by
$\etass_\B (x) (\Lambda) \eqdef \Lambda (x)$.  We have the following
restatement of Definition~\ref{defn:bary:choquet:bary:alg}.
\begin{fact}
  \label{fact:bary:alt}
  For every semitopological cone $\C$ (resp.\ pointed semitopological
  algebra $\B$, resp.\ semitopological algebra $\B$), for every
  $\nu \in \Val \C$ (resp.\ $\Val_{\leq 1} \B$, resp.\ $\Val_1 \B$),
  let
  $\beta^* (\nu) (\Lambda) \eqdef \int_{x \in \B} \Lambda (x) \,d\nu$
  for every $\Lambda \in \C^*$ (resp.\ $\B^*$, resp.\ $\B^\astar$).

  For every $\nu \in \Val \C$ (resp.\ $\Val_{\leq 1} \B$, resp.\
  $\Val_1 \B$), a point $x_0$ is a barycenter of $\nu$ if and only if
  $\etass_\B (x_0) = \beta^* (\nu)$.
\end{fact}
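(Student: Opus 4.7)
The statement is essentially a restatement of Definition~\ref{defn:bary:choquet:bary:alg}, so the proof consists almost entirely of unwinding notation, handled in a single pass that covers all three variants (cone, pointed semitopological barycentric algebra, semitopological barycentric algebra) uniformly.

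First, I would recall that, by Definition and Lemma~\ref{deflem:B'*}, the underlying set of $\C^*$ (resp.\ $\B^*$, resp.\ $\B^\astar$) is, by construction, precisely the collection of lower semicontinuous linear maps from $\C$ to $\creal$ (resp.\ lower semicontinuous linear maps from $\B$ to $\creal$, resp.\ lower semicontinuous affine maps from $\B$ to $\creal$). Hence Definition~\ref{defn:bary:choquet:bary:alg} says that $x_0$ is a barycenter of $\nu$ if and only if $\Lambda(x_0) = \int_{x \in \B} \Lambda(x) \, d\nu$ for every $\Lambda$ ranging over the underlying set of the relevant dual.

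Second, by the definition of $\etass_\B$ (respectively $\etaps_\B$) from Definition and Lemma~\ref{deflem:B'*}, we have $\etass_\B(x_0)(\Lambda) = \Lambda(x_0)$ for every $\Lambda$ in the dual; and by the definition introduced just before the statement, $\beta^*(\nu)(\Lambda) = \int_{x \in \B} \Lambda(x) \, d\nu$. Therefore the equality of functions $\etass_\B(x_0) = \beta^*(\nu)$, read pointwise in $\Lambda$, is literally the condition $\Lambda(x_0) = \int \Lambda \, d\nu$ for every such $\Lambda$, which by the preceding paragraph is exactly the definition of $x_0$ being a barycenter of $\nu$. The two implications then follow at once.

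I do not expect any real obstacle: the only thing to be careful about is matching, in each of the three variants, the clause "for every lower semicontinuous linear (resp.\ linear, resp.\ affine) map" in Definition~\ref{defn:bary:choquet:bary:alg} with the correct dual space from Definition and Lemma~\ref{deflem:B'*}, which is automatic by construction. Note that the proof does not require $\beta^*(\nu)$ itself to belong to the double dual; the equality of $\etass_\B(x_0)$ and $\beta^*(\nu)$ is interpreted simply as equality of functions on the dual, and this is all that is needed.
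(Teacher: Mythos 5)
Your proof is correct and proceeds exactly as the paper intends: the paper states this as a \emph{Fact} with no explicit argument, because, as you observe, the equality $\etass_\B(x_0)=\beta^*(\nu)$, unpacked pointwise on the relevant dual, is a verbatim restatement of Definition~\ref{defn:bary:choquet:bary:alg}. Your remark that one does not need $\beta^*(\nu)$ to lie in the double dual a priori, and your silent correction that the third case uses $\etaps_\B$ rather than $\etass_\B$, are both apt.
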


\begin{lemma}
  \label{lemma:bary:exist:cont}
  Let $\C$ be a semitopological cone (resp.\ a pointed semitopological
  algebra $\B$, resp.\ a semitopological algebra $\B$), and
  $\nu \in \Val \C$ (resp.\ $\Val_{\leq 1} \B$, resp.\ $\Val_1 \B$).
  If $\nu$ has a barycenter, then $\beta^* (\nu)$ is lower
  semicontinuous from $\C^*$ (resp.\ $\B^*$, resp.\ $\B^\astar$) to
  $\creal$.
\end{lemma}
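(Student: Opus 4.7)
The plan is to reduce the claim to the trivial observation that the canonical evaluation map into the double dual lands in a space whose elements are, by construction, lower semicontinuous.

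First I would invoke Fact~\ref{fact:bary:alt}: if $x_0$ is a barycenter of $\nu$, then $\beta^*(\nu) = \etass_\B(x_0)$ in the cone and pointed-algebra cases, and $\beta^*(\nu) = \etaps_\B(x_0)$ in the general barycentric-algebra case. So it is enough to argue that $\etass_\B(x_0)$ (respectively $\etaps_\B(x_0)$) is lower semicontinuous from $\B^*$ (respectively $\B^\astar$) to $\creal$.

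Next, I would appeal directly to Definition and Lemma~\ref{deflem:B'*}: by construction, $\etass_\B$ takes values in $\B^{**}$ and $\etaps_\B$ takes values in $\B^{\astar*}$. Since $\B^*$ (respectively $\B^\astar$) is itself a semitopological cone carrying the weak$^*$Scott topology, every element of its dual $\B^{**}$ (respectively $\B^{\astar*}$) is, by definition, a (linear or affine) lower semicontinuous map on $\B^*$ (respectively $\B^\astar$). Hence $\etass_\B(x_0)$ and $\etaps_\B(x_0)$ are in particular lower semicontinuous, and the conclusion follows.

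For completeness, the lower semicontinuity can also be verified by hand without invoking Definition and Lemma~\ref{deflem:B'*}: for every $r \in \Rp \diff \{0\}$, the preimage $\{\Lambda \in \B^* \mid \Lambda(x_0) > r\}$ (respectively in $\B^\astar$) is the subbasic open set $[x_0 > r]$ defining the weak$^*$Scott topology. There is no real obstacle in the argument; the statement is essentially a tautology once the two pieces of notation (the definition of barycenter and the definition of the weak$^*$Scott topology on the dual) are unpacked.
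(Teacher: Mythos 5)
Your proof is correct and takes essentially the same route as the paper's: reduce via Fact~\ref{fact:bary:alt} to the observation that the canonical image $\etass_\B(x_0)$ (resp.\ $\etaps_\B(x_0)$) lies in the double dual $\B^{**}$ (resp.\ $\B^{\astar*}$), whose elements are lower semicontinuous by construction. Your version is, if anything, slightly cleaner: you correctly write $\C^{**}$ where the paper's own proof slips and writes $\C^*$, you distinguish $\etaps$ from $\etass$ in the unpointed case where the paper's statement of Fact~\ref{fact:bary:alt} conflates them, and you add the hands-on subbasic-open-set verification as a sanity check.
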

\begin{proof}
  We deal with cones.  The argument is the same for (pointed)
  barycentric algebras.  For every point $x_0$ of $\C$,
  $\etass_\C (x_0)$ is an element of $\C^*$, hence is in particular a
  lower semicontinuous map.  If $x_0$ is a barycenter of $\nu$, then
  $\etass_\C (x_0) = \beta^* (\nu)$ by Fact~\ref{fact:bary:alt}, so
  $\beta^* (\nu)$ must be lower semicontinuous.  \qed
\end{proof}
Hence, if $\beta^* (\nu)$ is not lower semicontinuous, then $\nu$
cannot have any barycenter.

Conversely, $\beta^* (\nu)$ is always a linear map from $\C^*$ (resp.\
$\B^*$, resp.\ $\B^\astar$) to $\creal$, hence if $\beta^* (\nu)$ is
lower semicontinuous, then it is an element of $\C^{**}$ (resp.\
$\B^{**}$, resp.\ $\B^{\astar*}$).  Then, by Fact~\ref{fact:bary:alt},
$\nu$ has a barycenter if and only if that element is in the image of
$\etass_\C$ (resp.\ $\etass_{\B}$, resp.\ $\etass_{\B}$).

% That happens notably when %$\C$ is semi-reflexive, so that
% $\etass_{\C}$ is surjective, whence:
% \begin{lemma}
%   \label{lemma:bary:exist:cont:2}
%   Let $\C$ be a semi-reflexive semitopological cone, and $\nu$ be a
%   continuous valuation $\nu$ on $\C$.  Then $\nu$ has a barycenter if
%   and only if $\beta^* (\nu)$ is lower semicontinuous from $\C^*$ to
%   $\creal$.
% \end{lemma}

Another easy observation is that, since
Definition~\ref{defn:bary:choquet:bary:alg} only mentions
\emph{affine} (or linear) lower semicontinuous maps $\Lambda$, the
notion of barycenter only depends on the weakScott topology on $\B$.
Indeed, $\B$ and $\B_{\wS}$ have exactly the same linear lower
semicontinuous maps to $\creal$, by Remark~\ref{rem:weakScott:facts},
item~(ii).
\begin{fact}
  \label{fact:bary:weakscott}
  Let $\B$ be a semitopological barycentric algebra (resp.\ a pointed
  semitopological barycentric algebra, resp.\ a semitopological cone).
  For every probability (resp.\ subprobability, resp.\ continuous)
  valuation $\nu$ on $\B$, the barycenters of $\nu$ on $\B$ are
  exactly the barycenters of the restriction of $\nu$ to
  $\Open (\B_{\wS})$ on $\B_{\wS}$.  \qed
\end{fact}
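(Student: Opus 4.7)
The plan is to reduce the fact to three simple observations: that the restriction of $\nu$ makes sense as a valuation on $\B_{\wS}$, that the collection of test maps $\Lambda$ used in the definition of barycenter is the same for $\B$ and $\B_{\wS}$, and that for each such $\Lambda$ the integral against $\nu$ coincides with the integral against its restriction.

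First I would note that the weakScott topology is coarser than the original topology on $\B$ (Remark~\ref{rem:weakScott}, item~(i)), so the identity map $\B \to \B_{\wS}$ is continuous, and every open subset of $\B_{\wS}$ is already open in $\B$. Hence the restriction $\nu'$ of $\nu$ to $\Open (\B_{\wS})$ is strict, modular and Scott-continuous, i.e.\ a continuous valuation on $\B_{\wS}$. Moreover $\nu'(\B_{\wS})=\nu(\B)$, so $\nu'$ is a probability (resp.\ subprobability, resp.\ arbitrary) valuation exactly when $\nu$ is.

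Next, by Remark~\ref{rem:weakScott}, item~(ii), the collection of affine lower semicontinuous maps $\Lambda \colon \B \to \creal$ equals the collection of affine lower semicontinuous maps $\Lambda \colon \B_{\wS} \to \creal$, and similarly for linear maps when $\B$ is pointed (or a cone). Thus in the definition of barycenter (Definition~\ref{defn:bary:choquet:bary:alg}) applied to $\B$ and to $\B_{\wS}$, one quantifies over exactly the same family of test functionals $\Lambda$.

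For each such $\Lambda$, I would use the Choquet formula
\[
  \int \Lambda \, d\nu \;=\; \int_0^\infty \nu(\Lambda^{-1}(]t,\infty]))\,dt,
\]
which makes sense because $\Lambda$ is lower semicontinuous on $\B$. Since $\Lambda$ is also lower semicontinuous for the weakScott topology, each superlevel set $\Lambda^{-1}(]t,\infty])$ is open in $\B_{\wS}$; by definition of the restriction, $\nu(\Lambda^{-1}(]t,\infty])) = \nu'(\Lambda^{-1}(]t,\infty]))$, and therefore $\int \Lambda\,d\nu = \int \Lambda\,d\nu'$. Combining this with the previous step, the conditions $\Lambda(x_0) = \int \Lambda \,d\nu$ for all admissible $\Lambda$ on $\B$ and $\Lambda(x_0) = \int \Lambda \,d\nu'$ for all admissible $\Lambda$ on $\B_{\wS}$ are literally the same condition, giving the claimed equality of barycenters.

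There is no real obstacle here; the only point requiring care is that ``barycenter'' quantifies only over lower semicontinuous affine (or linear) test maps, and that this specific class is topology-invariant when one passes to $\B_{\wS}$---so the fact is essentially a bookkeeping consequence of Remark~\ref{rem:weakScott} together with the Choquet representation of the integral.
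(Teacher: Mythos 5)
Your proof is correct and follows essentially the same route the paper intends: the Fact is justified in the text by the sentence preceding it, namely that the definition of barycenter quantifies only over affine (or linear) lower semicontinuous test maps, and these are the same for $\B$ and $\B_{\wS}$ by Remark~\ref{rem:weakScott}, item~(ii). Your additional bookkeeping---that the restriction of $\nu$ is indeed a (sub)probability valuation, and that the two integrals agree via the Choquet formula because the superlevel sets of such $\Lambda$ are weakScott-open---is exactly the routine verification the paper leaves implicit.
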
  

% We need the following lemma in Example~\ref{exa:step:nobary}, whose
% purpose is to show a case where barycenters do \emph{not} exist.
% Example~\ref{exa:Vp:nobary} describes another case. 
Given a continuous map $f \colon X \to Y$, the \emph{image valuation}
$f [\nu]$ of $\nu \in \Val X$ is the continuous valuation
$V \in \Open Y \mapsto \nu (f^{-1} (V))$.  For every $h \in \Lform Y$,
we have the \emph{change of variable} formula
$\int h \,df[\nu] = \int (h \circ f) \,d\nu$, see for example
\cite[Lemma~2.10$(vi)$]{GLJ:Valg}; this is easy to prove using the
Choquet formula for the integral.
\begin{lemma}
  \label{lemma:bary:map}
  Let $f \colon \C \to \Cb$ be a linear continuous map between two
  semitopological cones.  For every $\nu \in \Val \C$ with barycenter
  $x_0$ in $\C$, $f (x_0)$ is a barycenter of $f [\nu]$ in $\Cb$.
  
  Similarly if $f \colon \B \to \Alg$ is a linear continuous map
  between pointed semitopological barycentric algebras and
  $\nu \in \Val_{\leq 1} \B$, or if $f \colon \B \to \Alg$ is an
  affine continuous map between not necessarily pointed
  semitopological barycentric algebras and $\nu \in \Val_1 \B$.
\end{lemma}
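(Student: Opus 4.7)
The plan is to unwind the definition of barycenter (Definition~\ref{defn:bary:choquet:bary:alg}) and apply the change of variable formula for image valuations. I will treat the three cases uniformly, noting only where the difference between ``linear'' and ``affine'' matters.

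First, fix a linear (resp.\ affine) lower semicontinuous map $\Lambda' \colon \Cb \to \creal$. My goal is to show that $\Lambda'(f(x_0)) = \int \Lambda' \,df[\nu]$. I would start by applying the change of variable formula (as recalled just before the lemma's statement) to get
\[
\int \Lambda' \,df[\nu] \;=\; \int (\Lambda' \circ f)\,d\nu.
\]
Next, I would observe that $\Lambda' \circ f$ is a lower semicontinuous map from $\C$ (resp.\ $\B$) to $\creal$, because both $f$ and $\Lambda'$ are continuous. Moreover $\Lambda' \circ f$ is linear (resp.\ affine) as a composition of linear (resp.\ affine) maps: this uses that linearity of $f$ and $\Lambda'$ implies both strictness and $\Iu$-homogeneity and additivity/$+_a$-preservation composes correctly, and similarly for affinity.

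Then, because $x_0$ is a barycenter of $\nu$ in the sense of Definition~\ref{defn:bary:choquet:bary:alg}, the defining equation applied to the test function $\Lambda' \circ f$ gives
\[
\int (\Lambda' \circ f)\,d\nu \;=\; (\Lambda' \circ f)(x_0) \;=\; \Lambda'(f(x_0)).
\]
Combining, $\Lambda'(f(x_0)) = \int \Lambda' \,df[\nu]$ for every such $\Lambda'$, which is exactly the condition that $f(x_0)$ be a barycenter of $f[\nu]$.

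There is no real obstacle here; the only thing to double-check is that in the pointed case and in the cone case one is entitled to use linear test functions on the target (which is how ``barycenter'' is defined there), and that $\Lambda' \circ f$ then qualifies as a test function on the source. In the cone case this is automatic. In the pointed barycentric algebra case, $\Lambda' \circ f$ is strict because $f$ is linear hence strict, and $\Lambda'$ is strict, so by Proposition~\ref{prop:strict:aff} linearity of $\Lambda' \circ f$ follows from its affinity plus strictness. In the purely barycentric algebra case we only need affinity, which is immediate. Hence in each of the three cases the test function used matches the definition of barycenter on the source, and the argument closes.
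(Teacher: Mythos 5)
Your proof is correct and follows essentially the same route as the paper: apply the change-of-variable formula, note that $\Lambda'\circ f$ is an admissible test function on the source, and invoke the defining property of $x_0$ as a barycenter. The extra remark verifying via Proposition~\ref{prop:strict:aff} that $\Lambda'\circ f$ is linear in the pointed case is a sound clarification that the paper leaves implicit.
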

\begin{proof}
  For every linear lower semicontinuous map
  $\Lambda \colon \Cb \to \creal$, $\Lambda \circ f$ is linear and
  lower semicontinuous.  Since $x_0$ is a barycenter of $\nu$,
  $\Lambda (f (x_0)) = \int_{x \in \C} \Lambda (f (x)) \,d\nu$, and
  that is equal to $\int_{y \in \Cb} \Lambda (y) \,df[\nu]$ by the
  change of variable formula.  The argument is the same on (possibly
  pointed) semitopological barycentric algebras, only taking $\Lambda$
  to be affine for not necessarily pointed barycentric algebras.  \qed
\end{proof}

\begin{corollary}
  \label{corl:bary:retract}
  Let $r \colon \C \to \Cb$ be a linear retraction between
  semitopological cones (resp.\ a linear retraction between pointed
  semitopological barycentric algebras, resp.\ an affine retraction
  between semitopological barycentric algebras), with associated section $s$.
  \begin{enumerate}
  \item For every continuous (resp.\ subprobability, resp.\
    probability) valuation $\nu$ on $\Cb$, if $x_0$ is a
    barycenter of $s [\nu]$ in $\C$, then $r (x_0)$ is a barycenter of
    $\nu$ in $\Cb$.
  \item Letting $\rast$ be nothing, resp.\ ``$\leq 1$'', resp.\
    ``$1$'', for every function $\beta \colon \Val_\rast \C \to \C$
    that maps every element of $\Val_\rast \C$ to one of its
    barycenters,
    $r \circ \beta \circ \Val s \colon \Val_\rast \Cb \to \Cb$ maps
    every element of $\Val_\rast \Cb$ to one of its barycenters.
  \end{enumerate}
\end{corollary}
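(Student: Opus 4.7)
The plan is to derive this directly from Lemma~\ref{lemma:bary:map}, the retraction identity $r \circ s = \identity{\Cb}$, and the definition of the image valuation; no new machinery is required.

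For item~1, I would apply Lemma~\ref{lemma:bary:map} to the map $r$ and the valuation $s[\nu]$. In each of the three cases (semitopological cones with $r$ linear, pointed semitopological barycentric algebras with $r$ linear, semitopological barycentric algebras with $r$ affine), $r$ has precisely the morphism property required by Lemma~\ref{lemma:bary:map}, and $s[\nu]$ lies in the relevant space of valuations on $\C$: it is a continuous valuation because $s$ is continuous, and $s[\nu](\C) = \nu(s^{-1}(\C)) = \nu(\Cb)$, so the subprobability or probability constraint is preserved. Hence, if $x_0$ is a barycenter of $s[\nu]$, then $r(x_0)$ is a barycenter of $r[s[\nu]]$. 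It remains to observe that $r[s[\nu]] = \nu$: for every open subset $V$ of $\Cb$,
\[
r[s[\nu]](V) = s[\nu](r^{-1}(V)) = \nu(s^{-1}(r^{-1}(V))) = \nu((r \circ s)^{-1}(V)) = \nu(V),
\]
since $r \circ s = \identity{\Cb}$. Therefore $r(x_0)$ is a barycenter of $\nu$.

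For item~2, I would specialize item~1 to $x_0 \eqdef \beta(\Val s(\nu)) = \beta(s[\nu])$. By the defining property of $\beta$, $x_0$ is a barycenter of $s[\nu]$ in $\C$; applying item~1 then shows that $(r \circ \beta \circ \Val s)(\nu) = r(x_0)$ is a barycenter of $\nu$ in $\Cb$, which is exactly what was required.

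There is no significant obstacle here, only some bookkeeping: one must verify that the three cases of the corollary align with the three cases of Lemma~\ref{lemma:bary:map}, which they do, since every linear retraction is in particular a linear continuous map and every affine retraction is in particular an affine continuous map. No hypothesis of $T_0$-ness, local convexity, or sobriety is needed, because we are not trying to establish uniqueness or existence of barycenters of $\nu$; we transport a known barycenter of $s[\nu]$ along $r$.
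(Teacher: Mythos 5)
Your proof is correct and follows essentially the same route as the paper: both apply Lemma~\ref{lemma:bary:map} to $r$ and $s[\nu]$, use the retraction identity to deduce $r[s[\nu]] = \nu$, and derive item~2 by specializing item~1 to $x_0 = \beta(s[\nu])$. You merely make explicit some verifications (that $s[\nu]$ lies in the correct class of valuations, the computation of $r[s[\nu]]$) that the paper leaves implicit.
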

\begin{proof}
  1.  By Lemma~\ref{lemma:bary:map}, $r (x_0)$ is a barycenter of $r [s
  [\nu]] = \nu$.

  2.  For every $\nu \in \Val_\rast \Cb$, $\beta (\Val s (\nu)) =
  \beta (s [\nu])$ is a barycenter of $s [\nu]$ by assumption.
  By item~1, $r (\beta (\Val s (\nu)))$ is a barycenter of $\nu$.  \qed
\end{proof}

\begin{example}
  \label{exa:step:nobary}
  Let $X$ be a continuous dcpo in its Scott topology, and let us
  consider the subcone $\C$ of $\Lform X$ consisting of the step
  functions from a topological space $X$ to $\creal$, namely those
  that take only finitely many values.  Then, discrete valuations
  $\sum_{i \in I} a_i \delta_{g_i}$ where each $g_i$ is a step
  function and $\sum_{i \in I} a_i g_i$ takes infinitely many values
  have no barycenter in $\C$.  In particular, there are topological,
  locally linear cones on which barycenters may fail to exist.
  % Using
  % Lemma~\ref{lemma:bary:map} with $f \eqdef \etass_\C$, show that,
  % given any continuous valuation $\nu$ on $\C$, the only possible
  % barycenter of $\nu$ is the map
  % $f_0 \colon x \mapsto \int_{f \in \Lform X} f (x) \,d\nu$.  (You
  % will need to use Theorem~\ref{thm:riesz:dual},
  % Exercise~\ref{ex:weakScott}, and
  % Proposition~\ref{prop:bary:examples}.)  Next, consider a discrete
  % valuation $\nu \eqdef \sum_{i \in I} a_i \delta_{g_i}$, and show
  % that, under a mild condition, the map $f_0$ built above is not a
  % step function (and give an explicit example of such a valuation
  % $\nu$).  Conclude that no such valuation has a barycenter in $\C$,
  % although they are all point-continuous and $\C$ is weakly locally
  % convex, in fact locally linear.  \qex
\end{example}
\begin{proof}
  Let $\nu \in \Val \C$.  If $f_0$ is a barycenter of $\nu$, then by
  Lemma~\ref{lemma:bary:map}, $\etass_\C (f_0)$ is a barycenter of
  $\etass_\C [\nu]$.  Now we can equate $\C^*$ with $\Val X$, by
  Example~\ref{exa:V=C*}, and then $\C^{**}$ with $\Lform X$, with the
  weakScott topology, by Theorem~\ref{thm:schsimp}.  By
  Lemma~\ref{lemma:weakScott}, the weakScott topology coincides with
  the Scott topology.  Since a continuous dcpo is locally compact,
  hence core-compact, in the Scott topology, we can apply
  Proposition~4.15 of \cite{GLJ:Valg}, which says that the function
  $x \mapsto \int_{f \in \Lform X} f (x) \,d\nu$ is the unique
  barycenter of $\nu$ on $\Lform X$.  Up to the isomorphism between
  $\C$ and $\C^{**}$, $\etass_\C (f_0)$ must therefore be equal to the
  map $x \in X \mapsto \int_{f \in \Lform X} f (x) \,d\nu$.  That
  isomorphism is simply $\etass_\C$, hence $f_0$ must be that map.

  When $\nu = \sum_{i \in I} a_i \delta_{g_i}$, $f_0$ must then be the
  map $x \in X \mapsto \sum_{i \in I} a_i g_i (x)$.  If $f_0$ takes
  infinitely many values, then $f_0$ cannot be in $\C$, so $\nu$ has
  no barycenter.

  Finally, $\C$ is locally linear, because, by
  Lemma~\ref{lemma:weakScott}, the topology of $\C$ is also the
  topology of pointwise convergence, which is always locally linear
  and topological.  \qed
\end{proof}

\begin{example}
  \label{exa:Vp:nobary}
  Let $X$ be a topological space, $\rast$ be nothing, ``$\leq 1$'' or
  ``$1$'', and $\C \eqdef \Val_{\rast, \pw} X$.  $\C$ is a cone if
  $\rast$ is nothing, otherwise a (possibly pointed) barycentric
  algebra.  $\C$ is locally affine (or locally linear), linearly
  separated, topological and sober.  Despite this, continuous
  valuations in general have no barycenter on $\C$.  Indeed, given any
  any continuous, non-point-continuous valuation $\mu$ on $X$
  (subnormalized if $\rast$ is ``$\leq 1$'', normalized if $\rast$ is
  ``$1$), that $i [\mu] \in \Val_{\rast} \C$ has no barycenter, where
  $i \colon X \mapsto \C$ is the map $x \mapsto \delta_x$; an example
  of such a $\mu$ is given in \cite{GLJ:minval}.
\end{example}
\begin{proof}
  We first show that for every $\varpi \in \Val_\rast \C$, if $\varpi$
  has a barycenter $\nu_0$ on $\C$, then for every open subset $U$ of
  $X$, $\nu_0 (U) = \int_{\nu \in \C} \nu (U) \,d\varpi$.  By
  definition, we must have
  $\Lambda (\nu_0) = \int_{\nu \in \C} \Lambda (\nu) \,d\varpi$ for
  every continuous linear (affine if $\rast$ is ``$1$'') map $\Lambda$
  from $\C$ to ${(\creal)}_\sigma$.  For each given open subset $U$ of
  $X$, let us define $\Lambda \colon \Val_{\rast, \pw} X \to \creal$
  by $\Lambda (\nu) \eqdef \nu (U)$.  This is linear if $\rast$ is
  nothing or ``$\leq 1$'', and affine if $\rast$ is ``$1$''.  This is
  continuous since $\Lambda^{-1} (]r, \infty]) = [U > r]$.  The result
  follows.

  The map $i$ is continuous, since $i^{-1} ([U > r])$ is equal to $U$
  if $0\leq r <1$, and to $\emptyset$ otherwise.  We have just argued
  that if $i [\mu]$ has a barycenter $\nu_0$ on $\C$, then for every
  open subset $U$ of $X$,
  $\nu_0 (U) = \int_{\nu \in \C} \nu (U) \,di[\mu] = \int_{x \in X} i
  (x) (U) \,d\mu$, by the change of variable formula.  Now
  $i (x) (U) = \delta_x (U) = \chi_U (x)$, so $\nu_0 (U) = \mu (U)$.
  Since $U$ is arbitrary, $\nu_0 = \mu$.  However a barycenter on $\C$
  must be in $\C$, so $\nu_0$ must be in $\C = \Val_{\rast, \pw} X$,
  but $\mu$ is not point-continuous, and this is impossible.

  We conclude by recalling that $\Val_\pw X$ is sober by
  \cite[Proposition~5.1]{heckmann96}.  $\Val_{\leq 1, \pw} X$ and
  $\Val_{1, \pw} X$ are sober, too, because they are Skula-closed in
  $\Val_\pw X$.  When $\rast$ is nothing, $\Val_{\rast, \pw} X$ is a
  topological cone; this was mentioned in
  Example~\ref{exa:VX:top:cone}.  When $\rast$ is ``$\leq 1$'' or
  ``$1$'', it is a topological barycentric algebra by
  Example~\ref{exa:V1X:top:bary:alg}.  It was also mentioned that
  $\Val_{\rast, \pw} X$ is locally convex in
  Example~\ref{exa:VX:loc:convex}, in fact locally linear (when
  $\rast$ is nothing, see Example~\ref{exa:VX:loclin}, or when $\rast$
  is ``$\leq 1$'', see Example~\ref{exa:Vleq1X:loclin}) or locally
  affine (when $\rast$ is ``$1$'', see Example~\ref{exa:V1X:loclin}).
  It is linearly separated, as mentioned in
  Example~\ref{exa:VX:convexT0}.  \qed
\end{proof}

% \begin{remark}
%   \label{rem:Vp:nobary:full}
%   Let $\rast$ be nothing, ``$\leq 1$'' or ``$1$''.  The following are
%   equivalent: $(i)$ every continuous (resp.\ and subnormalized, resp.\
%   and normalized) valuation on $\Val_{\rast, \pw} X$ has a barycenter;
%   $(ii)$ $\Val_{\rast, \pw} X = \Val_{\rast} X$.
% \end{remark}
% \begin{proof}
%   $(ii) \Rightarrow (i)$.  Every $\xi \in \Val_{\rast} X$ has a
%   barycenter, which is obtained by applying the multiplication of the
%   monad $\Val_\rast$ to it.  Explicitly, let $\mu (U) \eqdef \int_{\nu
%     \in \Val_\rast X} \nu (U) \,d\xi$ for every $U \in \Open X$.
%   %!!!

%   $(i) \Rightarrow (ii)$.  If $\Val_{\rast, \pw} X \leq \Val_{\rast}
%   X$, then the construction of Example~\ref{exa:Vp:nobary} yields an
%   element of $\Val_{\rast} (\Val_{\rast, \pw} X)$ with no barycenter.  \qed
% \end{proof}

\subsection{Barycenters and convex sets}
\label{sec:baryc-conv-sets}

We will see that barycenters of probability valuations supported on
various kinds of convex sets $A$ are in $A$.  We first need the
following easy remarks.  A continuous valuation $\nu$ on a space $X$
is \emph{supported} on a subset $A$ of $X$ if and only if for all
$U, V \in \Open X$ such that $U \cap A = V \cap A$,
$\nu (U) = \nu (V)$.
\begin{fact}
  \label{fact:f[nu]:supp}
  Let $f \colon X \to Y$ be a continuous map, and $A$ be a subset of
  $X$.  For every continuous valuation $\nu$ on $X$, if $\nu$ is
  supported on $A$ then $f [\nu]$ is supported on $f [A]$, on
  $cl (f [A])$, and on $\upc f [A]$.
\end{fact}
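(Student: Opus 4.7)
The plan is to exploit a simple monotonicity property of the relation ``$\nu$ is supported on $A$'', after which all three claims reduce to a single routine verification.

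First, I would record the following monotonicity observation: if $\nu$ is supported on $A$ and $A \subseteq B$, then $\nu$ is supported on $B$. Indeed, given two open sets $U, V$ with $U \cap B = V \cap B$, intersecting both sides with $A$ (and using $A = A \cap B$) yields $U \cap A = V \cap A$, hence $\nu(U) = \nu(V)$ by the support hypothesis. This reduces the three conclusions to the single statement that $f[\nu]$ is supported on $f[A]$, because $f[A]$ is contained in both $cl(f[A])$ and $\upc f[A]$.

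The core step is then to show that $f[\nu]$ is supported on $f[A]$. Let $U', V' \in \Open Y$ with $U' \cap f[A] = V' \cap f[A]$. For every $x \in A$, the point $f(x)$ lies in $f[A]$, so $f(x) \in U'$ iff $f(x) \in U' \cap f[A]$ iff $f(x) \in V' \cap f[A]$ iff $f(x) \in V'$; equivalently $x \in f^{-1}(U')$ iff $x \in f^{-1}(V')$. Hence $f^{-1}(U') \cap A = f^{-1}(V') \cap A$. Both $f^{-1}(U')$ and $f^{-1}(V')$ are open in $X$ by continuity of $f$, so the hypothesis that $\nu$ is supported on $A$ gives $\nu(f^{-1}(U')) = \nu(f^{-1}(V'))$, i.e., $f[\nu](U') = f[\nu](V')$.

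There is really no main obstacle here: the statement is essentially a bookkeeping fact about the definition of ``supported on''. The only mild subtlety is noticing that supportedness is preserved under enlarging the support set, which is what makes the $cl(f[A])$ and $\upc f[A]$ cases follow for free from the $f[A]$ case; without this remark one might be tempted to prove each of the three cases separately.
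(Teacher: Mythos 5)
Your proof is correct and follows essentially the same route as the paper's: both argue that $U \cap f[A] = V \cap f[A]$ implies $f^{-1}(U) \cap A = f^{-1}(V) \cap A$, whence $f[\nu](U) = f[\nu](V)$, and then invoke monotonicity of ``supported on'' under enlarging the support set to cover $cl(f[A])$ and $\upc f[A]$. You merely present the monotonicity remark up front rather than at the end, and spell out the preimage computation that the paper leaves implicit.
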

Indeed, if $U \cap f [A] = V \cap f [A]$, then
$f^{-1} (U) \cap A = f^{-1} (V) \cap A$, so
$f [\nu] (U) = \nu (f^{-1} (U)) = \nu (f^{-1} (V)) = f [\nu] (V)$.
Then, any continuous valuation supported on a set is also supported on
any larger set, such as $cl (f [A])$ or $\upc f [A]$.
  
\begin{fact}
  \label{fact:bary:img}
  Let $\B$ be a (resp.\ pointed) semitopological barycentric algebra,
  $\C$ be a semitopological cone, and $f \colon \B \to \C$ be a
  continuous map.  For every $\nu \in \Val_1 \B$ (resp.\
  $\nu \in \Val_{\leq 1} \B$), for every point $x \in \B$, if $x$ is a
  barycenter of $\nu$ then $f (x)$ is a barycenter of $f [\nu]$.
\end{fact}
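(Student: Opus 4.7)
The plan is to combine the change of variable formula for image valuations with the defining property of barycenters. I read the statement as implicitly requiring $f$ to carry the relevant algebraic structure (affine in the $\Val_1$ case, linear in the $\Val_{\leq 1}$ case), matching Lemma~\ref{lemma:bary:map}; without that, the ``Fact'' fails — take $\B = [0,1]$, $\C = \creal$, $f(t) = t^2$, and $\nu = \tfrac12 \delta_0 + \tfrac12 \delta_1$, whose barycenter is $1/2$, whereas $f(1/2) = 1/4$ is not the barycenter of $f[\nu] = \nu$.

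The computation itself is a short chain of equalities. Fix an arbitrary linear lower semicontinuous map $\Lambda \colon \C \to \creal$. The change of variable formula (recalled just before Lemma~\ref{lemma:bary:map}) gives
\[
  \int_{y \in \C} \Lambda(y) \, df[\nu] \;=\; \int_{x' \in \B} \Lambda(f(x')) \, d\nu.
\]
Next, $\Lambda \circ f \colon \B \to \creal$ is lower semicontinuous as the composition of a continuous map with a lower semicontinuous one. In the non-pointed case, since $\Lambda$ is linear hence in particular affine, and $f$ is affine, $\Lambda \circ f$ is affine; in the pointed case, with $f$ linear, $\Lambda \circ f$ is linear, invoking Remark~\ref{rem:bary:alg:pointed:linear} to freely identify the notions of linearity on the cone $\C$ and on $\C$ viewed as a pointed barycentric algebra.

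Finally, by the defining property of barycenters (Definition~\ref{defn:bary:choquet:bary:alg}) applied to $\nu$ and to the test function $\Lambda \circ f$, we obtain $(\Lambda \circ f)(x) = \int_{x' \in \B} (\Lambda \circ f)(x') \, d\nu$. Chaining this with the change of variable formula yields $\Lambda(f(x)) = \int_{y \in \C} \Lambda(y) \, df[\nu]$. Since $\Lambda$ was an arbitrary linear lower semicontinuous map $\C \to \creal$, this is exactly what it means for $f(x)$ to be a barycenter of $f[\nu]$ in $\C$.

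There is no genuine obstacle — the only subtlety worth flagging in the write-up is matching the class of test functionals used on each side (affine on $\B$ in the $\Val_1$ case, linear on $\B$ in the $\Val_{\leq 1}$ case, linear on $\C$ throughout), and observing that $f[\nu]$ is a probability (resp.\ subprobability) valuation because $f[\nu](\C) = \nu(f^{-1}(\C)) = \nu(\B)$, so the notion of barycenter is well-posed on the $\C$ side.
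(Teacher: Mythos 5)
Your argument is correct and is essentially the same as the paper's (change of variables for the image valuation, then the defining property of barycenters applied to $\Lambda \circ f$). You are also right that the Fact, as literally worded, omits the hypothesis that $f$ be affine (resp.\ linear): the paper's own justification invokes ``since $\Lambda \circ f$ is affine (resp.\ linear) and lower semicontinuous,'' which only follows if $f$ is, and your counterexample $f(t)=t^2$ on $[0,1]$ with $\nu=\tfrac12\delta_0+\tfrac12\delta_1$ is sound. This is a defect in the statement, not in your proof, and the intended hypothesis is visible in the parallel Lemma~\ref{lemma:bary:map}, which does ask for $f$ to be linear (resp.\ affine).
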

Indeed, for every linear lower semicontinuous map we have
$\Lambda \colon \C \to \creal$,
$\int_{y \in \C} \Lambda (y) \,df [\nu] = \int_{x \in \B} \Lambda (f
(x)) \,d\nu$ by the change of variable formula, and the latter is
equal to $\Lambda (f (x_0))$ since $\Lambda \circ f$ is affine (resp.\
linear) and lower semicontinuous, and by definition of barycenters.

\begin{fact}
  \label{fact:supp:f<=g}
  Let $\nu$ be a continuous valuation on a space $X$, supported on a
  subset $A$.  For any two maps $f, g \in \Lform X$ such that $f (x)
  \leq g (x)$ for every $x \in A$, $\int f \,d\nu \leq \int g \,d\nu$.
\end{fact}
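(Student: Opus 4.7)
The plan is to reduce the inequality $\int f\,d\nu\leq\int g\,d\nu$ to a pointwise inequality between measures of open sets, via the Choquet formula, and then to exploit the support hypothesis to eliminate the restriction to $A$.

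First, I would recall the Choquet formula stated in the preliminaries:
\[
  \int h\,d\nu = \int_0^\infty \nu(h^{-1}(]t,\infty]))\,dt,
\]
valid for every $h\in\Lform X$. Since the right-hand side is an increasing integral of the function $t\mapsto\nu(h^{-1}(]t,\infty]))$, it suffices to prove that, for every fixed $t>0$,
\[
  \nu(f^{-1}(]t,\infty]))\leq\nu(g^{-1}(]t,\infty])).
\]
Set $U\eqdef f^{-1}(]t,\infty])$ and $V\eqdef g^{-1}(]t,\infty])$; both are open in $X$ because $f$ and $g$ are lower semicontinuous.

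The key observation is that, although $U\subseteq V$ may fail globally, it does hold on $A$. Indeed, for every $x\in U\cap A$ we have $g(x)\geq f(x)>t$ by the pointwise assumption on $A$, so $x\in V\cap A$; hence $U\cap A\subseteq V\cap A$, and consequently
\[
  (U\cup V)\cap A = (U\cap A)\cup(V\cap A) = V\cap A.
\]
Applying the support condition to the two open sets $U\cup V$ and $V$ yields $\nu(U\cup V)=\nu(V)$. By monotonicity of $\nu$, $\nu(U)\leq\nu(U\cup V)=\nu(V)$, which is exactly the desired pointwise inequality.

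There is no real obstacle here; the only subtle point is that the support hypothesis, as stated in the paper, is an equality of measures on pairs of open sets that agree on $A$, rather than the more intuitive ``$\nu(X\diff A)=0$'' style statement. The little trick of comparing $U\cup V$ with $V$ is what converts the inclusion $U\cap A\subseteq V\cap A$ into a usable equality of open sets on $A$. Once this is done, integrating over $t\in{]0,\infty[}$ via Choquet's formula and the monotonicity of the Riemann integral immediately gives $\int f\,d\nu\leq\int g\,d\nu$.
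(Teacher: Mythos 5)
Your proof is correct and follows essentially the same route as the paper's: reduce to $\nu(U) \le \nu(V)$ whenever $U \cap A \subseteq V \cap A$, then integrate via the Choquet formula. The only cosmetic difference is that you apply the support hypothesis to the pair $(U \cup V, V)$, whereas the paper applies it to the dual pair $(U, U \cap V)$ (observing $U \cap A = (U \cap V) \cap A$); the two manipulations are interchangeable.
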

Indeed, we first observe that for all $U, V \in \Open X$ such that
$U \cap A \subseteq V \cap A$, $\nu (U) \leq \nu (V)$: if
$U \cap A \subseteq V \cap A$, then
$U \cap A = (U \cap A) \cap (V \cap A) = (U \cap V) \cap A$, so
$\nu (U) = \nu (U \cap V) \leq \nu (V)$.  Using the Choquet formula,
$\int f \,d\nu = \int_0^\infty \nu (f^{-1} (]t, \infty])) \,dt$.
Since $f \leq g$ on $A$,
$A \cap f^{-1} (]t, \infty]) \subseteq A \cap g^{-1} (]t, \infty])$
for every $t \in \Rp$.  Therefore
$\int f \,d\nu \leq \int_0^\infty \nu (g^{-1} (]t, \infty])) \,dt =
\int g \,d\nu$.

\begin{definition}[Lc-embeddable]
  \label{defn:lc:embed}
  An \emph{lc-embeddable barycentric algebra} is a semitopological
  barycentric algebra that embeds through an affine topological
  embedding into a locally convex semitopological cone.
\end{definition}
A \emph{pointed} lc-embeddable barycentric algebra is an lc-embeddable
barycentric algebra that is also pointed.  We do not require the
embedding to be linear in this case, just affine.

\begin{remark}
  \label{rem:lc:embed:loclin}
  By Definition and Lemma~\ref{deflem:B'*}, every locally affine $T_0$
  semitopological barycentric algebra is lc-embeddable.
\end{remark}

\begin{lemma}
  \label{lemma:bary:closed}
  \begin{enumerate}
  \item Given any non-empty closed convex subset $C$ of a locally
    convex semitopological cone $\C$, the barycenters of all
    subprobability valuations supported on $C$ are in $C$.
  \item The same result holds, more generally, if we replace $\C$ by a
    pointed lc-embeddable barycentric algebra.
  \item Given any closed convex subset $C$ of an lc-embeddable
    barycentric algebra (not necessarily pointed) $\B$, the
    barycenters of all probability valuations supported on $C$ are in
    $C$.
  \end{enumerate}
\end{lemma}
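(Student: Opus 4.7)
The plan is to reduce all three items to the same separation-plus-contradiction argument. Given a barycenter $x_0$ of $\nu$ supported on $C$, I would argue by contradiction: assuming $x_0 \notin C$, I would produce an affine (or, in item~2, linear) lower semicontinuous $\Lambda$ with $\Lambda(x_0) > 1$ and $\Lambda \leq 1$ on $C$, and then compare the barycenter identity $\Lambda(x_0) = \int \Lambda \, d\nu$ with the bound $\int \Lambda \, d\nu \leq \nu(\B)$ obtained from Fact~\ref{fact:supp:f<=g} (applied with the constant function $1$ as the upper bound on $C$), noting that $\nu(\B) \leq 1$.

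For item~1, I would produce $\Lambda$ directly from Proposition~\ref{prop:loc:conv:halfspace}, item~2, which gives an open half-space $H = \Lambda^{-1}(]1, \infty])$, with $\Lambda$ linear lsc, containing $x_0$ and disjoint from $C$. The contradiction is then immediate.

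For items~2 and~3, I would transport the separation to a cone via the embedding. Let $i \colon \B \to \C'$ be the affine topological embedding into a locally convex semitopological cone. By Lemma~\ref{lemma:conv:img} the image $i[C]$ is convex, hence by Definition and Proposition~\ref{defprop:clconv} its closure $cl(i[C])$ is closed convex (and it is non-empty since $C$ is). Because $i$ is full, I can pick an open $V \subseteq \C'$ with $i^{-1}(V) = \B \diff C$; then $i(x_0) \in V$ and $V \cap i[C] = \emptyset$, so $i(x_0) \notin cl(i[C])$. Now Proposition~\ref{prop:loc:conv:halfspace}, item~2, in $\C'$ produces a linear lsc $\Lambda_0 \colon \C' \to \creal$ separating $i(x_0)$ from $cl(i[C])$, and $\Lambda \eqdef \Lambda_0 \circ i$ is affine lsc on $\B$ with $\Lambda(x_0) > 1$ and $\Lambda \leq 1$ on $C$. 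For item~3 this $\Lambda$ is already of the right type, since probability barycenters are tested against affine lsc maps, and the contradiction follows as in item~1.

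The main obstacle is item~2, where subprobability barycenters on a pointed algebra are tested only against \emph{linear} lsc maps, while the pulled-back $\Lambda$ is only affine (the embedding $i$ need not map $\bot$ to $0$). I would fix this by setting $c \eqdef \Lambda(\bot)$ and replacing $\Lambda$ by $\Lambda' \eqdef \Lambda - c$. The key point is that the non-emptiness of $C$, combined with the monotonicity of $\Lambda$, forces $c \leq \Lambda(x) \leq 1$ for any $x \in C$, so $c$ is finite and the subtraction makes sense; $\Lambda'$ is then lower semicontinuous, strict and affine, hence linear by Proposition~\ref{prop:strict:aff}. The inequalities become $\Lambda'(x_0) > 1 - c$ and $\Lambda' \leq 1 - c$ on $C$, and Fact~\ref{fact:supp:f<=g} combined with the barycenter identity yields $\Lambda'(x_0) = \int \Lambda' \, d\nu \leq (1-c)\nu(\B) \leq 1-c$, the desired contradiction.
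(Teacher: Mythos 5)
Your item 1 matches the paper's exactly. For items 2 and 3 you take a genuinely different route. The paper pushes the valuation forward: it records that $i[\nu]$ is supported on $cl(i[C])$ (Fact~\ref{fact:f[nu]:supp}) and that $i(x_0)$ is a barycenter of $i[\nu]$ (Fact~\ref{fact:bary:img}), invokes item 1 to get $i(x_0) \in cl(i[C])$, and reads off the contradiction from the fullness of $i$. You instead prove $i(x_0) \notin cl(i[C])$ directly, separate in the cone, and pull the separating functional back to $\B$, rerunning the integration argument there. The payoff shows in item 2: since the embedding $i$ of a pointed lc-embeddable algebra is only affine (not strict), $\Lambda_0 \circ i$ is affine but not linear, yet subprobability barycenters on a pointed algebra are tested only against \emph{linear} lower semicontinuous maps; your normalization $\Lambda' \eqdef \Lambda - \Lambda(\bot)$ supplies exactly the right linear map (finiteness of $\Lambda(\bot)$ follows from monotonicity, non-emptiness of $C$, and $\Lambda \leq 1$ on $C$; strictness plus affinity gives linearity by Proposition~\ref{prop:strict:aff}; and the inequalities shift uniformly so the contradiction survives). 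The paper instead leans on Fact~\ref{fact:bary:img}, whose pointed case tacitly needs $\Lambda\circ f$ linear and hence $f$ strict; your functional-side adjustment discharges this cleanly.

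One small omission: item 3 allows $C=\emptyset$, for which your argument does not run (the separation step needs a non-empty closed convex set). That case is vacuous---only the zero valuation is supported on $\emptyset$, and it is not a probability valuation---and the paper says so explicitly; add a line to handle it.
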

\begin{proof}
  1.  Let $\nu$ be a subprobability valuation supported on $C$, and
  $x_0$ be a barycenter of $\nu$.  We assume that $x_0 \not\in C$, and
  we aim for a contradiction.  By
  Proposition~\ref{prop:loc:conv:halfspace}, item~2, there is an open
  half-space $H$ containing $x_0$ and disjoint from $C$.  The upper
  Minkowski functional $\Lambda \eqdef M^H$ is linear and lower
  semicontinuous.  By definition of barycenters,
  $\int_{x \in \C} \Lambda (x) \,d\nu = \Lambda (x_0)$.  However,
  $\Lambda (x) \leq 1$ for every $x \in C$, and $\nu$ is supported on
  $C$.  By Fact~\ref{fact:supp:f<=g},
  $\int_{x \in \C} \Lambda (x) \,d\nu \leq \int_{x \in \C} 1 \,d\nu
  \leq 1$ (the last inequality being because $\nu$ is a subprobability
  valuation), and that contradicts $\Lambda (x_0) > 1$.

  2.  Let $C$ be a non-empty closed convex subset of a pointed
  semitopological barycentric algebra $\B$ with an affine topological
  embedding $i$ into a locally convex semitopological cone $\C$
  instead.  Then $i [C]$ is convex, and $cl (i [C])$ is closed and
  convex by Definition and Proposition~\ref{defprop:clconv}.  As
  above, we let $x_0$ be a barycenter of a subprobability valuation
  $\nu$ supported on $C$, and we assume that $x_0 \not\in C$.  (We
  need $\B$ to be pointed so that this barycenter makes sense.)

  We note that $i [\nu]$ is supported on $cl (i [C])$ by
  Fact~\ref{fact:f[nu]:supp}.  Also, $i (x_0)$ is a barycenter of
  $i [\nu]$, by Fact~\ref{fact:bary:img}.

  By item~1, $i (x_0)$ must be in $cl (i [C])$.  However, since $i$ is
  a topological embedding, there is an open subset $V$ of $\C$ such
  that $\B \diff C = i^{-1} (V)$.  Hence $x_0 \in i^{-1} (V)$ since
  $x_0 \not\in C$, in other words $i (x_0) \in V$.  Since
  $i (x_0) \in cl (i [C])$, $V$ intersects $cl (i [C])$, and therefore
  also $i [C]$; hence $i^{-1} (V)$ intersects $C$, which is impossible
  since $i^{-1} (V) = \B \diff C$.

  3.  The argument is the same as in item~2 when $C$ is non-empty.
  When $C$ is empty, the statement is vacuously true: if $\nu$ is
  supported on the empty set, then $\nu$ must be the zero valuation,
  and cannot be a probability valuation.  \qed
\end{proof}

In non-Hausdorff spaces, compact saturated subsets may fail to be
closed, hence we need a separate result for compact saturated convex
subsets.
\begin{lemma}
  \label{lemma:bary:compact}
  \begin{enumerate}
  \item Given a compact saturated convex subset $Q$ of a locally
    convex semitopological cone $\C$, the barycenters of all
    continuous valuations $\nu$ with $\nu (X) \geq 1$ and supported on
    $Q$ are in $Q$.
  \item Given a compact saturated convex subset $Q$ of an
    lc-embeddable barycentric algebra $\B$, the barycenters of all
    probability valuations supported on $Q$ are in $Q$.
  \end{enumerate}
\end{lemma}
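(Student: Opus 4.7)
The plan is to follow the same pattern as Lemma~\ref{lemma:bary:closed}, but to replace the appeal to Proposition~\ref{prop:loc:conv:halfspace} (which separates a point from a closed convex set) by Lemma~\ref{lemma:compact:convex}, which says that a convex compact saturated subset of a locally convex semitopological cone equals the intersection of the open half-spaces containing it.

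For item~1, I would suppose, for contradiction, that a barycenter $x_0$ of $\nu$ lies outside $Q$. By Lemma~\ref{lemma:compact:convex} there is an open half-space $H$ with $Q \subseteq H$ and $x_0 \notin H$; let $\Lambda \eqdef M^H$ be its upper Minkowski functional, which is linear and lower semicontinuous. By construction $\Lambda(x) > 1$ for every $x \in Q$, while $\Lambda(x_0) \leq 1$. The key step, which is where compactness (rather than closedness) intervenes, is to strengthen $\Lambda > 1$ on $Q$ to a uniform bound: the open sets $\{\Lambda > t\}$ for $t > 1$ cover $Q$, so some single $t_0 > 1$ achieves $Q \subseteq \{\Lambda > t_0\}$. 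Applying the Choquet formula and the hypothesis that $\nu$ is supported on $Q$, we obtain $\nu(\{\Lambda > t\}) = \nu(\C) \geq 1$ for every $t \in [0, t_0[$ (because $\{\Lambda > t\} \cap Q = Q$ for such $t$), and hence $\int \Lambda \,d\nu \geq t_0 > 1$. This contradicts $\Lambda(x_0) = \int \Lambda \,d\nu \leq 1$.

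For item~2, I would fix an affine topological embedding $i \colon \B \to \C$ into a locally convex semitopological cone $\C$. The image valuation $i[\nu]$ is a probability valuation on $\C$, hence in particular satisfies $i[\nu](\C) = 1 \geq 1$; by Fact~\ref{fact:f[nu]:supp} it is supported on $\upc i[Q]$. This latter set is convex (by Definition and Proposition~\ref{defprop:upc:conv}, since $i[Q]$ is convex as $i$ is affine) and compact saturated (as the saturation of the compact image $i[Q]$). By Fact~\ref{fact:bary:img}, $i(x_0)$ is a barycenter of $i[\nu]$, so item~1 gives $i(x_0) \in \upc i[Q]$. To conclude $x_0 \in Q$, I would exploit the embedding: if $x_0 \notin Q$, saturation of $Q$ provides an open neighborhood $U$ of $Q$ in $\B$ with $x_0 \notin U$; writing $U = i^{-1}(V)$ with $V$ open in $\C$, we have $i[Q] \subseteq V$ and, since $V$ is upwards-closed in the specialization preordering, $\upc i[Q] \subseteq V$; hence $i(x_0) \in V$, contradicting $x_0 \notin i^{-1}(V) = U$.

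The main obstacle is the compactness extraction in item~1: turning the pointwise strict inequality $\Lambda > 1$ on $Q$ into a uniform separation $Q \subseteq \{\Lambda > t_0\}$ with $t_0$ strictly bigger than $1$ is indispensable, because without it the Choquet integral would only be bounded below by $1 = \Lambda(x_0)$ and yield no contradiction. Everything else---linear separation of $Q$ from $x_0$, the behaviour of the image valuation, and the topological-embedding reduction---follows routine patterns already used in Lemma~\ref{lemma:bary:closed} and Fact~\ref{fact:f[nu]:supp}.
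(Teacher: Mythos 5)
Your proof is correct. The noteworthy difference is in item~1: the paper invokes the strict separation theorem \cite[Theorem~10.5]{Keimel:topcones2} directly (with $C \eqdef \dc x_0$), obtaining a linear lower semicontinuous $\Lambda$ and a scalar $a > 1$ with $\Lambda \leq 1$ on $\dc x_0$ and $\Lambda \geq a$ on $Q$, so the uniform gap comes for free. You instead use Lemma~\ref{lemma:compact:convex} to get only pointwise strict separation ($\Lambda > 1$ on $Q$, $\Lambda(x_0) \leq 1$), and then manufacture the uniform gap yourself: since $\{\Lambda > t\}$, $t>1$, is an open cover of the compact set $Q$, some finite subcover gives $Q \subseteq \{\Lambda > t_0\}$ for a single $t_0 > 1$. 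That compactness extraction is exactly what the strict separation theorem packages away (and its own proof also uses compactness of $Q$), so the two routes are on the same footing logically; yours is just more explicit about where compactness enters. You also work the Choquet-formula bound out by hand instead of invoking Fact~\ref{fact:supp:f<=g}, which is fine. Item~2 is essentially identical to the paper's argument; you phrase the last step contrapositively (choosing an open neighborhood $U$ of the saturated set $Q$ that omits $x_0$, pulling $\upc i[Q]$ inside $V$, and contradicting $i(x_0)\in V$), whereas the paper observes directly that $i(x) \leq i(x_0)$ for some $x \in Q$, that $i$ is a preorder embedding, and that $Q$ is upwards-closed.
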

\begin{proof}
  1.  Let $\nu$ be a continuous valuation supported on $Q$ such that
  $\nu (\C) \geq 1$, and $x_0$ be some barycenter of $\nu$.  Let us
  assume that $x_0$ is not in $Q$.  By the strict separation theorem
  \cite[Theorem~10.5]{Keimel:topcones2} with $C \eqdef \dc x_0$, there
  is a lower semicontinuous linear map $\Lambda \colon \C \to \creal$
  and a constant $a > 1$ such that $\Lambda (x) \leq 1$ for every
  $x \in C$ and $\Lambda (x) \geq a$ for every $x \in Q$.  By
  definition of barycenters,
  $\int_{x \in \C} \Lambda (x) \,d\nu = \Lambda (x_0)$.  However,
  $\Lambda (x) \geq a$ for every $x \in Q$, and $\nu$ is supported on
  $Q$.  By Fact~\ref{fact:supp:f<=g}
  $\int_{x \in \C} \Lambda (x) \,d\nu \geq \int_{x \in \C} a \,d\nu
  \geq a$ (since $\nu (\C) \geq 1$), and that contradicts
  $\Lambda (x_0) \leq 1$.

  2.  If $Q$ is a compact saturated convex subset of a semitopological
  barycentric algebra $\B$ with an affine topological embedding $i$
  into a locally convex semitopological cone $\C$ instead, then
  $i [Q]$ is convex, and $ \upc i [Q]$ is compact saturated and
  convex.

  Let $x_0$ be a barycenter of a probability valuation $\nu$ supported
  on $Q$.  Using Fact~\ref{fact:f[nu]:supp}, $i [\nu]$ is supported on
  $\upc i [Q]$.  Also, $i (x_0)$ is a barycenter of $i [\nu]$, by
  Fact~\ref{fact:bary:img}.  By item~1, $i (x_0)$ must be in
  $\upc i [Q]$.  Hence there is a point $x \in Q$ such that
  $i (x) \leq i (x_0)$, whence $x \leq x_0$ since every topological
  embedding is a preorder embedding, and therefore $x_0 \in Q$ since
  $Q$ is upwards-closed.  \qed
\end{proof}

\subsection{A general barycenter existence theorem}
\label{sec:gener-baryc-exist}

We will imitate the main theorem of \cite{GLJ:bary}, which shows the
existence of barycenters of continuous valuations on convenient cones,
and we will generalize it to convenient (possibly pointed) barycentric
algebras.

Let $\Min Q$ denote the set of minimal elements of a subset $Q$ of a
topological space.  For every compact saturated subset $Q$ of a
topological space $X$, $Q = \upc \Min Q$; see, for example, the
comment after Proposition~3.2 in \cite{GLJ:bary}.  The following lemma
is a variant of \cite[Lemma~3.3]{GLJ:bary}.
\begin{lemma}
  \label{lemma:jia}
  Let $\B$ be a linearly separated semitopological barycentric algebra
  and $Q$ be a non-empty compact saturated subset of $\B$.  If the map
  $\varphi \colon \Lambda \mapsto \min_{x \in Q} \Lambda (x)$ is
  additive from $\B^\astar$ to $\creal$, then $Q = \upc x$ for some
  $x \in \B$.
\end{lemma}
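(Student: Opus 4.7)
The plan is to exhibit a single point $x \in Q$ that simultaneously minimizes every $\Lambda \in \B^\astar$ on $Q$, and then to use linear separation to conclude $Q = \upc x$.

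For each $\Lambda \in \B^\astar$, I set $Q_\Lambda \eqdef \{x \in Q \mid \Lambda (x) = \varphi (\Lambda)\}$. Since $\Lambda$ is lower semicontinuous and $Q$ is non-empty compact, the minimum is attained (a standard filtered-intersection argument in the compact space $Q$), so $Q_\Lambda$ is non-empty; and since $Q_\Lambda = Q \diff \Lambda^{-1} (]\varphi (\Lambda), \infty])$, it is closed in $Q$. The crucial observation is that additivity of $\varphi$ gives $Q_{\Lambda_1} \cap Q_{\Lambda_2} = Q_{\Lambda_1 + \Lambda_2}$: on $Q$ we have $\Lambda_1 + \Lambda_2 \geq \varphi (\Lambda_1) + \varphi (\Lambda_2) = \varphi (\Lambda_1 + \Lambda_2)$ by additivity, and equality at some $x$ forces $\Lambda_i (x) = \varphi (\Lambda_i)$ for both $i$. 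Iterating, every finite intersection of sets $Q_{\Lambda_i}$ is again of the form $Q_{\sum_i \Lambda_i}$, hence non-empty. Thus $\{Q_\Lambda\}_{\Lambda \in \B^\astar}$ is a family of closed subsets of the compact space $Q$ with the finite intersection property, so $\bigcap_{\Lambda \in \B^\astar} Q_\Lambda \neq \emptyset$.

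Picking $x$ in this intersection, $x \in Q$, and $\Lambda (x) = \varphi (\Lambda) \leq \Lambda (y)$ for every $y \in Q$ and every $\Lambda \in \B^\astar$. If $x \not\leq y$ for some $y \in Q$, then by linear separation (Definition~\ref{defn:convexT0}) there would be an affine lower semicontinuous $\Lambda$ with $\Lambda (x) > \Lambda (y)$, contradicting the above inequality; hence $x \leq y$ for every $y \in Q$, so $Q \subseteq \upc x$. Since $x \in Q$ and $Q$ is saturated, $\upc x \subseteq Q$ as well, and therefore $Q = \upc x$.

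I do not foresee a serious obstacle. The key conceptual move is recognising that additivity of $\varphi$ is equivalent to the finite-intersection identity $Q_{\Lambda_1} \cap Q_{\Lambda_2} = Q_{\Lambda_1 + \Lambda_2}$, after which compactness of $Q$ and linear separation do the rest.
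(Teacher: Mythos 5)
Your proof is correct in outline and takes a genuinely different route from the paper. The paper argues by contradiction: assuming $Q \neq \upc x$ for any $x$, it selects for each representative $x$ of a minimal equivalence class a distinct incomparable $\overline x$, uses linear separation to produce $\Lambda_x$ with $\Lambda_x(x) > 1 \geq \Lambda_x(\overline x)$, covers the compact set $Q$ by finitely many of the open sets $\Lambda_x^{-1}(]1, \infty])$, and derives the contradiction $\varphi(\Lambda) < \Lambda(x_0) = \varphi(\Lambda)$ from additivity applied to the finite sum $\Lambda = \sum_{x \in E} \Lambda_x$. You instead build the apex directly via the finite-intersection property on the closed minimizer sets $Q_\Lambda$, and use linear separation only at the very end. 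This is a cleaner decomposition of the roles of compactness, additivity, and linear separation.

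There is a small gap to patch: the claimed identity $Q_{\Lambda_1} \cap Q_{\Lambda_2} = Q_{\Lambda_1 + \Lambda_2}$ is not quite right in $\creal$, where $\infty$ can occur. If $\varphi(\Lambda_1) = \infty$ (i.e., $\Lambda_1 \equiv \infty$ on $Q$), then $Q_{\Lambda_1} = Q = Q_{\Lambda_1 + \Lambda_2}$, yet $Q_{\Lambda_2}$ may be a proper subset; and the argument ``equality forces $\Lambda_i(x) = \varphi(\Lambda_i)$ for both $i$'' relies on the cancellation $a + b = a' + b'$, $a \geq a'$, $b \geq b' \Rightarrow a = a'$, $b = b'$, which fails if $a' = \infty$. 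The fix is easy: any $\Lambda$ with $\varphi(\Lambda) = \infty$ has $Q_\Lambda = Q$ and can be dropped from any finite collection; and when all $\varphi(\Lambda_i) < \infty$, the minimizer of $\sum_i \Lambda_i$ takes a finite value $\sum_i \varphi(\Lambda_i)$, so each term $\Lambda_i(x)$ is finite and the cancellation argument gives $\Lambda_i(x) = \varphi(\Lambda_i)$, establishing the inclusion $Q_{\sum_i \Lambda_i} \subseteq \bigcap_i Q_{\Lambda_i}$ that you actually need for the finite-intersection property. The rest of your argument --- compactness of $Q$ giving a common minimizer $x$, then linear separation giving $x \leq y$ for all $y \in Q$, then saturation giving $\upc x \subseteq Q$ --- is sound.
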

\begin{proof}
  The fact that we have not assumed $\B$ to be $T_0$ will incur a bit
  of overhead.  Let us write $x \equiv y$ if and only if $x \leq y$
  and $y \leq x$.  We have $Q = \upc \Min Q$.  $\Min Q$ is a union of
  equivalence classes with respect to $\equiv$; using the Axiom of
  Choice, we pick one element from each, and we collect all those
  elements into one set which we call $A$.  (If $\B$ is $T_0$, then
  $A$ is simply $\Min Q$.)  Then $A$ is a collection of pairwise
  incomparable points, and $Q = \upc A$.

  Let us assume that $Q$ cannot be written as $\upc x$ for any
  $x \in \B$, or equivalently, that $A$ is not reduced to just one
  point.  $A$ is non-empty, since $Q$ is non-empty, so for every
  $x \in A$, $A$ contains a distinct, hence incomparable, point
  $\overline x$.

  Since $\B$ is linearly separated, there is an affine lower
  semicontinuous map $\Lambda_x \colon \B \to \creal$ such that
  $\Lambda_x (x) > 1$ and $\Lambda_x (\overline x) \leq 1$, for each
  $x \in A$.  The family ${(\Lambda_x^{-1} (]1, \infty]))}_{x \in A}$
  is an open cover of $A$, hence of $Q = \upc A$.  Since $Q$ is
  compact, there is a finite subset $E$ of $A$ such that
  $Q \subseteq \bigcup_{x \in E} \Lambda_x^{-1} (]1, \infty])$.  We
  note that $E$ is non-empty, since $Q$ is non-empty.

  Let $\Lambda \eqdef \sum_{x \in E} \Lambda_x$.  This is also an
  affine lower semicontinuous map from $\B$ to $\creal$.  Since
  $\varphi$ is additive by assumption, and the sum defining $\Lambda$
  is non-empty,
  $\varphi (\Lambda) = \sum_{x \in E} \varphi (\Lambda_x)$.

  Relying on the compactness of $Q$,
  $\varphi (\Lambda) = \min_{z \in Q} \Lambda (z)$ is equal to
  $\Lambda (x_0)$ for some $x_0 \in Q$.  Since
  $Q \subseteq \bigcup_{x \in E} \Lambda_x^{-1} (]1, \infty])$, there
  is an $x \in E$ such that $x_0 \in \Lambda_x^{-1} (]1, \infty])$,
  namely such that $\Lambda_x (x_0) > 1$.  Since $\overline x$ is not
  in $\Lambda_x^{-1} (]1, \infty])$, $\Lambda_x (\overline x) \leq 1$,
  and therefore
  $\varphi (\Lambda_x) = \min_{z \in Q} \Lambda_x (z) \leq 1 <
  \Lambda_x (x_0)$.  For all the other points $y$ of $E$,
  $\varphi (\Lambda_y) = \min_{z \in Q} \Lambda_y (z) \leq \Lambda_y
  (x_0)$, so, summing up, we obtain that
  $\varphi (\Lambda) = \varphi (\Lambda_x) + \sum_{y \in E \diff
    \{x\}} \varphi (\Lambda_y) < \Lambda_x (x_0) + \sum_{y \in E \diff
    \{x\}} \Lambda_y (x_0) = \Lambda (x_0) = \varphi (\Lambda)$, a
  contradiction.  \qed
\end{proof}

\begin{theorem}[Barycenter existence theorem]
  \label{thm:bary}
  Let $\rast$ be nothing, ``$\leq 1$'' or ``$1$''.  Let $\B$ be a
  convenient barycentric algebra $\B$, and let us assume that $\B$ is
  a cone if $\rast$ is nothing, resp.\ is pointed if $\rast$ is
  ``$\leq 1$'', resp.\ is compact and non-empty if $\rast$ is ``$1$''.
  % \begin{enumerate}
  % \item
  
  Every continuous valuation $\nu$ (resp.\ every subprobability
  valuation if $\rast$ is ``$\leq 1$'', resp.\ every probability
  valuation if $\rast$ is ``$1$'') on $\B$ has a unique barycenter
  $\beta (\nu)$, and the barycenter map $\beta$ is continuous.
  % \item Hence $\beta$ is the structure map of a
  %   $\Val_{\rast}$-algebra on $\B$, and the unique one that
  %   induces the given cone, resp.\ pointed barycentric algebra, resp.\
  %   barycentric algebra structure on $\B$.
  % \end{enumerate}
\end{theorem}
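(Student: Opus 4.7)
\emph{Uniqueness} of the barycenter is immediate from Lemma~\ref{lemma:bary:unique}: a convenient barycentric algebra is sober (hence $T_0$) and linearly separated by hypothesis, so no $\nu$ can admit two distinct barycenters.

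\emph{Existence} proceeds by a Choquet-style construction, imitating the strategy of~\cite{GLJ:bary}. To each $\nu \in \Val_\rast \B$ I associate the candidate set
\[
Q(\nu) \eqdef \bigcap \{Q \in \SVc \B : \nu \text{ is supported on } Q\},
\]
where ``supported'' is in the sense of Section~\ref{sec:baryc-conv-sets}. The family on the right is filtered: any two of its members have a non-empty convex compact saturated intersection by Lemma~\ref{lemma:convex:inter} and the coherence that comes from sobriety, and modularity of $\nu$ shows this intersection still supports $\nu$. Hence by Proposition~\ref{prop:QcvxC:dcone}---which identifies $\SVc \B$ with a continuous d-barycentric algebra whose directed suprema are filtered intersections---$Q(\nu)$ itself lies in $\SVc \B$, as soon as the family is non-empty. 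Non-emptiness is automatic in case~3 since $\B \in \SVc \B$; in case~2 it follows from subnormalization of $\nu$ combined with Proposition~\ref{prop:locconvcomp:base}, which yields convex compact saturated approximations to the support; case~1 reduces to case~2 by truncating $\nu$ along open half-spaces furnished by linear separation.

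The second step applies Lemma~\ref{lemma:jia} to the non-empty convex compact saturated set $Q(\nu)$. Linear separation is part of the convenience hypothesis, and I verify that $\varphi \colon \Lambda \mapsto \min_{x \in Q(\nu)} \Lambda(x)$ coincides on $\B^\astar$ (resp.\ $\B^*$) with $\Lambda \mapsto \int \Lambda\,d\nu$, which is additive in $\Lambda$ by linearity of integration. The equality $\min_{Q(\nu)} \Lambda = \int \Lambda\,d\nu$ is obtained by combining Lemma~\ref{lemma:bary:compact} (for the inequality $\Lambda(x) \leq \int \Lambda\,d\nu$ on every $x \in Q(\nu)$: such an $x$ lies in every convex compact saturated superset of the support, hence in the relevant half-spaces) with an approximation of $\nu$ from below by simple valuations $\mu \leq \nu$, whose barycenters $\beta(\mu)$, supplied by Theorem~\ref{thm:locconv:heckmann:beta}, populate $Q(\nu)$ and witness the reverse inequality. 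Lemma~\ref{lemma:jia} then yields $Q(\nu) = \upc x_0$ for a unique $x_0 \in \B$, which is the barycenter of $\nu$ by Fact~\ref{fact:bary:alt}. Continuity of $\beta$ is then checked as follows: for any $V \in \Open \B$ and $\nu_0 \in \beta^{-1}(V)$, Proposition~\ref{prop:locconvcomp:base} produces a convex compact saturated $Q$ with $\beta(\nu_0) \in \interior{Q} \subseteq Q \subseteq V$, and the first Schr\"oder--Simpson decomposition lemma (Proposition~\ref{prop:schsimp:decomp}) supplies a basic weak neighborhood $\mathcal{U}$ of $\nu_0$ such that $\beta(\nu) \in Q$ for every $\nu \in \mathcal{U}$, exactly as in the proof of Theorem~\ref{thm:V1->Vb}.

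\emph{The main obstacle} is the identification $\min_{Q(\nu)} \Lambda = \int \Lambda\,d\nu$ for an arbitrary continuous (not merely point-continuous) valuation. Point-continuous $\nu$ are already handled by Theorem~\ref{thm:locconv:heckmann:beta:pcont}, and Example~\ref{exa:Vp:nobary} shows that sobriety by itself cannot suffice. All three ingredients of convenience---local convex-compactness, sobriety, and linear separation---must cooperate: local convex-compactness and sobriety produce enough convex compact saturated supports and control their filtered intersections inside $\SVc \B$ (via Proposition~\ref{prop:QcvxC:dcone}), while linear separation simultaneously enables Lemma~\ref{lemma:jia} and the half-space approximations of $\nu$ by simple valuations whose barycenters carry the witnesses. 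Arranging all of these to operate in concert on a general continuous valuation, where the simple valuations $\mu \leq \nu$ in $\Val_{\rast, \fin} \B$ no longer weakly approximate $\nu$, is where the bulk of the technical work lies.
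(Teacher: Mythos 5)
Your proof takes a genuinely different route from the paper's, and it contains a real gap that you yourself flag in your closing paragraph.

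The paper's proof sidesteps the ``main obstacle'' you identify by never trying to realize $Q(\nu)$ directly as an intersection of supports. Instead it pushes $\nu$ forward along $\eta^\Smyth \colon \B \to \SVc\B$ and exploits Proposition~\ref{prop:QcvxC:dcone}: $\Smythc\B = \SVc\B$ is a \emph{continuous dcpo} in its Scott topology. On a continuous dcpo, Heckmann's Theorem~4.1 identifies continuous valuations with point-continuous ones, so Theorem~\ref{thm:locconv:heckmann:beta:pcont}---which you correctly observe handles point-continuous valuations---automatically yields a barycenter map $\alpha$ on \emph{all} of $\Val_\rast(\Smythc\B)$. Setting $Q \eqdef \alpha(\eta^\Smyth[\nu])$, the identity $\min_{x\in Q}\Lambda(x) = \int\Lambda\,d\nu$ then falls out of the change of variable formula applied to $\min\Lambda$ (Lemma~\ref{lemma:Lambda:bar:Qcvx}), and Lemma~\ref{lemma:jia} collapses $Q$ to $\upc x_\nu$. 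There is no approximation of a general continuous valuation by simple valuations anywhere, and continuity of $\beta$ is a one-line computation $\beta^{-1}(U) = (\alpha \circ \Val_\rast(\eta^\Smyth))^{-1}(\Box^\cvx U)$ rather than a Schr\"oder--Simpson argument.

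Your version has several concrete problems. First, the family of $Q \in \SVc\B$ supporting $\nu$ need not be filtered: you appeal to ``the coherence that comes from sobriety,'' but sobriety does not imply coherence, and the intersection of two convex compact saturated sets may fail to be compact; even if it were, it is not automatic that $\nu$ is still supported on it. Second, and as you acknowledge, the heart of the argument---that $\min_{Q(\nu)}\Lambda = \int\Lambda\,d\nu$ holds for an arbitrary continuous valuation---is exactly what you cannot prove: the simple valuations $\mu \leq \nu$ do not weakly approximate $\nu$ when $\nu$ is not point-continuous, so their barycenters give you no way to witness the reverse inequality, and Lemma~\ref{lemma:bary:compact} provides information about barycenters already known to exist, not the inequality $\min_{Q(\nu)}\Lambda \leq \int\Lambda\,d\nu$ you want. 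The moral is that one should not attempt to approximate $\nu$ in $\Val_\rast\B$ at all, but transport the problem to $\Smythc\B$ where the pathology simply disappears.
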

\begin{proof}
  $\Smythc \B$ is a linearly separated continuous d-barycentric
  algebra by Proposition~\ref{prop:QcvxC:dcone}, and coincides with
  $\SVc \B$.  When $\rast$ is ``$1$'', we have assumed $\B$ compact
  and non-empty, and $\B$ is also compact and non-empty since it is
  pointed in the other cases; so $\SVc \B$ is pointed in all cases, by
  Lemma~\ref{lemma:Qcvx:spec}, item~3.  Additionally, if $\rast$ is
  nothing, then $\B$ is a topological cone, since every continuous
  d-cone is topological by Proposition~\ref{prop:dcone:loc:convex},
  and therefore $\Smythc \B = \SVc \B$ is a topological cone
  \cite[Theorem~11]{Keimel:topcones2}.

  By Theorem~\ref{thm:locconv:heckmann:beta:pcont}, there is a unique
  linear (affine if $\rast$ is ``$1$'') lower semicontinuous map
  $\alpha \colon \Val_{\rast, \pw} {(\Smythc \B)} \to \Smythc \B$ such
  that $\alpha (\delta_Q) = Q$ for every $Q \in \Smythc \B$.
  Theorem~4.1 of \cite{heckmann96} states that continuous valuations
  and point-continuous valuations coincide on any locally finitary
  compact space; this applies to any c-space, hence to any continuous
  dcpo with its Scott topology in particular, and therefore to
  $\Smythc \B$.  Hence $\alpha$ is really a continuous linear (resp.\
  affine if $\rast$ is ``$1$'') map from $\Val_\rast {(\Smythc \B)}$
  to $\Smythc \B$.  Additionally, $\alpha$ maps every continuous
  valuation to a barycenter by Lemma~\ref{lemma:bary:simple}, item~4.

  Let us consider any element $\nu$ of $\Val_{\rast} \B$.  Since
  $\eta^\Smyth$ is continuous (Lemma~\ref{lemma:Qcvx:eta}), we can
  form the image continuous valuation $\eta^\Smyth [\nu]$ on
  $\SVc \B$.  We let $Q \eqdef \alpha (\eta^\Smyth [\nu])$.

  For every affine (if $\rast$ is ``$1$'', linear otherwise) lower
  semicontinuous map $\Lambda \colon \B \to \creal$, by
  Lemma~\ref{lemma:Lambda:bar:Qcvx}, $\min\Lambda$ is an affine (if
  $\rast$ is ``$1$'', linear otherwise) lower semicontinuous map from
  $\Smythc \B$ to $\creal$.  Since $\alpha$ maps every element of
  $\Val_{\rast} {(\Smythc \B)}$ to its barycenter, and since the
  function $\min\Lambda$ of Lemma~\ref{lemma:Lambda:bar:Qcvx} is
  affine (if $\rast$ is ``$1$'', linear otherwise) and lower
  semicontinuous, we must have
  $\min\Lambda (Q) = \int_{Q' \in \Smythc \B} \min\Lambda (Q')
  \,d\eta^\Smyth [\nu]$.  By the change of variable formula, the
  latter integral is equal to
  $\int_{x \in \B} \min\Lambda (\upc x) \,d\nu = \int_{x \in \B}
  \Lambda (x) \,d\nu$.  Also,
  $\min\Lambda (Q) = \min_{x \in Q} \Lambda (x) = \varphi (\Lambda)$,
  taking the notation $\varphi$ from Lemma~\ref{lemma:jia}.  In other
  words, $\varphi$ is the function
  $\Lambda \mapsto \int_{x \in \B} \Lambda (x) \,d\nu$, and that is an
  additive function since integration with respect to continuous
  valuations is additive in its integrated function.  By
  % Jia's
  Lemma~\ref{lemma:jia} (which applies since $\B$ is linearly
  separated), $Q = \upc x_\nu$ for some $x_\nu \in \B$.

  For every affine (if $\rast$ is ``$1$'', linear otherwise) lower
  semicontinuous map $\Lambda \colon \B \to \creal$, the equation
  $\min\Lambda (Q) = \int_{x \in \B} \Lambda (x) \,d\nu$ then
  simplifies to
  $\Lambda (x_\nu) = \int_{x \in \B} \Lambda (x) \,d\nu$, showing that
  $x_\nu$ is a barycenter of $\nu$.  It is unique by
  Lemma~\ref{lemma:bary:unique}.

  Let us define $\beta$ by $\beta (\nu) \eqdef x_\nu$, for every
  $\nu \in \Val_{\rast} \B$.  We recall that $\upc x_\nu$ is what
  we have written as $Q$, which is defined as
  $\alpha (\eta^\Smyth [\nu])$.  We claim that $\beta$ is continuous.
  For every open subset $U$ of $\B$, for every
  $\nu \in \Val_{\rast} \B$, $\beta (\nu) \in U$ if and only if
  $\alpha (\eta^\Smyth [\nu]) \in \Box^\cvx U$.  In other words,
  $\beta^{-1} (U) = (\alpha \circ \Val_{\rast}
  (\eta^\Smyth))^{-1} (\Box^\cvx U)$, which is open since $\alpha$ and
  $\eta^\Smyth$ are continuous.  \qed
\end{proof}

\section*{Acknowledgments}
I would like to thank Mikhaïl Volkov, who pointed me to the survey
\cite{Skornyakov:stoch:alg}, which led me to the work of Skornyakov
and Ignatov, who studied barycentric algebras under the name of
convexors.

\ifta
\bibliographystyle{elsarticle-harv}
\else
\bibliographystyle{abbrv}
\fi
\bibliography{baryalg}
%% Authors are advised to submit their bibtex database files. They are
%% requested to list a bibtex style file in the manuscript if they do
%% not want to use elsarticle-num.bst.

%% References without bibTeX database:

% \begin{thebibliography}{00}

%% \bibitem must have the following form:
%%   \bibitem{key}...
%%

% \bibitem{}

% \end{thebibliography}

\appendix

\section{The non-strictly embeddable pointed semitopological
  barycentric algebra $\B^{\text{nse}}$}
\label{sec:non-strictly-embedd}

We recall that $\B^{\text{nse}}$ is the subset of
$\C \eqdef \Lform I \times \Lform \nat$ consisting of elements
$\alpha.(\one, 0) + \sum_{i=1}^m \alpha_i \cdot (f_{a_i}, e_{n_i})$
where $\alpha, \alpha_i \in \Rp$ and
$\alpha + \sum_{i=1}^n \alpha_1 \leq 1$; $f_a$ is the characteristic
function of $V_a \eqdef {]1-a, 1]}$, $I \eqdef [0, 1]$, $e_n$ maps $n$
to $1$ and all other natural numbers to $0$.

Since $I$ and $\nat$ are locally compact, the Scott topology on
$\Lform I$ and $\Lform \nat$ coincides with the \emph{compact-open}
topology, as a special case of \cite[Corollary~13.5]{dBGLJL:LCS} for
example.  Let $\tau_1$ be the subspace topology on $\B^{\text{nse}}$
induced by the inclusion in $\C$ with the product topology.  This
topology therefore has subbasic open subsets of the form
$[Q > r] \eqdef \{(f, e) \in \B^{\text{nse}} \mid \min_{x \in Q} f (x)
> r\}$ and
$[n > r] \eqdef \{(f, e) \in \B^{\text{nse}} \mid e (n) > r\}$, where
$Q$ is compact in $I$, $n \in \nat$ and $r \in \real$.

We call \emph{$\tau_1$-open} the open subsets of $\B^{\text{nse}}$ in
the topology $\tau_1$, and \emph{$\tau_1$-closed} its closed subsets.
We say that a subset $C$ of $\B^{\text{nse}}$ is \emph{closed} if and
only if it is $\tau_1$-closed and for all $a, b, c$ in $[0, 1]$ such
that $b+c \leq 1$, for every $(f, e) \in \B^{\text{nse}}$, such that
$ba.(f_a, e_n) + c.(f, e) \in C$ for infinitely many values of
$n \in \nat$, $ba.(\one, 0) + c.(f, e)$ is in $C$.

\begin{lemma}
  \label{lemma:Bnse:topo}
  The family of all closed sets, as defined above, is closed under
  arbitrary intersections and finite unions, and therefore form the
  closed sets of a topology.
\end{lemma}
We will call $\tau_2$ that topology.

\begin{proof}
  The $\tau_1$-closed sets are closed under arbitrary intersections
  and finite unions, since they are the closed sets for the topology
  $\tau_1$.  Let ${(C_i)}_{i \in I}$ be a family of closed sets, and
  $C$ be their intersection.  If $ba.(f_a, e_n) + c.(f, e) \in C$ for
  infinitely many values of $n$, then $ba.(f_a, e_n) + c.(f, e)$ is in
  every $C_i$ for the same values of $n$, so $ba.(\one, 0) + c.(f, e)$
  is in every $C_i$, hence in $C$; therefore $C$ is closed.  If $I$ is
  finite, let now $C$ be $\bigcup_{i \in I} C_i$, and let us assume
  that $ba.(f_a, e_n) + c.(f, e) \in C$ for infinitely many values of
  $n$.  By the pigeonhole principle, there is an $i \in I$ such that
  $ba.(f_a, e_n) + c.(f, e) \in C_i$; so
  $ba.(\one, 0) + c.(f, e) \in C_i$, hence
  $ba.(\one, 0) + c.(f, e) \in C$, showing that $C$ is closed.  \qed
\end{proof}

\begin{lemma}
  \label{lemma:Bnse:ba}
  $\B^{\text{nse}}$ is a pointed semitopological barycentric algebra
  with the topology $\tau_1$, with $\bot \eqdef (0, 0)$ as
  distinguished element.  $\B^{\text{nse}}$ is a pointed
  semitopological barycentric algebra with the topology $\tau_2$;
\end{lemma}
\begin{proof}
  $\C$ is a semitopological cone, hence a pointed semitopological
  barycentric algebra by Lemma~\ref{lemma:bary:in:cone:iff}.  Then
  $\B^{\text{nse}}$, which contains the least element
  $\bot \eqdef (0, 0)$, is a pointed semitopological barycentric
  algebra in the topology $\tau_1$.  It is even a topological one,
  since $\C$ is topological, but we will not need this.

  We turn to $\tau_2$.  We must first show that $\bot$ is least in the
  specialization preordering of $\tau_2$: we show that the only
  $\tau_2$-open neighborhood of $\bot$ is the whole of
  $\B^{\text{nse}}$, or equivalently that every non-empty
  $\tau_2$-closed set $C$ contains $\bot$.  $C$ is $\tau_1$-closed,
  hence downwards-closed in the usual componentwise ordering, and
  since $C$ is non-empty, it must contain $\bot = (0, 0)$.

  We now claim that the function $\_ +_d (f', e')$ is continuous for
  every $d \in [0, 1]$ and $(f', e') \in \B^{\text{nse}}$.  Let $C$ be
  a $\tau_2$-closed set.  Since $\B^{\text{nse}}$ is a semitopological
  barycentric algebra with the topology $\tau_1$ and $C$ is
  $\tau_1$-closed, $(\_ +_d (f', e'))^{-1} (C)$ is $\tau_1$-closed.
  Let $a, b, c \in [0, 1]$ be such that $b+c \leq 1$ and
  $(f, e) \in \B^{\text{nse}}$, with the property that
  $ba. (f_a, e_n) + c.(f, e) \in (\_ +_d (f', e'))^{-1} (C)$ for
  infinitely many values of $n \in \nat$.

  If $c \neq 0$ or $d \neq 1$, then $cd+1-d > 0$, so
  $(ba.(f_a, e_n) + c.(f, e)) +_d (f', e') = bad.(f_a, e_n) + cd.(f,
  e) + (1-d).(f', e') = bad.(f_a, e_n) + (cd+1-d).((f, e) +_{\frac
    {cd} {cd+1-d}} (f', e'))$; then
  $bd + (cd+1-d) = (b+c) d + (1-d) \leq d + (1-d) = 1$, and
  $(ba.(f_a, e_n) + c.(f, e)) +_d (f', e') = bad.(f_a, e_n) +
  (cd+1-d).((f, e) +_{\frac {cd} {cd+1-d}} (f', e')) \in C$ for
  infinitely many values of $C$.  Since $C$ is $\tau_2$-closed,
  $bad.(\one, 0) + (cd+1-d).((f, e) +_{\frac {cd} {cd+1-d}} (f', e'))
  \in C$.  But
  $bad.(\one, 0) + (cd + 1 - d). ((f, e) +_{\frac {cd} {cd+1-d}} (f',
  e')) = (ba.(\one, 0) + c.(f, e)) +_d (f', e')$, so
  $ba.(\one, 0) + c.(f, e) \in (\_ +_d (f', e'))^{-1} (C)$.

  If $c=0$ and $d=1$, then $ba.(f_a, e_n) + c.(f, e) = ba.(f_a, e_n)$,
  and $(ba.(f_a, e_n) + c.(f, e)) +_d (f', e') = ba.(f_a, e_n)$ as
  well.  Since that is in $C$ for infinitely many values of $n$,
  $ba.(\one, 0)$ is in $C$, too, and
  $ba.(\one, 0) = (ba.(\one, 0) +c.(f, e)) +_d (f', e')$, so
  $ba.(\one, 0) + c.(f, e) \in (\_ +_d (f', e'))^{-1} (C)$.

  Finally, we claim that, given fixed elements
  $(f, e), (f', e') \in \B^{\text{nse}}$, the function $F$ that maps
  every $d \in [0, 1]$ to $(f, e) +_d (f', e')$ is continuous, where
  $[0, 1]$ is given the standard topology on the reals.  The inverse
  image of a $\tau_2$-closed subset $C$ of $\B^{\text{nse}}$ is closed
  in $[0, 1]$ because $+_d$ is (even jointly) continuous when
  $\B^{\text{nse}}$ is given the topology $\tau_1$, and $C$ is
  $\tau_1$-closed.  \qed
\end{proof}

\begin{lemma}
  \label{lemma:Bnse:T0}
  The downward closure $\dc (f, e)$ of every point
  $(f, e) \in \B^{\text{nse}}$ in the componentwise ordering is closed
  in the topology $\tau_2$, hence $\dc (f, e)$ is the $\tau_2$-closure
  of $(f, e)$.  In particular, $\B^{\text{nse}}$ is $T_0$ in the
  topology $\tau_2$.
\end{lemma}
\begin{proof}
  For every $(f, e) \in \B^{\text{nse}}$, $\dc (f, e)$ is
  $\tau_1$-closed, since $\leq$ is the specialization ordering of
  $\tau_1$.  Let $a, b, c \in [0, 1]$ be such that $b+c \leq 1$, let
  $(f', e') \in  \B^{\text{nse}}$ and let us assume that $ba.(f_a,
  e_n) + c.(f', e') \in \dc (f, e)$ for infinitely many values of
  $n$.

  Since the elements of $\B^{\text{nse}}$ are of the form
  $\alpha.(\one, 0) + \sum_{i=1}^m \alpha_i \cdot (f_{a_i}, e_{n_i})$
  and each $e_{n_i}$ takes the value $0$ everywhere except at $n_i$,
  the set $E \eqdef \{k \in \nat \mid e (k) \neq 0\}$ is finite.

  However, for every $n \in \nat$, if
  $ba.(f_a, e_n) + c.(f', e') \in \dc (f, e)$, then
  $ba.e_n \leq ba.e_n + c.e' \leq e$, so $ba \leq e (n)$.  Hence
  $ba \leq e (n)$ for infinitely many values of $n$, which entails
  that $E$ is infinite if $ba \neq 0$.  Therefore $ba=0$.  Hence our
  assumption that $ba.(f_a, e_n) + c.(f', e') \in \dc (f, e)$ for
  infinitely many values of $n$ reduces to
  $c.(f', e') \in \dc (f, e)$, from which
  $ba.(\one, 0) + c.(f', e') \in \dc (f, e)$ follows immediately.

  It follows that $\dc (f, e)$ is $\tau_2$-closed.  Every
  $\tau_2$-closed subset $C$ of $\B^{\text{nse}}$ that contains $(f,
  e)$ is $\tau_1$-closed, hence downwards-closed, so $C$ must contain
  $(f, e)$.  It follows that $\dc (f, e)$ is the $\tau_2$-closure of
  $(f, e)$.

  Let $\preceq$ be the specialization ordering of $\B^{\text{nse}}$ in
  the topology $\tau_2$.  Then $(f, e) \preceq (f', e')$ if and only
  if $(f, e)$ is in the $\tau_2$-closure of $(f', e')$, which is
  $\dc (f', e')$ as we have just seen.  Hence
  $(f, e) \preceq (f', e')$ if and only if $(f, e) \leq (f', e')$.

  We have obtained that $\leq$ is the specialization ordering of
  $\tau_2$.  Since it is antisymmetric, the topology $\tau_2$ is
  $T_0$.  \qed
\end{proof}

In the rest of this appendix, we fix $a \in {]0, 1]}$.  For every
$n \in \nat$, for every $b \in {]0, 1]}$, let
\[
  C^b_n \eqdef \{(f, e) \in \B^{\text{nse}} \mid b.f \leq a.f_a \text{
    and } b.e \leq a.e_n\}.
\]
Let also $f_\infty \eqdef a \cdot \one$, $e_\infty \eqdef 0$, and:
\[
  C^b_\infty \eqdef \dc (f_\infty, e_\infty),
\]
where $\dc$ is downward closure with respect to the usual,
componentwise ordering $\leq$.  Finally, let:
\[
  C^b \eqdef \bigcup_{n \in \nat \cup \{\infty\}} C^b_n.
\]
\begin{lemma}
  \label{lemma:Bnse:Cclosed:tau1}
  For every $b \in {]0, 1]}$, $C^b$ is $\tau_1$-closed.
\end{lemma}
\begin{proof}
  Let $(f, e)$ be a point of $\B^{\text{nse}}$ outside $C^b$.  We wish
  to find a $\tau_1$-open neighborhood of $(f, e)$ that is disjoint
  from $C^b$.  We define $E$ as in the proof of
  Lemma~\ref{lemma:Bnse:T0}, as the set of indices $k \in \nat$ such
  that $e (k) \neq 0$; this is a finite set.

  Let $U \eqdef \bigcap_{k \in E} [k > 0]$.  $U$ is a $\tau_1$-open
  neighborhood of $(f, e)$, by definition of $E$.
  \begin{itemize}
  \item $U$ intersects $C^b_\infty$ if and only if $e_\infty (k) > 0$
    for every $k \in E$, if and only if $E$ is empty.
  \item For every $n \in \nat$, $U$ intersects $C^b_n$ if and only if
    there is a pair $(f', e') \in C^b_n$ such that $e' (k) > 0$ for
    every $k \in E$.  If so, then $b.e' \leq a.e_n$, so
    $a.e_n (k) > 0$, and therefore $k=n$.  This entails that
    $E \subseteq \{n\}$.  Conversely, if $E \subseteq \{n\}$, then
    $a.(f_a, e_n)$ is in $C^b_n$ (since $b \leq 1$) and in $U$.
    Therefore $U$ intersects $C^b_n$ if and only if
    $E \subseteq \{n\}$.
  \item It follows that $U$ intersects $C^b$ if and only if $E$ has
    cardinality at most $1$.  In particular, if $E$ contains at least
    two indices, then $U$ is an open neighborhood of $(f, e)$ that is
    disjoint from $C^b$.
  \end{itemize}
  Henceforth, we assume that $E$ contains at most one index.

  If $E$ is empty, then by considering the shape of elements of $\B^{\text{nse}}$,
  $(f, e)$ is equal to $(\alpha.\one, 0)$ for some
  $\alpha \in [0, 1]$.  More generally, let us assume that
  $(f, e) = (\alpha.\one, 0)$, whether $E$ is empty or not.  Since
  $(f, e) \not\in C^b$, $(f, e)$ is not in $C^b_\infty$, so
  $\alpha > a$.  Therefore $[I > a]$ is an open neighborhood of
  $(f, e)$.  $[I > a]$ does not intersect $C^b_\infty$, because
  $\min_{x \in I} f_\infty (x) = a \not> a$.  If $[I > a]$ intersected
  $C^b_n$ for some $n \in \nat$, there would be a pair
  $(f', e') \in \B^{\text{nse}}$ such that $b . f' \leq a.f_a$ and
  $b . e' \leq a.e_n$ and such that $\min_{x \in I} f' (x) > a$.  But by
  taking minima over $x \in I$ on both sides of $b . f' \leq a.f_a$,
  we would have $ba < \min_{x \in I} a.f_a (x) = 0$ (since
  $f_a (0)=0$, equivalently $0 \not\in V_a$), which is impossible.
  Therefore $[I > a]$ does not intersect $C^b$.

  Finally, we deal with the case where $E$ is a one-element set
  $\{k\}$ and $(f, e)$ is not of the form $(\alpha.\one, 0)$.  The
  pair $(f, e)$ is of the form
  $(\alpha.\one + \beta .  f_{a'}, \beta . e_k)$ for some
  $\alpha, \beta \in \Rp$ such that $\alpha+\beta \leq 1$ and some
  $a' \in {]0, 1]}$.  It must be that $\beta > 0$, since $(f, e)$ is
  not of the form $(\alpha.\one, 0)$.  Since $(f, e)$ is not in $C^b$,
  it is not in $C^b_k$, so $b.e = b\beta . e_k \not\leq a.e_k$ or
  $b.f \not\leq a.f_a$.
  \begin{itemize}
  \item In the first case, $e (k) > a/b$, so $[k > a/b]$ is an open
    neighborhood of $(f, e)$.  For every $(f', e') \in [k > a/b]$,
    $e' (k) > a/b$, so we cannot have $b.e' \leq a.e_n$ for any
    $n \in \nat$, and therefore $(f', e')$ is not in $C^b_n$.  We
    cannot have $e' \leq e_\infty$ since $e' (k) > 0$, so $(f', e')$
    is not in $C^b_\infty$ either.  Hence $(f', e')$ is not in $C^b$.
    It follows that $[k > a/b]$ is disjoint from $C^b$.
  \item In the second case, there is a point $x \in I$ such that
    $b.f (x) > a.f_a (x)$, so $[\{x\} > r]$ is an open neighborhood of
    $(f, e)$ where $r \eqdef (a/b).f_a (x)$.  We claim that
    $[\{x\} > r] \cap [k > 0]$ is disjoint from $C^b$.  Let $(f', e')$
    be in the intersection.  It cannot be in $C^b_\infty$, otherwise
    $e' \leq e_\infty = 0$, but $(f', e') \in [k > 0]$, which forces
    $e' (k) > 0$.  If $(f', e')$ is in some $C^b_n$, then
    % $b.e' \leq a.e_n$, but $e' (k) > 0$ (and $b > 0$) so
    % $e_n (k) > 0$, and this implies that $n=k$.  Hence
    % $(f', e') \in C^b_k$, which entails that
    $b.f' \leq a.f_a$.  But
    $(f', e') \in [\{x\} > r]$, so $f' (x) > r$, equivalently
    $b.f'(x) > a.f_a (x)$, which is impossible.  Hence
    $[\{x\} > r] \cap [k > 0]$ is disjoint from $C^b$.  \qed
  \end{itemize}
\end{proof}

\begin{lemma}
  \label{lemma:Bnse:Cclosed:tau2}
  For every $b \in {]0, 1]}$, $C^b$ is $\tau_2$-closed.
\end{lemma}
\begin{proof}
  We already know that $C^b$ is $\tau_1$-closed by
  Lemma~\ref{lemma:Bnse:Cclosed:tau1}.  Let $a', b', c \in [0, 1]$ be
  such that $b'+c \leq 1$, let $(f, e) \in \B^{\text{nse}}$, and let
  us assume that $b'a'.(f_{a'}, e_n) + c.(f,e) \in C^b$ for infinitely
  many values of $n \in \nat$.  We wish to show that
  $b'a'.(\one, 0) + c.(f,e)$ is in $C^b$.

  Since $C^b$ is downwards-closed, $c.(f, e)$ is in $C^b$.  Hence if
  $b'a'=0$, then $b'a'.(\one, 0) + c.(f,e) = c.(f, e)$ is in $C^b$.
  In the sequel, we assume that $b'a' \neq 0$, namely that $b' > 0$
  and $a' > 0$.

  If for some $n \in \nat$, $b'a'.(f_{a'}, e_n) + c.(f,e)$ is in
  $C^b_\infty$, then $b'a'.e_n + c.e \leq e_\infty = 0$, so $b'a'=0$
  (and $c.e=0$), but this is impossible.  Hence, for each
  $n \in \nat$, $b'a'.(f_{a'}, e_n) + c.(f,e)$ is in $C^b_m$ for some
  $m \in \nat$, so $bb'a'.e_n + bc.e \leq a.e_m$.  Since $b > 0$ and
  $b'a' > 0$,
  $bb'a'.e_n (n) \leq bb'a'.e_n (n) + bc.e (n) \leq a.e_m (n)$ entails
  that $m=n$.  Therefore $b'a'.(f_{a'}, e_n) + c.(f,e) \in C^b_n$ for
  every $n \in \nat$.  In particular,
  $bc.e (m) \leq bb'a'.e_n (m) + bc.e (m) \leq a.e_n (m)$ for every
  $m \in \nat$, so $bc.e (m) = 0$ for every $m \neq n$.  Since that
  holds for every $n \in \nat$, $bc.e = 0$, and therefore $c.e=0$
  (since $b > 0$).  In particular, $b'a'.(f_{a'}, e_n) \in C^b_n$ for
  every $n \in \nat$, so $bb'a'. f_{a'} \leq a.f_a$.  Since
  $b, b', a'> 0$, for every $x \in V_{a'}$,
  $bb'a'. f_{a'} (x) = bb'a' > 0$, so $a.f_a (x) > 0$, and therefore
  $x \in V_a$.  Therefore $V_{a'} \subseteq V_a$, namely
  ${]1-a', 1]} \subseteq {]1-a, 1]}$.  This entails $a' \leq a$, as
  otherwise $1-a$ would be in $]1-a', 1]$ but not in $]1-a, 1]$.
  Hence
  $b'a'.(\one, 0) + c.(f, e) = b'a'.(\one, 0) \leq b'a.(\one, 0) \leq
  (a.\one, 0) \in C^b_\infty \subseteq C$.  \qed
\end{proof}

\begin{lemma}
  \label{lemma:Bnse:notfull}
  Let $0 < a \leq b < 1$.  Then $C^b$ cannot be written as
  $(\alpha \cdot \_)^{-1} (C')$ for any $\tau_2$-closed subset $C'$ of
  $\B^{\text{nse}}$ and for any $\alpha \in {]0, b]}$.
\end{lemma}
\begin{proof}
  Let us imagine that $C^b$ can be written as
  $(\alpha \cdot \_)^{-1} (C')$ for some $\tau_2$-closed subset $C'$
  of $\B^{\text{nse}}$.  For every $n \in \nat$,
  $((a/b) \cdot f_a, (a/b) \cdot e_n)$ is in $\B^{\text{nse}}$, since
  $a \leq b$.  It is in $C^b_n \subseteq C^b$, so
  $\alpha \cdot ((a/b) \cdot f_a, (a/b) \cdot e_n) = (\alpha /b)a
  . (f_a, e_n)$ is in $C'$.
  Since $C'$ is $\tau_2$-closed, and by definition of the $\tau_2$
  topology (with $c\eqdef 0$, and $\alpha/b$ instead of $b$, which is
  legitimate since $\alpha/b + c \leq 1$, as $\alpha \leq b$),
  $(\alpha /b)a.(\one, 0)$ must also be $C'$.  This is the image under
  $\alpha \cdot \_$ of the point $(a/b) . (\one, 0)$, so
  $(a/b).(\one, 0)$ must be in $C^b$.  But:
  \begin{itemize}
  \item $(a/b) . (\one, 0)$ is not in $C^b_\infty$, because $(a/b)
    . \one \not\leq a \cdot \one = f_\infty$, since $b < 1$.
  \item $(a/b) \cdot (\one, 0)$ is not in any $C^b_n$, since otherwise
    we would have $a \cdot \one \leq a.f_a$, but $a \cdot \one (0)=a$
    and $a.f_a (0) = 0$, and $a > 0$.
  \end{itemize}
  Hence we have reached a contradiction, and we must conclude that
  $C^b$ is not of the form $(\alpha \cdot \_)^{-1} (C')$ for any
  $\tau_2$-closed subset $C'$ of $\B^{\text{nse}}$.  \qed
\end{proof}

Hence, for every $\alpha \in {]0, 1[}$, taking
$b \in {[\max (a, \alpha), 1[}$, we obtain that $C^b$ is not the
inverse image of any $\tau_2$-closed set under $\alpha \cdot \_$.  It
follows that the complement of $C^b$ is not the inverse image of any
$\tau_2$-open set under $\alpha \cdot \_$, and therefore
$\alpha \cdot \_$ is not full.  By
Proposition~\ref{prop:bary:alg:embed:strict}, $\B^{\text{nse}}$ is not
strictly embeddable.

% added jgl

\section{A topological barycentric algebra that is not weakly locally
  convex}
\label{sec:topol-baryc-algebra}

Using the inequality $(a+b)^p \leq a^p + b^p$, valid for all
$a, b \in \Rp$, we verify that $\ell_-^p (\nat)$ is a topological
barycentric algebra.

We profit from the fact that $\ell_-^p (\nat)$ in fact has pointwise
addition and scalar multiplication.  Using the inequality
$(a+b)^p \leq a^p + b^p$, it is easy to see that addition is
$1$-Lipschitz, hence jointly continuous.  The map $a \cdot \_$ is
$a^p$-Lipschitz for every $a \in \Rp$.  The slightly more difficult
part is the following.  $\Rp^{op}$ is $\Rp$ with the reverse ordering
$\geq$, and $(\Rp^{op})_\sigma$ is $\Rp^{op}$ with the Scott topology,
in other words it is $\Rp$ with open sets equal to $]-\infty, -a[$,
$a \in \Rp$, plus the empty set and the whole of $\Rp$.
\begin{lemma}
  \label{lemma:lp:vi}
  For every $-\vec x \in \ell^p_- (\nat)$, $a \mapsto -a \cdot \vec x$
  is continuous from $(\Rp^{op})_\sigma$ to $\ell^p_- (\nat)$.
\end{lemma}
\begin{proof}
  We show that the inverse image $I$ of the open ball
  $B_{-\vec y, < r}$ centered at any $-\vec y \in \ell^p_- (\nat)$
  with radius $r > 0$ is open in $(\Rp^{op})_\sigma$.  For every
  $a \in \Rp$, $a \in I$ if and only if
  $d^p (-\vec y, -a \cdot \vec x) < r$, if and only if
  $\sum_{n \in \nat} \max (ax_n - y_n, 0)^p < r$.  The map
  $a \mapsto \sum_{n \in \nat} \max (ax_n - y_n, 0)^p$ is monotonic,
  so $I$ is downwards-closed.  If $I$ is empty, equal to $\Rp$ or of
  the form $[0, a_0[$, then $I$ is open in $(\Rp^{op})_\sigma$ and we
  are done.  Otherwise, $I$ is of the form $[0, a_0]$ for some
  $a_0 \in \Rp$; we claim that this case is impossible.  Since
  $a_0 \in I$, $d^p (-\vec y, -a_0 \cdot \vec x) < r$.  Let
  $\epsilon > 0$ be such that
  $d^p (-\vec y, -a_0 \cdot \vec x) + \epsilon < r$.  For every
  $a > a_0$,
  $d^p (-a_0 \cdot \vec x, -a \cdot \vec x) = \sum_{n \in \nat} \max
  (ax_n - a_0 x_n, 0)^p = (a - a_0)^p \sum_{n \in \nat} x_n^p$.  This
  is where we need the condition $\sum_{n \in \nat} x_n^p < \infty$:
  for $a > a_0$ sufficiently close to $a_0$,
  $(a - a_0)^p \sum_{n \in \nat} x_n^p$ will be smaller than
  $\epsilon$.  Then
  $d^p (-\vec y, -a \cdot \vec x) \leq d^p (-\vec y, -a_0 \cdot \vec
  x) + d^p (-a_0 \cdot \vec x, -a \cdot \vec x) \leq d^p (-\vec y,
  -a_0 \cdot \vec x) + \epsilon < r$, so $a \in I$: contradiction.  We
  conclude that $I$ is Scott-open in $\Rp^{op}$, hence that
  $a \mapsto a \cdot -\vec x$ is continuous.  \qed
\end{proof}

\begin{lemma}
  \label{lemma:lp:vii}
  The cone $\ell^p_- (\nat)$ is a topological barycentric algebra.
\end{lemma}
\begin{proof}
  It is easy to see that $\ell^p_- (\nat)$ is a cone.  In particular,
  it is a barycentric algebra, with $(-\vec x) +_a (-\vec y)$ defined
  as $a \cdot (-\vec x) + (1-a) \cdot (-\vec y)$.

  We claim that $(\Rp^{op})_\sigma$ is a c-space.  (It is even a sober
  c-space, or equivalently a continuous dcpo.  This would be as easy
  to check, but we do not wish to rely on, and therefore to have to
  define, the notion of continuous dcpo.)  Its specialization
  ordering is $\geq$, so the upward closure $\upc s$ of a point $s$ of
  $(\Rp^{op})_\sigma$ is $]-\infty, s]$.  Let $t \in \Rp$ be an
  element of an open subset $U$ of $(\Rp^{op})_\sigma$.  We wish to
  find $s \in \Rp$ and an open subset $V$ of $(\Rp^{op})_\sigma$ such
  that $t \in V \subseteq \upc s \subseteq U$, namely such that
  $t \in V \subseteq {]-\infty, s]}$ and $s \in U$.  If $U$ is the
  whole of $\Rp$, then we take $s \eqdef 0$ and $V \eqdef U$.
  Otherwise, $U = {]-\infty, r[}$ for some $r \in \Rp$ and $t < r$.
  We let $s \eqdef (t+r)/2$ and $V \eqdef {]-\infty, s[}$.

  By Lemma~\ref{lemma:lp:vi}, scalar multiplication
  $(a, x) \mapsto a \cdot x$, seen as a map from
  $(\Rp^{op})_\sigma \times \ell_-^p (\nat)$ to $\ell_-^p (\nat)$, is
  continuous in its first argument, and we have seen that it is
  Lipschitz hence continuous in its second argument.
  Using Ershov's observation \cite[Proposition~2]{Ershov:aspace:hull},
  and since $(\Rp^{op})_\sigma$ is a c-space, scalar multiplication is
  jointly continuous.

  Let $g \colon [0, 1] \to (\Rp^{op})_\sigma \times (\Rp^{op})_\sigma$
  be defined by $g (a) \eqdef (a, 1-a)$, where $[0, 1]$ has its usual
  metric topology.  The inverse image of a basic open subset
  ${[0, s[} \times {[0, t[}$ by $g$ is equal to $]1-t, s[$ if
  $0 \leq 1-t < s \leq 1$, to $[0, s[$ if $1-t < 0 < s \leq 1$, to
  $]-t, 1]$ if $0 \leq 1-t < 1 < s$, to $[0, 1]$ if $1-t < 0$ and
  $1 < s$, and is empty otherwise.  Hence $g$ is continuous.

  Let
  $f' \colon \ell^p_- (\nat) \times (\Rp^{op})_\sigma \times
  (\Rp^{op})_\sigma \times \ell^p_- (\nat) \to \ell^p_- (\nat)$ be the
  function defined by $f' (x, a, b, y) \eqdef a \cdot x + b \cdot y$.
  Since scalar multiplication is jointly continuous, and since
  addition is, too (being $1$-Lipschitz), $f'$ is jointly continuous.
  Now
  $f' \times (\identity \relax \times g \times \identity \relax)
  \colon A \times [0, 1] \times A \to \ell^p_- (\nat)$ is (jointly)
  continuous, and maps $(x, a, y)$ to $x +_a y$.  Therefore
  $\ell^p_- (\nat)$ is a topological barycentric algebra.  \qed
\end{proof}
Note that $\ell^p_- (\nat)$ is not a topological cone, although it is
a cone: the map $a \mapsto -a \cdot \vec x$ is not continuous from
$\Rp$ with its Scott topology to $\ell^p_- (\nat)$, since it is not
even monotonic.

Let $\vec 0$ be the sequence $(0, 0, \cdots)$, $\vec e_n$ be the
sequence with zeros at every position except for a one at position
$n$.  For all $\eta, \epsilon > 0$ such that $\eta < \epsilon$, we
pick $a \in {]0, \epsilon^{1/p}[}$.  Then the distance from $\vec 0$
to $\frac 1 n \sum_{i=1}^n a \cdot \vec e_n$ is
$\sum_{i=1}^n (a/n)^p = a^p n^{1-p}$.  Since $p < 1$, this tends to
$\infty$ as $n$ tends to $\infty$.  Hence, for $n$ large enough,
$\frac 1 n \sum_{i=1}^n a \cdot \vec e_n$ is not in
$B_{\vec 0, <\epsilon}$.  By construction,
$\frac 1 n \sum_{i=1}^n a \cdot \vec e_n$ is in the convex hull
$\conv {B_{\vec 0, <\eta}}$.  It follows that
$\conv {B_{\vec 0, <\eta}}$ is not included in
$B_{\vec 0, <\epsilon}$, whatever $\eta, \epsilon > 0$ are such that
$\eta < \epsilon$.

If $\ell^p_- (\nat)$ were locally convex, then $B_{\vec 0, <\epsilon}$
would contain a convex open neighborhood $U$ of $\vec 0$, which would
itself contain an open ball $B_{\vec 0, <\eta}$ with
$0 < \eta < \epsilon$.  Then
$\conv {B_{\vec 0, <\eta}} \subseteq U \subseteq B_{\vec 0,
  <\epsilon}$, but we have just seen that this is impossible.

\section{A semitopological cone that is not weakly locally convex}
\label{sec:semit-cone-that}

We first verify that $\C$ is a semitopological cone, with what we
called the closed sets being exactly its closed subsets.
\begin{lemma}
  \label{lemma:peck:nonlocconv:i}
  The closed sets indeed form the closed subsets of a topology on
  $\C$.
\end{lemma}
\begin{proof}
  Let ${(C_i)}_{i \in I}$ be an arbitrary family of closed sets as
  per the definition of closed set, and
  $C \eqdef \bigcap_{i \in I} C_i$.  Then $C$ is Scott-closed, and if
  $2a .  \chi_{U_n} +g \in C$ for infinitely many values of $n$, then
  for every $i \in I$, $2a .  \chi_{U_n} + g \in C_i$ for the same
  infinite set of values of $n$.  By assumption, $a . \one + g$ is in
  $C_i$, and this for every $i \in I$, so $a . \one \in C$.  Therefore
  $C$ is closed.

  Let us now suppose that $I$ is finite, and let
  $C \eqdef \bigcup_{i \in I} C_i$.  Then $C$ is Scott-closed, and if
  $2a .  \chi_{U_n} + g \in C$ for infinitely many values of $n$,
  there must be an $i \in I$ such that
  $\{n \in \nat \mid 2a .  \chi_{U_n} + g \in C_i\}$ is infinite:
  otherwise, $\{n \in \nat \mid 2a .  \chi_{U_n} + g \in C_i\}$ would
  be finite for every $i \in I$, and then the finite union
  $\bigcup_{i \in I} \{n \in \nat \mid 2a .  \chi_{U_n} + g \in C_i\}
  = \{n \in \nat \mid 2a .  \chi_{U_n} + g \in C\}$ would be finite, a
  contradiction.  (This is a pigeonhole principle argument.)  Since
  $C_i$ is closed, $a . \one + g$ must be in $C_i$, hence in $C$.
  \qed
\end{proof}

\begin{lemma}
  \label{lemma:peck:nonlocconv:ii}
  With the topology of Lemma~\ref{lemma:peck:nonlocconv:i}, and
  pointwise addition and scalar multiplication, $\C$ is a
  semitopological cone, and the sequence
  ${(2.\chi_{U_n})}_{n \in \nat}$ converges to $\one$.
\end{lemma}
\begin{proof}
  We first show that addition is separately continuous.  Let us
  fix $g \in \C$ and $C$ be a closed set.  Then $\_ + g$ is
  Scott-continuous, so $(\_ + g)^{-1} (C)$ is Scott-closed.  Let
  $a \in \Rp$ and $h \in \C$.  If
  $2a . \chi_{U_n} + h \in (\_ + g)^{-1} (C)$ for infinitely many
  values of $n$, then $2a . \chi_{U_n} + (g+h) \in C$ for infinitely
  many values of $n$, so $a . \one + (g+h) \in C$, and therefore
  $a . \one + h \in (\_ + g)^{-1} (C)$.

  Let us fix $b \in \Rp$.  We claim that $b \cdot \_$ is continuous.
  % This is clear if $b=0$, so we assume $b > 0$.
  Let $C$ be a closed set.  The set $(b \cdot \_)^{-1} (C)$ is
  Scott-closed because $C$ and because $b \cdot \_$ is
  Scott-continuous.  Let $a \in \Rp$ and $h \in \C$.  If
  $2a . \chi_{U_n} + h \in (b \cdot \_)^{-1} (C)$ for infinitely many
  values of $n$, then $2ab . \chi_{U_n} + b.h \in C$ for infinitely
  many values of $n$, so $ab . \one + b.h \in C$, namely
  $a . \one + h \in (b \cdot \_)^{-1} (C)$.

  Let us fix $x \in \C$.  We claim that $\_ \cdot x$ is continuous.
  Let $C$ be a closed set.  The set $(\_ \cdot x)^{-1} (C)$ is
  Scott-open in $\Rp$ because $\_ \cdot x$ is Scott-continuous.

  We finally consider the sequence ${(2.\chi_{U_n})}_{n \in \nat}$.
  Let $U$ be an open subset of $\C$ containing $\one$.  Let $C$ be the
  complement of $U$ in $\C$.  By definition of closed sets in $\C$ and
  since $\one = 1.\one + 0 \not\in C$, there cannot be infinitely many
  values of $n$ such that $2\chi_{U_n} \in C$.  Hence there is an
  $n_0 \in \nat$ such that for every $n \geq n_0$,
  $2 \chi_{U_n} \in U$, leading to the conclusion that
  ${(2.\chi_{U_n})}_{n \in \nat}$ converges to $\one$.  \qed
\end{proof}

We recall that $f_n \eqdef \frac 1 n \sum_{i=n}^{2n-1} 2.\chi_{U_i}$
for every $n \in \nat^*$ and that
$f_\infty \eqdef \limsup_{n \in \nat^*} f_n$, namely that
$f_\infty (x) \eqdef \inf_{n \in \nat^*} \sup_{m \in \nat^*, m\geq n}
f_m (x)$ for every $x \in {[0, 1[}$.  $C$ is defined as $\dc \{f_n
\mid n \in \nat^*\} \cup \dc f_\infty$, where $\dc$ is downward
closure with respect to the pointwise ordering, and we aim to show
that $C$ is closed.  This takes several steps.
\begin{lemma}
  \label{lemma:peck:nonlocconv:iv}
  $C$ is Scott-closed with respect to the pointwise ordering on $\C$.
\end{lemma}
\begin{proof}
  We first show that for every directed subset $D$ of $\C$, for every
  finite subset $E$ of $\C$, if $D \subseteq \dc E$ then
  $D \subseteq \dc f$ for some $f \in E$.  This is a well-known fact.
  For completeness, here is a proof.  By contradiction, let us assume
  that $D \not\subseteq \dc f$ for any $f \in E$.  For each $f \in E$,
  we find some $g_f \in D \diff \dc f$.  Since $D$ is directed, there
  is a $g \in D$ such that $g_f \leq g$ for every $f \in E$.  By
  assumption, $g \leq f$ for some $f \in E$, so $g_f \leq g \leq f$,
  contradicting the fact that $g_f$ is not in $\dc f$.

  $C$ is obviously downwards-closed.  Let ${(g_i)}_{i \in I}$ be a
  directed family of elements of $C$ with pointwise supremum $g$, and
  let us assume that every $g_i$ is in $C$.

  If there is a finite subset $E$ of $\nat^* \cup \{\infty\}$ such
  that every $g_i$ is less than or equal to $f_n$ for some $n \in E$,
  then every $g_i$ is less than or equal to a single $f_n$ by our
  preliminary observation, and therefore $g \in \dc f_n \subseteq C$.

  Therefore we assume that for every finite subset $E$ of
  $\nat^* \cup \{\infty\}$, there is an $i \in I$ such that
  $g_i \not\in \dc \{f_n \mid n \in E\}$.  In that case, we claim that
  for every $i \in I$, $g_i \leq f_\infty$, namely that for every
  $i \in I$, for every $n \in \nat^*$,
  $g_i \leq \sup_{m \in \nat^*, m \geq n} f_m$.  We consider the
  finite set $E \eqdef \{1, 2, \cdots, n-1\} \cup \{\infty\}$.  There
  is a $j \in I$ such that $g_j \not\in \dc \{f_n \mid n \in E\}$.  By
  directedness, there is a $k \in I$ such that $g_i, g_j \leq g_k$.
  Since $g_k \leq f_m$ for some $m \in \nat^* \cup \{\infty\}$, it
  must be that $m \not\in E$: if $m \in E$, we would have
  $g_j \leq g_k \leq f_m$, contradicting
  $g_j \not\in \dc \{f_n \mid n \in E\}$.  Hence $m \in \nat^*$ and
  $m \geq n$.  From $g_i \leq g_k \leq f_m$, it follows that
  $g_i \leq \sup_{m \in \nat^*, m \geq n} f_m$, as desired.

  Since $g_i \leq f_\infty$ for every $i \in I$, we obtain that
  $g \leq f_\infty$, hence $g \in C$.  Therefore $C$ is Scott-closed.
  \qed
\end{proof}

We recall that
$U_n \eqdef \bigcup_{i=0}^{2^n-1} \left[\frac {2i} {2^{n+1}}, \frac
  {2i+1} {2^{n+1}}\right[$.
\begin{lemma}
  \label{lemma:peck:nonlocconv:vi-x}
  The following hold:
  \begin{enumerate}
  \item for every $x \in {[0, 1[}$ written in binary as
    $0.x_1 x_2 \cdots = \sum_{i \geq 1} x_i 2^{-i}$, where each $x_i$
    is in $\{0, 1\}$ and the sequence of numbers $x_i$ is not
    eventually all ones, for every $n \in \nat$,
    $\chi_{U_n} (x) = 1-x_{n+1}$,
    $f_n (x) = 2 - \frac 2 n \sum_{i=n+1}^{2n} x_i$ and
    $f_\infty (x) = 2 - 2 \liminf_{n \in \nat^*} \frac 1 n
    \sum_{i=n+1}^{2n} x_i$;
  \item for all $n \in \nat$ and $m \in \nat^*$ such that $n<m$ or
    $n \geq 2m$, the only $a \in \Rp$ such that
    $2a . \chi_{U_n} \leq f_m$ is $0$;
  \item for all $n \in \nat$ and $m \in \nat^*$ such that
    $m \leq n < 2m$, every $a \in \Rp$ such that
    $2a . \chi_{U_n} \leq f_m$ is such that $a \leq 1/m$;
  \item for every $n \in \nat$, the only $a \in \Rp$ such that
    $2a. \chi_{U_n} \leq f_\infty$ is $0$;
  \item $C$ is closed.
  \end{enumerate}
\end{lemma}
\begin{proof}
  1.  We have $\chi_{U_0} (x) = 1-x_1$, $\chi_{U_1} (x) = 1-x_2$, and
  in general $\chi_{U_n} (x) = 1-x_{n+1}$ by design.  Therefore
  $f_n (x) = 2 - \frac 2 n \sum_{i=n+1}^{2n} x_i$ for every
  $n \in \nat$ and
  $f (x) = 2 - 2 \liminf_{n \in \nat^*} \frac 1 n \sum_{i=n+1}^{2n}
  x_i$.

  2.  Relying on item~1, it suffices to apply both sides of the
  inequality $2a . \chi_{U_n} \leq f_m$ to a number $x$ in $U_n$
  (equivalently, such that $x_{n+1}=0$, so that
  $2a . \chi_{U_n} (x) = 2a$) such that $f_m (x)$ is as small as we
  wish.  We pick $x$ so that not only $x_{n+1}=0$, but also $x_{m+1}$,
  \ldots, $x_{2m}$ are equal to $1$: this is possible since $n+1$ is
  not among $\{m+1, \cdots, 2m\}$, since $n < m$ or $n \geq 2m$.  Then
  $f_m (x) = 2 - \frac 2 m \sum_{i=m+1}^{2m} 1 = 0$ by item~1.  We
  obtain that $2a = 2a . \chi_{U_n} (x) \leq f_m (x) = 0$, so $a=0$.

  3.  Similarly, we find $x \in U_n$ (so that $x_{n+1}=0$ and
  $2a . \chi_{U_n} (x)=2a$), and we let $x_i \eqdef 1$ for every
  $i \in \{m+1, \cdots, n, n+2, \cdots, 2m\}$ (necessarily excluding
  $i = n+1$).  Then $f_m (x) = 2 - \frac 2 m (m-1) = \frac 2 m$, so we
  obtain the inequality $2a \leq 2/m$, equivalently $a \leq 1/m$.

  4.  Let us fix $n \in \nat^*$.  We once again pick a particular
  point $x \in U_n$; in particular, we set $x_{n+1} \eqdef 0$.  In
  order to avoid a final infinite sequence of ones, we let
  $x_i \eqdef 0$ for every $i$ of the form $2^k$, $k \in \nat$.  All
  the other values of $x_i$ are set to $1$.

  We claim that
  $\liminf_{m \in \nat^*} \frac 1 m \sum_{i=m+1}^{2m} x_i = 1$.  By
  definition,
  $\frac 1 m \sum_{i=m+1}^{2m} x_i = 1 - \frac {\# \{k \in \nat \mid
    m+1 \leq 2^k \leq 2m\}} m$, where $\#$ means ``number of''.  There
  is at most one natural number $k$ such that $m+1 \leq 2^k \leq 2m$,
  since if $m+1 \leq 2^k$, then $2^{k+1} > 2m$.  Hence
  $\frac 1 m \sum_{i=m+1}^{2m} x_i \geq 1 - \frac 1 m$.  This tends to
  $1$ as $m$ tends to $\infty$, proving the claim.

  By item~1,
  $f_\infty (x) = 2 - 2 \liminf_{m \in \nat^*} \frac 1 m
  \sum_{i=m+1}^{2m} x_i$, which is therefore equal to $0$.  The
  inequality $2a.\chi_{U_n} (x) \leq f_\infty (x)$ then reduces to
  $2a \leq 0$, so $a=0$.

  5.  In light of Lemma~\ref{lemma:peck:nonlocconv:iv}, it remains to
  show that for every $a \in \Rp$ and $g \in \C$ such that
  $2a.\chi_{U_n} + g \in C$ for infinitely many values of
  $n \in \nat$, $a.\one+g$ is also in $C$.

  This is obvious if $a=0$.

  Hence let us assume that $a > 0$ and that $2a.\chi_{U_n} + g \in C$
  for infinitely many values of $n \in \nat$.  We will simply show
  that this case cannot happen.  We fix $m_0 \in \nat^*$ such that
  $a \geq 1/m_0$, namely $m_0 \geq 1/a$.  There are still infinitely
  many values of $n \in \nat$ such that $2a.\chi_{U_n} + g \in C$
  \emph{and} such that $n \geq 2m_0$.  We pick one.  Since
  $2a.\chi_{U_n} + g \in C$ and $C$ is downwards-closed,
  $2a.\chi_{U_n}$ is also in $C$, hence smaller than or equal to $f_m$
  for some $m \in \nat^* \cup \{\infty\}$.  If $m=\infty$, item~4
  would imply $a=0$, which is impossible, so $m \in \nat^*$.  If
  $n < m$ or $n \geq 2m$, then by item~2 we would have $a=0$, which is
  impossible as well.  Therefore $m \in \nat^*$ and $m \leq n < 2m$.
  By item~3, $a \leq 1/m$.  But $2m \geq n+1$, and $n \geq 2m_0$, so
  $2m \geq 2m_0+1$, hence $m > m_0$, and this entails that
  $a < 1/m_0$, contradicting $a \geq 1/m_0$.  \qed
\end{proof}

Let us proceed.
\begin{lemma}
  \label{lemma:peck:nonlocconv:xi}
  $\one$ is not in $C$, so ${(f_n)}_{n \in \nat^*}$ does not converge
  to $\one$ in $\C$.
\end{lemma}
\begin{proof}
  If $\one \in C$, then $\one \leq f_m$ for some $m \in \nat^* \cup
  \{\infty\}$.  Therefore $2a.\chi_{U_n} \leq f_m$ for every $n \in
  \nat^*$, where $a \eqdef 1/2$.
  We use Lemma~\ref{lemma:peck:nonlocconv:vi-x}:
  if $m \in \nat^*$, then
  item~2 applied to any $n \geq 2m$ would imply $a=0$,
  otherwise item~4 would imply $a=0$.  Hence we reach a
  contradiction in both cases, so $\one$ is not in $C$.

  $C$ is a closed set by Lemma~\ref{lemma:peck:nonlocconv:vi-x},
  item~5.  It contains every $f_n$ with $n \in \nat^*$ by definition.
  Hence it must contain all the limits of ${(f_n)}_{n \in \nat}$.
  Since $\one$ is not in $C$, $\one$ cannot be a limit of
  ${(f_n)}_{n \in \nat}$.  \qed
\end{proof}

\begin{lemma}
  \label{lemma:peck:nonlocconv:xii}
  For every open neighborhood $U$ of $\one$ in $\C$, $\conv U$ must
  intersect $C$, so $\C$ is not weakly locally convex.
\end{lemma}
\begin{proof}
  Let $U$ be an open neighborhood of $\one$.  By
  Lemma~\ref{lemma:peck:nonlocconv:ii}, $U$ must contain
  $2 \chi_{U_n}$ for $n$ large enough, say for every $n \geq n_0$.
  Then $f_n$ is a convex combination of $2 \chi_{U_n}$, \ldots,
  $2 \chi_{U_{2n-1}}$, hence must be in $\conv U$.  But $f_n \in C$,
  so $\conv U$ intersects $C$.

  If $\C$ were weakly locally convex, then $V \eqdef \C \diff C$ would
  be an open neighborhood of $\one$ by
  Lemma~\ref{lemma:peck:nonlocconv:xi}, and would therefore contain a
  convex subset $A$, itself containing an open neighborhood $U$ of
  $\one$.  Then $A$ would contain $\conv U$, and we have just seen
  that $\conv U$ would then intersect $C$.  Hence $A$, and therefore
  the even larger set $V$ would also intersect $C$, which is absurd.
  \qed
\end{proof}

% end added

\end{document}

%%
%% End of file `elsarticle-template-num.tex'.